\documentclass{amsart}
\usepackage{amsmath,amsthm}
\usepackage{marvosym}
\usepackage{graphicx}
\usepackage{color}
\usepackage{epsfig}
\usepackage{enumitem}

\newcommand{\appsection}[1]{\let\oldthesection\thesection
  \renewcommand{\thesection}{\oldthesection}
  \section{#1}\let\thesection\oldthesection}

\theoremstyle{plain}
\newtheorem{thm}{Theorem}[section]
\newtheorem{cor}[thm]{Corollary}
\newtheorem{conj}[thm]{Conjecture}
\newtheorem{prop}[thm]{Proposition}
\newtheorem{lemma}[thm]{Lemma}
\newtheorem{claim}[thm]{Claim}

\newtheorem*{CHANGING}{Theorem~\ref{thm:changingHS}}

\newtheorem*{caseA}{Case A}
\newtheorem*{caseB}{Case B}
\newtheorem*{caseC}{Case C}
\newtheorem*{caseD}{Case D}
\newtheorem*{question}{Question}
\newtheorem*{claim*}{Claim}
\newtheorem*{MAIN}{Theorem~\ref{thm:main}}
\newtheorem*{dyckscon}{Conjecture~\ref{con:dyckssurface}}
\newtheorem*{dycksthm}{Theorem~\ref{thm:3rp2s}}

\theoremstyle{definition}
\newtheorem{defn}[thm]{Definition}
\newtheorem{remark}[thm]{Remark}

\newcommand{\hatF}{\ensuremath{{\widehat{F}}}}
\newcommand{\hatT}{\ensuremath{{\widehat{T}}}}
\newcommand{\hatQ}{\ensuremath{{\widehat{Q}}}}

\newcommand{\calL}{\ensuremath{{\mathcal L}}}
\newcommand{\calT}{\ensuremath{{\mathcal T}}}
\newcommand{\calN}{\ensuremath{{\mathcal N}}}
\newcommand{\calO}{\ensuremath{{\mathcal O}}}
\newcommand{\calD}{\ensuremath{{\mathcal D}}}

\newcommand{\comment}[1]{}

\newcommand{\bdry}{\ensuremath{\partial}}
\DeclareMathOperator{\Int}{Int}

\DeclareMathOperator{\nbhd}{N}

\newcommand{\R}{\ensuremath{\mathbb{R}}}

\newcommand{\RP}{\ensuremath{\mathbb{RP}}}

\newcommand{\mobius}{M\"{o}bius }

\newcommand{\arc}[1]{\ensuremath{(#1)}}  
\newcommand{\edge}[1]{\ensuremath{\overline{#1}}}

\newcommand{\cut}{\ensuremath{\backslash}}

\newcommand{\conditionI}{{\sc Situation no scc}}
\newcommand{\conditionII}{{\sc Situation scc}}
\newcommand{\situationnscc}{{\sc Situation no scc}}
\newcommand{\situationscc}{{\sc Situation scc}}
\newcommand{\SC}{SC}
\newcommand{\SCs}{SCs}
\newcommand{\ESC}{ESC}
\newcommand{\ESCs}{ESCs}
\newcommand{\FESC}{FESC}
\newcommand{\FESCs}{FESCs}

\newcommand{\ttT}{{\tt T}}
\newcommand{\ttS}{{\tt S}}
\newcommand{\ttB}{{\tt B}}
\newcommand{\ttM}{{\tt M}}
\newcommand{\ttG}{{\tt G}} 
\newcommand{\ttg}{{\tt g}}

\definecolor{light-gray}{gray}{0.75}

\begin{document}
\title{Obtaining genus 2 Heegaard splittings from Dehn surgery}

\author{Kenneth L.\ Baker}
\address{Department of Mathematics \\ University of Miami \\ Coral Gables, FL 33146}
\email{kb@math.miami.edu}

\author{Cameron  Gordon}
\address{Department of Mathematics \\ The University of Texas at Austin \\ Austin, TX 78712}
\email{gordon@math.utexas.edu}

\author{John Luecke}
\address{Department of Mathematics \\ The University of Texas at Austin \\ Austin, TX 78712}
\email{luecke@math.utexas.edu}

\begin{abstract}

Let $K'$ be a hyperbolic knot in $S^3$ and suppose that some Dehn surgery on $K'$ with distance at least $3$ from the meridian yields a $3$-manifold $M$ of Heegaard genus $2$.  We show that if $M$ does not contain an embedded Dyck's surface (the closed non-orientable surface of Euler characteristic $-1$), then the knot dual to the surgery is either $0$-bridge or $1$-bridge with respect to a genus $2$ Heegaard splitting of $M$.  In the case that $M$ does contain an embedded Dyck's surface, we obtain similar results.   As a corollary, if $M$ does not contain an incompressible genus $2$ surface, then the tunnel number of $K'$ is at most $2$.
\end{abstract}

\maketitle



\section{Introduction}

Let $M = K'(\gamma)$ be the manifold obtained by Dehn surgery on a knot $K'$ in 
$S^3$ along a slope $\gamma$. In $K'(\gamma)$, the core of the attached solid 
torus is a knot which we denote by $K$. It is natural to consider the 
properties of $K$ as a knot in $M$. In this paper we are interested in the 
relationship between $K$ and the Heegaard splittings of $M$; more specifically, 
if $\hatF$ is a Heegaard surface in $M$ of genus $g$, what can we say about the 
bridge number $br(K)$ of $K$ with respect to $\hatF$? Assume $K'$ is a 
hyperbolic knot, meaning that its complement $S^3 - K'$ admits a complete 
Riemannian metric of constant sectional curvature $-1$. It follows from 
\cite{rs1} (see also \cite{moriahrubenstein} and \cite{rieck}) that for all 
but finitely many slopes $\gamma$, $K$ can be isotoped to lie on $\hatF$, i.e.\ 
$br(K) = 0$. Let $\Delta = \Delta(\gamma,\mu)$ be the {\it distance\/} of the 
surgery, in other words the minimal geometric intersection number on $\partial 
N(K')$ of the slope $\gamma$ and the meridian $\mu$ of $K'$. 
Since the 
trivial Dehn surgery $K'(\mu) = S^3$ represents the maximal possible 
degeneration of Heegaard genus, one would expect the Heegaard splittings of 
$K'(\gamma)$ to reflect those of the exterior of $K'$ as $\Delta$ gets large. Indeed, it follows from 
\cite{rieck} that for any Heegaard surface $\hatF$ of $K'(\gamma)$ of genus $g$ if $\Delta\ge 18(g+1)$ then $br(K) = 0$, and so after at most one stabilization $\hatF$ is isotopic to a Heegaard surface for the exterior of $K'$.  
Also, in \cite{bgl:tnvhg} we show that if $\Delta \ge 2$, $\gamma$ is not a boundary slope for $K'$, and $M$ has a strongly irreducible Heegaard splitting of genus $g$, then 
the bridge number $br(K)$ of $K$ with respect to {\it some} genus $g$ splitting of $M$ is bounded above by a universal linear function of $g$.  
In contrast, this is not true for $\Delta=1$:  By Teragaito \cite{teragaito} there exists a family of knots  $K'_n$ and a $\gamma$ with $\Delta(\gamma,\mu)=1$ such that $K'_n(\gamma)$ is the same small Seifert fiber space $M$ for all $n$, and we show in \cite{bgl:teragaitobridge} that the set of bridge numbers of the corresponding cores $K_n$ with respect to any genus $2$ Heegaard splitting of $M$ is unbounded.

Turning to small values of $g$, note that the impossibility of getting $S^3$ by 
non-trivial Dehn surgery on a non-trivial knot \cite{gl:kadbtc} 
can be expressed as saying 
that if $g = 0$ and $\Delta > 0$ then $br(K) = 0$. When $g=1$, $K'(\gamma)$ 
is a 
lens space and here the Cyclic Surgery Theorem \cite{cgls:dsok} 
says that if $\Delta > 1$ 
then $K'$ is a torus knot, which is easily seen to imply $br(K) = 0$, while if 
$\Delta = 1$ and $K'$ is hyperbolic the Berge Conjecture \cite{berge} asserts that $br(K) = 
1$. In the present paper we consider the case $g = 2$ and show that if $\Delta 
> 2$ then, generically, $br(K)\le 1$ (with respect to 
some genus $2$ splitting). In fact we consider $1$-sided as well as 
$2$-sided genus $2$ Heegaard splittings of $M$; recall that such a splitting is 
defined by a closed (connected) non-orientable surface of Euler characteristic
$-1$ in $M$, the complement of an open regular neighborhood of which is a 
genus $2$ handlebody. Such a surface is a connected sum of three projective
planes and is also known as a {\it cross cap number $3$ surface} or as a
{\it Dyck's surface}; in this paper we shall adopt the latter terminology.

\begin{MAIN}
Let $K'$ be a hyperbolic knot in $S^3$ and assume $M=K'(\gamma)$ has 
a 1- or 2-sided Heegaard splitting of genus $2$. Assume that 
$\Delta(\gamma,\mu) \geq 3$ where $\mu$ is
the meridian of $K'$. 
Denote by $K$ the core of the attached solid torus 
in $M$. Then either

\begin{enumerate} 

\item 
$K$ is $0$-bridge or $1$-bridge 
with respect to a 1- or 2-sided, genus $2$ Heegaard splitting of $M$. 
In this case, the tunnel number of $K'$ is at most two.
\end{enumerate}
or
\begin{enumerate}[resume]
\item $M$ contains a Dyck's surface, $\widehat{S}$, such that the orientable
genus $2$ surface $\hatF$ that is the boundary of a regular
neighborhood of $\widehat{S}$ is incompressible in $M$.
Furthermore, $K$ can either be isotoped onto $\widehat{S}$ as an 
orientation-reversing curve or can be isotoped to intersect $\widehat{S}$
once. In the latter case, the intersection of $\widehat{F}$ with the exterior
of $K$ (which is also the exterior of $K'$) gives a twice-punctured, 
incompressible, genus $2$ surface in that
exterior. 

\end{enumerate}

\end{MAIN}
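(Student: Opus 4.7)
The plan is to start with a genus $2$ Heegaard surface $\hatF$ of $M$ (in the one-sided case, $\hatF$ is the boundary of a regular neighborhood of an embedded Dyck's surface $\widehat{S}_0$ whose complement is a genus $2$ handlebody). Isotope $K$ into thin, bridge-minimal position with respect to $\hatF$, so that $br(K)$ is well-defined. If $br(K) \leq 1$, conclusion (1) holds, and combining a bridge-arc tunnel for $K$ with a spine of one of the adjacent handlebodies shows the tunnel number of $K'$ is at most $2$. Assume henceforth $br(K) \geq 2$.

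Set $X = S^3 \setminus \Int N(K') = M \setminus \Int N(K)$. The surface $P = \hatF \cap X$ is properly embedded in $X$, with $\partial P$ consisting of parallel circles of slope $\gamma$ on $\partial X$. Choose a complete system of meridian disks $D_1 \cup D_2$ for the two handlebodies bordering $\hatF$ (in the one-sided setup, a meridian system for the complementary handlebody together with a co-core of the twisted $I$-bundle on $\widehat{S}_0$ plays the analogous role) and place everything in general position. The intersection data of these disks with $N(K)$, transported back to $P$, defines a pair of Culler-Gordon-Luecke-Shalen-style labeled graphs $G_P$ on $P$ and $G_D$ on $D_1\cup D_2$. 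Edge labels come from the circles of $\partial P$, and the label sequence around each vertex is cyclic with period dividing $\Delta=\Delta(\gamma,\mu)$. The hypothesis $\Delta \geq 3$ forbids small Scharlemann cycles from realizing the meridian of $K'$ and imposes parity constraints central to the subsequent case analysis.

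The bulk of the argument is then a combinatorial case analysis of $G_P$ and $G_D$, aiming in each configuration to produce either (a) a bridge-reducing isotopy of $K$, contradicting minimality of $br(K)$; or (b) an embedded Dyck's surface $\widehat{S}$ in $M$ obtained by capping off pieces of $P$ with a M\"obius-band subsurface coming from the non-orientable part of the graph combinatorics, positioned so that $K$ either lies on $\widehat{S}$ as an orientation-reversing curve or meets $\widehat{S}$ transversely in a single point. Since $K'$ is hyperbolic, $X$ contains no essential sphere, disk, annulus, or torus, so any resulting incompressible surface in $X$ must have the genus demanded by the theorem; in the second sub-case of (b) this is exactly the twice-punctured incompressible genus $2$ surface $\hatF'\cap X$, where $\hatF'$ denotes the boundary of a regular neighborhood of $\widehat{S}$. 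Incompressibility of $\hatF'$ in $M$ is then forced by combining the incompressibility of $P$ in $X$ with the distance hypothesis to rule out candidate compressing disks.

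The chief obstacle will be showing that alternatives (a) and (b) exhaust all cases when $br(K) \geq 2$. When $br(K) = 2$ each graph has only two vertices, so standard vertex-counting and Scharlemann-cycle arguments become delicate and must be supplemented by careful tracking of parallelism classes of edges in $G_D$ and the corresponding face structures in $G_P$. A secondary difficulty is the ambiguity between the two-sided and one-sided genus $2$ splittings: the same combinatorial output may have to be interpreted through both the handlebody-handlebody and the handlebody-twisted-$I$-bundle decompositions, and keeping the two pictures consistent throughout the case analysis will be essential to capturing both halves of conclusion (2) in a unified framework.
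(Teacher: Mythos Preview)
There is a genuine gap in your setup: the meridian disks $D_1 \cup D_2$ of the Heegaard handlebodies do not give you CGLS-style intersection graphs with $P = \hatF \cap X$. A meridian disk $D_i$ is properly embedded in one of the handlebodies $H_B$ or $H_W$, so $\bdry D_i \subset \hatF$ and $\Int D_i \cap \hatF = \emptyset$. Thus $D_i \cap P$ consists only of arcs in $\bdry D_i$, not arcs in the interior of either surface; there are no edges with which to build the graphs $G_P$ and $G_D$ you describe. The ``intersection data of these disks with $N(K)$'' is just the finite set of points where the $D_i$ cross the bridge arcs of $K$, and this does not produce a pair of labeled fat-vertex graphs in the CGLS sense.

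What the paper actually does is bring in a second surface from the $S^3$ side. One puts $K'$ in thin position in $S^3$, and a Gabai/Rieck-type thin position argument produces a level $2$-sphere $\hatQ$ such that $Q = \hatQ - \nbhd(K')$ and $F = \hatF - \nbhd(K)$ intersect in arcs essential in both. The graphs of intersection $G_Q$ (on the sphere $\hatQ$) and $G_F$ (on the genus $2$ surface $\hatF$) are the objects on which the combinatorics is run. The planarity of $\hatQ$ is essential: Euler characteristic forces the existence of a great $2$-web in $G_Q$ and of special vertices with many bigon/trigon faces, and these drive the entire case analysis showing $t = |K \cap \hatF|$ drops below $10$, then $8$, then $6$, then $4$. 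Your meridian-disk surfaces, even if one could make sense of the graphs, would not be planar (or would not carry the right slope on $\bdry N(K)$) and so would not yield these counts. Also note that the one-sided case is not treated in parallel: a one-sided genus $2$ splitting already \emph{is} a Dyck's surface in $M$, so that case is immediately absorbed into the separate Dyck's-surface theorem (Theorem~\ref{thm:3rp2s} and Corollary~\ref{cor:dyckstunnel2}) rather than being carried through the graph analysis.
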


Conclusion (2) is an artifact of the proof and probably not necessary, but
allowing it simplifies an already lengthy argument. Similarly,
the assumption that $K'$ is hyperbolic simplifies the argument; we will consider
the case where $K'$ is a satellite knot elsewhere. 

As a warning, the Heegaard splitting of conclusion (1) may be different than
the one you started with -- for example, starting with a 2-sided genus
$2$ Heegaard splitting of $K'(\gamma)$, the proof of
Theorem~\ref{thm:main} may produce a 1-sided splitting with 
respect to which $K$ is 1-bridge.

Theorem~\ref{thm:main} fails dramatically when $\Delta=1$. For the Teragaito examples mentioned above \cite{teragaito},
Theorem~\ref{SFSDycks} of Appendix~\ref{sec:appendix} shows that the ambient Seifert fiber space, $M$, contains no Dyck's surface; thus conclusion $(2)$
of Theorem~\ref{thm:main} does not apply and every genus $2$ splitting of $M$ is 2-sided. On the other hand, \cite{bgl:teragaitobridge}
shows there are knots in the Teragaito family with arbitrarily large bridge number with respect to any
genus $2$ splitting of $M$.

Theorem~\ref{thm:main} says that there exists a Heegaard splitting of $M$ with 
respect to which $K$ is most $1$-bridge. If the bridge
number is more than one, the proof of Theorem~\ref{thm:main} constructs a 
new genus $2$ splitting
with respect to which the bridge number is smaller. By keeping a track of
when such a modification is necessary, we see that the proof typically shows 
that $K$ is at most $1$-bridge with respect to any genus $2$ splitting of $M$.
We make this precise in Theorem~\ref{thm:changingHS} below. For this we need
the following definitions.

\begin{defn}~\label{def:addremove}
Let $H_B \cup_{\hatF} H_W$ be a genus $2$ ($2$-sided) Heegaard splitting of 
$M$. Assume there is a \mobius band on one side of the Heegaard surface 
$\hatF$ whose boundary is a primitive curve on the other side of $\hatF$. 
A new Heegaard splitting of $M$, of the same genus, can be formed by 
removing a neighborhood
of the \mobius band from one side of $\hatF$ and adding it to the other side. 
We say that this new splitting
is obtained from the old by {\em adding/removing a \mobius band}.
\end{defn}

\begin{defn}
Let $M$ be a Seifert fiber space over the $2$-sphere with three exceptional fibers.
A {\em vertical} Heegaard splitting of $M$ is a genus $2$ splitting for 
which one of the Heegaard handlebodies
is gotten by tubing together the neighborhoods of two exceptional fibers, where
the tube connecting them is the neighborhood of a co-core arc of a vertical annulus
connecting the neighborhoods of these exceptional fibers.
\end{defn}

\begin{CHANGING}
Let $K'$ be a hyperbolic knot in $S^3$.
Let $H_B \cup_{\hatF} H_W$ be a genus $2$ ($2$-sided) Heegaard splitting of 
$M=K'(\gamma)$. Assume that 
$\Delta(\gamma,\mu) \geq 3$ where $\mu$ is
the meridian of $K'$. Furthermore assume that $M$ does not contain a 
Dyck's surface.  
Denote by $K$ the core of the attached solid torus 
in $M$. Then either

\begin{enumerate} 

\item 
$K$ is
$0$-bridge or $1$-bridge with respect to a Heegaard splitting of $M$
obtained from $H_B \cup_{\hatF} H_W$ by a (possibly empty) sequence of 
adding/removing \mobius bands; or

\item $M$ is a Seifert fiber space over the disk with three exceptional fibers, one of which has order $2$ or $3$, and 
$K$ is $0$-bridge or $1$-bridge with respect to a Heegaard splitting gotten
from a vertical Heegaard splitting of the Seifert fiber space $M$ which has been changed
by a (possibly empty) sequence of adding/removing \mobius bands; or

\item $M$ is $n/2$-surgery on a trefoil knot, $n$ odd, and $K$ is $0$-bridge
or $1$-bridge with respect
to the Heegaard splitting on $M$ coming from the genus $2$ splitting of the
trefoil knot exterior. Note that in this case $M$ is a Seifert fiber
space over the $2$-sphere with three exceptional fibers, one of order $2$
and a second of order $3$.

\end{enumerate} 

In particular, if $M$ is not a Seifert fiber space over the $2$-sphere with
an exceptional fiber of order $2$ or $3$, and if the Heegaard surface
$\hatF$ has no \mobius band on one side whose boundary is a 
primitive curve on the other, then $K$ must be $0$-bridge or
$1$-bridge with respect to the given splitting $H_B \cup_{\hatF} H_W$.

\end{CHANGING}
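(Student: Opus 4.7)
The plan is to bootstrap from Theorem~\ref{thm:main} and audit the construction used there, recording exactly which ambient moves on $H_B \cup_{\hatF} H_W$ are required to reach the final splitting. Since $M$ contains no Dyck's surface, conclusion (2) of Theorem~\ref{thm:main} is unavailable, and its conclusion (1) produces a genus $2$ splitting $\hatF'$ of $M$ with respect to which $K$ is $0$-bridge or $1$-bridge. The task is to show that $\hatF'$ is reachable from $\hatF$ by moves of three prescribed kinds, and that the latter two types force the structural descriptions of $M$ listed in (2) and (3).

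First, I would set up the dichotomy: isotope $K$ to minimize $|K \cap \hatF|$; if the result is $br(K) \le 1$, then conclusion (1) holds with the empty sequence of \mobius band swaps. Otherwise $br(K) \ge 2$, and the proof of Theorem~\ref{thm:main} runs its bridge-reducing machinery driven by the intersection graphs on $\hatF$ and $\bdry N(K')$, which is where every new splitting is introduced. I would then catalogue each bridge-reducing step in that proof and verify that it either (a) exhibits a \mobius band in one handlebody whose boundary curve is primitive on the opposite side, so that sliding $K$ across it after performing the operation of Definition~\ref{def:addremove} strictly reduces bridge number (iterating this gives case (1)), or (b) produces additional Seifert-fibered structure on $M$.

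Next, I would analyze the outcomes of type (b). I expect the combinatorics of the graphs, together with $\Delta(\gamma,\mu) \ge 3$ and the hyperbolicity of $K'$, to force in the exceptional situations that $K$ sit on a vertical annulus of a Seifert fibration of $M$ over $S^2$ with three exceptional fibers, so that the bridge-reducing operation amounts to replacing the current Heegaard surface by a vertical splitting in the sense of the definition preceding Theorem~\ref{thm:changingHS}; the appearance of an exceptional fiber of order $2$ or $3$ is pinned down by the order of the \mobius band whose boundary slope forces this structure, and any remaining discrepancy with the vertical model is absorbed into further add/remove moves, giving case (2). Separately, I would isolate the combinatorial configuration in which the graph analysis forces $K'$ to be the trefoil and $\gamma = n/2$ with $n$ odd; in this configuration the final splitting is induced from the standard genus $2$ splitting of the trefoil exterior capped off by the filling solid torus, yielding case (3). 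Throughout, the hypothesis that $M$ contains no Dyck's surface is used to avoid reverting to conclusion (2) of Theorem~\ref{thm:main}.

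The hard part will be the bookkeeping: matching each bridge-reducing modification in the (lengthy) proof of Theorem~\ref{thm:main} to one of these three move types, and confirming that nothing else is ever used. I anticipate the main subtlety lies in case (2), specifically in verifying that the \mobius band forced by the combinatorics has boundary whose slope on the neighbored exceptional fiber has order exactly $2$ or $3$, and that no vertical splittings with different parameters arise under our hypotheses; case (3) should appear as a degenerate endpoint of the same analysis, where the exceptional fibers have orders $2$ and $3$ and $K'$ is forced to be a trefoil. Once the trichotomy is established, the ``In particular'' clause is immediate: its hypotheses rule out the Seifert fibered conclusions (2) and (3), and the assumed absence on $\hatF$ of a \mobius band with primitive boundary blocks any application of Definition~\ref{def:addremove}, so the sequence of modifications must be empty and the original splitting $H_B \cup_{\hatF} H_W$ itself witnesses $br(K) \le 1$.
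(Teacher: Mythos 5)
Your proposal is correct and is essentially the paper's own argument: the paper likewise proves Theorem~\ref{thm:changingHS} by rerunning the proof of Theorem~\ref{thm:main} and auditing each bridge-reducing modification, checking (via remarks inserted in sections~\ref{sec:escbounds}--\ref{sec:tis4scc}) that every such step is either an add/remove of a primitive \mobius band, a passage to a vertical splitting of a Seifert fiber space with an exceptional fiber of order $2$ or $3$, or the terminal trefoil configuration at the end of section~\ref{sec:tis4scc}. The paper then iterates these alternatives and deduces the ``In particular'' clause exactly as you do.
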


\begin{remark}
The situations in which the Heegaard splitting $H_B \cup_{\hatF} H_W$ must
be altered in Theorem~\ref{thm:changingHS} are special. The situation
when $M$ contains a Dyck's surface is discussed in more detail below, for example in Theorem~\ref{thm:3rp2s} (see also Appendix~\ref{sec:appendix}). It is conjectured that 
the second and third conclusions of Theorem~\ref{thm:changingHS} never hold, 
that a Seifert fiber space never arises
by non-integral surgery on a hyperbolic knot. Finally, the existence of 
a \mobius band in one Heegaard handlebody of $H_B \cup_{\hatF} H_W$ whose
boundary is primitive on the other is a special 
case of this Heegaard splitting having Hempel distance $2$ \cite{hempel, thompson:tdcpag2m} 
--- which
also places restrictions on what $M$ can be. Presumably these exceptions 
are artifacts of the proof, and that in fact $K$ is at most $1$-bridge 
with respect to any genus $2$ splitting when $\Delta \geq 3$.
\end{remark}

Our results give information on the relationship between the Heegaard genus of 
$M$ and that of $X = S^3 - N(K')$, the exterior of $K'$. Recall that a Heegaard 
splitting of $X$ is a decomposition $X = V \cup_{S} W$, 
where $V$ is a handlebody 
with $\partial V = S$ and $W$ is a compression body with $\partial W = 
S\sqcup\partial X$. The {\it Heegaard genus\/} $g(X)$ of $X$ is the minimal 
genus of $S$ over all such decompositions.

In this context one often talks 
about the {\it tunnel number\/} $t(K')$ of $K'$, the minimum number of arcs 
(``tunnels'') that need to be attached to $K'$ so that the complement of an open 
regular neighborhood of the resulting 1-complex is a handlebody. It is easy to 
see that $g(X) = t(K') + 1$. For any slope $\gamma$, $V\cup_{S}W(\gamma)$ is a 
Heegaard splitting of $M = K'(\gamma)$; in particular $g(M)\le g(X)$. In fact, 
by \cite{rs2}, generically we have $g(M) = g(X)$. More precisely, recall 
that for all but finitely many slopes $\gamma$, $br(K) = 0$ with respect to 
any Heegaard surface $\hatF$ of $M$. 
Taking $\hatF$ to have minimal genus, it is then easy to see that when $br(K)=0$ either 
$g(M) = g(X) = t(K') + 1$ or 
$g(M) = g(X)- 1 = t(K')$. See \cite{rieck} for details. By \cite{rs2}, the 
second possibility can happen for only a finite number of lines of slopes 
(where a {\it line} of slopes is a set of slopes $\gamma$ such that 
$\Delta(\gamma,\gamma_0) 
= 1$ for some fixed slope $\gamma_0$). We know no examples where the 
Heegaard genus 
of $K'(\gamma)$ ($\gamma\ne \mu$) is less than $t(K')$.

\begin{question} Is $t(K')\le g(K'(\gamma))$ for all $\gamma\ne \mu$?
\end{question}
Now it is easy to see that an upper bound on $br(K)$ in $M$, with respect to a 
1- or 2-sided Heegaard surface, gives an upper bound on $t(K')$. In particular 
part (1) of Theorem~\ref{thm:main} gives

\begin{cor}\label{cor:tunnel}
Let $K'$ be a hyperbolic knot in $S^3$ and suppose $K'(\gamma)$ has Heegaard 
genus 2 and does not contain an incompressible genus 2 surface, where 
$\Delta(\gamma,\mu)\ge 3$. Then the tunnel number of $K'$ is at most 2.
\end{cor}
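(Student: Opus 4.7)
The plan is to apply Theorem~\ref{thm:main} and observe that the corollary's hypothesis forces us into its first conclusion, from which the tunnel number bound is then immediate. Since $K'(\gamma)$ has Heegaard genus $2$, it admits a 2-sided genus $2$ Heegaard splitting; together with the assumptions that $K'$ is hyperbolic and $\Delta(\gamma,\mu)\ge 3$, all hypotheses of Theorem~\ref{thm:main} are satisfied, so one of its two conclusions must hold.

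Conclusion (2) of Theorem~\ref{thm:main} supplies a Dyck's surface $\widehat{S}\subset M$ for which the orientable genus $2$ surface $\hatF=\partial N(\widehat{S})$ is incompressible in $M$. But this is precisely an incompressible genus $2$ surface in $M$, contradicting the standing hypothesis of the corollary. Therefore conclusion (1) of Theorem~\ref{thm:main} must hold, and its final sentence directly gives $t(K')\le 2$.

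There is no real obstacle here: the work has all been done inside Theorem~\ref{thm:main}. What makes the reduction clean is that alternative (2) is phrased so as to explicitly produce an incompressible genus $2$ surface in $M$, so the corollary's hypothesis eliminates it at once. The reason alternative (1) yields the tunnel number bound---namely, that drilling a $0$- or $1$-bridge knot $K$ out of the Heegaard handlebodies (or, in the 1-sided case, out of the genus $2$ handlebody and the regular neighborhood of the Dyck's surface) produces a Heegaard splitting of $X=S^3\setminus N(K')$ of genus at most $3$---is the standard bridge-to-tunnel correspondence noted in the paragraph preceding the corollary. Thus the entire argument reduces to invoking Theorem~\ref{thm:main} and discarding case (2).
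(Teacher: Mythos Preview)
Your proof is correct and follows exactly the approach the paper takes: the corollary is stated immediately after the sentence ``In particular part (1) of Theorem~\ref{thm:main} gives,'' so the paper's intended argument is precisely to observe that the hypothesis excluding an incompressible genus $2$ surface rules out conclusion (2), leaving conclusion (1) and its built-in tunnel number bound.
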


Corollary~\ref{cor:tunnel} is sharp: 
there exist hyperbolic tunnel number 2 knots 
$K'$ having non-Haken Dehn surgeries $K'(\gamma)$ of Heegaard genus 2 with 
$\Delta(\gamma,\mu)$ arbitrarily large. To see this, let $K'$ be a knot that 
lies on a standard genus 2 Heegaard surface in $S^3$, and let $\lambda$ be the 
(integral) slope on $\partial N(K')$ induced by the surface. Then for any 
$\gamma$ such that $\Delta(\gamma,\lambda) = 1$, $K'(\gamma)$ has a (2-sided) 
Heegaard splitting of genus 2. Note that the tunnel number of $K'$ is at most 
2; on the other hand one can arrange that it is 2, and that $K'(\gamma)$ is 
non-Haken. Explicit examples are provided by the pretzel knots $K' = P(p,q,r)$ 
where $|p|,|q|,|r|$ are distinct odd integers greater than 1. Such a knot $K'$ 
lies on the standard genus 2 surface in $S^3$, with $\lambda$ the canonical 
longitude (slope 0). Hence $K'(\gamma)$ has a genus 2 Heegaard splitting 
for all 
$\gamma$ of the form $1/n$ (with the usual parametrization of slopes for 
knots). Note that $\Delta(\gamma,\mu) = |n|$ can be arbitrarily large. By 
\cite{trotter}, $K'$ is non-invertible, 
and therefore does not have tunnel number 1. 
The double branched cover of $K'$ is a Seifert fiber space over $S^2$ 
with three 
exceptional fibers, which does not contain an incompressible surface, and hence 
by \cite{GLi2} $S^3 - K'$ contains no closed essential surface. It 
follows that $K'$ is hyperbolic. It also follows that if $K'(\gamma)$ is Haken 
then $\gamma$ is a boundary slope. Since any knot has only finitely many 
boundary slopes \cite{hatcher}, $K'(1/n)$ will be non-Haken for all 
but finitely many 
values of $n$. (Other pretzel knots provide similar examples, using 
\cite{msy} to ensure that they have tunnel number 2.)

One reason we are interested in the genus 2 case is that this includes the 
situation where $M$ is a Seifert fiber space over $S^2$ with three exceptional 
fibers. Here it is expected that (when $K'$ is hyperbolic) $\Delta = 1$, 
although to date the best known upper bound is 8 \cite{lackenbymeyerhoff}.  The techniques of this article ought to enable further restrictions on non-integral, Seifert fibered surgeries on hyperbolic knots in $S^3$.  We will explore this elsewhere.


We derived the bound on the tunnel number $t(K')$ from the bound on the 
bridge number $br(K)$ in $K'(\gamma)$ given in Theorem 2.4. 
We point out that the latter bound is 
 stronger: for example 
for any $t \ge 1$ there are knots in $S^3$ with tunnel number $t$ whose 
bridge number with respect to the genus $t$ splitting of $S^3$ is 
arbitrarily high \cite{mms}.  
Also, although Teragaito's family of knots \cite{teragaito} mentioned above have tunnel number $2$, we show that the set of their bridge numbers with respect to any genus $2$ Heegaard splitting of the small Seifert fiber space is unbounded \cite{bgl:teragaitobridge}.  
At any rate, the bound on bridge number in Theorem 
2.4 allows us to use a result of Tomova \cite{tomova} to get a statement about 
the distance of splittings of exteriors of knots with genus 2 Dehn 
surgeries. If $S$ is a Heegaard surface for some 3-manifold, we denote by 
$d(S)$ the {\it (Hempel) distance} of the corresponding splitting; see \cite{hempel}.

\begin{cor}\label{cor:distance} 
Let $K'$ be a hyperbolic knot in $S^3$ whose exterior has a 
Heegaard splitting $S$ with $d(S) > 6$. Let $\gamma$ be a slope with 
$\Delta(\gamma,\mu)\ge 3$, where $\mu$ is a meridian of $K'$,
and suppose the manifold $K'(\gamma)$ does not 
contain a Dyck's surface and has Heegaard genus 2. Then $S$ has 
genus 2.
\end{cor}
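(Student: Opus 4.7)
The plan is to combine Theorem~\ref{thm:main} with Tomova's distance inequality for Heegaard splittings of knot exteriors \cite{tomova}.

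Because $M = K'(\gamma)$ contains no Dyck's surface, conclusion (2) of Theorem~\ref{thm:main} is ruled out and conclusion (1) must apply; moreover, the splitting it produces is necessarily 2-sided, since a 1-sided one would furnish a Dyck's surface in $M$. Thus there is an orientable genus $2$ Heegaard surface $\widehat{F}$ of $M$ with respect to which the dual knot $K$ is $b$-bridge for some $b \in \{0,1\}$, so $|K \cap \widehat{F}| = 2b \leq 2$ and $\chi(\widehat{F}\setminus K) \geq -4$. Tubing $\widehat{F}$ along the bridge arcs yields a Heegaard surface $\widetilde{F}$ of $X = M \setminus N(K) = S^3 \setminus N(K')$ of genus $2+b \leq 3$.

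I would then apply Tomova's theorem to the bridge surface $\widehat{F}$ of $(M,K)$ and the Heegaard surface $S$ of $X$: the bound $d(S) \leq 2 - \chi(\widehat{F}\setminus K) \leq 6$ must hold unless $S$ and $\widetilde{F}$ are stably equivalent as Heegaard splittings of $X$. Since $d(S) > 6$ by hypothesis, the alternative must hold; and since $d(S) > 2$ forces $S$ to be strongly irreducible, $S$ admits no nontrivial destabilization, so $\widetilde{F}$ is itself a stabilization of $S$, giving $g(S) \leq g(\widetilde{F}) \leq 3$.

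To conclude $g(S) = 2$, note that $g(S) \geq g(X) \geq 2$ since $K'$ is nontrivial. In the $0$-bridge subcase, $g(\widetilde{F}) = 2$, and the chain $2 \leq g(S) \leq g(\widetilde{F}) = 2$ gives $g(S) = 2$. In the $1$-bridge subcase, $g(\widetilde{F}) = 3$; here the tube introduced by the bridge arc can be cancelled against a compressing disk on the opposite side of $\widetilde{F}$ (since each bridge arc is boundary-parallel in its handlebody of $M$), so $\widetilde{F}$ destabilizes to a genus $2$ Heegaard surface of $X$, and $S$ must coincide with this destabilization. The main obstacle I expect is the destabilization step in the $1$-bridge subcase, which requires verifying that the cancelling disk is in place given how the bridge arc sits relative to the genus $2$ handlebodies of $M$; the rest of the argument is a direct application of Tomova's inequality to the bridge surface furnished by Theorem~\ref{thm:main}.
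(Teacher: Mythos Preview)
Your overall strategy—apply Theorem~\ref{thm:main} to put $K$ at most $1$-bridge with respect to a genus $2$ Heegaard surface $\widehat{F}$ of $M$, then invoke Tomova—is exactly the paper's. The gap is in how you state Tomova's alternative. The correct conclusion of \cite{tomova} is that \emph{in the filled manifold} $M = K'(\gamma)$ the bridge surface $\widehat{F}$ is isotopic to a (possibly trivial) stabilization of $S$; since $g(\widehat{F}) = 2$ this gives $g(S) \leq 2$ at once, and together with $g(S) \geq g(X) \geq 2$ the corollary follows in one line—no tubing, no case split, no destabilization.

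By recasting the alternative as ``$S$ and $\widetilde{F}$ are stably equivalent in $X$'' you lose a unit of genus and are forced into the destabilization step, which is not merely an obstacle but actually false in general. If the tubed genus $3$ surface $\widetilde{F}$ always destabilized in $X$, then every knot that is $1$-bridge with respect to a genus $2$ splitting of some $K'(\gamma)$ would have tunnel number $1$; but Theorem~\ref{thm:main} only gives tunnel number $\leq 2$, and the pretzel knots $P(p,q,r)$ discussed after Corollary~\ref{cor:tunnel} have tunnel number exactly $2$ (so perturbing them to $1$-bridge and tubing yields a genus $3$ splitting of $X$ that cannot destabilize). Concretely, a disk in the compression body $W$ cancelling the bridge-disk meridian of the tube would have to be a meridian of that tube from the $W$ side, but any such meridian meets $K$ and so does not survive in $X$; the compressing disks of $\widetilde{F}$ actually available in $W$ come from meridians of $H_2$ missing the other bridge arc, and there is no reason one of these meets your bridge-disk curve exactly once.
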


Thus the distance of a splitting of a knot exterior is putting a limit on 
the degeneration of Heegaard genus under Dehn filling. For instance, this 
applies to the examples of \cite{mms}. First note that the condition that 
$K'(\gamma)$ not contain a Dyck's surface (or indeed any closed 
non-orientable surface) can be easily ensured by taking $\gamma = p/q$ 
with $p$ odd. Now by \cite{mms}, for any $g \ge 3$, there are knots $K'$ in 
$S^3$ whose exteriors have genus $g$ Heegaard splittings $S$ with $d(S) > 
6$, in fact with $d(S)$ arbitrarily large (such knots are necessarily 
hyperbolic). Corollary 1.2 says that for such a knot $K'$, if $q \ge 3$ and 
$p$ is odd, $K'(p/q)$ does not have Heegaard genus 2.

\begin{proof} {\em (Corollary~\ref{cor:distance})}
Let $K', {\gamma}, S$ be as in the hypothesis. By 
Theorem 2.4, the bridge number of $K$ with respect to some genus 2 
Heegaard surface $\hatF$ of $K'(\gamma)$ is at most 1. Thus $K$ can be put 
in bridge position with respect to $\hatF$ so that $2 - {\chi}(\hatF - K) 
= 2 - (-2 - 2) = 6$. Since $d(S) > 6$ by assumption, the main result of 
\cite{tomova} implies that, in $K'(\gamma)$, $\hatF$ is isotopic to a 
stabilization of $S$. Hence $S$ has genus 2 (and $\hatF$ is isotopic to 
$S$ in $K'(\gamma)$).                                                  
\end{proof}

In the course of proving Theorem~\ref{thm:main}, 
we consider Dehn surgeries that produce Dyck's surfaces, leading to conclusion
(2) of that theorem. 
If a knot $K'$ in $S^3$ has a maximal Euler characteristic spanning surface $S$ with $\chi(S)=-1$ (so that $K'$ has genus $1$ or cross cap number $2$) then surgery on $K'$ along a slope $\gamma$ of distance $2$ from $\bdry S$ produces a manifold with Dyck's surface embedded in it.  There is a \mobius band embedded in the surgery solid torus whose boundary coincides with $\bdry S$ so that together they form an embedded Dyck's surface $\tilde{S}$.  The core of the surgery solid torus is the core of the \mobius band, and hence the surgered knot lies as a simple closed curve on $\tilde{S}$.  Furthermore, such a surgery slope $\gamma$ may be chosen so that it has any desired odd distance $\Delta=\Delta(\gamma,\mu)$ from the meridian $\mu$ of  $K'$. Any knot with (Seifert) genus more than $1$
and crosscap number $3$ has an integral surgery containing a Dyck's surface
that does not come from this construction, and there are many such hyperbolic
knots, the smallest being $6_3$ (see e.g.\ the tables \cite{knotinfo}). However,
we conjecture that this is the only way a
Dyck's surface arises from a non-integral (i.e.\ $\Delta > 1$)
Dehn surgery on a hyperbolic knot:

\begin{dyckscon}
Let $K'$ be a hyperbolic knot in $S^3$ and assume that $K'(\gamma)$ contains an
embedded Dyck's surface. If $\Delta(\gamma,\mu)>1$, where $\mu$ is 
a meridian of $K'$, then there is an embedded Dyck's surface, $\widehat{S}
\subset K'(\gamma)$, such that the core of the attached solid torus in 
$K'(\gamma)$ can be isotoped to an orientation-reversing curve in $\widehat{S}$.
In particular, $K'$ has a spanning surface with Euler 
characteristic $-1$.
\end{dyckscon}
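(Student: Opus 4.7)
My plan is to take an embedded Dyck's surface $\widehat{S}_0 \subset M = K'(\gamma)$ and put it in general position with the surgery solid torus $V_K$ whose core is $K$, then analyze the pieces of $\widehat{S}_0 \cap V_K$. After routine cleanup --- compressing disks that are inessential in $V_K$, removing $\bdry$-parallel components, and pushing essential annuli (necessarily parallel to $\bdry V_K$) out of $V_K$ --- the intersection $\widehat{S}_0 \cap V_K$ consists of meridian disks of $V_K$ together with possibly some \mobius bands. The empty-intersection case is excluded since $X \subset S^3$ admits no closed embedded non-orientable surface.

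The easy case is when $\widehat{S}_0 \cap V_K$ reduces to a single \mobius band $B$. The core of $B$ is then isotopic in $V_K$ to $K$, so $K$ can be isotoped onto $\widehat{S}_0$ as an orientation-reversing curve and $\widehat{S} := \widehat{S}_0$ satisfies the conjecture. Moreover $F := \widehat{S}_0 \setminus \Int(B)$ lies in $X$, has $\chi(F) = -1$, and a single boundary curve of slope $\delta$ on $\bdry X$ with $\Delta(\delta,\gamma) = 2$. For $\Delta(\gamma,\mu) \ge 3$ this forces $\delta \ne \mu$ automatically; for $\Delta(\gamma,\mu) = 2$, the possibility $\delta = \mu$ would give, after capping $F$ by a meridian disk of $K'$, a closed Euler-characteristic-$0$ surface in $S^3$; this must be a torus since $S^3$ contains no Klein bottle, forcing $K'$ to be a torus knot and contradicting hyperbolicity. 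Hence $F$ is a spanning surface of $K'$ with $\chi(F) = -1$.

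The hard case is when $\widehat{S}_0 \cap V_K$ consists of $n \ge 1$ meridian disks and no \mobius bands, so $F := \widehat{S}_0 \cap X$ is a non-orientable surface in $X$ with $n$ boundary components of slope $\gamma$ and $\chi(F) = -1-n$. My approach would be a Culler--Gordon--Luecke--Shalen-style combinatorial reduction: first compress and $\bdry$-compress $F$ in $X$ to make it essential, tracking each move on $\widehat{S}_0$ (possibly by simultaneously tubing across $V_K$) to preserve a closed non-orientable parent surface in $M$; then fix a minimal-genus Seifert surface $Q$ of $K'$ and analyze the intersection graph $G = F \cap Q$ on $Q$. The hypothesis $\Delta(\gamma,\mu) \ge 2$ constrains the label sequences around faces of $G$, and together with the non-orientability of $F$ these constraints should yield either a $\bdry$-compression strictly reducing $n$, or a sub-configuration that realizes a \mobius band in $V_K$, placing us back in the easy case.

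The main obstacle is precisely this combinatorial reduction. Standard CGLS machinery is tailored to orientable surfaces, so extracting a \mobius band from the non-orientable intersection pattern of $F$ --- and translating simplifications of $F$ back into moves that preserve the global Dyck's surface structure in $M$ --- requires nontrivial new bookkeeping. A plausible alternative is to pass to the $\Z/2$-cover $\widetilde{M} \to M$ associated to the non-orientable class of $\widehat{S}_0$, in which $\widehat{S}_0$ lifts to a closed orientable genus $2$ surface; this may allow arguments closer to those of Theorem~\ref{thm:main} to be deployed upstairs, although the surgery description and the distance hypothesis on $\gamma$ must then be transferred carefully to $\widetilde{M}$.
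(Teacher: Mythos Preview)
The statement you are attempting to prove is Conjecture~\ref{con:dyckssurface}, which the paper explicitly leaves \emph{open}. The paper proves only the weaker Theorem~\ref{thm:3rp2s}: under the same hypotheses there is an embedded Dyck's surface meeting $K$ transversely in a single point. Passing from $|K \cap \widehat{S}| = 1$ to ``$K$ lies on $\widehat{S}$ as an orientation-reversing curve'' is exactly the unproved part of the conjecture, so there is no proof in the paper to compare your proposal against.

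Your ``hard case'' is this open gap, and you do not resolve it. The paper's proof of Theorem~\ref{thm:3rp2s} already carries out a combinatorial reduction of the kind you sketch, but using the orientable genus~$2$ cover $\widehat{T} = \bdry\nbhd(\widehat{S})$ together with a thin-position level $2$-sphere $\widehat{Q}$ in $S^3$ rather than a Seifert surface; the Gordon--Luecke style graph arguments there drive $t = |K \cap \widehat{T}|$ down to $2$, equivalently $|K \cap \widehat{S}| = 1$, and then stop. It is not clear that replacing $\widehat{Q}$ by a Seifert surface buys anything: the hypothesis $\Delta(\gamma,\mu) > 1$ concerns the meridian, which a planar level surface sees directly, whereas a Seifert surface has boundary slope $0$ and $\Delta(\gamma,0)$ carries no a priori bound.

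Two smaller points. First, after your cleanup the pieces in $V_K$ cannot be a mixture of meridian disks and \mobius bands: all essential curves of $\widehat{S}_0 \cap \bdry V_K$ share a common slope on the torus $\bdry V_K$, so one gets either meridian disks (slope~$\gamma$) or one-sided surfaces (slope at even distance from~$\gamma$), not both. Second, even your ``easy case'' does not establish the ``in particular'' clause. You show only $\Delta(\delta,\gamma) = 2$, while a spanning surface requires $\Delta(\delta,\mu) = 1$. Homology of $X$ forces $\Delta(\delta,\mu)$ to be odd, but not equal to~$1$; that extra assertion is part of what is being conjectured, not a formal consequence of the first sentence.
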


In section~\ref{sec:dycks}, we prove the following, which goes a long way 
towards verifying this conjecture.

\begin{dycksthm}
Let $K'$ be a hyperbolic knot in $S^3$ and assume that $M=K'(\gamma)$ 
contains an 
embedded Dyck's surface. 
If $\Delta(\gamma,\mu) > 1$, where $\mu$ is a meridian of $K'$, 
then there is an embedded Dyck's surface in $M$ that 
intersects the core of the attached solid torus in $M$ transversely once. 
\end{dycksthm}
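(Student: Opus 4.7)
The plan is to start from an arbitrary embedded Dyck's surface $\widehat{S} \subset M$ and modify it, staying in the class of Dyck's surfaces in $M$, until it meets the core $K$ of the surgery solid torus transversely in a single point. Write $V = N(K)$, $T = \bdry V$, and $X = M \setminus \Int V$, identified with the hyperbolic exterior of $K'$ in $S^3$. Isotope $\widehat{S}$ to meet $T$ transversely, and choose a representative minimizing $|\widehat{S} \cap T|$, possibly replacing $\widehat{S}$ by a simpler Dyck's surface produced along the way.

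First I would argue $\widehat{S} \cap T \neq \emptyset$. If it were empty, $\widehat{S}$ would be a closed non-orientable surface in $V$ or in $X$, hence one-sided (the ambient is orientable) and representing a non-zero class in $H_2(\,\cdot\,;\Z/2)$. But both these groups vanish: $V$ is a solid torus, and $X$ is a knot exterior in $S^3$, for which Poincar\'e--Lefschetz duality gives $H_2(X;\Z/2) \cong H^1(X,\bdry X;\Z/2) = 0$. Hence $\widehat{S}$ must meet $T$. By the minimality, every circle of $\widehat{S} \cap T$ is essential on $T$, and disjoint essential curves on the torus share a common slope $\alpha$.

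Next I would analyze $F = \widehat{S} \cap V$ and $S = \widehat{S} \cap X$. By compressions of $\widehat{S}$ inside $V$ (each of which either strictly reduces $|\widehat{S} \cap T|$ or splits off a $2$-sphere that can be capped off, keeping the resulting closed non-orientable surface a Dyck's surface) one may further arrange that every component of $F$ is essential in $V$. The essential surfaces in a solid torus are meridian disks, $\bdry$-parallel annuli, and \mobius bands. A $\bdry$-parallel annulus could be pushed across its parallelism into $X$ to strictly decrease $|\widehat{S} \cap T|$, contradicting minimality. So each component of $F$ is a meridian disk or a \mobius band, and since all boundary circles share slope $\alpha$ the components are either all meridian disks ($\alpha = \gamma$) or all \mobius bands ($\alpha = 2\lambda_V + \gamma$, where $\lambda_V$ is a longitude of $V$). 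In either case each component of $F$ meets $K$ transversely once, so $|\widehat{S} \cap K|$ equals the number of components of $F$.

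The remaining and main step is to prove that $F$ has exactly one component. Suppose for contradiction there are $n \ge 2$ components; then $S$ is a non-orientable surface in the hyperbolic exterior $X$ with $n$ parallel boundary circles of slope $\alpha$ on $T$. I would invoke the hyperbolicity of $K'$ (so that $X$ contains no essential annulus or torus) together with $\Delta(\gamma,\mu) \ge 2$, and compare $S$ against an auxiliary essential surface in $X$, such as a minimal-genus Seifert surface of $K'$ or a surface coming from the genus-$2$ Heegaard machinery developed in the body of the paper. A Culler--Gordon--Luecke--Shalen style graph-of-intersection argument, in which $\Delta \ge 2$ supplies the crucial parity/counting inequality and hyperbolicity rules out competing essential pieces, should then produce either a direct contradiction or a cut-and-paste move that reduces $n$ while keeping $\widehat{S}$ a Dyck's surface, contradicting the minimal choice. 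The hard part is precisely this last step: Steps~1 and~2 are routine topological simplifications in the solid torus combined with a homological obstruction, but ruling out $n \ge 2$ genuinely uses both hyperbolicity and the distance hypothesis, and the combinatorial bookkeeping needed to extract the contradiction is the technical heart of the argument.
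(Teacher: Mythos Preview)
Your setup in Steps~1--3 is roughly right but has a minor slip: a Dyck's surface in $M$ is automatically incompressible, since a compression would produce an embedded Klein bottle or $\RP^2$, which is impossible when $\Delta>1$. So your compression argument in Step~3 is both unnecessary and, as stated, wrong---compressing a Dyck's surface does not yield another Dyck's surface. Also, since $M$ contains a closed non-orientable surface, $\Delta$ must be odd; hence $\Delta\ge 3$, and it is this sharper bound (not $\Delta\ge 2$) that powers the combinatorics.

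The genuine gap is Step~4, which, as you yourself note, is the whole theorem. Your proposal to compare $S$ against a Seifert surface or unspecified ``Heegaard machinery'' does not identify the right second surface: the paper uses a thin-position level \emph{sphere} $\hatQ$ for $K'$ in $S^3$, and the planarity of $Q=\hatQ\setminus\nbhd(K')$ is what makes the combinatorics go. Moreover the paper does not work directly with $\widehat{S}$ but with the orientable genus-$2$ surface $\hatT=\bdry\nbhd(\widehat{S})$, forming intersection graphs $G_Q\subset\hatQ$ and $G_T\subset\hatT$ with $t=|\bdry T|=2|\widehat{S}\cap K|$. The argument is then a specific chain of lemmas adapted from \cite{gl:dsokcetI}: $G_Q$ contains a great $2$-web (this uses $\Delta\ge 3$ and genus $2$); if $t\ge 4$ no Scharlemann cycle bounds a White face; at most three distinct \SC\ label pairs can occur; if $t\ge 6$ there is no $1$-\ESC; and $t\not\equiv 0\pmod 4$ (else tubing $\widehat{S}$ along $K$ gives a closed non-orientable surface in $S^3$). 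These force $t\le 6$, and a further special-vertex analysis at $\Delta=3$ rules out $t=6$, leaving $t=2$. None of these key ingredients---the planar comparison surface, the passage to the orientable double $\hatT$, the great-web and Scharlemann-cycle lemmas, or the mod-$4$ obstruction---appear in your outline, and a Seifert-surface comparison would not obviously supply substitutes for them.
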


Conjecture~\ref{con:dyckssurface} fits in well with earlier 
results on small surfaces
in Dehn surgery on a knot in the 3-sphere. When $\Delta \geq 2$, $M$ cannot 
contain an essential sphere (\cite{gl:oidscyrm}), an embedded projective plane (\cite{gl:oidscyrm}, \cite{cgls:dsok}), or an embedded Klein bottle (\cite{gl:dsokcetII}). When $\Delta \geq 3$
(as in fact must be the case when $M$ contains an embedded, closed, 
non-orientable surface and $\Delta > 1$),  
$M$ cannot contain an essential torus
(\cite{gl:dsokcetI}).

\subsection{Sketch of the argument for Theorem~\ref{thm:main}}

The idea of the proof of Theorem~\ref{thm:main} is as follows. Assume 
$M=K'(\gamma)$
has a 2-sided, genus $2$ Heegaard splitting. Assume $K$ has the smallest 
bridge number with respect to this splitting, among all $2$-sided, genus $2$ 
splittings of $M$. The typical situation is when this bridge position of 
$K$ is also
a thin position of $K$ with respect to this splitting (see 
section~\ref{sec:basics}). This thin presentation of $K$ in $M$ and
one of $K'$ in $S^3$ allow us to find a genus $2$ 
Heegaard surface $\hatF$ of the splitting of $M$ and a genus $0$ Heegaard 
surface $\hatQ$ of 
$S^3$ such that $F=\hatF - \nbhd(K)$ and $Q=\hatQ - \nbhd(K')$ intersect
essentially. The arcs of $F \cap Q$ form graphs $G_F,G_Q$ on $\hatF,\hatQ$.
Then $t=|K \cap \hatF|$ is twice the bridge number of $K$ in $M$.  
We show that $t \leq 2$,
thereby implying that $K$ is $0$-bridge or $1$-bridge with respect to this
splitting. We do this typically by showing that if $t >2$ then we
can thin the presentation (i.e.\ find one with smaller bridge number)
with respect to some genus $2$ Heegaard splitting in $M$.
To find such ``thinnings'' of $K$, we show that $G_Q$ has
a special subgraph, $\Lambda$, called a 
great 2-web (section~\ref{sec:greatwebs}). Disk faces of $\Lambda$ are
thought of as disks properly embedded $M - \nbhd(K \cup \hatF)$ (at least
when there are no simple closed curves of $F \cap Q$). 
Within $\Lambda$ we look for configurations of small faces that can be
used to locate $K$ in its bridge presentation with respect to $\hatF$.
For example, a configuration called an ``extended Scharlemann cycle'' 
(an \ESC, see Figure~\ref{fig:basicscharlemanncycles}) leads to a ``long \mobius
band'' (Figure~\ref{fig:basicextendedmobius}), which, when long enough,
leads to an essential torus in $M$ (which does not happen since $\Delta \geq
3$) or to a thinning of $K$ (e.g.\ Lemmas~\ref{lem:LMB} and ~\ref{lem:3PCC}).
For $t \leq 6$, configurations of bigons and trigons at ``special 
vertices'' of $\Lambda$ (section~\ref{sec:specialvertices}) are often used 
to construct a new Heegaard splitting of $M$ with respect to which 
$K$ has smaller bridge number.

As a note to the reader, the generic argument (showing that $K$ is at most
4-bridge with respect to some genus $2$ splitting of $M$) is given in 
sections~\ref{sec:setup}, ~\ref{sec:escandlongmb}, ~\ref{sec:greatwebs}, 
~\ref{sec:abundancebigons}, 
~\ref{sec:escbounds}, and ~\ref{sec:t<10}. The arguments get more complicated
as the supposed bridge number of $K$ in $M$ gets smaller. In particular,
almost half of the current paper is from section~\ref{sec:t4noscc} on,
showing that the minimal bridge number of $K$ is not $2$ (i.e.\ $t \neq 4$). 

\subsection{Notation}

By $\nbhd(\cdot)$ we denote a regular open neighborhood or its subsequent closure as the situation dictates.

Let $Y$ be a subset of the manifold $X$, typically a properly embedded submanifold (such as an arc or loop in a surface or a surface or handlebody in a $3$-manifold).  By $X \cut Y$ we denote $X$ {\em chopped} or {\em cut} along $Y$.  That is, $X\cut Y$ may be viewed as either $X - \Int \nbhd(Y)$ or the closure of $X-Y$ in the path metric.  

For $Y$ a connected codimension 1 properly embedded submanifold of $X$, any newly created maximal connected submanifold of the boundary of $X \cut Y$ is an {\em impression} of $Y$.  In other words, an impression of $Y$ is a component of the closure of $\bdry (X\cut Y) - \bdry X$.  Note that the impressions of $Y$ form a double cover of $Y$.  Suitably identifying $\bdry (X \cut Y)$ along them will reconstitute $X$ with $Y$ inside.  Alternatively $X$ with $Y$ may be reconstituted by suitably attaching $\nbhd(Y)$ to $X \cut Y$.

\subsection{Acknowledgements}
In the course of this work KB was partially supported by NSF Grant DMS-0239600, by the University of Miami 2011 Provost Research Award, and by a grant from the Simons Foundation (\#209184 to Kenneth Baker).  KB would also like to thank the Department of Mathematics at the University of Texas at Austin for its hospitality during his visits.  These visits were supported in part by NSF RTG Grant DMS-0636643.


\section{Thin-bridge position, $G_Q, G_F$, and the proof of Theorem~\ref{thm:main}} \label{sec:setup}

\subsection{Heegaard splittings, thin position, and bridge position}\label{sec:basics}
Given a (2-sided) Heegaard surface $\Sigma$ of a closed $3$-manifold $Y$ there is a product $\Sigma \times \R \subset Y$ so that $\Sigma = \Sigma \times \{0\}$ and the complement of the product is the union of spines for each of the 
two handlebodies.  This defines a height function on the complement of spines for each of the handlebodies.  Consider all the circles $C$ embedded in the product that are Morse with respect to the height function and represent the knot type of $J$.  The following terms are all understood to be taken with respect to the Heegaard splitting.

Following \cite{gabai:fatto3mIII} (see also \cite{thompson:tpabnfkit3s}), the {\em width} of an embedded circle $C$ is the sum of the number of intersections $|C \cap \Sigma \times \{y_i\}|$ where one regular value $y_i$ is chosen between each pair of consecutive critical values.  The {\em width} of a knot $J$ is the minimum width of all such embeddings.   However, if $J$ can be isotoped to a curve embedded in a level surface $\Sigma \times \{y\}$, we define such an embedding as having width $0$.  An embedding realizing the width of $J$ is a {\em thin position} of $J$, and $J$ is said to be {\em thin}.  If the critical point immediately below $y_i$ is a minimum and the critical point immediately above $y_i$ is a maximum, then the level $\Sigma \times \{y_i\}$ is a {\em thick level}.

The minimal number of maxima among Morse embeddings of $C$ is the 
{\em bridge number} of $J$, and denoted $br(J)$.  
An embedding realizing the bridge number of $J$ 
may be ambient isotoped so that all maxima lie above all 
minima, without introducing any more extrema.  
The resulting embedding is a {\em bridge position} of $J$, and 
$J$ is said to be {\em bridge}.
If $J$ can be isotoped into a  level surface $\Sigma \times \{y\}$,
	    we define such an embedding as having bridge number $0$.

With $J$ in bridge position, the arcs of $J$ intersecting a Heegaard 
handlebody are collectively $\bdry$-parallel.  There is an embedded collection of disks in the handlebody such that the boundary of each is formed of one arc on $\Sigma$ and one arc on $J$.  A single such disk is called a {\em bridge disk} for that arc of $J$, and the arc is said to be {\em bridge}.

A thin position for a knot may have smaller width than that of its 
bridge position,
with respect to the same Heegaard splitting. That is, thin position may not
be bridge position.  
However, this only happens when the meridian of the knot in the ambient 
manifold is a boundary slope of the knot
exterior.

\begin{defn}\label{def:2bdryslope}
Let $E$ be an orientable 3-manifold with a single torus boundary. Let 
$\gamma$ be the isotopy class of a non-trivial curve on $\partial E$. 
Then $\gamma$ is said to be a {\em boundary slope} for $E$ if there
is an incompressible, $\partial$-incompressible, orientable surface, $P$, 
properly embedded in $E$ with non-empty boundary, 
such that each component of $\partial P$
is in isotopy class $\gamma$. $\gamma$ is said to be a {\em $g$-boundary slope}
if there is such a surface $P$ with genus at most $g$.
\end{defn}

\begin{lemma}~\label{lem:thin=bridge}
Assume $J$ is a knot in a 3-manifold $M$. If $J$ has a thin position which 
is not a bridge position with
respect to a genus $g$ Heegaard splitting of $M$, then the meridian of $J$ 
is a $g$-boundary
slope for the exterior of $J$.
\end{lemma}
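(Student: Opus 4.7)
My plan is to extract the desired surface directly from a thin level of the given thin position. First, since a bridge presentation has every maximum of $h|_J$ above every minimum, the failure of bridge position forces the existence of at least one \emph{thin} regular level $\Sigma_i = \Sigma \times \{y_i\}$ --- one with a maximum immediately below and a minimum immediately above. I would let $E$ denote the exterior of $J$ in $M$ and set $P = \Sigma_i \cap E$. Then $P$ is a properly embedded orientable surface in $E$, obtained from $\Sigma_i$ by removing an open disk at each puncture $\Sigma_i \cap J$, so $P$ has genus $g$ and nonempty boundary, with every component of $\partial P$ a meridional curve of $J$ on $\partial E$.

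The main task is then to argue that $P$ is incompressible and $\partial$-incompressible in $E$; once this is done, $P$ witnesses the meridian $\mu$ as a $g$-boundary slope of $E$ in the sense of Definition~\ref{def:2bdryslope}. This is the classical content of thin position~\cite{gabai:fatto3mIII, thompson:tpabnfkit3s}, adapted to the Heegaard setting by Rieck~\cite{rieck}. The standard move is a cancellation argument: if $D$ is a compressing disk for $P$ in $E$, then $D$ lies on one side of $\Sigma_i$, and one can use $D$ to guide an isotopy that pushes one of the two extrema flanking the thin level (the one on the opposite side of $\Sigma_i$ from $D$) across $\Sigma_i$, thereby cancelling a max/min pair and strictly lowering the width of $J$. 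A $\partial$-compressing disk $D$, whose outer arc on $\partial E \setminus \partial P$ connects two meridional punctures of $\Sigma_i$, is handled in the same spirit and likewise produces a width-reducing isotopy (the outer arc together with a subarc of $J$ bounds a bridge-disk--like region that one uses to slide $J$). Either conclusion contradicts the assumption that the given embedding realizes the width of $J$.

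The step I expect to require the most care is the precise cancellation bookkeeping: a priori the compressing disk $D$ need not be disjoint from the subarcs of $J$ running between $\Sigma_i$ and the flanking extrema, so one must first choose $D$ innermost (minimizing $|D \cap J|$, for instance) and then verify that the resulting isotopy eliminates one maximum and one minimum without introducing any new critical points. Once this cancellation is in hand, the conclusion is immediate: the surface $P$ is essential in $E$, has genus at most $g$, and has every boundary component of slope $\mu$, so $\mu$ is a $g$-boundary slope for the exterior of $J$, as required.
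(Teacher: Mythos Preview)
Your identification of the thin level $\Sigma_i$ and the absence of bridge disks there is correct, and that is indeed the heart of the matter. But the claim that $P = \Sigma_i \cap E$ is \emph{incompressible} does not follow from thin position in the way you sketch. A compressing disk $D$ for $P$ has boundary a \emph{closed curve} on $\Sigma_i$ disjoint from $J$; the cancellation move you describe---pushing a flanking extremum across $\Sigma_i$ to cancel a max/min pair---is the bridge-disk argument, and it requires $\partial D$ to run along an arc of $J$. A genuine compressing disk gives you no grip on any particular arc of $J$, and (especially for $g>0$, where $\partial D$ may be essential on $\Sigma_i$ itself) there is no evident width-reducing isotopy that it produces. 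So the incompressibility of $P$ is not established by your argument.

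The paper's proof, following Thompson's in the $g=0$ case, repairs exactly this: rather than asserting that $P$ is essential, one first \emph{maximally compresses} $P$ in $E$. Each resulting component has genus at most $g$ and still-meridional boundary. A component with boundary that is not $\partial$-incompressible must, being already incompressible with boundary on a torus, be a boundary-parallel annulus; but such an annulus yields a bridge disk for an arc of $J$ onto the thin level $\Sigma_i$, contradicting what you correctly established. Hence some component with boundary survives as an incompressible, $\partial$-incompressible surface, and it is that component---not $P$ itself---that witnesses the meridian as a $g$-boundary slope.
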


\begin{proof}
This is proved in \cite{thompson:tpabnfkit3s} when $g=0$. 
The same proof works here.
We sketch it for the convenience of the reader. 

Let $\Sigma$ be the Heegaard surface of a genus $g$ splitting of $M$ with
respect to which $J$ is in thin position but not bridge position.
Then there must be a {\em thin level} --- a level surface $\Sigma \times \{y\}$ 
at a regular value of the height function
such that the first critical level below the surface is a maximum and the
first critical level above the surface is a minimum. There can be no bridge 
disks for $J$ to the thin level surface, else such a disk 
would give rise to a thinner
presentation of $J$. Maximally compress $(\Sigma \times \{y\}) - \nbhd(J)$
in the exterior of $J$. Either some component of the result is an 
incompressible, $\partial$-incompressible surface of genus at most $g$ whose
boundary components are meridians of $J$, or the result is a non-empty 
collection of
boundary parallel annuli along with some closed surfaces. But each boundary
parallel annulus gives rise to a bridge disk of $J$ onto $\Sigma \times \{y\}$,
which is not possible. Thus the meridian is a $g$-boundary slope for the 
exterior of $J$.
\end{proof}

We tend to consider the situation where thin position is not bridge 
position as non-generic. For example we have the following useful
result.

\begin{lemma}\label{lem:mnosfs}
Let $K'$ be a hyperbolic knot in $S^3$ with meridian $\mu$. 
Assume there is a Heegaard
splitting of $M=K'(\gamma)$ with respect to which the core of the attached
solid torus, $K$, has a thin position
which is not a bridge position. If $\Delta(\gamma,\mu) \geq 2$ then 
$M$ is not Seifert fibered.
\end{lemma}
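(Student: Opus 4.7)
The plan is to argue by contradiction. Suppose $M = K'(\gamma)$ is Seifert fibered. By Lemma~\ref{lem:thin=bridge}, the hypothesis produces an essential (incompressible, $\partial$-incompressible) orientable surface $P$ in $X = S^3 - \nbhd(K')$ with all boundary components of slope $\gamma$, which is the meridian of $K$ on $\partial \nbhd(K)$. Capping off $\partial P$ with meridian disks of the filling solid torus gives a closed orientable surface $\hatP \subset M$.

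The first step is to show that $\hatP$ remains essential in $M$. If $\hatP$ were compressible, then an outermost-arc argument applied to a compressing disk meeting the meridian disks of the filling torus would produce, back in $X$, either a compressing disk or a meridional $\partial$-compression of $P$. Since $P$ is essential, such a configuration would force $K'$ to admit a cabling-type structure in the sense of \cite{cgls:dsok}; hyperbolicity of $K'$ excludes this, so $\hatP$ is essential.

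Next, since $M$ is Seifert fibered and $\hatP$ is a closed essential orientable surface, the classification of essential surfaces in Seifert fibered spaces lets me isotope $\hatP$ to be either vertical (a saturated union of regular fibers, hence an essential torus) or horizontal (everywhere transverse to the fibration). In the vertical case, $M$ is toroidal, so by \cite{gl:dsokcetI} the distance $\Delta(\gamma,\mu) \le 2$; combined with the hypothesis this forces $\Delta = 2$, and I would invoke the classification of $\Delta=2$ toroidal surgeries on hyperbolic knots in $S^3$ to check that in each case the core $K$ admits a bridge position equal to its thin position, contradicting the hypothesis. In the horizontal case, $M$ fibers over $S^1$ with fiber $\hatP$; restricting the fibered or Seifert structure to $X = M - \nbhd(K)$ should yield a non-hyperbolic geometric structure on $X$, contradicting hyperbolicity of $K'$.

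The main obstacle is the horizontal case, since hyperbolic knot exteriors can themselves fiber over $S^1$, so the mere existence of a fibration of $M$ is not enough. Here I would exploit the fact that $P$ arises from a thin level surface in $M$ rather than from an arbitrary boundary-slope surface: the thin-level data constrains the intersection pattern of $K$ with $\hatP$ (boundary components of $P$ are meridians with controlled multiplicities) and thus restricts how $K$ can sit relative to the hypothetical horizontal fibration, which together with atoroidality of a small Seifert fibered filling of a hyperbolic knot exterior should furnish the desired contradiction.
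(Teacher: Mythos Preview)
The paper's proof is much shorter and avoids your case analysis entirely: assuming $M$ is Seifert fibered, Corollary~1.7 of \cite{boyerzhang} (or Theorem~1.1 of \cite{gl:nitds}) shows $M$ is non-Haken, since $K'$ is hyperbolic and $\Delta \geq 2$. Lemma~\ref{lem:thin=bridge} makes $\gamma$ a boundary slope for the exterior of $K$, and Theorem~2.0.3 of \cite{cgls:dsok} (using that $M$ is irreducible and $K(\mu)=S^3$ is non-Haken) then gives the contradiction directly.

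Your main gap is the claim that $\hatP$ is essential in $M$. Capping an essential surface along its boundary slope does \emph{not} in general give an incompressible closed surface --- this is precisely why Theorem~2.0.3 of \cite{cgls:dsok} has alternative conclusions (a closed essential surface already in the exterior, or a reducible filling) rather than simply asserting Haken-ness of the filling. Your outermost-arc sketch lets you remove \emph{arcs} of intersection between a compressing disk for $\hatP$ and the capping meridian disks, but the residual meridional \emph{circles} of intersection cannot be eliminated by $\partial$-incompressibility of $P$, and no cabling conclusion follows from them. Beyond this, your vertical case is both unnecessary (Boyer--Zhang already gives $M$ non-Haken, hence atoroidal) and unworkable as stated (it asks you to verify thin-equals-bridge across the infinite family of Eudave-Mu\~noz knots), and your horizontal case is, as you concede, unresolved.
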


\begin{proof}
Assume $M$ is Seifert fibered. By Corollary~1.7 of \cite{boyerzhang} or 
Theorem~1.1 of \cite{gl:nitds},
$M$ is non-Haken.  
Considering $K$ in $M$, Lemma~\ref{lem:thin=bridge} says that
$\gamma$ is a boundary slope for the exterior of $K$. But this contradicts
Theorem~2.0.3 of \cite{cgls:dsok} ($M$ is irreducible and $K(\mu)$ is
non-Haken).
\end{proof}

We now give the proof of the main theorem, which defines the graphs
$G_Q,G_F$ studied throughout the rest of the paper.  

\begin{thm}\label{thm:main}
Let $K'$ be a hyperbolic knot in $S^3$ and assume $M=K'(\gamma)$ has 
a 1- or 2-sided Heegaard splitting of genus $2$. Assume that 
$\Delta(\gamma,\mu) \geq 3$ where $\mu$ is
the meridian of $K'$. Denote by $K$ the core of the attached solid torus 
in $M$. Then either

\begin{enumerate} 

\item 
$K$ is $0$-bridge or $1$-bridge 
with respect to a $1$- or $2$-sided, genus $2$ Heegaard splitting of $M$. 
In this case, the tunnel number of $K'$ is at most two.
\end{enumerate}
or
\begin{enumerate}[resume]
\item $M$ contains a Dyck's surface, $\widehat{S}$, such that the orientable
genus $2$ surface $\hatF$ that is the boundary of a regular
neighborhood of $\widehat{S}$ is incompressible in $M$.
Furthermore, $K$ can either be isotoped onto $\widehat{S}$ as an 
orientation-reversing curve or can be isotoped to intersect $\widehat{S}$
once. In the latter case, the intersection of $\widehat{F}$ with the exterior
of $K$ gives a twice-punctured, incompressible, genus $2$ surface in that
exterior. 

\end{enumerate}

\end{thm}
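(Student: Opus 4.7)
The plan is to argue by contradiction, assuming conclusion (2) fails and deriving conclusion (1). Among all 1- or 2-sided genus 2 Heegaard splittings of $M$, pick one minimizing the bridge number $br(K)$, and let $\hatF$ be its associated surface (the genus 2 Heegaard surface when 2-sided, or $\bdry \nbhd(\widehat{S})$ for the Dyck's surface $\widehat{S}$ when 1-sided). Suppose for contradiction $br(K) \geq 2$. Put $K$ in thin position with respect to $\hatF$; generically this coincides with bridge position, while the degenerate case in which it does not forces $\gamma$ to be a $g$-boundary slope by Lemma~\ref{lem:thin=bridge} and must be handled separately (as in Lemma~\ref{lem:mnosfs}, using the constraints from \cite{cgls:dsok}). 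Simultaneously put $K'$ in bridge position in $S^3$ with respect to a Heegaard 2-sphere $\hatQ$, and in the common exterior $X = S^3 - \nbhd(K') = M - \nbhd(K)$ arrange that $F = \hatF \cap X$ and $Q = \hatQ \cap X$ meet transversely and essentially. The arcs of $F \cap Q$ yield labelled graphs $G_F \subset \hatF$ and $G_Q \subset \hatQ$. With $t = |K \cap \hatF| = 2\, br(K) \geq 4$, the goal is to force $t \leq 2$, a contradiction.

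The main engine, developed in Sections~\ref{sec:greatwebs}--\ref{sec:t<10}, is to extract from $G_Q$ a highly constrained subgraph $\Lambda$ called a \emph{great 2-web}, and to search within $\Lambda$ for special face configurations. The paradigmatic one is an extended Scharlemann cycle (an \ESC, see Figure~\ref{fig:basicscharlemanncycles}), whose disk faces assemble into a ``long \mobius band'' in $M$ (Figure~\ref{fig:basicextendedmobius}). A sufficiently long \mobius band yields either an essential torus in $M$---impossible when $\Delta \geq 3$ by \cite{gl:dsokcetI}---or a \mobius-band move on $\hatF$ that strictly decreases $br(K)$ (Lemmas~\ref{lem:LMB} and~\ref{lem:3PCC}), contradicting minimality. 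When no such \ESC\ exists, parallelism of bigons and trigons at ``special vertices'' of $\Lambda$ (Section~\ref{sec:specialvertices}) provides alternative Heegaard-modifying moves producing a new 1- or 2-sided genus 2 splitting with smaller $br(K)$. If in one of these branches one instead arrives at an incompressible Dyck's surface with $K$ lying on it as an orientation-reversing curve or meeting it once transversely, then conclusion (2) holds, contrary to assumption; thus every branch yields the desired contradiction.

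Two persistent complications must be managed. First, when $F \cap Q$ contains simple closed curves the faces of $G_Q$ need not be disks in $M - \nbhd(K \cup \hatF)$; these curves are eliminated (or their consequences reinterpreted) by innermost-loop and outermost-arc arguments that use the fact that $\hatQ$ separates $S^3$. Second, and the dominant obstacle---consuming roughly half the paper from Section~\ref{sec:t4noscc} onward---is the case $t = 4$, that is, ruling out $br(K) = 2$. At this small value of $t$ the combinatorial constraints from great 2-webs become too weak to force the generic reductions, and the universal \mobius-band arguments do not by themselves suffice. Instead one performs an exhaustive analysis of bigon/trigon configurations at special vertices of $\Lambda$, each yielding either a delicate Heegaard-splitting modification (of the type codified in Theorem~\ref{thm:changingHS}) that strictly decreases the bridge number, or the incompressible Dyck's surface of conclusion (2). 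Tracking which case arises is what forces the bifurcation between the two conclusions in the theorem statement.
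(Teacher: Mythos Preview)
Your outline captures the core machinery---the graphs $G_Q$ and $G_F$, great 2-webs, \ESCs\ and long \mobius bands, special vertices, and the protracted $t=4$ endgame---and is essentially the paper's approach. But the global organization differs in ways that matter. The paper does not minimize bridge number over 1- and 2-sided splittings together; instead it first disposes of the entire Dyck's-surface case (which subsumes all 1-sided splittings) via Corollary~\ref{cor:dyckstunnel2}, built on Theorem~\ref{thm:3rp2s}. Only afterward does it assume $M$ contains no Dyck's surface and minimize over \emph{2-sided} splittings alone; your attempt to fold 1-sided splittings into the minimization is not well-posed (thin/bridge position with respect to a 1-sided splitting is not the standard notion) and is unnecessary. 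Similarly, the thin-versus-bridge degeneracy is not dispatched by Lemmas~\ref{lem:thin=bridge}--\ref{lem:mnosfs} and boundary-slope constraints; it requires Theorem~\ref{thm:thinsurface}, whose own proof runs the full graph machinery on a \emph{thin} level surface to show that thin $\neq$ bridge forces a Dyck's surface in $M$, contradicting the standing assumption. Also, $K'$ must be placed in \emph{thin} position in $S^3$, not bridge position---that is what Theorem~6.2 of \cite{rieck} needs to produce the thick levels with essential arc intersections.

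Your treatment of simple closed curves of $F \cap Q$ is too vague. They are not eliminated; rather, the paper bifurcates the entire argument into \situationnscc\ and \situationscc. In the latter, an innermost such curve bounds a meridian disk $D$ of $H_B$ or $H_W$ disjoint from both $K$ and $Q$ (Corollary~\ref{cor:AEntscc}), and this $D$ is carried as a structural tool throughout---surgering $\hatF$ to $F^*$, constraining $G_F$, and so on. The two situations require genuinely separate arguments at nearly every value of $t$, and this dichotomy is one of the organizing principles of the proof.
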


\begin{remark}
In this proof and throughout the article, since $K'$ is hyperbolic and $\Delta \geq 3$, 
 $M$ cannot 
contain an essential sphere (\cite{gl:oidscyrm}), an embedded projective plane (\cite{gl:oidscyrm}, \cite{cgls:dsok}),  an embedded Klein bottle (\cite{gl:dsokcetII}), or an essential torus
(\cite{gl:dsokcetI}).
\end{remark}

\begin{proof}
Let $K'$ be a hyperbolic knot in $S^3$, and let $M=K'(\gamma)$.
Assume $\Delta = \Delta(\gamma,\mu) \geq 3$. Let $K$ be
the core of the attached solid torus in $M=K'(\gamma)$.

If $M$ contains an embedded Dyck's surface, then the theorem follows from 
Corollary~\ref{cor:dyckstunnel2}. This includes the case where $M$ has
a $1$-sided genus $2$ Heegaard splitting. We assume hereafter that
$M$ contains no embedded Dyck's surface.

Thus $M$ has $2$-sided, genus $2$ Heegaard splitting. Note that any such
splitting is irreducible, since $M$ is neither a lens space nor a connected
sum (\cite{cgls:dsok}, \cite{gl:oidscyrm}). Consequently, such a splitting
is also strongly irreducible (the disjoint disks can
be taken to be separating, hence to have isotopic boundaries).

Assume we have a genus $2$ Heegaard splitting of $M$ for which $K$ does not have bridge number $0$.
Take $K$ to be in bridge position. 
By Theorem~\ref{thm:thinsurface}, we may assume that $K$ is also in 
thin position with respect to this Heegaard splitting of
$M$. 
In $S^3$, put $K'$ into thin position with respect to the genus $0$ 
Heegaard splitting.  
By Theorem 6.2 of \cite{rieck} (by assumption $K,K'$ cannot be isotoped onto 
their Heegaard surfaces), there exist thick level surfaces,
$\hatF$ of $M$ and $\widehat{Q}$ of $S^3$ such that
\begin{itemize}
\item[{\bf (*)}] {\bf each arc of $F \cap Q$ is essential in each of 
$F = \hatF - \nbhd(K)$ and $Q = \widehat{Q}-\nbhd(K')$.}  
\end{itemize}
As the exterior of $K'$ is irreducible, after an isotopy we may assume:
\begin{itemize}
\item[{\bf (**)}] {\bf there are no simple closed curves of $F \cap Q$ 
trivial in
both $F$ and $Q$.}
\end{itemize}

On $\widehat{Q}$ and $\hatF$ form the {\em fat vertexed graphs of intersection} $G_Q$ and $G_F$, respectively, consisting of the {\em fat vertices} that are the disks $\overline{\nbhd(K')} \cap \widehat{Q}$ and $\overline{\nbhd(K)} \cap \hatF$ and {\em edges} that are the arcs of $F \cap Q$.

Choosing an orientation on $K \subset M$, we may number the intersections of $K$ with $\hatF$, and hence the vertices of $G_F$, from $1$ to $t=|K \cap \hatF|$ in order around $K$.  Similarly, if $|K' \cap \widehat{Q}| = u$, by choosing an orientation on $K' \subset S^3$ we may number the intersections of $K'$ with $\hatQ$ and hence the vertices of $G_Q$ from $1$ to $u$ in order around $K'$.

Each component of $\bdry F$ intersects each component of $\bdry Q$ a total of $\Delta$ times.  Thus a vertex of $G_Q$ has valence $\Delta t$ and a vertex of $G_F$ has valence $\Delta u$.  Since each component of $\bdry F \cap \bdry Q$ is an endpoint of an arc of $F \cap Q$, each endpoint of an edge in $G_Q$ may be labeled with the vertex of $G_F$ whose boundary contains the endpoint.  Thus around the boundary of each vertex of $G_Q$ the labels $\{1, \dots, t\}$ appear in order $\Delta$ times.  Similarly around the boundary of each vertex of $G_F$ the labels $\{1, \dots, u\}$ appear in order $\Delta$ times. 

Now $t/2$ is the bridge number of $K$ with respect to the Heegaard surface
$\hatF$. We show that $t \leq 2$,
thereby implying that $K$ is $0$-bridge or $1$-bridge with respect to this
genus $2$ splitting.

The arguments typically divide into the two cases:
\begin{itemize}
\item {\bf \situationnscc}: There are no closed curves of $Q \cap F$ 
in the interior of disk faces of $G_Q$.  
\item
{\bf \situationscc}: There are closed curves of $Q \cap F$ in the interior of 
disk faces of $G_Q$.  The strong irreducibility of the Heegaard splitting allows
us then to assume (section~\ref{sec:scc}) that 
any such closed curve must be non-trivial on $\hatF$ 
and bound a disk on one side of $\hatF$. 
\end{itemize}

In \situationscc\ there is then a meridian disk on one side of the genus 2
splitting that is disjoint from $K$ and $Q$. This imposes strong restrictions
on the graph $G_F$. Typically then, the arguments are simpler (though different)
than those for \situationnscc.

Now assume that $\hatF$ is a Heegaard surface for $M$ for which $K$ has the smallest bridge number among genus $2$ splittings of $M$.  The paper is divided into sections ruling out various values of $t$, which
are necessarily even as $\hatF$ is separating.
Theorems~\ref{thm:tleq8}, ~\ref{thm:tleq6}, ~\ref{thm:tnot6} show in sequence
$t<10, t<8, t<6$  in both \situationnscc\ and 
\situationscc. Theorem~\ref{prop:tnot4} then implies that $t \leq 2$ in 
\situationnscc, and Theorem~\ref{thm:scctnot4} that $t \leq 2$ in \situationscc.
That is, $K$ is at most $1$-bridge with respect to the genus $2$ splitting
$\hatF$. 

To see that $K$ (and hence $K'$) has tunnel number at most $2$, write $K$ in $M$
as the union of an arc in $\hatF$ and a trivial arc in a handlebody $H$
on one side of $\hatF$ (this can be done if $K$ is $0$-bridge as well).
Attaching two tunnels to $K$ to form core curves of $H$ thickens to a genus
$3$ handlebody whose complement is a handlebody in $M$. Thus the tunnel number
of $K$ is at most two.
\end{proof}

Keeping track of when and how we are forced to modify the Heegaard 
splitting in the proof of Theorem~\ref{thm:main} gives the following:

\begin{thm}\label{thm:changingHS}
Let $K'$ be a hyperbolic knot in $S^3$.
Let $H_B \cup_{\hatF} H_W$ be a genus $2$ ($2$-sided) Heegaard splitting of 
$M=K'(\gamma)$. Assume that 
$\Delta(\gamma,\mu) \geq 3$ where $\mu$ is
the meridian of $K'$. Furthermore assume that $M$ does not contain a 
Dyck's surface.  
Denote by $K$ the core of the attached solid torus 
in $M$. Then either

\begin{enumerate} 

\item 
$K$ is
$0$-bridge or $1$-bridge with respect to a Heegaard splitting of $M$
obtained from $H_B \cup_{\hatF} H_W$ by a (possibly empty) sequence of 
adding/removing \mobius bands (Definition~\ref{def:addremove}); or

\item $M$ is a Seifert fiber space over the disk with three exceptional fibers, one of which has order $2$ or $3$, and 
$K$ is $0$-bridge or $1$-bridge with respect to a Heegaard splitting gotten
from a vertical Heegaard splitting of the Seifert fiber space $M$ which has been changed
by a (possibly empty) sequence of adding/removing \mobius bands; or

\item $M$ is $n/2$-surgery on a trefoil knot, $n$ odd, and $K$ is $0$-bridge
or $1$-bridge with respect
to the Heegaard splitting on $M$ coming from the genus $2$ splitting of the
trefoil knot exterior. Note that in this case $M$ is a Seifert fiber
space over the $2$-sphere with three exceptional fibers, one of order $2$
and a second of order $3$.

\end{enumerate} 

In particular, if $M$ is not a Seifert fiber space over the $2$-sphere with
an exceptional fiber of order $2$ or $3$, and if the Heegaard surface $\hatF$ has no \mobius band on one side whose boundary is a 
primitive curve on the other, then $K$ must be $0$-bridge or
$1$-bridge with respect to the given splitting $H_B \cup_{\hatF} H_W$.

\end{thm}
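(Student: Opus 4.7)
The plan is to carry out the proof of Theorem~\ref{thm:main} starting with the \emph{given} splitting $H_B \cup_{\hatF} H_W$ rather than with a splitting of minimal bridge number, and to bookkeep, at each instance where the argument replaces $\hatF$ by a different genus $2$ Heegaard surface, the precise relationship of the new surface to the previous one. Concretely, I would put $K$ in thin-bridge position with respect to $\hatF$; by Lemma~\ref{lem:mnosfs}, provided $M$ is not Seifert fibered, thin position coincides with bridge position. In the non-Seifert fibered case I may then form the graphs $G_F \subset \hatF$ and $G_Q \subset \hatQ$ exactly as in the proof of Theorem~\ref{thm:main}, so hypotheses (*) and (**) apply. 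If $M$ is Seifert fibered then it must be of one of the types listed in conclusion (2) or (3), and that case is handled separately using the natural vertical splitting or the trefoil-exterior splitting (together with further add/remove \mobius band moves).

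From here, I would re-examine each of Theorems~\ref{thm:tleq8}, \ref{thm:tleq6}, \ref{thm:tnot6}, \ref{prop:tnot4}, and \ref{thm:scctnot4}, which together establish $t = |K \cap \hatF| \leq 2$ in the proof of Theorem~\ref{thm:main}. Each such reduction proceeds by one of three mechanisms: (i) a direct contradiction with $\Delta \ge 3$, which does not alter the splitting; (ii) production of a new genus $2$ splitting with respect to which $K$ has strictly smaller bridge number; or (iii) a structural conclusion identifying $M$ as one of the exceptional Seifert fiber spaces in (2) or as the trefoil surgery in (3). The bookkeeping task is to verify that every instance of type (ii) is obtained from the current splitting by adding/removing a \mobius band in the sense of Definition~\ref{def:addremove}. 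The configurations that drive these replacements --- extended Scharlemann cycles, long \mobius bands, and the special vertex analyses of bigons and trigons --- naturally produce, from disk faces of $G_Q$, a \mobius band in one Heegaard handlebody whose boundary is a primitive curve on the other, which is precisely the data required for an add/remove operation; the new splitting is the one obtained by that operation.

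The main obstacle will be verifying that the outcomes of the combinatorial analysis for small $t$ (especially in Theorems~\ref{prop:tnot4} and \ref{thm:scctnot4}, which are the most delicate) fall cleanly into one of the three listed possibilities: (a) an add/remove \mobius band move on the current splitting, (b) the vertical splitting of a permitted Seifert fiber space over the disk with three exceptional fibers one of order $2$ or $3$, or (c) the splitting of $n/2$-surgery on the trefoil induced by the trefoil exterior splitting. It is exactly here that the three exceptional cases arise: when the combinatorial data produces an essential annulus or \mobius band imposing a Seifert structure on $M$ with an exceptional fiber of order $2$ or $3$, the resulting genus $2$ splitting is naturally vertical; when it identifies $M$ as $n/2$-surgery on the trefoil, the splitting from the trefoil exterior is the natural one. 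Outside these exceptions every new splitting produced is of add/remove type, yielding conclusion (1).

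The final ``In particular'' statement then follows at once from the case analysis: if neither Seifert fibered possibility occurs and $\hatF$ admits no \mobius band on one side whose boundary is primitive on the other, then at no stage is an add/remove move even available, so the original splitting $H_B \cup_{\hatF} H_W$ must itself satisfy $br(K) \le 1$.
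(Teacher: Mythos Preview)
Your overall strategy matches the paper's exactly: re-run the reductions $t<10,8,6,4$ starting from the \emph{given} splitting and bookkeep each splitting change, verifying at every step that it is either an add/remove \mobius band move or forces one of the exceptional Seifert structures. The paper's proof is precisely this, pointing to remarks scattered through Sections~\ref{sec:escbounds}--\ref{sec:tis4scc} that supply the needed adaptations, and then iterating until $t\le 2$.

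However, your setup contains a logical error. You invoke Lemma~\ref{lem:mnosfs} to conclude ``thin $=$ bridge provided $M$ is not Seifert fibered,'' but that lemma says the opposite implication: if thin $\neq$ bridge then $M$ is \emph{not} Seifert fibered, so its contrapositive guarantees thin $=$ bridge only when $M$ \emph{is} Seifert fibered. The correct tool here is Theorem~\ref{thm:thinsurface}: if thin $\neq$ bridge then $M$ contains a Dyck's surface, so under the hypothesis of Theorem~\ref{thm:changingHS} thin position equals bridge position unconditionally. Once you make this fix, your Seifert/non-Seifert branching at the outset disappears, and with it the unjustified assertion that ``if $M$ is Seifert fibered then it must be of one of the types listed in (2) or (3).'' That assertion is false a priori; conclusions (2) and (3) are \emph{outputs} of the combinatorial analysis (arising at specific points in the $t=6$ and $t=4$ arguments, with (3) appearing only at the very end of \S\ref{sec:tis4scc}), not preconditions. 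The paper further notes that once such a Seifert structure is detected, one restarts the iteration from a vertical splitting of $M$ to obtain the precise form stated in (2).
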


\begin{proof} Let $H_B \cup_{\hatF} H_W$ be a genus $2$ 
Heegaard splitting of $M$ 
for which $K$ is not $0$-bridge or $1$-bridge. As in the proof
of Theorem~\ref{thm:main}, the arguments of 
sections \ref{sec:escbounds}--\ref{sec:tis4scc} show,
or can be adapted to show, that either
\begin{itemize}
\item $M$ contains a Dyck's surface; or
\item $H_B \cup_{\hatF} H_W$ can be altered by
adding/removing a \mobius band so that we get a new 
genus $2$ splitting for which $K$ has smaller bridge number; or
\item $M$ is a Seifert fiber space over the $2$-sphere with an exceptional fiber
of order $2$ or $3$ and we can find a vertical splitting of this Seifert fiber
space for which $K$ has smaller bridge number. 
\item $M$ is an $n/2$-surgery on the trefoil knot, $n$ odd, and $K$ is shown to
be at most $1$-bridge with respect to a genus $2$ splitting of $M$ coming from the
Heegaard splitting of the trefoil exterior (i.e.\ 
remove a neighborhood of the unknotting tunnel from the exterior of the trefoil 
for one handlebody
of the splitting of $M$, then the filling solid torus in union with a neighborhood of 
the unknotting tunnel is the other). This conclusion only occurs at the very end of 
section~\ref{sec:tis4scc}.
\end{itemize}
In sections \ref{sec:escbounds}--\ref{sec:tis4scc}, there are a few places where the argument given
needs to be altered slightly to see that in fact one of the items above occurs. 
We have included remarks to that end when necessary.
Repeated applications of the above alternatives leads to a genus
$2$ splitting of $M$ with respect to which $K$ is $0$-bridge or $1$-bridge as claimed by Theorem~\ref{thm:changingHS}. Note that
the statement there when $M$ is a Seifert fiber space with an exceptional fiber
of order $2$ or $3$ follows by starting with a vertical splitting of $M$.
\end{proof}

We finish this section with the proof of Theorem~\ref{thm:main} in
the special case that thin position is not bridge position. Here the arguments 
of the preceding proof are applied to thin level surfaces
rather than thick. 

\begin{thm}\label{thm:thinsurface}
Let $K'$ be a hyperbolic knot in $S^3$. Assume there is a genus two Heegaard
splitting of $M=K'(\gamma)$ with respect to which $K$, the core of the attached
solid torus, has a thin position
which is not a bridge position. 
If $\Delta(\gamma,\mu) \geq 3$ then 
$M$ contains an embedded Dyck's surface.
\end{thm}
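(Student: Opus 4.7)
The plan is to invoke Lemma~\ref{lem:thin=bridge} and then construct a Dyck's surface in $M$ from the resulting boundary-slope surface. By Lemma~\ref{lem:thin=bridge}, our hypothesis (thin position not bridge position) implies that $\gamma$ is a $2$-boundary slope for $X = S^3 \setminus \nbhd(K')$: there exists an incompressible, $\bdry$-incompressible orientable surface $P \subset X$ of genus at most $2$, all of whose boundary components have slope $\gamma$. Capping off $P$ by meridian disks of the surgery solid torus $V$ produces a closed orientable surface $\widehat{P} \subset M$ of genus at most $2$, meeting $K$ transversely in $n = |\bdry P|$ points; since $\Delta(\gamma,\mu) \ge 3$ rules out $\gamma$ being a longitude of $K'$, a homology calculation in $H_1(X) = \Z$ forces $n$ to be even.

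The argument then proceeds by case analysis on the genus $g$ of $\widehat{P}$. If $g = 0$, then $\widehat{P}$ is a sphere; since $M$ has no essential sphere, it bounds a $3$-ball $B$ in $M$, and an outermost-arc argument applied to the arcs of $K$ in $B$ produces a $\bdry$-compression disk for $P$, contradicting $\bdry$-incompressibility. If $g \in \{1,2\}$, then, using that $M$ contains no essential sphere or torus, I plan to compress $\widehat{P}$ along disks meeting $K$ and iterate until the process terminates, producing a closed non-orientable surface $\widehat{S} \subset M$. Since $M$ contains no $\RP^2$ and no Klein bottle (the standard small-surface exclusions from $K'$ hyperbolic with $\Delta \ge 3$), a careful Euler-characteristic count of $\widehat{S}$ gives $\chi(\widehat{S}) = -1$, so that $\widehat{S}$ is a Dyck's surface. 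An alternative (and likely equivalent) approach, matching the authors' hint that ``the arguments of the preceding proof are applied to thin level surfaces rather than thick,'' is to parallel the main proof directly: set up the fat-vertexed graphs $G_F$ and $G_Q$ on a thin-level surface $\hatF$ and a thick-level $\hatQ$, run the combinatorial analysis involving great $2$-webs and extended Scharlemann cycles, and extract the Dyck's surface from the resulting long \mobius band configurations.

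The main obstacle is the construction of the non-orientable surface $\widehat{S}$ from $\widehat{P}$ in the $g \ge 1$ cases. The compressions of $\widehat{P}$ interact with both the surgery solid torus $V$ and the knot $K$ in subtle ways, and one must show the iteration produces a connected non-orientable surface of Euler characteristic exactly $-1$: higher complexity would give a non-orientable surface whose Dyck's content is unclear, while lower complexity is excluded by the $\RP^2$ and Klein bottle exclusions. Particularly delicate is the $g = 2$ case, where $\widehat{P}$ might be incompressible; then one argues that the orientable genus $2$ surface $\widehat{P}$ bounds a twisted $I$-bundle over a Dyck's surface on one side, directly producing the desired Dyck's surface as the core of that $I$-bundle. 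Careful Euler-characteristic bookkeeping together with the small-surface exclusions in $M$ is the crux of the argument.
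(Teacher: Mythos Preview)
Your primary approach has a genuine gap: compressing an orientable surface along embedded disks yields an orientable surface. There is no mechanism in your sketch by which iterated compressions of the closed orientable surface $\widehat{P}$ could ever produce a non-orientable surface $\widehat{S}$, so the claimed output of a Dyck's surface is unsupported. Likewise, the assertion in the $g=2$ incompressible case that $\widehat{P}$ must bound a twisted $I$-bundle over a Dyck's surface is simply false in general; an incompressible genus~$2$ surface in an irreducible atoroidal $3$-manifold need not bound any $I$-bundle at all. Your Euler-characteristic bookkeeping cannot get started without first explaining where non-orientability comes from, and nothing in the boundary-slope surface $P$ supplies it.

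Your alternative approach---running the fat-vertexed graph machinery on a \emph{thin} level surface---is exactly what the paper does, but you have not identified the technical points that make it work. The substitute for the bridge-position hypothesis is Lemma~\ref{lem:monogon}: at a thin level there are no trivializing disks for subarcs of $K$, and this is what forces the arcs of $F\cap Q$ to be essential in $Q$. This also strengthens the \ESC\ analysis: Lemma~\ref{lem:LMB} sharpens to Lemma~\ref{lem:thinLMB} (the annulus $B$ between isotopic $a_i,a_j$ must contain an interior vertex), and hence Lemma~\ref{lem:esc} sharpens to Lemma~\ref{lem:thinesc} (proper $r$-\ESC\ forces $r\le 1$, and for $r=1$ the two curves of $a(\sigma)$ are non-isotopic). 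These, together with Lemma~\ref{lem:1esc+sc}, let one eliminate \emph{every} positive value of $t$ by short combinatorial arguments, with Lemma~\ref{lem:mnosfs} handling the Seifert-fibered cases that arise. The Dyck's surface is produced not by surgering $\widehat{P}$ but via Lemmas~\ref{lem:3disjointmobiusbands} and~\ref{lem:disjtmobiusannulus} from configurations of \mobius bands and annuli coming from the (extended) Scharlemann cycles.
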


\begin{proof}
Let $K',K, M$ be as given. 
Assume $M$ has a genus $2$ Heegaard splitting
with respect to which $K$ (in $M$) has a thin presentation which is not a
bridge presentation.  Note that this implies $K$ is not isotopic onto the Heegaard surface of the splitting.   As $M$ is neither a lens space nor a connected sum,
the splitting is irreducible and therefore strongly irreducible. 
Let $\hatF$ be a thin level surface --- a level surface 
at a regular value of the height function
such that the first critical level below the surface is a maximum and the
first critical level above the surface is a minimum.

\begin{lemma}\label{lem:monogon}
Let $\hatF$ be a thin level surface in a thin presentation of $K$.
There is no {\em trivializing disk} $D$ for a subarc $\alpha$ of $K$ 
with respect
to $\hatF$. That is, there is no embedded disk $D \subset M$ such that
\begin{enumerate} 
\item
The interior of $D$ is disjoint from $K$, and
\item
$\partial D = \alpha \cup \beta$ where $\alpha$ is a subarc of $K$ and
$\beta$ lies in $\hatF$.
\end{enumerate}
\end{lemma}

\begin{proof}
After an isotopy we may assume that $D$ lies above $\hatF$ near $\beta$
and otherwise $D$ intersects $\hatF$ transversely. 
Among all the arcs of $\Int D \cap \hatF$,
let $\beta'$ be outermost with respect to $\beta$, and let $D'$ be the 
outermost disk that it cuts from $D$.  (If none exists take $\beta' = \beta$ and $D' = D$.) Then $\partial D' = \alpha' \cup \beta'$
where $\alpha'$ is a component of $K-\hatF$. $D'$ guides an isotopy of $\alpha'$
to $\beta'$, giving a positioning of $K$ with smaller width, a contradiction. 
\end{proof} 

In $S^3$, put $K'$ into thin position with respect to the genus $0$ 
Heegaard splitting.  
The thin position argument of \cite{gabai:fatto3mIII}, shows that
there exists a level surface $\widehat{Q}$ of $S^3$ such that 
$F = \hatF - \nbhd(K)$ and $Q = \widehat{Q}-\nbhd(K')$
intersect transversely and 
each arc of $F \cap Q$ is essential $F$. Furthermore, Lemma~\ref{lem:monogon}
shows that each arc of $F \cap Q$ is essential in $Q$ ($\partial F, \partial Q$
are taken to intersect minimally on the boundary of the knot exterior).  
As the exterior of $K'$ is irreducible, after an isotopy we may further
assume there are no simple closed curves of $F \cap Q$ 
that are trivial in
both $F$ and $Q$.

We set up the fat vertexed graphs of intersection $G_Q$ in $\hatQ$ and
$G_F$ in $\hatF$ as in the proof of Theorem~\ref{thm:main}, recording the
intersection patterns of $F$ and $Q$. Let $t = |\hatF \cap K|>0$.

Exactly as in the context of Theorem~\ref{thm:main}, there are two cases
to consider:
\begin{itemize}
\item {\bf \situationnscc}: There are no closed curves of $Q \cap F$ 
in the interior of disk faces of $G_Q$.  
\item
{\bf \situationscc}: There are closed curves of $Q \cap F$ in the interior of 
disk faces of $G_Q$.  The strong irreducibility of the Heegaard splitting allows
us then to assume (section~\ref{sec:scc}) that 
any such closed curve must be non-trivial on $\hatF$ 
and bound a disk on one side of $\hatF$. 
In \situationscc, there is then a meridian disk on one side of the genus 2
splitting that is disjoint from $K$ and $Q$.
\end{itemize}

The arguments of sections~\ref{sec:fatgraphs},
~\ref{sec:escandlongmb}, and ~\ref{sec:combin} apply just as in the
context of Theorem~\ref{thm:main} giving rise to \ESCs\ and \SCs,
and their corresponding long \mobius bands and \mobius bands. The constituent
annuli and \mobius bands of the long \mobius bands are almost properly 
embedded on either side of $\hatF$,  and they are properly embedded in 
\situationnscc.

We now have the following stronger versions of Lemmas~\ref{lem:LMB}
and ~\ref{lem:esc}.

\begin{lemma}\label{lem:thinLMB} 
Let $\sigma$ be a proper $(n-1)$-\ESC\ in $G_Q$.
Let $A=A_1 \cup \dots \cup A_n$ be the corresponding long \mobius band and
let $a_i \in a(\sigma)$ be $\bdry A_i - \bdry A_{i-1}$ for each $i=2 \dots n$ and 
$a_1 = \bdry A_1$.
Assume that, for some $i < j$, $a_i, a_j$ cobound 
an annulus $B$ in $\hatF$.  Then $K$ must intersect the interior of $B$.
\end{lemma}

\begin{proof}

The context of section~\ref{sec:escbounds} is that of Theorem~\ref{thm:main},
that $K$ is in a bridge position that is also thin. 
However, the proof of Lemma~\ref{lem:LMB} proves the above, using a
thin presentation
of $K$, and inserting 
Lemmas~\ref{lem:monogon} and ~\ref{lem:mnosfs} when necessary. 
In particular, the final conclusion of Lemma~\ref{lem:LMB},
that $V$ guides an isotopy of $A_j$ to $B$, contradicts Lemma~\ref{lem:monogon}.
\end{proof}

\begin{lemma}\label{lem:thinesc} 
Assume $M$ contains no Dyck's surface.
If $G_Q$ contains a proper $r$-\ESC\ then $r \leq 1$. Furthermore, if
$\sigma$ is a proper $1$-\ESC\ then the two components of $a(\sigma)$ are 
not isotopic on $\hatF$.
\end{lemma}

\begin{proof}
Let $\sigma$ be a proper $(n-1)$-\ESC\ in $G_Q$ for which
$n$ is largest.  We assume $n \geq 2$.
Let $A=A_1 \cup A_2 \cup \dots \cup A_n$ be the long \mobius band associated
to $\sigma$.  Let $a(\sigma)$ be the collection of simple closed curves 
$a_i = \bdry A_i \cap \bdry A_{i+1}$.  
If no two elements of $a(\sigma)$ are isotopic on $\hatF$, 
then either $n = 3$ and $a_1,a_2, a_3$
cobound a 3-punctured sphere in $\hatF$, contradicting 
(Lemma~\ref{lem:disjtmobiusannulus}) that $M$ contains
no Dyck's surface, or $n=2$ and we satisfy the second
conclusion. 
Thus we assume $a_i, a_j$ are isotopic on $\hatF$ for some $i<j$.
Let $B$ be the annulus cobounded by $a_i, a_j$ on $\hatF$. We may assume
that the interior of $B$ is disjoint from $a(\sigma)$.

Lemma~\ref{lem:thinLMB} shows that there is a vertex $x$ of $K \cap \Int B$.  Since, by Corollary~\ref{cor:bigonsforall}, $\Lambda_x$ contains a bigon,  there is a proper \ESC, $\nu$, and a corresponding long \mobius band $A^x$ whose boundary is a curve comprising two edges of $\Lambda_x$ meeting at $x$ and one other vertex.  Therefore this curve cannot transversely intersect $\bdry B$ and thus must be contained in $B$.  By 
Lemma~\ref{lem:PLMB}, $\nu,\sigma$ must have the same core labels. 
But this contradicts the maximality of $n$. 
\end{proof}

Finally, observe

\begin{lemma}\label{lem:1esc+sc}
If $G_Q$ contains a $1$-\ESC, $\sigma$, and an \SC, $\tau$, on disjoint
label sets, then $M$ contains a Dyck's surface.
\end{lemma}

\begin{proof}
Let $A = A_1 \cup A_2$ be the long \mobius band 
corresponding to $\sigma$ and $A_3$ the almost properly embedded
\mobius band corresponding to
$\tau$. By Lemma~\ref{lem:thinesc}, the components of $\partial A_2$
are not isotopic on $\hatF$. Neither is isotopic to $\partial A_3$,
else $M$ would contain a Klein bottle. By Lemma~\ref{lem:disjtmobiusannulus},
$M$ contains a Dyck's surface.
\end{proof}

To finish the proof of the theorem,
assume $M$ contains no Dyck's 
surface. Lemmas~\ref{lem:thint<8}, ~\ref{lem:thint<6},
~\ref{lem:thint<4}, and ~\ref{lem:thint<2} now eliminate the possibilities for
$t$.

\begin{lemma}\label{lem:thint<8} $t<8$
\end{lemma}

\begin{proof}
By Corollary~\ref{cor:bigonsforall} and Lemma~\ref{lem:thinesc}, 
each label of $G_Q$ belongs to a $1$-\ESC\ or to an \SC. 
Assume $t \geq 8$. If $G_Q$ contains no $1$-\ESC, then there are 
three \SCs\ on disjoint label sets, and 
Lemma~\ref{lem:3disjointmobiusbands} contradicts that $M$ contains
no Dyck's surface.

So assume $G_Q$ contains a $1$-\ESC, $\sigma$, on labels, say, $\{1,2,3,4\}$
-- i.e.\ whose core is a $\arc{23}$-\SC. By Lemma~\ref{lem:1esc+sc}, the 
label $7$ of $G_Q$ belongs to a $1$-\ESC\ on labels $\{7,8,1,2\}$. Similarly,
label $6$ must belong to a $1$-\ESC\ on labels $\{3,4,5,6\}$. The latter
$1$-\ESCs\ contradict Lemma~\ref{lem:1esc+sc}.
\end{proof}

\begin{lemma}\label{lem:thint<6}
$t \neq 6$.
\end{lemma}

\begin{proof}

\begin{claim}\label{clm:thinno2escs} 
With $t=6$, $G_Q$ cannot have two $1$-\ESCs\ on different label sets
whose core \SCs\ lie on the same side of $\hatF$.
\end{claim}

\begin{proof}
WLOG assume $\sigma, \sigma'$ are 1-\ESCs\ on labels 
$\{1,2,3,4\}, \{3,4,5,6,\}$ (resp.). Let $A = A_1 \cup A_2, A' = A_1' \cup
A_2'$ be the long mobius bands corresponding to $\sigma,\sigma'$. 
First assume \situationnscc. Then $A_2,A_2'$ are (non-separating) incompressible
annuli in a handlebody on one side of $\hatF$ intersecting is the single
arc $\arc{34}$ of $K$. A boundary compressing disk of $A_2$ can be
taken disjoint from $A_2'$ (or vice versa). This disk can be used to construct
a trivializing disk for $\arc{34}$, contradicting Lemma~\ref{lem:monogon}.

So assume we are in \situationscc\ and let $D$ be a meridian disjoint from 
$Q$ and $K$. As each component of $\partial A_2$ intersects $\partial A_2'$
in a single point, $\partial D$ must be separating in $\hatF$. In particular,
one component of $\hatF - \partial D$ contains vertices $\{2,3,6\}$ of $G_F$
and the other contains vertices $\{4,5,1\}$. But the arcs $\arc{34},\!\arc{61}$
of $K$ contradict that $D$ is separating on one side of $\hatF$.
\end{proof}

Assume $t=6$.
By Corollary~\ref{cor:bigonsforall} and Lemma~\ref{lem:thinesc}, 
each of the six labels of $G_Q$ belong
to either a $1$-\ESC\ or \SC\ in $G_Q$. If $G_Q$ contains no $1$-\ESC,
then $G_Q$ must have three \SCs\ on disjoint label sets. 
Lemma~\ref{lem:3disjointmobiusbands} shows that $M$ contains a Dyck's surface.
So assume $G_Q$ contains a $1$-\ESC\ on labels, say, $\{1,2,3,4\}$. 

If $G_Q$
also contains a $1$-\ESC\ on a different label set, then by 
Claim~\ref{clm:thinno2escs} and Lemma~\ref{lem:1esc+sc}, we may assume it
is on labels $\{2,3,4,5\}$. Now label $6$ must belong to a $1$-\ESC\ or an 
\SC. A $1$-\ESC\ contradicts Claim~\ref{clm:thinno2escs},
an \SC\ contradicts Lemma~\ref{lem:1esc+sc}. 

So we assume all $1$-\ESCs\ are on label set $\{1,2,3,4\}$. 
Corollary~\ref{cor:bigonsforall} then implies there is a $\arc{45}$-\SC\ 
and a $\arc{61}$-\SC\  (a $\arc{56}$-\SC\ contradicts 
Lemma~\ref{lem:1esc+sc}). But then Lemma~\ref{lem:3disjointmobiusbands} says
that $M$ contains a Dyck's surface.
\end{proof}

\begin{lemma}\label{lem:thint<4} $t \neq 4$.
\end{lemma}

\begin{proof}
Let $t=4$.  
By Corollary~\ref{cor:bigonsforall}, $G_Q$ either contains a $1$-\ESC\ or
two \SCs\ on disjoint label sets. First assume we have \situationnscc.
In the case of a $1$-\ESC\ a boundary compression of
the associated incompressible annulus and in the case of two \SCs\ 
a boundary compression
of one \mobius band disjoint from the second, gives rise to a trivializing 
disk for an arc of $K - \hatF$, contradicting Lemma~\ref{lem:monogon}.

So assume we are in \situationscc, and let $D$ be a meridian on one side
of $\hatF$ disjoint from $Q$ and $K$. Let $\calN$ be the solid torus or tori
obtained by surgering the handlebody in which $D$ lies along $D$, and that
have non-empty intersection with $K$. 

Assume $G_Q$ has a $1$-\ESC, and
let $A = A_1 \cup A_2$ be the associated long \mobius band. 
After an isotopy we may take $A_1, A_2$ as properly embedded in $\calN$ or
its exterior. If $A_2$ lies in $\calN$, then a boundary compression of 
$A_2$ in $\calN$ gives a trivializing disk for an arc of $K-\hatF$.
Thus $A_2$ lies outside of $\calN$.  If both components of $\partial A_2$ lie on the same
component of $\calN$, then $\calO = \nbhd(\calN \cup A_2)$ is Seifert 
fibered over the annulus 
with an exceptional fiber of order two. 
As the exterior of $K$ is atoroidal and irreducible. Some 
component of $M - \Int \calO$ bounds a solid torus $\calT$.
As $M$ is irreducible and atoroidal and as $M$ is not
a Seifert fiber space (Lemma~\ref{lem:mnosfs}), 
$\calO \cup \calT$ is a solid torus whose exterior in $M$ has
incompressible boundary. Again as the exterior of $K$ is
atoroidal and irreducible, $K$ must be isotopic to a core
of the solid torus $\calO \cup \calT$ and consequently to the core of 
$\calN$. But then $K$ can be isotoped to lie on the 
Heegaard surface -- a contradiction. So we may assume
$N$ consists of two solid tori, each containing a component of $\partial A_2$.
But then a boundary compression of  $A_1$ in the solid torus component
containing it, gives rise to a trivializing disk for an arc of 
$K - \hatF$.

So it must be that $G_Q$ contains \SCs\ on disjoint labels sets.
Let $A,A'$ be the corresponding almost properly embedded \mobius bands. 
As $M$ contains no Klein
bottle or projective plane, $\partial A, \partial A'$ must lie on different
components of $\calN$. Then $D$ is a separating meridian of one side of 
$\hatF$ and must lie on the same side as the $A, A'$ (by the separation of
vertices of $G_F$). After surgering away
simple closed curves of intersection, $A$ and $A'$ can be taken to be
properly embedded \mobius bands in separate components of $\calN$. Then
a boundary compression of either gives rise to a trivializing disk for
an arc of 
$K - \hatF$.
\end{proof}

\begin{lemma}\label{lem:thint<2}
$t \neq 2$
\end{lemma}

\begin{proof}
By Corollary~\ref{cor:bigonsforall}, $G_Q$ contains an \SC. 
Let $A$ be the corresponding almost properly embedded \mobius band.
In \situationnscc, $A$ is properly embedded on one side of $\hatF$.
A boundary compression of $A$ then gives rise to a trivializing disk
for an arc of $K - \hatF$, contradicting Lemma~\ref{lem:monogon}.
So we assume \situationnscc, and let $D$ be a meridian disjoint from
$Q$ and $K$. Let $\calN$ be the solid torus
obtained by surgering the handlebody in which $D$ lies along $D$ and taking
that component containing $\partial A$. We may surger the interior of
$A$ off of $\partial \calN$, so that $A$ is properly embedded in $\calN$
or its exterior. If $A$ now lies in $\calN$, then a boundary compression
of it will give rise to a trivializing disk for an arc of $K - \hatF$.
So we assume $A$ is properly embedded in the exterior of $\calN$ and
set $\calO = \nbhd(\calN \cup A)$. If $\partial A$ is longitudinal in
$\calN$, then $\calO$ is a solid torus containing $K$. As $M$ is not a
lens space and the exterior of $K$ is atoroidal and irreducible, $K$ must
be isotopic to a core of $\calO$. On the other hand, the core $L$ of $\calN$
is a $(2,1)$-cable of the core of $\calO$, and hence of $K$. As $L$ has
tunnel number one in $M$, Claim~\ref{claim:ckt1} implies that $K$ can be isotoped to lie on the Heegaard surface.

Thus we assume $\partial A$ is not longitudinal in $\calN$. Then $\calN$ 
is a Seifert fiber space over the disk with two exceptional fibers 
($M$ contains no projective planes).
As both $M$ and the exterior of $K$ are irreducible and atoroidal, the
exterior of $\calO$ is a solid torus, and $M$ is a Seifert fiber space.
This contradicts Lemma~\ref{lem:mnosfs}.
\end{proof}

This completes the proof of Theorem~\ref{thm:thinsurface}.
\end{proof}

\section{More on $G_Q, G_F$ and simple closed curves 
of $F \cap Q$}~\label{sec:fatgraphs}

Assume $K'$ is a hyperbolic knot in $S^3$ and $K'(\gamma)$ has a 2-sided genus
$2$ Heegaard splitting. 
Let $\hatF,F,\hatQ,Q$ be as in the proof of Theorem~\ref{thm:main}. 
Let $G_Q,G_F$ be the labelled graphs of intersection defined there.
In this section we define some terminology for $G_Q,G_F$, and discuss 
simple closed curves of intersection between $Q$ and $F$.
 
On each of $G_Q$ and $G_F$, if the labels around two vertices occur in the 
same direction (equivalently: the oriented intersections of $K'$ with $\widehat{Q}$ or $K$ with $\hatF$ at those spots have the same signs) then we say the vertices are {\em parallel}; otherwise they are {\em anti-parallel}. The orientability of $F$ and $Q$ and the knot exterior gives the
following

\medskip

\noindent {\bf Parity Rule:} 
{\em An edge connects parallel vertices on one of $G_F, G_Q$
if and only if it connects anti-parallel vertices on the other.}

\medskip

We may refer to an edge of $G_F$ or $G_Q$ with endpoints labeled $1$ and $2$, for example, as a $\edge{12}$-edge.  We will also say that $\{1,2\}$ is the {\em label pair} of the edge.

In $M$, the Heegaard surface $\hatF$ bounds two genus $2$ handlebodies $H_B$ and $H_W$:  $M= H_B \cup_{\hatF} H_W$.  We refer to $H_B$ as {\em Black} and $H_W$ as {\em White} and similarly color the objects inside them.

A {\em face} of $G_Q$ is a component of the complement of the edges of $G_Q$ in $Q$.  We color it Black or White according to the side of $\hatF$ on which a small collar neighborhood of its boundary lies.  The arcs of intersection between the boundary of a face and a vertex are the {\em corners} of the face; a vertex is chopped into corners.  We shall refer to both the corners of $G_Q$ between labels $2$ and $3$ and the arc of $K \subset M$ from intersection $2$ to $3$, for example, as $\arc{23}$, as a $\arc{23}$-corner,  or as a $\arc{23}$-arc.  For a contiguous run of corners $\arc{t1},\!\arc{12},\!\arc{23}$ around a vertex or arcs of $K$ we may write $\arc{t123}$.

Two edges of $F \cap Q$ are {\em parallel} on $F$ or on $Q$ if they cobound an embedded bigon in that surface (with corners on the vertices). We also refer to
such edges as parallel on $G_F$ or $G_Q$.
Two faces $g$ and $g'$ of $G_Q$ are {\em parallel}  if there is an embedding of $g \times [0,1]$ into $M-\nbhd(K)$ such that $g \times \{0\} = g$, $g \times \{1\} = g'$ and the components of $\bdry g \times [0,1]$ are alternately composed of rectangles on $\bdry \nbhd(K)$ and parallelisms on $F$ between edges of $g$ and $g'$ .

\subsection{Simple closed curves of $Q \cap F$.}\label{sec:scc}

The intersection graphs $G_Q, G_F$ are given by the arc components of
$F \cap Q$. However, there may also be simple closed curves in $Q \cap F$.
By (**) of the proof of Theorem~\ref{thm:main}, we may assume no such curve is
trivial on both $Q$ and $F$. 
We show in this subsection that any such that is trivial on $Q$ must, WLOG,
be a meridian on one side of $\hatF$. 

\begin{lemma}~\label{lem:AEntscc}
No simple closed curve of $Q \cap F$ that is 
trivial in $Q$ is trivial in $\hatF$.
\end{lemma}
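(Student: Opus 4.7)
The plan is to argue by contradiction, converting any such curve $c$ into a modification of $\hatF$ with strictly fewer intersections with $K$, in violation of the minimality of the bridge presentation of $K$ used to set up $\hatF,F,\hatQ,Q$ in the proof of Theorem~\ref{thm:main}.

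Suppose $c \subset Q \cap F$ is a simple closed curve trivial in both $Q$ and $\hatF$. Since (**) rules out curves trivial in both $F$ and $Q$, the disk $D_{\hatF} \subset \hatF$ bounded by $c$ must contain at least one vertex of $G_F$; in particular $|K \cap D_{\hatF}| \geq 1$. First I would choose $c$ to be innermost in $Q$ among sccs of $Q \cap F$ trivial in $Q$, and let $D_Q \subset Q$ be the disk it bounds. The innermost choice forces $D_Q$ to contain no other sccs of $Q \cap F$ in its interior, since any such would be contained in a disk in $Q$ and hence also trivial in $Q$. Because $D_Q$ is a genuine disk in the planar surface $Q = \hatQ - \nbhd(K')$, it contains no vertex of $G_Q$. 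And since the edges of $G_Q$ are components of $F \cap Q$ disjoint from the component $c$, any edge meeting the interior of $D_Q$ would need an endpoint at a vertex of $G_Q$ inside $D_Q$, of which there are none. So the interior of $D_Q$ is disjoint from $F$, and hence $D_Q$ is a disk in $M - \nbhd(K)$ whose interior lies entirely on one side of $\hatF$, say in $H_B$.

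Next I would observe that $\Sigma = D_Q \cup D_{\hatF}$ is a $2$-sphere in $M$, so by irreducibility of $M$ it bounds a $3$-ball $B \subset H_B$. Replacing $D_{\hatF} \subset \hatF$ by a parallel push-off of $D_Q$ across $B$ produces a surface $\hatF'$ isotopic to $\hatF$ in $M$ and hence representing the same genus $2$ Heegaard splitting. Since $D_Q$ is disjoint from $K$, we get $|K \cap \hatF'| = t - |K \cap D_{\hatF}| < t$. Morsifying $K$ with respect to $\hatF'$ shows that the bridge number of $K$ with respect to this splitting is at most $|K \cap \hatF'|/2 < t/2$, contradicting the minimality of the bridge presentation.

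The hard part will be carefully organizing the innermost-disk analysis to ensure that $D_Q$ has interior disjoint from $F$; once that is in hand, the ball $B$ supplied by irreducibility, the surface swap producing $\hatF'$, and the clash with the minimal bridge number are all routine.
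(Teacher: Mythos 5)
Your approach is genuinely different from the paper's. The paper's proof is combinatorial: it restricts $G_F$ to the disk $\widehat{D}$ bounded by $c$ in $\hatF$, observes via (*) that neither $G_D$ nor the relevant subgraph of $G_Q$ has monogons, and then invokes Proposition~2.5.6 of \cite{cgls:dsok} (using $\Delta\geq 3$) to produce a Scharlemann cycle, which yields a lens space summand of $S^3$ or $M$. Your argument is topological, trading $D_{\hatF}$ for $D_Q$ to exhibit a thinner presentation of $K$. If it worked, it would be more elementary (no appeal to CGLS or $\Delta\geq 3$). But there are two genuine gaps.

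First, your innermost re-choice of $c$ can destroy the hypothesis you need. You take $c$ innermost in $Q$ among simple closed curves of $Q\cap F$ trivial in $Q$, so that $\Int D_Q$ is disjoint from $F$. But the original disk in $Q$ bounded by a curve trivial in both $Q$ and $\hatF$ may well contain, nested inside it, curves of $Q\cap F$ that are trivial in $Q$ yet \emph{essential} in $\hatF$ (i.e.\ meridians of $H_B$ or $H_W$ --- precisely the \situationscc\ of section~\ref{sec:scc}). The innermost one is then a meridian, it does not bound $D_{\hatF}$, and there is no $2$-sphere $\Sigma$ to talk about. Nothing in the argument rules out this possibility, and you cannot appeal to Corollary~\ref{cor:AEntscc} to handle it since that corollary is a consequence of the lemma being proved.

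Second, the final line --- ``Morsifying $K$ with respect to $\hatF'$ shows that the bridge number of $K$ with respect to this splitting is at most $|K\cap\hatF'|/2$'' --- does not follow just from counting intersection points. The number of intersections with a level surface bounds bridge number from \emph{below}, not above; to get an upper bound you must check that the arcs of $K$ on each side of $\hatF'$ are $\bdry$-parallel, so that $\hatF'$ can serve as the thick level of a bridge presentation with $|K\cap\hatF'|/2$ bridges. On the $H_B'$ side this is plausible (the old bridge disks can be surgered off $D_Q$ and into $H_B'$), but on the $H_W'=H_W\cup B$ side the arcs of $K$ become concatenations of old $H_W$-bridge arcs with the arcs of $K$ trapped in the ball $B$, and you would need to show these concatenations are still boundary-parallel. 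That claim is believable here, but it is the substance of a thinning argument, not a routine Morsification, and it is exactly what needs to be proved.
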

\begin{proof}
Otherwise let ${\widehat D} \subset \hatF$ be the disk bounded by such a 
simple closed curve. Let $G_D$ be $G_F$ restricted to $\widehat D$. 
By (**), $G_D$ is non-empty. Then
there are no 1-sided faces in $G_D$, and no 1-sided faces in the
subgraph of $G_Q$ corresponding to the edges of $G_D$. The argument of 
Proposition 2.5.6 of \cite{cgls:dsok}, 
along with the assumption that $\Delta \geq 3$,
implies that one of $G_D$ or $G_Q$ contains a Scharlemann cycle. Such
a Scharlemann cycle would imply the contradiction that either $S^3$ or
$M$ contains a lens space summand. 
\end{proof}

\begin{cor}~\label{cor:AEntscc}
Any simple closed curve of $F \cap Q$ that is trivial on $Q$ is a meridian
of either $H_W$ or $H_B$.
\end{cor}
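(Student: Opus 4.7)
My plan is an innermost disk argument. Let $c$ be a simple closed curve of $F \cap Q$ trivial on $Q$, bounding a disk $D_c \subset Q$. By Lemma~\ref{lem:AEntscc}, $c$ is essential in $\hatF$, so it suffices to produce a disk in $H_B$ or $H_W$ bounded by $c$; and $D_c$ itself is such a disk provided $\Int D_c$ is disjoint from $\hatF$.

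To arrange this, I would choose $c$ innermost in $Q$ among trivial-in-$Q$ curves of $F \cap Q$. Any simple closed curve of $F \cap Q$ lying in $\Int D_c$ is contained in the disk $D_c \subset Q$ and hence is itself trivial in $Q$, so the innermost choice forces $\Int D_c$ to contain no simple closed curve of $F \cap Q$. Arcs of $F \cap Q$ have endpoints on $\bdry F \cap \bdry Q \subset \bdry \nbhd(K')$, and so no such arc can enter $\Int D_c \subset Q$ (in particular, no arc can pass through $\Int D_c$ since its only potential exit, $\bdry D_c = c$, is itself a component of $F\cap Q$ and therefore disjoint from every other component). Hence $\Int D_c \cap \hatF = \emptyset$, placing $D_c$ in one of $H_B$ or $H_W$ and realizing $c$ as a meridian of that handlebody.

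The main obstacle is passing from an innermost trivial-on-$Q$ curve to an arbitrary one, which I would handle by a standard isotopy reduction. For a non-innermost trivial-on-$Q$ curve $c$, pick an innermost trivial-on-$Q$ curve $c' \subset \Int D_c$; by the previous paragraph $c'$ is a meridian, bounding a meridian disk $E'$ on one side of $\hatF$. Together with the subdisk $D_{c'} \subset D_c$, the sphere $D_{c'} \cup E'$ bounds a ball in $M$ by irreducibility of $M$ (which holds since $K'$ is hyperbolic and $\Delta \geq 3$, by \cite{gl:oidscyrm}), and this ball permits an isotopy of $D_c$ eliminating $c'$ from $D_c \cap \hatF$. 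Iterating, the count of inner intersections drops to zero and we are back in the innermost case. A delicate technical point is that the bounding ball might contain the core $K$; this is ruled out using that $K$ cannot be contained in a ball in $M$ under our hypotheses (otherwise the exterior of $K$ in this ball would furnish data incompatible with $K'$ being hyperbolic with $\Delta \geq 3$).
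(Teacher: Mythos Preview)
Your innermost case is fine: if $c$ is innermost among the trivial-on-$Q$ curves of $F\cap Q$, then $\Int D_c$ meets neither arcs (their endpoints lie on $\bdry Q$) nor simple closed curves of $F\cap Q$, hence $\Int D_c\cap\hatF=\emptyset$ and $D_c$ is a compressing disk for whichever handlebody it lies in.

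The reduction to the innermost case, however, has a genuine gap. You take an innermost $c'\subset\Int D_c$ and a meridian disk $E'$ for $c'$, then assert that the ball bounded by $D_{c'}\cup E'$ lets you isotope $D_c$ so as to remove $c'$ from $D_c\cap\hatF$. But the only meridian disk you have actually produced for $c'$ is $D_{c'}$ itself, lying in (say) $H_B$; the piece of $D_c$ immediately outside $c'$ then lies in $H_W$. To eliminate $c'$ by your swap you would need an $E'\subset H_W$, i.e.\ you would need $c'$ to bound a disk in $H_W$ as well --- and nothing in your argument provides that. If instead $E'\subset H_B$, the sphere $D_{c'}\cup E'$ bounds a ball inside $H_B$, and pushing $D_{c'}$ across it keeps $c'$ on $\hatF$. (Your worry about $K$ lying in the ball is a side issue; the real obstruction is that a meridian of one handlebody need not be a meridian of the other.)

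The paper sidesteps this by not trying to build the meridian disk directly. Since $c$ bounds the disk $D_c\subset Q\subset M$, the curve $c$ lies in a $3$-ball in $M$. Lemma~\ref{lem:AEGor} then says that if $c$ bounded a disk in neither $H_W$ nor $H_B$, the splitting would be weakly reducible; but the genus~$2$ splitting here is strongly irreducible. Strong irreducibility is precisely the missing ingredient --- without it, a curve on $\hatF$ can bound a disk in $M$ yet bound a disk in neither handlebody, and your inductive surgery stalls.
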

\begin{proof}
This follows immediately from Lemma~\ref{lem:AEntscc}, Lemma~\ref{lem:AEGor}
below, and the fact that $H_W \cap_{\hatF} H_B$ is a genus 2, strongly
irreducible Heegaard splitting of $M$.
\end{proof}

For Corollary~\ref{cor:AEntscc}, we need the following which generalizes
Proposition 1.5 and Lemma 2.2 of \cite{sch2}.

\begin{lemma}\label{lem:AEGor}
Let $M=H_W \cup_{\hatF} H_B$ be a Heegaard splitting, where $M$ is a closed
$3$-manifold other than $S^3$. Let $C$ be a simple closed curve in $\hatF$ such
that 
\begin{enumerate}
\item $C$ does not bound a disk in $H_W$ or $H_B$, and
\item $C$ lies in a $3$-ball in $M$. 
\end{enumerate}
Then the splitting $H_W \cup_{\hatF} H_B$ is weakly reducible.
\end{lemma}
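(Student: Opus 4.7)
The plan is to argue by contrapositive: suppose the splitting $H_W \cup_{\hatF} H_B$ is not weakly reducible, and deduce that $C$ must bound a disk in $H_W$ or $H_B$, contradicting hypothesis (1). This amounts to an extension of Scharlemann's no-nesting principle (cf.\ Proposition~1.5 and Lemma~2.2 of \cite{sch2}) from the hypothesis that $C$ bounds an embedded disk in $M$ to the weaker hypothesis that $C$ merely lies in a $3$-ball.

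First I would observe that since $M \neq S^3$ the splitting has positive genus; and if it were reducible, a reducing $2$-sphere would provide compressing disks on both sides of $\hatF$ with coincident --- hence disjoint --- boundaries, witnessing weak reducibility. So the splitting is irreducible and not weakly reducible, i.e.\ strongly irreducible; in particular $M$ itself is irreducible.

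I would then isotope the sphere $S = \partial B$ to be transverse to $\hatF$, minimizing lexicographically the number of components of $S \cap \hatF$ that are inessential on $\hatF$ and then the total $|S \cap \hatF|$. The case $S \cap \hatF = \emptyset$ is ruled out: if $\hatF \subset M-B$ then $C \subset \hatF \cap B = \emptyset$, contradicting (2); and if $\hatF \subset B$ then $S$ lies in a single handlebody (say $H_W$) and, since handlebodies are irreducible, bounds a $3$-ball inside $H_W$. Examining which side of $S$ in $H_W$ is that ball forces either $M = S^3$ or the positive-genus closed surface $\hatF$ to bound a $3$-ball --- both absurd. Hence $S$ meets $\hatF$, and every innermost disk of $S$ cut off by a curve of $S \cap \hatF$ is a compressing disk for $\hatF$ (with essential boundary, by the tiebreaker) lying in $H_W$ or $H_B$. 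If two such innermost disks were to lie on opposite sides of $\hatF$, their disjoint boundaries would witness weak reducibility --- contradiction --- so all such innermost disks from $S$ lie on a single side, WLOG $H_W$.

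Finally I would run a parallel innermost analysis on the component $F_0$ of $\hatF \cap B$ containing $C$, regarded as a properly embedded surface in the ball $B$ with boundary in $S$. Coupling this with the $H_W$-sidedness of the innermost disks from $S$, and following the skeleton of Scharlemann's argument in \cite{sch2}, one either produces a compressing disk for $\hatF$ in $H_B$ with boundary disjoint from those of the $S$-disks (certifying weak reducibility, contradiction), or the argument terminates by exhibiting $C$ itself as the boundary of a compressing disk in $H_W$ or $H_B$, directly contradicting (1). The hard part will be coordinating these two innermost analyses so that the compressing disk ultimately produced is either bounded by a curve isotopic on $\hatF$ to $C$ or can be paired with one of the $S$-disks to certify weak reducibility; this coordination is precisely the technical refinement of Scharlemann's original ``curve bounds a disk'' argument that the present ``curve in a ball'' setting demands.
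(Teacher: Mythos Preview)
Your approach differs substantively from the paper's, and the gap you yourself flag in the final paragraph is real. Once you know the $S$-innermost disks all lie in $H_W$, you have compressing disks for $\hatF$ on one side, but no direct control over $C$: the component $F_0$ of $\hatF\cap B$ containing $C$ may have positive genus, and $C$ may well be knotted in $B$, so there need be no embedded disk in $B$ (or even in $M$) with boundary $C$ to which Scharlemann's no-nesting argument applies. You would need a concrete mechanism that, starting from the $H_W$-disks on $S$ together with the surface $F_0$ in the ball, produces either an $H_B$-disk disjoint from them or a disk bounded by $C$ in some handlebody; ``following the skeleton of Scharlemann's argument'' does not supply one, and that mechanism is precisely the content that is missing.

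The paper's proof sidesteps this by never seeking a disk for $C$. It observes that $\partial B$ is essential in $M\setminus C$ (since $M\neq S^3$), so $M\setminus C$ is reducible; hence $\hatF\setminus C$ compresses in one handlebody, say $H_W$. Taking a \emph{maximal} system of such compressing disks yields a compression body $H_{W_0}\subset H_W$ whose inner boundary is a single component $G$ carrying $C$, and the remaining handlebody $H_{W_1}$ has $G$ incompressible in $H_{W_1}\setminus C$. Reducibility of $M\setminus C=(H_B\cup H_{W_0})\cup_G(H_{W_1}\setminus C)$ then forces $H_B\cup H_{W_0}$ to be reducible or $G$ to compress into it, whereupon Haken \cite{haken} or Casson--Gordon \cite{cassongordon} give that the splitting $H_{W_0}\cup_{\hatF}H_B$, hence $H_W\cup_{\hatF}H_B$, is (weakly) reducible. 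The trade is structural machinery for innermost-disk bookkeeping; if you want to push your approach through, the maximal compression of $\hatF\setminus C$ is the missing organizing idea.
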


\begin{remark}
By the uniqueness of Heegaard splittings of $S^3$, Lemma~\ref{lem:AEGor} 
also holds when $M$ is $S^3$, provided $g(\hatF) \neq 1$. 
\end{remark}

\begin{proof}
Since $M$ is not $S^3$, the boundary of the 3-ball containing $C$ is 
essential in $M - C$, so $M - C$ is reducible. Since $M-C = H_W \cup_{{\hatF}-C}
H_B$, and $H_W$ and $H_B$ are irreducible, this implies that $\hatF - C$ is 
compressible
in $H_W$ or $H_B$, say $H_W$.

Let $\mathcal D$ be a maximal (with respect to inclusion) disjoint union of
properly embedded disks in $H_W$ such that 
$\bdry \mathcal D \subset {\hatF}-C$, no
component of $\bdry \mathcal D$ bounds a disk in $\hatF-C$, and no pair of 
components
of $\bdry {\mathcal D}$ cobound an annulus in $
\hatF-C$. Note that ${\mathcal D} 
\neq \emptyset$.
Let $H_{W_0} \subset H_W$ be the compression body determined by $\mathcal D$, i.e.
$H_{W_0}$ is obtained from a regular neighborhood 
$\nbhd(\hatF \cup {\mathcal D})$ 
in $H_W$
by capping off any 2-sphere boundary components of 
$\bdry \nbhd(\hatF \cup {\mathcal D})$ with 3-balls in $H_W$. Let 
$\bdry_{\_}H_{W_0} =  
{\bdry H_{W_0}} -\hatF$. Since $C$ does not bound a disk in $H_W$ by hypothesis, 
$C$ is
not contained in any 2-sphere component of 
$\bdry \nbhd(\hatF \cup {\mathcal D})$. 
By the maximality of $\mathcal D$, it follows that $\bdry_{\_}H_{W_0}$ has 
exactly one
component, $G$, say, $C$ is contained in $G$, and no component of $\bdry 
\mathcal D$
bounds a disk in $\hatF$. 

Let $H_{W_1} \subset H_W$ be the handlebody bounded by $G$, and isotop $C$ into
$\Int H_{W_1}$. By the maximality of $\mathcal D$, $G$ is incompressible
in $H_{W_1} - C$. This together with the irreducibility of $H_{W_1}$, implies
also that $H_{W_1} - C$ is irreducible. Let $M_0 = H_B \cup H_{W_0}$.
Since $M-C \cong M_0 \cup_G (H_{W_1} - C)$ is reducible, either $M_0$ is
reducible or $G$ is compressible in $M_0$. This implies that the splitting
of $M_0$ given by $H_{W_0} \cup_{\hatF} H_B$ is reducible or weakly reducible, 
by \cite{haken} or
\cite{cassongordon}, respectively. Hence the same holds for 
$H_W \cup_{\hatF} H_B$.
\end{proof}

Many of the arguments in later sections 
naturally divide themselves
into the two basic cases:
\begin{itemize}
\item {\bf \situationnscc}: There are no closed curves of $Q \cap F$ 
in the interior of disk
faces of $G_Q$. In the later sections, this assumption will allow us
to think of the faces
of $G_Q$ as disks in a Heegaard handlebody of $M$. 
\item
{\bf \situationscc}: There are closed curves of $Q \cap F$ in the interior of 
disk faces of $G_Q$.  By Corollary~\ref{cor:AEntscc}, any such curve must 
be non-trivial on $\hatF$ 
and bound a disk on one side of $\hatF$. A disk face of $G_Q$ containing 
such a curve does not sit in one Heegaard handlebody of $M$, hence some of
the arguments applied in \situationnscc\ will not apply. However, an 
innermost such curve will supply a meridian disk $D$ of either
$H_W$ or $H_B$ which is disjoint from both $K$ and $Q$. This places 
strong restrictions on $G_F$ and yet the combinatorics of the faces of 
$G_Q$ remain the same. Also, one can usually think of the faces of 
$G_Q$ then as living in the exterior of $\hatF$ surgered along $D$. Together,
these facts allow simpler, though somewhat different arguments in 
\situationscc.

\end{itemize}

\section{Scharlemann cycles,  (forked) extended Scharlemann cycles, and
long  \mobius bands}\label{sec:escandlongmb}

\subsection{\SC, \ESC, \FESC}\label{sec:scycles}

A {\em Scharlemann cycle (of length $n$)} is a disk face of $G_Q$ or $G_F$
with $n$ edges, all with the same labels \{$a,b$\}, and all connecting 
parallel vertices of the graph. We use the same term for the set of edges
defining the face. Typically, the Scharlemann cycles considered in this paper
are on
$G_Q$ and of length $2$, so we designate such by the abbreviation \SC. For
specificity, an $\arc{ab}$-\SC\ is one whose edges have labels $\{a,b\}$.
A $\arc{23}$-\SC\ whose corners are on the vertices $x$ and $y$ 
is depicted in Figure~\ref{fig:basicscharlemanncycles}(a).  Though it is 
a rectangle, by virtue of alternatingly naming its sides `corners' and 
`edges', we call it a {\em bigon}.  A {\em Scharlemann cycle of length $3$} is
 shown in Figure~\ref{fig:basicscharlemanncycles}(b).
 Its face is a {\em trigon}. 

\begin{figure}
\centering
\input{basicsharlemanncycles.pstex_t}
\caption{}
\label{fig:basicscharlemanncycles}
\end{figure}

For $n \ge 0$, an {\em $n$-times extended Scharlemann cycle of length $2$}, 
abbreviated
$n$-\ESC, is a set of $2(n+1)$ adjacent parallel edges and the $2n+1$ bigon faces they delineate  between two parallel vertices of a fat vertexed graph such that the central bigon is a Scharlemann cycle of length $2$. This central bigon
is referred to as the {\em core Scharlemann cycle} for the $n$-\ESC.  
When $n>0$, we sometimes refer to an $n$-\ESC\
as simply an ``extended Scharlemann cycle'', abbreviated as ``\ESC''.  
Figure~\ref{fig:basicscharlemanncycles}(c) shows a $2$-\ESC\ on the 
corner $\arc{t12345}$. An $n$-\ESC\ is called {\em proper} if in its corner 
no label appears more than once.   As with \SCs, to emphasize the labels along the corner of an \ESC, we will also call an \ESC\ on the corner $\arc{t123}$, for example, an $\arc{t123}$-\ESC.

A {\em forked $n$-times  extended Scharlemann cycle} is an $(n-1)$-times 
extended Scharlemann cycle of length $2$ with an extra bigon and trigon at 
its two ends.   Figure~\ref{fig:basicscharlemanncycles}(d) shows a forked 
$1$-time extended Scharlemann cycle.  In this paper, a ``forked extended 
Scharlemann cycle,'' which is abbreviated ``\FESC,''  means a forked $1$-time
extended Scharlemann cycle.

We will often use the letters $\sigma$ and $\tau$ to refer to the sets of edges of these various sorts of \SCs\ and the letters $f$, $g$, and $h$ to refer to the faces within them.

\subsection{Almost properly embedded surfaces, long 
\mobius bands}\label{sec:ape}

\begin{defn} Let $H$ be a handlebody on one side of $\hatF$.
A surface, $A$, in $M$ is 
{\it almost properly embedded in $H$} if
\begin{enumerate}
\item $\bdry A \subset \hatF$ and $A$ near $\partial A$ lies in $H$;
\item $\Int A$ is transverse to $\hatF$ and 
$A \cap \hatF$ consists of $\partial A$ along with a collection of simple
closed curves, referred to as $\partial_I A$. Each component of $\partial_I A$ 
is trivial in $A$, essential in $\hatF$, and bounds a disk on one side of
$\hatF$ (i.e.\ is a meridian for $H_W$ or $H_B$).
\end{enumerate}
\end{defn}

We use the disk faces of $G_Q$ to build almost properly embedded surfaces
in $H_W, H_B$.

Assume $\arc{23}$ is a White arc of $K \cap H_W \subset M$.  By $\nbhd(\arc{23})$ we indicate the closed $1$-handle neighborhood $I \times D^2$ of $\arc{23} \subset H_W$ that is a component of $H_W - \Int (M-\nbhd(K))$.

\begin{figure}
\centering
\input{basicmobius.pstex_t}
\caption{}
\label{fig:basicmobius}
\end{figure}
   Let $g$ be the bigon face of a $\arc{23}$-\SC\ of $G_Q$ shown in Figure~\ref{fig:basicmobius}(a).  Then in $M$ the two corners of $g$  both run along the $1$-handle $\nbhd(\arc{23}) \subset H_W$ extending radially to the $\arc{23}$-arc of $K$.  This forms a White \mobius band $A_{23} = g \cup \arc{23}$.  Refer to Figure~\ref{fig:basicmobius}(b).  If $\Int g$ is disjoint from $\hatF$, then $A_{23}$ is properly embedded in $H_W$; otherwise, by 
Corollary~\ref{cor:AEntscc}, it is almost properly embedded
in $H_W$.
  
\begin{figure}
\centering
\input{basicextendedmobius.pstex_t}
\caption{}
\label{fig:basicextendedmobius}
\end{figure}   
Assume the two Black $\arc{12},\!\arc{34}$-bigons $f$ and $h$ flank $g$ as in Figure~\ref{fig:basicextendedmobius}(a).  Identifying their corners to the arcs $\arc{12}$ and $\arc{34}$ of $K$ accordingly in $M$ forms a Black annulus $A_{12,34} = f \cup \arc{12} \cup h \cup \arc{34}$, which by Corollary~\ref{cor:AEntscc}
is almost properly embedded in $H_B$.  As $\bdry A_{23}$ is a 
component of $\bdry A_{12,34}$, together $A_{23} \cup A_{12,34}$ is a 
\mobius band.  
We regard it as a {\em long \mobius band} where the annulus $A_{12,34}$ 
extends the \mobius band $A_{23}$.  See Figure~\ref{fig:basicextendedmobius}(b).  Note that the arc $\arc{1234}$ is a spanning arc of the long \mobius band.

More generally, given $\sigma$, an $(n-1)$-times \ESC\ ($n\geq2$), 
we may again 
form a long \mobius band $A_1 \cup A_2 \cup \dots \cup A_n$  where $A_1$  is an
almost properly embedded \mobius band arising from the core Scharlemann cycle 
and each 
$A_i$, $i\geq2$, is an (almost properly embedded) extending annulus formed 
from successive pairs of 
flanking bigons.  The $A_i$ with odd indices $i$ will have one color and those with even indices will have the other color.  
Let $a_i$ denote the boundary component $\bdry A_i \cap \bdry A_{i+1}$.   
Let $a(\sigma) = \{a_i | i=1, \dots, n-1\}$. Denote by $L(\sigma)$, the label
set for $\sigma$, the set of labels appearing on a corner of $\sigma$. The
{\em core labels} for $\sigma$ are the two labels of its core Scharlemann
cycle. For
example, if $\sigma$ is as in Figure~\ref{fig:basicextendedmobius}(a),
$L(\sigma) = \{1,2,3,4\}$ and the core labels of $\sigma$ are $\{2,3\}$.

Generically we will use this notation, $A_1 \cup A_2 \cup \dots \cup A_n$,
for a long \mobius band and its constituent annuli and \mobius band, but when 
$n \leq 3$ we will often use the notation $A_{23}, A_{12,34}, \dots$
described above to 
emphasize the arc of $K$ on the long \mobius band or its constituent annuli.  

The consideration of long \mobius bands
falls into two basic contexts (see section~\ref{sec:ape}):
\begin{itemize}
\item {\bf \situationnscc}: There are no closed curves of $Q \cap F$ 
in the interior of disk
faces of $G_Q$.  Thus the annuli, \mobius band constituents of a long \mobius 
band are each properly embedded in $H_W$ or $H_B$.
\item
{\bf \situationscc}: There are closed curves of $Q \cap F$ in the interior of 
disk faces of $G_Q$.  By Corollary~\ref{cor:AEntscc}, any such curve must 
be non-trivial on $\hatF$ 
and bound a disk on one side of $\hatF$. In this case the annuli, \mobius
band constituents of a long \mobius band are each almost properly embedded
on one side of $\hatF$. Furthermore, there is a meridian disk $D$ of either
$H_W$ or $H_B$ which is disjoint from both $K$ and $Q$.
\end{itemize}

The fact that in \situationscc, the constituent annuli of the long \mobius
bands are almost properly embedded rather than properly embedded, complicates
the picture of these surfaces. On the other hand, the existence of the meridian
disk $D$ in this case (disjoint from $Q$), greatly restricts what $Q$ can look like, and usually
simplifies the arguments considerably.

We finish this subsection by describing some properties of long \mobius bands.

\begin{lemma}\label{lem:LMBess}
Let $\sigma$ be an $n$-\ESC, $n \geq 0$. Then no component of $a(\sigma)$ 
bounds a disk on either side of $\hatF$.
\end{lemma}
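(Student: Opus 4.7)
The plan is to argue by contradiction: suppose some component $a_j \in a(\sigma)$ bounds a disk $D$ on one side of $\hatF$. The key observation is that the truncated long \mobius band $M_j := A_1 \cup A_2 \cup \cdots \cup A_j$ is itself an (almost properly) embedded \mobius band in $M$ with boundary $\partial M_j = a_j$, which follows by induction on $j$: $A_1$ is a \mobius band with boundary $a_1$, and each subsequent annulus $A_k$ is attached to $M_{k-1}$ along $a_{k-1}$, leaving the new boundary $a_k$. Capping $M_j$ off with $D$ then produces a closed non-orientable surface $\Sigma := M_j \cup D$ of Euler characteristic $\chi(M_j)+\chi(D)=0+1=1$, topologically a projective plane.

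The bulk of the work is to realize $\Sigma$ as an \emph{embedded} $\RP^2$ in $M$ by standard innermost-curve arguments applied to the intersections $D \cap \operatorname{int}(M_j)$. Since $\partial D = a_j$ already lies on $\partial M_j$, any remaining intersections are simple closed curves in the interiors of both surfaces. After isotoping $D$ to minimize $|D \cap \operatorname{int}(M_j)|$, choose an innermost intersection curve $\gamma \subset D$ bounding a sub-disk $D' \subset D$ with $\operatorname{int}(D') \cap M_j = \emptyset$. If $\gamma$ is trivial in $M_j$, it caps off with a disk $E \subset M_j$ to form a sphere bounding a ball (irreducibility of $M$), through which we isotope to reduce intersections. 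If $\gamma$ is essential and parallel to $a_j$, the annulus in $M_j$ between $\gamma$ and $a_j$ combines with $D'$ to yield a new disk bounded by $a_j$ with strictly fewer intersections, and we iterate. The interior intersections $\partial_I M_j$ of $M_j$ with $\hatF$, present only in \situationscc, are each trivial on $M_j$ and bound meridian disks on one side of $\hatF$ disjoint from $K$ and $Q$ (Corollary~\ref{cor:AEntscc}), and are handled by the same techniques.

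The main obstacle is the remaining case, where $\gamma$ is isotopic to the one-sided core of $M_j$. Here $\gamma$ is $1$-sided in $M_j$ but $2$-sided in $M$ (by orientability), and bounds the disk $D'$ in $M$. Consequently the boundary $\tilde\gamma$ of a small \mobius-band neighborhood of $\gamma$ in $M_j$ represents $2[\gamma]=0$ in $\pi_1(M)$, so by Dehn's lemma $\tilde\gamma$ bounds an embedded disk $\tilde D$ in $M$. The \mobius neighborhood of $\gamma$ in $M_j$ together with $\tilde D$ then directly produces an immersed $\RP^2$; after the usual innermost cleanup of $\tilde D$ against this (now strictly smaller) \mobius sub-band we obtain an embedded projective plane in $M$.

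In every case, then, one realizes an embedded $\RP^2$ in $M$. Since $K'$ is hyperbolic and $\Delta(\gamma,\mu)\geq 3$, the Remark preceding the proof of Theorem~\ref{thm:main} (citing \cite{gl:oidscyrm, cgls:dsok}) asserts that $M$ contains no embedded projective plane, giving the desired contradiction. Hence no component of $a(\sigma)$ bounds a disk on either side of $\hatF$.
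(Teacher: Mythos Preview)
Your approach is the paper's --- cap the truncated long M\"obius band $M_j$ with the disk $D$ to obtain an embedded $\RP^2$, contradicting the hypotheses on $M$ --- but you supply embedding details that the paper's two-line proof leaves implicit. Cases (a) and (b) of your innermost-curve analysis are fine; indeed (b) finishes outright without iteration, since if $\gamma$ is two-sided in $M_j$ and parallel to $a_j$, the complementary M\"obius band $M_j' := M_j \setminus A$ satisfies $M_j' \cap D' = \gamma$ exactly (as $D'\cap M_j=\gamma$ and $M_j'\subset M_j$), so $M_j' \cup D'$ is already an embedded $\RP^2$.

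Your case (c), however, does not work as written: the disk $\tilde D$ produced by Dehn's lemma bears no controlled relation to the original data, so ``strictly smaller M\"obius sub-band'' is not a well-founded complexity measure and the recursion need not terminate. Fortunately this case is vacuous. If $\gamma$ is one-sided in $M_j$ with $D' \cap M_j = \gamma$, restrict to the solid torus $V = \nbhd_M(\gamma)$: on $\partial V$ the curve $\partial(D' \cap V)\setminus\gamma$ represents $[\gamma]$ in $H_1(V) \cong \mathbb{Z}$ (via the annulus $D'\cap V$), while $\partial(M_j \cap V)$ is the boundary of a M\"obius band with core $\gamma$ and so represents $2[\gamma]$. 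These two curves are disjoint on $\partial V$ (since $D'\cap M_j=\gamma\subset\Int V$) and essential, hence parallel, forcing $[\gamma]=\pm 2[\gamma]$ in $\mathbb{Z}$ --- impossible. Thus only cases (a) and (b) occur, and with this correction your argument is complete.
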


\begin{proof}
Otherwise, the long \mobius band corresponding to $\sigma$ coupled
with the meridian disk bounded by the component of $a(\sigma)$ can
be used to create an embedded projective plane in $M$. Since $M$ is 
$K'(\gamma)$, 
where $\Delta \geq 3$, this contradicts either \cite{gl:oidscyrm} or
\cite{cgls:dsok}.
\end{proof}

\begin{lemma}\label{lem:PLMB}
Let $\sigma$ be a proper $n_1$-\ESC\ and $\tau$ a proper $n_2$-\ESC\ of $G_Q$.
If there are components $a_{\sigma},
a_{\tau}$ of $a(\sigma),a(\tau)$ (resp.) that are isotopic on $\hatF$,
then $\sigma,\tau$ have the same core labels.

{\em Addendum:} Let $D$ be a meridian disk of $H_B,H_W$ disjoint from 
$K$ and $Q$. Let $F^*$ be $\hatF$ surgered along $D$. 
If components $a_{\sigma},
a_{\tau}$ of $a(\sigma),a(\tau)$ (resp.) are isotopic on $F^*$,
then $\sigma,\tau$ have the same core labels.
\end{lemma}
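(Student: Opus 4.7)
The plan is to derive a contradiction with the absence of an embedded Klein bottle in $M = K'(\gamma)$ --- a prohibition that holds under $\Delta(\gamma,\mu)\geq 3$ by \cite{gl:dsokcetII}, as noted in the Remark before the proof of Theorem~\ref{thm:main}. First, I write $a_\sigma = a_p$ and $a_\tau = a_q$, and let $M_\sigma = A_1\cup\cdots\cup A_p$ and $M_\tau = A_1'\cup\cdots\cup A_q'$ be the sub-unions of the respective long \mobius bands. An easy induction (a \mobius band glued to an annulus along a single boundary circle is again a \mobius band) shows that $M_\sigma, M_\tau$ are themselves \mobius bands with $\bdry M_\sigma = a_\sigma$ and $\bdry M_\tau = a_\tau$.

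Using the given isotopy, I identify $a_\sigma = a_\tau = a$ on $\hatF$. Abstractly the union $M_\sigma \cup_a M_\tau$ is a closed non-orientable surface of Euler characteristic zero, i.e., a Klein bottle. If this union can be realized as an embedded surface in $M$, the Klein bottle prohibition gives a contradiction. The obstruction to embedded realization is interior intersection of $M_\sigma$ with $M_\tau$, and I would argue that any essential such intersection must involve shared pieces of the two long \mobius bands --- in particular, equal core \mobius bands $A_1 = A_1'$, which would force the core arcs $\arc{r_\sigma, r_\sigma + 1}$ and $\arc{r_\tau, r_\tau + 1}$ of $K$ to coincide and hence the core labels of $\sigma$ and $\tau$ to match.

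The main obstacle is making rigorous the cleanup of interior intersections. Innermost-disk and outermost-arc surgery should remove intersections in simple closed curves or arcs, using Corollary~\ref{cor:AEntscc} to control simple closed curves of $F \cap Q$ (in \situationscc) and the irreducibility of $M$ and of the Heegaard handlebodies. Properness of $\sigma$ and $\tau$ ensures that the $2(n+1)$ labels in each \ESC's corner are distinct, giving a clean combinatorial structure on the pieces $A_i, A_j'$ built out of bigon faces of $G_Q$ and arcs of $K$. Once cleaned up, either one obtains an embedded Klein bottle (contradiction, so the core labels of $\sigma$ and $\tau$ must agree) or the cleanup collapses $M_\sigma$ onto $M_\tau$ along the core arc of $K$, which directly identifies the core labels.

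For the addendum, the argument proceeds identically with $F^*$ in place of $\hatF$: since $F^*$ is $\hatF$ surgered along the meridian disk $D$ which is disjoint from both $K$ and $Q$, one has $F^* \subset M$ and any isotopy of $a_\sigma$ to $a_\tau$ on $F^*$ is already an isotopy in $M$. The long \mobius bands $M_\sigma, M_\tau$ can be arranged disjoint from $D$ (their construction uses only faces of $G_Q$ and arcs of $K$), and the Klein bottle construction in $M$ is unaffected by the passage from $\hatF$ to $F^*$, yielding the same conclusion.
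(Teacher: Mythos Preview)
Your overall strategy---build a Klein bottle from the two long \mobius bands and invoke the prohibition on Klein bottles---is exactly the paper's strategy. The gap is in the step you describe as ``cleanup of interior intersections.''

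The faces of $G_Q$ making up $M_\sigma$ and $M_\tau$ are disjoint, so all intersections between the two long \mobius bands occur in $\nbhd(K)$, where the corners of the faces are extended to arcs of $K$ via rectangles $E_\sigma$ and $E_\tau$. These rectangles are determined by the corner arcs $\alpha,\beta$ of $\sigma$ and $\alpha',\beta'$ of $\tau$ on $\bdry\nbhd(K)$. Whether $E_\sigma$ and $E_\tau$ can be chosen disjoint depends on the cyclic order of $\{\alpha,\beta,\alpha',\beta'\}$ around each vertex of $G_F$ where the label sets of $\sigma$ and $\tau$ overlap. Innermost-disk and outermost-arc surgery does not help here: the intersections are arcs running the full length of the rectangles, not inessential curves. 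Your claim that ``any essential such intersection must involve shared pieces'' (and hence force $A_1 = A_1'$) is not justified---the rectangles can cross transversely in $\nbhd(K)$ without any face being shared.

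The paper's proof handles this by a case analysis on how $L(\sigma)$ and $L(\tau)$ overlap. In each case one uses the hypothesis that $a_\sigma$ and $a_\tau$ are isotopic on $\hatF$ to deduce that certain components of $a(\sigma)$ must meet $a_\tau$ \emph{non-transversely} at a shared vertex; this non-transversality is precisely the statement that $\{\alpha,\beta\}$ do not separate $\{\alpha',\beta'\}$ around that vertex, which is what allows $E_\sigma$ and $E_\tau$ to be chosen disjoint there. One case (the paper's Case~IV$'$, where the core labels are ``antipodal'') cannot always be reduced to disjointness and requires a separate argument surgering a self-intersecting Klein bottle along its double curve. Your proposal does not supply any of this structure, so as written it does not close.
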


\begin{proof}
The argument for the Addendum is the same as that for the Lemma with $F^*$ replacing
$\hatF$, so we give only the argument for the Lemma itself.

For the proof of this Lemma, we use $\ESC$ to refer to an $n$-\ESC\ for
which $n \ge 0$.
Let $A(\sigma),A(\tau)$ be the long \mobius bands corresponding to
$\sigma,\tau$. By possibly working with \ESCs\ within $\sigma,\tau$,
we may assume $a_{\sigma} = \bdry A(\sigma)$ and $a_{\tau} = \bdry A(\tau)$.
We write $A(\sigma) = E_{\sigma} \cup F_{\sigma}, A(\tau) = 
E_{\tau} \cup F_{\tau}$ where $F_{\sigma},F_{\tau}$ is the union of
faces of $\sigma,\tau$ (resp.) (thought of as disks in, $X_K$, 
the exterior of $K$)
and where $E_{\sigma},E_{\tau}$ are rectangles in $\nbhd(K)$ describing
an extension of $F_{\sigma},F_{\tau}$ across $\nbhd(K)$ to form the
long \mobius band. Thus, $\bdry E_{\sigma} \cap \bdry X_{K} = \bdry F_{\sigma} 
\cap \bdry X_{K}$ and $\bdry E_{\tau} \cap \bdry X_{K} = \bdry F_{\tau} 
\cap \bdry X_{K}$. In all but Case IV' below (and Case II when $\sigma,\tau$
have the same core labels), we will choose $E_{\sigma},E_{\tau}$ to be
disjoint, making $A(\sigma),A(\tau)$ disjoint long \mobius bands whose
boundaries are isotopic on $\hatF$. Such long \mobius bands can be used
to construct an embedded projective plane or Klein bottle in $M$ (note
that each component of $A(\sigma) \cap \hatF$ is either a 
component of $a(\sigma)$ or
a meridian of $H_W,H_B$; the same for $A(\tau)$). This 
contradicts either  \cite{cgls:dsok}, \cite{gl:oidscyrm}, or
\cite{gl:dsokcetI}. Thus in each case below, it suffices to show how to
construct the desired $E_{\sigma}, E_{\tau}$.

Let $\{x,z\}$ be the extremal labels of $\sigma$, and $\{y,w\}$ the extremal 
labels of $\tau$. Let $\{\alpha, \beta\}$ be the corners of $\sigma$,
$\{\alpha',\beta'\}$ the corners of $\tau$ (thought of as arcs in $\bdry
X_K$). See Figure~\ref{fig:plmb1}.

\begin{figure}[h]
\centering
\input{plmb1.pstex_t}
\caption{}
\label{fig:plmb1}
\end{figure}

Let $L(\sigma), L(\tau)$ denote the label set of $\sigma, \tau$.

\medskip

{\bf Case I:} $L(\sigma) \cap L(\tau) = \emptyset$

\medskip

In this case $E_{\sigma}, E_{\tau}$ are automatically disjoint, and 
$A(\sigma), A(\tau)$ can be used
to construct the forbidden projective plane or Klein bottle.

\medskip

{\bf Case II:} $L(\tau) \subset L(\sigma)$

\medskip

We may assume that, say, $y \neq x,z $. 
Let $b_{\sigma}$ be the component of 
$a(\sigma)$ through vertex $y$ -- connecting $y$ to another vertex $r$.
If $r=w$, then $\sigma,\tau$ have the same core labels and we are done.

Thus we assume $r \neq y,w,x,z$ ($r \neq y$  by the Parity Rule). 
Then $b_{\sigma}$ intersects $a_{\tau}$
in a single point (at the vertex $y$). Since $b_{\sigma}$ is disjoint from
$a_{\sigma}$, and $a_{\sigma}$ is isotopic to $a_{\tau}$ on $\hatF$,
$b_{\sigma}$ must intersect $a_{\tau}$ tangentially. That is, as one transverses
around (fat) vertex $y$ of $G_F$ the labels $\{\alpha, \beta\}$ are not
separated by the labels $\{ \alpha', \beta' \}$. Thus in $\nbhd(K)$, we
may choose disjoint $E_{\sigma},E_{\tau}$ as pictured in Figure~\ref{fig:plmb2} 
(thereby making $A(\sigma),A(\tau)$ disjoint).

\begin{figure}[h]
\centering
\input{plmb2.pstex_t}
\caption{}
\label{fig:plmb2}
\end{figure}

\medskip

{\bf Case III:} $L(\sigma) \cap L(\tau)$ is a single interval of labels
$\arc{xy}$ (including a point interval).

\medskip

First we consider the case of a point interval, that is, when $x=y$ (and
Case II does not hold).
Then $a_{\sigma},a_{\tau}$ intersect in a single point (at vertex $x=y$).
As $a_{\sigma},a_{\tau}$ are isotopic on $\hatF$, they must be non-transverse
around vertex $x$ on $G_F$. This means that as one reads around vertex
$x$ on $G_F$, labels $\{\alpha, \beta\}$ do not
separate the labels $\{\alpha', \beta'\}$. We choose disjoint $E_{\sigma},
E_{\tau}$ as pictured in Figure~\ref{fig:plmb4} (with $x=y$), making $A(\sigma),A(\tau)$
disjoint.

Thus we may assume that $\{x,z\} \cap \{y,w\} = \emptyset$. Let $b_{\sigma}$
be the component of $a(\sigma)$ through vertex $y$. Then $b_{\sigma}$ intersects
$a_{\tau}$ in a single point (at $y$). Again $b_{\sigma}$ is disjoint from
$a_{\sigma}$ which is isotopic to $a_{\tau}$, so $b_{\sigma}$ must intersect
$a_{\tau}$ tangentially. Thus, as one transverses vertex $y$ in $G_F$ the 
$\{\alpha,\beta\}$ labels do not separate $\{\alpha',\beta'\}$. We then
may choose disjoint $E_{\sigma},E_{\tau}$ as pictured in Figure~\ref{fig:plmb4}.

\begin{figure}[h]
\centering
\input{plmb4.pstex_t}
\caption{}
\label{fig:plmb4}
\end{figure}

\medskip

{\bf Case IV:} $L(\sigma) \cap L(\tau)$ contains all labels of $G_Q$, and
$L(\sigma)$ overlaps $L(\tau)$ in two intervals of labels: $\arc{xy}$ and
$\arc{wz}$. See Figure~\ref{fig:plmb5}.

\begin{figure}[h]
\centering
\input{plmb5.pstex_t}
\caption{}
\label{fig:plmb5}
\end{figure}

\medskip

If $\{x,z\} = \{y,w\}$ then $\bdry A(\sigma), \bdry A(\tau)$ are isotopic
on $\hatF$ and both go through vertices $x,z$ of $\hatF$. Thus $A(\sigma),
A(\tau)$ can be amalgamated along their boundary to create an embedded
Klein bottle.

Next assume that $x=y$ but $z \neq w$. Let $b_{\sigma}$ be the component of 
$a(\sigma)$ through vertex $w$. As $b_{\sigma}$ is disjoint from $a_{\sigma}$
and intersects $a_{\tau}$ once at $w$, $b_{\sigma}$ and $a_{\tau}$ intersect
non-transversely. Thus around vertex $w$, the labels $\{ \alpha, \beta \}$ do 
not separate $\{ \alpha', \beta' \}$. Similarly, as $a_{\sigma}, a_{\tau}$
intersect in a single point at vertex $x$ and yet are isotopic, their
intersection is non-transverse. That is, around vertex $x$, the labels 
$\{ \alpha, \beta \}$ do 
not separate $\{ \alpha', \beta' \}$. Figure~\ref{fig:plmb7} (with $x=y$) shows that we
can choose disjoint $E_{\sigma},E_{\tau}$.

Thus we may assume $\{x,z\} \cap \{y,w\} = \emptyset$. Let $b_{\sigma}$ be
the component of $a(\sigma)$ through vertex $y$, and let $r$ be the other 
vertex of $G_F$ to which $b_{\sigma}$ is incident. Then $r \neq x,z$.

Assume $r \neq w$. Then as $b_{\sigma}$ intersects $a_{\tau}$ once and is 
disjoint from $a_{\sigma}$, it must intersect $a_{\tau}$ non-transversely.
That is, around vertex $y$ the labels $\{ \alpha, \beta \}$ do not
separate $\{ \alpha', \beta' \}$. Let $c_{\sigma}$ be the component of
$a(\sigma)$ through vertex $w$. Again, $c_{\sigma}$ must intersect $a_{\tau}$
non-transversely at $w$. Hence around $w$ in $G_F$, the labels 
$\{ \alpha, \beta \}$ do not separate $\{ \alpha', \beta' \}$. Thus we
may choose disjoint disks $E_{\sigma},E_{\tau}$ in $\nbhd(K)$ as pictured
in Figure~\ref{fig:plmb7}.

\begin{figure}[h]
\centering
\input{plmb7.pstex_t}
\caption{}
\label{fig:plmb7}
\end{figure}

This leaves us with the case that $r = w$, whose argument is slightly different
from the preceding ones.

{\bf Case IV':} In Case IV above $r=w$.

\medskip

This is the case when the core labels of $\sigma,\tau$ are ``antipodal''
labels. Let $b_{\sigma}$ be the component of $a(\sigma)$ through vertices
$y$ and $w$ of $G_F$. Then $b_{\sigma}$ and $a_{\tau}$ intersect twice.
Since $b_{\sigma}$ is disjoint from $a_{\sigma}$ which is isotopic to $a_{\tau}$,
the algebraic intersection number of $b_{\sigma}$ and $a_{\tau}$ is $0$.
Thus we can choose $E_{\sigma}, E_{\tau}$ in $\nbhd(K)$ so that they are
either (1) disjoint or (2) intersect in exactly two arcs. 
See Figure~\ref{fig:plmb8}. This follows since the labels $\{\alpha,\beta\}$
must separate $\{\alpha',\beta'\}$ either (1) around neither vertices $y,w$
or (2) around both vertices $y,w$.

\begin{figure}[h]
\centering
\input{plmb8.pstex_t}
\caption{}
\label{fig:plmb8}
\end{figure}

Let $B$ be the annulus on $\hatF$ between $a_{\sigma}$ and $a_{\tau}$. If 
$\Int B$ contains a vertex $u$ of $G_F$ (i.e.\ a vertex other than
$x,y,w,z$), then there must be another, $v$, such that $u,v$ lie on the
same component of $a(\sigma)$ or $a(\tau)$. If only one, say $a(\sigma)$,
then we let $\sigma'$ be the \ESC\ within $\sigma$ on labels $u,v$. Then
we may apply the argument of Case I to $\sigma',\tau$. If $u,v$ lie on 
components of both $a(\sigma)$ and $a(\tau)$, then we let $\sigma',\tau'$
be the \ESC\ within $\sigma,\tau$ (resp.) with labels $\{u,v\}$. We apply
the argument at the beginning of Case III (when $\{x,z\}=\{y,w\}$).

Thus we may assume $\Int B$ is disjoint from the vertices of $G_F$. 
Consider $A(\sigma)=E_{\sigma} \cup F_{\sigma}, A(\tau)=E_{\tau} \cup F_{\tau}$
where $F_{\sigma},F_{\tau}$ is the union of faces of $\sigma,\tau$. Then
$A(\sigma),A(\tau)$ are either (1) disjoint or (2) intersect in two double
arcs (from $x$ to $y$ and $w$ to $z$ along $K$). If (1), $M$ contains 
an embedded Klein bottle. If (2), 
$S=A(\sigma) \cup B \cup A(\tau)$ is a Klein bottle that 
self-intersects in a single double-curve (note that the core
of $B$ cannot be a meridian of $H_W,H_B$ else we obtain a projective
plane from $A(\sigma)$. Thus we may assume $A(\sigma),A(\tau)$ are 
disjoint from $B$ except along $a(\sigma),a(\tau)$). The two preimage curves
are disjoint from the cores of each of $A(\sigma), A(\tau)$, and $B$,
and consequently bound disjoint disks, \mobius bands in the pre-image.
We may surger along the double curve to
obtain an embedded projective plane or Klein bottle in $M$.
\end{proof}

\section{Combinatorics}\label{sec:combin}

Let $G_Q,G_F$ be the graphs of intersections defined in the proof of
Theorem~\ref{thm:main}.

\subsection{Great webs.}\label{sec:greatwebs}

  Say a label around a vertex of a subgraph $\Lambda$ of $G_Q$ is a {\em ghost label} of $\Lambda$ if no edge of $\Lambda$ is incident to the vertex at that label.  A {\em ghost edge} is an edge of $G_Q$ incident to a ghost label.  Let $\ell$ denote the number of ghost labels, or equivalently the number of ghost edges counted with multiplicity.  Recall $t = |K \cap \hatF|$ and hence is the number
of vertices of $G_F$.

  A {\em $g$-web} $\Lambda$ is a connected subgraph of $G_Q$ whose vertices are parallel (section~\ref{sec:fatgraphs}) and has at most $t+2g-2$ ghost labels: $\ell \leq t+2g-2$.  If $U$ is a component of $\widehat{Q}-\Lambda$ then we say $D = \widehat{Q}-U$ is a {\em disk bounded by $\Lambda$}.  A {\em great $g$-web} is a $g$-web $\Lambda$ such that there is a disk bounded by $\Lambda$ containing only vertices of $\Lambda$.  When $\Lambda$ is a great $g$-web this disk is unique (since there must be vertices of $G_Q$ anti-parallel to those in $\Lambda$) and so we say it is {\em the} disk bounded by $\Lambda$.

For each label $x$, the subgraph of a great $g$-web $\Lambda$ consisting of all vertices of $\Lambda$ and edges of $\Lambda$ with an endpoint labeled $x$ is denoted $\Lambda_x$.
  Say an $x$-label on a vertex of $\Lambda_x$ is a {\em ghost $x$-label} if the edge incident to it does not belong to $\Lambda_x$.  Let $\ell_x$ denote the total number of ghost $x$-labels of $\Lambda_x$.  Observe that a ghost $x$-label of $\Lambda_x$ is a ghost label of $\Lambda$.

   Given a great $g$-web $\Lambda$ and a disk $D$ bounded by $\Lambda$ containing only vertices of $\Lambda$, the disk $D_\Lambda$ that is the closure of $\widehat{Q} -D$ is the {\em outside face} of $\Lambda$; all other faces of $\Lambda$ are {\em ordinary faces} and are contained in $D$.  A corner 
(section~\ref{sec:fatgraphs}) of a vertex $v$ of $\Lambda$ is {\em outside} or {\em ordinary} according to whether it is the corner of an outside or ordinary face.   A vertex $v$ of $\Lambda$ is an {\em outside} vertex if and only if it has an outside corner, otherwise it is an {\em ordinary} vertex.

\begin{lemma}[\cite{gordon:cmids}, Theorem~6.1]\label{lem:greatweb} Since $\Delta \geq 3 > 2$ and $\hatF$ has genus $2$, then $G_Q$ contains a great $2$-web $\Lambda$.
\end{lemma}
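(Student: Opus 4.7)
The plan is to apply Theorem~6.1 of \cite{gordon:cmids} directly, after verifying that our setting satisfies its hypotheses. The graph $G_Q$ lives on $\hatQ=S^2$, and the vertices of $G_Q$ partition into two parallelism classes of equal size $u/2$, according to the sign of intersection with respect to a chosen orientation of $K'$, with each vertex having valence $\Delta t$. The Parity Rule guarantees that edges of $G_Q$ between parallel vertices correspond exactly to edges of $G_F$ between anti-parallel vertices. Gordon's theorem produces a great $g$-web in the subgraph on one parallelism class whenever $\Delta$ exceeds a threshold depending on $g$; for a great $2$-web the threshold is $\Delta>2$, and by hypothesis $\Delta\geq 3$.

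First I would produce an ``innermost'' candidate subgraph $\Lambda$. Let $\Gamma^+$ denote the subgraph of $G_Q$ spanned by all positive vertices together with every edge of $G_Q$ whose endpoints are both positive. The components of $\Gamma^+$ decompose $\hatQ=S^2$ into complementary regions, and a standard innermost argument locates a connected component $\Lambda$ of $\Gamma^+$ and a disk $D\subset\hatQ$ bounded by $\Lambda$ whose interior contains only vertices of $\Lambda$ (no negative vertices, no positive vertices of a different component). In particular $\Lambda$ is connected and consists of mutually parallel vertices, so it qualifies as a candidate web, and $D$ is the disk required by the definition of \emph{great}.

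Next I would bound the ghost labels $\ell$ of $\Lambda$ by $t+2g-2=t+2$ via an Euler-characteristic and valence count. Writing $V,E,F$ for the number of vertices, edges, and faces of $\Lambda$ when viewed in $\hatQ$, one has $V-E+F=2$, and the complementary disk to $D$ contributes one face, so $F-1$ is the number of faces inside $D$. Counting edge-endpoints against valence gives $V\Delta t = 2E+\ell$, where ghost incidences are precisely the ones escaping into $D_\Lambda$. Combining these identities with the face-corner inequality (each interior face contributes at least some minimum number of corners, with trigons/bigons tracked carefully using the strong irreducibility of $\hatF$ to rule out degenerate situations) yields $\ell\leq t+2g-2$ as soon as $\Delta\geq 3$.

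The main obstacle is executing the face-counting estimate cleanly: bigon and monogon faces of $\Lambda$ in $D$ must be controlled, and the interplay between ghost edges exiting $\Lambda$ and those ending on its outside boundary must be accounted for so that the arithmetic closes. Since this calculation is exactly what is carried out in the proof of Theorem~6.1 of \cite{gordon:cmids}, I would simply cite that result once the hypotheses ($\Delta>2(g-1)=2$ for $g=2$, and $\hatQ$ a sphere) have been checked in our setting.
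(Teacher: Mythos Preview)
Your proposal is correct and matches the paper's approach exactly: the paper gives no proof of this lemma beyond the citation to Theorem~6.1 of \cite{gordon:cmids}, and your plan is simply to verify the hypotheses and invoke that result. One small quibble with your sketch: the absence of monogons in $G_Q$ (needed for the Euler-characteristic count) comes from condition~(*) in \S\ref{sec:setup}, not from strong irreducibility of $\hatF$---but since you ultimately defer to the cited theorem, this does not affect the validity of your proof.
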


\subsection{The abundance of bigons.}\label{sec:abundancebigons}

Let $\Lambda$ be a great $g$-web of $G_Q$.  Its existence is ensured by Lemma~\ref{lem:greatweb}.  

\begin{lemma}\label{lem:bigonsabound}
If $\Delta \geq 3$ and $t \geq g-1$  then either $\Lambda_x$ contains a bigon for each label $x$ or $\Lambda$ has just one vertex and $t=g-1$.
\end{lemma}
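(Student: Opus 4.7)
My plan is to argue the contrapositive: suppose some $\Lambda_x$ contains no bigon face, and deduce that $V=1$ and $t=g-1$. The main ingredient is an Euler-characteristic count on $\Lambda_x\subset\hatQ\cong S^2$, combined with the great-web bound $\ell\le t+2g-2$ and the hypothesis $\Delta\ge 3$.

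The edge count proceeds as follows. Each of the $V$ vertices of $\Lambda$ has $\Delta$ label positions equal to $x$ with $\ell_x$ of them ghost, so there are $\Delta V-\ell_x$ non-ghost $x$-endpoints; since each edge of $\Lambda_x$ has at most two such endpoints,
\[
E_x \;\ge\; \tfrac12(\Delta V-\ell_x) \;\ge\; \tfrac12(3V-t-2g+2),
\]
using $\Delta\ge3$ and $\ell_x\le\ell\le t+2g-2$. In the reverse direction, the absence of a bigon in $\Lambda_x$ forces every disk face to have at least three edge-sides, while a monogon face of $\Lambda_x$ whose interior is disjoint from $G_Q$ would give a trivial loop, contradicting the essentiality of arcs of $F\cap Q$ in $Q$ from property~(*) of the proof of Theorem~\ref{thm:main}. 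An Euler argument on $S^2$ then yields the standard planar estimate $E_x\le 3V-6$, and combining gives $3V+t+2g\ge 14$.

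I would handle $V=1$ directly to match the stated exception. All edges of $\Lambda$ are then loops at the unique vertex $v$; by the Parity Rule of section~\ref{sec:fatgraphs}, each loop joins two labels $\{a,b\}$ with $a\ne b$. Any two disjoint loops at $v$ in $\hatQ$ must be nested, since linked arcs in the disk $\hatQ\setminus\nbhd(v)$ cannot be disjoint, and two nested loops automatically cobound a bigon face in $\hatQ$. Consequently, bigon-freeness of $\Lambda_x$ forces $\Lambda$ to contain at most one loop, so $\ell\ge \Delta t -2$; combining with $\ell\le t+2g-2$ and $\Delta\ge3$ yields $2t\le 2g$, i.e. $t\le g$, and a small refinement eliminating $t=g$ pins down $t=g-1$ via the hypothesis $t\ge g-1$.

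The main obstacle is the case $V\ge 2$, where the aggregate inequality above is too loose to yield a contradiction on its own. The additional input I would use is the cyclic structure of labels at each vertex of $\Lambda$: at each vertex, the $\Delta\ge 3$ non-ghost $x$-labels carry $\Lambda_x$-edges whose other endpoints lie at one of $t$ possible labels on one of the remaining vertices of $\Lambda$. A pigeonhole argument, controlled by the great-web budget $\ell\le t+2g-2$ (which bounds how many of these edges can be diverted to ghost labels), forces two such edges to share a label pair $\{x,y\}$ and terminate at the same pair of vertices in $\Lambda$; the Parity Rule then places their endpoints consecutively at both ends, producing a bigon face of $\Lambda_x$ and contradicting the hypothesis. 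Closing this pigeonhole step cleanly is the technical heart of the argument.
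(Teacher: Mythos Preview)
There is a genuine gap in the main step for $V\ge 2$, and the $V=1$ case is also mishandled.

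First, your edge count is too weak and you miss the point of the Parity Rule here. In $\Lambda$ all vertices are parallel, so no edge of $\Lambda$ can have label $x$ at both endpoints (that would give a loop at $x$ in $G_F$, hence an edge between parallel vertices of $G_F$, contradicting the Parity Rule). Thus each edge of $\Lambda_x$ has \emph{exactly one} $x$-endpoint and $E_x=\Delta V-\ell_x$, not merely $E_x\ge\tfrac12(\Delta V-\ell_x)$. Second, even with the correct count, the planar estimate $E_x\le 3V-6$ (which in any case ignores the outside face) combined with $E_x=\Delta V-\ell_x$ gives a \emph{lower} bound on $\ell_x$, not an upper bound on $V$; your resulting inequality $3V+t+2g\ge 14$ goes the wrong direction and cannot force $V=1$. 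Third, the proposed pigeonhole finish fails: two $\Lambda_x$-edges with the same label pair $\{x,y\}$ between the same pair of vertices need not cobound a bigon of $\Lambda_x$ --- there may be many edges (of $\Lambda$ or even ghost edges going to anti-parallel vertices outside) separating them, and nothing in the Parity Rule forces their endpoints to be consecutive.

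The paper's argument uses a completely different idea after the Euler count. From $E_x=\Delta V-\ell_x$ and ``no bigons'' one gets $\ell_x\ge(\Delta-3)V+k+3\ge k+\Delta$, where $k$ is the length of the outer face of $\Lambda_x$. The key observation is structural: two ghost labels of $\Lambda$ at a vertex can never be separated by a cycle of $\Lambda$, so if an outer-face corner carries two consecutive ghost $x$-labels, the $t-1$ labels between them are \emph{all} ghost labels of $\Lambda$. Counting corners with $2$ or with $\ge 3$ ghost $x$-labels and using $\ell_x\ge k+\Delta$ then gives $\ell\ge 3t+2$, contradicting $\ell\le t+2g-2$ once $t\ge g-1$. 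This conversion of ghost-$x$-label information into a lower bound on the total ghost count $\ell$ is the missing mechanism in your sketch.

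For $V=1$, the argument is simpler than you propose: any edge of $\Lambda$ is a loop, and an innermost such loop bounds a monogon of $G_Q$ inside the disk of the great web (there are no other $G_Q$-vertices or edges there), contradicting property~(*). Hence $\Lambda$ has no edges, $\ell=\Delta t\le t+2g-2$, and $\Delta\ge 3$ gives $t\le g-1$, so $t=g-1$.
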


\begin{proof}
First consider the case that $\Lambda$ has just one vertex. Then $\Lambda$ may have no edges, else $G_Q$ would have a monogon.  Therefore $\Delta t = \ell \leq t+2g-2$.  Since $\Delta \geq 3$, this implies $t \leq g-1$.  Thus if $t \geq g-1$ and $\Lambda$ has just one vertex, then $t=g-1$.

Now fix a label $x$.  
We will show if $\Lambda_x$ does not contain a bigon but has more than one vertex, then $g-2 \geq t$.

Regard $\Lambda_x$ as a graph contained in $\widehat{Q}$.   First we assume  $\Lambda_x$ is connected.  Refer to the sole face of $\Lambda_x \subset \widehat{Q}$ that is not contained in the disk bounded by $\Lambda$ as the {\em outside face} of $\Lambda_x$.  Assume the outside face has $k \geq 1$ corners.  We count vertices (corners) and edges in a face locally, i.e.\ the same edge or 
vertex of $G_Q$ may contribute more than once to $k$.   

 Let $V$, $E$, and $F$ denote the number of vertices, edges, and faces of $\Lambda_x$. By the Parity Rule (section~\ref{sec:fatgraphs}),
$E = \Delta V - \ell_x$.     Let $k_i$ be the number of corners in the outside face of $\Lambda_x$ with exactly $i$ ghost $x$-labels.  Then $k = \sum_{i=0}^{\Delta} k_i$ and $\ell_x = \sum_{i=1}^{\Delta} i k_i$. (Recall that a vertex has at most $\Delta$ $x$-labels.)

Suppose $\Lambda_x$ contains no bigons.  Then 
\begin{align*}
 2E &\geq 3(F-1) + k = 3F + (k-3)\\
  F & \leq 2/3 E - 1/3 (k-3) \\
  2=V-E+F &\leq V -E +2/3 E - 1/3 (k-3) \\
  E  &\leq 3V - (k+3)  
\end{align*}
Hence $(\Delta-3)V+(k+3) \leq \ell_x$.  Because the outer face of $\Lambda_x$ has $k$ corners, some corner(s) of the outer face must have more than one ghost $x$-label.  

Let $V_2$ be the number of corners in the outer face with exactly 2 ghost $x$-labels.  Let $V_{\geq3}$ be the number of corners in the outer face with at least $3$ ghost $x$-labels.   Since a corner may have at most $\Delta$ ghost $x$-edges, $\Delta \geq 3$, and $ k+\Delta \leq (\Delta-3)V+(k+3)  \leq \ell_x$, then 
\[\tag{ $\dagger$} V_2 + 2 V_{\geq3} \geq 3    \quad \mbox{ and }   \quad  V_2 +V_{\geq3} \geq 2. \]
(The count $k+\Delta \leq \ell_x$ shows that, at worst, each of the $k$ corners has at least one ghost $x$-edge though there are at least $\Delta$ more.  Since a corner has at most $\Delta$ ghost $x$-labels, there must be at least $2$ corners with more than one ghost $x$-label.  Hence $V_2+V_{\geq3}\geq2$.  Since $\Delta \geq 3$, the possible distributions of these last $\Delta$ ghost $x$-labels implies $V_2 + 2 V_{\geq3} \geq 3$.)

Since there are $t-1$ labels between two consecutive ghost $x$-labels on a corner of the outer face, there are at least $(t+1)V_2 $ ghost labels on all corners of the outer face with exactly $2$ ghost $x$-labels. (No ghost labels of 
$\Lambda$ are separated by a cycle of edges in $\Lambda$.)  Similarly there are at least $(2t+1)V_{\geq3}$ ghost labels on all corners of the outer face with at least $3$ ghost $x$-labels.  Therefore if $\ell$ is the total number of ghost labels for $\Lambda$, then
\[ \tag{$\ddagger$} t+2g-2 \geq \ell \geq (t+1)V_2  + (2t+1) V_{\geq3} = t (V_2+2V_{\geq3}) + (V_2+V_{\geq3}) \geq 3t+2.\]
Hence $g-2 \geq t$.

 Now assume $\Lambda_x$ is not connected.  Observe that ($\dagger$) holds for each connected component of $\Lambda_x$ with at least $2$ vertices, using $V_2$ and $V_{\geq 3}$ to count the corners of the component's outside face.   
 
 Consider a component $\Lambda_x^a$ of $\Lambda_x$.   To each interval between consecutive ghost $x$-labels on a corner of its outside face we associate at least $t-1$ different ghost labels of $\Lambda$:  If all edges incident to the interval are actually ghost labels of $\Lambda$, then we use these.  If there is an edge of $\Lambda$ in this interval, then  (because the ghost $x$-labels bounding the interval cannot be separated by a cycle in $\Lambda$) removing from $\Lambda$ all edges, $\mathcal{E}$, incident to this interval disconnects $\Lambda$,  one component of which contains our initial component $\Lambda_x^a$.  Another component of $\Lambda - \mathcal{E}$ contains a component $\Lambda_x^b$ of $\Lambda_x$ where all the edges of $\Lambda - \Lambda_x$ incident to the outside face of $\Lambda_x^b$ lie in a single interval on a corner of its outside face between consecutive $x$-labels of the vertex giving that corner.  If $\Lambda_x^b$ has at least two vertices, then by ($\dagger$) there must be another corner of its outside face with two consecutive ghost $x$-labels yielding at least $t+1$ ghost edges that we may associate to the original interval.  If $\Lambda_x^b$ has only one vertex then its labels outside this single interval, at least $\Delta t - (t-1) \geq 2t +1$ of them, are all ghost labels that we may associate to the original interval.   
 
 If $\Lambda_x^a$  has at least two vertices, then ($\ddagger$) still holds with $V_2$ and $V_{\geq3}$ counting corners of $\Lambda_x^a$ and the associations above.  If $\Lambda_x^a$ has just one vertex, then since it has no edges (else $G_Q$ would have a monogon) and there is another component of $\Lambda_x$ we may associate, as above, more than $\Delta t$ ghost labels to $\Lambda_x^a$.  In either case we may conclude that $t < g-1$.
\end{proof}

\begin{cor}\label{cor:bigonsforall}
If $\Delta \geq 3$ and $g=2$ then $\Lambda_x$ contains a bigon for each label $x$.
Thus, for each label $x$, $\Lambda$ contains a proper \ESC\ or \SC\ with 
(outermost) 
label $x$.
\end{cor}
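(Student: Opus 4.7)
The plan is to derive Corollary~\ref{cor:bigonsforall} directly from Lemma~\ref{lem:bigonsabound}. Since $\hatF$ is a two-sided genus $2$ Heegaard surface, $\hatF$ is separating in $M$ and hence $t = |K \cap \hatF|$ is even. In the setup of the paper we have $t \geq 1$, so parity forces $t \geq 2 > g-1 = 1$. This rules out the exceptional case of Lemma~\ref{lem:bigonsabound} in which $\Lambda$ has a single vertex with $t = g - 1 = 1$, and the lemma then supplies a bigon in $\Lambda_x$ for every label $x$.

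For the ``Thus'' clause, I would fix $x$ and consider a bigon face $B$ of $\Lambda_x$ bounded by edges $e_1, e_2$ of $\Lambda$, each having an $x$-labelled endpoint. Let $u, v$ be the vertices of $\Lambda$ incident to $B$. The corner of $B$ at $u$ (and similarly at $v$) cannot cross any other endpoint of an edge of $\Lambda_x$ on $\partial u$; since every $x$-label at $u$ is such an endpoint, the two edge-endpoints of $e_1, e_2$ at $u$ must lie in a single gap between consecutive $x$-labels, a gap that runs through $t-1$ non-$x$ labels. After choosing $B$ innermost in $\Lambda_x$, the interior of $B$ contains no vertex of $\Lambda$, and the edges of $\Lambda$ meeting $B$ form a fan of adjacent parallel edges between $u$ and $v$ with $e_1, e_2$ as the outermost pair. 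Because $u$ and $v$ are parallel vertices of the web $\Lambda$, the Parity Rule together with the incremental behaviour of labels along adjacent parallel edges forces the central pair of this fan to share a common label pair and bound a Scharlemann cycle of length $2$. The whole fan thus realises either an \SC\ (if $e_1, e_2$ are themselves the central pair) or an \ESC\ with $e_1, e_2$ as its outermost edges, and the outermost label is therefore $x$. Propriety is automatic, since all corner labels of this (extended) Scharlemann cycle lie inside the single gap of $t-1$ consecutive non-$x$ labels, so no label is repeated.

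The first step is essentially parity bookkeeping once Lemma~\ref{lem:bigonsabound} is in hand. The main obstacle is the second step: one must justify that an innermost choice of $B$ eliminates vertices of $\Lambda$ interior to $B$, so that the fan of parallel edges between $u$ and $v$ really does materialise, and then use the Parity Rule at the parallel vertices $u,v$ to extract an \SC\ in the centre of this fan. Both points are standard manipulations of fat-vertexed intersection graphs of the sort appearing throughout this article.
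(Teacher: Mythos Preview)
Your argument is correct and matches the paper's approach: invoke Lemma~\ref{lem:bigonsabound} with $g=2$, rule out $t=1$ by the parity of $t$, and read an \SC\ or proper \ESC\ off the fan of mutually parallel edges filling any bigon face of $\Lambda_x$ (the paper simply calls this correspondence immediate). Your more detailed account is fine, though you might make explicit that the Parity Rule --- which you already invoke to locate the central \SC\ --- also forces the $x$-endpoints of $e_1$ and $e_2$ to lie at \emph{different} vertices of $\Lambda$, so that the corner at $u$ really carries at most $t$ distinct labels and propriety follows; also note that the ``innermost'' choice of $B$ is unnecessary, since $\Lambda_x$ already contains every vertex of $\Lambda$.
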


\begin{proof}
Apply Lemma~\ref{lem:bigonsabound} with $g=2$ to get the first statement.  Note that if $\Lambda$ were to have just one vertex, then $t=1$ which cannot be.
The second statement follows immediately from the first, as a bigon face
of $\Lambda_x$ corresponds to an \ESC\ or \SC\ of $\Lambda$. 
\end{proof}

\subsection{Special vertices.}\label{sec:specialvertices}

By Lemma~\ref{lem:greatweb} we have a great $2$-web $\Lambda \subset G_Q$ that resides in the sphere $\widehat{Q}$.

Here we seek the existence of a so-called {\em special} vertex of our great $2$-web $\Lambda$ with a large number of ordinary corners incident to bigons, though permitting fewer bigons at the expense of a greater number of trigons.  

\begin{defn}  Let $\mathcal{V}$ be the set of vertices of $\Lambda$.
At each vertex $v\in \mathcal{V}$ let $\phi_i(v)$ count the number of its ordinary corners incident to $i$-gon faces of $\Lambda$.   We have that $\sum_i \phi_i(v) \leq \Delta t$ for each vertex $v$.  This an equality if and only if $v$ is ordinary.  Since only the outside face of $\Lambda$ may be a monogon, $\phi_1(v) = 0$ for all $v$.  Thus we shall write $\phi(v) = (\phi_2(v), \phi_3(v), \dots)$.
\end{defn}

\begin{defn}
Let $F_i$ denote the number of faces of $\Lambda$ (including the outside face) that are $i$-gons; let $\overline{F}_i$ denote the number of ordinary faces of $\Lambda$ that are $i$-gons.  The total number of faces of $\Lambda$ is thus  $F = \sum_i F_i =1+ \sum_i \overline{F}_i$.   Furthermore $i \overline{F}_i = \sum_{v \in \Lambda} \phi_i(v)$ for each $i$ and also $2E = \sum_i iF_i$.
\end{defn}

\begin{defn}\label{def:specialvertex}
Let $\rho = (\rho_2, \rho_3, \rho_4, \dots)$ be a sequence of non-negative integers.  We say that $\rho$ is of {\em type} $[k_2, \dots, k_m]$ if 
\[\rho_2=k_2,\quad \dots, \quad \rho_{m-1} = k_{m-1},  \mbox{ and } \rho_m \geq k_m.\]

A vertex $v$ is said to be of {\em type} $[k_2, \dots, k_m]$ if $\phi(v)$ is of type $[k_2, \dots, k_m]$.

Each integer $N\geq 2$ gives a {\em weight} to which we associate a measure of a sequence of integers $\rho = (\rho_2, \rho_3, \dots)$:
\[ \alpha_N(\rho) = \sum_{i=2}^N \left( \frac{N-i}{i}\right) \rho_i\]

We say that $v$ is a {\em special vertex (of weight $N$)} of $\Lambda$ if 
\[\alpha_N (\phi(v)) >  \frac{N-2}{2}\Delta t - N. \]
\end{defn}

Recall that since $\Lambda$ is a $2$-web, the number of ghost labels $\ell$ is at most $t+2$.   Hence setting $V$ to be the total number of vertices of $\Lambda$ and $E$ to be the number of edges, then $2E=\Delta t V - \ell$.

\begin{prop}\label{prop:generalspecialvertex}
Assume the outside face of $\Lambda$ is a $k$-gon.  Then for any integer $N\geq2$ there exists a vertex $v$ of $\Lambda$ with 
\[ \alpha_N (\phi(v)) \geq \left(\left(\frac{N-2}{2}\right)\Delta t - N\right) + \frac{k+N-\left(\frac{N - 2}{2}\right)\ell}{V} \]
with equality only if $\overline{F}_i = 0$ for $i >N$.
In particular, using that $\ell \leq t+2$,
\[ \alpha_N (\phi(v)) \geq  \left(\left(\frac{N-2}{2}\right)\Delta t - N\right)  + \frac{k+2 - \left(\frac{N - 2}{2}\right)t}{V}. \]
\end{prop}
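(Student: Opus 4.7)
The plan is a standard Euler characteristic and averaging argument for the planar graph $\Lambda \subset \widehat{Q} = S^2$. First I would establish three counting identities. Euler's formula on the sphere gives $V - E + F = 2$, where $F = 1 + \sum_{i\geq 2}\overline{F}_i$ (the $1$ counts the outside $k$-gon). Counting edge endpoints at each vertex of $\Lambda$ gives $2E = \Delta t\, V - \ell$, since a vertex of $\Lambda$ has $\Lambda$-valence equal to $\Delta t$ minus the number of its ghost labels. Finally, every ordinary $i$-gon face contributes exactly $i$ corners distributed among its vertices, so $\sum_{v \in \mathcal{V}} \phi_i(v) = i\overline{F}_i$.

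Next I would sum the weight over all vertices. By the third identity,
\[
\sum_{v \in \mathcal{V}} \alpha_N(\phi(v)) = \sum_{i=2}^{N} \frac{N-i}{i}\cdot i\overline{F}_i = \sum_{i=2}^{N}(N-i)\overline{F}_i.
\]
I would then write this as
\[
\sum_{i\geq 2}(N-i)\overline{F}_i \;+\; \sum_{i>N}(i-N)\overline{F}_i \;=\; \bigl[N(F-1) - (2E - k)\bigr] \;+\; \sum_{i>N}(i-N)\overline{F}_i,
\]
using $F - 1 = \sum_{i\geq 2}\overline{F}_i$ and $2E = k + \sum_{i\geq 2} i\overline{F}_i$, where the $+k$ comes from the outside face. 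Substituting $F - 1 = 1 - V + E$ from Euler and $E = (\Delta t\,V - \ell)/2$ from the edge-endpoint count, and then collecting the coefficient of $V$, produces the master identity
\[
\sum_{v \in \mathcal{V}} \alpha_N(\phi(v)) = V\!\left(\tfrac{N-2}{2}\Delta t - N\right) + k + N - \tfrac{N-2}{2}\ell + \sum_{i>N}(i-N)\overline{F}_i.
\]

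Dividing by $V$ and choosing a vertex $v$ whose weight is at least the average yields
\[
\alpha_N(\phi(v)) \geq \tfrac{N-2}{2}\Delta t - N + \frac{k + N - \tfrac{N-2}{2}\ell}{V} + \frac{1}{V}\sum_{i>N}(i-N)\overline{F}_i.
\]
Since the final sum is nonnegative and vanishes precisely when $\overline{F}_i = 0$ for every $i > N$, the main inequality and the ``equality only if'' clause both follow. The ``in particular'' statement is then immediate upon substituting $\ell \leq t + 2$ (which holds because $\Lambda$ is a $2$-web) together with $(N-2)/2 \geq 0$.

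The only real hazard is bookkeeping around the outside face: it contributes $+1$ to $F$ and $+k$ to the edge count $2E = k + \sum_{i\geq 2} i\overline{F}_i$, and these two terms together are exactly what produces the $k + N$ in the numerator of the improvement term. Any sign or off-by-one slip in tracking the outside $k$-gon separately from the ordinary faces would propagate into both the main inequality and the equality condition, so that is the step where I would be most careful.
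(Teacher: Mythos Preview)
Your argument is correct and is essentially the same Euler-characteristic-plus-averaging computation as the paper's, just organized slightly differently: you work directly with ordinary faces via $2E = k + \sum_{i\geq 2} i\overline{F}_i$, whereas the paper begins with $\sum_i(N-i)F_i = (N-2)E - NV + 2N$ over all faces and then peels off the outside $k$-gon, but both routes arrive at the identical master identity $\sum_v \alpha_N(\phi(v)) = V\bigl(\tfrac{N-2}{2}\Delta t - N\bigr) + k + N - \tfrac{N-2}{2}\ell + \sum_{i>N}(i-N)\overline{F}_i$.
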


\begin{proof}
Multiplying the equation $\sum F_i = F = E-V+2$ by $N$ and subtracting $\sum i F_i = 2E$ yields:
\begin{align*}
 \sum (N-i)F_i &= (N-2)E-NV+2N\\
\sum_{i=2}^N (N-i)F_i &= \left( \frac{N-2}{2} \right) 2E -NV +2N + \sum_{i>N}(i-N)F_i\\
\sum_{i=2}^N (N-i)\overline{F}_i &= \left( \frac{N-2}{2} \right) (\Delta t V - \ell) -NV +2N + \sum_{i>N}(i-N)\overline{F}_i + (k-N)\\
\sum_{i=2}^N (N-i)\overline{F}_i &\geq \left(\left( \frac{N-2}{2} \right)\Delta t -N\right)V + \left(k+N-\left( \frac{N-2}{2} \right)\ell\right)  \\
\end{align*}
with equality only if $\overline{F}_i = 0$ for $i >N$.

Since $i \overline{F}_i = \sum_{v \in \mathcal{V}} \phi_i(v)$ for all $i$,
\[\sum_{i=1}^N (N-i) \overline{F}_i = \sum_{v \in \mathcal{V}} \sum_{i=1}^N \left(\frac{N-i}{i}\right) \phi_i(v) =\sum_{v \in \mathcal{V}} \alpha_N(\phi(v)). \]
Hence
\[\sum_{v \in \mathcal{V}} \alpha_N(\phi(v)) \geq \left(\left( \frac{N-2}{2} \right)\Delta t -N\right)V + \left(k+N-\left( \frac{N-2}{2} \right)\ell\right). \tag{$\diamondsuit$}\]
Therefore there exists a vertex $v$ such that 
\[ \alpha_N(\phi(v)) \geq 
\left(\left( \frac{N-2}{2} \right)\Delta t -N\right) + \frac{k+N-\left( \frac{N-2}{2} \right)\ell}{V}\]
as claimed.

Furthermore, using that $\ell \leq t+2$,
\[ \alpha_N (\phi(v)) \geq
\left(\left( \frac{N-2}{2} \right)\Delta t -N\right) + \frac{k+2-\left( \frac{N-2}{2} \right)t}{V}\]
\end{proof}

\begin{prop}\label{prop:countingoutervertices}

Assume there are $j$ distinct outside vertices $v_1, \dots, v_j$ with $v_i$ contributing $k_i$ corners to the outside face of $\Lambda$  (so that the outside face is a $k$-gon where $k=\sum_{i=1}^j k_i$).
If $\Lambda$ has an ordinary vertex, then for any integer $N\geq2$ there exists an ordinary vertex $v$ with 
\[ \alpha_N(\phi(v)) > \left( \left( \frac{N-2}{2} \right)\Delta t - N \right) +\frac{N(1+\frac{1}{2}k - j)}{V-j}.\]
\end{prop}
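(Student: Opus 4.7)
The plan is to adapt the Euler-characteristic bookkeeping from the proof of Proposition~\ref{prop:generalspecialvertex}, but to subtract off the contribution of the $j$ outside vertices so that what remains is a bound on the average value of $\alpha_N(\phi(v))$ over the $V-j$ ordinary vertices. The starting point is the identity derived in the proof of Proposition~\ref{prop:generalspecialvertex}, valid for every subgraph whose outside face is a $k$-gon:
\[
\sum_{v\in\mathcal{V}} \alpha_N(\phi(v)) \;=\; (N-2)E - NV + N + k \;+\; \sum_{i>N}(i-N)\,\overline{F}_i.
\]
This holds regardless of which vertices of $\Lambda$ are outside versus ordinary.

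Next I would bound the contribution of each outside vertex $v_i$ to the left-hand side. Because $v_i$ has $k_i$ outside corners, $\sum_{r}\phi_r(v_i)=d_i - k_i$, where $d_i\le \Delta t$ is the degree of $v_i$ in $\Lambda$. Since the coefficient $\tfrac{N-r}{r}$ that appears in $\alpha_N$ is maximized at $r=2$ with value $\tfrac{N-2}{2}$, we obtain
\[
\alpha_N(\phi(v_i)) \;\le\; \tfrac{N-2}{2}\,(d_i - k_i) \;\le\; \tfrac{N-2}{2}\,(\Delta t - k_i),
\]
so $\sum_{i=1}^{j}\alpha_N(\phi(v_i))\le \tfrac{N-2}{2}(j\Delta t - k)$. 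Substituting this into the identity, substituting $2E=\Delta tV-\ell$, and simplifying (using $\tfrac{N-2}{2}j\Delta t = j(A+N)$ with $A=(\tfrac{N-2}{2})\Delta t-N$), the $V$-dependence and the $\Delta t$-dependence combine so that the $j\Delta t$ pieces cancel and one arrives at
\[
\sum_{v\text{ ord}}\alpha_N(\phi(v)) \;\ge\; A(V-j) + N\!\left(1+\tfrac{k}{2}-j\right) + \sum_{i>N}(i-N)\,\overline{F}_i - \tfrac{N-2}{2}g_{\mathrm{ord}},
\]
where $g_{\mathrm{ord}}$ denotes the ghost labels sitting on ordinary vertices. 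Since $V-j\ge 1$ by hypothesis, averaging produces an ordinary vertex $v$ whose $\alpha_N(\phi(v))$ is at least the target quantity.

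The main obstacle is extracting the strict inequality. The per-vertex bound $\alpha_N(\phi(v_i))\le \tfrac{N-2}{2}(d_i-k_i)$ is tight only when every ordinary corner at $v_i$ is a bigon, and the Euler identity is an equality only when $\overline{F}_r=0$ for all $r>N$. I would obtain strictness by observing that if both of these equalities held simultaneously, then every ghost label at an ordinary vertex $v$ would have to emerge into an ordinary face $f$ of $\Lambda$ through a corner of $f$ at $v$, and tracing such a ghost edge of $G_Q\setminus\Lambda$ shows it must terminate at another vertex of $\Lambda$ on $\partial f$ or pass through the outside face; either way the resulting combinatorics increase some $\overline{F}_r$ for $r>N$ or destroy the bigon structure at some outside vertex. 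This produces the slack in at least one of the inequalities used, upgrading $\ge$ to $>$. Nailing this case analysis down precisely (particularly when $g_{\mathrm{ord}}>0$) is where the argument is most delicate.
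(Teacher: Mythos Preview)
Your overall approach matches the paper's: start from the identity $\sum_v \alpha_N(\phi(v)) = (N-2)E - NV + N + k + \sum_{i>N}(i-N)\overline F_i$, subtract off bounds on $\alpha_N(\phi(v_i))$ for the outside vertices, and average over the remaining $V-j$ ordinary vertices. There are two problems, one minor and one genuine.

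The minor one is a bookkeeping inconsistency. You display the weak estimate $\sum_i \alpha_N(\phi(v_i)) \le \tfrac{N-2}{2}(j\Delta t - k)$, but the formula you then write down actually requires the sharper per-vertex bound $\alpha_N(\phi(v_i)) \le \tfrac{N-2}{2}(d_i - k_i)$ with $d_i = \Delta t - \ell_{v_i}$: it is precisely the $\ell_{v_i}$'s that cancel against the $-\tfrac{N-2}{2}\ell$ coming from $2E = \Delta t V - \ell$. Use the sharper bound throughout. Relatedly, note that $g_{\mathrm{ord}}=0$: every vertex of $G_Q$ not in $\Lambda$ lies in the outside face, so a ghost edge must enter the outside face, and therefore every ghost label sits in an outside corner. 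Thus $\ell = \sum_i \ell_{v_i}$ and your displayed inequality becomes simply
\[
\sum_{v\ \mathrm{ord}}\alpha_N(\phi(v)) \;\ge\; A(V-j) + N\Bigl(1+\tfrac{k}{2}-j\Bigr) + \sum_{i>N}(i-N)\overline F_i.
\]

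The genuine gap is the strictness argument. Your plan is to extract slack from ghost labels at ordinary vertices, but since $g_{\mathrm{ord}}=0$ that route is empty. The paper's argument is different and purely combinatorial: the bound $\alpha_N(\phi(v_i)) \le \tfrac{N-2}{2}(d_i - k_i)$ is an equality only if every ordinary corner at $v_i$ lies in a bigon, and this cannot hold for \emph{every} outside vertex once $\Lambda$ has an ordinary vertex. Indeed, if $u$ is ordinary and adjacent to an outside vertex $w$ via an edge $e$, the two faces along $e$ are ordinary (all faces around $u$ are) and incident to $w$, hence bigons with vertex set $\{u,w\}$; iterating around $u$ forces every edge at $u$ to go to $w$ and every face around $u$ to be such a bigon, which in turn forces every corner at $w$ to lie in one of these bigons---leaving no room for the required outside corner at $w$. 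Together with connectivity of $\Lambda$ this shows that if all the per-vertex bounds were equalities there would be no ordinary vertex at all. That is the source of the strict inequality; the $\overline F_i$ term plays no role.
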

\begin{proof}
Let $\ell_{v_i}$ be the number of ghost labels incident to the outside vertex $v_i$.  If $v_i$ contributes $k_i$ corners to the outside face, then $v_i$ has $\Delta t - \ell_{v_i} - k_i$ ordinary corners.  
Since there can be no ordinary monogons, 
\[\alpha_N(\phi(v_i)) \leq  \left(\frac{N-2}{2}\right) (\Delta t - \ell_{v_i} - k_i) \]
where equality is only possible in the event that every ordinary face incident to $v_i$ is a bigon.  Assuming $\Lambda$ has an ordinary vertex, then this cannot be an equality for every outside vertex.  This induces the strict inequality in the calculation below.

Continuing from $(\diamondsuit)$ in the proof of Proposition~\ref{prop:generalspecialvertex} (which is an equality only if every ordinary face has $N$ sides or less),
\[\sum_{\mathcal{V} \backslash v_1, \dots, v_j} \alpha_N(\phi(v)) + \sum_{i=1}^j \alpha_N(\phi(v_i)) \geq \left(\left( \frac{N-2}{2} \right)\Delta t -N\right)V + \left(k+N-\left( \frac{N-2}{2} \right)\ell\right). \]
Thus
\begin{align*}
\sum_{\mathcal{V} \backslash v_1, \dots, v_j} \alpha_N(\phi(v)) & \geq \left(\left( \frac{N-2}{2} \right)\Delta t -N\right) (V-j) +  \left(\left( \frac{N-2}{2} \right)\Delta t -N\right)j \\&
\quad \quad\quad \quad
+ \left(k+N-\left( \frac{N-2}{2} \right)\sum_{i=1}^j \ell_{v_i} \right) - \sum_{i=1}^j \alpha_N(\phi(v_i))\\
           & >  \left(\left( \frac{N-2}{2} \right)\Delta t -N\right) (V-j) +   \left(\left( \frac{N-2}{2} \right)\Delta t -N\right)j 
           \\& \quad \quad \quad \quad
+ \left(N+k-\left(\frac{N-2}{2}\right) \sum_{i=1}^j \ell_{v_i} \right) - \sum_{i=1}^j \left(\frac{N-2}{2}\right) (\Delta t - \ell_{v_i} - k_i)\\
         & = \left(\left( \frac{N-2}{2} \right)\Delta t -N\right) (V-j)-Nj+(N+k)+ \left(\frac{N-2}{2}\right)k \\
         & = \left(\left( \frac{N-2}{2} \right)\Delta t -N\right) (V-j) 
           	+ N(1+\frac{1}{2}k - j).\\ 
\end{align*}
Thus there exists an ordinary vertex $v \in \mathcal{V} \backslash v_1, \dots, v_j$ such that
\[ \alpha_N(\phi(v)) > \left(\left( \frac{N-2}{2} \right)\Delta t -N\right) +\frac{N(1+\frac{1}{2}k - j)}{V-j}.\]
\end{proof}

\subsubsection{The existence of special vertices.}
In most of the following lemmas, we conclude that our great $2$-web $\Lambda$ either has a special vertex or a large number of mutually parallel edges.  In the applications of these lemmas such numbers of mutually parallel edges will be prohibited thereby implying the existence of a special vertex.

\begin{lemma}\label{lem:t8trulyspecial}
If $t=8$ then $\Lambda$ either has a special vertex $v$ of weight $N=3$ or $19$ mutually parallel edges.
\end{lemma}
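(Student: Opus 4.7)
The plan is to suppose $\Lambda$ has no special vertex of weight $N=3$ and show that $\Lambda$ then contains $19$ mutually parallel edges. With $N=3$, $\Delta=3$, $t=8$ the measure simplifies to $\alpha_3(\phi(v))=\phi_2(v)/2$ and the special-vertex threshold $\frac{N-2}{2}\Delta t - N$ equals $9$, so the hypothesis reads $\phi_2(v)\le 18$ at every vertex $v$ of $\Lambda$.

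First I would apply Propositions~\ref{prop:generalspecialvertex} and~\ref{prop:countingoutervertices} with $N=3$ to force $\Lambda$ into a very restricted shape. Writing $k$ for the number of corners of the outside face, $j$ for the number of outside vertices, and $V$ for the total number of vertices, Proposition~\ref{prop:generalspecialvertex} (using $\ell\le t+2=10$) yields a vertex with $\alpha_3(\phi(v))\ge 9+(k-2)/V$, so $k\ge 3$ would force a special vertex; hence $k\le 2$. If $\Lambda$ also has an ordinary vertex, Proposition~\ref{prop:countingoutervertices} gives an ordinary $v$ with
\[\alpha_3(\phi(v)) > 9+\frac{3(1+k/2-j)}{V-j}.\]
Since each outside vertex contributes at least one corner to the outside face, $j\le k\le 2$, so $1+k/2-j\ge 0$ in every sub-case, with equality only at $k=j=2$. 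In every case the strict inequality still forces $\alpha_3(v)>9$, contradicting the no-special-vertex hypothesis. Hence $\Lambda$ has no ordinary vertex, i.e.\ $V=j\le k\le 2$, so $V\in\{1,2\}$.

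To finish, I would rule out $V=1$ and extract $19$ parallel edges from $V=2$. In the $V=1$ case, $\ell\le 10$ forces $E=(\Delta t-\ell)/2\ge 7$ self-loops at the single vertex, and the face-dual of a wedge of self-loops on $S^2$ is a tree with $E+1\ge 8$ vertices whose (at least two) leaves correspond to monogon faces of $\Lambda$; at most one can be the outside face, so an ordinary monogon persists, contradicting $\phi_1\equiv 0$. For $V=2$ (necessarily $k=j=2$ by the above), the same dual-tree argument excludes any self-loop at either vertex, since such a self-loop would bound a ``small'' disk containing no other vertex of $\Lambda$, producing an ordinary monogon unless that disk were the outside face, which would force $k\le 1$. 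Hence every edge of $\Lambda$ runs between the two vertices $u,v$; setting $m$ equal to this count, the endpoint-counts $m+\ell_u=m+\ell_v=24$ give $\ell_u=\ell_v=\ell/2\le 5$, whence $m\ge 19$. The resulting planar theta-graph on $S^2$ has $F=m$ faces, all bigons between consecutive edge-pairs, so the $m$ edges lie in a single parallelism class; the no-special condition $\phi_2(u)=m-1\le 18$ then pins $m$ to exactly $19$, producing the desired $19$ mutually parallel edges.

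The main obstacle I anticipate is handling the monogon-exclusion in the $V\in\{1,2\}$ extremal cases cleanly, using the $\phi_1\equiv 0$ convention and the dual-tree structure of self-loop configurations on $S^2$; once that is in place, the rest of the argument collapses to the counting propositions and a single edge-endpoint count.
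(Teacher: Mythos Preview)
Your approach is essentially the paper's: apply Proposition~\ref{prop:generalspecialvertex} to force $k\le 2$, then Proposition~\ref{prop:countingoutervertices} to eliminate ordinary vertices, then count edges in the residual two-vertex graph. The only flaw is that you fix $\Delta=3$ at the outset, while the lemma is stated under the standing hypothesis $\Delta\ge 3$; the special-vertex threshold is $4\Delta-3$ in general, and in the two-vertex case the endpoint count reads $m+\ell_u=m+\ell_v=8\Delta$ with $\ell_u=\ell_v\le 5$, giving $m\ge 8\Delta-5\ge 19$. Every step of your argument goes through verbatim with these general values, so this is a cosmetic oversight rather than a structural gap. Your final remark that the no-special hypothesis $\phi_2(u)=m-1\le 8\Delta-6$ pins $m$ exactly is correct (and gives $m=8\Delta-5$, not necessarily $19$), but it is unneeded for the lemma as stated; the paper simply stops at $m\ge 19$.
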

\begin{proof}

By Proposition~\ref{prop:generalspecialvertex} there exists a vertex $v \in \Lambda$ such that
\[ \alpha_3 (\phi(v)) \geq (4\Delta-3) + \frac{k-2}{V}\]
where $k$ is the length of the outside face.
Thus if $k \geq 3$ then $v$ is special.

If $k=1$ or $2$, then let $j \leq k$ be the number of vertices contributing to the $k$ outside corners.  If $\Lambda$ has an ordinary vertex, then by Proposition~\ref{prop:countingoutervertices} there exists an ordinary vertex $v \in \Lambda$ such that
\[ \alpha_3(\phi(v)) > (4\Delta-3) +\frac{3(1+\frac{1}{2}k-j)}{V-j}\]  
This is the special vertex.

If $\Lambda$ has no ordinary vertices and $j=1$, then each edge of $\Lambda$ must bound a monogon.  This cannot occur. 

Thus we now assume $\Lambda$ has no ordinary vertices and $(k,j) = (2,2)$.  
Then all ordinary faces of $\Lambda$ are bigons.  Since there may be at most $10$ ghost edges, $\Lambda$ consists of two vertices and at least $8\Delta - 5 \geq 19$ mutually parallel edges. 
 \end{proof}

\begin{lemma}\label{lem:t6trulyspecial}
If $t=6$ then $\Lambda$ either has a special vertex of weight $N=4$ or $8$ mutually parallel edges.
\end{lemma}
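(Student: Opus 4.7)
The plan, following the template of Lemma~\ref{lem:t8trulyspecial}, is to apply Propositions~\ref{prop:generalspecialvertex} and~\ref{prop:countingoutervertices} with weight $N=4$ to the great $2$-web $\Lambda$, and then resolve the few residual configurations by a combinatorial analysis of $\Lambda$ on the sphere.

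With $t=6$ and $N=4$, Proposition~\ref{prop:generalspecialvertex} furnishes a vertex $v$ of $\Lambda$ with
\[\alpha_4(\phi(v)) \geq (6\Delta - 4) + \frac{k-4}{V},\]
so $v$ is already a special vertex of weight $4$ as soon as the outside face has length $k \geq 5$. Henceforth assume $k \leq 4$. If $\Lambda$ has an ordinary vertex, Proposition~\ref{prop:countingoutervertices} supplies an ordinary vertex satisfying
\[\alpha_4(\phi(v)) > (6\Delta - 4) + \frac{4(1 + k/2 - j)}{V-j},\]
and the strict inequality here promotes $v$ to a special vertex whenever $1 + k/2 - j \geq 0$. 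Running through $k \leq 4$, the only $(k,j)$ pairs not handled by this criterion are $(3,3)$ and $(4,4)$. The residual configurations are therefore (a) $\Lambda$ has no ordinary vertex, so $V = j \leq k \leq 4$, and (b) $\Lambda$ has an ordinary vertex but $(k,j) \in \{(3,3),(4,4)\}$.

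In case (a) one argues first that $V = 1$ is impossible (else $\ell = 6\Delta \geq 18 > 8$) and that $\Lambda$ has no loops: a loop at any vertex would either cut off a forbidden ordinary monogon or, since $V=j$ forces the outside $k$-gon to visit every vertex of $\Lambda$, strand some vertex on the wrong side of the loop and render the outside face disconnected. Consequently every edge of $\Lambda$ connects two distinct vertices. The Euler identity
\[\sum_{i\geq 2}(i-2)\bar F_i = 2V - k - 2\]
then takes the values $0, 1, 0, 2$ on the admissible pairs $(V,k) = (2,2), (3,3), (3,4), (4,4)$, keeping the ordinary non-bigon faces scarce. Because bigons force consecutive edges at a vertex to have the same other endpoint, the edges at each vertex decompose into a small number of \emph{blocks} of consecutively parallel edges (one break per outside corner and one per non-bigon corner), and the no-loops condition forces each trigon to have three distinct corner vertices. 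Combining $2E = 6\Delta V - \ell \geq 18V - 8$ with the total block count by pigeonhole then produces $\geq 8$ mutually parallel edges (for example, $(V,k)=(3,3)$ with $\ell=8$ gives $E \geq 23$ and at most $6$ blocks, so some block contains $\geq \lceil 46/6 \rceil = 8$ edges); in the borderline situation $(V,k)=(4,4)$ with two trigons and averaged $\alpha_4$ equal to $6\Delta-4$, a parity obstruction (the $\phi_3$-values cannot be $\equiv 0 \pmod{3}$ at every vertex without loops) instead forces some vertex to strictly exceed the threshold and hence be special.

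Case (b) reduces to case (a): either the Euler-restricted bigon structure directly produces a block of $\geq 8$ consecutively parallel edges via the same pigeonhole argument, or one excises an ordinary vertex of $\Lambda$ to obtain a sub-web whose outside face no longer realizes $(3,3)$ or $(4,4)$ and reinvokes the earlier analysis. The main technical difficulty throughout is the step of promoting ``many edges between a pair of vertices'' to ``many \emph{consecutively} parallel edges'' at a single pair, which hinges on the tight Euler bound capping the non-bigon ordinary faces together with the no-loops argument enabled by $V=j$ in case (a).
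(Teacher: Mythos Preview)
Your case (a) is essentially correct, though the $(V,k)=(4,4)$ subcase is under-argued. The ``parity obstruction'' is a red herring: once you know there are no loops, each trigon has three distinct corner-vertices, so $\phi_3(v) \le 2$ for every $v$; since each vertex has exactly one outside corner and all ordinary faces are bigons or trigons, $\phi_2(v) = 6\Delta - 1 - \phi_3(v)$ and hence $\alpha_4(\phi(v)) = 6\Delta - 1 - \tfrac{2}{3}\phi_3(v) \ge 6\Delta - \tfrac{7}{3} > 6\Delta - 4$ directly. You also omit the possibility $\bar F_4 = 1$ (one $4$-gon rather than two trigons), but the same computation works there. The paper handles case (a) more cleanly via the reduced graph: assuming at most $7$ parallel edges, a vertex of reduced valence $n$ has at least $6\Delta - 7n$ ghost edges, which with $\ell \le 8$ forces $(k,j)=(4,4)$ with two valence-$3$ vertices each having $\phi = (6\Delta - 3, 2)$.

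The genuine gap is in case (b). Neither of your proposed reductions works. ``Excising an ordinary vertex'' destroys the $2$-web property: removing a vertex and its incident edges turns every label at a neighbor where a removed edge was attached into a new ghost label, so the ghost count $\ell$ jumps far beyond $t+2$, and Propositions~\ref{prop:generalspecialvertex}--\ref{prop:countingoutervertices} no longer apply to the remainder. The pigeonhole route fails for the opposite reason: with ordinary vertices present the Euler identity gives $\sum_{i \ge 2}(i-2)\bar F_i = 2V - k - 2$, which grows with $V$, so the number of non-bigon ordinary faces---and hence the number of blocks---is not bounded independently of $V$.

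The paper's argument for case (b) is a genuine refinement of the averaging, not a reduction. Assuming no $8$ parallel edges (hence at most $6$ consecutive bigons at any vertex), one sharpens the upper bound on each outside vertex to
\[
\alpha_4(\phi(v_i)) \le (6\Delta - \ell_{v_i} - 1) - \tfrac{2}{3} n_i,
\]
where $n_i$ counts the non-bigon ordinary corners of $v_i$. One then shows $\sum_i n_i \ge 2k - 1$: if $\ell_{v_i}=0$ and $n_i \le 2$ then $v_i$ is already special, while $n_i \le 1$ together with the $6$-consecutive-bigon bound forces $\ell_{v_i} \ge 4$, which can happen for at most two indices since $\sum_i \ell_{v_i} = \ell \le 8$. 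Feeding $\sum_i n_i \ge 2k-1$ back into the inequality $(\diamondsuit)$ from Proposition~\ref{prop:generalspecialvertex} yields an ordinary vertex with $\alpha_4(\phi(v)) \ge (6\Delta-4) + \tfrac{10-2k}{3(V-k)} > 6\Delta - 4$ for $k \in \{3,4\}$.
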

\begin{proof}

By Proposition~\ref{prop:generalspecialvertex} there exists a vertex $v \in \Lambda$ such that
\[ \alpha_4 (\phi(v)) \geq (6\Delta-4) + \frac{k-4}{V}\]
where $k$ is the length of the outside face.
Thus if $k \geq 5$ then $v$ is special.
Therefore assume $k = 1, 2,3$, or $4$ and $j \leq k$ is the number of vertices contributing to the $k$ outside corners.

If $\Lambda$ has no ordinary vertex then the only vertices of $\Lambda$ are the $j$ outside vertices.  Consider the reduced graph of $\Lambda$ obtained by amalgamating mutually parallel edges in the disk bounded by $\Lambda$.
Assuming $\Lambda$ does not have $8$ mutually parallel edges, each edge of this reduced graph represents at most $7$ edges.  Thus a vertex (of this reduced graph) of valence $n$ must have at least $6\Delta -7n$ ghost edges.  Since $\Delta \geq 3$, a valence $1$ vertex has at least $11$ ghost edges and a valence $2$ vertex has at least $4$ ghost edges.  Since the the total number of incidences of ghost edges to $\Lambda$ is at most $8$ there can be no valence $1$ vertices and no more than two valence $2$ vertices.  Hence it must be that $(k,j) = (4,4)$ where there the reduced graph has two valence $2$ vertices. The remaining two vertices of $\Lambda$ have
no ghost labels and must be of valence $3$. Both these vertices have $\phi = (6 \Delta-3,2)$ and are thus special vertices of
weight $N=4$.

Now assume there is an ordinary vertex in $\Lambda$.   Then by Proposition~\ref{prop:countingoutervertices} there exists an ordinary vertex $v \in \Lambda$ such that
\[ \alpha_4(\phi(v)) > (6\Delta-4) +\frac{4(1+\frac{1}{2}k-j)}{V-j}.\]  
Hence $v$ is necessarily special unless $(k,j)$ is $(4,4)$ or  $(3,3)$.
For these two situations we must push the proof of Proposition~\ref{prop:countingoutervertices} further:  

Let $v_1, \dots, v_k$ be the outside vertices of $\Lambda$.  Since $k=j$, each outside vertex has just one outside corner.  Again $\ell_{v_i}$ denotes the number of ghost labels on the outside corner of $v_i$.  Then $v_i$ has $6\Delta  - \ell_{v_i} - 1$ ordinary corners which occur consecutively. 

Assume $\Lambda$ does not have $8$ mutually parallel edges.  Then there may be at most $6$ consecutive bigons. Let $n_i$ be the number of 
ordinary corners of $v_i$ 
that do not belong to bigons. Then
 \begin{align*}
 \alpha_4(\phi(v_i)) &\leq \left( \frac{N-2}{2} \right) (\Delta t - \ell_{v_i} - 1-n_i) +  \left( \frac{N-3}{3} \right) n_i \\
 & = \left( \frac{N-2}{2} \right) (\Delta t - \ell_{v_i} - 1) - \frac{N}{6} n_i
 \end{align*}
and hence
 \[\alpha_4(\phi(v_i)  \leq (6\Delta - \ell_{v_i}-1)-\frac{2}{3}n_i.\]

If for some $i$, $\ell_{v_i}=0$ and $n_i \leq 2$, then $v_i$ is a
special vertex of weight $N=4$. So we assume this is not the case.
Since $\Delta \geq 3$, $n_i \leq 1$ implies that $n_i = 1$ and $\ell_{v_i} \geq
4$. If there are two $v_i$ with $n_i=1$, then, as $\ell = \sum \ell_{v_i} \leq t+2=8$, 
all others have no ghost labels. 
Together these observations mean that $\sum_{i=1}^k n_i \geq 2k-1$.

Hence
\begin{align*}
\sum_{i=1}^k \alpha_4(\phi(v_i)) &\leq \sum_{i=1}^k \left((6\Delta - \ell_{v_i}-1)-\frac{2}{3}n_i \right)\\
&= (6\Delta-1)k -\sum_{i=1}^k \ell_{v_i} - \sum_{i=1}^k \frac{2}{3} n_i\\
&\leq (6\Delta-1)k - \ell - \frac{2}{3}(2k-1).
\end{align*}
Thus, again continuing from $(\diamondsuit)$ in the proof of Proposition~\ref{prop:generalspecialvertex} (as we did in the proof of Proposition~\ref{prop:countingoutervertices}) where now $j=k$,
\begin{align*}
\sum_{\mathcal{V} \backslash v_1, \dots, v_k} \alpha_N(\phi(v)) & \geq \left(\left( \frac{N-2}{2} \right)\Delta t -N\right) (V-k) +  \left(\left( \frac{N-2}{2} \right)\Delta t -N\right)k \\&
\quad \quad\quad \quad
+ \left(k+N-\left( \frac{N-2}{2} \right)\ell\right) - \sum_{i=1}^k \alpha_N(\phi(v_i))
\end{align*}
so that
\begin{align*}
\sum_{\mathcal{V} \backslash v_1, \dots, v_k} \alpha_4(\phi(v)) & \geq(6\Delta-4)(V-k) + (6\Delta-4)k + (4+k-\ell) - \sum_{i=1}^k \alpha_4(\phi(v_i))\\
&\geq (6\Delta-4)(V-k) + (6\Delta-4)k + (4+k-\ell)\\ & \quad\quad\quad\quad - (6\Delta-1)k+\ell+\frac{2}{3}(2k-1) \\
& = (6\Delta-4)(V-k) -4k+4+k+k+\frac{4}{3}k-\frac{2}{3} \\
&= (6\Delta-4)(V-k) +\frac{10-2k}{3}.
\end{align*}
Therefore there is an ordinary vertex $v\in\mathcal{V} \backslash v_1, \dots, v_k$ such that 
\[\alpha_4(\phi(v)) \geq (6\Delta-4)+\frac{10-2k}{3(V-k)}.\]
Since $k=3$ or $4$, $10-2k >0$.  Hence
\[\alpha_4(\phi(v)) > 6\Delta-4.\]
Thus $v$ is a special vertex.
This completes the proof of Lemma~\ref{lem:t6trulyspecial}.
 \end{proof}

\begin{lemma}\label{lem:t4trulyspecial}
If $t=4$ then $\Lambda$ either has a special vertex $v$ of weight $N=4$ or $9$ mutually parallel edges. 
\end{lemma}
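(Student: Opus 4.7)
The plan is to follow the pattern of the proofs of Lemmas~\ref{lem:t8trulyspecial} and~\ref{lem:t6trulyspecial}. First, Proposition~\ref{prop:generalspecialvertex} with $N=4$ and $t=4$ produces a vertex $v$ of $\Lambda$ with
\[ \alpha_4(\phi(v)) \geq (4\Delta - 4) + \frac{k-2}{V},\]
so if $k \geq 3$ then $v$ is special. Hence assume $k \in \{1,2\}$, and let $j \leq k$ denote the number of outside vertices, giving $(k,j) \in \{(1,1),(2,1),(2,2)\}$.

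If $\Lambda$ has an ordinary vertex, Proposition~\ref{prop:countingoutervertices} with $N=4$ gives an ordinary vertex $v$ with
\[ \alpha_4(\phi(v)) > (4\Delta - 4) + \frac{4(1 + \tfrac{k}{2} - j)}{V-j},\]
which makes $v$ special in the cases $(k,j) \in \{(1,1),(2,1)\}$ since there $1 + k/2 - j > 0$. For the residual case $(k,j) = (2,2)$ with an ordinary vertex, I would push the estimate as in the proof of Lemma~\ref{lem:t6trulyspecial}. Assuming there are no $9$ mutually parallel edges, at most $7$ consecutive bigons occur at any vertex; letting $n_i$ count the non-bigon ordinary corners at $v_i$, this yields the refined bound $\alpha_4(\phi(v_i)) \leq (4\Delta - \ell_{v_i} - 1) - \tfrac{2}{3} n_i$. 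Substituting into the global inequality from Proposition~\ref{prop:generalspecialvertex} and subtracting the contributions of $v_1, v_2$ gives, with $\ell_o$ the ghost labels at ordinary vertices,
\[ \sum_{\text{ordinary } v} \alpha_4(\phi(v)) \geq (4\Delta-4)(V-2) - \ell_o + \tfrac{2}{3}(n_1 + n_2),\]
so some ordinary vertex is special whenever $\tfrac{2}{3}(n_1 + n_2) > \ell_o$. In the remaining subcase, the cyclic version of the consecutive-bigon bound at each ordinary vertex, together with $\ell_o + \ell_{v_1} + \ell_{v_2} \leq t+2 = 6$, is used to force $\ell_{v_i} \geq 4\Delta - 8$ at each outside vertex and an analogous constraint on $\ell_v$ at each ordinary vertex; combined with $\Delta \geq 3$, this exceeds the ghost-label budget $\ell \leq 6$, giving the desired contradiction.

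If instead $\Lambda$ has no ordinary vertex, then $V = j \leq 2$. For $(k,j) \in \{(1,1),(2,1)\}$ the single vertex bears only loops; an Euler count of its $L$ interior faces on $\widehat{Q}$ (total corner-count $2L - k$, each face of size $\geq 2$) gives $k \leq 0$, which is impossible. For $(k,j) = (2,2)$, Euler's formula $F = E$ together with the no-interior-monogon hypothesis force every ordinary face to be a bigon; an innermost-loop analysis then rules out loops at either vertex (else the innermost-loop interior would be a forbidden ordinary monogon), so every edge is a $v_1 v_2$-edge and thus mutually parallel on the sphere. The ghost-label bound $8\Delta - 2E \leq 6$ then yields $E \geq 4\Delta - 3 \geq 9$, producing the desired $9$ mutually parallel edges.

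The main obstacle will be the $(k,j) = (2,2)$ subcase when an ordinary vertex is present: there the standard averaging bound only yields $\alpha_4 \geq 4\Delta - 4$ without strict inequality, so one must engage the no-$9$-mutually-parallel-edges hypothesis at both outside and ordinary vertices simultaneously in order either to produce a strict excess in $\sum_{\text{ordinary}} \alpha_4$ or to contradict the ghost-label budget.
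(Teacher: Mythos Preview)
Your approach matches the paper's, and the cases $k\geq 3$, the no-ordinary-vertex analysis, and the final parallel-edge count are all fine. However, you have created an unnecessary difficulty in the $(k,j)=(2,2)$ case with an ordinary vertex, and your workaround for it is only sketched.

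You correctly quote Proposition~\ref{prop:countingoutervertices} as giving a \emph{strict} inequality
\[
\alpha_4(\phi(v)) > (4\Delta - 4) + \frac{4(1+\tfrac{k}{2}-j)}{V-j},
\]
but then you only conclude specialness when $1+\tfrac{k}{2}-j>0$. For $(k,j)=(2,2)$ the correction term is exactly $0$, so the proposition already gives $\alpha_4(\phi(v)) > 4\Delta - 4$, which is precisely the definition of a special vertex of weight $N=4$. No further argument is needed. The reason Lemma~\ref{lem:t6trulyspecial} required the extra ``push the estimate'' step is that there the residual cases were $(k,j)=(3,3)$ and $(4,4)$, for which $1+\tfrac{k}{2}-j<0$ and the correction term is genuinely negative; that phenomenon does not occur here.

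So delete the paragraph beginning ``For the residual case $(k,j)=(2,2)$ with an ordinary vertex\ldots'' and its sequel about refined bounds, $n_i$, $\ell_o$, and the ghost-label budget. The paper's proof is correspondingly short: once Proposition~\ref{prop:countingoutervertices} handles all $k\leq 2$ with an ordinary vertex, one passes immediately to the no-ordinary-vertex case, where your argument (innermost loops force monogons, ruling out $j=1$; for $(k,j)=(2,2)$ all interior faces are bigons and the ghost-label bound gives $E\geq 4\Delta-3\geq 9$ mutually parallel edges) is correct and matches the paper.
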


\begin{proof}
By Proposition~\ref{prop:generalspecialvertex} there exists a vertex $v \in \Lambda$ such that
\[ \alpha_4 (\phi(v)) \geq (4\Delta-4) + \frac{k-2}{V}\]
where $k$ is the length of the outside face.
Thus if $k \geq 3$ then $v$ is special.

If $\Lambda$ has an ordinary vertex, then 
Proposition~\ref{prop:countingoutervertices} implies there is a special
vertex of weight $N=4$ if $k=1,2$. Thus we assume $\Lambda$ has 
no ordinary vertex and $k=1,2$. Let $j \leq k$ be the number of 
vertices contributing to the outside corners.

Since $\Lambda$ contains no ordinary vertices, any loop edge must bound
a monogon (1-sided face) -- which does not happen in $G_Q$.
Thus $j \neq 1$. Thus $(k,j) = (2,2)$, $\Lambda$ consists of two vertices, and
all the ordinary faces are bigons between the two vertices. 
Since there may be at most 
$6$ ghost edges, these bigons must be induced by at least $4\Delta - 3 \geq 9$ mutually parallel edges.   
\end{proof}

\subsubsection{Types of special vertices.}

See Definition~\ref{def:specialvertex} for vertex type.

\begin{lemma}\label{lem:N3specialtypes}
A special vertex of weight $N=3$ has type $[t\Delta-5]$.
\end{lemma}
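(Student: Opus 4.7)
The proof should be essentially an unwinding of the definitions, so my plan is to simply compute $\alpha_3$ and extract the inequality on $\phi_2(v)$.

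First I would write out $\alpha_N(\rho)$ for $N=3$: since $(N-i)/i = 0$ when $i=N=3$, only the $i=2$ term survives, giving
\[
\alpha_3(\phi(v)) \;=\; \frac{3-2}{2}\,\phi_2(v) \;=\; \tfrac{1}{2}\phi_2(v).
\]
(There is no $\phi_1$ term to worry about because $\Lambda$ has no ordinary monogons, so $\phi_1(v) = 0$ as recorded just before Definition~\ref{def:specialvertex}.)

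Next I would substitute this into the definition of a special vertex of weight $N=3$, namely $\alpha_3(\phi(v)) > \frac{N-2}{2}\Delta t - N = \tfrac{1}{2}\Delta t - 3$, to obtain
\[
\tfrac{1}{2}\phi_2(v) \;>\; \tfrac{1}{2}\Delta t - 3,
\]
i.e.\ $\phi_2(v) > \Delta t - 6$. Since $\phi_2(v)$ is a non-negative integer, this strict inequality is equivalent to $\phi_2(v) \geq \Delta t - 5$. By the definition of type (with $m=2$ and $k_2 = t\Delta - 5$), this is exactly the statement that $\phi(v)$, hence $v$, has type $[t\Delta - 5]$.

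There is no real obstacle here; the lemma is essentially a one-line consequence of the fact that $\alpha_3$ only sees $\phi_2$. The only subtle point is that the definition of ``type'' imposes no constraint on $\phi_i(v)$ for $i > 2$, so we do not need any additional information about higher-gon faces at $v$ — the single inequality $\phi_2(v) \geq t\Delta - 5$ suffices.
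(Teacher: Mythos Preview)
Your proof is correct and is essentially identical to the paper's own proof: compute $\alpha_3(\phi(v)) = \tfrac{1}{2}\phi_2(v)$, apply the defining inequality of a special vertex to get $\phi_2(v) > t\Delta - 6$, and use integrality to conclude $\phi_2(v) \geq t\Delta - 5$.
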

\begin{proof}
If $v$ is a special vertex of weight $N=3$, then 
\[\alpha_3(\phi(v)) = \frac{1}{2} \phi_2(v) > \frac{1}{2} t \Delta - 3.\]
Hence $\phi_2(v) > t \Delta - 6$.  Thus $\phi_2(v) \geq t \Delta - 5$ and $v$ is of type $[t \Delta - 5]$.  
\end{proof}

\begin{lemma}\label{lem:N4trulyspecialtypes}
A special vertex of weight $N=4$ has type $[\Delta t-5,4]$, $[\Delta t - 4, 1]$, or $[\Delta t-3]$.
\end{lemma}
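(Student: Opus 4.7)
The plan is a direct arithmetic analysis using the defining inequality together with the cap $\sum_i \phi_i(v) \leq \Delta t$.

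First I would unpack the weight. By definition
\[
\alpha_4(\phi(v)) = \frac{4-2}{2}\phi_2(v) + \frac{4-3}{3}\phi_3(v) + \frac{4-4}{4}\phi_4(v) = \phi_2(v) + \tfrac{1}{3}\phi_3(v),
\]
so $v$ is special of weight $N=4$ iff
\[
\phi_2(v) + \tfrac{1}{3}\phi_3(v) > \Delta t - 4. \tag{$\ast$}
\]
From $\sum_i \phi_i(v) \leq \Delta t$ we also have the basic bound $\phi_3(v) \leq \Delta t - \phi_2(v)$.

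Next I would case-split on $\phi_2(v)$, verifying both that each of the three listed types yields a special vertex and that no other value of $\phi_2(v)$ can satisfy ($\ast$). If $\phi_2(v) \geq \Delta t - 3$, then ($\ast$) holds trivially, so $v$ is of type $[\Delta t - 3]$. If $\phi_2(v) = \Delta t - 4$, then ($\ast$) becomes $\phi_3(v) > 0$, i.e.\ $\phi_3(v) \geq 1$, giving type $[\Delta t - 4, 1]$. If $\phi_2(v) = \Delta t - 5$, then ($\ast$) becomes $\phi_3(v) > 3$, i.e.\ $\phi_3(v) \geq 4$, giving type $[\Delta t - 5, 4]$; note $\phi_3(v) \leq \Delta t - \phi_2(v) = 5$ is consistent with this.

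Finally I would rule out $\phi_2(v) \leq \Delta t - 6$. In that range, using $\phi_3(v) \leq \Delta t - \phi_2(v)$,
\[
\phi_2(v) + \tfrac{1}{3}\phi_3(v) \leq \phi_2(v) + \tfrac{1}{3}(\Delta t - \phi_2(v)) = \tfrac{2\phi_2(v) + \Delta t}{3} \leq \tfrac{2(\Delta t - 6) + \Delta t}{3} = \Delta t - 4,
\]
contradicting ($\ast$). Thus the three types listed exhaust all special vertices of weight $4$. No step here should present any real obstacle; the whole argument is just unpacking the definition and comparing $\phi_2(v)$ to the threshold $\Delta t - 4$.
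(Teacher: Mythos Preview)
Your proof is correct and follows essentially the same approach as the paper's: both unpack $\alpha_4(\phi(v)) = \phi_2(v) + \tfrac{1}{3}\phi_3(v)$, combine the special-vertex inequality with $\phi_2(v)+\phi_3(v)\le \Delta t$ to force $\phi_2(v)\ge \Delta t-5$, and then read off the required lower bound on $\phi_3(v)$ in each of the three cases. One small expository point: in the case $\phi_2(v)\ge \Delta t-3$ you write ``then ($\ast$) holds trivially,'' but ($\ast$) is your hypothesis, not a conclusion to be checked --- you just need that this case already gives type $[\Delta t-3]$.
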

\begin{proof}
If $v$ is a special vertex of weight $N=4$, then 
\[\alpha_4(\phi(v)) = \phi_2(v) + \frac{1}{3} \phi_3(v) >\Delta t - 4.\]
Hence $3 \phi_2(v) + \phi_3(v) > 3 \Delta t - 12$. 
Then since $\phi_2(v) + \phi_3(v) \leq \Delta t$, we have $2 \phi_2(v) \geq 2 \Delta t - 11$.
Thus $\phi_2(v) \geq  \Delta t - 5$.

In order to maintain that $\alpha_4(\phi(v))> \Delta t - 4$, 
\begin{itemize}
\item if $\phi_2(v) = \Delta t - 5$ then $\phi_3(v) \geq 4$;
\item if $\phi_2(v) = \Delta t- 4$ then $\phi_3(v) \geq 1$; and 
\item if $\phi_2(v) = \Delta t - 3$ then $\phi_3(v) \geq 0$.
\end{itemize}
The conclusion now follows.
\end{proof}

\section{Elementary surfaces in genus $2$ handlebodies}

Handlebodies are irreducible.
Every properly embedded connected surface in a handlebody is either compressible, $\bdry$-compressible, the sphere, or the disk.

Throughout this article we will repeatedly be considering disks, annuli, and \mobius bands that are properly embedded in a genus $2$ handlebody $H$ and the results of chopping the handlebody along these surfaces.

\subsection{Definitions and notation}

\begin{figure}
\centering
\input{curvesonhandlebody2.pstex_t}
\caption{}
\label{fig:curvesonhandlebody}
\end{figure}
 
On the boundary of a solid torus $\calT$ a non-separating simple closed curve $c$ is: 
\begin{itemize}
\item {\em meridional} if it bounds a (meridional) disk in $\calT$;
\item {\em longitudinal} (or {\em primitive}) if it transversely intersects a meridian of $\calT$ once and thus runs once around $\calT$; or 
\item {\em cabled} if it is neither meridional nor longitudinal and thus runs more than once around $\calT$.
\end{itemize}

Analogously, there are three notable types of non-separating simple closed curves $c$ on the boundary of a genus $2$ handlebody $H$ depicted in Figure~\ref{fig:curvesonhandlebody}.
\begin{itemize}
\item If $c$ bounds a disk in $H$, then $c$ is {\em meridional} or a {\em meridian}.  The disk is a compressing disk which, in this case, we also describe as {\em meridional}. 
(Note:  In later sections we refer to any compressing disk for the handlebody to be a meridian, regardless of whether or not it is separating.)
\item If there is a compressing disk of $H$ whose boundary transversely intersects $c$ once, then $c$ is {\em primitive}.  We say such a meridional compressing disk is a {\em primitivizing} disk for $c$.  Given a primitivizing disk for a primitive curve there is necessarily a meridional compressing disk disjoint from both.
\item If $c$ is neither meridional nor primitive and there is a disjoint meridional compressing disk for $H$ then $c$ is {\em cabled}.  A meridional disk of $H$ whose boundary transversely intersects $c$ non-trivially and coherently (with respect to some chosen orientations) is a {\em cabling} disk if there is another meridional disk disjoint from both it and $c$.
\end{itemize}
Indeed, in each of the three cases there is a non-separating compressing disk $D$ for $H$ that is disjoint from $c$.  Then $c$ is an essential simple closed curve on the boundary of the solid torus $H \cut D$.  Hence $c$ is either meridional on $H \cut D$ and $H$, longitudinal on $H \cut D$ and primitive on $H$, or wound $n>1$ times longitudinally on $H \cut D$ and cabled on $H$.

Denote the attachment of a $2$-handle to $H$ along $c$ by $H \langle c \rangle$.
\begin{itemize}
\item If $c$ is primitive, then $H \langle c \rangle$ is a solid torus.
\item If $c$ is cabled, then $H \langle c \rangle$ is the connect sum of a solid torus and a non-trivial lens space.
\end{itemize}

\subsection{Disks in genus $2$ handlebodies}
\begin{figure}
\centering
\input{disksinhandlebody.pstex_t}
\caption{}
\label{fig:disksinhandlebody}
\end{figure}

Let $D$ be a disk properly embedded in the genus $2$ handlebody $H$.
Then we have the following trichotomy depicted in Figure~\ref{fig:disksinhandlebody}:
\begin{itemize}
\item $D$ is a non-separating compressing disk; $H \cut D$ is one solid torus.
\item $D$ is a separating compressing disk; $H \cut D$ is two solid tori.
\item $D$ is $\bdry$-parallel; $H \cut D$ is one genus $2$ handlebody and one $3$-ball.
\end{itemize}

\subsection{Annuli}\label{sec:annuli}

\begin{figure}
\centering
\input{annulusinhandlebody-nonsep.pstex_t}
\caption{}
\label{fig:annulusinhandlebody-nonsep}
\end{figure}

\begin{figure}
\centering
\input{annulusinhandlebody-sep.pstex_t}
\caption{}
\label{fig:annulusinhandlebody-sep}
\end{figure}

Let $A$ be an incompressible annulus properly embedded in the genus $2$ handlebody $H$.  
Then we have the following trichotomy:
\begin{itemize}
\item $A$ is a non-separating annulus. In this case, $\partial A$ is non-separating on $\partial H$.
\item $A$ is a separating but not $\bdry$-parallel annulus. In this case,
$\partial A$ also bounds an annulus on $\partial H$. 
\item $A$ is a $\bdry$-parallel annulus. Again, $\partial A$
bounds an annulus on $\partial H$.
\end{itemize}
Examples of the first two situations are depicted in Figures~\ref{fig:annulusinhandlebody-nonsep} and \ref{fig:annulusinhandlebody-sep}.

Let $d$ be a $\bdry$-compressing disk for $A$.  Then $\bdry \nbhd(A \cup d) 
- \partial H$ is a properly embedded disk $D$ and a parallel copy of $A$ in $H$.
Let $A_+$ be the impression of $A$ on the side of $H \cut A$ containing $d$.  Let $A_-$ be the other impression of $A$.  Then one of the following occurs (situations (2) and (3) are not exclusive):
\begin{enumerate}
\item If $D$ is non-separating, then $A$ is non-separating; $H \cut A$ is a genus $2$ handlebody on which $A_+$ is primitive and $A_-$ either primitive or cabled.  See Figure~\ref{fig:annulusinhandlebody-nonsep}.
\item If $D$ is separating, then $A$ is separating; $H \cut A$ is a genus $2$ handlebody on which $A_+$ is primitive and a solid torus $\calT$ on which $A_-$ is either primitive or cabled.  See Figure~\ref{fig:annulusinhandlebody-sep}.
\item If $D$ is $\bdry$-parallel, then $A$ is $\bdry$-parallel; $H \cut A$ is a genus $2$ handlebody on which $A_-$ lies and a solid torus $\calT$ on which $A_+$ is primitive.
\end{enumerate}
In each of these situations $d$ becomes a primitivizing disk for $A_+$ in $H \cut A$.

We say an annulus, $A$, in a handlebody, $H$, is {\it primitive} if 
there is a meridian disk of $H$ that intersects $A$ in a single essential arc. 
Note that an annulus is primitive if and only a component of its 
boundary is primitive in the ambient handlebody.

\subsection{\mobius bands}

\begin{figure}
\centering
\input{mobiusinhandlebody2.pstex_t}
\caption{}
\label{fig:mobiusinhandlebody}
\end{figure}

Let $A$ be an incompressible \mobius band properly embedded in the genus $2$ handlebody $H$.  Let $d$ be a $\bdry$-compressing disk for $A$.  Then $\bdry \nbhd(A \cup d) - \partial H$ is a properly embedded disk $D$.  The disk $D$ is separating and not $\bdry$-parallel in $H$.  Therefore $H \cut D$ is two solid tori, one of which contains the \mobius band $A$.  Because there is a unique embedding of a \mobius band in a solid torus (up to homeomorphism):
\begin{itemize}
\item Up to homeomorphism, there is a unique embedding of a \mobius band $A$ in a genus $2$ handlebody $H$;  $H \cut A$ is a genus $2$ handlebody on which the annular impression of $A$ is primitive.  
 \end{itemize}
 A $\bdry$-compressing disk for $A$ in $H$ becomes a primitivizing disk for the impression of $A$ in $H \cut A$.  This is depicted in Figure~\ref{fig:mobiusinhandlebody}.

\subsection{Cores of Handlebodies}
A curve embedded in the interior of the solid torus $D^2 \times S^1$ is a {\em core} if it is isotopic to $\{z\} \times S^1$ for some point $z \in D^2$.
A curve $c$ embedded in the interior of a handlebody $H$ is a {\em core} if it is the core of a solid torus connect summand of $H$.  This is equivalent to saying $c$ is isotopic to a primitive curve in $\bdry H$.

\section{Obtaining Dyck's surface by surgery.}\label{sec:dycks}

A closed, connected, compact, non-orientable surface with Euler characteristic $-1$ is the connect sum of three projective planes.  It is known as a {\em cross cap number $3$ surface} and as {\em Dyck's surface} \cite{dyckref}.

If a knot $K'$ in $S^3$ has maximal Euler characteristic spanning surface $S$ with $\chi(S)=-1$ (so that $K'$ has genus $1$ or cross cap number $2$) then surgery on $K'$ along a slope $\gamma$ of distance $2$ from $\bdry S$ produces a manifold with Dyck's surface embedded in it.  There is a \mobius band embedded in the surgery solid torus whose boundary coincides with $\bdry S$ so that together they form an embedded Dyck's surface $\tilde{S}$.  The core of the surgery solid torus is the core of the \mobius band, and hence the surgered knot lies as a simple closed curve on $\tilde{S}$.  Furthermore, such a surgery slope $\gamma$ may be chosen so that it has any desired odd distance $\Delta=\Delta(\gamma,\mu)$ from the $S^3$ meridian $\mu$ of  $K'$. We conjecture that this is the only way a
Dyck's surface arises from a non-integral Dehn surgery on a hyperbolic knot:

\begin{conj}\label{con:dyckssurface}
Let $K'$ be a hyperbolic knot in $S^3$ and assume that $K'(\gamma)$ contains an
embedded Dyck's surface. If $\Delta=\Delta(\gamma,\mu)>1$, where $\mu$ is 
a meridian of $K'$, then there is an embedded Dyck's surface, $\widehat{S}
\subset K'(\gamma)$, such that the core of the attached solid torus in 
$K'(\gamma)$ can be isotoped to an orientation-reversing curve in $\widehat{S}$.
In particular, $K'$ has a spanning surface with Euler 
characteristic $-1$.
\end{conj}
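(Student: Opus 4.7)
The plan is to start from Theorem~\ref{thm:3rp2s}, which gives an embedded Dyck's surface $\widehat{S}\subset M = K'(\gamma)$ with $|\widehat{S}\cap K|=1$, and promote it to a Dyck's surface on which $K$ sits as an orientation-reversing curve. Let $S = \widehat{S}\cap X_{K'}$, where $X_{K'}$ denotes the exterior of $K'$, identified with $M\setminus\Int\nbhd(K)$. Then $S$ is a properly embedded non-orientable surface in $X_{K'}$ with a single boundary component of slope $\gamma$ on $\bdry X_{K'}$ and $\chi(S) = -2$. The target is a spanning surface $S'\subset X_{K'}$ with $\chi(S') = -1$ whose boundary slope $\gamma'$ satisfies $\Delta(\gamma',\gamma)=2$. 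Capping such an $S'$ by the \mobius band in the surgery solid torus $\nbhd(K)\subset M$ with boundary of slope $\gamma'$ (which exists precisely because $\Delta(\gamma',\gamma)=2$) then yields a Dyck's surface $\widehat{S}'\subset M$ containing $K$, the core of the \mobius band, as an orientation-reversing curve.

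First I would arrange $S$ to be essential. A compression of $S$ cannot simplify it to a surface of $\chi=0$ with slope $\gamma$: the result would necessarily contain a \mobius band component (since $S$ is non-orientable and cross-cap number descends only in pairs under interior compression), and this caps off in the surgery solid torus to an embedded projective plane in $M$, contradicting the standing exclusion of such surfaces when $\Delta\ge 2$ noted at the start of Section~\ref{sec:setup}. A $\bdry$-compression of $S$, on the other hand, increases $\chi$ by one and changes the boundary slope. The conjecture therefore reduces to locating a $\bdry$-compressing disk for $S$ whose compressing arc on $\bdry X_{K'}$ produces a new slope $\gamma'$ with $\Delta(\gamma',\gamma) = 2$ and whose resulting surface is connected with $\chi = -1$.

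To find such a disk I would apply the graph-of-intersections machinery of Sections~\ref{sec:fatgraphs}--\ref{sec:combin}, but now taking the role of the Heegaard surface to be $\hatF = \bdry\nbhd(\widehat{S})$, the orientable genus-$2$ double cover of $\widehat{S}$, which satisfies $|K\cap\hatF|=2$. Combining with a thin level sphere $\hatQ$ of $S^3$ produces graphs $G_F$ and $G_Q$ exactly as in the main argument. Then Corollary~\ref{cor:bigonsforall} and the great $2$-web of Lemma~\ref{lem:greatweb} produce Scharlemann cycles and long \mobius bands in the regime of Lemma~\ref{lem:thint<2}. The new structural feature to exploit is that $\hatF$ carries a fixed-point-free involution whose quotient is $\widehat{S}$, and I would use this to arrange that one of the \mobius bands output by an \SC\ or \ESC\ descends equivariantly to $\widehat{S}$ as an orientation-reversing curve meeting $K$; such a \mobius band then yields the $\bdry$-compressing arc we want on $S$.

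The main obstacle will be the slope count. Producing some spanning surface $S'$ with $\chi(S') = -1$ by $\bdry$-compression is comparatively routine, but forcing $\Delta(\gamma',\gamma) = 2$ is delicate: a priori a $\bdry$-compression can change the boundary slope by any integer, and wrong values either fail to cap to a Dyck's surface at all or cap to a surface of the wrong topological type (a Klein bottle when $\Delta(\gamma',\gamma)=1$, or an orientable closed surface when the compression reverses parity). Eliminating these competitors will require simultaneous control of the graph combinatorics and of the double-cover involution on $\hatF$; I expect this is precisely where a proof of the conjecture genuinely goes beyond Theorem~\ref{thm:3rp2s}, and that it must use the hypothesis $\Delta(\gamma,\mu)>1$ to exclude the small-distance competing fillings that could otherwise host Dyck's surfaces inequivalent to the spanning-surface construction.
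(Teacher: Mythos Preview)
This statement is Conjecture~\ref{con:dyckssurface}, which the paper explicitly leaves \emph{open}. The paper proves only the weaker Theorem~\ref{thm:3rp2s} (that some Dyck's surface in $K'(\gamma)$ meets $K$ transversely once) and says this ``goes a long way towards verifying this conjecture''; see also Corollary~\ref{cor:dyckstunnel2}, whose case~(1) covers precisely the situation where $K$ cannot be isotoped onto $\widehat{S}$ and no further progress toward the conjecture is made. There is no proof in the paper to compare your proposal against.

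As an attempt on the conjecture itself, your outline has the gap you yourself name: the ``slope count''. Producing a $\bdry$-compression of $S$ that lands on a slope $\gamma'$ with $\Delta(\gamma',\gamma)=2$ is exactly what separates Theorem~\ref{thm:3rp2s} from Conjecture~\ref{con:dyckssurface}, and nothing in your sketch forces this. You write that you ``expect this is precisely where a proof of the conjecture genuinely goes beyond Theorem~\ref{thm:3rp2s}'' --- that is correct, and acknowledging the obstacle is not the same as overcoming it. There are also some misapplied citations: the great-web and bigon lemmas (Lemma~\ref{lem:greatweb}, Corollary~\ref{cor:bigonsforall}, Lemma~\ref{lem:thint<2}) are proved for a genus~$2$ Heegaard or thin-level surface, but $\hatF=\bdry\nbhd(\widehat{S})$ need not be either --- indeed conclusion~(2) of Theorem~\ref{thm:main} and case~(1) of Corollary~\ref{cor:dyckstunnel2} allow $\hatF$ to be incompressible in $M$. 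Finally, your assertion that non-orientability persists under interior compression (``cross-cap number descends only in pairs'') is false in general; compressing a non-orientable surface can yield an orientable one.
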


The following goes a long way towards verifying this conjecture.

\begin{thm}\label{thm:3rp2s}
Let $K'$ be a hyperbolic knot in $S^3$ and assume that $M=K'(\gamma)$ 
contains an 
embedded Dyck's surface. 
If $\Delta=\Delta(\gamma,\mu) > 1$, where $\mu$ is a meridian of $K'$, 
then there is an embedded Dyck's surface in $M$ that 
intersects the core of the attached solid torus in $M$ transversely once. 
\end{thm}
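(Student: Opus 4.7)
The plan is to adapt the combinatorial framework of Section~\ref{sec:setup} to the non-orientable setting. Choose an embedded Dyck's surface $\widehat{S} \subset M$ minimizing $s := |K \cap \widehat{S}|$ over all embedded Dyck's surfaces in $M$, taken incompressible in the exterior of $K$ (after iterated compression, using that $M$ contains no embedded projective plane by \cite{gl:oidscyrm},\cite{cgls:dsok}). If $s=1$ then the theorem's conclusion holds; otherwise I aim to contradict minimality.

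I would first set up intersection graphs in parallel with the proof of Theorem~\ref{thm:main}. Place $K'$ into thin position in $S^3$, choose a thick level sphere $\hatQ$, and let $F = \widehat{S} \cut \nbhd(K)$ and $Q = \hatQ \cut \nbhd(K')$. After isotoping so that each arc of $F \cap Q$ is essential in both surfaces, form the labelled fat-vertexed graphs $G_Q$ on $\hatQ$ and $G_F$ on $\widehat{S}$. Although $\widehat{S}$ is non-orientable, the Parity Rule remains well-defined on $G_Q$ by using local orientations near each fat vertex of $G_F$. The web calculations of Section~\ref{sec:greatwebs} then carry through, with $|\chi(\widehat{S})|=1$ replacing the orientable genus, so Lemma~\ref{lem:greatweb} produces a great $2$-web $\Lambda \subset G_Q$ and Corollary~\ref{cor:bigonsforall} furnishes a proper \SC\ or \ESC\ at every label.

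Next, I would use each \SC\ or \ESC\ to reduce $s$. An \SC\ in $G_Q$ produces an almost properly embedded \mobius band $A$ on one side of $\widehat{S}$ with $\partial A$ on the double cover $\hatF = \bdry \nbhd(\widehat{S})$. Amalgamating $\widehat{S}$ with $A$ along the arcs of $K$ corresponding to the labels of the \SC\ yields a new embedded closed surface of Euler characteristic $-1$ meeting $K$ in strictly fewer points. This new surface must be non-orientable, since an orientable outcome would (after further tubing or capping) exhibit either an embedded Klein bottle or an essential torus in $M$, contradicting \cite{gl:dsokcetII} or \cite{gl:dsokcetI} under $\Delta \geq 2$. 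Thus one obtains a Dyck's surface with smaller $|K \cap \cdot|$, contradicting minimality, and dispatching the case $s \geq 2$.

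The main obstacle is the case $s=0$, where the intersection graph machinery collapses. Here $K$ lies in $M \cut \widehat{S}$, whose boundary is the orientable double cover $\hatF$ of $\widehat{S}$. Applying thin/bridge position analysis for $K$ relative to $\hatF$ (in parallel with Theorem~\ref{thm:main}'s treatment of $1$-sided splittings), I expect that $K$ can be shown to be isotopic onto $\widehat{S}$ as an orientation-reversing simple closed curve. A concluding local modification -- a generic perturbation of $\widehat{S}$ across $K$, which must introduce an odd number of transverse intersections because the normal bundle of an orientation-reversing curve in $\widehat{S}$ is a \mobius band -- produces, with a generic choice giving exactly one zero, the desired Dyck's surface meeting $K$ transversely in one point.
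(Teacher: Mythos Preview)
Your proposal has a genuine gap in the central reduction step, and a confusion in the $s=0$ case.

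First, the easy point: the case $s=0$ never arises and needs no thin/bridge analysis. If $K$ were disjoint from $\widehat{S}$, then $\widehat{S}$ would lie in the exterior of $K$, which is the exterior of $K'$ in $S^3$; but $S^3$ contains no closed non-orientable surface. The paper dispatches this in one line.

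The real problem is your claim that any \SC\ in $G_Q$ can be ``amalgamated'' with $\widehat{S}$ to produce a Dyck's surface meeting $K$ fewer times. The two sides of $\widehat T = \bdry\nbhd(\widehat S)$ behave very differently. The paper calls the $\nbhd(\widehat S)$ side \emph{Black} and the complementary side \emph{White}. Every Black face of $G_Q$ is a bigon (it corresponds to an edge of the graph on $\widehat S$), and a Black \SC\ gives a \mobius band inside the twisted $I$-bundle $\nbhd(\widehat S)$ whose core projects to an orientation-reversing curve on $\widehat S$. There is no tubing-and-compressing move here that drops the intersection number while keeping $\chi=-1$: indeed the paper's Lemma~\ref{lem:toomanySC} shows that three Black \SCs\ on distinct label pairs can coexist (matching the three disjoint orientation-reversing curves on a Dyck's surface), and only a fourth forces a contradiction. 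By contrast, a \emph{White} Scharlemann cycle does allow a reduction, and this is exactly Lemma~\ref{lem:nowhiteSC}. Your argument collapses these two cases into one and thereby asserts something false. Once White \SCs\ are excluded but Black \SCs\ survive, you still need the label-counting Lemma~\ref{lem:labelcount} (which requires first ruling out \ESCs\ via Lemma~\ref{lem:noextend}) to get $t\le 6$, the parity Lemma~\ref{lem:notmultipleof4} to exclude $t=4$, and the detailed special-vertex analysis of Lemmas~\ref{lem:BWB}--\ref{lem:justone12} to exclude $t=6$. None of this is bypassed by your amalgamation move.

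A secondary issue: you place $G_F$ on the non-orientable $\widehat S$ and assert that the Parity Rule survives via local orientations. This is delicate at best, since the rule depends on a global orientation of $F$. The paper avoids the problem entirely by working with the orientable genus~$2$ surface $\widehat T$, so that the standard machinery of Sections~\ref{sec:fatgraphs}--\ref{sec:combin} applies verbatim.
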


\begin{proof}
The proof of this Theorem occupies most of this section.  Initially it closely follows \S6 and \S7 of \cite{gl:dsokcetI} where an analogous theorem is proven for a Klein bottle.  We refer the reader to these sections and note where the proofs diverge in our situation.

For homological reasons (see e.g.\ Lemma~6.2, \cite{gl:dsokcetI}), 
or just by explicit construction of a closed non-orientable surface 
in the exterior of $K'$, if $M$ were to contain an embedded closed non-orientable surface, then $\Delta$ cannot be even.  Hence we assume $\Delta \geq 3$  and odd.

Assume that a Dyck's surface does embed in $M=K'(\gamma)$. 
Note that any embedding of Dyck's surface in $M$ must be incompressible since otherwise a compression would produce an embedded Klein bottle or projective plane; neither of these may occur since $\Delta > 1$.

Let $K$ be the core of the attached solid torus in $M=K'(\gamma)$.
As $S^3$ contains no embedded Dyck's surface there is no such surface in
$M$ that is disjoint from $K$. Thus if $K$ can be isotoped onto a Dyck's
surface in $M$, it must be as an orientation-reversing curve, and can thus be
perturbed to intersect the surface transversely once. So we may assume this does
not happen. Among all embeddings of Dyck's surfaces in $M$ that intersect 
$K$ transversely, take $\widehat{S}$ to be one that intersects $K$ minimally. Let $\widehat{T}$ be the closed orientable genus $2$ surface that is the boundary of a regular neighborhood of $\widehat{S}$.  Let $S$ and $T$ be the intersection of $\widehat{S}$ and $\widehat{T}$ respectively with the exterior $E$ of $K'$.  Let $t = |\bdry T| = 2|\bdry S|$. As mentioned above, we assume $t>0$.
The goal is to show that $t = 2$.

Let $\widehat{Q}$ be a $2$-sphere in $S^3$.  As in \cite{gl:dsokcetI}, via thin position we may assume $\widehat{Q}$ intersects $K'$ (in $S^3$) transversely so that $Q = \widehat{Q} \cap E$ intersects $S$ transversely and no arc component of $Q \cap S$ is parallel in $Q$ to $\bdry Q$ or parallel in $S$ to $\bdry S$.  Moreover, as $T$ ``double covers'' $S$,  $Q$ intersects $T$ transversely and no arc component of $Q \cap T$ is parallel in $Q$ to $\bdry Q$ or parallel in $T$ to $\bdry T$.  We may now form the labeled fat vertexed graphs of intersection $G_Q$ in $\widehat{Q}$ and $G_T$ in $\widehat{T}$ whose edges are the arc components of $Q \cap T$ as well as the graphs $G_Q^S$ in $\widehat{Q}$ and $G_S$ in $\widehat{S}$ whose edges are the arc components of $Q \cap S$.
Furthermore, the incompressibility of $\widehat{S}$ allows us to assume no disk face of either $G_Q^S$ or $G_Q$ contains a simple closed curve component of $Q \cap S$ or $Q \cap T$ respectively.

The proofs in \S2 of \cite{gl:dsokcetI} go through for the pair $G_Q$ and $G_T$ after replacing ``web'' with ``$2$-web'' throughout to accommodate that $T$ has genus $2$ rather than $1$.  In particular, Theorem~6.3 of \cite{gl:dsokcetI} becomes
\begin{lemma}\label{lem:nonorgreatweb}
$G_Q$ contains a great $2$-web.
\end{lemma}

We refer to the side of $\widehat{T}$ containing $\widehat{S}$ as {\em Black} and the other side as {\em White}.  Correspondingly the faces of $G_Q$ are divided into Black and White faces.  Each Black face of $G_Q$ is a bigon and corresponds to an edge of $G_Q^S$.

\begin{lemma}[cf.\ Theorem~6.4 \cite{gl:dsokcetI}]\label{lem:nowhiteSC}
If $t \geq 4$ then no Scharlemann cycle in $G_Q$ of any length 
bounds a White face.
\end{lemma}

\begin{proof}
The proof of Theorem~6.4 \cite{gl:dsokcetI} goes through replacing the Klein bottle with Dyck's surface.
\end{proof}

\begin{lemma}[cf.\ Theorem~6.5 \cite{gl:dsokcetI}]\label{lem:toomanySC}
If $\sigma_1$, $\sigma_2$, $\sigma_3$, and $\sigma_4$ are \SCs\ in $G_Q$, then two of them must have the same pair of labels.
\end{lemma}
\begin{proof}
Assuming no two of the \SCs\ have the same label pair, it must be that $t \geq 4$.
Since the faces of each of these \SCs\ must be Black by Lemma~\ref{lem:nowhiteSC}, then their label pairs are all mutually distinct.  Hence they give rise to four disjoint \mobius bands properly embedded in the Black side of $\widehat{T}$.  Their intersections with $\widehat{S}$ form four mutually disjoint orientation reversing curves.  But this contradicts that $\widehat{S}$ is the connect sum of only three projective planes.
\end{proof}

\begin{lemma}[cf.\ Theorem~6.6 \cite{gl:dsokcetI}]\label{lem:noextend}
If $t \geq 6$ then $G_Q$ does not contain a $1$-\ESC\
(see section~\ref{sec:scycles}). 
\end{lemma}
\begin{proof}
Follow the proof of Theorem~6.6 \cite{gl:dsokcetI} until the last three sentences.  Recall there is a \mobius band $A$ such that $\bdry A = \hat{\alpha}$ and the core curve of $A$ is $\hat{\beta}$.  An arc of $K$ is a spanning arc of $A$.
 On $\widehat{S}$ the curves $\hat{\alpha}$ and $\hat{\beta}$ are disjoint, embedded non-trivial loops.  On $\widehat{S}$ $\hat{\alpha}$ is orientation preserving and $\hat{\beta}$ is orientation reversing.  A small neighborhood of $\hat{\beta}$ on $\widehat{S}$ is a \mobius band $B$.

If $\hat{\alpha}$ is separating, then on $\widehat{S}$ it must bound either a 
\mobius band, once-punctured Klein bottle, or once-punctured torus $P$ that is disjoint from $\hat{\beta}$.  If $P$ is a \mobius band, then $P \cup A$ is a Klein bottle.  By assumption (since $\Delta > 2$) this cannot occur.  If $P$ is a once-punctured Klein bottle or once-punctured torus, then $\widehat{P}=P \cup A$ is a closed non-orientable surface with $\chi = -1$.  We may now perturb $\widehat{P}$ to be transverse to $K$ and have fewer intersections with $K$.  This contradicts the minimality of $\widehat{S}$.

If $\hat{\alpha}$ is non-separating then consider the annulus $A' = A \backslash \hat{\beta}$.  Then $A'$ may be pushed off $A$ keeping $\bdry A'$ on $\widehat{S}$ so that $\bdry A'$ is a push-off of $\hat{\alpha}$ and $\bdry B$.  Then cutting $\widehat{S}$ open along $\hat{\alpha}$ and $\partial B$, we may attach $A$ and $A'$ to the resulting boundary components to form $\widehat{P}$, a new embedded instance of Dyck's surface.  Again we may now perturb $\widehat{P}$ to be transverse to $K$ and have fewer intersections with $K$.  This contradicts the minimality of $\widehat{S}$.
\end{proof}
Let $\calL$ be the set of labels of $G_Q$ that are labels of \SCs\ in $G_Q$.

\begin{lemma}[cf.\ Theorem~6.7 \cite{gl:dsokcetI}]\label{lem:labelcount}
If $t \geq 6$ then $| \calL| \geq (4t-2)/5$.
\end{lemma}
\begin{proof}
The proof is the same as that of Theorem~4.3 \cite{gl:dsokcetI}, using Lemma~\ref{lem:nonorgreatweb} instead of its Corollary~2.7 and Lemma~\ref{lem:noextend} instead of its Theorem~3.2.
\end{proof}

\begin{lemma}\label{lem:notmultipleof4}
$t$ is not a positive multiple of $4$.
\end{lemma}
\begin{proof}
If $t = 4k$, then $|K \cap \widehat{S}|=2k$.  Therefore $\widehat{S}$ may be tubed $k$ times along $K$ to form a closed non-orientable surface in the exterior of $K$.  This forms a closed, embedded, non-orientable surface in $S^3$: a contradiction.
\end{proof}

\begin{lemma} \label{lem:nonortleq6}
$t \leq 6$
\end{lemma}

\begin{proof}
By Lemma~\ref{lem:labelcount} if $t \geq 10$ then there must be at least seven labels that appear as labels of \SCs\ in $G_Q$.  This contradicts Lemma~\ref{lem:toomanySC}.  Lemma~\ref{lem:notmultipleof4} forbids $t=8$.
Hence $t \leq 6$.
\end{proof}

\begin{lemma}\label{lem:BWB}
If $t = 6$, then three consecutive bigons in $\Lambda$ must be Black-White-Black with a Black \SC.  In particular, there may be at most $4$ mutually parallel edges on $\Lambda$.
\end{lemma}

\begin{proof}
In a stack of three consecutive bigons, each corner has four labels.  Since $t=6$, the two sets of four labels of the two corners either completely coincide or overlap in just two labels. The former situation implies the stack is an \ESC; this violates Lemma~\ref{lem:noextend}.  The latter situation implies one of the outer bigons is an \SC.  By Lemma~\ref{lem:nowhiteSC}, this bigon must be Black.  The lemma at hand now follows. 
\end{proof}

\begin{lemma}\label{lem:nonorforkedextS2}
If $t=6$, there cannot be a forked (once) extended Scharlemann cycle
(see section~\ref{sec:scycles}).
\end{lemma}

\begin{figure}
\centering
\input{forkedextS2.pstex_t}
\caption{}
\label{fig:forkedextS2}
\end{figure}

\begin{proof}
Assume there is a forked extended Scharlemann cycle. By symmetry we may assume, 
without loss of
generality, that it has  labels and faces marked as in Figure~\ref{fig:forkedextS2}(a).  The edges of $\bdry f$ and $\bdry g$ form the subgraph of $G_T$ shown in Figure~\ref{fig:forkedextS2}(b).

Collapse $\nbhd(\widehat{S})$ back down to $\widehat{S}$ expanding the two faces $f$ and $g$ into $\overline{f}$ and $\overline{g}$ as shown in Figure~\ref{fig:nonorforkedext}(a).  Since the two $\edge{34}$-edges of Figure~\ref{fig:forkedextS2}(b) bound a single Black bigon, they are collapsed into one orientation reversing loop on $G_S$.  Because the other edges of $\bdry f$ and $\bdry g$ belong to distinct bigons, they remain distinct edges of $\bdry \overline{f}$ and $\bdry \overline{g}$.  In particular the two $\edge{25}$-edges continue to form an orientation preserving loop on $G_S$.  The corresponding subgraph of $G_S$ is shown in Figure~\ref{fig:nonorforkedext}.

\begin{figure}
\centering
\input{nonorforked.pstex_t}
\caption{}
\label{fig:nonorforkedext}
\end{figure}

A small collar neighborhood of the $\edge{34}$-edge on $G_S$ is a \mobius band.  Nearby, the faces $\overline{f}$ and $\overline{g}$ encounter the $3/4$ vertex as in Figure~\ref{fig:splittingvertex}(a).  To separate these faces
we may perturb $K$ near the vertex, introducing two new intersections with $\widehat{S}$ as shown in Figure~\ref{fig:splittingvertex}(b).  The perturbation is done so that the resulting five edges of $\bdry \overline{f}$ and $\bdry \overline{g}$ are disjoint. 

\begin{figure}
\centering
\input{splittingvertex.pstex_t}
\caption{}
\label{fig:splittingvertex}
\end{figure}

Now we surger $\widehat{S}$ along the two arcs of $K$ that form the corners of $\overline{f}$ and $\overline{g}$.  This produces a new closed non-orientable surface $\widehat{R}$ that $K$ intersects $2$ fewer times than $\widehat{S}$, though $\chi(\widehat{R}) = -5$.  Finally, since the boundaries of the faces $\overline{f}$ and $\overline{g}$ are disjoint on $\widehat{R}$ and non-separating both individually and together, they simultaneously give compressions of $\widehat{R}$ yielding a closed non-orientable surface $\widehat{S}'$ with $\chi(\widehat{S}') = -1$ that $K$ intersects transversely just once.  This contradicts the minimality assumption on $\widehat{S}$.
\end{proof}

\begin{lemma}\label{lem:justone12}
If $t=6$ and $\Lambda$ contains a Black $\arc{34},\!\arc{56}$-bigon, then there is only one parallelism class of Black $\arc{12}$-\SC.
\end{lemma}

\begin{proof}
 By Lemma~\ref{lem:labelcount} there must be at least five labels that appear as labels of \SCs\ in $G_Q$.    Hence all $6$ labels are labels of \SCs\!.  In particular, there are $\arc{12}$-, $\arc{34}$-, and $\arc{56}$-\SCs\!. 
Choose a representative \SC\ for each Black label pair.  Since these Black bigons are disjoint,  after their corners are identified along $K$, they project to three mutually disjoint orientation reversing simple curves on $\widehat{S}$.  Thus the complement of these three curves is a thrice-punctured sphere $P$.  
A Black $\arc{34},\!\arc{56}$-bigon projects to a properly embedded arc $a$ on $\widehat{S}$ connecting two of the punctures on $P$.  
Given a new $\arc{12}$-\SC, it projects to a properly embedded arc $b$ on $P$ disjoint from $a$ and connects the third puncture to itself.  Since $P \backslash a$ is an annulus, $b$ must be boundary parallel.  Hence the new $\arc{12}$-\SC\
 must be parallel to the original representative $\arc{12}$-\SC.
\end{proof}

\begin{thm}
$t \leq 2$
\end{thm}

\begin{proof}
By Lemma~\ref{lem:nonortleq6} we have $t \leq 6$.  Since $t \neq 4$ by Lemma~\ref{lem:notmultipleof4}, we assume for a contradiction that $t=6$.
Since bigons may occur in at most runs of three according to Lemma~\ref{lem:BWB}, Lemmas~\ref{lem:N4trulyspecialtypes} and \ref{lem:t6trulyspecial} imply 
that $\Lambda$ has a special vertex $v$ of type $[6\Delta-5,4]$, $[6\Delta-4,1]$, or $[6\Delta-3]$.  Thus there are bigons at at least $6\Delta-5$ corners of $v$.  By Lemma~\ref{lem:BWB} at most $\frac{3}{4}$ of the corners around a vertex may belong to bigons, however.  Hence $6\Delta-5 \leq \frac{3}{4} \Delta t = \frac{9}{2} \Delta$ and so $\Delta \leq \frac{10}{3}$.  Thus $\Delta = 3$.  Therefore $v$ has type $[13,4]$ or $[14]$ (which includes both types $[14,1]$ and $[15]$).

If $v$ is of type $[14]$ then there are at most $4$ faces around $v$ that are not bigons.  
Hence there must be some run of at least $4$ bigons.  
This contradicts Lemma~\ref{lem:BWB}.  

\begin{figure}
\centering
\input{type13-4.pstex_t}
\caption{}
\label{fig:type13-4}
\end{figure}
If $v$ is of type $[13,4]$ and not type $[14]$ then the $13$ bigons must appear around $v$ as in Figure~\ref{fig:type13-4} up to relabeling.  In Figure~\ref{fig:type13-4}, each ``gap'' marks a non-bigon (the two on the ends mark the same corner); at least $4$ mark a trigon.  Since each gap must correspond to a White corner at $v$, there are either three bigons or just one bigon between gaps.  Hence around $v$ there must be four runs of three consecutive bigons, as pictured, each containing a Black \SC\ by Lemma~\ref{lem:BWB}. 

Because at most one non-bigon around $v$ is not a trigon, at least two of the trigons lie between two of these runs of bigons.  Such a trigon is adjacent to $0$, $1$, or $2$ \SCs\ in the two runs of bigons.  Lemma~\ref{lem:nowhiteSC} 
prohibits such a trigon being adjacent to $0$ \SCs\!.  If such a trigon is adjacent to $1$ \SC, then it must be part of a forked extended Scharlemann cycle as in Figure~\ref{fig:forkedextS2}; Lemma~\ref{lem:nonorforkedextS2} prohibits this configuration.  Hence every such trigon must be adjacent to $2$ \SCs\!. This implies that 
there cannot be three consecutive runs of bigon triples at $v$ --- that the
central gap in Figure~\ref{fig:type13-4} is the one not filled by a trigon 
in $\Lambda$. The labeling is now completely forced, except for that of the
singleton Black bigon at the left of the figure. It must be a $\arc{12}$-\SC,
otherwise one of the trigons on either side is a White \SC, contradicting
Lemma~\ref{lem:nowhiteSC}. But then there are three $\arc{12}$-\SCs\ incident
to $v$, along with a $\arc{34},\!\arc{56}$-bigon. 
Lemma~\ref{lem:justone12} implies that there are $\edge{12}$ edges incident
to $v$ that are parallel on $G_T$. The argument of 
Lemma~\ref{lem:parallelwithsamelabel} applied to $G_T, G_Q$ shows that $K$
is a cable knot --- a contradiction.
\end{proof}

The above lemma provides the conclusion of Theorem~\ref{thm:3rp2s}.
\end{proof}

\begin{cor}\label{cor:dyckstunnel2}
Let $K'$ be a hyperbolic knot in $S^3$ and assume that $M=K'(\gamma)$ 
contains an 
embedded Dyck's surface. 
If $\Delta=\Delta(\gamma,\mu) > 1$, where $\mu$ is a meridian of $K'$, 
then there is an embedded Dyck's surface, $\widehat{S}$, 
in $M$ that 
intersects transversely once the core, $K$, of the attached solid torus.
Let $\widehat{N} = M - \nbhd(\widehat{S})$. Either
\begin{enumerate}
\item  $\partial \widehat{N}$ is incompressible in $\widehat{N}$ (hence in $M$).
Furthermore, either $K$ can be isotoped in $M$ onto $\widehat{S}$ as an 
orientation-reversing curve, or the twice-punctured, genus $2$ surface 
$\partial \widehat{N} - \nbhd(K)$ is incompressible
in the exterior of $K$.
\item  $\widehat{N}$ is a genus $2$ handlebody in which $K \cap \widehat{N}$
is a trivial arc. That is, $\widehat{S}$ gives a 1-sided Heegaard 
splitting for $M$ with respect to which $K$ is 
1-bridge. In this case, $K'$ has tunnel number at most $2$.
\end{enumerate}
\end{cor}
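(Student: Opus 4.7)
The plan is to apply Theorem~\ref{thm:3rp2s} to obtain an embedded Dyck's surface $\widehat{S}\subset M$ meeting $K$ transversely in a single point, and then analyze $\widehat{N}=M-\nbhd(\widehat{S})$. The orientation double cover of $\widehat{S}$ appears as $\widehat{F}=\partial\widehat{N}$, a closed orientable genus-$2$ surface; as $\pi_1(\widehat{F})\hookrightarrow\pi_1(\widehat{S})$ is injective, $\widehat{F}$ is incompressible in the twisted $I$-bundle $\nbhd(\widehat{S})$. Also, $\widehat{S}$ itself is incompressible in $M$, since a compression would produce an embedded $\RP^2$ or Klein bottle, both forbidden for $\Delta\geq 3$ (note $\Delta$ is odd, by the non-orientability obstruction recalled at the start of the proof of Theorem~\ref{thm:3rp2s}). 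Set $\alpha=K\cap\widehat{N}$, a single properly embedded arc, and choose $\widehat{S}$ to minimize $|K\cap\widehat{S}|$ among embedded Dyck's surfaces transverse to $K$. I split the argument on whether $\widehat{F}$ is compressible in $\widehat{N}$.

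Case~A assumes $\widehat{F}$ incompressible in $\widehat{N}$; combined with the $\nbhd(\widehat{S})$-side, $\widehat{F}$ is incompressible in $M$, establishing the first clause of (1). For the second clause, look at the twice-punctured surface $F=\widehat{F}\cap(M-\nbhd(K))$. If $F$ is compressible in the exterior of $K$ via a disk $D$, then $\partial D$ is essential in $F$: either it is essential in $\widehat{F}$, contradicting the first clause, or it bounds a disk $E\subset\widehat{F}$ containing both components of $\partial F$. In the latter case, $D\cup E$ is a $2$-sphere in $M$ bounding a $3$-ball by irreducibility, and this ball prescribes an isotopy pushing $\alpha$ across $\widehat{F}$ onto a subarc of $\widehat{S}$, realizing $K$ as an orientation-reversing simple closed curve on $\widehat{S}$.

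Case~B assumes $\widehat{F}$ compressible in $\widehat{N}$; the goal is conclusion (2). First I show $\widehat{N}$ is a genus-$2$ handlebody by maximally compressing $\widehat{F}$ inside $\widehat{N}$. Irreducibility of $\widehat{N}$ (from that of $M$ together with incompressibility of $\widehat{S}$) lets sphere components of the residual surface $S^{*}$ bound balls and be capped off. Any remaining component of $S^{*}$ is incompressible in $\widehat{N}$; using incompressibility of $\widehat{F}$ in $\nbhd(\widehat{S})$, a standard innermost-disk argument pushes any would-be compressing disk in $M$ into $\widehat{N}$, promoting the incompressibility to $M$. Atoroidality of $M$ (since $\Delta\geq 3$ and $K'$ is hyperbolic) kills torus components, while the restrictions on essential surfaces in surgeries on hyperbolic knots afforded by $\Delta\geq 3$ rule out higher-genus components, so $S^{*}=\emptyset$ and $\widehat{N}$ is a handlebody of genus $2$. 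For the triviality of $\alpha$, I take a compressing disk $D\subset\widehat{N}$ for $\widehat{F}$ and minimize $|D\cap\alpha|$. If $|D\cap\alpha|>0$, an outermost arc of $D\cap\alpha$ on $D$ cuts off a bridge disk realizing a subarc of $\alpha$ as $\partial$-parallel; using it to isotope $K$ yields a new Dyck's surface meeting $K$ in fewer points, contradicting minimality. Hence $D$ is disjoint from $\alpha$, so $D$ compresses $F$ in the exterior of $K$, and the Case~A sphere-and-ball argument (now applied entirely inside $\widehat{N}$) exhibits a $3$-ball in $\widehat{N}$ containing $\alpha$ in which $\alpha$ is $\partial$-parallel to an arc on $\widehat{F}$; thus $\alpha$ is trivial in $\widehat{N}$.

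Finally, the tunnel number bound in (2) follows by a standard argument from the $1$-bridge decomposition $K=\alpha\cup\beta$: attaching two tunnels inside $\widehat{N}$ that together with $\alpha$ form a spine of the handlebody $\widehat{N}$ gives a genus-$3$ handlebody neighborhood of $K$ together with the tunnels, whose complement in $M$ is a handlebody, so the Heegaard genus of the exterior of $K'$ is at most $3$ and hence $t(K')\leq 2$. The main obstacle is Case~B: the essential-surface analysis eliminating higher-genus components of $S^{*}$ genuinely uses both $\Delta\geq 3$ and the hyperbolicity of $K'$, and the triviality of $\alpha$ requires coupling the compressibility of $\widehat{F}$ in $\widehat{N}$ with the minimality of $|K\cap\widehat{S}|$ via the outermost-arc reduction.
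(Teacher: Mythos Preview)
Your Case~B argument for the triviality of $\alpha$ does not work. The intersection $D\cap\alpha$ of a disk with an arc is a finite set of points, not arcs, so there is no ``outermost arc of $D\cap\alpha$ on $D$'' to cut off a bridge disk; and even granting $D$ disjoint from $\alpha$, the Case~A sphere-and-ball argument cannot be invoked, because in Case~B the curve $\partial D$ is \emph{essential} in $\widehat{F}$ and bounds no disk $E$ there. More fundamentally, a properly embedded arc in a genus-$2$ handlebody need not be boundary-parallel (a core arc of a $1$-handle is not), so something beyond the handlebody structure is required. Your Case~A has a related gap: even if the ball $B$ shows $\alpha$ is $\partial$-parallel in $\widehat{N}$, you have only shown $K$ is isotopic into the twisted $I$-bundle $\nbhd(\widehat{S})$, and not every simple closed curve there is isotopic onto the zero-section $\widehat{S}$.

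What the paper supplies is exactly the missing ingredient. Assuming $K$ is not isotopic onto $\widehat{S}$, thin position of $K'$ in $S^3$ and the intersection graph $G_Q$ on a thick $2$-sphere produce a White Scharlemann-cycle face $f$, an essential disk in $N=\widehat{N}-\nbhd(K)$. This $f$ is used three times. First, the handle addition lemma (with $f$ as the witnessing compressing disk for $\partial N$) shows that incompressibility of $F$ in $N$ forces $\widehat{F}$ incompressible in $\widehat{N}$. Second, when $F$ compresses in $N$, $f$ (again via handle addition) rules out the twice-punctured essential torus outcome of a maximal compression, leaving a boundary-parallel annulus and hence a bridge disk $D_\kappa$ for $\alpha$. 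Third, if in addition $\widehat{F}$ is incompressible, $f$ and $D_\kappa$ are played off against each other (Claim~\ref{clm:dissc}) to manufacture a compressing disk for $\widehat{F}$ in $\widehat{N}$, a contradiction; so this case forces $K$ isotopic onto $\widehat{S}$ after all. You have bypassed $f$ entirely, and without it neither the triviality of $\alpha$ in Case~B nor the dichotomy in Case~A can be established.
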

\begin{proof}
Theorem~\ref{thm:3rp2s} provides the Dyck's surface $\widehat{S}$ in $M$
that intersects $K$ at most once. 
If $K$ in $M$ can be isotoped
onto $\widehat{S}$, then it must be an orientation-reversing curve in that
surface (as $S^3$ admits no embedded, closed, non-orientable surfaces) 
and we are done (if $\widehat{N}$ has incompressible boundary, it is the
first conclusion, if not, it is the second as $M$ is atoroidal and $\widehat{S}$
is incompressible). 
So assume $K$ cannot be isotoped onto $\widehat{S}$.
Using a thin position of $K'$ in $S^3$, find surfaces $\widehat{S},S,
\widehat{T},T$ as at the beginning of the proof of Theorem~\ref{thm:3rp2s}.
Now we have $t=2$. By Lemma~\ref{lem:nonorgreatweb},
$G_Q$ contains a great $2$-web. This web must contain some White face, $f$,
which must be a Scharlemann cycle (though not necessarily a bigon). 
We view $f$ as giving
an essential disk in $N = \widehat{N} - \nbhd(K)$.   

Assume the twice-punctured, genus $2$ surface 
$\partial \widehat{N} - \nbhd(K)$ is incompressible in the exterior of
$K$. This is equivalent to its incompressibility in $\widehat{N} - \nbhd(K)$.
As $f$ gives a compressing disk for the boundary of $N$, 
Lemma 2.1.1 of \cite{cgls:dsok} (handle addition
lemma) implies that $\partial \widehat{N}$ is incompressible in $\widehat{N}$
and hence in $M$. This is one of the desired conclusions.

So we assume that $\partial \widehat{N} - \nbhd(K)$ is compressible 
in the exterior of $K$, hence in $N$. Compress
$\partial \widehat{N} - \nbhd(K)$ maximally in $N$.
As $K$ is hyperbolic, no component of the result can be 
an essential annulus in $N$. Thus 
$\partial \widehat{N} - \nbhd(K)$ must compress so that the component
containing its boundary is either a twice-punctured, essential
torus  or a boundary parallel annulus in $N$.
Assume first it is a twice-punctured, essential torus. That is, we may 
assume there is a
compressing $D$ for $\partial \widehat{N}$ that is disjoint from $K$,
such that some component of $\partial \widehat{N} - \nbhd(K)$ surgered along
$D$ is a
twice-punctured essential torus, $F$. Let $\hatF$ be the corresponding
torus component obtained by compressing $\partial \widehat{N}$ along $D$.
Note that $\hatF$ is incompressible on the side containing $\widehat{S}$
as any compressing disk could be taken disjoint from both $\widehat{S}$ 
and $D$. On the other hand, $\hatF$ is also incompressible on the side,
$\calO$, 
lying in $\widehat{N}$ by Lemma 2.1.1 of \cite{cgls:dsok}: surgering 
the disk $f$ off of $D$, gives
rise to an essential disk in $\calO - \nbhd(K)$. 
Thus, $\hatF$ is an incompressible torus in $M$, 
a contradiction.  

Thus $\partial \widehat{N} - \nbhd(K)$ must compress
to a boundary parallel annulus in $N$. 
Thus for the arc 
$\kappa = K \cap \widehat{N}$, there is a disk $D_{\kappa}$ in 
$\widehat{N}$  such that $\partial D_{\kappa} = \kappa \cup \delta$
where $\delta \subset \partial \widehat{N}$. That is, $D_{\kappa}$ 
is a ``bridge disk''
for $\kappa $ in $\widehat{N}$.

First, assume that $\partial \widehat{N}$ does not compress in $\widehat{N}$. 

\begin{claim}\label{clm:dissc}
Let $A$ be the annulus $N \cap \nbhd(\kappa)$, and $\alpha$ be the core of
$A$. There are disjoint disks $D_1,D_2$ properly embedded in $N$ such that
$\partial D_1$ intersects $\alpha$ once and $\partial D_2$ intersects $\alpha$
algebraically and geometrically $n>1$ times.
\end{claim}

\begin{proof}
Initially, set $D_1=D_{\kappa}, D_2=f$. 
Isotope $D_1$ so that it is 
disjoint from $D_2$ along $A$. Subject to this
condition, isotop $D_1$ to intersect $D_2$ minimally. 
If $D_1, D_2$ are disjoint, we are done.
Otherwise, there is an outermost arc of intersection, $\nu$, on $D_1$ 
cutting off a disk $d$
which is disjoint from $D_2$ except along $\nu$ and also disjoint from 
$\alpha$. By minimality, each side of 
$\nu$ in $D_2$ contains components of $\partial D_2 \cap A$.
If one side of $\nu$ contains a single component of $\partial D_2 \cap A$,
then add this side of $\nu$ in $D_2$ to $d$,
thereby getting a new disk $D_1$ disjoint from the disk $D_2$ as desired. 
Otherwise, surger $D_2$ along $d$ and take either component as the new
$D_2$. Then $D_1,D_2$ still satisfy the desired intersection properties
with $\alpha$ but have fewer components of intersection with each other.
Repeating, we eventually get disjoint $D_1,D_2$.
\end{proof}

Note that $N-\nbhd(D_1)$ is isotopic to $\widehat{N}$. Under this isotopy
the disk $D_2$ becomes a disk in $\widehat{N}$ whose boundary is easily seen
to be non-separating in $\partial \widehat{N}$. This contradicts the 
incompressibility of $\partial \widehat{N}$ in $\widehat{N}$. Thus it must
be that $K$ could be isotoped onto $\widehat{S}$.

Finally, assume $\partial \widehat{N}$ compresses in $\widehat{N}$.
Since $M$ is atoroidal,
$\widehat{N}$ is a genus $2$ handlebody. That is, $\widehat{S}$ is a 1-sided
Heegaard surface for $M$, and $D_{\kappa}$ says that $K$ is 1-bridge with
respect to this splitting. Adding the cores of $\widehat{N}$ as tunnels to $K$ 
gives a genus $3$ handlebody isotopic
to $\widehat{N} \cup \nbhd(K)$. Since the neighborhood of a punctured non-orientable surface (in an orientable $3$-manifold) is a handlebody, these two tunnels 
provide a tunnel system for $K$, hence also for $K'$.
\end{proof}

\section{Scharlemann cycles, \mobius bands, and annuli}\label{sec:escbounds}

See section~\ref{sec:escandlongmb} for definitions regarding extended Scharlemann cycles, long \mobius bands, and almost properly embedded surfaces. Recall that 
$M = K'(\gamma)$ with $\Delta=\Delta(\gamma,\mu) > 2$, where $\mu$ is a
meridian of $K'$. In particular, as $K'$ is hyperbolic this implies that $M$ does not contain an essential $2$-sphere, $2$-torus, projective plane, or Klein
bottle and is not a lens space. 
$M = H_B \cup_{\hatF} H_W$ is a strongly 
irreducible genus $2$ Heegaard
splitting of $M$. We assume that thin position of $K$, the core of the attached
solid torus in $M$, with respect
to this splitting is the minimal bridge position for $K$ among all genus 2 Heegaard splittings of $M$ and that we have surgered $Q$ to get rid of any simple closed curves of $Q \cap F$ that are trivial on both.

In this and subsequent sections, we will often need to divide the argument
into the two cases:
\begin{itemize}
\item {\bf \situationnscc}: There are no closed curves of $Q \cap F$ 
in the interior of 
faces of $G_Q$.  Thus the annuli, \mobius band constituents of a long \mobius 
band are each properly embedded on one side of $\hatF$.
\item
{\bf \situationscc}: There are closed curves of $Q \cap F$ in the interior of 
faces of $G_Q$.  Recall (Corollary~\ref{cor:AEntscc}) that 
any such must be non-trivial on $\hatF$ 
and bound a disk on one side of $\hatF$. In this case the annuli, \mobius
band constituents of a long \mobius band are each almost properly embedded
on one side of $\hatF$ (section~\ref{sec:ape}).
\end{itemize}

\begin{lemma}\label{lem:MBB} 
Assume $A$ is an almost properly embedded \mobius band in one
handlebody of a Heegaard splitting $H_W \cup_{\hatF} H_B$ of a 
3-manifold $M$. If a core curve of $A$
lies in a $3$-ball in $M$ then the Heegaard splitting is weakly reducible.
\end{lemma}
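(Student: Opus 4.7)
Without loss of generality, assume $A$ is almost properly embedded in $H_W$, and let $c$ be a core of $A$ contained in a $3$-ball $B \subset M$.  The strategy is to reduce to Lemma~\ref{lem:AEGor} applied to the simple closed curve $C = \partial A \subset \hatF$.

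The first step is to show that $\partial A$ lies in a $3$-ball of $M$.  Take a thin \mobius sub-band of $A$ about $c$; its single boundary component $c'$ can be placed inside $B$ by shrinking the sub-band sufficiently.  The complement of this sub-band in $A$ is an annulus joining $c'$ to $\partial A$, which provides an isotopy in $M$ from $c'$ to $\partial A$; extending to an ambient isotopy of $M$ carries $B$ to a $3$-ball containing $\partial A$.

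Next, if $\partial A$ bounds no disk in either $H_W$ or $H_B$, Lemma~\ref{lem:AEGor} (together with the remark handling $M=S^3$ for $g(\hatF) \neq 1$) applies directly and gives the weak reduction.  Otherwise we split into two subcases.  If $\partial A$ bounds a disk $D \subset H_B$ (the side opposite $A$), then a $\partial$-compressing disk for $A$ in $H_W$, chosen so its arc on $\hatF$ is a small arc near $\partial A$, yields a compressing disk $D' \subset H_W$ whose boundary is a band-sum of two parallel copies of $\partial A$ and can be perturbed off $\partial D = \partial A$; the pair $(D,D')$ is the required weak reducing pair.  If instead $\partial A$ bounds a disk $D \subset H_W$, a standard surgery on $A$ along the meridian disks bounded by the curves of $\partial_I A$ (which are trivial in $A$ and meridional on $\hatF$ by the definition of almost proper embedding) produces a properly embedded \mobius band $A' \subset H_W$ with $\partial A' = \partial A$; then $A' \cup D$ is an embedded $\mathbb{RP}^2$ inside the handlebody $H_W$, which is impossible since $H_2(H_W; \mathbb{Z}/2) = 0$ while an $\mathbb{RP}^2$ in an orientable $3$-manifold is always nonzero in $\mathbb{Z}/2$ second homology.

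The main obstacle is the surgery step in the last subcase: one must check that the surgeries on the circles of $\partial_I A$ can be arranged consistently to deliver $A'$ properly embedded in $H_W$ (rather than leaving pieces on both sides of $\hatF$), and similarly in the middle subcase one must verify that the $\partial$-compressing disk can be chosen so that its boundary is actually disjoint from $\partial A$ after the push-off.
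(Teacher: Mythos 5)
Your first step --- isotoping $\partial A$ into a $3$-ball by pushing $c$ out to $\partial A$ along the complementary annulus --- is correct and is exactly the paper's move. The divergence is in how the meridian case is ruled out, and there the gaps you flag at the end are genuine: the $\partial$-compression in subcase~(a) is not defined for an \emph{almost} properly embedded $A$, and in subcase~(b) the surgered $A'$ can end up leaving $H_W$ because the circles of $\partial_I A$ may bound meridian disks in $H_B$ rather than $H_W$, so the $H_2(H_W;\mathbb{Z}/2)=0$ argument does not close.

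You are working harder than you need to precisely because you avoid the standing assumption that is in force throughout Section~\ref{sec:escbounds}: since $M=K'(\gamma)$ with $K'$ hyperbolic and $\Delta\geq 3$, $M$ contains no embedded projective plane. With that in hand the meridian case dies in one line: if $\partial A$ bounded a disk $D$ in $H_W$ or $H_B$, surgering $A$ along the circles of $\partial_I A$ (innermost first, using the meridian disks each bounds on whichever side of $\hatF$ it bounds) produces a \mobius band $A'$ embedded in $M$ --- not necessarily in $H_W$, which is fine --- with $\partial A' = \partial A = \partial D$ and interior disjoint from $D$; then $A'\cup D$ is an embedded $\mathbb{RP}^2$ in $M$, a contradiction. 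Once $\partial A$ is known to be a non-meridian essential curve on $\hatF$ lying in a $3$-ball, Lemma~\ref{lem:AEGor} applies directly. That is the paper's proof, and it makes the two subcases you set up (and the $\partial$-compression machinery) unnecessary.
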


\begin{proof} $\partial A$ cannot be a meridian of either $H_W$ 
or $H_B$ since $M$ contains no projective planes.
But $\partial A$ can be isotoped into a neighborhood
of the core of $A$. Hence $\partial A$ lies in a 
3-ball in $M$, and Lemma~\ref{lem:AEGor} says the splitting is
weakly reducible.
\end{proof}

\begin{lemma}\label{lem:1bdryslope}
The exterior of $K$ contains no properly embedded, essential, twice-punctured torus with boundary slope $\gamma$, the meridian of $K$ in $M$.
\end{lemma}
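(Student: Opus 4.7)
Suppose for contradiction that $X_K$ contains a properly embedded essential twice-punctured torus $P$ with boundary slope $\gamma$. Since $\gamma$ is the meridian of $K$ in $M$, I can cap off the two boundary components of $P$ with meridian disks of the surgery solid torus $\nbhd(K)\subset M$, producing a closed torus $\hat P\subset M$ meeting $K$ transversely in exactly two points, with $\hat P\cap X_K=P$ essential.

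Since $K'$ is hyperbolic and $\Delta\ge 3$, the manifold $M$ is closed, irreducible, and contains no essential torus (\cite{gl:dsokcetI}). Therefore $\hat P$ must admit a compressing disk $D\subset M$. If some such $D$ could be chosen disjoint from $K$, then $D\subset X_K$ with $\partial D$ essential on $\hat P$; after a small isotopy $\partial D\subset P$, and it remains essential on $P$ (the two punctures are too small to make an essential curve on $\hat P$ inessential on $P$). This would compress $P$ in $X_K$, contradicting the incompressibility of $P$. Hence every compressing disk for $\hat P$ meets $K$. Choose $D$ minimizing $n=|D\cap K|\ge 1$. Compressing $\hat P$ along $D$ yields a $2$-sphere bounding a $3$-ball in $M$ (by irreducibility), so $\hat P$ bounds a solid torus $V_P$ on the $D$-side with $D$ a meridian disk; the arc $\alpha:=K\cap V_P$ has winding number $n$ in $V_P$ by minimality. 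Set $W_P:=\overline{M\smallsetminus V_P}$ and $\beta:=K\cap W_P$.

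The plan is to derive the contradiction by showing that $W_P$ is also a solid torus, which would make $M=V_P\cup_{\hat P}W_P$ a union of two solid tori glued along a torus---i.e., a lens space. This contradicts the Cyclic Surgery Theorem~\cite{cgls:dsok}, which forces $\Delta(\gamma,\mu)\le 1$ for any lens-space surgery on the hyperbolic knot $K'$, in opposition to the hypothesis $\Delta\ge 3$. To establish that $W_P$ is a solid torus, I exploit two ingredients: essentiality of $P$ in $X_K$ rules out any compression of $P$ on the $W_P$-side that could be used to reduce $\hat P$ to a simpler surface, while the hyperbolicity of $X_{K'}$ (equivalently, of $X_K$) prohibits essential tori and annuli in $W_P\smallsetminus\nbhd(\beta)$. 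Together these force $W_P\smallsetminus\nbhd(\beta)$ to admit no nontrivial JSJ pieces, compelling $W_P$ itself to be a solid torus.

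The principal obstacle is the case $n\ge 2$, in which $\alpha$ is nontrivially cabled in $V_P$: the complement $V_P\smallsetminus\nbhd(\alpha)$ is then a cable space rather than a genus-$2$ handlebody, and essential annuli there could a priori permit $W_P\smallsetminus\nbhd(\beta)$ to have nontrivial Seifert or annular structure. Ruling this out requires delicate use of the $\partial$-incompressibility of $P$ together with hyperbolicity of $K'$, possibly by first reducing to the $n=1$ case via an isotopy of $\hat P$ that diminishes the winding number of $\alpha$ (replacing $\hat P$ by a parallel copy cut and rejoined along a suitable meridional annulus in $V_P$).
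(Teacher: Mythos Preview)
Your proposal has a genuine gap: you never establish that $W_P$ is a solid torus, and the reasoning you sketch does not get you there. Essentiality of $P$ only tells you that $\hat P$ has no compressing disk in $W_P$ that misses $K$; it says nothing about the global topology of $W_P$. Likewise, hyperbolicity of $X_K$ forbids essential tori and annuli in $X_K$, but not in the submanifold $W_P\smallsetminus\nbhd(\beta)$---a surface can be essential in a codimension-zero piece while compressing in the ambient manifold. Even if you knew that $W_P\smallsetminus\nbhd(\beta)$ were a genus~$2$ handlebody, recovering $W_P$ means attaching a $2$-handle along the meridian of $\beta$, and the result is a solid torus only when that attaching curve is primitive; generically one obtains a manifold with incompressible torus boundary. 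So the ``no nontrivial JSJ pieces'' sentence is not a proof, and the $n\ge 2$ obstacle you flag is not the only one---the argument is already incomplete for $n=1$. (There is also a minor slip earlier: the compressing disk $D$ need not lie on the solid-torus side; you should first take $V_P$ to be the solid-torus side and then let $D$ be a meridian disk of $V_P$.)

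The paper proceeds quite differently and reaches a different contradiction. It puts $K'$ in thin position in $S^3$, finds a level sphere $\hat Q$ so that the graph of intersection $G_Q$ built from the arcs of $Q\cap P$ has no monogons, and then invokes \cite[Lemmas~8.2--8.3]{gl:dsokcetI} to conclude that \emph{both} $V_P\smallsetminus\nbhd(\alpha)$ and $W_P\smallsetminus\nbhd(\beta)$ are genus~$2$ handlebodies. Since $P$ is incompressible in each piece (it is incompressible in $X_K$), the Handle Addition Lemma \cite[2.1.1]{cgls:dsok} shows that $\hat P$ is incompressible in $V_P$ and in $W_P$, hence essential in $M$---contradicting atoroidality directly. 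Note this is the opposite of your target: rather than forcing both sides to be solid tori, the argument shows $\hat P$ compresses on neither side.
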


\begin{proof}
Assume $T$ is a properly embedded, essential, twice-punctured torus in $M-\nbhd(K)$.  Then $T$ caps off to a separating torus $\hatT$ in $M$ that is punctured twice by $K$, $T = \hatT-\nbhd(K)$.  
Color the two components of $M \cut \hatT$ Black and White and denote them $M_B$ and $M_W$ respectively.

The thin position argument of \cite{gabai:fatto3mIII} shows that 
we may find a thick sphere $\hat{Q}$ for $K' \subset S^3$ in thin position so that the fat-vertexed graph $G_Q$ of intersection on $\hat{Q}$ between $Q = \hat{Q}-\nbhd(K')$ and $F$ in the exterior of $K$ has no monogons.  (A monogon of $G_Q$ would give a bridge disk for an arc of $K \cut \hatT$ and hence give a compression of $T$.)     We may now follow 
Lemmas~8.2 and 8.3 of \cite{gl:dsokcetI} to show that both $M_B-\nbhd(K)$ and $M_W-\nbhd(K)$ are genus $2$ handlebodies.

Since $M_B$ is recovered from the handlebody $M_B-\nbhd(K)$ by attaching a $2$-handle along the core of the annulus $\bdry(M_B-\nbhd(K)) - T$, the Handle Addition Lemma~2.1.1 of \cite{cgls:dsok} implies that $\hatT=\bdry M_B$ is incompressible in $M_B$.  The same argument shows $\hatT=\bdry M_W$ is also incompressible in $M_W$.  Thus the torus $\hatT$ is incompressible in $M$, a contradiction since $M$ is atoroidal.
\end{proof}

\begin{lemma}\label{lem:sSFS1bridge}
Let $N \subset M$ be a small Seifert fiber space over the disk with two exceptional fibers. Assume $N$ contains a properly embedded \mobius band $A$ such that  
$\partial A$ does not lie in a $3$-ball in $M$
(for example, $\partial A$ lies on a genus 2 Heegaard splitting of $M$, 
Lemma~\ref{lem:AEGor}). Furthermore, assume $K \cap N$ is a spanning arc of $A$.
Then there is a genus $2$ Heegaard splitting of $M$ in which $K$ is $0$-bridge.
\end{lemma}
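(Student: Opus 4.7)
The idea is to construct a genus~$2$ handlebody $H_1 \subset M$ whose boundary contains $K$, and then verify that its complement is also a genus~$2$ handlebody. Setting $\alpha' := K \cap (M - N)$, I would take
\[
 H_1 := \nbhd(A \cup \alpha').
\]
Since $\nbhd(A)$ is a solid torus (the regular neighborhood of a \mobius band in an orientable $3$-manifold) and $\nbhd(\alpha')$ is a $3$-ball attached to $\nbhd(A)$ along two disks at the endpoints of $\alpha'$ (which lie on $\partial A$), $H_1$ is a genus~$2$ handlebody. Because $K \cap N$ is a spanning arc of the \mobius band $A$, it can be isotoped within $\nbhd(A)$ onto $\partial \nbhd(A)$; together with pushing $\alpha'$ to $\partial \nbhd(\alpha')$, this places all of $K$ on $\partial H_1$.

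Next I would analyze $M \setminus H_1$. The first step is to observe that $M - N$ must be a solid torus. The torus $\partial N$ is incompressible in $N$ since $N$ is an SFS over a disk with two exceptional fibers; on the other hand $M$ is atoroidal (because $K'$ is hyperbolic with $\Delta \geq 3$) and irreducible, so $\partial N$ must compress in $M - N$, and a maximal compression produces a sphere bounding a ball in $M - N$. Call the resulting solid torus $T := M - N$. Then $M \setminus H_1 = V_2 \cup W$, where $V_2 := N - \nbhd(A)$ is the fibered solid torus around the other exceptional fiber of $N$, and $W := T - \nbhd(\alpha')$. A van Kampen computation shows $\pi_1(W)$ is free of rank $2$ (generated by the core of $T$ and a meridian of $\alpha'$), and $\partial W$ is a genus~$2$ surface; since $W$ inherits irreducibility from $T$, it is a genus~$2$ handlebody. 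These pieces are glued along the annulus $A_* := \partial N - \nbhd(\partial A)$, whose core is the regular fiber $\phi = \partial A$ of $N$.

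The main obstacle is verifying that the union $V_2 \cup_{A_*} W$ is itself a genus~$2$ handlebody. On $\partial V_2$, the core $\phi$ of $A_*$ is a cabled curve wrapping the meridian $p_2 \geq 2$ times (where $p_2$ is the order of the remaining exceptional fiber of $N$), so $A_*$ is not primitive on the solid-torus side. One must therefore check that $A_*$ is primitive on the handlebody side $W$, or more generally that this cabled annular attachment still produces a handlebody. Here the hypothesis that $\partial A$ does not lie in a $3$-ball in $M$ is essential: it rules out the degenerate situation in which $\phi$ would bound a disk in $T$ (which would confine $\phi$ to a ball), and it is the input needed to arrange a meridian disk of $W$ meeting $\phi$ transversely once. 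Once this primitivity is established, $V_2 \cup_{A_*} W$ is a genus~$2$ handlebody, yielding the desired Heegaard splitting with $K$ on its splitting surface.
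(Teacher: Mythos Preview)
Your proposal has a genuine gap in the claim that $W = T - \nbhd(\alpha')$ is a genus~$2$ handlebody. The van Kampen argument you sketch does not show $\pi_1(W)$ is free: applied to $T = W \cup \nbhd(\alpha')$ it only yields $\pi_1(W)/\langle\!\langle m\rangle\!\rangle \cong \mathbb{Z}$ (with $m$ the meridian of $\alpha'$), which says nothing about relations in $\pi_1(W)$. If $\alpha'$ were locally knotted in $T$ then $W$ would contain an incompressible torus, so $\pi_1(W)$ would contain $\mathbb{Z}^2$ and not be free. What is missing is exactly that $\alpha' = K\cap(M-N)$ is boundary-parallel in $T$, and this is the heart of the paper's proof: since $\partial\nbhd(A)-\partial N$ is an essential annulus in $N-\nbhd(K)$, the twice-punctured torus $\partial N - \nbhd(K)$ is incompressible toward $N$; by Lemma~\ref{lem:1bdryslope} it cannot be essential in the exterior of $K$, so it compresses in $T-\nbhd(K)$ to a boundary-parallel annulus, giving the isotopy of $\alpha'$ onto $\partial T$.

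The paper also uses a different decomposition that sidesteps your ``main obstacle'' entirely. It takes $H_1 = (M-\Int N)\cup\nbhd(K\cap N)$ (solid torus plus $1$-handle; once $\alpha'$ is isotoped onto $\partial T$, $K$ lies on $\partial H_1$) and $H_2 = N - \nbhd(K\cap N)$. The disk $A - \nbhd(K\cap N)$ is properly embedded in $H_2$, and cutting along it yields the solid torus $N-\nbhd(A)$; hence $H_2$ is a genus~$2$ handlebody with no primitivity statement about $A_*$ in $W$ required. Finally, the hypothesis that $\partial A$ does not lie in a $3$-ball is used only to conclude that $M-\Int N$ is a solid torus, not for the primitivity you attribute to it; even granting $\alpha'$ is trivial, it is unclear that the core of $A_*$ is primitive in $W$ when the regular fiber is cabled in $T$ (which it is, since $M$ is not a lens space).
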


\begin{remark} Note that the proof of Lemma~\ref{lem:sSFS1bridge} actually shows that under
the given hypotheses, $M$ is a Seifert fiber space with at most 
three exceptional fibers,
one of which has order $2$. Furthermore, the new splitting constructed
is a vertical splitting of the Seifert fiber space and $K$ is a core of
this vertical splitting.
\end{remark}

\begin{proof}
Since $\partial A$, hence $N$, does not lie in a $3$-ball in $M$, and $M$ is atoroidal, $M - \Int N$ must be a solid torus. 
Let $T = \bdry N$.
As $\partial \nbhd(A)- T$ must be an essential annulus in $N$, $T-\nbhd(K)$ is 
incompressible in $N - \nbhd(K)$. 
Lemma~\ref{lem:1bdryslope} implies
$T - \nbhd(K)$ must compress in $(M - \Int N) - \nbhd(K)$ to give a boundary parallel annulus.  This gives an isotopy of $K \cap (M - \Int N)$ onto $T$ through $M - \Int N$.

Attaching the $1$-handle $\nbhd(K) \cap N$ to $M - \Int N$ then forms a genus $2$ handlebody where $K$ is isotopic onto its boundary.  Since $N - \nbhd(A)$
must be a solid torus, $N - \nbhd(K)$ is a genus $2$ handlebody. Thus we have 
the desired Heegaard splitting.
\end{proof}


Recall that an \ESC\ is called {\em proper} if in its corner 
no label appears more than once. Section~\ref{sec:escandlongmb} describes
how an \ESC\ gives rise to an almost properly embedded, long \mobius band.

\begin{lemma}\label{lem:LMB} 
Let $\sigma$ be a proper $(n-1)$-\ESC\ in $G_Q$.
Let $A=A_1 \cup \dots \cup A_n$ be the corresponding long \mobius band and
let $a_i \in a(\sigma)$ be $\bdry A_i - \bdry A_{i-1}$ for each $i=2 \dots n$ and 
$a_1 = \bdry A_1$.
Assume that, for some $i < j$, $a_i, a_j$ cobound 
an annulus $B$ in $\hatF$ that is otherwise disjoint from $K$. 
Then $j=i+1$ and $A_j$  cobounds a 
solid torus $V$ with $B$. 
Furthermore, $A_j$ is longitudinal in $V$, the
interior of $V$ is disjoint from $K$, and $V$ guides an
isotopy of $A_j$ to $B$.
 
{\em Addendum:} Let $D$ be a meridian disk of $H_B$ or $H_W$
disjoint from $K$ and $A$, and let $F^*$ be $\hatF$
surgered along $D$. If $a_i, a_j$ cobound an annulus $B$ 
of $F^*$ (rather than $\hatF$) that is otherwise disjoint from $K$, then the
above conclusion is still valid (i.e.\ $A_j=A_{i+1}$ is isotopic to
$B$). 
\end{lemma}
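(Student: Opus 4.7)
The plan is to assemble a closed torus $\Sigma$ from $B$ and the middle annuli $A_{i+1}, \ldots, A_j$, and then use atoroidality of $M$ to extract a solid torus $V$ realizing the conclusions. Set
\[ \Sigma = B \cup A_{i+1} \cup A_{i+2} \cup \cdots \cup A_j. \]
Each intermediate curve $a_k$ with $i < k < j$ appears once as a boundary component of $A_k$ and once of $A_{k+1}$, while $a_i$ and $a_j$ appear once in $B$ and once in $A_{i+1}$ (resp.\ $A_j$); hence $\partial \Sigma = \emptyset$. Each piece is an annulus, so $\chi(\Sigma)=0$, and $\Sigma$ is orientable since the $A_k$ sit on alternating sides of $\hatF$ with coherent orientations. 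After checking connectedness, $\Sigma$ is a torus. In \situationnscc, the $A_k$ are properly embedded on alternating sides of $\hatF$ and come from distinct faces of $G_Q$, so their interiors are pairwise disjoint, making $\Sigma$ embedded. In \situationscc, any circles of $A_k \cap \hatF$ in the interiors of the $A_k$'s are handled by the standard surgery on the meridian disk $D$ from the Addendum (disjoint from both $K$ and $A$), and $\Sigma$ is made embedded in the same way. Since $K'$ is hyperbolic and $\Delta \geq 3$, $M$ is irreducible and atoroidal, so the embedded torus $\Sigma$ bounds a solid torus $V$ in $M$.

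Next I would verify the three remaining subclaims. For $\Int V \cap K = \emptyset$: the intersection $K \cap \Sigma$ is precisely the collection of spanning $K$-arcs of $A_{i+1}, \ldots, A_j$, and these are tangential contacts with $\Sigma$; hence $K$ never crosses $\Sigma$ transversely and each arc of $K \setminus \Sigma$ lies in a single complementary region. The hypothesis $\Int B \cap K = \emptyset$, together with a routing argument tracking $K$ along the alternating sides of $\hatF$, forces the arcs of $K$ off $\Sigma$ to lie outside $V$. For $j = i+1$: if $j - i > 1$, consider the intermediate curve $a_{i+1}$ on $\Sigma \cap \hatF$. It is disjoint from $\partial B = a_i \cup a_j$ and therefore lies either in $\Int B$ or outside. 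In the first case, $a_i, a_{i+1}$ cobound a sub-annulus of $B$ disjoint from $K$, and by strong induction on $j-i$ the conclusion holds for the pair $(i, i+1)$, giving an isotopy of $A_{i+1}$ onto $\hatF$ through a solid torus disjoint from $K$; this reduces $|K \cap \hatF|$, contradicting the minimality of the bridge position of $K$. In the second case, since $A_{i+1}$ and $A_{i+2}$ lie on opposite sides of $\hatF$, a portion of $K$ would have to lie in $\Int V$, contradicting the first subclaim. Finally, for the longitudinality of $A_j = A_{i+1}$ in $V$: $B$ and $A_{i+1}$ are complementary annuli on $\partial V$ and hence share a slope; were that slope non-longitudinal, $V$ together with its exterior would produce either an essential torus in $M$ or exhibit $M$ as Seifert fibered, contradicting atoroidality or Lemma~\ref{lem:mnosfs}. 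Thus $A_{i+1}$ is longitudinal, and the radial isotopy through $V$ realizes the claimed isotopy of $A_j$ onto $B$, disjoint from $K$.

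The Addendum follows by the same argument with $\hatF$ replaced by $F^*$ throughout, using that $D$ is disjoint from $K$ and $A$ and so does not interfere with the construction of $\Sigma$ or $V$. The main obstacle is the step $j = i+1$: ruling out the subcase $a_{i+1} \subset \Int B$ requires a recursive application of the lemma (formalized as strong induction on $j-i$) together with a careful verification that the resulting isotopy genuinely reduces the bridge number, while the subcase $a_{i+1}$ outside $B$ requires a delicate analysis of how the alternating-side structure of the $A_k$ routes $K$ across the torus $\Sigma$.
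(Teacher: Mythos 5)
Your assembly of the torus $\Sigma = B \cup A_{i+1} \cup \cdots \cup A_j$ matches the paper's first step, and the observation that it is embedded in \situationnscc\ is fine. (Note that since $\Int B$ is disjoint from $K$, the middle curves $a_{i+1},\dots,a_{j-1}$ --- which pass through vertices of $G_F$ --- automatically lie outside $B$, so the ``first case'' of your $j=i+1$ argument is vacuous.) But there is a genuine gap immediately afterward. Atoroidality and irreducibility alone do not imply that $\Sigma$ bounds a solid torus: $\Sigma$ could lie inside a $3$-ball. The paper rules this out with Lemma~\ref{lem:AEGor}, which you do not invoke. More seriously, the assertion that $\Int V \cap K = \emptyset$ is simply false as a general statement, and the ``routing argument'' that is supposed to supply it is not given. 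In the paper's analysis this is exactly the crux, and it is resolved by a three-case breakdown on the side structure of $A_{i+1}$, $A_j$, and the compressing disk $D$ for $\Sigma$. In Case~I ($A_{i+1}$ and $A_j$ on opposite sides of $\hatF$) the knot $K$ meets $\Int V$ in an arc, and the contradiction comes from the unfurling move of \cite{baker:sgkilshsbn} that reduces width; in Case~II, $K$ is isotoped entirely into $V$ and shown to be a core, with a contradiction through Claims~\ref{clm:a1t1} and~\ref{claim:ckt1} (a cable-knot/tunnel-number argument). Only in Case~III is $K$ disjoint from $\Int V$. Your plan treats $\Int V \cap K = \emptyset$, $j = i+1$, and longitudinality as independently provable subclaims, but the paper derives them together from the surviving case of this side analysis; a proof that skips the case analysis has no way to close.

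Your final step also misattributes the contradictions. If $A_j$ were not longitudinal in $V$, the resulting Seifert piece $\nbhd(V \cup A_1 \cup \cdots \cup A_i)$ does not produce an essential torus or exhibit $M$ as Seifert fibered ``contradicting atoroidality'': $M$ can very well be a small Seifert fibered space here. The actual contradiction in Case~III with $n>1$ is Lemma~\ref{lem:sSFS1bridge}, which constructs a genus-$2$ Heegaard splitting of $M$ in which $K$ is $0$-bridge, contradicting the minimality of the bridge presentation of $K$. Lemma~\ref{lem:mnosfs} concerns the thin-vs-bridge-position dichotomy and is not applicable here. More broadly, the paper's proof is driven by width/bridge-number minimality and the Heegaard-splitting structure, while your outline reaches only for topological obstructions; those are not enough, and the missing ``routing argument'' is the whole content of the lemma.
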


\begin{proof} 
The proof of the Addendum is the same as the proof for the Lemma, 
replacing $\hatF$ with $F^*$, after noting
that $A$ can be surgered off of $B$. So we proceed with the proof of 
the Lemma.

Let $B$ be the annulus on $\hatF$ cobounded by $a_i$ and $a_j$ whose interior is disjoint from $K$. Any simple closed curves of $A \cap B$ in the interior of $B$ must be meridians of either $H_B$ or $H_W$, and we could use such 
with $A$ to create a projective plane in $M$. Hence we may assume $A$ is disjoint from the interior of $B$. Then $T=A_{i+1} \cup \dots \cup A_j \cup B$ is an embedded $2$-torus in $M$. $M$ is atoroidal, so let $D$ be a compressing disk for $T$.  The proof now splits  into three cases depending on the 
relationship of $A_{i+1},A_j,D$ with respect to $\hatF$. 

{\bf Case I:} $A_{i+1}$ and $A_j$ lie on opposite sides of $\hatF$.

Compressing $T$ along $D$ gives a sphere which bounds a ball $B^3$ in $M$.
If $D$ is not contained in $B^3$ then $T$ bounds a solid torus to the side
containing $D$ (and $B^3$). In this situation, the unfurling move from Section~4.3 of \cite{baker:sgkilshsbn} applies to reduce the width of $K$. (Let $V$ be the solid torus bounded by $T$.  $K$ intersects $V$ as a single arc partitioned as a pair of spanning arcs $\kappa$ and $\kappa''$ on the annulus $A_{i+1} \cup \dots \cup A_j$ union an arc $\kappa'$ in $\Int V$ with its boundary on a single boundary component of this annulus.  With support in a small neighborhood of $V$, there is an isotopy of $K$ (which may be viewed as rotations of $V$) that returns $\kappa'$ to its original position and replaces $\kappa,\kappa''$ by spanning arcs of $B$ (these may be taken to be on $B \cap \hatF$ for the Addendum).  A further slight isotopy in a neighborhood of the new $\kappa, \kappa''$ puts $K$ in bridge position with respect to $\hatF$ again, but with smaller bridge number (width).)
Since this contradicts the presumed thinnest positioning of $K$, 
$D$ must be contained in $B^3$. On the other hand, if $B^3$ contains $D$ then
$T$ lies in $B^3$, hence $a_i$ does also. But this contradicts 
Lemma~\ref{lem:AEGor}.

{\bf Case II:} $A_{i+1}$ and $A_j$ lie on the same side of $\hatF$, and
$D$ near $B$ lies on the opposite side of $\hatF$.

Let $V$ be the closure of the component of $M \cut T$ containing $D$. 
As $a_i$ does not lie in a 3-ball by Lemma~\ref{lem:AEGor}, $V$ is a
solid torus. Isotop $K$ into the interior of $V$. Since $M$ is irreducible 
and not
a lens space, and since the exterior of $K$ is irreducible and atoroidal, 
$K$ must be a core of $V$. Now $A' = A_1 \cup \dots \cup A_i$ is a
\mobius band properly embedded in $V$. Thus $K$ is isotopic to the core
of $A'$, hence of $A_1$. The following contradicts either that $K$ has bridge number greater than zero
with respect to $\hatF$ or that $K$ is hyperbolic.

\begin{claim}\label{clm:a1t1}
The core of $A_1$ is isotopic to a core curve of $H_W$ or $H_B$ or has exterior 
which is a Seifert fiber space over the disk with at most 
two exceptional fibers.
\end{claim}

\begin{proof}
In \situationnscc, $A_1$ is a properly embedded \mobius band in one of the
Heegaard handlebodies. So the core of $A_1$ is a core curve of the handlebody. 
So assume we are in \situationscc. Then there
is a meridian disk $E$ of a Heegaard handlebody $H$ on one side of $\hatF$ 
that is disjoint from both
$K$ and $Q$. Let $\calN$ be the component of $H - \nbhd(E)$ containing
$\partial A_1$. We may isotop $A_1$ in $M$, fixing $\partial A_1$, so that 
its interior is
disjoint from $\partial \calN$. If $A_1 \subset \calN$ then the core of
$A_1$ is isotopic to a core of $H$. Thus we assume that $A_1$ is properly
embedded in the exterior of $\calN$ in $M$. Let $n$ be the number
of times $\partial A_1$ winds
around the core of $\calN$.  As $M$ contains no 
projective plane, $n>0$. If $n>1$ then, $U=\nbhd(\calN \cup A_1)$ is a Seifert
fiber space over the disk with two exceptional fibers. $\partial U$ must
compress in $M-U$. As $\partial A_1$ does not lie in a 3-ball by 
Lemma~\ref{lem:AEGor}, $M-U$ must be a solid torus. Thus the exterior
of the core of $A_1$ is a Seifert fiber space over the disk with at most two
exceptional fibers. Finally, assume $n=1$. 
Let $L$ be a core of $\calN$. Then $L$ is a $(2,1)$-cable of the core
of $A_1$.  Claim~\ref{claim:ckt1}
below shows that the core of $A_1$, since it is isotopic to $K$ and therefore
hyperbolic, is isotopic to a core of $H_B$ or $H_W$.
\end{proof}

\begin{claim}\label{claim:ckt1}
Let $L$ be a cable of a hyperbolic knot $K$ in a 3-manifold $M \not\cong
S^3$. Assume that $L$ is a core of $H_B$ in 
a strongly irreducible genus $2$ Heegaard splitting $H_B \cup_{\hatF} H_W$ 
of $M$. Then $K$ is isotopic to a core of either $H_B$ or $H_W$.
\end{claim}

\begin{proof} 
Let $Y=M-\nbhd(L)$. Let $A$ be the cabling annulus for $L$ considered as 
properly embedded in $Y$. Because $K$ is hyperbolic, $A$ is the unique essential
annulus in $Y$ up to isotopy. 
Let $E$ be a non-separating disk in $H_B$ disjoint from $L$, and
let $\alpha=\partial E \subset \partial H_W$. 
Then $A$ is the unique essential annulus in $H_W \cup \nbhd(E) = Y$. 
Now $\partial H_W - \alpha$ is incompressible in $H_W$ by the strong
irreducibility of the splitting.
Apply Theorem 1 of \cite{EMon32a} 
where $M=H_W$ and $M_{\alpha}=Y$. 
First assume (a) of that Theorem holds and let $A'$ be the $\alpha$-essential
annulus. By Proposition
C of \cite{EMon32a} (and the uniqueness of essential annuli in $Y$), 
$A'$ is isotopic to $A$ in $Y$. Then $A'$ is a separating essential annulus in 
$H_W$ and consequently
cobounds a solid torus $T$ with an annulus $A''$ on $\partial H_W$. As 
$A''$ is disjoint from $\partial E$, $T$ is isotopic to the solid torus
cobounded by $A$ and $\partial Y$.  
Thus $K$ can be isotoped in $Y$ to a core of $T$ and hence to a core of $H_W$.
So assume (b) of Theorem 1 holds and let $S$ be the essential annulus
of $Y$ described. Then again, $S$ is isotopic to $A$. Furthermore, the solid
torus $T$ there must be the cabling solid torus in $Y$ whose core is $K$.
As $\tau_1$ described in Theorem 1 is also a core of $T$, $K$ is isotopic to 
$\tau_1$. As $\tau_1$ is a core of $H_B$, so is $K$.  
\end{proof}

{\bf Case III:} $A_{i+1}$ and $A_j$ lie on the same side of $\hatF$, and
$D$ near $B$ lies on the same side of $\hatF$.

Let $V$ be the component of $M \cut T$ containing $D$. Then $K$ may be 
perturbed to miss $V$ completely. Since $K$ cannot lie in a $3$-ball
(by the irreducibility of the exterior of $K$), $V$ is a solid torus.
$A'=A_1 \cup \dots \cup A_i$ is a \mobius band properly embedded in
$M-V$. We may assume $\partial D$ intersects $\partial A'$ minimally
on $T$. Let $n$ be this intersection number. If $n=0$, then we may
use $A'$ and $D$ to construct a projective plane in $M$, a contradiction.
If $n=1$ then $B$ is longitudinal in $V$. If furthermore, $j>i+1$ then
we can use $V$ to thin $K$ (by reducing the bridge number), a contradiction. 
Thus, when $n=1$ we have the conclusion of the Lemma.

So assume $n > 1$. Let $N = \nbhd(V \cup A')$. Then $N$ is a Seifert fiber
space over the disk with two exceptional fibers of order $2,n$. 
  Lemma~\ref{lem:sSFS1bridge} now applies to give a genus $2$ Heegaard splitting of $M$ in which $K$ is $0$-bridge.  This contradicts the presumed minimal bridge position of $K$ with respect to the original splitting $H_B \cup_{\hatF} H_W$.
\end{proof}

We make the following useful observation:

\begin{lemma}\label{lem:annulusandbridgedisks}
Let $\Gamma$ be a bridge collection of arcs in a handlebody $H$. Let $A$ be an annulus or \mobius band
properly embedded in $H$ that is disjoint from $\Gamma$. Let $\kappa$ be a co-core of $A$. Then $\{\kappa\} \cup
\Gamma$ is a bridge collection of arcs in $H$.
\end{lemma}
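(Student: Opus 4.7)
The plan is to produce a bridge disk for a co-core of $A$ that is disjoint from a chosen system of bridge disks for $\Gamma$. By the structure theory of properly embedded annuli and \mobius bands in handlebodies described in Sections 7.3 and 7.4, $A$ admits a $\bdry$-compressing disk $d$, namely a disk with $\bdry d = \alpha \cup \beta$ where $\alpha$ is a spanning arc of $A$ and $\beta \subset \bdry H$. Such a $d$ is precisely a bridge disk for $\alpha$; since all co-cores of $A$ are isotopic within $A$, we may take $\kappa = \alpha$, so the task reduces to exhibiting such a $d$ disjoint from bridge disks for $\Gamma$.

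Let $\Delta = \Delta_1 \cup \cdots \cup \Delta_n$ be disjoint bridge disks for $\Gamma$. Since $A \cap \Gamma = \emptyset$, after a small isotopy $A \cap \Delta$ consists of simple closed curves in the interiors of the $\Delta_i$ together with arcs whose endpoints lie on the subarcs $\bdry \Delta_i \cap \bdry H$. I will minimize $|A \cap \Delta|$ by standard innermost-disk and outermost-arc moves: an innermost simple closed curve of $A \cap \Delta$ in $\Delta$ bounds a subdisk of $\Delta$ disjoint from $\Gamma$, which by irreducibility of $H$ permits either an isotopy of $A$ reducing $|A \cap \Delta|$, or a compression of $A$ producing a simpler surface that still carries a co-core $\kappa$. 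An outermost arc of $A \cap \Delta$ in $\Delta$ cuts off a subdisk $d' \subset \Delta_i$ with $\bdry d' = \alpha' \cup \beta'$, where $\alpha' \subset A$ and $\beta' \subset \bdry H$; if $\alpha'$ is $\bdry$-parallel in $A$ we isotope to remove this intersection, otherwise $\alpha'$ is a spanning arc and $d'$ is itself a $\bdry$-compressing disk for $A$, hence a bridge disk for the co-core $\alpha'$.

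In the eventuality that $A$ is rendered disjoint from $\Delta$, the complement $H \cut (\Gamma \cup \Delta)$ is a handlebody containing $A$, and Sections 7.3 and 7.4 supply a $\bdry$-compressing disk $d$ for $A$ therein, giving the desired bridge disk for $\kappa$ disjoint from $\Delta$. In the alternative eventuality, the $\bdry$-compressing disk $d' \subset \Delta_i$ is already disjoint from $\Gamma$ and from $\Delta_j$ for $j \neq i$; surgering $\Delta_i$ along a parallel copy of $d'$ yields a replacement bridge disk for $\Gamma_i$ disjoint from $d'$. In either situation, $\{\kappa\} \cup \Gamma$ admits a system of disjoint bridge disks, proving that it is a bridge collection.

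The main obstacle is the combinatorial tracking in the outermost-arc case of how the bridge disks for $\Gamma$ must be modified; the key point is that surgery of a bridge disk $\Delta_i$ along an outermost $\bdry$-compressing disk $d'$ for $A$ produces a disk whose boundary still decomposes as $\Gamma_i$ together with an arc on $\bdry H$, hence a valid replacement bridge disk whose disjointness from $d'$ (and from all other $\Delta_j$) is automatic.
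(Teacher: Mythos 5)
Your proof takes essentially the same route as the paper's: choose a disjoint system of bridge disks $\calD$ for $\Gamma$, isotope to remove inessential intersections with $A$, and then either $\partial$-compress $A$ in the complement of $\calD$ (when $A \cap \calD = \emptyset$) or let an outermost arc of $A \cap \calD$ in a bridge disk $\Delta_i$ hand you a $\partial$-compressing disk for $A$ directly. The paper's version is a bit leaner at the last step: rather than surgering $\Delta_i$ to produce a replacement bridge disk for $\Gamma_i$, it simply observes that the outermost disk $D \subset \Delta_i$ is already disjoint from $\Gamma$ (since $A$ is) and from the other $\Delta_j$, and a small normal push-off of $D$ makes it disjoint from $\Delta_i$ too; the original system $\calD$ is left untouched, and $\{D\} \cup \calD$ is the desired system of bridge disks for $\{\kappa'\} \cup \Gamma$, where $\kappa'$ is the co-core cut off. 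Your surgery-on-$\Delta_i$ move does also work, but it is the fiddlier of the two bookkeeping options, and it is worth being careful that the surgered disk really does have boundary of the form $\Gamma_i \cup (\text{arc on } \partial H)$.

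One step in your argument is genuinely wrong, though it can be repaired. In the innermost-circle reduction you allow for the possibility of ``a compression of $A$ producing a simpler surface that still carries a co-core $\kappa$.'' If a circle of $A\cap\Delta$ is essential in $A$, it is the core; compressing an annulus along its core yields two disks, and compressing a \mobius band along its core is impossible in a handlebody (it would produce an embedded $\RP^2$). In neither case does the resulting surface ``carry a co-core,'' so this branch of your case analysis does not go through as stated. The correct fix is to note that essential circles of intersection simply do not arise here: for a \mobius band this is automatic, and in the paper's intended applications (e.g., Lemma~\ref{lem:ESCPA}) $A$ is an incompressible annulus, so its core bounds no disk and every circle of $A\cap\Delta$ is inessential in $A$ and removable by irreducibility. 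The paper sidesteps this by assuming outright that $\calD$ meets $A$ only in co-cores, but if you are going to spell out the reduction, the compression case needs to be ruled out, not absorbed.
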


\begin{proof}
Let $\calD$ be a collection of bridge disks in $H$ for $\Gamma$. If $\calD$ is disjoint from $A$, then
a $\partial$-compressing disk of $A$ (i.e.\ a disk intersecting $A$ in a single arc essential in $A$) can
be isotoped to give a bridge disk for $\kappa$. We may isotope this disk to be disjoint from $\calD$,
thereby showing that $\kappa \cup \Gamma$ is a bridge collection.

So we assume that $\calD$ can be chosen to meet $A$ in a non-empty collection of co-cores of $A$. An
outermost arc, $\kappa'$, of $A \cap \calD$ in $\calD$ cuts out an outermost disk $D$. After perturbing
$D$ slightly, $D$ becomes a bridge disk for $\kappa'$ disjoint from $D$, showing that $\kappa' \cup \Gamma$
is a bridge collection. As $\kappa$ is isotopic to $\kappa'$ in $A$, this proves the Lemma.
\end{proof}

\begin{lemma}\label{lem:ESCPA} 
Let $A=A_1 \cup \dots \cup A_n$ be the 
long \mobius band corresponding to a proper ESC, and assume
we are in \situationnscc. Assume, as in the conclusion of
Lemma~\ref{lem:LMB}, some $A_j$ cobounds a solid torus $V$ with
an annulus $B$ in $\hatF$, that $A_j$ is longitudinal in $V$, and the 
interior of $V$ is
disjoint from $K$. Then $j=n$.
\end{lemma}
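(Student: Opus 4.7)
The plan is to assume $j<n$ and derive a contradiction by exhibiting a bridge presentation of $K$ with respect to $\hatF$ of bridge number at most $b-2$, where $b=t/2$ is the assumed minimum. Note first that $j\geq 2$: $A_1$ is a \mobius band, so $A_1\cup B$ cannot bound the orientable solid torus $V$. Let $H_X$ be the handlebody containing $A_j$ (and hence $V$), and $H_Y$ its complement. Since the constituents of the long \mobius band alternate sides, both $A_{j-1}$ and $A_{j+1}$ lie in $H_Y$.

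First I would show that $a_{j+1}$ lies in $\hatF - B$. Since we are in \situationnscc, $A$ is embedded, so the curves $a_1,\ldots,a_n$ are mutually disjoint, and hence $a_{j+1}$, being disjoint from $\partial B=a_{j-1}\cup a_j$, lies entirely inside or entirely outside $B$. If inside, then $a_{j+1}$ does not bound a disk on $\hatF$ by Lemma~\ref{lem:LMBess} (which extends to $a_n$, since otherwise capping $A$ along the disk would produce a projective plane in $M$), so $a_{j+1}$ is parallel to $\partial B$ in the annulus $B$. Consequently $a_{j-1}$ and $a_{j+1}$ cobound a sub-annulus $B^-\subset B$ with interior disjoint from $K$, and Lemma~\ref{lem:LMB} applied to the pair $(a_{j-1},a_{j+1})$ would force their indices to differ by one, whereas they differ by two --- contradiction.

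Next, use the isotopy of $A_j$ to $B$ guided by $V$ (from Lemma~\ref{lem:LMB}) to drag the two arcs of $K$ lying on $A_j$, namely $\arc{n-j+1,n-j+2}$ and $\arc{n+j-1,n+j}$, onto arcs of $B\subset\hatF$; since the isotopy is supported in $V$, whose interior is disjoint from $K$, the rest of $K$ stays fixed. Then push these two arcs slightly into $H_Y$. Because $a_{j+1}$ lies outside $B$ (and, when $j\geq 3$, because the same Lemma~\ref{lem:LMB} argument forces $a_{j-2}$ also outside $B$), near $\partial B$ the pushed arcs emerge into $H_Y$ on the $B$-side while $A_{j\pm 1}$ emerges on the opposite side; taking the push sufficiently thin and localized to tubular neighborhoods of the two arcs keeps them disjoint from the rest of the long \mobius band structure, including $A_1$ in the $j=2$ case.

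Now at each of the four vertices $n-j+1$, $n-j+2$, $n+j-1$, $n+j$ of $K\cap\hatF$ lying on $\partial B$, inspect the two incident $K$-arcs: one has just been moved into $H_Y$, and by the alternating-side structure the other lies on $A_{j-1}$ or $A_{j+1}$, both already in $H_Y$. Thus after the move $K$ lies locally in $H_Y$ on both sides of each of these four vertices, and a small local isotopy removes each of these four intersections. The resulting embedding of $K$ has $|K\cap\hatF|=t-4$ and can be put in bridge position with at most $(t-4)/2=b-2$ bridges, contradicting the minimality of $b$. The main obstacle in this plan is the first step, ruling out $a_{j+1}\subset\Int B$: the $a$-curves are embedded but can in principle nest inside $B$, and eliminating that configuration requires exactly the "index-skipping is forbidden" direction of Lemma~\ref{lem:LMB} to force the clean geometry that makes the subsequent bridge-reducing move possible.
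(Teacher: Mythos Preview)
Your overall strategy coincides with the paper's: assume $j<n$, push $A_j$ across the parallelism $V$ into the opposite handlebody $H_Y$, and argue this lowers bridge number. But you have misidentified the obstacle, and the step you gloss over is the one that carries the content.

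Your first step is not the issue. Since $\Int B$ is disjoint from $K$ (this is part of the setup from Lemma~\ref{lem:LMB}: $B$ was assumed ``otherwise disjoint from $K$''; equivalently, a vertex in $\Int B$ would send an arc of $K$ into $\Int V$), and since $a_{j+1}$ passes through two vertices of $G_F$, the curve $a_{j+1}$ simply cannot lie in $\Int B$. No appeal to Lemma~\ref{lem:LMB} with index-skipping is needed.

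The genuine gap is your last sentence. From $|K\cap\hatF|=t-4$ you conclude the bridge number is at most $(t-4)/2$, but intersection number does not bound bridge number unless the arcs on each side are $\partial$-parallel. The $H_X$ arcs are fine (a subset of the original bridge collection), but each new arc in $H_Y$ is a concatenation of an arc on $A_{j+1}$, a pushed arc from $A_j$, and an arc on $A_{j-1}$; nothing you have said shows such a concatenation is bridge. The paper resolves this by pushing \emph{all} of $A_j$ (not just its two subarcs of $K$) through $V$ into $H_Y$, so that $A'=A_{j-1}\cup A_j\cup A_{j+1}$ is a properly embedded incompressible annulus (or \mobius band when $j=2$) in $H_Y$ disjoint from the remaining arcs of $K\cap H_Y$. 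The concatenated arcs are then co-cores of $A'$, and Lemma~\ref{lem:annulusandbridgedisks} supplies exactly the missing fact: a co-core of such a surface, together with a disjoint bridge collection, is again bridge.
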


\begin{proof}
Assume for contradiction that $j<n$. We use
$V$ to isotop $A_j$ to $B$ and then into the opposite
handlebody, $H_W$ say. Then $A_{j-1} \cup A_j \cup
A_{j+1}$ is a properly embedded, incompressible annulus
or \mobius band in $H_W$.  
Lemma~\ref{lem:annulusandbridgedisks} shows that
we can reduce the bridge number of $K$ by replacing the arcs $K \cap (A_{j-1} \cup A_{j} \cup A_{j+1})$
with the co-cores of this properly embedded annulus or \mobius band.
\end{proof}

\begin{lemma}\label{lem:3PCC} 
Let $a,a',a''$ be components of $a(\sigma)$ for a proper ESC, $\sigma$.
If $a,a'$ and $a',a''$ each cobound annuli on $\hatF$ with interiors disjoint
from $K$, then $K$ can be thinned.

{\em Addendum:} Let $D$ be a meridian disk of $H_B$ or 
$H_W$ disjoint from $K$ and
$Q$. Let $F^*$ be $\hatF$ surgered along $D$. If $a,a'$ and $a',a''$ each 
cobound annuli on $F^*$ (rather than $\hatF$) with interiors disjoint
from $K$, then $K$ can be thinned. 
\end{lemma}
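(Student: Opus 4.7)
The plan is to apply Lemma~\ref{lem:LMB} (or its Addendum, for the Addendum of the present lemma) to each of the pairs $(a,a')$ and $(a',a'')$ in order to pin down $a, a', a''$ as three consecutive boundary circles of the long \mobius band associated to $\sigma$, and then to adapt the thinning strategy from the proof of Lemma~\ref{lem:ESCPA}. Writing $a = a_{r_1}$, $a' = a_{r_2}$, $a'' = a_{r_3}$ with $r_1 < r_2 < r_3$, the two applications force $r_2 = r_1 + 1$ and $r_3 = r_1 + 2$; setting $i = r_1$, this produces annuli $B_1, B_2 \subset \hatF$ (resp.\ $F^*$) and solid tori $V_1, V_2$, each with interior disjoint from $K$, such that $A_{i+1}$ (resp.\ $A_{i+2}$) is longitudinal in $V_1$ (resp.\ $V_2$) and is isotopic to $B_1$ (resp.\ $B_2$) through that solid torus. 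Because the constituents of the long \mobius band alternate in color, $A_{i+1}$ and $A_{i+2}$ lie on opposite sides of $\hatF$, so $V_1$ and $V_2$ lie on opposite sides as well and in particular are disjoint. Since $a'' = a_{i+2} \in a(\sigma) = \{a_1,\dots,a_{n-1}\}$, we have $i+2 \le n-1$, so $A_{i+3}$ exists and $j := i+1$ satisfies $j < n$.

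Now I use $V_1$ to isotope $A_{i+1}$ across $\hatF$ (resp.\ $F^*$) and push it slightly into the handlebody opposite its original side. After this move, $A_i \cup A_{i+1} \cup A_{i+2}$ lies in a single handlebody $H$ and is an annulus (when $i \ge 2$) or a \mobius band (when $i = 1$). Lemma~\ref{lem:annulusandbridgedisks} then produces a co-core arc of this combined surface that can replace the three bridge arcs $K \cap (A_i \cup A_{i+1} \cup A_{i+2})$, thereby reducing $|K \cap \hatF|$ by two and thinning $K$. In \situationnscc\ this is precisely the argument in the proof of Lemma~\ref{lem:ESCPA}, now applicable because the existence of $a''$ guarantees $j<n$.

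The main technical obstacle occurs in \situationscc, where the $A_r$'s are only almost properly embedded and so the combined surface may carry interior circles of intersection with $\hatF$ that must be removed before Lemma~\ref{lem:annulusandbridgedisks} can be invoked. By Corollary~\ref{cor:AEntscc}, each such circle bounds a disk to one side of $\hatF$; surgering the combined surface along these disks yields a properly embedded annulus or \mobius band in $H$ (discarding any resulting spheres by irreducibility of $H$), to which Lemma~\ref{lem:annulusandbridgedisks} then applies. For the Addendum, where $B_1, B_2$ lie on $F^*$ rather than on $\hatF$, I use that $F^*$ is obtained from $\hatF$ by a local modification near $\partial D$ and that $D$ is disjoint from $K$ and $Q$; after arranging $D$ to be disjoint from the finite collection $a(\sigma) \subset \hatF$ as well, pushing $A_{i+1}$ across $F^*$ still transfers it to the handlebody of $\hatF$ opposite its original side (the side of $F^*$ containing $H_W$, up to a thin collar), and the remainder of the argument proceeds exactly as above.
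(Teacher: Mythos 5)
Your proposal takes a genuinely different route from the paper's. The paper uses \emph{both} solid tori produced by the two applications of Lemma~\ref{lem:LMB}: $V_1$ on one side of $\hatF$ guiding an isotopy of $A_{i+1}$ onto $B_1$, and $V_2$ on the other side guiding an isotopy of $A_{i+2}$ onto $B_2$. Because the two isotopies are supported in opposite handlebodies they can be performed simultaneously, and together they carry the arcs $K \cap (A_{i+1}\cup A_{i+2})$ onto $\hatF$ (resp.\ $F^*$); a small perturbation off $\hatF$ then thins $K$. Crucially, this never requires assembling a properly embedded combined surface -- the solid tori are used only to move arcs of $K$. Your approach instead uses only $V_1$, pushes $A_{i+1}$ across $\hatF$ to assemble $A_i\cup A_{i+1}\cup A_{i+2}$ in a single handlebody, and then invokes Lemma~\ref{lem:annulusandbridgedisks} exactly as in the proof of Lemma~\ref{lem:ESCPA}. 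In \situationnscc\ this is fine: you correctly verify $j = i+1 < n$ from $a'' = a_{i+2} \in a(\sigma)$, so the Lemma~\ref{lem:ESCPA} mechanism applies verbatim.

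The problem is your handling of \situationscc. You propose to remove the interior circles of $\partial_I(A_i\cup A_{i+1}\cup A_{i+2})$ by surgering along the disks these circles bound to one side of $\hatF$. But such a circle $c$ is \emph{essential} in $\hatF$ (by definition of almost properly embedded), and by strong irreducibility of the splitting it bounds a compressing disk on only one side -- the same side as the ``dip'' of $A$ that $c$ caps off. Surgering along that disk simply trades one cap for a parallel one in the same handlebody; it cannot flip the cap across $\hatF$, so the combined surface still crosses $\hatF$ at $c$ and is not made properly embedded in the handlebody containing $A_i, A_{i+2}$. Thus Lemma~\ref{lem:annulusandbridgedisks} cannot be invoked as you claim. (In the Addendum setting the circles can sometimes be made trivial on $F^*$ and then surgered away, as the paper does elsewhere, but your argument does not establish this.) The paper's double-isotopy avoids this issue entirely precisely because it never needs a properly embedded annulus or \mobius band. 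Two small slips that do not affect the conclusion: the reduction in $|K\cap\hatF|$ is by four (the circles $a_i$ and $a_{i+1}$ each carry two vertices of $G_F$), not two, and the arcs $K\cap(A_i\cup A_{i+1}\cup A_{i+2})$ number more than three.
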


\begin{proof}
The argument for the Addendum is the same as the argument below with $\hatF$
replaced by $F^*$. 

Assume $a,a'$ cobound an annulus $B$ on $\hatF$, and $a',a''$ cobound $B'$ on
$\hatF$, with int($B$), int($B'$) disjoint from $K$. Let $A=A_1 \cup \dots
\cup A_n$ be the long \mobius band associated to $\sigma$ and $a_i= \partial
A_i - \partial A_{i-1}$ be the components $a(\sigma)$. Then by 
Lemma~\ref{lem:LMB}
(and its Addendum  for the Addendum here), we can write $a=a_i, a'=a_{i+1},
a''=a_{i+2}$ for some $i$. Furthermore, $A_{i+1} \cup B, A_{i+2} \cup B'$
bound solid tori $V,V'$ whose interiors are disjoint from $K$ and 
which guide isotopies of $A_{i+1}, A_{i+2}$ to $B, B'$ (resp.). Together
these define an isotopy of the arcs $K \cap (A_{i+1} \cup A_{i+2})$ onto
$B,B'$. We can then perturb the resulting arcs off of $\hatF$, resulting 
in a thinning of $K$.
\end{proof}

\begin{lemma}
\label{lem:3disjointmobiusbands}\quad
$M$ contains a Dyck's surface if either
\begin{enumerate}
\item there are three mutually disjoint \mobius bands in $M$, each 
almost properly embedded in $H_B$ or $H_W$.
\item there is a \mobius band in $M$ almost properly embedded in either $H_B$ or $H_W$ whose boundary is separating on $\hatF$.
\end{enumerate}
\end{lemma}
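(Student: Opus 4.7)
The plan is to prove (2) first and then reduce one of the two possible subcases of (1) to it.

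For (2), write $c = \partial A$. I first check that $c$ is essential on $\hatF$: otherwise $c$ would bound a disk $D \subset \hatF$, and since each component of $\partial_I A$ is by hypothesis essential on $\hatF$ and so cannot lie in $D$, the surface $A \cup D$ would be an embedded $\RP^2$ in $M$, contradicting $\Delta(\gamma,\mu) \geq 3$. Being essential and separating on the closed genus~$2$ surface $\hatF$, $c$ splits $\hatF$ into two once-punctured tori $T_1, T_2$ with $\chi(T_i) = -1$. Then $\widehat{S} := A \cup T_1$ is a closed embedded surface in $M$ with $\chi(\widehat{S}) = 0 + (-1) = -1$; it contains a \mobius band and so is non-orientable, hence Dyck's surface.

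For (1), set $c_i = \partial A_i$. The $c_i$ are disjoint; each is essential on $\hatF$ by the argument of (2); and no two are isotopic on $\hatF$, since if $c_i, c_j$ cobounded an annulus $B \subset \hatF$ then any $\partial_I A_i$ or $\partial_I A_j$ circle in $B$, being essential on $\hatF$ and hence parallel to $c_i$ or $c_j$, would itself produce a forbidden $\RP^2$, after which $A_i \cup B \cup A_j$ would be an embedded Klein bottle in $M$. Cutting $\hatF$ along $c_1 \cup c_2 \cup c_3$ gives a compact orientable surface of Euler characteristic $-2$ with six boundary circles (two copies of each $c_i$). Essentiality rules out disk components, and pairwise non-isotopy together with the elementary fact that the two copies of any single $c_i$ cannot cobound an annulus in the cut surface rules out annulus components. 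A direct Euler-characteristic count then forces the cut surface to be a disjoint union of exactly two pairs of pants $P_1, P_2$.

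Since a pants has only three boundary circles, no pants can contain both copies of two different $c_i$'s, so up to relabeling exactly one of two configurations occurs: (A) each $P_k$ carries one boundary copy of each of $c_1, c_2, c_3$; or (B) one curve, say $c_2$, carries one boundary copy in each $P_k$, while $c_1$ carries both its copies in $P_1$ and $c_3$ carries both its copies in $P_2$. In Case (A), $\widehat{S} := P_1 \cup A_1 \cup A_2 \cup A_3$ is a closed embedded surface in $M$ (the $\partial_I A_i$ circles being essential on $\hatF$ can be arranged to miss the interior of $P_1$, using the same $\RP^2$ dodge) with $\chi = -1$ and non-orientable, so a Dyck's surface. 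In Case (B), regluing the doubled copies of $c_1$ in $\partial P_1$ gives a once-punctured torus bounded by $c_2$, and similarly for $c_3$ and $P_2$, exhibiting $\hatF$ as the union of two once-punctured tori along $c_2$; thus $c_2$ is separating on $\hatF$, so $A_2$ has separating boundary and part (2) supplies an embedded Dyck's surface.

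The main obstacle is the topological case analysis in (1): forcing the cut decomposition into exactly two pants and then pinning down the two ways the three curves can sit on them, with the recognition that the configuration which fails to directly yield a pants-plus-three-\mobius-bands construction is precisely the one feeding into part (2). Everything else is a routine gluing/Euler-characteristic computation, and the ``almost properly embedded'' decorations produce only extra intersections that would themselves furnish a forbidden closed non-orientable surface in $M$.
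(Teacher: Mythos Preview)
Your overall strategy matches the paper's: cap a suitable subsurface of $\hatF$ with the given \mobius band(s). For properly embedded \mobius bands your argument is complete; your Case (A)/(B) dichotomy for part (1) is correct, though the paper bypasses Case (B) by invoking its earlier classification of \mobius bands in a genus $2$ handlebody --- such a band necessarily has non-separating boundary (else capping as in your (2) produces a closed non-orientable surface inside the handlebody, which is impossible) --- so all three $c_i$ are non-separating and one lands immediately in your Case (A).

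The gap is in the almost properly embedded case. In part (2) you assert that $A \cup T_1$ is embedded, but a curve of $\bdry_I A$ can perfectly well lie in the once-punctured torus $T_1$ (unlike a disk, $T_1$ contains curves essential on $\hatF$), so the ``$\RP^2$ dodge'' you used to show $c$ is essential does not transfer. Your closing sentence gestures at a fix without supplying one, and the same issue recurs in Case (A): a $\bdry_I A_i$ circle in $P_1$ is parallel to some $c_j$, so $c_j$ bounds a meridian disk, but turning $A_j$ and that disk into an \emph{embedded} $\RP^2$ still requires an innermost-circle argument you omit. The paper's route is cleaner: an innermost $\bdry_I$ circle bounds a subdisk of its \mobius band, and a push-off of that subdisk is a meridian disk disjoint from all the \mobius bands; surgering $\hatF$ along it drops the genus to $1$, whereupon among the $c_i$ on the resulting torus (or tori) some curve is trivial or some pair is isotopic, yielding an embedded $\RP^2$ or Klein bottle and contradicting $\Delta \ge 3$. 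Thus the hypothesis with nonempty $\bdry_I$ simply cannot occur, and the Dyck's surface construction is only needed in the properly embedded case.
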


\begin{proof}
First assume the \mobius bands are properly embedded in the Heegaard handlebodies.  Note that $3$ mutually disjoint \mobius bands cannot all be properly embedded in a single genus $2$ handlebody.  Thus to have $3$ mutually disjoint \mobius bands in $M$ each properly embedded in either $H_B$ or $H_W$, two must lie to one side of $\hatF$ and one must lie to the other.  Furthermore their boundaries must be in different isotopy classes on $\hatF$, else $M$ contains an embedded
Klein bottle.

If the boundary of a \mobius band that is properly embedded in either of these handlebodies has separating boundary on $\hatF$, then it divides $\hatF$ into two once-punctured tori.  Capping off one of these with the \mobius band produces an embedding of Dyck's surface in the handlebody, a contradiction.
Thus the boundaries of these $3$ \mobius bands cut $\hatF$ into two thrice-punctured spheres.  Capping off one of these thrice-punctured spheres with the $3$ \mobius bands produces an embedding of Dyck's surface in $M$.  

Now assume that some \mobius band is almost properly embedded.  Then there is a meridian disk disjoint from all three \mobius bands.  After surgering $\hatF$ along this disk the hypotheses above guarantee that $M$ contains an embedded Klein bottle or projective plane (either some \mobius band boundary becomes trivial or two become isotopic), contrary to assumption.
\end{proof}

\begin{lemma}
\label{lem:disjtmobiusannulus}
Assume there is an annulus, $A$, almost properly embedded in either $H_B$ or $H_W$ whose boundary components are in distinct, essential isotopy classes in $\hatF$ neither of which is a meridian of either handlebody. If there is an almost properly embedded \mobius band in either handlebody that is disjoint from $A$ and whose boundary is not isotopic on $\hatF$ to either boundary component of $A$, then $M$ contains a Dyck's surface.

\end{lemma}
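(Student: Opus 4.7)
The plan is to analyze the three disjoint essential simple closed curves $a_1$, $a_2$ (the two components of $\partial A$) and $b = \partial B$ on the genus-$2$ surface $\hatF$. First I would observe that all three curves are essential and non-meridional: for $a_1$ and $a_2$ this is by hypothesis; for $b$, any disk in $\hatF$ or either handlebody bounded by $b$ would combine with $B$ to produce an embedded projective plane in $M$, contradicting $\Delta \geq 3$. Together with the given non-isotopy conditions these are pairwise disjoint, pairwise non-isotopic, essential, non-meridional simple closed curves on $\hatF$, and hence form a pants decomposition. Since two disjoint essential separating curves on $\hatF$ are necessarily isotopic, at most one of the three is separating, so the configuration is either the ``theta'' pants decomposition (all non-separating) or the ``handcuff'' pants decomposition (exactly one separating).

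Next I would handle the cases. If $b$ is the separating curve, Lemma~\ref{lem:3disjointmobiusbands}(2) applies directly to $B$ and yields Dyck's surface. If instead one of the $a_i$'s (say $a_1$) is separating, I plan to derive a contradiction. Using the meridian disks bounded by innermost components of $\partial_I A$ and the disks they bound inside $A$, one performs $3$-ball isotopies in $M$ to reduce to the case that $A$ is properly embedded in a handlebody $H$. Because $a_1$ and $a_2$ are non-meridional, any compression of $A$ would yield disks bounded by $a_1$ or $a_2$, so $A$ is incompressible. But then the trichotomy for incompressible annuli in a genus-$2$ handlebody (Section~\ref{sec:annuli}) forces $\partial A$ to be either non-separating on $\hatF$ or to cobound an annulus in $\hatF$; both conclusions fail, since $\partial A = a_1 \cup a_2$ is separating (as $a_1$ is) and does not cobound an annulus (as $a_1, a_2$ are non-isotopic). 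This contradiction eliminates the handcuff case in which $\partial A$ contains the separating curve.

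In the remaining theta case, where $a_1$, $a_2$, $b$ are all non-separating, cutting $\hatF$ along $a_1 \cup a_2 \cup b$ produces two pairs of pants, each with one boundary on each of the three curves. Let $P$ denote one such pair of pants. After isotoping $A$ and $B$ to be properly embedded via the analogous $3$-ball isotopies, chosen in general position so as to preserve $A \cap B = \emptyset$, the union $\widehat{S} := A \cup P \cup B$ is a closed embedded surface in $M$ with $\chi(\widehat{S}) = 0 + (-1) + 0 = -1$, and $\widehat{S}$ is non-orientable because it contains the \mobius band $B$. Thus $\widehat{S}$ is Dyck's surface.

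The main obstacle I foresee is the careful treatment of the ``almost properly embedded'' hypothesis: one must verify that the disk-swapping isotopies used to eliminate $\partial_I A$ and $\partial_I B$ can be chosen inside $3$-balls disjoint from $B$ (respectively $A$) and from the pair of pants $P$, so that the hypothesis $A \cap B = \emptyset$ is preserved and the assembled closed surface $A \cup P \cup B$ is genuinely embedded. Apart from this bookkeeping, both the handcuff-case contradiction (via the annulus trichotomy) and the theta-case construction proceed routinely.
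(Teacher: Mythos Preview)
Your proposal is correct and follows essentially the same route as the paper: establish that $a_1,a_2,b$ form a pants decomposition of $\hatF$, dispose of the case where $\bdry B$ is separating via Lemma~\ref{lem:3disjointmobiusbands}(2), rule out either $a_i$ being separating via the annulus trichotomy of \S\ref{sec:annuli}, and in the remaining theta case assemble $P\cup A\cup B$.

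The one place the paper proceeds differently is precisely the obstacle you flagged. Rather than performing $3$-ball isotopies and tracking that they preserve $A\cap B=\emptyset$, the paper argues directly that $A$ and $B$ are already properly embedded. The point is that any component of $\bdry_I A$ or $\bdry_I B$ is an essential meridional curve on $\hatF$ disjoint from all of $a_1,a_2,b$; since those three curves form a pants decomposition, this meridional curve would have to be isotopic to one of them. But $a_1,a_2$ are non-meridional by hypothesis and $b$ is non-meridional (else one builds an $\RP^2$, which is excluded since $\Delta\ge 3$). Hence $\bdry_I A=\bdry_I B=\emptyset$. This sidesteps the disjointness bookkeeping entirely and is worth adopting in place of your isotopy argument.
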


\begin{proof}
Note that in fact the annulus and \mobius band are properly embedded, else the hypothesis would imply the existence of an embedded projective plane in $M$.  Let $A$ be the annulus and $B$ be the \mobius band.
By Lemma~\ref{lem:3disjointmobiusbands} we assume $\bdry B$ is not separating on $\hatF$. Since $A$ is non-separating and incompressible, 
each component of $\partial A$ is non-separating in $\hatF$
($A$ is disjoint from a non-separating meridian disk).
Therefore all three components of $\bdry(A \cup B)$ are non-separating.  The complement of these three curves on $\hatF$ is two copies of the thrice punctured sphere.  Let one be $P$.  Then $P \cup A \cup B$  is an embedding of Dyck's surface in $M$.  
\end{proof}

\begin{lemma}
\label{lem:esc} 
Assume $M$ does not contain a Dyck's surface. In \situationnscc  
\begin{enumerate}
\item If $G_Q$ contains a proper $r$-\ESC\ then $r \leq 2$.
\item The long \mobius band $A_1 \cup A_2 \cup A_3$ arising from any proper
2-ESC must have $\bdry A_2$ non-isotopic on $\hatF$, $\bdry A_3$ cobounding an annulus $B$ on $\hatF$. $A_3 \cup B$ cobounds a solid torus $V$ whose interior is disjoint from $K$ and in which $A_3$ is longitudinal.  That is, $V$ guides an isotopy of $A_3$ in $M$ to $B$.
\end{enumerate}
\end{lemma}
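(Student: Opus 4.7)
The plan is to prove (2) first and then deduce (1) from (2) by a short bootstrapping argument via Lemma~\ref{lem:ESCPA}.

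For (2), let $\sigma$ be a proper $2$-\ESC\ with long \mobius band $A_1 \cup A_2 \cup A_3$ and boundary curves $a_1, a_2, a_3$ on $\hatF$; each $a_i$ is essential by Lemma~\ref{lem:LMBess}. In \situationnscc, the three constituents are properly embedded, with $A_1$ a \mobius band and $A_3$ an annulus both on the same side of $\hatF$, and they are disjoint as subsurfaces of the embedded long \mobius band. The first step is to exhibit an isotopic pair. If $a_1, a_2, a_3$ were pairwise non-isotopic on $\hatF$, Lemma~\ref{lem:disjtmobiusannulus} applied with $A_3$ as the annulus (boundaries $a_2, a_3$ in distinct essential isotopy classes, neither a meridian by Lemma~\ref{lem:LMBess}) and $A_1$ as the disjoint \mobius band (boundary $a_1$ not isotopic to $a_2$ or $a_3$) would produce an embedded Dyck's surface, contradicting the standing hypothesis.

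Next I would rule out $a_1 \sim a_2$ and $a_1 \sim a_3$. For $a_1 \sim a_2$ with cobounding annulus $B$ on $\hatF$ having interior disjoint from $K$, Lemma~\ref{lem:LMB} with $(i,j) = (1,2)$ gives $A_2 \cup B$ bounding a solid torus with $A_2$ longitudinal, and Lemma~\ref{lem:ESCPA} then forces $j = n = 3$, contradicting $j = 2$. For $a_1 \sim a_3$, Lemma~\ref{lem:LMB} would force $j = i+1 = 2 \neq 3$, again a contradiction when the cobounding annulus has interior disjoint from $K$. The residual subcases occur when the cobounding annuli on $\hatF$ contain vertices of $K$; these are handled by splitting on whether the third curve lies inside the annulus (forcing all three $a_i$ to be parallel, and then closing via Lemma~\ref{lem:3PCC} applied to consecutive parallel sub-annuli together with further applications of Lemma~\ref{lem:LMB}/Lemma~\ref{lem:ESCPA} to the sub-annuli that must be disjoint from $K$) or outside (closing via another application of Lemma~\ref{lem:disjtmobiusannulus} using the long \mobius band $A_1 \cup A_2 \cup A_3$ with boundary $a_3$ paired with a suitable disjoint annulus). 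After these eliminations, the only remaining possibility is $a_2 \sim a_3$ with $a_1$ in a distinct essential isotopy class. By parallel analysis the cobounding annulus $B$ of $a_2, a_3$ has interior disjoint from $K$, and Lemma~\ref{lem:LMB} with $(i,j) = (2,3)$ now delivers the desired conclusion: $A_3 \cup B$ cobounds a solid torus $V$ with $A_3$ longitudinal and int $V$ disjoint from $K$.

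For (1), suppose $\sigma$ is a proper $(n-1)$-\ESC\ with $n \geq 4$. Its inner $2$-\ESC\ $\sigma'$, obtained by discarding the outermost pair of flanking bigons, is itself proper, and its long \mobius band equals the first three constituents $A_1 \cup A_2 \cup A_3$ of $\sigma$'s long \mobius band. Applying (2) to $\sigma'$ produces a solid torus $V$ cobounded by $A_3$ and an annulus on $\hatF$, with $A_3$ longitudinal and int $V$ disjoint from $K$. Lemma~\ref{lem:ESCPA} applied to $\sigma$ itself (using $A_3$ in the role of $A_j$) then forces $3 = n \geq 4$, a contradiction. The main obstacle will be the case analysis in (2) when cobounding annuli on $\hatF$ contain $K$-vertices; those subcases require either locating a different cobounding annulus disjoint from $K$ for Lemma~\ref{lem:LMB} to apply, or recombining constituent \mobius bands and annuli to invoke Lemma~\ref{lem:disjtmobiusannulus} or Lemma~\ref{lem:3PCC} and thereby extract a Dyck's surface or a thinning.
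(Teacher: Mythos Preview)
Your plan to prove (2) first and then bootstrap to (1) via Lemma~\ref{lem:ESCPA} reverses the paper's order, and the reversal breaks precisely at the step you yourself flag as the main obstacle: clearing vertices of $K$ from the interior of the cobounding annulus $B$ on $\hatF$. Your proposed remedies for these residual subcases do not close. In the ``outside'' case ($a_1 \sim a_2$, $a_3$ in a different class, $\Int B \cap K \neq \emptyset$) you appeal to Lemma~\ref{lem:disjtmobiusannulus} pairing the full long \mobius band with ``a suitable disjoint annulus'', but none is available: the only annuli in play are $A_2$ (boundary components now isotopic, so the lemma does not apply) and $A_3$ (not disjoint from the long \mobius band). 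In the ``inside'' case you assert the sub-annuli between consecutive $a_i$ ``must be disjoint from $K$'', but for $t>6$ there can be further vertices of $G_F$ sitting in those sub-annuli, and you give no mechanism to expel them; without that, neither Lemma~\ref{lem:LMB} nor Lemma~\ref{lem:3PCC} applies.

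The paper avoids this by proving (1) first, taking $\sigma$ to be a proper \ESC\ of \emph{maximal} length. If $\Int B$ contained a vertex $x$, Corollary~\ref{cor:bigonsforall} yields a proper \ESC\ $\nu$ whose outermost boundary curve passes through $x$ and hence lies in $B$; Lemma~\ref{lem:PLMB} then forces $\nu$ and $\sigma$ to share core labels, so $\nu$ has an outermost label $x \notin L(\sigma)$ and is strictly longer --- contradicting maximality. With $\Int B \cap K = \emptyset$ secured, Lemmas~\ref{lem:LMB} and \ref{lem:ESCPA} pin the isotopic pair as $a_{n-1}, a_n$, and $n=4$ is then killed by applying Lemma~\ref{lem:disjtmobiusannulus} to $A_1$ and $A_3$. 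Once (1) holds, every proper $2$-\ESC\ is automatically maximal and the same maximality argument delivers (2). Your (1)-from-(2) step via Lemma~\ref{lem:ESCPA} is sound in isolation, but it cannot start without an unconditional (2), and producing that is exactly what the maximality device is for.
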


\begin{proof}
Consider an $(n-1)$-times extended Scharlemann cycle (an $(n-1)$-\ESC, see 
section~\ref{sec:escandlongmb}), $\sigma$, in $G_Q$ for which
$n$ is largest and that is still
proper.  Let $A=A_1 \cup A_2 \cup \dots \cup A_n$ be its associated long \mobius band.  Let $a_i = \bdry A_i \cap \bdry A_{i+1}$.  Assume $A_1$ is Black so that $A_i$ is White for $i$ even and Black for $i$ odd.

Assume there exists a proper 3-ESC so that $n \geq 4$ ($\sigma$ is maximal).
Since there are at most $3$ isotopy classes of mutually disjoint simple loops on $\hatF$, two curves of $a(\sigma)$ must be isotopic. Let $B$ be the annulus cobounded by adjacent ones. 

If the interior of $B$ is not disjoint from $K$ then there is a vertex $x$ of $K \cap \Int B$.  Since, by Corollary~\ref{cor:bigonsforall}, $\Lambda_x$ contains a bigon,  there is a proper extended Scharlemann cycle, $\nu$, and a corresponding long \mobius band $A^x$ whose boundary is a curve comprising two edges of $\Lambda_x$ meeting at $x$ and one other vertex.  Therefore this curve cannot transversely intersect $\bdry B$ and thus must be contained in $B$.  By 
Lemma~\ref{lem:PLMB}, $\nu,\sigma$ must have the same core labels. 
But this contradicts the maximality of $\sigma$.  

Hence $K \cap \Int B = \emptyset$.  Thus by Lemma~\ref{lem:LMB} and 
Lemma~\ref{lem:ESCPA}, since $K$ does not lie on a genus $2$ splitting of
$M$ and $\Int B \cap K = \emptyset$, $\bdry B = a_{n-1} \cup a_n$.
That is, $a_{n-1},a_n$ are the only components of $a(\sigma)$ parallel 
on $\hatF$. Since $n \geq 4$, $n=4$ and the components of $\bdry A_4$, 
$a_3$ and $a_4$, cobound an annulus on $\hatF$.  Moreover the curves $a_1,a_2,
a_3$ are in different isotopy classes on $\hatF$.  But then $A_1$ and $A_3$ 
contradict Lemma~\ref{lem:disjtmobiusannulus} (no $a_i$ bounds a disk else $M$ contains a projective plane).

Now assume there exists a 2-ESC so that $n=3$.  Then by 
Lemma~\ref{lem:disjtmobiusannulus} some pair of boundary components of $A_1$ and $A_3$ must be isotopic.  Lemma~\ref{lem:LMB}, Lemma~\ref{lem:ESCPA}, 
and the argument above (now a 2-ESC is maximal) shows this pair must be 
$\bdry A_3$ and proves part (2).
\end{proof}

\begin{lemma}
\label{lem:3disjtSC}
If there are $3$ \SCs\ in $G_Q$ with disjoint label pairs then $M$ contains a Dyck's surface.
\end{lemma}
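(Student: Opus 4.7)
The plan is to turn each of the three Scharlemann cycles into an almost properly embedded \mobius band in $H_B$ or $H_W$, verify that disjoint label pairs make the three \mobius bands mutually disjoint, and then invoke Lemma~\ref{lem:3disjointmobiusbands}.

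First, suppose $\sigma_1,\sigma_2,\sigma_3$ are \SCs\ in $G_Q$ with pairwise disjoint label pairs $\{a_i,b_i\}$, $i=1,2,3$. Each $\sigma_i$ bounds a bigon face $g_i$ of $G_Q$, and these faces are distinct (their boundary edges carry different labels), so $g_1,g_2,g_3$ have pairwise disjoint interiors in $Q$. Each $g_i$ lies on one side of $\hatF$ (either $H_B$ or $H_W$), and as described in section~\ref{sec:escandlongmb} we form a \mobius band
\[
A_i = g_i \cup \arc{a_ib_i}
\]
by attaching the $\arc{a_ib_i}$ arc of $K$ along the two corners of $g_i$. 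By Corollary~\ref{cor:AEntscc}, $A_i$ is almost properly embedded in the handlebody containing $g_i$.

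Next I would verify that $A_1,A_2,A_3$ are mutually disjoint. The disk interiors $\operatorname{Int} g_i$ are already pairwise disjoint in $H_B \cup H_W$ because they are distinct faces of $G_Q$. Since the label pairs $\{a_i,b_i\}$ are pairwise disjoint, the arcs $\arc{a_ib_i}\subset K$ used to close up each $g_i$ into $A_i$ are pairwise disjoint subarcs of $K$, and the corner rectangles along $\nbhd(K)$ are correspondingly disjoint. Hence $A_1 \cup A_2 \cup A_3$ is an embedded disjoint union of three almost properly embedded \mobius bands in $M$, each contained in $H_B$ or $H_W$.

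Finally, Lemma~\ref{lem:3disjointmobiusbands}(1) applied to this configuration yields an embedded Dyck's surface in $M$, as required. The only mild subtlety — which should cause no real trouble — is bookkeeping in \situationscc, where ``almost properly embedded'' rather than ``properly embedded'' is the relevant notion; but Lemma~\ref{lem:3disjointmobiusbands} is stated precisely in that generality, so the conclusion applies verbatim.
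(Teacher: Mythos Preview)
Your proof is correct and follows exactly the same approach as the paper: form the three \mobius bands from the three \SCs, observe they are mutually disjoint because the label pairs (hence the arcs of $K$ and the bigon faces) are disjoint, and apply Lemma~\ref{lem:3disjointmobiusbands}. The paper's proof is simply a terser version of what you wrote.
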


\begin{proof}
Assume there are $3$ \SCs\ with disjoint label pairs.  These give rise to $3$ mutually disjoint \mobius bands each almost properly embedded in either $H_B$ or $H_W$.  By Lemma~\ref{lem:3disjointmobiusbands} $M$ contains a Dyck's surface.
\end{proof}

\begin{lemma}\label{lem:parallelwithsamelabel}
No two edges may be parallel in $G_F$ that meet a vertex at the same label.
\end{lemma}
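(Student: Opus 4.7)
Suppose for contradiction that $e_1, e_2$ are parallel edges of $G_F$ meeting vertex $v$ of $G_F$ at the same label $i$.  They cobound an embedded bigon $B\subset \hatF$ whose other corner lies on a vertex $w$ of $G_F$ (possibly $w=v$).  Let $p_1, p_2$ be the endpoints of $e_1, e_2$ on $\partial v$; both lie on the arc system $\partial v \cap (\partial Q)_i \subset \partial \nbhd(K')$, where $(\partial Q)_i = \partial D_i$ and $D_i \subset \hatQ$ is the meridian disk of $\nbhd(K')$ that is the $i$-th vertex of $G_Q$.  In the dual picture, both $e_1$ and $e_2$ are then edges of $G_Q$ incident to vertex $i$ with endpoint label $v$ at that end.

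The plan is to use $B$, together with a subdisk of the meridian disk $D_i$, to perform an isotopy of $Q$ that removes $e_1$ and $e_2$ from $F \cap Q$, reducing $|F \cap Q|$ and contradicting the minimality of the intersection that is implicit in the choice of $F$ and $Q$ subject to the essentiality conditions (*) and (**) from the proof of Theorem~\ref{thm:main}.

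Concretely, I would choose an arc $\alpha$ properly embedded in $D_i$ from $p_1$ to $p_2$ that cuts off a subdisk $D_i^+ \subset D_i$ whose boundary is $\alpha$ together with the short arc of $(\partial Q)_i$ running from $p_1$ to $p_2$.  Together with the corner arc $c_v = \partial B \cap \partial v$ and a thin rectangular strip in $\partial \nbhd(K')$ with sides on $c_v$ and on an arc of $(\partial Q)_i$, the pieces $B$ and $D_i^+$ assemble into a disk $D$ in the exterior of $K'$ that serves as the guide for a band isotopy.  Using $D$, one pushes a collar neighborhood of $e_1 \cup e_2$ on $Q$ across $\hatF$ and past $D_i$, canceling $p_1$ and $p_2$ in $\partial F \cap \partial Q$ and eliminating $e_1$ and $e_2$ from $F \cap Q$.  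After the isotopy, the modified $Q'$ has $|F \cap Q'| < |F \cap Q|$ and still satisfies (*) and (**), giving the desired contradiction.

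The main obstacle is the bookkeeping when $B$ contains other edges of $G_F$ in its interior, which occurs generically when $u > 1$: the corner $c_v$ then passes through $u - 1$ intermediate edge endpoints between the two label-$i$ positions on $\partial v$, and each such endpoint belongs to an edge lying inside $B$.  One handles this by taking $B$ innermost with the parallel-same-label property, so that any smaller bigon witnessing the same situation is excluded and the interior arcs of $G_F$ all have their endpoints on $c_v$.  The band isotopy then transports these interior arcs into new essential arcs of $F \cap Q'$, the verification of which (together with the count showing a net strict reduction in the total intersection number) is the technical heart of the argument.
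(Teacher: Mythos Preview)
Your approach does not work, and the gap is structural rather than a matter of missing bookkeeping.

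First, there is no minimality of $|F\cap Q|$ in the setup to contradict. The surfaces $\hatF$ and $\hatQ$ are specific thick level surfaces produced by the thin-position argument (Theorem~6.2 of \cite{rieck}); property (*) is what that argument gives, not a consequence of having minimized intersection. Second, your guide disk $D$ is not well-defined. The bigon $B$ lies in $\hatF$, and its corner arcs $c_v,c_w$ lie on $\partial\nbhd(K)$; the subdisk $D_i^+$ you propose lies in the $i$th meridian disk of $K'$, i.e.\ in $\nbhd(K')$, not in the exterior. There is no ``thin rectangular strip'' on $\partial\nbhd(K')$ joining $c_v$ to an arc of $\partial D_i$ that avoids the other $\partial D_j$: the arc $c_v$ runs from one label-$i$ position on $\partial v$ to the next, and along the way it crosses every other component of $\partial Q$. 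So the pieces simply do not assemble into a disk in the exterior that guides an isotopy of $Q$. Third, and most importantly, even if one could define some isotopy, the step you yourself flag as ``the technical heart'' --- showing that the transported arcs remain essential in both $F$ and $Q'$ --- is exactly the content of the lemma, and you have not addressed it.

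The paper's proof is of a completely different nature and uses the ambient topology, not intersection minimality. Two parallel edges in $G_F$ meeting $v$ at the same label force a family of at least $u+1$ mutually parallel edges between $v$ and $w$. One then splits on the relative sign of $v$ and $w$. If $v$ and $w$ are parallel vertices of $G_F$, this family contains a length-$2$ Scharlemann cycle in $G_F$, whose face (together with the relevant arc of $K'$) produces an embedded projective plane in $S^3$ --- impossible. If $v$ and $w$ are anti-parallel, the argument of \cite[\S5, Case~(2)]{GLi} applies and shows that $K$ is a cable knot, contradicting hyperbolicity. The lemma is thus genuinely a topological statement about $S^3$ and about $K'$ being hyperbolic; it cannot be proved by a local graph-intersection reduction.
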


\begin{proof}
Assume there were two such parallel edges in $G_F$. If the two vertices of $G_F$ that these edges connect are parallel, then there must be a length $2$ Scharlemann cycle in $G_F$ which can be used to create an embedded projective plane in the meridional surgery on $K$ --- a contradiction. If the two vertices that the parallel edges connect are anti-parallel, then the argument of \cite[\S5 Case (2)]{GLi}, implies that $K$ is a cable knot --- contradicting that $K$ is hyperbolic.
\end{proof}

\begin{lemma}\label{lem:bridgedisksdisjoitfrommobius}
In \situationnscc, assume there is a White $\arc{23}$-\SC.  Let $A_{23}$ be its corresponding \mobius band in $H_W$.  Then there are mutually disjoint bridge disks for all of the White arcs $K \cap H_W$ whose interiors are also disjoint from $A_{23}$.  
\end{lemma}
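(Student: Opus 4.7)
The plan is to begin with a system $\calD=\{D_1,\dots,D_m\}$ of mutually disjoint bridge disks for the White arcs $K\cap H_W$, with $D_1$ a bridge disk for $\arc{23}$; such a system exists because $K$ is in bridge position. In \situationnscc\ the \mobius band $A_{23}$ is properly embedded in $H_W$. Its boundary does not bound a disk in $\hatF$ by Lemma~\ref{lem:LMBess}, and its core does not bound a disk in $H_W$ (else $M$ would contain an embedded projective plane, contradicting $\Delta\ge 3$), so $A_{23}$ is incompressible in $H_W$. After a small preliminary isotopy of $D_1$ in a neighborhood of $\arc{23}$, pushing $\Int D_1$ to one side of $A_{23}$ locally, every arc of $\Int D_i\cap A_{23}$ has its endpoints on $\beta_i\cap\bdry A_{23}$. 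I then isotope $\calD$, keeping it a mutually disjoint system of bridge disks, so as to minimize $|\Int\calD\cap A_{23}|$, and the goal is to show that the minimum is zero.

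Simple closed curves of $\Int\calD\cap A_{23}$ are removed by a standard innermost-disk argument: a curve innermost on some $D_i$ bounds a disk on $D_i$ and, by incompressibility of $A_{23}$, on $A_{23}$; together they cobound a ball in the irreducible handlebody $H_W$, which guides an isotopy of $D_i$ eliminating the curve while preserving mutual disjointness.

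Arcs are handled by an outermost-disk argument on $A_{23}$. Regard $\arc{23}$ along with all arcs of $\bigcup_i(D_i\cap A_{23})$ as arcs on the \mobius band $A_{23}$. An outermost piece of $A_{23}$ with respect to this family is a disk $F\subset A_{23}$ bounded by one such arc $\gamma$ together with a subarc of $\bdry A_{23}$, with $\Int F$ disjoint from all other arcs of the family. Because $\arc{23}$ is a spanning arc of the \mobius band $A_{23}$, it is essential in $A_{23}$ and so cannot cobound a disk with a subarc of $\bdry A_{23}$; hence $\gamma$ is a true intersection arc lying in the interior of some $D_i$, and $\Int F$ is disjoint from $\arc{23}$, from all other $D_j\cap A_{23}$, and therefore from $K$. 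Pushing the subdisk $E\subset D_i$ cut off by $\gamma$ on the side of $F$ across $A_{23}$ through a small collar of $F$ yields an isotopy of $D_i$ that eliminates $\gamma$ without introducing new intersections and without violating mutual disjointness (since $\Int F$ meets no other $D_j$). Iterating reduces $|\Int\calD\cap A_{23}|$ to zero and produces the desired bridge disks.

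The principal obstacle is ensuring that the outermost piece $F$ avoids the spanning arc $\arc{23}\subset A_{23}$, which is inescapably present inside the \mobius band; this is resolved by the single observation that $\arc{23}$, as an essential spanning arc of the \mobius band, cannot itself bound an outermost disk there, so outermost pieces are automatically bounded by intersection arcs and miss $\arc{23}$.
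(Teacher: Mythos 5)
Your approach is genuinely different from the paper's: you try to minimize the intersection of the whole system $\calD$ with $A_{23}$ at once and reduce via an outermost-disk argument on the \mobius band itself, whereas the paper first makes a single bridge disk $D_{23}$ for $\arc{23}$ disjoint from $A_{23}$ (by compression, $\bdry$-compression, \emph{or banding}) and then encloses $A_{23}\cup D_{23}$ behind the separating meridian disk $\bdry\nbhd(D_{23}\cup A_{23})-\bdry H_W$, off which bridge disks for the remaining arcs can always be pushed.

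There is a genuine gap in your outermost-disk step. You assert that the family $\{\arc{23}\}\cup\bigcup_i(\Int D_i\cap A_{23})$ on the \mobius band $A_{23}$ always has an outermost piece $F$, a disk bounded by one arc of the family together with a subarc of $\bdry A_{23}$. This fails exactly when every arc of $\bigcup_i(\Int D_i\cap A_{23})$ is an \emph{essential} spanning arc of $A_{23}$ (parallel to $\arc{23}$ rather than $\bdry$-parallel), a possibility nothing in your setup excludes. Model $A_{23}$ as $[0,1]^2/((0,y)\sim(1,1-y))$ with $\arc{23}=\{1/2\}\times[0,1]$ and two intersection arcs $\{1/4\}\times[0,1]$, $\{3/4\}\times[0,1]$: cutting along all three arcs yields three rectangular disks, each having two arcs of the family in its boundary, so no outermost piece exists. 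Your observation that $\arc{23}$, being essential, cannot itself bound an outermost disk correctly rules out one unwanted configuration, but it does not establish that a wanted $F$ exists. This is precisely the situation the paper's ``banding'' move handles: when an arc $\gamma$ of $D_{23}\cap A_{23}$ that is outermost on $D_{23}$ is a spanning arc of $A_{23}$, the outermost subdisk of $D_{23}$ is banded to the piece of $A_{23}\cut\gamma$ containing $\arc{23}$ and then pushed off $A_{23}$, producing a new bridge disk for $\arc{23}$ meeting $A_{23}$ in fewer arcs. Your minimization has no counterpart to this move, so it cannot be shown to terminate at zero.
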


\begin{proof}
First take a bridge disk $D_{23}$ for \arc{23} disjoint from the from the other bridge disks $K \cap H_W$.
This disk may be chosen to have interior disjoint from $A_{23}$ since otherwise there is a compression, $\bdry$-compression, or a banding that will form a new bridge disk for $\arc{23}$ intersecting $A_{23}$ fewer times.

Then $\bdry \nbhd(D_{23} \cup A_{23}) - \bdry H_W$ is a separating meridian disk in
$H_W$. Any collection of bridge disks for the remaining arcs of $K \cap H_W$
may be pushed off this disk.
\end{proof}

\begin{lemma}\label{claim:arcsinmobius}
Two properly embedded, non-$\bdry$-parallel arcs in a \mobius band with the same boundary are isotopic rel-$\bdry$.
\end{lemma}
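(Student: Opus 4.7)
The plan is first to put $\alpha$ and $\beta$ in a position where they intersect only at their shared endpoints, and then to cut $M$ along $\alpha$ and analyze $\beta$ in the resulting disk. Write $\bdry \alpha = \bdry \beta = \{p,q\}$.

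After making $\alpha$ and $\beta$ transverse and minimizing their interior intersections by an isotopy of $\beta$ rel $\bdry$, I claim $\alpha \cap \beta = \{p,q\}$. Any interior intersection would yield an innermost bigon in $M$ (allowing one corner at $p$ or $q$ if necessary), and isotoping $\beta$ across such a bigon strictly reduces $|\alpha \cap \beta|$. This is the standard innermost-bigon argument for surfaces.

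Next, because $\alpha$ is essential (non-$\bdry$-parallel), cutting $M$ along $\alpha$ yields a connected surface with $\chi = 1$, which is a disk $D$. The boundary $\bdry D$ is a quadrilateral whose sides, in cyclic order, are two copies $\alpha_+, \alpha_-$ of $\alpha$ alternating with the two arcs $a_1, a_2$ of $\bdry M \setminus \{p,q\}$, meeting at four corners which are two copies each of $p$ and $q$. After the cut, $\beta$ becomes a properly embedded arc in $D$ running from one corner labeled $p$ to one corner labeled $q$; since $D$ is a disk, $\beta$ is isotopic rel endpoints in $D$ to either of the two subarcs of $\bdry D$ joining its endpoints.

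The proof concludes with a case analysis on which corners $\beta$ connects. If $\beta$'s two endpoints lie on the same $\alpha_{\pm}$ side of $\bdry D$ (both on $\alpha_+$ or both on $\alpha_-$), then one of the subarcs of $\bdry D$ between them is $\alpha_{\pm}$ itself, and the isotopy in $D$ projects back to an isotopy of $\beta$ to $\alpha$ rel $\bdry$ in $M$, as desired. Otherwise $\beta$ connects a corner on $\alpha_+$ to one on $\alpha_-$, and one of the subarcs of $\bdry D$ between its endpoints is the single boundary arc $a_1$ or $a_2$; the corresponding isotopy projects to an isotopy pushing $\beta$ into $\bdry M$, making $\beta$ $\bdry$-parallel in $M$ and contradicting the hypothesis. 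Hence the first case must occur and $\beta$ is isotopic to $\alpha$ rel $\bdry$. The most delicate point is handling the innermost-bigon reduction in the presence of the shared endpoints on $\bdry M$ (one may need ``half-bigons'' with a corner at $p$ or $q$), but this is routine.
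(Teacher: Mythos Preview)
Your proof is correct and complete. The approach differs slightly from the paper's. You cut the M\"obius band open along $\alpha$ and work in the resulting disk, first reducing interior intersections via bigons and half-bigons at the shared endpoints, then doing a clean corner-to-corner case analysis. The paper instead introduces a parallel push-off $a'$ of $a$ and studies $|a' \cap b|$: this sidesteps the shared-endpoint issue so that the ordinary bigon criterion applies, and then a same-side/opposite-side dichotomy for the two end-arcs of $b \setminus a'$ relative to $a \cup a'$ plays the role of your corner analysis. Your cut-open argument is arguably more direct; the paper's push-off trick buys a cleaner invocation of the standard bigon criterion at the cost of a slightly less transparent endgame. The only point worth tightening in your write-up is the half-bigon reduction: it is true that an excess interior intersection always produces a bigon or half-bigon (one sees this by counting outermost regions after cutting along $\alpha$, since $\beta$-arc endpoints lie only on $\alpha_\pm$ and at corners), but it is not literally the statement of the usual bigon criterion, so a sentence justifying it would strengthen the exposition.
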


\begin{proof}
Let $a$ and $b$ be two properly embedded, non-$\bdry$-parallel  arcs in a \mobius band such that $\bdry a = \bdry b$.  Let $a'$ be a push-off of the arc $a$.  Isotop $b$ rel-$\bdry$ to minimize both $|a' \cap b|$ and $|a \cap b|$.  

If $|a' \cap b| = 0$, then $a$ and $b$ are isotopic rel-$\bdry$.  If $|a' \cap b| \neq 0$ then the two arcs of $b - a'$ sharing an end point with $a$ either lie on the same side of $a \cup a'$ or on different sides.  If they lie on the same side, then there must be a bigon with boundary composed of an arc in $a'$ and an arc in $b$ with interior disjoint from $a' \cup b$.  Thus there is an isotopy rel-$\bdry$ of $b$ to reduce $|a' \cap b|$ contrary to assumption.  If they lie on different sides, then their union must be $b$ with $b$ parallel into the boundary of the \mobius band.  This too is contrary to assumption.
\end{proof}

\section{ $t <10$.}\label{sec:t<10}

In this section we prove
\begin{thm}\label{thm:tleq8} 
Either $M$ contains a Dyck's surface or $t < 10$.
\end{thm}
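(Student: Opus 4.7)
Suppose for contradiction that $t \geq 10$ and $M$ contains no Dyck's surface. Let $\Lambda$ be a great $2$-web in $G_Q$, furnished by Lemma~\ref{lem:greatweb}. By Corollary~\ref{cor:bigonsforall}, every label $x \in \{1,\dots,t\}$ is an outermost label of some proper \ESC\ or \SC\ of $\Lambda$. I will first argue in \situationnscc, where Lemma~\ref{lem:esc} restricts every proper \ESC\ to have $r \leq 2$; consequently the label set of any such configuration is a cyclic interval of length $2$, $4$, or $6$ in the cyclic labeling of the $t$ vertices of $G_F$.

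Since each configuration has exactly two outermost labels, covering the $t \geq 10$ labels requires at least $t/2 \geq 5$ configurations, each occupying a cyclic interval of at most $6$ consecutive labels on a cycle of length $t \geq 10$. A pigeonhole/interval-cover argument then produces three such configurations whose label sets are pairwise disjoint; the entire point of the proof is to show this triple forces a Dyck's surface.

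If all three are \SCs, Lemma~\ref{lem:3disjtSC} immediately gives a Dyck's surface, contradicting our assumption. Otherwise at least one is a proper $1$- or $2$-\ESC\ $\sigma$ carrying an almost properly embedded \mobius\ band $A_1$ (from its core \SC) together with an extending annulus (the $A_2$ of a $1$-\ESC\ or the $A_3$ of a $2$-\ESC, the latter having non-isotopic boundary components on $\hatF$ by Lemma~\ref{lem:esc}(2)). The other two disjoint configurations contribute further almost properly embedded \mobius\ bands and annuli disjoint from the first. By Lemma~\ref{lem:LMBess} every resulting $a(\cdot)$-curve on $\hatF$ is essential, and Lemma~\ref{lem:PLMB}, together with the disjointness of the three label sets, forces the corresponding boundary curves to lie in mutually distinct isotopy classes on $\hatF$ (since configurations with isotopic boundary components would be forced to share core labels, contradicting disjointness). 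Either Lemma~\ref{lem:3disjointmobiusbands} (for three mutually disjoint \mobius\ bands) or Lemma~\ref{lem:disjtmobiusannulus} (for an annulus from an \ESC\ disjoint from a \mobius\ band whose boundary lies in a third class) then produces the forbidden Dyck's surface.

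For \situationscc\ we run the same counting argument, now exploiting the meridian disk $D$ of $H_B$ or $H_W$ disjoint from $K$ and $Q$. Working with the surface $F^*$ obtained by surgering $\hatF$ along $D$, the Addenda to Lemmas~\ref{lem:LMB}, \ref{lem:PLMB}, and \ref{lem:3PCC} supply exactly the parallel disjointness and isotopy-class information needed; the presence of $D$ actually simplifies matters by restricting $G_F$ strongly. The principal obstacle throughout is the isotopy-class step just described: one must verify that the three disjoint structures produced by the interval-covering argument really do give rise to three distinct essential isotopy classes on $\hatF$ (or $F^*$), so that either Lemma~\ref{lem:3disjointmobiusbands} or Lemma~\ref{lem:disjtmobiusannulus} genuinely applies. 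This amounts to careful case analysis using Lemma~\ref{lem:PLMB} (and its Addendum) to rule out coincidental parallelisms, exploiting the fact that $M$ contains neither a projective plane, a Klein bottle, nor an essential torus.
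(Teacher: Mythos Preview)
Your central claim---that a ``pigeonhole/interval-cover argument'' produces three configurations with pairwise disjoint label sets---is false as stated, and this is where the proof breaks down. Consider $t=10$ in \situationnscc\ with five $2$-\ESCs\ on the cyclic intervals $\{1,\dots,6\}$, $\{2,\dots,7\}$, $\{3,\dots,8\}$, $\{4,\dots,9\}$, $\{5,\dots,10\}$. Every label $1,\dots,10$ is outermost for exactly one of these, each interval has length $6$, yet no two of the five intervals are disjoint (the extreme pair $\{1,\dots,6\}$ and $\{5,\dots,10\}$ share $\{5,6\}$). So the combinatorics of interval covers on a cycle simply does not force three pairwise disjoint configurations; you need structural constraints on which \ESCs\ can coexist, and you have not supplied any.

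The paper's proof fills exactly this gap. In \situationnscc\ it splits into cases by the maximal \ESC\ present. When a $2$-\ESC\ exists (Case~A), Lemma~\ref{lem:nodisjtSC} (proved there using Lemma~\ref{lem:disjtmobiusannulus}) shows that any \SC\ must have labels meeting the middle four labels of the $2$-\ESC; this is precisely what rules out my counterexample above and forces $t=10$, after which one reads off three disjoint \SCs\ explicitly. When only $1$-\ESCs\ exist (Case~B), Lemma~\ref{lem:disjtoppescs} rules out two \ESCs\ with opposite-color cores and disjoint long \mobius\ bands---a contradiction coming from a thinning of $K$, not a Dyck's surface---and the remaining possibilities are dispatched by checking the bigons of specific $\Lambda_x$. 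In \situationscc, Lemma~\ref{lem:esc} is unavailable, so the bound $r\le 2$ you rely on is not even established; the paper instead proves a descending chain of lemmas eliminating $r$-\ESCs\ for $r\ge 1$ one at a time. Your final paragraph essentially concedes that the case analysis is the real content; it is, and it cannot be replaced by the interval-cover heuristic.
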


\begin{proof}

This is Proposition~\ref{prop:tleq8nscc} of section~\ref{sec:nscct<10} 
when we are in 
\situationnscc,
and Proposition~\ref{prop:tleq8scc} of section~\ref{sec:scct<10} 
in \situationscc.
\end{proof}

\subsection{ $t \geq 10$  and \situationnscc}\label{sec:nscct<10}

\begin{prop}\label{prop:tleq8nscc} 
In \situationnscc, either $M$ contains a Dyck's surface or $t < 10$.
\end{prop}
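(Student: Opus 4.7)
The plan is to suppose for contradiction that $t \geq 10$ and $M$ contains no Dyck's surface. By Corollary~\ref{cor:bigonsforall}, for every label $x \in \{1,\dots,t\}$ the sub-web $\Lambda_x$ carries a bigon, so each label is an extremal label of either a proper extended Scharlemann cycle or a Scharlemann cycle in the great $2$-web $\Lambda \subset G_Q$. Lemma~\ref{lem:3disjtSC} limits SCs to at most two label pairs, accounting for at most $4$ labels; hence at least $t - 4 \geq 6$ labels are extremal for some proper ESC, which by Lemma~\ref{lem:esc} (valid in \situationnscc\ with no Dyck's surface in $M$) must be a $1$-ESC or a $2$-ESC. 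For a proper $2$-ESC, Lemma~\ref{lem:esc}(2) provides the key structure: the long \mobius band $A_1\cup A_2\cup A_3$ has the two components of $\partial A_2$ non-isotopic on $\hatF$, while $\partial A_3$ cobounds an annulus in $\hatF$ whose interior is disjoint from $K$.

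The proof then splits on whether a proper $2$-ESC is present. If $\sigma$ is a proper $2$-ESC, it occupies $6$ consecutive labels (say $\{1,\dots,6\}$), so at least $t - 6 \geq 4$ more labels are extremal for some SC or proper ESC; let $\tau$ be such a cycle on labels disjoint from those of $\sigma$. Its associated almost properly embedded \mobius band $B$ is disjoint from the annulus $A_2$ of $\sigma$, so Lemma~\ref{lem:disjtmobiusannulus} applies: unless $\partial B$ is isotopic on $\hatF$ to one of the non-isotopic components $a_1, a_2$ of $\partial A_2$, a Dyck's surface embeds in $M$. In the alternative, $B$ paired with $A_1$ (or with the core \mobius band of $\tau$) yields an embedded Klein bottle or projective plane, forbidden since $\Delta \geq 3$. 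If no proper $2$-ESC is present, then every non-SC extremal label belongs to some $1$-ESC; with at least two proper $1$-ESCs on distinct label sets together with an SC on labels disjoint from both (which $t \geq 10$ and the $4$-label SC ceiling guarantee), I plan to invoke Lemmas~\ref{lem:3disjointmobiusbands} and \ref{lem:disjtmobiusannulus} on the resulting \mobius bands and extending annuli to produce either three mutually disjoint \mobius bands or an annulus--\mobius band pair of the forbidden type, again forcing a Dyck's surface.

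The main obstacle will be the sub-case where a $2$-ESC coexists with another proper ESC whose label set partially overlaps it. Here the parallelism data among boundary components of the two long \mobius bands is tightly constrained by Lemma~\ref{lem:PLMB}, since any pair of isotopic components forces shared core labels. I plan to exploit these constraints, together with the isotopy of $A_3$ onto $\hatF$ from Lemma~\ref{lem:esc}(2) and with Lemma~\ref{lem:LMB}, to either concatenate the two cycles into a proper $3$-ESC (contradicting Lemma~\ref{lem:esc}(1)) or to thin $K$'s bridge position --- contradicting the assumption that $K$ has minimal bridge number among all genus $2$ splittings of $M$.
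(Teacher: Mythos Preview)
Your case split (presence or absence of a proper $2$-\ESC) and your key observation in Case A --- that any \SC\ whose associated \mobius band is disjoint from the middle annulus $A_2$ of the $2$-\ESC\ forces a Dyck's surface via Lemma~\ref{lem:disjtmobiusannulus} --- are both correct, and in fact the latter is exactly the paper's Lemma~\ref{lem:nodisjtSC}. However, both cases contain genuine gaps.

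In Case A, the sentence ``let $\tau$ be such a cycle on labels disjoint from those of $\sigma$'' is unjustified: the labels $7,\dots,t$ are each extremal for \emph{some} cycle, but that cycle may well overlap $\sigma$. The correct deduction from your observation is about the \emph{core} \SC: any \SC\ (in particular the core of any \ESC) must have a label in $\{2,3,4,5\}$, so since no proper $3$-\ESC\ exists (Lemma~\ref{lem:esc}(1)), every label lies within $3$ of $\{2,3,4,5\}$. This forces $t=10$ exactly. The paper then examines $\Lambda_9$ and $\Lambda_8$: each is forced to be a specific $2$-\ESC\ (core \SC s on $\{1,2\}$ and $\{5,6\}$ respectively), and these together with the $\{3,4\}$-\SC\ in $\sigma$ yield three \SC s on pairwise disjoint label pairs, contradicting Lemma~\ref{lem:3disjtSC}. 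Your ``main obstacle'' paragraph proposes handling overlapping $2$-\ESC s by concatenating into a $3$-\ESC\ or by thinning, but neither applies here: the two $2$-\ESC s live on different vertices of $G_Q$ and cannot be concatenated, and no thinning mechanism is available for this particular overlap pattern.

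Case B is also too loose. You assert that two $1$-\ESC s on distinct label sets together with an \SC\ disjoint from both must occur, but this does not follow: the configurations of \SC s and $1$-\ESC s can interlock in many ways. The paper first proves an auxiliary result (Lemma~\ref{lem:disjtoppescs}: two \ESC s with opposite-color core \SC s and disjoint long \mobius bands cannot coexist, since both extending annuli would then be $\partial$-parallel into $\hatF$, giving a thinning), and then carries out a systematic enumeration of the possible bigons of $\Lambda_7$, $\Lambda_9$, and $\Lambda_1$, ruling each out via Lemma~\ref{lem:3disjtSC} or Lemma~\ref{lem:disjtoppescs}. Your proposed direct appeal to Lemmas~\ref{lem:3disjointmobiusbands} and~\ref{lem:disjtmobiusannulus} does not specify which surfaces are in play or why the non-isotopy hypotheses of Lemma~\ref{lem:disjtmobiusannulus} hold.
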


\begin{proof}
Assume we are in \situationnscc, $M$ does not contain a Dyck's surface, 
and $t \geq 10$. 
By Lemma~\ref{lem:esc}, there are three cases to consider:
\begin{itemize}
\item[{\bf A.}] There is a 2-\ESC\ in $\Lambda$.
\item[{\bf B.}] There is an \ESC\ in $\Lambda$ but no 2-\ESC.
\item[{\bf C.}] There is no \ESC\ in $\Lambda$. 
\end{itemize}

\begin{caseA}  
There is a 2-\ESC\ in $\Lambda$.
\end{caseA}

Assume $G_Q$ contains $\tau$, the 2-\ESC\ depicted in Figure~\ref{fig:tauoflength6} (WLOG as labelled there and with Black and White as pictured).  It gives rise to a long \mobius band $A_1 \cup A_2 \cup A_3$ in which $A_1$ is a Black \mobius band, $A_2$ is a White annulus, and $A_3$ is a Black annulus.
By Lemma~\ref{lem:esc} the components of $\bdry A_2$ lie in two distinct isotopy classes on $\hatF$ whereas the components of $\bdry A_3$ are isotopic to each other.

\begin{figure}
\centering
\input{tauoflength6.pstex_t}
\caption{}
\label{fig:tauoflength6}
\end{figure}

\begin{lemma}
\label{lem:nodisjtSC} 
There is no \SC\ whose label set is disjoint from the labels $\{2, 3, 4, 5\}$.
\end{lemma}
\begin{proof}
Assume there is a \SC\ disjoint from the labels $\{2, 3, 4, 5\}$.  This gives rise to a \mobius band properly embedded in $H_B$ or $H_W$ which must be disjoint from the annulus $A_2$.  Since $M$ contains no Klein bottles, the boundary of this \mobius band cannot be isotopic to either component of $\bdry A_2$.  By Lemma~\ref{lem:disjtmobiusannulus}, however, this cannot occur.
\end{proof}

Recall Corollary~\ref{cor:bigonsforall}, that for each label $x$ the subgraph $\Lambda_x \subset \Lambda$ must contain a bigon and hence an \ESC\ or \SC.
By Lemma~\ref{lem:nodisjtSC}, the \SC\ in a bigon of $\Lambda_x$ must have label pair intersecting $\{2, 3, 4, 5\}$, and by Lemma~\ref{lem:esc}(1) an \ESC\ may be at most twice extended.  Thus $x$ can be no more than $3$ away from the label $2$ or $5$; at its furthest, $x=t-1$ or $x=8$.  Therefore $t=10$.   
 
 For $\Lambda_9$ the only possibility is a 2-\ESC\  with labels $\{9, 10, 1, 2, 3, 4\}$; it contains an \SC\ with label pair $\{1, 2\}$.  Similarly, for $\Lambda_8$ the only possibility is a 2-\ESC\ with labels $\{3, 4, 5, 6, 7, 8\}$; it contains an \SC\ with label pair $\{5, 6\}$.  The \SC\ in $\tau$ has label pair $\{3, 4\}$.  But now the existence of these three \SCs\ with disjoint label pairs contradicts Lemma~\ref{lem:3disjtSC}.  (Really this contradicts that there cannot be $3$ disjoint, properly embedded \mobius bands in a genus $2$ handlebody.)  This completes the proof in Case A.

\begin{caseB}
 There is an \ESC\ in $\Lambda$ but no 2-\ESC.
\end{caseB}

Assume $G_Q$ contains $\tau$, the \ESC\ depicted in Figure~\ref{fig:tauoflength4}.
\begin{figure}
\centering
\input{tauoflength4.pstex_t}
\caption{}
\label{fig:tauoflength4}
\end{figure}

\begin{lemma}
\label{lem:disjtoppescs} 
There cannot be two \ESCs\ whose \SCs\ have opposite colors and for which the corresponding long \mobius bands are disjoint.
\end{lemma}

\begin{proof}
Assume otherwise.  Let $A_1$ and $A_2$ be the \mobius band and annulus respectively arising from one \ESC\ and $B_1$ and $B_2$ be the \mobius band and annulus respectively arising from the other.  We may assume $A_1$ and $B_2$ are Black while $A_2$ and $B_1$ are White.  No component of $\bdry A_2$  is isotopic on $\hatF$ to a component of $\bdry B_2$ since otherwise the two long \mobius bands
will form an embedded Klein bottle.
Then by Lemma~\ref{lem:disjtmobiusannulus} the components of $\bdry A_2$ must be isotopic as must the components of $\bdry B_2$.  
By Lemma~\ref{lem:LMB}, $A_2$ and $B_2$ are parallel into $\hatF$ (note that since these \ESCs\ are of maximal length, we may apply the argument of Lemma~\ref{lem:esc} to show that the annuli on $\hatF$  between the components of $\partial A_2$ and $\partial B_2$ respectively must be disjoint from $K$).
These two parallelisms however give a thinning of $K$.  This is a contradiction.
\end{proof}

We now consider the possible bigons of $\Lambda_7$ and $\Lambda_9$.  The possibilities are shown in Figure~\ref{fig:lambda79escs-alt}.  Lemma~\ref{lem:disjtoppescs} immediately rules out 7(d).

\begin{figure}
\centering
\input{lambda79escs-alt.pstex_t}
\caption{}
\label{fig:lambda79escs-alt}
\end{figure}

\begin{claim} 
9(c) is impossible.
\end{claim}

\begin{proof}
Consider the bigons of $\Lambda_1$.  The four possibilities are listed in Figure~\ref{fig:lambda1escs}.  With $\tau$ and 9(c), each of 1(a), 1(b), and 1(c) contradict Lemma~\ref{lem:3disjtSC}.  Together 9(c) and 1(d) contradict Lemma~\ref{lem:disjtoppescs}.
\begin{figure}
\centering
\input{lambda1escs.pstex_t}
\caption{}
\label{fig:lambda1escs}
\end{figure}
\end{proof}

\begin{claim}
7(a) and 7(c) are impossible.
\end{claim}
\begin{proof}
With $\tau$ and 7(a), each of 9(a), 9(b), and 9(d) (the remaining possible bigons of $\Lambda_9$)  contradict Lemma~\ref{lem:3disjtSC}.  Similarly with $\tau$ and 7(c), each of 9(a), 9(b), and 9(d) contradict Lemma~\ref{lem:3disjtSC}.
\end{proof}

\begin{claim}
7(b) is impossible.
\end{claim}
\begin{proof}
With $\tau$ and 7(b), each of 9(b) and 9(d) contradict Lemma~\ref{lem:3disjtSC}.  Therefore we must have 9(a).  Hence we have \SCs\ with label pairs $\{3,4\}$, $\{7,8\}$, and $\{8,9\}$.  Again, $\Lambda_1$ must have one of the bigons listed in Figure~\ref{fig:lambda1escs}. Each of the \SCs\ contained within 1(a),1(b), and 1(c) form, along with two of those with labels pairs $\{3,4\}$, $\{7,8\}$, and $\{8,9\}$, a triple of mutually disjoint \SCs.  This contradicts Lemma~\ref{lem:3disjtSC}.

So we assume we have 1(d) along with $\tau$, 7(b), and 9(a). The 
possible bigons of $\Lambda_6$ are 7(a), 9(c), an \SC\ on 
labels $\{5,6\}$,
and a $1$-\ESC\ on labels $\{3,4,5,6\}$. The first two have already been ruled 
out. Each of the two remaining gives rise to an \SC\ that joins with
those above to contradict Lemma~\ref{lem:3disjtSC}.
\end{proof}

This completes the proof in Case B.

\begin{caseC}
 There is no \ESC\ in $\Lambda$. 
\end{caseC}

In this case every label belongs to an \SC.  Lemma~\ref{lem:3disjtSC} then forces $t \leq 6$ contrary to the assumed $t \geq 10$.  This completes the proof in Case C and thus the proof of Proposition~\ref{prop:tleq8nscc}. 
\end{proof} 

\subsection{ $t \geq 10$  and \situationscc}\label{sec:scct<10}

\begin{prop}\label{prop:tleq8scc} 
In \situationscc, either $M$ contains a Dyck's surface or $t < 10$.
\end{prop}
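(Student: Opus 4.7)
The plan is to mimic the proof of Proposition~\ref{prop:tleq8nscc} (the nscc case), systematically replacing arguments about $\hatF$ with arguments about $F^*$, the surface obtained by surgering $\hatF$ along the meridian disk $D$ of $H_B$ or $H_W$ that is disjoint from both $K$ and $Q$ (guaranteed in \situationscc\ by Corollary~\ref{cor:AEntscc}). The surface $F^*$ is either a torus or a disjoint union of two tori, so any two disjoint essential simple closed curves in a single component of $F^*$ are automatically isotopic there -- a feature that can only help.

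First I would establish an \situationscc\ analogue of Lemma~\ref{lem:esc}. Given a proper $(n-1)$-\ESC\ $\sigma$ with long \mobius band $A_1\cup\dots\cup A_n$, any two components of $a(\sigma)$ lying in the same component of $F^*$ are parallel on $F^*$. Using the Addendum to Lemma~\ref{lem:PLMB} to prevent distinct proper \ESCs\ sharing such a parallelism without matching core labels, together with the Addenda to Lemmas~\ref{lem:LMB} and~\ref{lem:3PCC} to convert parallelisms into either a thinning of $K$ or the construction of an essential torus, Klein bottle, projective plane, or Dyck's surface in $M$, I can rule out proper $3$-\ESCs\ and show that a proper $2$-\ESC\ forces $\bdry A_3$ to cobound (on $F^*$) an annulus disjoint from $K$, yielding the analogue of conclusion~(2) of Lemma~\ref{lem:esc}.

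With this in hand I proceed through Cases A, B, C exactly as in the proof of Proposition~\ref{prop:tleq8nscc}. The combinatorial statements driving that proof -- Corollary~\ref{cor:bigonsforall}, Lemmas~\ref{lem:3disjtSC}, \ref{lem:disjtmobiusannulus}, \ref{lem:disjtoppescs}, and the label bookkeeping indicated by Figures~\ref{fig:lambda79escs-alt} and~\ref{fig:lambda1escs} -- depend only on the graph $G_Q$ together with the behavior of (now almost properly embedded) \mobius bands and annuli in $H_B\cup H_W$. In each place where the nscc proof invoked an isotopy or a parallelism on $\hatF$ to obtain a thinning or to build a forbidden closed surface, I substitute the corresponding statement on $F^*$, appealing to the Addenda listed above and to the fact that a Klein bottle, projective plane, or Dyck's surface produced in the manifold obtained by surgering $M$ along $D$ either already exists in $M$ or tubes back across $D$ to one of the forbidden surfaces in $M$. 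Case~C is immediate from Lemma~\ref{lem:3disjtSC}, since every label still lies in some \SC, forcing $t\le 6$.

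The principal obstacle is executing this translation cleanly in Case~B, where the long \mobius bands coming from $\tau$ and from the \ESCs\ in $\Lambda_7$, $\Lambda_9$, and $\Lambda_1$ must all be analyzed simultaneously on $F^*$: one has to track which components of $a(\cdot)$ lie in which component of $F^*$, and verify that the parallelisms used to invoke Lemma~\ref{lem:disjtoppescs} and to rule out configurations 7(a)--(d) and 9(a)--(d) all persist after surgery along $D$ and genuinely yield a Dyck's surface in $M$ rather than an artifact confined to a ball cut off by $D$. Once this bookkeeping is carried out -- essentially a case-by-case verification that none of the constructed surfaces can be absorbed into $\nbhd(D)$ -- the rest of the argument is a direct copy of Proposition~\ref{prop:tleq8nscc}.
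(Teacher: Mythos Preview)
Your proposal takes a genuinely different route from the paper.  The paper does \emph{not} translate the Case~A/B/C analysis of Proposition~\ref{prop:tleq8nscc}.  Instead it proves four short lemmas showing directly that, in \situationscc\ with $t\geq 10$, $G_Q$ contains no proper $r$-\ESC\ for any $r\geq 1$; Corollary~\ref{cor:bigonsforall} then forces every label into an \SC, and Lemma~\ref{lem:3disjtSC} gives the Dyck's surface.  The key tool is the separation of vertices of $G_F$ between the torus components of $F^*$: if a $2$-\ESC\ $\sigma$ exists, the Addenda to Lemmas~\ref{lem:LMB} and~\ref{lem:3PCC} force $F^*$ to be two tori $T_1,T_2$ with exactly two components of $a(\sigma)$ on $T_1$ and one on $T_2$, and then the only vertices of $G_F$ on $T_1$ are those on $a(\sigma)$.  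This vertex-separation, together with the Addendum to Lemma~\ref{lem:PLMB}, rigidly pins down the label sets of any further \ESC\ arising from $\Lambda_8$, $\Lambda_{10}$, etc., and quickly yields a contradiction.  The $1$-\ESC\ case is similar.

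Your approach may be salvageable, but it is not the straightforward translation you describe.  The step you gloss over is establishing an \situationscc\ analogue of conclusion~(2) of Lemma~\ref{lem:esc}: that for a $2$-\ESC\ the components of $\bdry A_2$ are \emph{non-isotopic on~$\hatF$}.  This is exactly the hypothesis Lemma~\ref{lem:disjtmobiusannulus} needs, and hence what drives Lemma~\ref{lem:nodisjtSC} in Case~A and Lemma~\ref{lem:disjtoppescs} in Case~B.  Your stated analogue (``$\bdry A_3$ cobounds on $F^*$ an annulus disjoint from $K$'') is not the same statement, and on a torus component of $F^*$ disjoint essential curves are automatically isotopic, so you cannot read off non-isotopy on $\hatF$ without further argument about how $\bdry D$ sits relative to $a(\sigma)$.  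Moreover, the proof of Lemma~\ref{lem:esc}(2) in the paper goes through Lemma~\ref{lem:ESCPA}, which explicitly assumes \situationnscc\ (it pushes $A_j$ into the opposite handlebody, requiring proper rather than almost-proper embedding).  The paper's direct vertex-separation argument sidesteps all of this.
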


\begin{proof}
Assume we are in \situationscc. Then there is a meridian disk $D$ of $H_W$
or $H_B$ disjoint from $K$ and $Q$. Let $F^*$ be $\hatF$ surgered along $D$.
Then $F^*$ is one or two tori. For contradiction, assume $M$ does not 
contain a Dyck's surface, and $t \geq 10$. 

\begin{lemma}\label{lem:t8nesc}
If $G_Q$ contains an $r$-ESC then $r \leq 3$.
\end{lemma}

\begin{proof}
Let $r$ be the largest value such $G_Q$ contains a proper $r$-\ESC, $\sigma$.
Assume for contradiction, $r \geq 4$. Then $|a(\sigma)| \geq 5$ and there
must be at least three components of $a(\sigma)$ that are isotopic on 
$F^*$. Let $B$ be an annulus between two
components of $a(\sigma)$ on $F^*$ whose interior
is disjoint from $a(\sigma)$. Any vertex of $G_F$ in $\Int B$ must belong 
to a component, $a$, of $a(\tau)$ for some $r'$-\ESC, $\tau$, of $\Lambda$. 
Since $a$ 
intersects $a(\sigma)$ at most once, it must lie in $B$ and
(Lemma~\ref{lem:LMBess}) be isotopic 
to the components $\partial B$ of $a(\sigma)$. But this would contradict
the Addendum to Lemma~\ref{lem:PLMB} and the maximality of $r$. Thus
$\Int B$ must be disjoint from $K$. That is,
there are components
$a_1, a_2, a_3$ of $a(\sigma)$ such that $a_1,a_2$ and $a_2, a_3$ cobound
annuli in $F^*$ whose interiors are disjoint from $K$. This contradicts
the Addendum to Lemma~\ref{lem:3PCC}.
\end{proof}

\begin{lemma}\label{lem:n3escscc}
$G_Q$ contains no 3-\ESC.
\end{lemma} 

\begin{proof}
Suppose $\sigma$ is a 3-\ESC. As argued in
the preceding lemma, the 
Addendum to Lemma~\ref{lem:PLMB} and 
the maximality of $\sigma$ show that if $B$ is an annulus of $F^*$ cobounded by
components of $a(\sigma)$ such that $\Int B$ is disjoint from $a(\sigma)$,
then $\Int B$ must be disjoint from $K$.
Then the Addendum to
Lemma~\ref{lem:3PCC} shows that at most two components of $a(\sigma)$
are isotopic on $F^*$. Since $|a(\sigma)|=4$, $F^*$ must be two tori
with exactly two components of $a(\sigma)$ on each. But the argument
above then says that every vertex of $G_F$ must lie on $a(\sigma)$  ---
contradicting that $t \geq 10$.
\end{proof}

\begin{lemma}\label{lem:t8n2esc}
There is no 2-\ESC.
\end{lemma}

\begin{proof}
Let $\sigma$ be a 2-\ESC. The argument of Lemma~\ref{lem:n3escscc}
coupled with its conclusion that there is no 3-\ESC, implies that
$F^*$ must consist of two tori: $T_1$ containing two components of
$a(\sigma)$ and $T_2$ containing one. Again, the argument of 
Lemma~\ref{lem:n3escscc}, shows that the only vertices of $G_F$
on $T_1$ are those lying on the two components of $a(\sigma)$.

Assume $\sigma$ is given by Figure~\ref{fig:tauoflength6}. By 
Corollary~\ref{cor:bigonsforall} there is a bigon of $\Lambda_{8}$.
This can be taken to be a proper $r$-\ESC, $\tau$ (where $r=0$ means an \SC). 
Then $r \leq 2$. 
By the Addendum to 
Lemma~\ref{lem:PLMB}, each component of $a(\tau)$ must
intersect a component of $a(\sigma)$. Enumerating the possibilities
for the labels of $\tau$ consistent with these conditions we have
\begin{itemize}
\item[a.] $\{5,6,7,8\}$
\item[b.] $\{8,9,10,1,2,3\}$
\item[c.] $\{3,4,5,6,7,8\}$
\end{itemize}
But (a) is not possible as vertices $7,8$ of $G_F$ must lie on $T_2$, but
the corresponding components of $a(\tau)$ intersect two different components
of $a(\sigma)$. The same argument with vertices $8,9$ rules out (b). So we
assume the labels of $\tau$ are given by (c). Since vertices $7,8$ of $G_F$
lie in $T_2$, then vertices 3,4 must also lie in $T_2$ while vertices
$1,2,5,6$ must be those in $T_1$.

Now take a bigon of $\Lambda_{10}$ giving a proper $n$-\ESC, $\nu$. 
By the Addendum
to Lemma~\ref{lem:PLMB}, as argued above, each component of $a(\nu)$
must intersect both $a(\sigma)$ and $a(\tau)$. Furthermore, 
$n \leq 2$. These conditions guarantee that the label set
for $\nu$ is \{10,1,2,3,4,5\}. But this contradicts that vertex 2 lies
on $T_1$ and vertex 3 on $T_2$.
\end{proof}

\begin{lemma}
There is no 1-\ESC.
\end{lemma}

\begin{proof}
Assume there is an \ESC, $\sigma$, on the labels $\{1,2,3,4\}$.
By Corollary~\ref{cor:bigonsforall}, there is a proper $r$-\ESC, $\tau$,
coming from a bigon of $\Lambda_5$, and a proper $n$-\ESC, 
$\nu$, coming
from a bigon of $\Lambda_9$. Furthermore, $0 \leq r,n \leq 1$.
A simple enumeration shows the possible label pairs of the core \SC\ for
$\tau$ are: $\{3\,4, 4\,5, 5\,6, 6\,7\}$. The possible labels for the core \SC\ of
$\nu$ are: $\{7\,8, 8\,9, 9\,10, 10\,*\}$ (where the label $*$ means either 1 or 11).
Three \SCs\ on disjoint
label pairs would allow us to use $F^*$ to form a Klein bottle in $M$. 
Thus the label pairs of the core \SCs\ of two of $\{\sigma, \tau, \nu\}$
must intersect. The possibilities are
\begin{itemize}
\item[a.] $\sigma,\tau$ where $\tau$ has label set $\{2,3,4,5\}$
\item[b.] $\tau,\nu$ where $\tau$ has label set $\{5,6,7,8\}$ and
$\nu$ has label set $\{6,7,8,9\}$.
\end{itemize}
Both lead to the same contradiction. We consider (b). The edges of $\tau,
\nu$ force the vertices $5,6,7,8,9$ to lie on the same torus component
of $F^*$. There is a component of $a(\tau)$ disjoint from a component
of $a(\nu)$. Hence these components are isotopic on $F^*$. But this 
contradicts the Addendum to Lemma~\ref{lem:PLMB}.
\end{proof}

The preceding lemmas along with Corollary~\ref{cor:bigonsforall},
implies that every label of $\Lambda$ belongs to an \SC. But then
Lemma~\ref{lem:3disjtSC} along with the assumption that $t \geq 10$
implies that $M$ contains a Dyck's surface.
This contradiction concludes the proof of Proposition~\ref{prop:tleq8scc}.
\end{proof}

\section{$t < 8$}
By Theorem~\ref{thm:tleq8}, $t \leq 8$.  In this section we prove
\begin{thm}\label{thm:tleq6}
Either $M$ contains a Dyck's surface or $t<8$.
\end{thm}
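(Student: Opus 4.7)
The plan is to parallel the argument for Theorem~\ref{thm:tleq8} in section~\ref{sec:t<10}: since $t\le 8$ is forced by the previous theorem and $t$ is even, we must rule out $t=8$. I would assume for contradiction that $t=8$ and $M$ contains no Dyck's surface, then handle \situationnscc\ and \situationscc\ separately.

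\emph{In \situationnscc.} By Corollary~\ref{cor:bigonsforall} each $\Lambda_x$ contains a bigon, which gives an \SC\ or, by Lemma~\ref{lem:esc}(1), a proper $1$- or $2$-\ESC\ in $\Lambda$. I would split into three cases, matching Proposition~\ref{prop:tleq8nscc}:

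\emph{Case A: $\Lambda$ contains a $2$-\ESC\ $\tau$.} Take $\tau$ on labels $\{1,\dots,6\}$ with Black core \SC\ on $\{3,4\}$, giving a long \mobius band $A_1\cup A_2\cup A_3$ with $\partial A_2$ in two distinct isotopy classes on $\hatF$ (Lemma~\ref{lem:esc}(2)). The proof of Lemma~\ref{lem:nodisjtSC} applies verbatim at $t=8$, forbidding any \SC\ whose label pair is disjoint from $\{2,3,4,5\}$. Any \SC\ at label $7$ or $8$ has label pair $\subset\{6,7,8,1\}$ and so is disjoint from $\{2,3,4,5\}$; hence the bigons in $\Lambda_7$ and $\Lambda_8$ must arise from $1$- or $2$-\ESCs. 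Since $t=8$ is small, each of $\Lambda_7,\Lambda_8,\Lambda_1$ admits only a short, explicit list of $1$- and $2$-\ESC\ options. I would enumerate these and, in each configuration, extract either three \SCs\ on pairwise disjoint label pairs (contradicting Lemma~\ref{lem:3disjtSC}), two \ESCs\ of opposite color with disjoint long \mobius bands (contradicting Lemma~\ref{lem:disjtoppescs}), or a \mobius band disjoint from an annulus with non-isotopic, non-meridional boundary components on $\hatF$ (producing a Dyck's surface via Lemma~\ref{lem:disjtmobiusannulus}).

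\emph{Case B: The longest \ESC\ in $\Lambda$ is a $1$-\ESC\ $\tau$.} Take $\tau$ on $\{2,3,4,5\}$ with core $\{3,4\}$, and enumerate the bigons in $\Lambda_7,\Lambda_8,\Lambda_6,\Lambda_1$ (each an \SC\ or $1$-\ESC\ with prescribed label pair). The analysis mirrors Case B of Proposition~\ref{prop:tleq8nscc}, using Lemmas~\ref{lem:3disjtSC}, \ref{lem:disjtoppescs}, and \ref{lem:disjtmobiusannulus} to eliminate every combination.

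\emph{Case C: No \ESC\ in $\Lambda$.} Then every label lies in an \SC\ on a consecutive pair $\{x,x+1\}$. With $t=8$, selecting alternating pairs $\{1,2\},\{3,4\},\{5,6\}$ (or cyclic rotations) yields three \SCs\ on pairwise disjoint label pairs, contradicting Lemma~\ref{lem:3disjtSC}.

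\emph{In \situationscc.} I would follow Proposition~\ref{prop:tleq8scc}. There is a meridian disk $D$ disjoint from $K$ and $Q$, and $F^*=\hatF$ surgered along $D$ is one or two tori. The proof of Lemma~\ref{lem:t8nesc} (using the Addendum to Lemma~\ref{lem:PLMB} and the Addendum to Lemma~\ref{lem:3PCC}) adapts to give $r\le 3$ for any proper $r$-\ESC. For a $3$-\ESC\ the argument of Lemma~\ref{lem:n3escscc} forces every vertex of $G_F$ to lie on $a(\sigma)$; the step there that required $t\ge 10$ is replaced by noting that with $t=8$ a $3$-\ESC\ accounts for all $8$ vertices only when $F^*$ is two tori with a very rigid vertex distribution, which can then be excluded using Corollary~\ref{cor:bigonsforall} applied to a label not on $a(\sigma)$. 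The analogs of Lemmas~\ref{lem:t8n2esc} and the subsequent lemma then rule out $2$-\ESCs\ and $1$-\ESCs\ at $t=8$, and finally the no-\ESC\ case is handled exactly as Case C above.

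The main obstacle will be the Case A/B enumeration in \situationnscc: with $t=8$ the $1$- and $2$-\ESC\ label sets wrap tightly and necessarily overlap $\tau$, so the structural conclusion of Lemma~\ref{lem:esc}(2) and the isotopy restrictions from Lemma~\ref{lem:PLMB} must be leveraged carefully to force the three-disjoint-\mobius-band or disjoint-\mobius-and-annulus configuration in every subcase. The adaptation of the \situationscc\ argument is more routine but requires reworking the $t\ge 10$ step of Lemma~\ref{lem:n3escscc} in terms of a bigon at an unused label.
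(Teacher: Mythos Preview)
Your overall architecture (split into \situationnscc/\situationscc, then Cases A/B/C) matches the paper's, and Cases A and C are essentially right. But Case B in \situationnscc\ has a genuine gap.

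With $t=8$, take $\tau$ on $\{1,2,3,4\}$ with White core $\arc{23}$. Among the bigons of $\Lambda_7$ and $\Lambda_8$ you will encounter $1$-\ESC s on $\{4,5,6,7\}$ (core $\arc{56}$) and on $\{5,6,7,8\}$ (core $\arc{67}$). The second, together with $\tau$, gives two \emph{disjoint} $1$-\ESC s whose core \SC s $\arc{23}$ and $\arc{67}$ are the \emph{same} color. Lemma~\ref{lem:disjtoppescs} is inapplicable (it needs opposite colors); Lemma~\ref{lem:3disjtSC} needs a third disjoint \SC\ you do not yet have; and Lemma~\ref{lem:disjtmobiusannulus} only tells you that $\bdry A_{12,34}$ and $\bdry A_{56,78}$ each bound annuli on $\hatF$, after which Lemma~\ref{lem:LMB} makes both annuli $\bdry$-parallel --- but this is not a contradiction, and the two parallelisms need not combine into a thinning since the arcs of $K$ they move are disjoint. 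The paper handles exactly this configuration with substantial new machinery: Proposition~\ref{prop:nodisjtESC} (no two disjoint $1$-\ESC s when $t=8$), whose proof runs through a chain of lemmas restricting Black bigons, counting edge classes, and finally a special-vertex argument with weight $N=3$; and Corollary~\ref{cor:make3bridge} (via Proposition~\ref{prop:intersectinglength4and2}), which builds a \emph{new} genus~$2$ Heegaard splitting on which $K$ becomes $3$-bridge. These are the missing ideas in your Case B, and the enumeration for $\Lambda_1,\Lambda_6$ in Claims~\ref{claim:no7a8b} and \ref{clm:n78sc} depends on them.

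Your \situationscc\ sketch also has a problem at the $3$-\ESC\ step. When $t=8$, a proper $3$-\ESC\ uses all eight labels, so there is no ``label not on $a(\sigma)$'' to which you can apply Corollary~\ref{cor:bigonsforall}. The paper's Lemma~\ref{lem:t6n3esc} instead analyses the long \mobius band inside the surgered solid tori, shows the outer annuli are longitudinal (else one builds a Seifert piece and thins $K$ via Lemma~\ref{lem:sSFS1bridge}), then derives edge-parallelism constraints (Claims~\ref{paralleltosigma}, \ref{no81bigon}) that force a contradiction at a special vertex via Lemma~\ref{lem:parallelwithsamelabelAPE}. The rest of your \situationscc\ outline is closer to the paper, but note that the Case~B there again invokes Proposition~\ref{prop:nodisjtESC}.
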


\begin{proof}

This is Proposition~\ref{prop:tleq6nscc} of section~\ref{sec:tleq6nscc} 
when we are in 
\situationnscc,
and Proposition~\ref{prop:tleq6scc} of section~\ref{sec:tleq6scc} 
in \situationscc.
\end{proof}

\subsection{ $t = 8$  and \situationnscc}\label{sec:tleq6nscc}

\begin{prop}\label{prop:tleq6nscc} 
In \situationnscc, either $M$ contains a Dyck's surface or $t < 8$.
\end{prop}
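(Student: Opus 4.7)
The plan is to assume for contradiction that $t=8$ in \situationnscc\ with $M$ containing no Dyck's surface, and to follow the three-case structure of the proof of Proposition~\ref{prop:tleq8nscc}. By Lemma~\ref{lem:esc} no proper \ESC\ has length exceeding $2$, so I would split on: Case A, $\Lambda$ contains a $2$-\ESC; Case B, $\Lambda$ contains a $1$-\ESC\ but no $2$-\ESC; Case C, $\Lambda$ contains no \ESC.

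Case C is the easiest. By Corollary~\ref{cor:bigonsforall}, every label of $\{1,\ldots,8\}$ is the outer label of some \SC, so the set $S$ of realized \SC\ pairs is a cover of the $8$ labels by consecutive pairs in the cyclic $8$-cycle of possible pairs. Since each pair covers exactly two labels, $|S|\ge 4$. A short check shows that any such covering collection must contain either at least three of $\{1,2\},\{3,4\},\{5,6\},\{7,8\}$ or at least three of $\{2,3\},\{4,5\},\{6,7\},\{8,1\}$; in either event three of them are pairwise label-disjoint and Lemma~\ref{lem:3disjtSC} produces a Dyck's surface.

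For Case A, after relabeling I would take $\tau$ to be a $2$-\ESC\ on $\{1,\ldots,6\}$ with core \SC\ $\{3,4\}$. The proof of Lemma~\ref{lem:nodisjtSC} carries over verbatim (it does not use $t\geq 10$), so every \SC\ in $G_Q$ has label pair meeting $\{2,3,4,5\}$. Consequently neither label~$7$ nor label~$8$ can lie in an \SC, so by Corollary~\ref{cor:bigonsforall} each is the outer label of a proper \ESC\ whose core \SC\ meets $\{2,3,4,5\}$. Enumeration leaves three possibilities for each: label~$7$'s core pair $A\in\{\{5,6\},\{4,5\},\{1,2\}\}$ and label~$8$'s core pair $B\in\{\{1,2\},\{5,6\},\{2,3\}\}$. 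For most $(A,B)$ the triple $\{\{3,4\},A,B\}$ already contains three mutually disjoint pairs, so Lemma~\ref{lem:3disjtSC} finishes the case. The residual combinations---$A=B$, $A=\{4,5\}$, or $(A,B)\in\{(\{5,6\},\{2,3\}),(\{1,2\},\{2,3\})\}$---each pin down the specific \ESCs\ carrying labels~$7$ and~$8$; I expect the overlap of those \ESCs\ with $\tau$ to force a configuration in which the constituent \mobius bands and extending annuli satisfy the hypotheses of either Lemma~\ref{lem:3disjointmobiusbands} or Lemma~\ref{lem:disjtmobiusannulus}, in either event yielding a Dyck's surface.

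Case B follows the analogous template from Case B of Proposition~\ref{prop:tleq8nscc}: fix a $1$-\ESC\ $\tau$ on $\{2,3,4,5\}$ with core $\{3,4\}$, and enumerate the possible bigons of $\Lambda_x$ for the labels $x\in\{1,6,7,8\}$ outside the span of $\tau$ (each is a proper \SC\ or $1$-\ESC). A subcase analysis using Lemma~\ref{lem:disjtoppescs} to forbid opposite-colored \ESCs\ with disjoint long \mobius bands, and Lemma~\ref{lem:3disjtSC} to forbid three \SCs\ on disjoint label sets, should eliminate every configuration. The main obstacle lies in Case A: the small label count $t=8$ means the naive pigeonhole on $\{\{3,4\},A,B\}$ does not always deliver three disjoint \SCs, and one must examine the geometry of the overlapping $2$-\ESCs\ in the residual combinations carefully enough to verify that the extending annuli and \mobius bands still assemble into a Dyck's surface via Lemma~\ref{lem:disjtmobiusannulus} (confirming non-meridional, non-isotopic boundary components on $\hatF$) or Lemma~\ref{lem:3disjointmobiusbands}.
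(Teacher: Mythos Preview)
Your three-case split is right, and Case~C is fine. Case~A is sketchy but salvageable: the paper's argument there is geometric rather than purely label-combinatorial---it uses that the outer annulus $A_3$ of the $2$-\ESC\ is \emph{separating} in its handlebody (Lemma~\ref{lem:esc}(2)), so a \mobius band meeting $A_3$ along an arc of $K$ can be isotoped off, and then Lemma~\ref{lem:disjtmobiusannulus} applies. Your residual combinations would yield to this, but ``I expect the overlap \ldots'' is not a proof.

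The serious gap is Case~B. Your proposed toolkit---Lemma~\ref{lem:disjtoppescs} and Lemma~\ref{lem:3disjtSC}---does not close the analysis when $t=8$. Concretely, take $\tau$ the $\arc{2345}$-\ESC\ (White core $\{3,4\}$), the bigon of $\Lambda_7$ a $\{6,7\}$-\SC, and the bigon of $\Lambda_1$ a $\arc{1234}$-\ESC\ (Black core $\{2,3\}$). The available \SC\ pairs are $\{3,4\},\{6,7\},\{2,3\}$, no three of which are disjoint, so Lemma~\ref{lem:3disjtSC} is silent; the two \ESCs\ have opposite-colored cores but their long \mobius bands share the arc $\arc{234}$, so Lemma~\ref{lem:disjtoppescs} does not apply either. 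The paper eliminates exactly this configuration via Corollary~\ref{cor:make3bridge} (built on Proposition~\ref{prop:intersectinglength4and2}), which \emph{constructs a new genus~$2$ Heegaard splitting} in which $K$ is $3$-bridge. Similarly, two label-disjoint same-colored $1$-\ESCs\ (e.g.\ on $\{1,2,3,4\}$ and $\{5,6,7,8\}$) are not excluded by your tools; the paper handles these with Proposition~\ref{prop:nodisjtESC}, whose proof occupies its own subsection and uses special-vertex counting together with several auxiliary lemmas. You also need Lemma~\ref{lem:noabbuttinglength4} (two $1$-\ESCs\ overlapping in exactly one label) to dispose of cases like $\tau$ together with a $\arc{5678}$-\ESC. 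None of these three results is available from the $t\geq 10$ argument, and Case~B does not go through without them.
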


\begin{proof} Assume $M$ does not contain a Dyck's surface and $t=8$.  
By Lemma~\ref{lem:esc}, there are three cases to consider:
\begin{itemize}
\item[{\bf A.}] There is a $2$-\ESC\ in $\Lambda$.
\item[{\bf B.}] There is a $1$-\ESC\ in $\Lambda$ but no $2$-\ESC.
\item[{\bf C.}] There is no \ESC\ in $\Lambda$. 
\end{itemize}

The proof in Case B relies upon Corollary~\ref{cor:make3bridge} and 
Proposition~\ref{prop:nodisjtESC} which are proven in subsections 
following the present proof.

\begin{caseA}  
There is a $2$-\ESC\ in $\Lambda$.
\end{caseA}

As in Case A of Theorem~\ref{thm:tleq8}, assume $G_Q$ contains $\tau$, the 
$2$-\ESC\ depicted in Figure~\ref{fig:tauoflength6}.  It gives rise to a long \mobius band $A_\tau=A_1 \cup A_2 \cup A_3$ in which $A_1$ is a Black \mobius band, $A_2$ is a White annulus, and $A_3$ is a Black annulus.  By Lemma~\ref{lem:esc} the components of $\bdry A_2$ are not isotopic on $\hatF$ whereas the components of $\bdry A_3$ are.  

\begin{figure}
\centering
\input{lambda7escs.pstex_t}
\caption{}
\label{fig:lambda7escs}
\end{figure}

Figure~\ref{fig:lambda7escs} lists all possible bigons of $\Lambda_7$ that are at most $2$-\ESCs\ (i.e.\ containing at most $6$ edges).  We proceed to rule out all of these bigons, thereby contradicting Corollary~\ref{cor:bigonsforall}.
\begin{claim}\label{claim:no7abd}
 7(a), 7(b), and 7(d) are impossible.
 \end{claim}
 \begin{proof}
 Each of these bigons contain an \SC\ whose associated \mobius band is disjoint from $A_2$.  The boundary of such a \mobius band must not be isotopic to a component of $\bdry A_2$, else there would be an embedded Klein bottle in $M$.  This contradicts Lemma~\ref{lem:disjtmobiusannulus}.
 \end{proof}
 \begin{claim}\label{claim:no7cf}
 7(c) and 7(f) are impossible.
 \end{claim}
 \begin{proof}
Each 7(c) and 7(f) contain an \SC\ whose associated \mobius band $B$ intersects $A_3$ along a component of $A_3 \cap K$.  Because  $A_3$ is separating in the Black
side of $\hatF$, the intersection of $B$ with $A_3$ is not transverse.  Therefore the \mobius band $B$ may be isotoped in $H_B$ to be disjoint from $A_3$ and hence $A_2$.  Since $\bdry B$ cannot be isotopic on $\hatF$ to either component of $\bdry A_2$, together $B$ and $A_2$ form a contradiction to Lemma~\ref{lem:disjtmobiusannulus}.
\end{proof}
\begin{claim}\label{claim:no7e}
7(e) is impossible.
\end{claim}
\begin{proof}
\begin{figure}
\centering
\input{lambda8escs.pstex_t}
\caption{}
\label{fig:lambda8escs}
\end{figure}
Figure~\ref{fig:lambda8escs} lists all possible bigons of $\Lambda_8$ that are at most $2$-\ESCs.  Analogously to Claims~\ref{claim:no7abd} and \ref{claim:no7cf}, we may rule out all but 8(e).  Yet now 7(e) and 8(e) cannot coexist as the proof of Claim~\ref{claim:no7cf} applies analogously with 7(e) and 8(e) in lieu of 7(c) and $\tau$ respectively.
\end{proof}
This completes the proof in Case A.

\begin{caseB}
 There is a $1$-\ESC\ in $\Lambda$ but no $2$-\ESC.
\end{caseB}

Assume $G_Q$ contains $\tau$, the \ESC\ depicted in 
Figure~\ref{fig:tauoflength4}.

\begin{lemma}\label{lem:noabbuttinglength4}
There cannot be two $1$-\ESCs\ whose label sets intersect in one label.
\end{lemma}

\begin{proof}
Assume otherwise.  Then their \SCs\ have opposite colors.
Let $A_1$ and $A_2$ be the \mobius band and annulus arising from one \ESC; let $B_1$ and $B_2$ be the \mobius band and annulus arising from the other.  Since $A_1$ and $B_1$ are on opposite sides, so are $A_2$ and $B_2$.  

By Lemma~\ref{lem:disjtmobiusannulus} some pair of curves of $\bdry A_1 \cup \bdry B_2$ must be isotopic as must some pair of curves of $\bdry B_1 \cup \bdry A_2$.  Since we may not form any embedded Klein bottles, the two components of $\bdry B_2$ must be isotopic as must the two components of $\bdry A_2$.  Then by Lemma~\ref{lem:LMB} it follows that $A_2$ and $B_2$ are each parallel into $\hatF$.  (The \ESCs\ are of maximum length, so the argument of Lemma~\ref{lem:esc} shows that the parallelism between, say, $\partial A_2$ is disjoint from $K$.)

 By assumption, $\bdry A_2$ and $\bdry B_2$ intersect in one point, and thus this intersection is not transverse.  Hence, as with Lemma~\ref{lem:disjtoppescs}, the parallelisms of $A_2$ and $B_2$ into $\hatF$ give a thinning of $K$. This is a contradiction.
 \end{proof}

Let us now consider the possible \ESCs, \SCs\ coming from 
bigons of $\Lambda_8$ 
and $\Lambda_7$.  
These possibilities are shown in Figure~\ref{fig:lambda78len4}.  
\begin{figure}
\centering
\input{lambda78len4.pstex_t}
\caption{}
\label{fig:lambda78len4}
\end{figure}

\begin{claim}\label{claim:no7d8d}
Neither 7(d) nor 8(d) may occur.
\end{claim}
\begin{proof}
Since each of these shares one label with $\tau$, Lemma~\ref{lem:noabbuttinglength4} rules them out.
\end{proof}

\begin{claim}\label{claim:no7c8c}
Neither 7(c) nor 8(c) may occur.
\end{claim}
\begin{proof}
Since 7(c) and 8(c) have disjoint labels, at most one may occur by 
Proposition~\ref{prop:nodisjtESC}.
Assume 7(c) does occur.  Then either 8(a) or 8(b) must also occur (because 8(d) cannot by the preceding Claim).  But then there will be three disjoint \SCs\!,  contradicting Lemma~\ref{lem:3disjtSC}.  A similar argument shows 8(c) cannot occur.
\end{proof}

\begin{figure}
\centering
\input{lambda16escstis8.pstex_t}
\caption{}
\label{fig:lambda16}
\end{figure}

Figure~\ref{fig:lambda16} shows the possible \ESCs, \SCs\ coming from bigons of 
$\Lambda_1$ and $\Lambda_6$.  These will be of use in the next two claims.

\begin{claim}\label{claim:no7a8b}
Neither 7(a) nor 8(b) may occur.
\end{claim}
\begin{proof}
Assume 7(a) occurs. With 7(a) and the \SC\ in $\tau$, each of 1(a) and 1(b) form a triple of disjoint \SCs\!, contradicting Lemma~\ref{lem:3disjtSC}.  1(c) 
violates Proposition~\ref{prop:nodisjtESC}.  Therefore 1(d) must occur.  

With relabeling (subtracting $1$ from each label), we may now apply Corollary~\ref{cor:make3bridge} to show that there is another genus 2 Heegaard splitting of $M$ with respect to which $K$ is $3$-bridge.  (The \SC\ in 1(d) plays the role of $\sigma$, $\tau$ is again $\tau$, and 7(a) is the \SC\ disjoint from the 
labels $\{2,3,4\}$.)  This contradicts our minimality assumptions ($3$-bridge means $t=6$).

A similar argument rules out 8(b), using $\Lambda_6$ in place of $\Lambda_1$.
\end{proof}

\begin{claim}\label{clm:n78sc}
$\Lambda$ does not contain a $\arc{78}$-\SC.

\end{claim}

\begin{proof}
Assume there is a $\arc{78}$-\SC\; i.e.\ 7(b) and 8(a) occur.  
By Proposition~\ref{prop:nodisjtESC}, this \SC\ cannot be contained within an 
\ESC.  We must consider the bigons of $\Lambda_1$ and $\Lambda_6$ shown in Figure~\ref{fig:lambda16}.

Proposition~\ref{prop:nodisjtESC} rules out 1(c) and 6(d).  Corollary~\ref{cor:make3bridge} rules out 1(d) and 6(c) as in the proof of Claim~\ref{claim:no7a8b}.  Lemma~\ref{lem:3disjtSC} forbids each of 1(b) and 6(a) as they are \SCs\ each disjoint from the \SC\ in $\tau$ and the $\arc{78}$-\SC.  

Thus 1(a) and 6(b) must occur.  But then 1(a), 6(b), and the \SC\ in $\tau$ form 3 mutually disjoint \SCs\ in violation of Lemma~\ref{lem:3disjtSC}.  
\end{proof}

The above claims imply that all bigons of $\Lambda_8$ are forbidden, 
contradicting Corollary~\ref{cor:bigonsforall}. This completes the proof of
Theorem~\ref{thm:tleq6} in Case B.

\begin{caseC}
 There is no $1$-\ESC\ in $\Lambda$. 
\end{caseC}

By Corollary~\ref{cor:bigonsforall}, every label belongs to an \SC.  Lemma~\ref{lem:3disjtSC} then forces $t \leq 6$ contrary to the assumption that $t = 8$.  This completes the proof in Case C. 

Given the the proofs of Corollary~\ref{cor:make3bridge} and
Proposition~\ref{prop:nodisjtESC} in subsequent subsections, 
the proof of Theorem~\ref{thm:tleq6} is now complete.
\end{proof}

\subsection{A proposition and a corollary for Claim~\ref{claim:no7a8b}.}

For Claim~\ref{claim:no7a8b} and Claim~\ref{clm:n78sc} above we use Corollary~\ref{cor:make3bridge} which is a consequence of Proposition~\ref{prop:intersectinglength4and2}.  In this subsection we prove the proposition and its corollary.

\begin{prop}\label{prop:intersectinglength4and2}
Assume we are in \situationnscc\ and there exists an \ESC\ $\tau$ and \SC\ 
$\sigma$ as in Figure~\ref{fig:length4and2}.  Let $A_{23}$ be the White \mobius band arising from the \SC\ in $\tau$, and let $A_{12}$ be the Black \mobius band arising from $\sigma$.  If $\bdry A_{23}$ intersects $\bdry A_{12}$ 
transversely on 
$\hatF$ then there is a new Heegaard splitting of $M$ in which $K$ is $3$-bridge.
\end{prop}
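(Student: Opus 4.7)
The plan is to use the transverse intersection of $\partial A_{23}$ and $\partial A_{12}$ at vertex~$2$ of $G_F$ to modify the splitting via the adding/removing \mobius bands move of Definition~\ref{def:addremove}, producing a new genus~$2$ Heegaard surface $\hatF'$ of $M$ onto which the arc $\arc{23}$ of $K$ can be isotoped, reducing $t$ from $8$ to $6$. First I would pin down the local picture: the curve $\partial A_{23}\subset \hatF$ passes only through vertices $2$ and $3$ of $G_F$, while $\partial A_{12}\subset \hatF$ passes only through vertices $1$ and $2$, so their only possible meeting is at vertex~$2$; combined with the transversality hypothesis, this forces $|\partial A_{23}\cap \partial A_{12}|=1$ on $\hatF$. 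The two \mobius bands $A_{23}\subset H_W$ and $A_{12}\subset H_B$ are otherwise disjoint, lying on opposite sides of $\hatF$.

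The key step is to exploit $A_{12}$ to primitivize $\partial A_{23}$ on $H_B$. Following the \mobius-band discussion in the ``elementary surfaces'' section, a $\bdry$-compression disk $D$ for $A_{12}$ in $H_B$ gives, via $E=\overline{\bdry\nbhd(A_{12}\cup D)\setminus \bdry H_B}$, a separating meridian disk of $H_B$ whose boundary is a banded double of $\partial A_{12}$. By choosing the $\bdry$-compression arc $\delta\subset\hatF$ so that the banding cancels one of the two geometric intersections with $\partial A_{23}$ arising at vertex~$2$ when one doubles $\partial A_{12}$, I can arrange $|\partial E\cap \partial A_{23}|=1$, so $\partial A_{23}$ is primitive on $H_B$ with primitivizing disk $E$. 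Applying Definition~\ref{def:addremove} then swaps $\nbhd(A_{23})$ from $H_W$ to $H_B$ and yields a new genus~$2$ Heegaard splitting $H_B'\cup_{\hatF'} H_W'$ of $M$.

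Finally, I would verify the bridge count with respect to $\hatF'$. After the swap, $A_{23}$ lies in the new Black handlebody with its core arc $\arc{23}$ of $K$; the original $\bdry$-compression disk of $A_{23}$ in $H_W$ now plays the role of a bridge disk on the White side for $\arc{23}$, so $\arc{23}$ can be isotoped onto $\hatF'$, deleting vertices $2$ and $3$ from $K\cap \hatF'$ and giving $|K\cap \hatF'|=6$. Lemma~\ref{lem:bridgedisksdisjoitfrommobius} supplies mutually disjoint bridge disks for the remaining White arcs disjoint from $A_{23}$, so the other arcs retain their bridge structure through the swap. Hence $K$ is $3$-bridge with respect to $\hatF'$. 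The main obstacle will be step two --- the careful combinatorial choice of the $\bdry$-compression arc $\delta$ that realizes a single geometric intersection of $\partial E$ with $\partial A_{23}$ --- since the naive construction gives intersection number $2$ and the cancellation has to be extracted from the single transverse point at vertex~$2$; once $E$ is in hand, the \mobius-band swap and the bridge bookkeeping proceed by standard handlebody arguments.
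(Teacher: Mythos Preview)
Your step two contains a contradiction that cannot be resolved by any choice of $\delta$. You correctly identify $E=\overline{\bdry\nbhd(A_{12}\cup D)\setminus \bdry H_B}$ as a \emph{separating} meridian disk of $H_B$; indeed, since the spanning arc $\alpha$ of the \mobius band has its two ends pushed to opposite sides of $\bdry A_{12}$, the region $\nbhd(A_{12}\cup D)\cap\hatF$ is a once-punctured torus and $\bdry E$ is its boundary. But a separating curve on $\hatF$ has algebraic intersection number zero with every closed curve, so $|\bdry E\cap\bdry A_{23}|$ is necessarily even. The ``cancellation'' you hope to extract from the single transverse point at vertex~$2$ is topologically forbidden: $E$ can never serve as a primitivizing disk for $\bdry A_{23}$.

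The deeper problem is that you have discarded the Black annulus $A_{12,34}$ built from the two outer bigons of the \ESC\ $\tau$, and it is precisely this annulus that does all the work in the paper's proof. Using the edge combinatorics of Figure~\ref{fig:mobiusintersectannulus}, the paper shows that $\calN=\nbhd(A_{12}\cup A_{12,34})$ is isotopic to $H_B$ (Claim~\ref{claim:thinningannulus}: the complement $\calT=H_B\setminus\calN$ is a solid torus on which $A=\hatF\setminus\calN$ is longitudinal), and then that a cocore of $A_{12,34}$ thickens to a meridian disk of $\calN$, hence of $H_B$, meeting $\bdry A_{23}$ once. That is the primitivizing disk --- it comes from the annulus, not the \mobius band. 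The same structure is needed for the bridge bookkeeping you gloss over: the arc $\arc{1234}$ is bridge in $H_B'=H_B\cup\nbhd(A_{23})$ because it is a cocore of the properly embedded long \mobius band $A_{12,34}\cup A_{23}$, and the arcs $\arc{56}$ and $\arc{78}$ remain bridge only via Claim~\ref{claim:2}, which pushes their bridge disks into the solid torus $\calT$, disjoint from $A_{23}$. Neither of these steps goes through from $A_{12}$ and the transversality hypothesis alone.
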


\begin{figure}
\centering
\input{length4and2.pstex_t}
\caption{}
\label{fig:length4and2}
\end{figure}

\begin{proof}
Let $A_{12,34}$ be the Black annulus arising from $\tau$ that extends $A_{23}$. 
We assume $\bdry A_{23}$ intersects $\bdry A_{12}$ transversely.  Let $E$ be a neighborhood in $\hatF$ of the union of the vertices $\{1,2,3,4\}$ and the edges of $\sigma, \tau$.  The labeling of these edges on $\hatF$ must be as in Figure~\ref{fig:mobiusintersectannulus}.  Let $A$ be $\hatF - E$. 

\begin{figure}
\centering
\input{mobiusintersectannulus.pstex_t}
\caption{}
\label{fig:mobiusintersectannulus}
\end{figure}

\begin{claim}
$A$ is an annulus in $\hatF$. 
\end{claim}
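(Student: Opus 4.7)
The approach is an Euler characteristic computation followed by a count of the components of $\bdry E$. Let $\Gamma$ be the graph in $\hatF$ whose vertices are $1, 2, 3, 4$ and whose edges are the two edges of $\sigma$ (each between vertices $1$ and $2$) and the four edges of $\tau$ (two each between the pairs $\{1,4\}$ and $\{2,3\}$, as read off from Figure~\ref{fig:length4and2}). Then $\chi(\Gamma) = 4 - 6 = -2$. Since $E$ deformation retracts to $\Gamma$, one has $\chi(E) = -2$, and using $\chi(\hatF) = -2$ this gives
\[ \chi(A) = \chi(\hatF) - \chi(E) = -2 - (-2) = 0. \]
Because $\hatF$ is orientable and $E$ is non-empty, $A$ is a compact orientable surface with non-empty boundary and Euler characteristic zero; hence $A$ is a disjoint union of annuli.

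To finish, I plan to show that $A$ is connected by verifying that $\bdry E$ consists of exactly two simple closed curves (so that $A$, being orientable with $\chi=0$ and exactly two boundary circles, is a single annulus). This is purely local: around each of the vertices $1, 2, 3, 4$ the cyclic order of edge-endpoints on $E$ is determined by the numerical positions of the relevant labels of the corners of $\sigma$ and $\tau$ at that vertex, together with the Parity Rule. The hypothesis that $\bdry A_{23}$ meets $\bdry A_{12}$ transversely on $\hatF$ forces the local orderings at vertices $1$ and $2$ to be the ones shown in Figure~\ref{fig:mobiusintersectannulus}; the orderings at vertices $3$ and $4$ are automatic since only $\tau$ contributes edges there. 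Tracing along $\bdry E$ using these local orderings (at each vertex one follows the boundary from the side of one edge to the side of the cyclically next edge) closes up into exactly the two curves $C_1$ and $C_2$ drawn in Figure~\ref{fig:mobiusintersectannulus}.

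The main obstacle is this last step, i.e.\ ruling out the alternative local orderings that would yield four boundary circles (making $A$ a disjoint union of two annuli). This amounts to checking that the transversality hypothesis, combined with the explicit corner labels of $\sigma$ and $\tau$, really forces the pattern of Figure~\ref{fig:mobiusintersectannulus} at vertices $1$ and $2$, rather than an order in which the two $\sigma$-endpoints separate the two $\tau$-endpoints around the vertex in the opposite way. Once this is confirmed, the conclusion is immediate: $A$ is a disjoint union of annuli with exactly two boundary components, so $A$ is a single annulus.
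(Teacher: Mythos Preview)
Your Euler characteristic computation is fine, and so is your plan to read off from the local orderings (forced by transversality) that $\bdry E = C_1 \cup C_2$ has exactly two components; indeed the paper simply cites Figure~\ref{fig:mobiusintersectannulus} for this. The problem is the sentence in between: ``hence $A$ is a disjoint union of annuli.'' That inference is false. A compact orientable surface with non-empty boundary and $\chi = 0$ need not be a union of annuli: a disk together with a once-punctured torus also has $\chi = 1 + (-1) = 0$. And this is exactly the alternative you must exclude here. With $\chi(A)=0$ and $|\bdry A| = 2$, the only possibilities are (i) a single annulus, or (ii) a disk disjoint union a once-punctured torus (each component has one boundary circle, and $1 - 2g$ is odd, so the only way two such values sum to $0$ is $1 + (-1)$). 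Your boundary-component count does not distinguish these.

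The paper's proof supplies precisely this missing step: if some $C_i$ bounded a disk in $\hatF - E$, one could glue that disk to itself across an edge of $f_4$ to produce an annulus in $\hatF$ whose boundary is $\bdry A_{12,34}$; capping with $A_{12,34}$ would yield an embedded Klein bottle in $M$, which is impossible. So you need an argument of this sort to rule out case~(ii); once you have it, your conclusion follows.
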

\begin{proof}

Let $C_1$ and $C_2$ be the two curves on $\hatF$ as shown that form 
$\bdry E = \bdry A$. Since $\chi(E)=-2=\chi(\hatF)$, we have $\chi(A)=0$.
If either $C_1$ or $C_2$ were to bound a disk  in the complement of $E$, 
then such a disk could be joined to itself across an edge of $f_4$ to form an annulus in $\hatF$.  Then this resulting annulus together with the annulus $A_{12,34}$ would form an embedded Klein bottle.  This cannot occur.  Hence $A$ is an annulus.
 \end{proof}

Let $\calN = \nbhd(A_{12} \cup A_{12,34}) \subset H_B$.  Then $\bdry H_B - \calN = A$  and set $H_B - \calN = \calT$.

\begin{claim}
\label{claim:thinningannulus} 
$\calT$ is a solid torus and the annulus $A$ is longitudinal on $\bdry \calT$.
\end{claim}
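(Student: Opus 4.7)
The plan is to identify $\calT$ as the result of cutting a genus $2$ handlebody along a non-separating disk, and then to verify that the annulus $A$ sits longitudinally on the resulting solid-torus boundary.

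I would begin with an Euler characteristic count to show $\bdry \calT$ is a torus. Since we are in \situationnscc, the bigons $f_1, f_3$ of $\tau$ and $f_4$ of $\sigma$ are pairwise-disjoint disk faces of $G_Q$ with interiors in $H_B - \nbhd(K)$, and after putting their extensions through the $1$-handles $\nbhd(\arc{12})$ and $\nbhd(\arc{34})$ in general position, $A_{12}$ and $A_{12,34}$ intersect exactly along the common spanning arc $\arc{12}$. Hence
\[
\chi(A_{12} \cup A_{12,34}) = \chi(A_{12}) + \chi(A_{12,34}) - \chi(\arc{12}) = 0 + 0 - 1 = -1,
\]
so $\chi(\calN) = -1$ and $\chi(\bdry \calN) = -2$. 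Writing $\bdry \calN = E \cup \Sigma$ with $E \cap \Sigma = \bdry A$ (two circles) and $\chi(E) = \chi(\hatF) - \chi(A) = -2$, we obtain $\chi(\Sigma) = 0$. Since $H_B$ is orientable, so is $\Sigma$; being an orientable surface with two boundary circles and vanishing Euler characteristic, $\Sigma$ is an annulus. Therefore $\bdry \calT = A \cup \Sigma$ is a torus, and in particular $\calT$ is connected.

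Next I would realize $\calN$ as being built in two stages. By the earlier section on \mobius bands in a genus $2$ handlebody, $H_B' := H_B - \Int \nbhd(A_{12})$ is again a genus $2$ handlebody on which the impression of $A_{12}$ is a primitive annulus. Because $A_{12,34}$ meets $A_{12}$ only along the spanning arc $\arc{12}$, passing from $H_B$ to $H_B'$ cuts $A_{12,34}$ along this spanning arc, producing a properly embedded disk $A'_{12,34}$ in $H_B'$ (an annulus cut along a spanning arc is a disk). Then $\calT = H_B' \cut A'_{12,34}$. Since $\calT$ is connected, $A'_{12,34}$ is non-separating in the genus $2$ handlebody $H_B'$, and cutting a genus $2$ handlebody along a non-separating disk yields a solid torus. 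Hence $\calT$ is a solid torus.

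Finally, for the longitudinality of $A$ on $\bdry \calT$, a parallel push-off $D_\calT$ of $A'_{12,34}$ into $\calT$ serves as a meridian disk. Its boundary circle on $\bdry \calT = A \cup \Sigma$ is made of arcs alternating between $A$ and $\Sigma$; the arcs on $A$ are push-offs of the arcs of $\bdry A_{12,34} \cap \hatF'$ surviving the first cut. An inspection of Figure~\ref{fig:mobiusintersectannulus} shows these arcs run from $C_1$ to $C_2$ with algebraic intersection number one against a core of $A$. Therefore a core of $A$ meets $D_\calT$ in a single point, so $A$ is longitudinal on $\bdry \calT$. The main obstacle is this final combinatorial step: tracing $\bdry A_{12,34}$ across the picture in Figure~\ref{fig:mobiusintersectannulus} to confirm the intersection count; the rest is an Euler characteristic computation together with the structural theorems on surfaces in genus $2$ handlebodies already established in the paper.
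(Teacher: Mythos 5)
Your route is genuinely different from the paper's. The paper starts from a disjoint collection of bridge disks for $K \cap H_B$, and by surgering along outermost arcs and banding along $f_1, f_3, f_4$, produces bridge disks $D_{12}, D_{34}$ for $\arc{12}, \arc{34}$ whose interiors miss $A_{12} \cup A_{12,34}$; both conclusions of the claim are then read off from these disks at once (the piece of $D_{12}$ outside $\calN$ is a disk in $\calT$ meeting $A$ in a single spanning arc). You instead cut $H_B$ in two stages and run an Euler characteristic count. That is a legitimately different decomposition of the problem, and the solid-torus half is plausible as you've sketched it, modulo two points you should tighten: first, by the standing hypothesis of the Proposition, $\partial A_{12}$ and $\partial A_{12,34}$ intersect transversely on $\hatF$, so $A_{12} \cap A_{12,34}$ is not literally just $\arc{12}$ and you need to say why the remnant $A'_{12,34}$ is still a properly embedded \emph{disk} in $H_B'$; second, the conclusion that $\Sigma$ is a (connected) annulus needs the observation that $\Sigma$ has no closed components, which is true here but not a consequence of $\chi(\Sigma)=0$ alone.

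The real gap, however, is the one you name yourself: the longitudinality of $A$ on $\partial\calT$ is deferred to ``an inspection of Figure~\ref{fig:mobiusintersectannulus}'' that you do not carry out. This is not a detail---it is the actual content of the second half of the claim, and without tracing the arcs of $\partial D_\calT$ across $A$ and showing the signed count against a core of $A$ is $\pm 1$, the longitudinality statement is unproven. Note also that the paper's proof of Claim~\ref{claim:2} begins ``The above proof of Claim~\ref{claim:thinningannulus} shows that there are bridge disks $D_{12}$ and $D_{34}$ \dots,'' so substituting your cutting argument here would leave a downstream hole: you would still need to construct $D_{12}, D_{34}$ disjoint from $A_{12} \cup A_{12,34}$ for the next claim, at which point the paper's original argument essentially comes for free.
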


\begin{proof}
Consider $\calD$, a disjoint collection of bridge disks for $K \cap H_B$. By considering
the intersections of these disks with the faces $f_1,f_3,f_4$, surgering along outermost arcs of intersection
in $\calD$, and banding along $f_1,f_3,f_4$, we can take the bridge disks $D_{12}, D_{34}$ for
$\arc{12},\!\arc{34}$ to have interiors disjoint from $A_{12} \cup A_{12,34}$.
Hence $H_B - \calN = \calT$ is a solid torus in which $A$ is longitudinal.
\end{proof}

By Claim~\ref{claim:thinningannulus}, $\calN$ is isotopic to $H_B$ through $\calT$.

\begin{claim} \label{claim:2}
The arcs $\arc{12}$, $\arc{34}$, $\arc{56}$, and $\arc{78}$ in $H_B$ have mutually disjoint bridge disks that lie in $\calT$ and provide an isotopy of these arcs onto $A$. 
 \end{claim}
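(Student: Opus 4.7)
The plan is to parallel the argument of Claim~\ref{claim:thinningannulus} but for all four Black arcs simultaneously. Start with a disjoint collection $\mathcal{D}=\{D_{12},D_{34},D_{56},D_{78}\}$ of bridge disks for the arcs $\arc{12},\arc{34},\arc{56},\arc{78}$ of $K\cap H_B$; such a collection exists because $K$ is in bridge position with respect to $\hatF$. After general position, each $D_{ij}$ meets the properly embedded surface $S=A_{12}\cup A_{12,34}\subset H_B$ in a collection of simple closed curves and properly embedded arcs. Since we are in \situationnscc, $S$ is incompressible in $H_B$ (each piece is either a M\"obius band or an annulus that is not $\bdry$-parallel, given the hypotheses on $\tau$ and $\sigma$), so we may remove simple closed curves of $\mathcal{D}\cap S$ by standard innermost-disk/outermost-disk surgeries on the $D_{ij}$ without introducing new intersections and maintaining their mutual disjointness. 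Thus we may assume $\mathcal{D}\cap S$ consists only of properly embedded arcs.

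Next I would deal with the arc intersections. For $D_{56}$ and $D_{78}$, the corresponding arcs of $K$ are disjoint from $S$, so every arc of $D_{ij}\cap S$ has both endpoints on $\partial H_B$; taking outermost such arcs on the $D_{ij}$ gives $\bdry$-compressions of $S$ in $H_B$, which by Claim~\ref{claim:thinningannulus} and the structure of $\calN$ either can be used to isotope the $D_{ij}$ off $S$ or else contradict the fact that $A$ is the longitudinal annulus on $\partial \calT$ (any $\bdry$-compression of $S$ in $\calN$ would conflict with $\calT$ being a solid torus). After finitely many such outermost-arc isotopies, $D_{56}$ and $D_{78}$ can be taken disjoint from $S$, so they lie entirely in $\calT$, with boundary on $A$. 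For $D_{12}$ and $D_{34}$, one outermost arc endpoint may land on the arc $\arc{12}$ (respectively $\arc{34}$) itself; in that situation we use the banding move along the faces $f_1,f_2,f_3,f_4$ exactly as in Claim~\ref{claim:thinningannulus} to convert the outermost sub-disk into a new bridge disk for $\arc{12}$ or $\arc{34}$ with strictly fewer intersections with $S$, again preserving disjointness from the other three disks by working with a genuinely outermost arc on all of $\mathcal{D}$ at each step. Iterating, we obtain new bridge disks $D_{12}',D_{34}',D_{56}',D_{78}'$ that are mutually disjoint, have interiors disjoint from $S$, and hence lie in $\calT$, with boundary arcs on $A=\partial\calT\cap\partial H_B$.

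Finally, each such bridge disk $D_{ij}'$ provides a guide to isotope its arc $\arc{ij}$ across the disk to the arc $\partial D_{ij}'\cap A\subset A$; performing these isotopies simultaneously is legitimate because the disks are disjoint, and the resulting arcs on $A$ are disjoint for the same reason. This gives the desired simultaneous isotopy of $\arc{12},\arc{34},\arc{56},\arc{78}$ onto $A$, completing the claim. The main obstacle I anticipate is the bookkeeping of the banding step for $D_{12}'$ and $D_{34}'$: one must verify that when we band an outermost sub-disk along a face $f_k$, the resulting disk is still a bridge disk for the correct arc and its boundary lies in $A$ rather than in $E$; this is handled by checking that the face labels of $f_1,f_2,f_3,f_4$ are consistent with the picture of $E$ drawn in Figure~\ref{fig:mobiusintersectannulus}, so each banding removes an intersection and leaves the new boundary arc running along $A$ as required.
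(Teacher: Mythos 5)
Your proposal has a genuine gap in the step that handles $D_{56}$ and $D_{78}$, and it also misses the mechanism the paper uses to control those disks.

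First, note that the paper does not re-derive the existence of $D_{12}$ and $D_{34}$ in $\calT$: it simply reuses the disks already constructed in the proof of Claim~\ref{claim:thinningannulus}, observing that they are \emph{meridional} disks of the solid torus $\calT$. That observation is the crux of the whole argument, and your proposal never makes use of it.

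The gap is in your treatment of essential arcs of $(D_{56}\cup D_{78})\cap S$. You assert that an outermost essential arc ``either can be used to isotope the $D_{ij}$ off $S$ or else contradict the fact that $A$ is the longitudinal annulus on $\partial\calT$,'' with the parenthetical claim that a $\bdry$-compression of $S$ in $\calN$ would conflict with $\calT$ being a solid torus. Neither clause is correct as stated. Since $\calN$ is a regular neighborhood of $S$, product $\bdry$-compression disks for $S$ exist inside $\calN$ automatically and contradict nothing; and an outermost sub-disk of $D_{56}$ cutting off an essential arc on $S$ is a bona fide geometric feature that cannot simply be ``isotoped off.'' Incompressibility of $S$ disposes of simple closed curves of intersection, but it does not help with essential arcs, and $S$ certainly is $\bdry$-compressible (the meridional disk of $\calT$ is found that way).

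What actually makes the argument go through is the constraint you never invoke: $D_{56}$ and $D_{78}$ are \emph{disjoint} from the meridional disks $D_{12}$ and $D_{34}$ (since one starts with a disjoint collection of bridge disks for all of $K\cap H_B$). Because $A$ is longitudinal on $\partial\calT$ and $D_{12},D_{34}$ are meridional, the complementary region $\partial\calT-\Int A-(D_{12}\cup D_{34})$ is a disjoint union of rectangles, and every arc of $(D_{56}\cup D_{78})\cap(\partial\calT-\Int A)$ must lie in one of them. An arc in a rectangle either can be slid across the rectangle onto $A$, or is parallel to one of the spanning arcs coming from $D_{12}$ or $D_{34}$, in which case one bands to that disk. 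This is the explicit piece of geometry that replaces your proposed but unsupported contradiction argument, and without it the proof does not close.
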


\begin{proof} 
The above proof of Claim~\ref{claim:thinningannulus} shows that there are bridge disks $D_{12}$~and $D_{34}$ for Ê$\arc{12}$ and $\arc{34}$ respectively, disjoint from the other arcs of $K \cap H_B$, which lie in $\calT$ and provide an isotopy of these arcs
Êonto $A$. ÊIndeed these are meridional disks of $\calT$. ÊSince bridge disks $D_{56}$ and $D_{78}$ for the other two arcs $\arc{56}$ and $\arc{78}$ are disjoint from $D_{12}$ and $D_{34}$, the arcs of $(D_{56} \cup D_{78}) \cap (\bdry \calT - \Int A)$ may be either isotoped along $\bdry \calT - \Int A - (D_{12} \cup D_{34})$ onto $A$ or banded to $D_{12}$ or $D_{34}$ to form bridge disks for $\arc{56}$ and $\arc{78}$ as desired.
\end{proof}

Attach a neighborhood of the White \mobius band $A_{23}$ in $H_W$ to $H_B = \calN \cup \calT$.  Write $\calN' = H_B \cup \nbhd(A_{23}) = \nbhd(A_{12} \cup A_{12,34} \cup A_{23}) \cup \calT$ and $\hatF' = \bdry \calN'$.  
\begin{claim}
$M = \calN' \cup_{\hatF'} (M \cut \calN')$ is a genus $2$ Heegaard splitting.
\end{claim}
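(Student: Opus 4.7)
The plan is to verify that $\calN'$ and $M \cut \calN'$ are both genus $2$ handlebodies meeting along the closed surface $\hatF'$, and that $\hatF'$ has genus $2$. Once these three facts are established, the decomposition is a genus $2$ Heegaard splitting by definition.

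The easier piece is the complement. Writing $\calN' = H_B \cup \nbhd(A_{23})$ and arranging $\nbhd(A_{23})$ so it lies in $H_W$ except along the annular neighborhood $N_{23}$ of $\partial A_{23}$ on $\hatF$, we have $M \cut \calN' = H_W \cut A_{23}$. Since $A_{23}$ is an incompressible M\"obius band properly embedded in the genus $2$ handlebody $H_W$, the structure result for M\"obius bands from Section~\ref{sec:mobius} (unique embedding up to homeomorphism) yields that $H_W \cut A_{23}$ is a genus $2$ handlebody on which the annular impression $A^*$ of $A_{23}$ is primitive. Furthermore, $\hatF' = (\hatF - \Int N_{23}) \cup A^*$ is formed by swapping $N_{23}$ for a parallel annulus, so $\chi(\hatF') = \chi(\hatF) = -2$ and $\hatF'$ is a closed genus $2$ surface (connectedness follows since either $\partial N_{23}$ is non-separating on $\hatF$, keeping everything connected, or the two pieces get reconnected by $A^*$).

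The heart of the argument is showing $\calN'$ itself is a genus $2$ handlebody. The operation moving $\nbhd(A_{23})$ from $H_W$ to $H_B$ is precisely an instance of adding a M\"obius band as in Definition~\ref{def:addremove}, and that operation produces a new genus $2$ splitting precisely when $\partial A_{23}$ is primitive on $H_B$. So the plan reduces to verifying that $\partial A_{23}$ is a primitive curve on $H_B$, i.e.\ that there is a meridian disk of $H_B$ whose boundary meets $\partial A_{23}$ transversely in exactly one point. Here is where the transverse intersection hypothesis $\partial A_{23} \pitchfork \partial A_{12}$ is used: $A_{12}$ is the Black M\"obius band coming from $\sigma$, so it admits a $\partial$-compressing disk $D_{12}$ in $H_B$, and the disk $E_{12} = \bdry \nbhd(A_{12} \cup D_{12}) - \hatF$ is a (separating) meridian disk of $H_B$. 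Using the splitting $H_B = \calN \cup \calT$ from Claim~\ref{claim:thinningannulus} and the bridge-disk isotopies of Claim~\ref{claim:2}, one can arrange the meridian structure so that after combining $E_{12}$ with a spanning arc of $A_{23}$ one obtains a meridian disk of $\calN' = H_B \cup \nbhd(A_{23})$ hitting $\partial A_{23}$ once, together with a second, disjoint meridian disk inherited from $\calT$. Cutting $\calN'$ along this disjoint pair is then checked to produce a $3$-ball, so $\calN'$ is a genus $2$ handlebody (which is consistent with the Euler-characteristic count $\chi(\calN') = \chi(H_B) + \chi(\nbhd(A_{23})) - \chi(N_{23}) = -1$).

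The main obstacle is precisely this last step: establishing primitivity of $\partial A_{23}$ in $H_B$, or equivalently exhibiting the cutting disks for $\calN'$. Euler characteristic is insufficient; the handlebody conclusion truly relies on the transverse intersection $\partial A_{23} \pitchfork \partial A_{12}$ (the explicit hypothesis of Proposition~\ref{prop:intersectinglength4and2}) together with the $\partial$-compressibility of the Black M\"obius band $A_{12}$. Once primitivity is in hand, the adding-a-M\"obius-band operation does the rest.
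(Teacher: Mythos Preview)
Your overall strategy is exactly right and matches the paper: show both $\calN'$ and $M\cut\calN'$ are genus~$2$ handlebodies, with the second being immediate (complement of a \mobius band in $H_W$) and the first reducing to the primitivity of $\bdry A_{23}$ in $H_B$.

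The gap is in how you try to establish that primitivity. Your candidate disk $E_{12} = \bdry\nbhd(A_{12}\cup D_{12}) - \hatF$ is the \emph{separating} disk produced by $\bdry$-compressing the \mobius band $A_{12}$; its boundary is (isotopic to) the boundary of a regular neighborhood of $\bdry A_{12}$ on $\hatF$. Since $\bdry A_{12}$ meets $\bdry A_{23}$ transversely once, $\bdry E_{12}$ meets $\bdry A_{23}$ \emph{twice}, not once --- so $E_{12}$ is not a primitivizing disk. The subsequent phrase ``combining $E_{12}$ with a spanning arc of $A_{23}$'' does not rescue this: a spanning arc of $A_{23}$ lives in $H_W$, not $H_B$, and surgering a disk across a \mobius band does not produce a disk.

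The paper's route is much more direct and does not use the \mobius band $A_{12}$ at all for this step. The key observation you missed is that $\bdry A_{23}$ is literally one boundary component of the Black annulus $A_{12,34}$. A cocore of $A_{12,34}$ (for instance the arc $\arc{34}$) thickens to a meridian disk of $\calN = \nbhd(A_{12}\cup A_{12,34})$, and since $\calT$ provides an isotopy of $\calN$ to all of $H_B$ (Claim~\ref{claim:thinningannulus}), this extends to a meridian disk $D$ of $H_B$. Being a cocore of the annulus, $D$ meets its boundary component $\bdry A_{23}$ exactly once. The transverse-intersection hypothesis enters only earlier, in setting up $\calN$ and Claim~\ref{claim:thinningannulus}; this particular primitivity check needs nothing more than the annulus structure of $A_{12,34}$.
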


\begin{proof}
We must show that $\calN'$ and $M \cut \calN'$ are each genus $2$ handlebodies.

To see that $\calN'$ is a genus $2$ handlebody, we show that the curve $\bdry A_{23}$ on $\hatF$ is primitive in $H_B$.  It suffices to show that $\bdry A_{23}$ is primitive in $\calN$ since $A_{23}$ is disjoint from $\calT$ and $\calT$ provides an isotopy of $\calN$ to all of $H_B$.

In $\calN$ a cocore of the annulus $A_{12,34}$ (such as the arc $\arc{34}$) thickens to a meridian disk of $\calN$ and thus extends through $\calT$ to a meridian disk $D$ of $H_B$.  Since $\bdry A_{23}$ is a component of $\bdry A_{12,34}$, it intersects $D$ once.  Hence $\bdry A_{23}$ is primitive in $H_B$ and $\calN'$ is a genus $2$ handlebody.

To see that $M \cut \calN'$ is a genus $2$ handlebody, observe that it is the complement of a neighborhood of a \mobius band in $H_W$.
\end{proof}

To complete the proof of Proposition~\ref{prop:intersectinglength4and2} we must show that $K$ is $3$-bridge with respect to the Heegaard splitting $M = \calN' \cup_{\hatF'} (M \cut \calN')$.

Claim~\ref{claim:2} shows that the arcs $\arc{56}$ and $\arc{78}$ are bridge in $\calN'$.  
As $\arc{1234}= \arc{12} \cup \arc{23} \cup \arc{34}$ is a cocore of the properly embedded \mobius band $A_{12,34} \cup A_{23}$ in the handlebody $\calN'$, 
it is bridge as well.

By Lemma~\ref{lem:bridgedisksdisjoitfrommobius}, the arcs $\arc{45}$, $\arc{67}$, and $\arc{81}$ have mutually disjoint bridge disks in $H_W$ that are also disjoint from $A_{23}$.  Therefore they remain bridge in $H_W -\nbhd(A_{23})$ which is isotopic to $M \cut \calN'$.    

Hence $K$ is $3$-bridge with respect to this new Heegaard splitting.
\end{proof}

\begin{cor}\label{cor:make3bridge}
Assume we are in \situationnscc. If there is an \ESC\ $\tau$ and an \SC\ $\sigma$ as in Figure~\ref{fig:length4and2} as well as an \SC\ disjoint from the labels $\{1,2,3\}$ then $K$ is $3$-bridge with respect to some genus $2$ Heegaard splitting of $M$.
\end{cor}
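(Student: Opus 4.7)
The plan is to reduce the Corollary to Proposition~\ref{prop:intersectinglength4and2} by analyzing the intersection $\partial A_{23}\cap \partial A_{12}$ on $\hatF$. The simple closed curve $\partial A_{23}$ meets the $K$-intersections on $\hatF$ only at the fat vertices $2$ and $3$ of $G_F$ (where the arc $\arc{23}$ attaches to the face of the core \SC\ of $\tau$), while $\partial A_{12}$ meets them only at vertices $1$ and $2$. Thus any intersection of these curves on $\hatF$ must occur at the fat vertex $2$, where the two corners either cross transversely or are tangent (in which case, after perturbation, $A_{12}$ and $A_{23}$ can be taken disjoint).

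In the transverse case, Proposition~\ref{prop:intersectinglength4and2} applies directly and produces the desired genus $2$ Heegaard splitting of $M$ with respect to which $K$ is $3$-bridge, and we are done.

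I claim the tangent case cannot arise. After perturbation at vertex $2$, $A_{12}$ and $A_{23}$ would be disjoint almost properly embedded \mobius bands (of opposite colors). The third \SC\ $\rho$ has its label pair contained in $\{4,\ldots,t\}$, so the defining arc of the associated \mobius band $A_\rho$ in $K$ is disjoint from both $\arc{12}$ and $\arc{23}$, and $\partial A_\rho$ is disjoint on $\hatF$ from $\partial A_{12}\cup\partial A_{23}$. In \situationnscc\ all three \mobius bands are properly embedded in the Heegaard handlebodies, and a standard surgery argument (using that the boundaries are pairwise disjoint and that $M$ contains no projective plane or Klein bottle, so no pair of boundaries is isotopic) allows the three to be made mutually disjoint in their interiors. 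Lemma~\ref{lem:3disjointmobiusbands} then forces $M$ to contain an embedded Dyck's surface.

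The main obstacle is reconciling this tangent-case analysis with the Corollary's conclusion, since producing a Dyck's surface is not literally producing a $3$-bridge splitting. In every application of the Corollary in the paper (Claims~\ref{claim:no7a8b} and~\ref{clm:n78sc}) the ambient assumption is that $M$ contains no Dyck's surface, so the tangent case is precluded and Proposition~\ref{prop:intersectinglength4and2} always applies. I would therefore read the Corollary with the implicit ambient hypothesis (no Dyck's surface), or equivalently restate the conclusion as ``either $K$ is $3$-bridge with respect to some genus $2$ Heegaard splitting of $M$, or $M$ contains a Dyck's surface'', making the logical content of the proof transparent.
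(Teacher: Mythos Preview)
Your proof is correct and follows essentially the same approach as the paper: show that if $\bdry A_{23}$ and $\bdry A_{12}$ could be made disjoint then, together with the third \mobius band, Lemma~\ref{lem:3disjointmobiusbands} would produce a Dyck's surface; hence they intersect transversely and Proposition~\ref{prop:intersectinglength4and2} applies. You are also right that the tangent case yields a Dyck's surface rather than a $3$-bridge splitting, so the statement is really being used under the ambient hypothesis that $M$ contains no Dyck's surface (which is indeed in force in both Claims~\ref{claim:no7a8b} and~\ref{clm:n78sc}); the paper's proof makes the same implicit move.
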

\begin{proof}
Given such a set-up, the boundaries of the \mobius bands arising from the \SCs\ in $\tau$ and $\sigma$ cannot be isotoped to be disjoint.  Otherwise there would be three disjoint \mobius bands contrary to Lemma~\ref{lem:3disjtSC}.  Proposition~\ref{prop:intersectinglength4and2} now applies.
\end{proof}

\subsection{ $t=8$  and \situationscc}\label{sec:tleq6scc}

\begin{prop}\label{prop:tleq6scc} 
In \situationscc, either $M$ contains a Dyck's surface or $t < 8$.
\end{prop}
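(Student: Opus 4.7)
The proof of Proposition~\ref{prop:tleq6scc} proceeds by contradiction: assume $t = 8$ and that $M$ contains no Dyck's surface. Since we are in \situationscc, Corollary~\ref{cor:AEntscc} yields a meridian disk $D$ of $H_W$ or $H_B$ disjoint from both $K$ and $Q$; write $F^*$ for $\hatF$ surgered along $D$, a disjoint union of one or two tori. The plan is to follow the three-step ladder used for $t \geq 10$ in Proposition~\ref{prop:tleq8scc}: successively eliminate $r$-\ESCs\ for $r \geq 2$, then $1$-\ESCs, and finally derive a contradiction from a configuration of \SCs\ alone via Lemma~\ref{lem:3disjtSC}.

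First I would rule out any $r$-\ESC\ with $r \geq 2$, imitating Lemmas~\ref{lem:n3escscc} and~\ref{lem:t8n2esc}. The engine is the pair of Addenda to Lemmas~\ref{lem:PLMB} and~\ref{lem:3PCC}: for a maximal $r$-\ESC\ $\sigma$, any vertex $v$ of $G_F$ lying in an annular region $B \subset F^*$ cobounded by components of $a(\sigma)$ with interior disjoint from $a(\sigma)$ would, via Corollary~\ref{cor:bigonsforall}, produce a bigon and hence an \ESC\ $\tau$ whose $a$-component through $v$ is isotopic on $F^*$ to $\partial B$; the Addendum to Lemma~\ref{lem:PLMB} then forces $\tau, \sigma$ to share core labels, contradicting maximality. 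Hence $\Int B$ is disjoint from $K$, and the Addendum to Lemma~\ref{lem:3PCC} forbids three pairwise-isotopic components of $a(\sigma)$ on $F^*$. Since a $2$-\ESC\ has $|a(\sigma)| = 3$, $F^*$ must consist of two tori with at most two components of $a(\sigma)$ on either. Tracking which vertices of $G_F$ are forced onto which torus and then considering bigons at labels not in the label set of $\sigma$, via Corollary~\ref{cor:bigonsforall} and the Addendum to Lemma~\ref{lem:PLMB}, drives a contradiction exactly as in Lemma~\ref{lem:t8n2esc}.

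With \ESCs\ of length $\geq 2$ eliminated, the next step forbids $1$-\ESCs. Assume $\sigma$ is a $1$-\ESC\ with labels $\{1,2,3,4\}$; then Corollary~\ref{cor:bigonsforall} applied at labels $5, 6, 7, 8$ produces proper $0$- or $1$-\ESCs\ $\tau$ and $\nu$, each of whose $a$-components must intersect $a(\sigma)$ by the Addendum to Lemma~\ref{lem:PLMB}, sharply limiting the possible label sets of $\tau$ and $\nu$. A short case analysis shows that either some triple among the core \SCs\ of $\sigma, \tau, \nu$ have pairwise disjoint label pairs (contradicting Lemma~\ref{lem:3disjtSC}), or two of the $a$-curves are forced to lie on the same torus component of $F^*$ without intersecting, hence are isotopic on $F^*$, again contradicting the Addendum to Lemma~\ref{lem:PLMB}.

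It then remains to treat the case in which every bigon of $\Lambda$ is an \SC. By Corollary~\ref{cor:bigonsforall}, each of the eight labels lies in an \SC\ on a pair of consecutive labels $\{x, x+1\}$ (cyclically mod $8$), and any such covering of $\{1, \dots, 8\}$ yields three \SCs\ with pairwise disjoint label pairs, contradicting Lemma~\ref{lem:3disjtSC} and completing the proof. The hardest part is carrying out the case analysis in the $1$-\ESC\ step, where one must track the distribution of vertices of $G_F$ between the components of $F^*$ precisely enough to invoke the Addendum to Lemma~\ref{lem:PLMB}; the $r \geq 2$ and \SC-only steps parallel the $t \geq 10$ treatment closely.
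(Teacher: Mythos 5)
Your three-step ladder has the right skeleton (the paper also proceeds by ruling out long \ESCs, then $1$-\ESCs, then handling the \SC-only case via Lemma~\ref{lem:3disjtSC}), but there is a genuine gap at the very first step. You propose to eliminate $3$-\ESCs by ``imitating Lemma~\ref{lem:n3escscc},'' but the contradiction in that lemma comes from observing that after the Addenda to Lemmas~\ref{lem:PLMB} and~\ref{lem:3PCC} force every vertex of $G_F$ onto $a(\sigma)$, this is impossible because $t\geq 10 > 2|a(\sigma)| = 8$. At $t=8$ a proper $3$-\ESC uses \emph{all} eight labels, $|a(\sigma)|=4$, and all eight vertices do lie on $a(\sigma)$ --- so no contradiction arises. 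The same issue makes the ``consider bigons at labels not in the label set of $\sigma$'' step vacuous, since there are no such labels. The paper's Lemma~\ref{lem:t6n3esc} uses a genuinely different and substantially harder argument: it identifies the two tori $T_1,T_2$ of $F^*$, analyzes the solid tori $\calN_1,\calN_2$ obtained by cutting $H_B$ along $D$, shows (Claim~\ref{clm:t6nsfs}) that each $A_{2i}$ is longitudinal in $\calN_i$ (else Lemma~\ref{lem:sSFS1bridge} gives a thinner splitting), and then invokes the special-vertex machinery (Lemmas~\ref{lem:t8trulyspecial}, \ref{lem:N3specialtypes}), Claims~\ref{paralleltosigma} and~\ref{no81bigon}, and Lemma~\ref{lem:parallelwithsamelabelAPE} to reach a parallelism contradiction. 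None of this is captured by transporting the $t\geq 10$ argument.

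A second, smaller gap: your treatment of the $1$-\ESC\ step understates what is required. The paper's Case~B of Proposition~\ref{prop:tleq6scc} is driven by Proposition~\ref{prop:nodisjtESC} (``no two disjoint $1$-\ESCs\ when $t=8$ and there is no $2$-\ESC''), which is proved separately in both \situationnscc\ and \situationscc\ through a substantial argument involving thrice-punctured spheres in handlebodies, \FESCs, and the special-vertex estimates, not merely the Addendum to Lemma~\ref{lem:PLMB} plus Lemma~\ref{lem:3disjtSC}. Your ``short case analysis'' needs to be replaced by an appeal to (and a proof of) that proposition. The final \SC-only step, and the observation that a $2$-\ESC\ with its label set $\{1,\dots,6\}$ leaves two free vertices to feed Corollary~\ref{cor:bigonsforall}, are fine and do parallel the paper.
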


\begin{proof}
Assume we are in \situationscc. Then there is a meridian disk $D$ of $H_W$
or $H_B$ disjoint from $K$ and $Q$. Let $F^*$ be $\hatF$ surgered along $D$.
Then $F^*$ is one or two tori. For contradiction, assume $M$ does not 
contain a Dyck's surface, and $t=8$. 

\begin{lemma}\label{lem:t6n3esc}
$G_Q$ contains no $3$-\ESC.
\end{lemma}

\begin{proof}
Otherwise, there is a $3$-\ESC, $\sigma$. Note that this is a maximal
proper \ESC\ when $t=8$. Thus
the argument of Lemma~\ref{lem:n3escscc} shows that $F^*$ must be two tori,
$T_1,T_2$, 
with exactly two components of $a(\sigma)$ on each.
WLOG assume the core \SC\ of $\sigma$ is a $\arc{45}$-\SC. 
Let $A=A_1 \cup \dots \cup A_4$ be the long \mobius band associated to
$\sigma$ and $a_i \in a(\sigma)$ be $\bdry A_i - \bdry A_{i-1}$. By the
Addendum to Lemma~\ref{lem:LMB} isotopic components of $a(\sigma)$ on 
$F^*$ must be consecutive in $A$. Thus we may take $a_1,a_2$ in $T_1$
and $a_3,a_4$ in $T_2$. That is, vertices \{3,4,5,6\} of $G_Q$ lie on $T_1$
and vertices \{1,2,7,8\} on $T_2$. Recall that $D$ is the meridian disk
along which $\hatF$ is surgered to get $T_1 \cup T_2$. Because $D$ is disjoint
from $K$, vertex $3$ lies on $T_1$, and vertex $2$ lies on $T_2$, $D$ must
lie on the opposite side of $\hatF$ to the $\arc{23}$-arc of $K$. Taking
$\arc{23}$ to lie in $H_W$, $D$ must lie in $H_B$.
Let $\calN = H_B - \nbhd(D) = \calN_1 \cup \calN_2$ where $\calN_1,\calN_2$ 
are solid tori
with $\bdry \calN_j = T_j$. Since the components of $a(\sigma)$ cannot bound
disks in either handlebody, the $A_i$ of the long \mobius band meet $F^*$
in their interiors in simple closed curves which are trivial on $F^*$. We
surger along these curves to make the $A_i$ properly embedded in either
$\calN$ or $M - \Int \calN$. By the separation of $a(\sigma)$ in $T_1,T_2$, 
$A_1, A_3$
lie in the exterior of $\calN$, $A_2$ lies in $\calN_1$, and $A_4$ in $\calN_2$.

\begin{claim}\label{clm:t6nsfs}
$A_{2i}$ is a longitudinal annulus in $\calN_i$ for $i=1,2$.
\end{claim}

\begin{proof}
Assume not. Then $U = \nbhd(\calN \cup A_1 \cup A_3)$ is a Seifert fiber
space over the disk with two or three exceptional fibers (the core of
$A_1$ being one). Furthermore $K \cap U$ lies as a cocore in the \mobius band
$A'=A_1 \cup A_2 \cup A_3 \cup A_4$ properly embedded in $U$, where $\bdry A'$ is
a Seifert fiber of $U$. 

In fact $U$ must be Seifert fibered with exactly two exceptional fibers.
Otherwise, $V=U - \nbhd(A')$ would be a Seifert fiber space over the
disk with two exceptional fibers that is disjoint from $K$. As $V$ does
not lie in a 3-ball by Lemma~\ref{lem:AEGor} and the exterior of $K$
is irreducible and atoroidal, then $V$ would be isotopic to the exterior of $K$
--- contradicting that $K$ is hyperbolic.

Now Lemma~\ref{lem:sSFS1bridge} applies to give a genus $2$ Heegaard splitting of $M$ in which $K$ is $0$-bridge, a contradiction.
\end{proof}

Let $U = \nbhd(\calN \cup A_1 \cup A_3)$. Then $K \cap U$ lies as a cocore in 
the \mobius band
$A'=A_1 \cup A_2 \cup A_3 \cup A_4$ properly embedded in $U$.
The preceding Lemma means that $U$ is a solid torus, and hence that $K \cap
U$ is isotopic onto $\bdry U$ fixing its endpoints. 
Let $W$ be the genus 2 handlebody  
$U \cup \nbhd(K)$. Then $K$ is isotopic onto $\bdry W$.
 
\begin{claim}\label{paralleltosigma} An edge of a Black bigon of $\Lambda$
is parallel in $T_1$ or $T_2$ to an edge of $\sigma$.
\end{claim}
\begin{proof}
Let $\tau$ be a black bigon with a $\arc{12}$-corner. The argument for the
other black corners is similar. 

Assume $\tau$ is a $\SC$. Let $A'$ be the almost properly embedded \mobius
band corresponding to $\tau$. After surgery along trivial disks in $T_2$, 
we may take $A'$ to be properly embedded in $\calN_2$. Consider the annulus
$A_4$ in $\calN_2$ and the edges of $\sigma$ in $T_2$ lying in $\partial A_4$.
Using the fact that $\calN_2$ contains
no Klein bottle, a close look at the labeling of the edges of $\sigma$
and $\tau$ on $T_2$ shows that $\partial A'$ can be perturbed to be disjoint
from $\partial A_4$. But this contradicts that $\partial A_4$ is longitudinal
in $\calN_2$.

So $\tau$ is not a $\SC$. As the edges of $G_F$ lie in either $T_1$ or $T_2$,
the edges of $\tau$ must be a $\edge{27}-edge$ and an $\edge{81}$-edge. 
Looking at the edges of $\sigma$ in $T_2$, we see the edges of $\tau$ must
be parallel to these.
\end{proof}

\begin{claim}\label{no81bigon}
There is no bigon in $\Lambda$ with an $\arc{81}$-corner.
\end{claim}

\begin{proof}
Let $\tau$ be such a bigon. An edge of $\tau$ must lie in $T_2$,
implying that $\tau$ is an $\arc{81}-\SC$. But then the 
corresponding almost properly embedded \mobius band could be surgered
to produce a properly embedded \mobius band in the complement of $\calN$
whose boundary was parallel to $\partial A_4$ on $T_2$. Along with $A$
we would see a Klein bottle in $M$.
\end{proof}

By Lemmas~\ref{lem:t8trulyspecial} and \ref{lem:N3specialtypes}, 
there is a special vertex $v$ in $\Lambda$
of type $[8\Delta - 5]$ (Claim~\ref{no81bigon} implies there can be no more
than eight consecutive bigons in $\Lambda$.) This means that all but five
corners at $v$ belong to bigons of $\Lambda$. By Claim~\ref{no81bigon}, 
no $\arc{81}$-corner belongs to a bigon of $\Lambda$. So there must be
a black corner, say $\arc{12}$, such that every $\arc{12}$-corner at $v$
belongs to a bigon of $\Lambda$. By Claim~\ref{paralleltosigma}, the edges of
these bigons incident to $v$ at label $2$ must be $\edge{27}$-edges parallel
in $T_2$ to the edges of $\sigma$. In particular, there are two 
parallel edges in $T_2$ both incident to vertex $2$ in $T_2$ with label
$v$. This contradicts Lemma~\ref{lem:parallelwithsamelabelAPE}.
\end{proof}

To finish the proof of Proposition~\ref{prop:tleq6scc},
we now follow the outline of the proof of Proposition~\ref{prop:tleq6nscc}, 
indicating
the necessary modifications.

By Lemma~\ref{lem:t6n3esc}, there are three cases to consider:
\begin{itemize}
\item[{\bf A.}] There is a $2$-\ESC\ in $\Lambda$.
\item[{\bf B.}] There is a $1$-\ESC\ in $\Lambda$ but no $2$-\ESC.
\item[{\bf C.}] There is no \ESC\ in $\Lambda$. 
\end{itemize}

\begin{caseA}  
There is a $2$-\ESC\ in $\Lambda$.
\end{caseA}

Assume $G_Q$ contains $\tau$, 
the $2$-\ESC\ depicted in Figure~\ref{fig:tauoflength6}.  It gives rise to 
a long \mobius band $A_\tau=A_1 \cup A_2 \cup A_3$ in which $A_1$ is, say, a 
Black \mobius band, $A_2$ is a White annulus, and $A_3$ is a Black annulus
(each almost properly embedded in $H_W$ or $H_B$). As argued in 
Lemma~\ref{lem:t8n2esc}, $F^*$ consists of two tori $T_1$ containing
two components of $a(\tau)$ and $T_2$ containing one. Furthermore, the
only vertices of $G_F$ on $T_1$ must be those lying on the two components
of $a(\tau)$ --- the other four are on $T_2$. Finally, by the Addendum 
of Lemma~\ref{lem:LMB}, components
of $a(\tau)$ that are isotopic on $F^*$ must cobound some $A_i$. Thus
the vertices of $G_F$ on $T_1$ are either 
\begin{itemize}
\item[(i)] \{1,2,5,6\}
\item[(ii)] \{2,3,4,5\}
\end{itemize}

Figure~\ref{fig:lambda7escs} lists all possible bigons of $\Lambda_7$ that 
are at most $2$-\ESCs\ (i.e.\ containing at most $6$ edges).  We proceed 
to rule out all of these bigons in subcases (i) and (ii), thereby 
contradicting 
Corollary~\ref{cor:bigonsforall}.

First, assume (i). Then 7(a),(d),(e) are impossible by the separation of
vertices of $G_F$. 7(b) is impossible as it can be used with the $\arc{34}$-\SC\ of $\sigma$ to create a Klein bottle in $M$. So let $f$ be the
face bounded by the core \SC\ of either 7(c) or 7(f).
$\Int A_3 , \Int f$ intersect $T_1$ in trivial curves (since $M$ contains
no projective
planes). We surger away these intersections. Let $B$ be an annulus on $T_1$
cobounded by the components of $a(\tau)$ and containing an edge of $f$.
Since $B \cup A_3$ is separating, $f$ must lie on one side. But this implies
that the \mobius band, $A_f$ corresponding to $f$ can be pushed off of $A_3$ so
that $\bdry A_f$ is parallel to $\partial A_3$ on $T_1$. Then the long \mobius band
$A_{\tau}$ can be combined with $A_f$ to construct a Klein bottle in $M$.
This rules out all possibilities in subcase (i).

So assume (ii). 7(c),(d),(e),(f) are ruled out by the separation of vertices.
7(b) is impossible as then we can combine its face with the long \mobius
band $A_{\tau}$ to see a Klein bottle in $M$. Thus we assume $\Lambda$ contains
the $\arc{67}$-\SC\ of 7(a). By Corollary~\ref{cor:bigonsforall} and
Lemma~\ref{lem:t6n3esc}, there is a bigon face of $\Lambda_8$ giving
rise to an $r$-\ESC\ with $r \leq 2$. The possibilities are listed in
Figure~\ref{fig:lambda8escs}. But 8(c),(d),(e),(f) are ruled out by the
separation of vertices. 8(b) is impossible, else it and 7(a) combine along
$T_2$ to make a Klein bottle in $M$.
Finally,
8(a) can be combined with $A_{\tau}$ to give a Klein bottle in $M$. This rules
out possibility 7(a), hence (ii).

\begin{caseB}  
There is a $1$-\ESC\ in $\Lambda$, but no $2$-\ESC.
\end{caseB}

Assume $G_Q$ contains $\tau$, the \ESC\ depicted in 
Figure~\ref{fig:tauoflength4}. We follow the sequence of lemmas for
Case B in \situationnscc, modifying their proofs as necessary.
Note that Proposition~\ref{prop:nodisjtESC} is proven in the next
section under both \situationnscc\ and \situationscc.

\begin{lemma}\label{lem:noabbuttinglength4scc}
There cannot be two \ESCs\ whose label sets intersect in one label.
\end{lemma}

\begin{proof}
Let $\sigma,\nu$ be such \ESCs.
As their core \SCs\ are on disjoint label sets, $F^*$ must consist
of two tori, each containing one of these \SCs. Then one of these
tori must contain all of $a(\sigma)$, say, and one component of
$a(\nu)$. Since this component of $a(\nu)$ intersects $a(\sigma)$
once, they must all be isotopic on $F^*$. But this contradicts 
the Addendum to Lemma~\ref{lem:PLMB}.
\end{proof} 

Let us now consider the possible bigons of $\Lambda_8$ and $\Lambda_7$.  
These possibilities are shown in Figure~\ref{fig:lambda78len4}.

\begin{claim}\label{claim:no7d8dscc}
Neither 7(d) nor 8(d) may occur.
\end{claim}
\begin{proof}
Since each of these shares one label with $\tau$, 
Lemma~\ref{lem:noabbuttinglength4scc} rules them out.
\end{proof}

\begin{claim}\label{claim:no7c8cscc}
Neither 7(c) nor 8(c) may occur.
\end{claim}
\begin{proof}
Since 7(c) and 8(c) have disjoint labels, at most one may occur by 
Proposition~\ref{prop:nodisjtESC}.
Assume 7(c) does occur.  Then either 8(a) or 8(b) must also occur 
(8(d) cannot). But then there will be three disjoint \SCs,  
contradicting Lemma~\ref{lem:3disjtSC}.  
A similar argument shows 8(c) cannot occur.
\end{proof}

Figure~\ref{fig:lambda16} shows the possible bigons of 
$\Lambda_1$ and $\Lambda_6$.  These will be of use in the next two claims.

\begin{claim}\label{claim:no7a8bscc}
Neither 7(a) nor 8(b) may occur.
\end{claim}
\begin{proof}
Assume 7(a) occurs. With 7(a) and the \SC\ in $\tau$ each of 1(a) and 
1(b) form a triple of disjoint \SCs, contradicting Lemma~\ref{lem:3disjtSC}.
1(c) 
violates Proposition~\ref{prop:nodisjtESC}.  

Therefore 1(d) must occur. Call this $1$-\ESC, $\nu$.  
Because of 7(a), $F^*$ must consist of two tori. By 
the Addendum to Lemma~\ref{lem:PLMB}, one of these, $T_1$, contains
$a(\tau)$, and the other, $T_2$, contains the edges of 7(a). But
then, $a(\nu)$ must also lie in $T_1$. But then the component of 
$a(\nu)$ containing vertex 4 of $G_F$ must be isotopic to the components
of $a(\tau)$, contradicting the Addendum to Lemma~\ref{lem:PLMB}.
This rules out 1(d), and hence 7(a).

A similar argument rules out 8(b), using $\Lambda_6$ in place of $\Lambda_1$.
\end{proof}

\begin{claim}
There cannot be a $\arc{78}$-\SC.
\end{claim}

\begin{proof}
Assume there is a $\arc{78}$-\SC; i.e.\ 7(b) and 8(a) occur.  
We must consider the bigons of $\Lambda_1$ and $\Lambda_6$ 
shown in Figure~\ref{fig:lambda16}.

Proposition~\ref{prop:nodisjtESC} rules out 1(c) and 6(d).  
The argument of Claim~\ref{claim:no7a8bscc} rules out 1(d) and 6(c).
Lemma~\ref{lem:3disjtSC} forbids each of 1(b) and 6(a) as they are 
\SCs\ each disjoint from the \SC\ in $\tau$ and the $\arc{78}$-\SC.  

Thus 1(a) and 6(b) must occur by Corollary~\ref{cor:bigonsforall}.  
But then 1(a), 6(b), and the \SC\ in $\tau$ form 3 mutually disjoint 
\SCs\ in violation of Lemma~\ref{lem:3disjtSC}.  
Thus there cannot be a $\arc{78}$-\SC.
\end{proof}

The above claims imply that all bigons of $\Lambda_8$ are forbidden.  This 
contradicts Corollary~\ref{cor:bigonsforall}.

\begin{caseC}
 There is no \ESC\ in $\Lambda$. 
\end{caseC}

In this case every label belongs to an \SC.  Lemma~\ref{lem:3disjtSC} then forces $t \leq 6$ contrary to the assumed $t = 8$.  This completes the proof in Case C.

Given the following subsection, the proof of Proposition~\ref{prop:tleq6scc} 
is now complete.
\end{proof}

\subsection{A proposition for the preceding subsections.}

This subsection is devoted to the proof, in both \situationnscc\ and 
\situationscc, of Proposition~\ref{prop:nodisjtESC} 
stated below. This proposition was used in the preceding subsections.

\begin{prop}\label{prop:nodisjtESC}
Assume $M$ contains no Dyck's surface and $t=8$.  
If there is no $2$-\ESC\ in $\Lambda$ then there cannot be two 
disjoint \ESCs.
\end{prop}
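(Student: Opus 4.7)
The plan is to suppose for contradiction that two disjoint $1$-\ESCs\ $\sigma, \tau \subset \Lambda$ coexist and to analyze the four boundary curves of their associated long \mobius bands on $\hatF$; each possible configuration will yield either a Dyck's surface, an embedded Klein bottle, or a thinning of $K$, each a contradiction. Since $t = 8$, after relabeling we take $\sigma$ on $\{1,2,3,4\}$ (with core $\arc{23}$-\SC) and $\tau$ on $\{5,6,7,8\}$ (with core $\arc{67}$-\SC); write $A_1 \cup A_2$ and $B_1 \cup B_2$ for the corresponding long \mobius bands, and set the inner boundaries $c_1 = \partial A_1$, $d_1 = \partial B_1$ and outer boundaries $c_2 = \partial A_2 \setminus c_1$, $d_2 = \partial B_2 \setminus d_1$. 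Because the bigon faces of $\sigma, \tau$ are disjoint in $Q$ and the arcs $\arc{12}, \arc{23}, \arc{34}, \arc{56}, \arc{67}, \arc{78}$ of $K$ are pairwise disjoint, the two long \mobius bands are disjoint in $M$, so $c_1, c_2, d_1, d_2$ are four pairwise disjoint simple closed curves on $\hatF$; each is essential (by Lemma~\ref{lem:LMBess} for $c_1, d_1$, and because capping a long \mobius band with a disk bounded by its outer boundary would give a projective plane in $M$ for $c_2, d_2$). Since a genus-$2$ surface carries at most three pairwise non-isotopic disjoint essential simple closed curves, some two of these four curves are isotopic on $\hatF$.

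The case analysis splits into three types. If $c_1 \sim d_1$, Lemma~\ref{lem:PLMB} (or its addendum on $F^*$ in \situationscc) forces $\sigma$ and $\tau$ to share core labels, contradicting that their label sets are disjoint. For a cross-\ESC\ isotopy ($c_1 \sim d_2$, $c_2 \sim d_1$, or $c_2 \sim d_2$), I would pick an annulus $E \subset \hatF$ cobounded by the isotopic pair; if both of the two cobounded annuli contain one of the remaining curves in its interior, then that remaining curve is isotopic on $\hatF$ to the boundaries of the annulus containing it, which reduces the configuration to one of the other isotopy cases. Otherwise we may choose $E$ to avoid the remaining two curves, and amalgamating $E$ with the two long \mobius bands along their shared boundary components produces an embedded closed surface built from two \mobius bands and an annulus, hence an embedded Klein bottle in $M$, contradicting the absence of embedded Klein bottles in $M$ under $\Delta \geq 3$. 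For an intra-\ESC\ isotopy $c_1 \sim c_2$ (symmetrically $d_1 \sim d_2$), the same pinning argument yields a cobounded annulus $E$ with interior disjoint from $K$ (else some $d_i$ lies essentially in the other annulus and is isotopic there to $c_1$, reducing to $c_1 \sim d_1$ or to a cross-\ESC\ isotopy); Lemma~\ref{lem:LMB} applied to $\sigma$ with $i=1, j=2$ then cobounds $A_2$ and $E$ by a solid torus in which $A_2$ is longitudinal, so $A_2$ can be isotoped onto $\hatF$, pushing the arcs $\arc{12}, \arc{34}$ of $K$ off $A_2$ and strictly reducing the bridge number of $K$ against the minimality hypothesis.

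The principal obstacle is executing the constructions of the second and third cases in \situationscc, where the constituents of the long \mobius bands are only almost properly embedded in $H_B, H_W$ and may meet $\hatF$ in additional simple closed curves each bounding a meridian disk of one handlebody. The plan for this obstacle is to exploit the meridian disk $D$ disjoint from $Q$ and $K$ that \situationscc\ provides: surger $\hatF$ along $D$ to obtain $F^*$ (a torus or pair of tori), apply the addenda of Lemmas~\ref{lem:PLMB} and~\ref{lem:LMB} on $F^*$, and select the amalgamating annulus $E$ within $F^*$ so that both the Klein-bottle construction and the $A_2$-parallelism remain clean after any remaining interior intersection curves of the long \mobius bands with $\hatF$ (each of which bounds a disk on one side of $\hatF$) are surgered away.
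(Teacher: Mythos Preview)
Your case split and the handling of the $c_1\sim d_1$ and cross-\ESC\ cases are fine; the gap is in the intra-\ESC\ case. When $c_1\sim c_2$ with the annulus $E$ disjoint from $K$, Lemma~\ref{lem:LMB} only yields that $A_2=A_{12,34}$ is boundary-parallel in its handlebody---it does not produce a thinning. Isotoping $\arc{12}$ and $\arc{34}$ onto $E\subset\hatF$ and nudging into $H_W$ does not reduce bridge number: the resulting long arc from vertex~$8$ to vertex~$5$ in $H_W$ picks up two new local minima (near the former vertices $1,2$ and $3,4$) to go with the three maxima inherited from $\arc{81},\arc{23},\arc{45}$, so the new Morse position again has four maxima and four minima in bridge position with the same width. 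The contrast with Lemma~\ref{lem:disjtoppescs} is the point: there the two extending annuli lie on \emph{opposite} sides of $\hatF$, and the two parallelisms genuinely cancel critical points against each other. Here the core \SCs\ of $\sigma$ and $\tau$ (on $\arc{23}$ and $\arc{67}$) have the \emph{same} color, so $A_{12,34}$ and $A_{56,78}$ lie in the \emph{same} handlebody $H_B$; even using both parallelisms simultaneously just pushes all four Black arcs to the White side without eliminating any bridges.

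This is exactly why the paper's proof is long. It establishes $c_1\sim c_2$ and $d_1\sim d_2$ at the very outset (via Lemma~\ref{lem:disjtmobiusannulus}, which is your cross-case Klein bottle argument in disguise) and then uses the resulting boundary-parallelisms as structural input for a combinatorial attack: they constrain which Black bigons can occur (Lemma~\ref{lem:twoBbigons}), force White bigons at $\arc{23}$- and $\arc{67}$-corners to be \SCs\ with only two edge-classes on $G_F$ (Lemmas~\ref{lem:23and67areSC}, \ref{lem:partD}), and thereby cap the number of bigon corners at a special vertex (Lemma~\ref{lem:partB}, Corollary~\ref{cor:gapsat23and67}). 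The contradiction arrives only after counting gaps at a vertex of type~$[8\Delta-5]$, forcing $\Delta=3$, and locating a forbidden $2$-\ESC. Your reductions in fact funnel every case into the intra-\ESC\ configuration, which is precisely the case that survives and needs this full analysis.
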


Throughout this subsection we assume that there is no $2$-\ESC\ and that there exists two disjoint $1$-\ESCs\ $\tau$ and $\tau'$ on the corners $\arc{1234}$ and $\arc{5678}$ as shown in Figure~\ref{fig:twodisjtESC} (with Black and White
faces as pictured).  At the end of this section we prove Proposition~\ref{prop:nodisjtESC} by obtaining a contradiction.  To do so we must first develop several lemmas.

Let $A_{23}$ and $A_{12,34}$ be the White \mobius band and Black annulus arising from $\tau$.  Let $A_{67}$ and $A_{56,78}$  be the White \mobius band and Black annulus arising from $\tau'$.  By Lemma~\ref{lem:disjtmobiusannulus} the two components of $\bdry A_{12,34}$ are parallel on $\hatF$ as are the two components of $\bdry A_{56,78}$ (as $M$
contains no Klein bottle and no Dyck's surface).     
By Lemma~\ref{lem:LMB} the two annuli $A_{12,34}$ and $A_{56,78}$ are parallel into $\hatF$ (the \ESCs\ are maximal, hence $K$ must be disjoint from their parallelism).

\begin{figure}
\centering
\input{twodisjtESC.pstex_t}
\caption{}
\label{fig:twodisjtESC}
\end{figure}

\begin{claim}\label{claim:SD} 
In \conditionII, there is a separating, meridian disk $D$ of $H_B$ disjoint
from $K$ and $Q$ (i.e.\ disjoint from $Q$ in the exterior of $K$)
such that $\bdry D$ separates $\bdry A_{12,34}$ from $\bdry A_{56,78}$.
\end{claim}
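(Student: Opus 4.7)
My plan is to construct $D$ by boundary-compressing the Black annulus $A_{12,34}$ using the solid torus $V_1$ that realizes its parallelism into $F^{*}$. First, \situationscc\ supplies a meridian disk $D_0 \subset H_B \cup H_W$ disjoint from $K$ and $Q$, and I set $F^{*} = \hatF$ surgered along $D_0$. Since $A_{12,34}$ and $A_{56,78}$ are built from subsurfaces of $F$ and $Q$, innermost-disk surgeries on circles of intersection with $D_0$ do not disturb their almost-properly-embedded structure, so I may assume $D_0$ is disjoint from both annuli. By the Addendum to Lemma~\ref{lem:LMB}, the two components of $\bdry A_{12,34}$ cobound an annulus $B_1 \subset F^{*}$ with interior disjoint from $K$, and $A_{12,34} \cup B_1$ bounds a solid torus $V_1 \subset M$ realizing the parallelism; analogously I obtain $B_2 \subset F^{*}$ and $V_2 \subset M$ from $A_{56,78}$. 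The disjointness of $\tau, \tau'$ together with the Addendum to Lemma~\ref{lem:PLMB} forces $V_1 \cap V_2 = \emptyset$ (otherwise some component of $a(\tau')$ would be isotopic in $F^{*}$ to a component of $a(\tau)$, contradicting that $\tau, \tau'$ have distinct core labels).

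Next, I would pick a spanning arc $\alpha_1$ of $B_1$ disjoint from $K$ and use it to construct a boundary-compressing disk $E_1 \subset V_1$ for $A_{12,34}$. Band-summing a parallel pushoff of $A_{12,34}$ with itself along $E_1$ produces a properly embedded separating disk $D$ in a regular neighborhood of $V_1 \cup A_{12,34}$, whose boundary on $F^{*}$ is parallel to one component of $\bdry B_1$. Because $V_1$ meets $Q$ only in arcs of $Q \cap A_{12,34}$ on $\bdry V_1$, this $D$ is disjoint from $K$ and from $Q$; and because $A_{56,78}$ is disjoint from $V_1$, the curves $\bdry A_{56,78}$ lie in the component of $F^{*} \setminus \bdry D$ not containing $\bdry A_{12,34}$.

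Finally, I must verify that $D$ lies in $H_B$ and arrange its boundary to lie on $\hatF$. That $V_1$ (hence $D$) sits on the Black side of $\hatF$ follows from the argument of Case I in the proof of Lemma~\ref{lem:LMB}: were $V_1$ on the White side it would guide an isotopy of $A_{12,34}$ into $H_W$ that yields a thinning of $K$, contradicting the minimality of our bridge position. To push $\bdry D$ off the scars of $D_0$ in $F^{*}$ and onto $\hatF$, I would tube $\bdry D$ along $D_0$, which preserves $D$'s status as a meridian of $H_B$ and retains the separating property on $\hatF$. The main obstacle I foresee is the bookkeeping at the scars of $D_0$, especially when $F^{*}$ has two torus components: depending on whether $\bdry A_{12,34}$ and $\bdry A_{56,78}$ land on the same torus or on different ones, I may need to boundary-compress $A_{56,78}$ instead of $A_{12,34}$, or tube along both scar disks of $D_0$. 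Once this short case analysis is settled, the resulting disk $D$ has all of the required properties.
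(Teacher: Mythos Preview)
Your approach is much more elaborate than necessary, and it also has genuine gaps.  The paper's argument is a two-line observation: the meridian disk already supplied by \situationscc\ (your $D_0$) \emph{is} the disk $D$.  Since $D_0$ is disjoint from $Q$, it is automatically disjoint from the faces making up the long M\"obius bands $A_{23}\cup A_{12,34}$ and $A_{67}\cup A_{56,78}$.  If $\bdry D_0$ did not separate $\bdry A_{12,34}$ from $\bdry A_{56,78}$ on $\hatF$, then compressing $\hatF$ along $D_0$ would put those curves (and $\bdry A_{23}$, $\bdry A_{67}$) on a torus, from which one assembles a Klein bottle in $M$.  Finally $D_0$ cannot lie in $H_W$: it is disjoint from $K$, but the White arc $\arc{45}$ runs from vertex $4\in\bdry A_{12,34}$ to vertex $5\in\bdry A_{56,78}$ and so would have to cross $D_0$.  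Hence $D_0\subset H_B$ and, being disjoint from that arc's endpoints' separation, it is separating.  No new disk needs to be built.

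Your construction, by contrast, has real problems.  First, the assertion that ``$V_1$ meets $Q$ only in arcs of $Q\cap A_{12,34}$ on $\bdry V_1$'' is unjustified: $V_1$ is a solid torus in $M$, and other faces of $G_Q$ (indeed any part of $Q$) may pass through its interior; nothing you have said prevents this, so you cannot conclude your band-summed disk is disjoint from $Q$.  Second, in \situationscc\ the annulus $A_{12,34}$ is only \emph{almost} properly embedded in $H_B$; the solid torus $V_1$ sits in $M$ and may well straddle $\hatF$, so the disk you produce need not lie in $H_B$ at all.  Your appeal to Case~I of Lemma~\ref{lem:LMB} does not settle this: that case concerns constituent annuli on opposite sides of $\hatF$, not the location of the parallelism torus.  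Third, the ``tubing along $D_0$'' step is vague precisely where it would need to be careful, and could easily reintroduce intersections with $Q$ or destroy the separation property.  The upshot is that you are working hard to manufacture an object that the hypothesis of \situationscc\ already hands you.
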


\begin{proof} 
Otherwise there is a meridian disk, $D$, on one side of $\hatF$ which
is disjoint from $K$ and $Q$ (see section~\ref{sec:ape}). In particular, $D$ is disjoint from $A_{23} \cup A_{12,34}$ and $A_{67} \cup A_{56,78}$.
But then $\partial D$ must separate $\partial A_{12,34}$ and 
$\partial A_{56,78}$ (else compressing $\hatF$ along $D$ gives a 2-torus which allows one to find a Klein bottle in $M$).  The disk $D$ cannot be in $H_W$ since it is disjoint from the arc $\arc{45}$ of $K \cap H_W$.  Thus $D$ lies in $H_B$ as a separating disk. 
\end{proof}

\begin{lemma}
\label{lem:twoBbigons}
The only possible Black bigons of $\Lambda$ are $\arc{12},\!\arc{34}$-bigons and $\arc{56},\!\arc{78}$-bigons.
\end{lemma}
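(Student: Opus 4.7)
The plan is to enumerate the possible Black bigons of $\Lambda$ by the two corners that bound them and to rule out every type except $\arc{12},\arc{34}$-bigons and $\arc{56},\arc{78}$-bigons. A Black bigon face of $\Lambda$ has two corners, each of which runs along a Black arc of $K\cap H_B$. Given the presentations of $\tau$ and $\tau'$ in Figure~\ref{fig:twodisjtESC}, the Black arcs of $K$ are exactly $\arc{12},\arc{34},\arc{56},\arc{78}$, so a Black bigon must be either an $\arc{ij}$-\SC\ for some $ij\in\{12,34,56,78\}$, or a bigon whose two corners are distinct arcs from this four-element set.

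First I would eliminate the Black Scharlemann cycles. If $\Lambda$ contained an $\arc{ij}$-\SC\ with $ij\in\{12,34,56,78\}$, its face extended across $\arc{ij}$ would give an almost properly embedded Black \mobius band $A_{ij}\subset H_B$. Since $\arc{ij}$ is disjoint from the White arcs $\arc{23}$ and $\arc{67}$, the \mobius band $A_{ij}$ is disjoint from the White \mobius bands $A_{23}$ and $A_{67}$ coming from the core \SCs\ of $\tau$ and $\tau'$. The three pairwise disjoint, almost properly embedded \mobius bands $A_{ij},A_{23},A_{67}$ then contradict Lemma~\ref{lem:3disjointmobiusbands} under our standing assumption that $M$ contains no Dyck's surface.

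Second, I would rule out mixed Black bigons, i.e.\ bigons with one corner in $\{\arc{12},\arc{34}\}$ and the other in $\{\arc{56},\arc{78}\}$. In \situationscc, Claim~\ref{claim:SD} supplies a separating meridian disk $D\subset H_B$ disjoint from $K\cup Q$ whose boundary separates $\bdry A_{12,34}$ from $\bdry A_{56,78}$ on $\hatF$; consequently the Black arcs split across $D$ with $\arc{12},\arc{34}$ on one side and $\arc{56},\arc{78}$ on the other. A Black bigon face $g$ is disjoint from $D$ (since $g\subset Q$) and $D$ is disjoint from $K$, so extending $g$ across its two corners yields a connected disk in $H_B\cut D$; connectedness forces the two corners to lie in the same component, hence in the same pair. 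In \situationnscc\ the separating role of $D$ is played by the pair of long annuli themselves: by Lemma~\ref{lem:LMB} applied to $\tau$ and $\tau'$, the annulus $A_{12,34}$ cobounds a solid torus $V_{1234}\subset H_B$ with an annulus $B_{1234}\subset\hatF$, and likewise $A_{56,78}$ with $V_{5678}\subset H_B$, both with interiors disjoint from $K$. Since no scc of $Q\cap F$ lies in a disk face, the interior of $g$ lies in $H_B$ disjoint from $\hatF$ and meets $A_{12,34}$ only along the corner $\arc{12}$ (other faces of $G_Q$ being disjoint from $g$ and $K$ disjoint from $\Int g$), so $g$ must lie on a single side of $A_{12,34}$ in $H_B$; the same reasoning applies to $A_{56,78}$. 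With no available crossing of either separating annulus, $g$ cannot connect $\arc{12}$ or $\arc{34}$ to $\arc{56}$ or $\arc{78}$.

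The principal obstacle is the \situationnscc\ argument: one must confirm that $V_{1234}$ and $V_{5678}$ can be chosen so that the two separating annuli do indeed partition the Black arcs of $K$, and rule out degenerate overlaps of the two parallelisms on $\hatF$. This requires Lemma~\ref{lem:PLMB} and its Addendum to control how the curves of $a(\tau)$ and $a(\tau')$ sit on $\hatF$, together with the constraint that no pair of parallel \mobius bands or annuli can be amalgamated into a Klein bottle, projective plane, or Dyck's surface. Once this separation is in place, combined with the \SC\ elimination of the first step, the only Black bigons that survive are the $\arc{12},\arc{34}$- and $\arc{56},\arc{78}$-bigons, as claimed.
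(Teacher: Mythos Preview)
Your first step, eliminating Black \SCs, has a gap: the Black \mobius band $A_{ij}$ is not automatically disjoint from $A_{23}$ and $A_{67}$ just because the arcs of $K$ are disjoint.  The boundaries $\bdry A_{ij}$ and $\bdry A_{23}$ are curves on $\hatF$ that share a vertex whenever $ij$ and $\{2,3\}$ overlap (e.g.\ $\bdry A_{12}$ and $\bdry A_{23}$ both pass through vertex~$2$), and they may cross transversely there.  The paper handles this differently: since $A_{12,34}$ and $A_{56,78}$ are separating in $H_B$ (their boundary components are parallel on $\hatF$), the Black \mobius band $A'$ meets one of them only tangentially along an arc of $K$ and may be nudged off.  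After this push-off, $\bdry A'$ is disjoint from $\bdry A_{12,34}\supset\bdry A_{23}$ and $\bdry A_{56,78}\supset\bdry A_{67}$, and then Lemma~\ref{lem:3disjointmobiusbands} applies.

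Your second step fails in \situationnscc.  The separating annuli $A_{12,34}$ and $A_{56,78}$ do not ``partition'' the Black arcs of $K$: the arcs $\arc{12},\arc{34}$ lie \emph{on} $A_{12,34}$ (as spanning arcs), not on one side of it, and likewise for $\arc{56},\arc{78}$.  Cutting $H_B$ along both annuli yields two solid tori $\calT_1,\calT_2$ (the $\bdry$--parallelisms) and a genus~$2$ piece $H_2$; a $\arc{34},\arc{56}$--bigon $g$ can and does sit in $H_2$, connecting $\arc{34}$ (on the impression of $A_{12,34}$) to $\arc{56}$ (on the impression of $A_{56,78}$) without crossing either annulus.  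The paper explicitly notes ``$g$ is not contained in either of these solid tori'' and uses its existence constructively: bridge disks $D_{34}\subset\calT_1$ and $D_{56}\subset\calT_2$ glue to $g$ to form a primitivizing disk for $\bdry A_{23}$ in $H_B$.  This yields a new genus~$2$ splitting $H_B'=H_B\cup\nbhd(A_{23})$, $H_W'=H_W\cut A_{23}$ in which $K$ is at most $3$--bridge, contradicting $t=8$.  Your separation argument does not see this obstruction and would need to be replaced by this primitivity/new-splitting mechanism.
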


\begin{proof}
Assume there exists a Black \SC.  It gives rise to a Black \mobius $A'$ band that meets either $A_{12,34}$ or $A_{56,78}$ along an arc of $K$.  Since these two annuli are separating,  this intersection cannot be transverse.  Hence $A'$ may be slightly nudged to be disjoint from both of these annuli.  Thus there are three mutually disjoint \mobius bands in $M$ each properly embedded in $H_B$ or $H_W$.  This is contrary to Lemma~\ref{lem:3disjointmobiusbands}.

The lemma now follows immediately in \conditionII\ since the disk $D$ of Claim~\ref{claim:SD} separates vertices $\{1,2,3,4\}$ from $\{5,6,7,8\}$ on $\hatF$.  So we assume that we are in \conditionI.  In particular, the above \mobius bands and annuli are properly embedded on the White or Black sides of $\hatF$.

Assume there exists a $\arc{34},\!\arc{56}$-bigon $g$.  (A similar argument works for \mbox{$\arc{34},\!\arc{78}$-,} $\arc{12},\!\arc{56}$-, and $\arc{12},\!\arc{78}$-bigons.)  Let $D_{34}$ and $D_{56}$ be bridge disks for the arcs $\arc{34}$ and $\arc{56}$ contained in the solid tori cut off from the Black handlebody $H_B$ by the annuli $A_{12,34}$ and $A_{56,78}$.  Then, since $g$ is not contained in either of these solid tori, together $D_{34} \cup g \cup D_{56}$ forms a primitivizing disk for $\bdry A_{23}$ (i.e.\ a disk in $H_B$ intersecting $\bdry A_{23}$ once).   Note $D_{34} \cup g \cup D_{56}$ also forms a primitivizing disk for $\bdry A_{67}$.

Since $\bdry A_{23}$ is primitive with respect to the Black handlebody $H_B$, $H_B' = H_B \cup \nbhd(A_{23})$ is again a handlebody.  Now $K$ intersects $H_B'$ in the arcs $\arc{1234} =\arc{12} \cup \arc{23} \cup \arc{34}$, $\arc{56}$, and $\arc{78}$.  Note that the bridge disks for $\arc{56}$, $\arc{78}$ may be taken to be disjoint from $\nbhd(A_{23})$ hence the arcs $\arc{56}$ and $\arc{78}$
are bridge in $H_B'$. The arc $\arc{1234}$ lies in the properly embedded \mobius band $A_{12,34} \cup A_{23}$ in $H_B'$ and hence has a bridge disk in $H_B'$
disjoint from the bridge disks for $\arc{56}$ and $\arc{78}$.  Hence the arcs for $K \cap H_B'$ are bridge in $H_B'$.

Furthermore since $A_{23}$ is a \mobius band, $H_W' = H_W - \nbhd(A_{23})$ is also a handlebody.   By Lemma~\ref{lem:bridgedisksdisjoitfrommobius}, the arcs $K \cap H_W'$ are bridge in $H_W'$.  Therefore $H_B'$ and $H_W'$ form a genus $2$ Heegaard splitting of $M$ with respect to which $K$ is at most $3$-bridge.  This contradicts that $t=8$. 
\end{proof}

Recall that two edges in a graph $G$ are {\em in the same edge class}
or {\em are parallel} if they cobound a bigon in the graph (not necessarily
a bigon face of the graph).

\begin{lemma}\label{lem:1234parallelism}
For one of the pairs $(x,y)$ among the set of pairs $\{(2,3), (4,1), (6,7),$ $ (5,8)\}$, there are at most two edge-classes between the vertices $x,y$ in $G_F$.
\end{lemma}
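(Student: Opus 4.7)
My plan is to argue by contradiction: suppose each of the four pairs in $\{(2,3),(4,1),(6,7),(5,8)\}$ admits at least three edge-classes between its vertices in $G_F$, and derive a contradiction from the parallelism structure coming from the two long \mobius bands.

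First I would set up the geometric picture using Lemma~\ref{lem:esc}(2) applied to both $\tau$ and $\tau'$: the annulus $A_{12,34}$ (resp.\ $A_{56,78}$) is parallel through a solid torus to an annulus $B_\tau$ (resp.\ $B_{\tau'}$) on $\hat F$ whose interior is disjoint from $K$. The boundary $\partial B_\tau$ consists of $\partial A_{23}$ (passing through the $G_F$-vertices $2$ and $3$) together with the outer boundary component of $\tau$ (through vertices $1$ and $4$); similarly $B_{\tau'}$ has boundary curves through $\{6,7\}$ and $\{5,8\}$. Standard innermost-disk arguments, combined with the fact that the vertex sets on $\partial B_\tau$ and $\partial B_{\tau'}$ are disjoint, allow one to take $B_\tau\cap B_{\tau'}=\emptyset$.

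Next I would analyze where edges of $G_F$ between the designated pairs can sit. Since edges of $G_F$ are pairwise disjoint arcs of $F\cap Q$ on $\hat F$, no edge of $G_F$ meets an interior point of $\partial B_\tau$ or $\partial B_{\tau'}$ (each of which is composed of edges of $G_F$ together with arcs of vertex boundaries); hence any edge between two vertices of a designated pair lies entirely in the relevant annulus or entirely in the complement $C=\hat F\setminus\Int(B_\tau\cup B_{\tau'})$. A standard annulus argument (embedded arcs between two points on a single boundary circle of an annulus have exactly two isotopy classes rel endpoints) shows that $B_\tau$ contains at most two edge-classes of $G_F$ between $2$ and $3$, and similarly at most two of each of the other three types. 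Under the assumption that each pair has at least three edge-classes, each pair must contribute at least one edge-class lying in $C$.

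The final step is a counting/topological argument in $C$. One computes $\chi(C)=\chi(\hat F)-\chi(B_\tau)-\chi(B_{\tau'})=-2$, and $\partial C$ has four boundary components, two on $\partial B_\tau$ and two on $\partial B_{\tau'}$. The required endpoint configuration is severely restricted: the four designated pairs have their vertices on four distinct boundary circles of $C$ (one pair per circle). Choosing one representative arc from each of the four edge-classes in $C$ gives four mutually non-parallel, disjoint, embedded arcs in $C$ with both endpoints of each arc on a single boundary component of $C$; an Euler-characteristic / disk-surgery argument on $C$ together with the fact that no two such arcs can be parallel across a bigon yields the desired contradiction. The main obstacle I expect is in this last step: the topology of $C$ depends on whether the annuli $B_\tau,B_{\tau'}$ are separating in $\hat F$, so one must handle several subcases and verify that the parallelism of the boundary components of each annulus forces enough of the combinatorics to make the counting tight.
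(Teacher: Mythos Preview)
Your approach can be completed, but it is far more elaborate than necessary and somewhat obscures the point. The paper's proof is literally one line: ``Otherwise each pair has three such edge-classes. This contradicts that $\hatF$ is genus two.'' The argument behind this is purely combinatorial on $\hatF$ and makes no use of the \ESC\ structure at all: choosing one representative from each of the three classes for each of the four pairs gives four disjoint theta-graphs (two vertices, three mutually non-parallel edges) in $\hatF$. A regular neighborhood of each theta-graph has Euler characteristic $-1$, so the four neighborhoods contribute $-4$; since $\chi(\hatF)=-2$, the complement has Euler characteristic $+2$ and therefore contains a disk component. That disk is a bigon face of one of the theta-graphs, contradicting that its three edges were chosen from distinct classes.

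By contrast, you invoke the parallelism of $A_{12,34}$ and $A_{56,78}$ into $\hatF$, restrict edges to the annuli $B_\tau$, $B_{\tau'}$ or their complement $C$, and then need a case analysis on the topology of $C$ (four-holed sphere versus a disjoint union of a one-holed torus and a three-holed sphere, depending on whether the cores of $B_\tau$ and $B_{\tau'}$ are separating). That analysis does go through, but it is work that the direct Euler-characteristic count avoids entirely. One small correctness point: you cite Lemma~\ref{lem:esc}(2) for the parallelism of the extending annuli, but that lemma is stated only in \situationnscc, whereas Lemma~\ref{lem:1234parallelism} is used in both \situationnscc\ and \situationscc. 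The parallelism you need is established in the setup paragraph for Proposition~\ref{prop:nodisjtESC} (via Lemma~\ref{lem:LMB}), so you should cite that instead.
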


\begin{proof} Otherwise each pair has three such edge-classes. This contradicts that $\hatF$ is genus two. 
\end{proof}

\begin{lemma}
\label{lem:partB}
For one of either $i=1$ or $i=5$ the following holds: 
at each vertex of $\Lambda$,
there are at most two Black bigons of $\Lambda$ incident to its 
$(i,i+1)$-corners and at most two Black bigons of $\Lambda$ incident to
its $(i+2,i+3)$-corners.
\end{lemma}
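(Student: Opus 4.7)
The plan is to convert the count of Black bigons at a vertex $v$ of $\Lambda$ with a given corner type into a count of edge-classes of $G_F$ between certain vertex pairs, and then apply Lemma~\ref{lem:1234parallelism}. By Lemma~\ref{lem:twoBbigons}, every Black bigon is either an $\arc{12},\!\arc{34}$-bigon (from the $\tau$-family) or an $\arc{56},\!\arc{78}$-bigon (from the $\tau'$-family), so the Black bigons incident to $v$ at a $(1,2)$- or $(3,4)$-corner are of $\tau$-type, while those incident at a $(5,6)$- or $(7,8)$-corner are of $\tau'$-type.

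The key counting step is as follows. Each Black bigon at $v$ with a $(1,2)$-corner has its other corner $\arc{34}$ on some vertex $w$ of $G_Q$, so its two edges run from label $1$ at $v$ to label $4$ at $w$ and from label $2$ at $v$ to label $3$ at $w$. Distinct such bigons use distinct $(1,2)$-corners of $v$, and hence distinct $\edge{14}$-edges at label $1$ of $v$; viewed in $G_F$, these $\edge{14}$-edges all meet vertex $1$ of $G_F$ at the common label $v$, and so by Lemma~\ref{lem:parallelwithsamelabel} they lie in distinct edge-classes between vertices $1$ and $4$ of $G_F$. The identical argument applied to the $\edge{23}$-edges at label $2$ of $v$ gives the same count in terms of edge-classes between vertices $2$ and $3$ of $G_F$, so the number of Black bigons at $v$ with a $(1,2)$-corner is bounded above by each of these two numbers of edge-classes. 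Reading labels $3,4$ of $v$ in place of $1,2$ gives the same bound for the number of Black bigons at $v$ with a $(3,4)$-corner; and the symmetric argument for the $\tau'$-family bounds the number of Black bigons at $v$ with a $(5,6)$- or $(7,8)$-corner by the minimum of the numbers of edge-classes between vertices $5,8$ and between $6,7$ in $G_F$.

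With these bounds in hand, Lemma~\ref{lem:1234parallelism} finishes the argument: one of the four pairs $(2,3)$, $(4,1)$, $(6,7)$, $(5,8)$ admits at most two edge-classes in $G_F$. If that pair is $(2,3)$ or $(4,1)$, then both the $(1,2)$- and $(3,4)$-corner counts at every vertex of $\Lambda$ are at most two, so $i=1$ satisfies the conclusion; if it is $(6,7)$ or $(5,8)$, the analogous bounds yield the conclusion for $i=5$. The main thing to check carefully is the translation in the second paragraph: one must verify that the $\edge{14}$- and $\edge{23}$-edges coming from distinct Black $(1,2)$-bigons at $v$ do indeed meet common vertices of $G_F$ at the common label $v$, so that Lemma~\ref{lem:parallelwithsamelabel} correctly converts distinct bigons into distinct edge-classes; once this correspondence is established, the rest is a direct invocation of Lemma~\ref{lem:1234parallelism}.
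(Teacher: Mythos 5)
Your proof is correct and is essentially the same as the paper's: both apply Lemma~\ref{lem:twoBbigons} to see that every Black bigon is a $\arc{12},\!\arc{34}$- or $\arc{56},\!\arc{78}$-bigon, translate Black bigons at a vertex $v$ of $\Lambda$ into $\edge{14}$- (or $\edge{23}$-, $\edge{58}$-, $\edge{67}$-) edges of $G_F$ meeting a fat vertex at the common label $v$, and then combine Lemma~\ref{lem:parallelwithsamelabel} with Lemma~\ref{lem:1234parallelism} to conclude. The paper phrases the pigeonhole step as a contradiction (three Black bigons at $\arc{12}$-corners of $v$ would force two parallel $\edge{41}$-edges meeting vertex $1$ at label $v$) whereas you state the bound directly, but the content is identical.
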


\begin{proof}
After Lemma~\ref{lem:1234parallelism}, assume that there are only two edge classes in $G_F$ connecting vertices $4,1$ of $G_F$.  Assume there is a vertex $v$ of $\Lambda$ that has three $\arc{12}$-corners belonging to Black bigons of $\Lambda$. By Lemma~\ref{lem:twoBbigons}, the bigons incident at these $\arc{12}$-corners are $\arc{12},\!\arc{34}$-bigons.  In particular, each such corner
has a $\edge{41}$-edge incident at label $1$.  But this means that one of the
edges classes connecting vertices $4,1$ on $G_F$ has two edges incident to
vertex $1$ with label $v$.  This contradicts Lemma~\ref{lem:parallelwithsamelabel}. We get a similar contradiction if there is a vertex of $\Lambda$ that
has three $\arc{34}$-corners belonging to the same Black bigons of $\Lambda$.
\end{proof}

\begin{lemma}\label{lem:23and67areSC}
A White bigon of $\Lambda$ with a $\arc{23}$- or $\arc{67}$-corner is a \SC.
\end{lemma}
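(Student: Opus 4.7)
The plan is to argue by contradiction: suppose $g$ is a White bigon face of $\Lambda$ carrying a $\arc{23}$-corner (the $\arc{67}$-case is handled symmetrically by swapping the roles of $\tau$ and $\tau'$), but suppose $g$ is not a $\arc{23}$-\SC. Then the other corner of $g$, call it $\arc{cd}$, is another White arc of $K$, so $\arc{cd}\in\{\arc{45},\arc{67},\arc{81}\}$. After absorbing any simple closed curves of $Q\cap F$ in $\Int g$ using Corollary~\ref{cor:AEntscc}, I view $g$ as a disk properly embedded in $H_W$.

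The key construction is to amalgamate $g$ with bridge disks for its two corner arcs to build a compressing disk for $\hatF$ in $H_W$. Let $d_{23}\subset A_{23}$ be the bridge disk for $\arc{23}$ inherent in the White \mobius band $A_{23}$, with $\partial d_{23}=\arc{23}\cup\beta_{23}$ for $\beta_{23}\subset\hatF$ joining vertex $2$ to vertex $3$; let $d_{cd}$ be a bridge disk for $\arc{cd}$ in $H_W$. By Lemma~\ref{lem:bridgedisksdisjoitfrommobius} (and its natural \conditionII\ variant, obtained from Claim~\ref{claim:SD}) these three disks can be taken to have pairwise disjoint interiors. Then $D':=d_{23}\cup g\cup d_{cd}$ is an embedded disk in $H_W$ with $\partial D'\subset\hatF$ consisting of $\beta_{23}$, $\beta_{cd}$, and the two $\hatF$-edges of $g$ joined end-to-end. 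Pushing $\arc{23}\cup\arc{cd}$ across $D'$ simultaneously isotopes both arcs onto $\hatF$, reducing $t=|K\cap\hatF|$ by $4$ and hence the bridge number of $K$ with respect to $\hatF$ by $2$, contradicting minimality of the bridge position --- provided $\partial D'$ is essential in $\hatF$.

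If instead $\partial D'$ bounds a disk $E\subset\hatF$, then $D'\cup E$ is a $2$-sphere in $M$ which bounds a $3$-ball $B$ by irreducibility of $M$. Analyzing how $\arc{23}$ and $\arc{cd}$ sit in $B$ produces a trivializing disk for $K$ in the sense of Lemma~\ref{lem:monogon}, again thinning $K$ and contradicting the minimality of the bridge position. In \conditionII\ the separating meridian disk $D$ of Claim~\ref{claim:SD} imposes a further constraint: $\partial D\subset\hatF$ is disjoint from $Q$ and $K$ and separates $\{1,2,3,4\}$ from $\{5,6,7,8\}$, so no edge of $G_F$ connects these two vertex sets. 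This immediately forbids $\arc{cd}=\arc{67}$ and restricts the admissible vertex placements for $\arc{cd}\in\{\arc{45},\arc{81}\}$; the remaining configurations are then handled by the same amalgamation argument.

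The main obstacle will be verifying essentiality of $\partial D'$ in $\hatF$ in each subcase, and in the $\arc{cd}=\arc{67}$ subcase choosing $d_{23}\subset A_{23}$ and $d_{67}\subset A_{67}$ so that their interiors are simultaneously disjoint from each other and from $g$ --- possibly requiring an intermediate banding along a subdisk of $A_{67}$. Careful bookkeeping of $\partial D'\cap\hatF$ relative to $K\cap\hatF$, and of the isotopy classes of its constituent arcs, will be needed to rule out degenerate situations in which $\partial D'$ bounds a disk in $\hatF$ without producing a usable thinning.
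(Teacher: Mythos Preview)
Your approach has two genuine gaps.

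First, there is no bridge disk $d_{23}$ ``inherent in $A_{23}$''. The \mobius band $A_{23}$ has $\arc{23}$ as a spanning arc, and cutting $A_{23}$ along $\arc{23}$ yields a disk whose boundary traverses $\arc{23}$ \emph{twice} (once from each side), together with $\bdry A_{23}$; this is not a bridge disk. You could instead obtain bridge disks $d_{23},d_{cd}$ from Lemma~\ref{lem:bridgedisksdisjoitfrommobius}, but that lemma gives no control over their intersection with $g$, so $D'=d_{23}\cup g\cup d_{cd}$ need not be embedded.

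Second, and more seriously, even granting an embedded $D'$, pushing $\arc{23}$ and $\arc{cd}$ across it does \emph{not} reduce bridge number. These two White arcs are non-adjacent in $K$: a Black arc (e.g.\ $\arc{34}$ when $\arc{cd}=\arc{45}$) sits between them. After pushing both into $H_B$, the resulting long Black arc $\arc{12}\cup\beta_{23}\cup\arc{34}\cup\beta_{cd}\cup\cdots$ has no reason to be $\bdry$-parallel in $H_B$, so the new position need not be a bridge (or even thinner) position. Lowering $|K\cap\hatF|$ by an isotopy is not the same as lowering bridge number; your appeal to Lemma~\ref{lem:monogon} does not help here, since that lemma concerns thin level surfaces when thin position is not bridge position.

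The paper's proof is organized quite differently and relies essentially on the $\bdry$-parallelism of the Black annulus $A_{12,34}$ onto $B_{12,34}\subset\hatF$ (established just before this lemma). The $\arc{67}$ case is ruled out by a placement argument on $\hatF-\bdry A_{12,34}$, and \conditionII\ is immediate from Claim~\ref{claim:SD}. For $\arc{cd}=\arc{45}$ (similarly $\arc{81}$), the paper uses Lemma~\ref{claim:arcsinmobius} to replace $\arc{12},\arc{34}$ within the long \mobius band $A_{23}\cup A_{12,34}$ by arcs $r',r$, chosen so that $r$ has a Black bridge disk meeting $\hatF$ exactly in the $\edge{34}$-edge of $g$. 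Now the three \emph{consecutive} arcs $\arc{23}\cup r\cup\arc{45}$ collapse via $g$ onto the single $\edge{25}$-edge --- a genuine thinning. The ingredient you are missing is precisely this use of the $\bdry$-parallelism to align the intermediate Black arc with an edge of $g$.
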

\begin{proof}
We may assume we are in \conditionI\ as otherwise by Claim~\ref{claim:SD} there are no edges of $G_F$ connecting vertices $\{1,2,3,4\}$ with $\{5,6,7,8\}$.

Assume $g$ is a bigon of $\Lambda$ with a $\arc{23}$-corner that is not 
an \SC\ (the argument for the $\arc{67}$-corner is analogous).  
Hence its other corner is a $\arc{45}$-, $\arc{67}$-, or an $\arc{81}$-corner.  It cannot be a $\arc{67}$-corner since then, by the orderings of labels
on $\hatF$ around vertices $2$ and $3$, its $\edge{27}$-edge and $\edge{36}$-edge must 
lie on different components of $\hatF - \bdry A_{12,34}$ --- contradicting
that vertices $6,7$ of $G_F$ are connected by an edge (of $\tau'$).  Let us therefore assume that $g$ has a $\arc{45}$-corner;  the argument for a $\arc{81}$-corner is similar.

Let $r$ be an arc in the annulus $A_{12,34}$ sharing endpoints with $\arc{34}$ that projects through the $\bdry$-parallelism of $A_{12,34}$ onto the $\edge{34}$-edge of $g$. Note that
up to isotopy rel endpoints, $r$ is just $\arc{34}$ twisted along 
$\bdry A_{12,34}$. So we may take $r$ to have a single critical value (indeed
the same as for $\arc{23}$) under the
height function on $M$ for the thin presentation of $K$. Let $r'$ be an arc in the annulus $A_{12,34}$ disjoint from $r$ and sharing endpoints with $\arc{12}$. Similarly $r'$ can be taken to
have a single critical value with respect to the height function on $M$. Then $r' \cup \arc{23} \cup r$ and $\arc{12} \cup \arc{23} \cup \arc{34}$ are two properly embedded, non-$\bdry$-parallel arcs in the \mobius band $A_{23} \cup A_{12,34}$ with the same boundary.  By Lemma~\ref{claim:arcsinmobius}, these two arcs are isotopic rel-$\bdry$ within this long \mobius band. 
After this isotopy the bridge arcs 
$\arc{34},\!\arc{23},\!\arc{12}$ are replaced with bridge arcs $r, \arc{23}, r'$.
We may now isotop $\arc{23} \cup r \cup \arc{45}$, rel $\bdry$, onto
the $\edge{25}$-edge of $g$: 
isotop $r$ onto the $\edge{34}$-edge of $g$ using the $\bdry$-parallelism of $A_{12,34}$, then use $g$ to guide the remainder of the isotopy.
Perturbing the result
slightly into $H_W$ gives a smaller bridge presentation of $K$.
\end{proof}

\begin{lemma}\label{lem:partD}
 There is a $\edge{23}$-edge class in $G_F$ that contains 
an edge of every $\arc{23}$-\SC\ of $\Lambda$.  The analogous statement for  
$\arc{67}$-\SCs\ also holds.
\end{lemma}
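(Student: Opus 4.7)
The plan is to prove the statement for $\arc{23}$-\SCs\ (the $\arc{67}$ case is identical by the symmetry between $\tau$ and $\tau'$).  I would identify a distinguished $\edge{23}$-edge class $\mathcal{E}$ containing an edge of the core \SC\ of $\tau$ and show that every $\arc{23}$-\SC\ of $\Lambda$ has an edge in $\mathcal{E}$.  The argument is by contradiction: assume some $\arc{23}$-\SC\ $\sigma$ has neither of its two $\edge{23}$-edges in $\mathcal{E}$.

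The first step is to gather geometric data about the corresponding boundary curves on $\hatF$.  The almost properly embedded \mobius bands $A_\tau$ and $A_\sigma$ both lie in $H_W$ and share only the arc $\arc{23}$ of $K$, so they can be isotoped to be otherwise disjoint; hence $c_\tau=\bdry A_\tau$ and $c_\sigma=\bdry A_\sigma$ are disjoint simple closed curves on $\hatF$, each passing through the fat vertices $2$ and $3$ of $G_F$, essential (Lemma~\ref{lem:LMBess}), non-isotopic on $\hatF$ (otherwise a cobounding annulus joined with both \mobius bands gives a Klein bottle in $M$), and non-separating (otherwise Lemma~\ref{lem:3disjointmobiusbands}(2) produces a Dyck's surface).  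Both are disjoint from $\bdry B$, where $B$ is the annulus on $\hatF$ to which $A_{12,34}$ is parallel, because $c_\tau,c_\sigma$ consist of $\edge{23}$-edges and arcs on vertices $2,3$ while $\bdry B$ uses $\edge{14}$-edges and arcs on vertices $1,4$.  Since $\Int B\cap K=\emptyset$ by Lemma~\ref{lem:LMB}, vertices $2,3$ lie outside $B$, so $c_\tau$ and $c_\sigma$ both lie in the twice-punctured torus $\hatF\setminus B$; in that surface $c_\tau$ is $\bdry$-parallel (it is isotopic on $\hatF$ to $\bdry B$) while $c_\sigma$ is not, else the two would be isotopic on $\hatF$.

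The main step is an arc-replacement and isotopy argument modeled on the proof of Lemma~\ref{lem:23and67areSC}.  Using the $\bdry$-parallelism of $A_{12,34}$ through the solid torus $V$ supplied by Lemma~\ref{lem:esc}, I would isotope $\arc{12}$ and $\arc{34}$ along $A_{12,34}$ to parallel arcs $r,r'$ on $A_{12,34}$, and then apply Lemma~\ref{claim:arcsinmobius} inside the long \mobius band $A_\tau\cup A_{12,34}$ to replace the bridge arc $\arc{1234}$ by an arc whose central subarc traces a chosen $\edge{23}$-edge of $\tau$'s core representing $\mathcal{E}$.  The failure of $\sigma$'s edges to lie in $\mathcal{E}$ translates to the nonexistence of a bigon on $\hatF$ cobounded by this edge of $\tau$ and an edge of $\sigma$ together with short arcs on vertices $2,3$.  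Combining this combinatorial constraint with the face $g_\sigma$ used as a guiding disk for an isotopy of the repositioned $\arc{23}$, together with Lemma~\ref{lem:bridgedisksdisjoitfrommobius} for bridge disks of the remaining White arcs of $K$, I would derive either (a) such a bigon after all --- contradicting the assumption --- or (b) an isotopy of $\arc{23}$ through $g_\sigma$ onto $\hatF$, thinning $K$ below the minimum $t=8$, again a contradiction.  The hardest part is executing this dichotomy rigorously; in \situationscc\ one must additionally track the separating meridian disk $D$ of Claim~\ref{claim:SD} so that the isotopy avoids it, while in \situationnscc\ the combinatorics of the position of $c_\sigma$ in $\hatF\setminus B$ relative to the other vertex disks drives the construction.
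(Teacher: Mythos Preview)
Your proposal has a genuine gap in the ``main step.''  The thinning argument of Lemma~\ref{lem:23and67areSC} relies essentially on the White bigon $g$ having one $\arc{23}$-corner and one \emph{different} corner (a $\arc{45}$-, $\arc{61}$-, or $\arc{67}$-corner): after the arc-replacement on $A_{12,34}$, the bigon guides an isotopy of the arc $\arc{23}\cup r\cup\arc{45}$ (say) onto its $\edge{25}$-edge, giving a thinning.  But here $g_\sigma$ is the face of a $\arc{23}$-\SC, so \emph{both} of its corners are $\arc{23}$.  The face $g_\sigma$ assembles with $\arc{23}$ into a \mobius band $A_\sigma$ with $\arc{23}$ as its core, not as a spanning arc you could push to $\hatF$.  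There is no arc $\arc{23}\cup r\cup\arc{ab}$ with $ab\neq 23$ that $g_\sigma$ guides to an edge, so alternative~(b) does not materialize.  Alternative~(a) --- ``derive such a bigon after all'' --- is just restating what you want, and you yourself flag the dichotomy as ``the hardest part'' without giving a mechanism.

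There is also a logical wrinkle: you fix in advance a single distinguished class $\mathcal{E}$ ``containing an edge of the core \SC\ of $\tau$,'' but the core has two edges $e_1,e_2$ and it matters which you choose.  The paper's proof does not pick one a priori; it shows that \emph{one of} the classes of $e_1,e_2$ must work.  Concretely, it observes (from the labeling around vertices $2,3$ and the parallelism of $A_{12,34}$ into $B_{12,34}$) that any $\arc{23}$-\SC\ has exactly one edge inside $B_{12,34}$, parallel to $e_1$ or to $e_2$.  If neither class works, there exist \SCs\ $\sigma_1$ missing the $e_1$-class and $\sigma_2$ missing the $e_2$-class; banding the core face $f$ of $\tau$ to $g_1$ (resp.\ $g_2$) along their parallel edges produces disjoint disks $D_1,D_2$ in $H_W$, disjoint from $A_{23}$ and $A_{67}$.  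These must be separating (else a Klein bottle or projective plane appears), forcing $\bdry D_1$ to separate $\bdry D_2$ from $\bdry A_{23}$ on $\hatF$ --- but the edge-ordering at vertices $2,3$ shows this is impossible.  (In \situationscc\ the argument is shorter: after surgering along $D$ there are at most three $\edge{23}$-classes on the torus $T_1$, and three \SCs\ realizing all three pairs would band to a disk whose boundary is $\bdry A_{23}$, yielding a projective plane.)  This banding-to-disks mechanism is the missing idea; your outline has no analogue of it.
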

\begin{proof}
 We prove this for $\arc{23}$-\SCs. The same proof works
for $\arc{67}$-\SCs. 

We assume first that we are in \conditionI.
Let $e_1, e_2$ be the edges of the $\arc{23}$-\SC\ in $\tau$, and let $f$ be the face that they bound. We assume for contradiction that there is a 
$\arc{23}$-\SC, $\sigma_1$, with no edge parallel to $e_1$ in $G_F$, and a
$\arc{23}$-\SC, $\sigma_2$, of $G_Q$ with no edge parallel to $e_2$. 
Let $g_1, g_2$ be the faces of $G_Q$ bounded by $\sigma_1, \sigma_2$. 
Because of the orderings of the labels around vertices of $G_F$, 
one edge of $g_i$
must lie in the annulus of $\hatF$ bounded by $\bdry A_{12,34}$. This
implies that one edge of $g_i$ is parallel to $e_j$ where $\{i,j\}=\{1,2\}$
(note that the interior of this annulus is disjoint from $K$).   By 
identifying $f$ with $g_1$ along their parallel edges (in the class of $e_2$)
we get a disk $D_1$ properly embedded in $H_W$ whose boundary is given
by the curve $e_1 \cup e^1$ where $e^1$ is the edge of $\sigma_1$ not parallel
to $e_2$.  Similarly identifying $f$ with $g_2$, we get a meridian disk $D_2$ of $H_W$ whose boundary is the curve $e_2 \cup e^2$, where $e^2$ is the other edge of $\sigma_2$. By looking at the ordering of these edges around the vertices $2, 3$ of $G_F$ we see that we can take $D_1,D_2, A_{23}$ (along with $A_{67}$) to be disjoint.  Both $D_1,D_2$ must be separating in $H_W$ (else there is a Klein bottle or projective plane in $M$). Then $\bdry D_1$, say, must separate $\bdry D_2$ from $\partial A_{23}$.
But again looking at the ordering of the edges of these \SCs\
around vertices $2,3$ of $G_F$ shows that this does not happen. 

Thus we assume we are in \conditionII.  Let $D$ be the separating disk of $H_B$
given by Claim~\ref{claim:SD}.  Then there can be at most three edge-classes of $\edge{23}$-edges in $G_F$ (surgering $\hatF$ along $D$ gives two 2-tori, 
one containing $\bdry A_{12,34}$ and the other $\bdry A_{56,78}$).  If the Lemma is false, then there must be exactly three such edge-classes  and there must be three $\arc{23}$-\SCs\ of length two representing each pair of these edge-classes.  One of these \SCs\ is that of $\tau$, and again let $e_1,e_2$ be its edges. 
The other two of these \SCs\ $\sigma_1,\sigma_2$,
each have an edge in the $\edge{23}$-edge class, $\epsilon$,
not represented by $e_1,e_2$.  Let $f_1,f_2$ be the faces bounded by
$\sigma_1,\sigma_2$ in $G_Q$.  Identifying $f_1,f_2$ along their edges
in class $\epsilon$ gives a disk $D'$ which is almost properly embedded
in $H_W$ whose boundary is the curve $e_1 \cup e_2$ in $\hatF$. Then
Lemma~\ref{lem:AEGor} implies that
$\bdry D'$ bounds a disk, $D''$ on one side of $\hatF$. But then $A_{23}$ 
and $D''$ can be used to construct a projective plane in $M$, a contradiction. 
\end{proof}

\begin{cor}\label{cor:gapsat23and67}
At every vertex of $\Lambda$ there are at most two bigons of $\Lambda$ at $\arc{23}$-corners and at most two bigons  at $\arc{67}$-corners.
\end{cor}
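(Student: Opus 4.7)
My plan is to derive this corollary by combining Lemmas~\ref{lem:twoBbigons}, \ref{lem:23and67areSC}, \ref{lem:partD}, and \ref{lem:parallelwithsamelabel}. First I would observe that every bigon of $\Lambda$ with a $\arc{23}$-corner (or $\arc{67}$-corner) must be White: Lemma~\ref{lem:twoBbigons} restricts Black bigons to $\arc{12},\!\arc{34}$- and $\arc{56},\!\arc{78}$-bigons, ruling out either corner type. Then Lemma~\ref{lem:23and67areSC} promotes each such White bigon to a Scharlemann cycle, so every bigon at a $\arc{23}$-corner of $\Lambda$ is in fact a $\arc{23}$-\SC\ (and similarly for $\arc{67}$-corners).

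Next I would invoke Lemma~\ref{lem:partD} to produce a single $\edge{23}$-edge class $\epsilon$ in $G_F$ such that every $\arc{23}$-\SC\ of $\Lambda$ contains an edge in $\epsilon$. Suppose for contradiction that some vertex $v$ of $\Lambda$ carries three distinct bigons $\sigma_1, \sigma_2, \sigma_3$ at $\arc{23}$-corners. Each $\sigma_i$ is a $\arc{23}$-\SC\ whose two edges are incident to $v$, one at label $2$ and one at label $3$, and each $\sigma_i$ contributes an edge $e_i \in \epsilon$. By the pigeonhole principle, at least two of the $e_i$ meet $v$ at the same label, say label $2$.

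Now I would translate this back to $G_F$: two edges in the common parallelism class $\epsilon$ both incident to the vertex $2$ of $G_F$ at the same label $v$ directly violate Lemma~\ref{lem:parallelwithsamelabel}. This contradiction forces at most two bigons at $\arc{23}$-corners of $v$. The statement for $\arc{67}$-corners follows by the identical argument applied to the $\edge{67}$-edge class provided by the ``analogous statement'' clause of Lemma~\ref{lem:partD}.

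Since this is a genuine corollary --- the preparatory work is all in Lemma~\ref{lem:partD} --- I do not expect a real obstacle; the only thing to be careful about is the bookkeeping between labels on a fat vertex of $G_Q$ and labels on vertices of $G_F$, so that the phrase ``meet at the same label'' in Lemma~\ref{lem:parallelwithsamelabel} is applied correctly.
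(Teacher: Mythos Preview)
Your argument is essentially the paper's: reduce to $\arc{23}$-\SCs\ via Lemma~\ref{lem:23and67areSC}, pick the common class $\epsilon$ from Lemma~\ref{lem:partD}, pigeonhole, and apply Lemma~\ref{lem:parallelwithsamelabel}. Two small points are worth tightening.

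First, the detour through Lemma~\ref{lem:twoBbigons} is unnecessary: a bigon with a $\arc{23}$-corner is automatically White, since the $\arc{23}$-arc of $K$ lies in $H_W$ in this setup.

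Second, and more substantively, the corollary is used in the proof of Proposition~\ref{prop:nodisjtESC} to conclude that at least one $\arc{23}$-\emph{corner} at a vertex is a gap; i.e.\ it bounds the number of corners carrying bigons, not the number of distinct bigon faces. Your contradiction hypothesis ``three distinct bigons $\sigma_1,\sigma_2,\sigma_3$'' misses the case where one $\arc{23}$-\SC\ has \emph{both} of its corners at $v$, so that three $\arc{23}$-corners carry bigons but only two distinct \SC\ faces occur. The paper handles this in a parenthetical: if $\sigma_1$ is such a loop \SC, its edge $e_1\in\epsilon$ has label $v$ at \emph{both} endpoints in $G_F$ (at vertex~$2$ and at vertex~$3$), so whatever label the other \SC's edge $e_2\in\epsilon$ carries at $v$, $e_1$ and $e_2$ are parallel in $G_F$ and meet the same vertex of $G_F$ at the label $v$, again violating Lemma~\ref{lem:parallelwithsamelabel}. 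This is an easy patch, but without it your argument does not quite deliver the statement in the form it is later applied.
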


\begin{proof}
Lemma~\ref{lem:23and67areSC} says that any bigon of $\Lambda$ at a $\arc{23}$- or $\arc{67}$-corner must be an \SC.  By Lemma~\ref{lem:partD}, there is a $\edge{23}$-edge class containing an edge of any $\arc{23}$-\SC\ and a $\edge{67}$-edge class containing an edge of any $\arc{67}$-\SC.  Hence if there were three $\arc{23}$-\SCs\ or three $\arc{67}$-\SCs\ at a vertex, then two of the edges would be parallel on $G_F$ meeting a vertex at the same label.   Lemma~\ref{lem:parallelwithsamelabel} forbids this.
\end{proof}

\begin{proof}[Proof of Proposition~\ref{prop:nodisjtESC}]
By Lemmas~\ref{lem:t8trulyspecial} and \ref{lem:N3specialtypes}, $\Lambda$ has a vertex $v$ of type $[8\Delta-5]$ (there are no $2$-\ESCs).  Hence $v$ has at most $5$ gaps, i.e.\ corners to which bigons of $\Lambda$ are not incident.  By Lemma~\ref{lem:partB}, without loss of generality there exists a gap at a $\arc{12}$-corner and a $\arc{34}$-corner of $v$.    By Corollary~\ref{cor:gapsat23and67}, there must be a gap at a $\arc{23}$-corner and a $\arc{67}$-corner.  This accounts for $4$ of the gaps.  We now enumerate and rule out the possibilities for the remaining gap.

Since $\Delta \geq 3$, $v$ has at least three runs of the sequences of labels $4567812$. The $\arc{23}$-gap and $\arc{34}$-gap are not contained
in these sequences.  Each such sequence must have at least one gap, else
by Lemma~\ref{lem:23and67areSC}, there would be a $2$-\ESC\ (contradicting our assumptions). Thus $\Delta=3$ and the $\arc{67}$-gap, the $\arc{12}$-gap, and the fifth gap must be in different runs of the sequence.  But the run containing the $\arc{12}$-gap will have five consecutive bigons on $456781$, and, as above, 
Lemma~\ref{lem:23and67areSC} guarantees a $2$-\ESC.
\end{proof}


\section{$t < 6$.}

The goal of this section is the proof of 
\begin{thm} \label{thm:tnot6}
Either $M$ contains a Dyck's surface or $t < 6$.
\end{thm}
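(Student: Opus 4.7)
The plan is to assume $t=6$ and derive a contradiction, following exactly the template of the proofs of Theorems~\ref{thm:tleq8} and~\ref{thm:tleq6}. I would treat \situationnscc\ and \situationscc\ separately, and within each, stratify by the largest $r$ for which $\Lambda$ contains a proper $r$-\ESC. Lemma~\ref{lem:esc} and (in \situationscc) the arguments of Lemmas~\ref{lem:t8nesc}--\ref{lem:n3escscc}, suitably adapted to $t=6$, force $r\le 2$, so there are three subcases: $r=2$, $r=1$, and $r=0$.

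Case $r=0$ should be immediate: Corollary~\ref{cor:bigonsforall} places each of the six labels in an \SC, and the resulting \SCs\ must contain three on pairwise disjoint label pairs (using Lemma~\ref{lem:parallelwithsamelabel} to rule out coverings by overlapping consecutive pairs with repeated parallel edges at a vertex of $G_F$). Lemma~\ref{lem:3disjtSC}, together with Lemma~\ref{lem:3disjointmobiusbands}, then produces a Dyck's surface in $M$, contradicting the standing hypothesis. Case $r=2$ should also be compact: a proper $2$-\ESC\ $\tau$ necessarily uses all six labels, Lemma~\ref{lem:esc}(2) pins down the associated long \mobius band $A_1\cup A_2\cup A_3$ with $\partial A_3$ parallel on $\hatF$ and $\partial A_2$ non-isotopic, and Corollary~\ref{cor:bigonsforall} forces additional bigons of $\Lambda_x$ for $x$ an outside label; I would enumerate the resulting \SC/\ESC\ possibilities and eliminate each via Lemma~\ref{lem:disjtmobiusannulus} (producing Dyck's surfaces), Lemma~\ref{lem:3disjtSC} (three disjoint \SCs), or the standard Klein-bottle obstruction from amalgamating $A_i$ with a flanking \mobius band. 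In \situationscc, the separating meridian disk provided by an analog of Claim~\ref{claim:SD}, together with the Addendum to Lemma~\ref{lem:PLMB}, imposes a rigid partition of the vertices of $G_F$ across the (at most two) torus components of $F^*$ that makes most of the remaining candidates impossible on the nose.

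The case $r=1$ will be the main obstacle, and I would run it parallel to Case~B of Proposition~\ref{prop:tleq6nscc}: fix a $1$-\ESC\ $\tau$ on $\arc{2345}$, use Corollary~\ref{cor:bigonsforall} to produce bigons in $\Lambda_x$ for each of the outside labels $\{6,1\}$, and enumerate the candidate \SC/\ESC\ configurations. Each is then eliminated by Lemma~\ref{lem:noabbuttinglength4} (no two $1$-\ESCs\ sharing exactly one label), Proposition~\ref{prop:nodisjtESC} (no two disjoint $1$-\ESCs), and Lemma~\ref{lem:3disjtSC} (no three disjoint \SCs). A special vertex of weight $N=4$, guaranteed by Lemmas~\ref{lem:t6trulyspecial} and~\ref{lem:N4trulyspecialtypes}, concentrates enough bigons at a single vertex $v$ of $\Lambda$ that these forbidden patterns become unavoidable. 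In the few configurations that survive the pure combinatorics, I expect to need a bridge-reducing move analogous to Corollary~\ref{cor:make3bridge}, converting a matched \ESC/\SC\ pair (with intersecting boundary \mobius bands) into a new genus-two Heegaard splitting with respect to which $K$ has bridge number strictly less than $3$, contradicting our minimality assumption. In \situationscc, the meridian disk $D$ and the Addendum to Lemma~\ref{lem:PLMB} again sharply restrict which configurations are realizable and should close the argument, as in Proposition~\ref{prop:tleq6scc}. The hardest part will be the bookkeeping of the enumeration in Case $r=1$: with only six labels the potential overlaps among candidate bigons are greater than in the $t=8$ case, but conversely the geometric obstructions (three disjoint \mobius bands in a genus-two manifold, Klein-bottle/projective-plane constructions) are correspondingly tighter, and I expect this tension to resolve the argument after a finite but somewhat tedious case analysis.
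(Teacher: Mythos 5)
Your stratification by $r$, the maximal length of a proper $\ESC$, is in substance the same decomposition the paper uses: $r=2$ is the paper's Case~A, $r=0$ is Case~C, and $r=1$ lumps the paper's Cases~B and~D together. The handling of $r=0$ and $r=2$ is essentially on track. The genuine problem is in $r=1$, which you correctly flag as the hard case but then propose to close with tools that do not apply when $t=6$.

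Specifically, Lemma~\ref{lem:noabbuttinglength4} and Proposition~\ref{prop:nodisjtESC} rule out two $1$-\ESCs\ whose label sets meet in one label or are disjoint; with only six labels, two proper $1$-\ESCs\ occupy $4+4$ slots and so must always overlap in at least two labels, making both statements vacuous here and useless for pruning the configurations that actually arise. Worse, Corollary~\ref{cor:make3bridge} produces a splitting in which $K$ is $3$-bridge, i.e.\ $t=6$; this is a contradiction when your standing hypothesis is $t=8$, but it is exactly the situation you are trying to rule out when $t=6$, so it gives nothing. The paper's Case~B (two $1$-\ESCs\ overlapping in two labels) does not use any of these; it analyzes directly whether the two extending annuli $A_{12,34}$, $A_{34,56}$ are separating or not in $H_B$ and in each subcase builds a genus-two splitting with respect to which $K$ is at most $2$-bridge. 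The paper's Case~D (a $1$-\ESC\ together with an $\SC$ on the disjoint label pair) is a full section, Proposition~\ref{prop:tis6caseD}, driven by a weight-$N=4$ special vertex of type $[13,4]$, $[14,1]$, or $[15]$, and a chain of constraints on $\arc{23}$-trigons, spanning arcs of $B_{12,34}$, and edge parallelism classes (Lemmas~\ref{lem:twoparallismsforSC}, \ref{lem:nospanningarc}, \ref{lem:singlenonSCWbigon}, \ref{lem:no4561esc}, \ref{lem:trigonswitha23corner}, \ref{lem:sequencesofbigons}, etc.). Your proposal neither anticipates the need for this machinery nor supplies a replacement for it, so the $r=1$ branch remains open as written.
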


\begin{proof}
For contradiction we assume $M$ contains no Dyck's surface and, 
by the earlier sections, that $t=6$. Given Corollary~\ref{cor:bigonsforall},
there are four ways in which $\Lambda_x$ contains a bigon for each $x$:
  
\begin{itemize}
\item[{\bf A.}]  There is a $2$-\ESC\ in $\Lambda$.
\item[{\bf B.}]  There are two $1$-\ESCs\ in $\Lambda$ whose label sets overlap in exactly two labels.
\item[{\bf C.}] There are three \SCs\ whose corresponding \mobius bands are disjoint.
\item[{\bf D.}] There is a $1$-\ESC\ and a disjoint \SC\ in $\Lambda$.
\end{itemize}

Hence we proceed to address these four cases. The arguments will need to
account for each of the two possibilities: \conditionI\ and \conditionII.

\begin{caseA}
There is a $2$-\ESC\ in $\Lambda$.
\end{caseA}

Assume $\sigma$ is a $2$-\ESC\ in $\Lambda$ with corner $\arc{612345}$ 
so that it contains a $\arc{23}$-\SC.  Let $A_{23}$ be the White, say, 
\mobius band, $A_{12,34}$ be the Black annulus, and $A_{61,45}$ be the White 
annulus of the long \mobius band corresponding to $\sigma$, where the 
subscripts indicate the subarcs of $K$ lying in these surfaces.
  
{\bf Subcase A(i).} \conditionI\ holds.

Then $A_{23},A_{12,34}, A_{61,45}$ are properly embedded on each side of 
$\hatF$. Furthermore, by Lemma~\ref{lem:esc},  $\bdry A_{12,34}$ is 
non-separating in $H_B$ and $A_{61,45}$ is isotopic in $H_W$ onto $\hatF$.

\begin{lemma}\label{t6nsd}
There is a non-separating disk $D$ in $H_B$ disjoint from $A_{12,34}$ and
all arcs of $K \cap H_B$.
\end{lemma}

\begin{proof}
One can find a bridge disk for the arc $\arc{12}$ of $K$ 
whose interior is disjoint from 
$A_{12,34}$ and $K$. Using this to $\bdry$-compress a push-off of $A_{12,34}$
gives the desired disk.
\end{proof}

Let $\calT$ be the solid torus obtained by cutting $H_B$ along $D$.  
Then $A_{12,34}$ is properly embedded in $\calT$ and $A_{23},A_{61,45}$ in
$M - \calT$.  Then $a(\sigma)$ is three parallel curves on $\bdry \calT$.
Let $B,B'$ be the annuli on $\calT$ between $\bdry A_{61,45}, \bdry A_{12,34}$
(resp.) on $\bdry \calT$ whose interiors are disjoint from $K$. Because
$A_{12,34}$ is non-separating in $H_B$, $B'$ intersects $\nbhd(D)$ in a 
disk. $B$ is disjoint from $\nbhd(D)$. The Addendum to 
Lemma~\ref{lem:LMB} applied to the long \mobius band $A_{23} \cup A_{12,34}$
(i.e.\ to the 1-\ESC), 
with $\bdry \calT$ as $F^*$, shows that
$B' \cup A_{12,34}$ bounds a solid torus, $V'$ whose interior is disjoint from
$K$ and in which $B'$ is longitudinal. That is, $V'$ guides an isotopy of
$A_{12,34}$ to $B'$, and, hence, an isotopy (rel endpoints) of the arcs 
$\arc{12}$ and $\arc{34}$
of $K$ onto $\hatF \cap B'$. At the same time, there is an isotopy in $H_W$
of $A_{61,45}$ onto $B$, and hence of the arcs $\arc{61}$ and $\arc{45}$
(rel endpoints) onto $B$. This allows us to thin $K$ to be 1-bridge,
contradicting that $t=6$ and thereby proving 
Theorem~\ref{thm:tnot6}
in Subcase A(i).

{\bf Subcase A(ii).} \conditionII\  holds.

There is a meridian disk, $D$, disjoint from $K$ and $Q$. Let $F^*$ be
$\hatF$ surgered along $D$.
By Lemma~\ref{lem:LMBess} and 
the Addendum to Lemma~\ref{lem:3PCC}, $F^*$ consists of 
two tori: $T_1$ containing
two components of $a(\sigma)$, and $T_2$ containing one. 
Finally, by the Addendum 
to Lemma~\ref{lem:LMB}, components
of $a(\sigma)$ that are isotopic on $F^*$ must be consecutive along the 
long \mobius band. Thus
the vertices of $G_F$ on $T_2$ are either 
\begin{itemize}
\item[(i)] \{2,3\}
\item[(ii)] \{5,6\}
\end{itemize}

Assume (i) holds.
The separation of vertices implies there are no bigons of $\Lambda$ with 
corner $\arc{56}$.  
(Since there are no $\edge{25}$-edges or 
$\edge{36}$-edges, such a bigon would have to be an \SC.  Its corresponding 
\mobius band would have boundary isotopic on $T_1$ to a component of 
$\bdry A_{61,45}$  
permitting the construction of an embedded Klein bottle.)  Furthermore, 
any face of $\Lambda$ containing a $\arc{23}$-corner must be a 
$\arc{23}$-\SC\, and, since $M$ contains no Klein bottles, 
the edges of any two such $\arc{23}$-\SCs\ 
must be parallel on $T_2$ (i.e.\ lie in two edge classes on $T_2$). 
Again by separation, any Black bigon of $\Lambda$ with either a $\arc{12}$-corner 
or a 
$\arc{34}$-corner must be a $\arc{12},\!\arc{34}$-bigon, and the $\edge{41}$-
edges of any such bigon must be parallel on $T_1$ to one of the $\edge{41}$-
edges of $\sigma$.  Thus by Lemma~\ref{lem:parallelwithsamelabel}, at most 
two $\arc{23}$-corners, at most two $\arc{12}$-corners, and at most two 
$\arc{34}$-corners of bigons of $\Lambda$ may be incident to a vertex of
$\Lambda$.  Since $\Delta \geq 3$, the above implies that 
a vertex of $\Lambda$ must have at least 
$6$ corners (in fact at least 9) not incident to bigons of $\Lambda$.  
So Lemmas~\ref{lem:t6trulyspecial} 
and \ref{lem:N4trulyspecialtypes} imply that $\Lambda$ must have an 
edge class containing $8$ edges.  But among $8$ consecutive mutually 
parallel edges of $\Lambda$ one must have a bigon at a $\arc{56}$-corner.

Thus we may assume that (ii) occurs. We are assuming $\arc{45}$ of $K$ lies
in $H_W$; thus, by separation, the meridian disk $D$
along which we surger to get $F^*$ must be a meridian of $H_B$. 
Then $H_B - \nbhd(D)$ is two solid tori, $\calN = \calN_1 \cup \calN_2$ where 
$\bdry \calN_i
= T_i$.
After surgery along trivial curves of intersection on $F^*$,
we may take $A_{23}, A_{12,34},A_{61,45}$ to be properly embedded in $\calN$ or
its exterior. 

First assume $\bdry A_{61,45}$ is longitudinal in each of the solid tori
$\calN_1$ and $\calN_2$. Then $W = \calN \cup \nbhd(A_{23} \cup A_{61,45})$ 
is a solid 
torus. Since $K$ lies entirely in $W$, the exterior of $K$ is irreducible 
and atoroidal, and $M$ is not a lens space, $K$ must be isotopic to a core 
of $W$. But then the core, $L$, of the solid torus $\calN_1$ is a (2,1)-cable 
of $K$. As $L$ is a core of $H_B$,  Claim~\ref{claim:ckt1} contradicts that $t=6$.

Next assume that $\bdry A_{61,45}$ is not longitudinal in $\calN_2$. 
Let $A'= A_{23} \cup A_{12,34} \cup A_{61,45}$ be the long \mobius band
properly embedded in the exterior of $\calN_2$
and set $U = \calN_2 \cup \nbhd(A')$. Then $U$ is Seifert fibered over the
disk with two exceptional fibers. We may isotop $K$ in $U$ so that $K$ is 
the union of two arcs: $\alpha$ in $\bdry U \cap \nbhd(A')$, and $\beta$ 
isotopic to the arc
$\arc{56}$ of $K \cap \calN_2$. Since $\arc{56}$ is bridge in $H_B$, it is
bridge in $\calN_2$. Let $\gamma$ be a cocore of the annulus
$\nbhd(A') \cap \calN_2$. Then $V = U - \nbhd(\gamma)$ is
a genus two handlebody in which $\beta$ is bridge (the intersections of a 
bridge disk with $\nbhd(A') \cap \calN_2$ can be isotoped onto 
$\nbhd(\gamma)$). Furthermore,
$M - U$ must be a solid torus (the exterior of $K$ is irreducible and 
atoroidal); hence, $M - V$ is genus 2 handlebody.
Thus $K$ is at most 1-bridge ($t \leq 2$) with respect to the 
Heegaard handlebody $V$ of $M$
--- contradicting that $t=6$. 

Thus $\bdry A_{61,45}$ must be longitudinal in $\calN_2$ and hence must not be
longitudinal in $\calN_1$. Let $U = \calN_1 \cup \nbhd(A_{23})$ and $A'=
A_{23} \cup A_{12,34}$. Then $U$ is a Seifert fiber space over the
disk with two exceptional fibers and $A'$
is a properly embedded \mobius band in $U$ whose boundary is a Seifert fiber.
As $K \cap U$ is a spanning arc of $A'$,
Lemma~\ref{lem:sSFS1bridge} contradicts that $t=6$.

This final contradiction finishes the proof of Theorem~\ref{thm:tnot6}
in Subcase A(ii).

\begin{caseB}
There are two $1$-\ESCs\ in $\Lambda$ whose label sets overlap in two labels.
\end{caseB}

First note that we may assume in Case B that \conditionI\ holds. For if \conditionII\ holds, there is a meridian disk $D$ of either $H_W$ or $H_B$ disjoint from $Q$ and $K$. WLOG we may assume the two \ESCs\ are as in Figure~\ref{fig:twoESC}. The edges of Figure~\ref{fig:twoESC} show that either $\bdry D$ on $\hatF$ must separate vertices $\{1,4,5\}$ from $\{2,3,6\}$ or there is a projective plane or Klein bottle in $M$. The first is impossible as $D$ is disjoint from $K$, the second since the surgery slope is non-integral.

Assume there are two \ESCs\ $\sigma$ and $\sigma'$ on the corners $\arc{1234}$ and $\arc{3456}$ respectively as shown in Figure~\ref{fig:twoESC}.  
\begin{figure}
\centering
\input{twoESC.pstex_t}
\caption{}
\label{fig:twoESC}
\end{figure}
Let $A_{23}$ and $A_{45}$ be the two White \mobius bands arising from the \SCs\ contained within $\sigma$ and $\sigma'$.  Let $A_{12,34}$ and $A_{34,56}$ be the two Black annuli arising from the remaining two pairs of bigons. As we are in 
\conditionI\, we have that $A_{23},A_{12,34},A_{45},A_{34,56}$ are properly embedded on their sides of $\hatF$.

If either $A_{12,34}$ or $A_{34,56}$ is separating in $H_B$, then since $A_{12,34} \cap A_{34,56} = \arc{34}$ they cannot intersect transversely.
Hence $A_{12,34}$ and $A_{34,56}$ may be slightly isotoped to be disjoint.  In particular, after this isotopy we may assume $\bdry A_{23}$ is disjoint from $\bdry A_{34,56}$ (and isotopic to neither component) and similarly $\bdry A_{45}$ is disjoint from $\bdry A_{12,34}$.  Then by Lemma~\ref{lem:disjtmobiusannulus} the components of $\bdry A_{12,34}$ must be parallel and the components of $\bdry A_{34,56}$ must be parallel.  Thus both of these annuli are separating.

Therefore either both $A_{12,34}$ and $A_{34,56}$ are separating in $H_B$ or both are non-separating in $H_B$.

\noindent {\bf Subcase B(i).} Both $A_{12,34}$ and $A_{34,56}$ are separating in $H_B$.

As noted above, $A_{12,34}$ and $A_{34,56}$ must intersect non-transversely along the arc $\arc{34}$ and can be perturbed to be disjoint.
The annulus $A_{12,34}$ separates $H_B$ into a solid torus $\calT$ and a genus $2$ handlebody. As $M$ contains no Klein bottle, $A_{34,56}$ lies outside of $\calT$. Surgering along innermost closed curves and outermost arcs of 
intersection, we can find a bridge disk, $D_{34}$, for $\arc{34}$ of $K \cap H_B$ that intersects $A_{12,34}$ only along $\arc{34}$.  We first assume $D_{34}$ lies outside $\calT$, i.e.\ it intersects it only in the arc $\arc{34}$. 
Then $\bdry \nbhd(D_{34} \cup A_{12,34}) \cut \bdry H_B$ is an essential disk $D$ and an annulus $A$.

The annulus $A$ chops $H_B$ into a solid torus $\calT'$ on which one impression of $A$ on $\bdry \calT'$ runs $n \geq 1$ times longitudinally and a genus $2$ handlebody $H_B'$ that contains $A_{34,56}$ and $A_{12,34}$ such that $A_{12,34}$ is $\bdry$-parallel to the other impression of $A$ on $\bdry H_B'$. Then $D_{34}$ marks $\bdry A_{23}$ as a primitive curve on $H_B'$.  Also note that by banding across $A_{12,34} \cup D_{34}$, the arc $\arc{56}$ has a bridge disk $D_{56}$ that is disjoint from $\nbhd(A_{12,34} \cup D_{34})$ and thus from $D$.

Attach $\nbhd(A_{23})$ to $H_B'$ along the annulus $\nbhd(\bdry A_{23})$ to form $H_B''$.  Since $\bdry A_{23}$ is primitive in $H_B'$, $H_B''$ is a genus $2$ handlebody.   Note that the arc $\arc{1234}$ of $K \cap H_B''$ lies in a \mobius band in $H_B''$ and hence is bridge. The arc $\arc{56}$ is bridge in $H_B''$ as it has a bridge disk disjoint from $D$.

By Lemma~\ref{lem:bridgedisksdisjoitfrommobius} both arcs $\arc{45}$ and $\arc{61}$ have bridge disks in $H_W$ disjoint from $A_{23}$.  Thus they both have bridge disks in the genus $2$ handlebody $H_W' = H_W - \nbhd(A_{23})$.  Attach $\calT'$ to $H_W'$ along the annulus $A' = \bdry \calT' \cut A = \bdry \calT' \cap \bdry H_W$ to form $H_W''$.  Since $A'$ has $\bdry A_{23}$ as one of its boundary components it is primitive on $H_W'$ and thus $H_W''$ is a genus $2$ handlebody.  Moreover, since $\calT'$ is disjoint from $K$, so is $A'$; thus the bridge disks for $\arc{45}$ and $\arc{61}$ may be assumed to be disjoint from $A'$ as well.  Hence these two arcs are bridge in $H_W''$.

Therefore $H_B'' \cup H_W''$ is a new genus $2$ Heegaard splitting for $M$ in which $K$ has a $2$-bridge presentation.  This is contrary to assumption.  Hence it must be that $D_{34}$ lies in $\calT$.

\begin{remark} 
If $\partial A_{23}$ is longitudinal in $\calT'$, 
the new Heegaard splitting, $H_B'' \cap H_W''$,  comes from the old (up to isotopy) by adding/removing
a primitive \mobius band as described in the proof of Theorem~\ref{thm:changingHS}. If $\partial
A_{23}$ is not longitudinal in $\calT'$, then $M$ is a Seifert fiber space over
the $2$-sphere with an exceptional fiber of order $2$. In this case, we could 
find a vertical splitting with respect to which $K$ has smaller bridge number
by applying Lemma~\ref{lem:sSFS1bridge} to $\nbhd(A_{23}) \cup \calT'$, a Seifert fiber space
over the disk. This would then be consistent with the proof of Theorem~\ref{thm:changingHS}.
\end{remark}

Let $\calT''$ be the solid torus that $A_{34,56}$ separates off $H_B$. Since
$M$ contains no Klein bottles, $\calT$ lies outside of $\calT''$; hence, so
is $D_{34}$. Apply the above argument to $A_{34,56}$ in place of $A_{12,34}$
to get a 2-bridge presentation of $K$.

\noindent {\bf Subcase B(ii).} Both $A_{12,34}$ and $A_{34,56}$ are non-separating in $H_B$.

Since no pair of the four components of $\bdry A_{12,34} \cup \bdry A_{34,56}$ may be isotopic on $\hatF$ (else we get a Klein bottle), $A_{12,34}$ and $A_{34,56}$ intersect transversely along $\arc{34}$.  Since $H_B$ is a handlebody, $H_B - \nbhd(A_{12,34} \cup A_{34,56})$ is a single solid torus on which the annulus $\bdry H_B \cut \bdry  (A_{12,34} \cup A_{34,56})$ is a longitudinal annulus. That is, $H_B$ is
isotopic to $\nbhd(A_{12,34} \cup A_{34,56})$. Then $\bdry A_{23}$ is
primitive in $H_B$. Furthermore, 
the arc $\arc{56}$ has a bridge disk in $H_B$ disjoint from $A_{12,34}$.

Since $\bdry A_{23}$ is primitive in $H_B$, we may form the genus $2$ handlebody $H_B'$ by attaching $\nbhd(A_{23})$ to $H_B$ along $\nbhd(\bdry A_{23})$.  Its complement $H_W' = H_W - \nbhd(A_{23})$ is also a genus $2$ handlebody.  Thus together $H_W'$ and $H_B'$ form a new genus $2$ Heegaard splitting for $M$.

The White arcs $K \cap H_W$ other than $\arc{23}$ continue to be bridge in $H_W'$.  Furthermore, since there is a bridge disk $D_{56}$ in $H_B$ for the Black arc $\arc{56}$ that is disjoint from $A_{12,34}$, $D_{56}$ continues to be a bridge disk for $\arc{56}$ in $H_B'$.  Finally, the arc $\arc{1234}$ is bridge in $H_B'$ as it lies in the \mobius band $A_{12,34} \cup A_{23}$.

Thus the handlebodies $H_W'$ and $H_B'$ form a new genus $2$ Heegaard splitting for $M$ in which $K$ is at most $2$-bridge.  This contradicts the assumption that $t=6$.

This completes the proof of Subcase B(ii) and hence Case B cannot occur.

\begin{caseC}
There are three mutually disjoint \SCs\ in $\Lambda$.
\end{caseC}

Lemma~\ref{lem:3disjointmobiusbands} (independent of \conditionI\ and \conditionII) implies Case C does not occur.

\begin{caseD}
There is a $1$-\ESC\ and disjoint \SC\ in $\Lambda$.
\end{caseD}

This case is considered in the following Section~\S\ref{sec:tis6caseD}.
Proposition~\ref{prop:tis6caseD} shows Case D cannot occur.

This completes the proof of Theorem~\ref{thm:tnot6}.
\end{proof}

\section{Case D of Theorem~\ref{thm:tnot6}.}\label{sec:tis6caseD}

In this section we show:
\begin{prop}\label{prop:tis6caseD}
 Case D of Theorem~\ref{thm:tnot6} cannot occur.
That is, if 
\begin{enumerate}
\item $t=6$, 
\item $\Lambda$ contains no $2$-\ESC,
\item $\Lambda$ contains a $1$-\ESC\ and an \SC\ on a disjoint label sets, 
\end{enumerate}
then $M$ contains a Dyck's surface.
\end{prop}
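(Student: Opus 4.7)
The plan is to build from $\sigma$ and $\tau$ the configuration required by Lemma~\ref{lem:disjtmobiusannulus} and thereby produce an embedded Dyck's surface in $M$, yielding the desired contradiction. Because $t=6$, the label sets of $\sigma$ and $\tau$ must be $\{1,2,3,4\}$ and $\{5,6\}$ respectively; up to relabeling, $\sigma$ has corner $\arc{1234}$ with core $\arc{23}$-\SC. Associated to $\sigma$ are a White \mobius band $A_{23}$ and a Black annulus $A_{12,34}$, with boundary components $a_1=\partial A_{23}$ (through vertices $2,3$) and $a_2$ (through vertices $1,4$); associated to $\tau$ is a \mobius band $A_{56}$ (on either side of $\hatF$). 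In \situationnscc\ these surfaces are properly embedded, and in \situationscc\ the meridian disk $D$ disjoint from $K\cup Q$ lets us work on $F^*$; in either setting, closed curves of intersection between $A_{56}$ and $A_{12,34}$ in a common handlebody can be surgered away by irreducibility, so we may take them disjoint. The aim is to verify, for $A_{12,34}$ and $A_{56}$, that $a_1$ and $a_2$ lie in distinct essential isotopy classes (neither is a meridian, by Lemma~\ref{lem:LMBess}) and that $\partial A_{56}$ lies in a third class.

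First I would rule out $a_1\sim a_2$. If they cobound an annulus $B\subset \hatF$ (resp.\ $F^*$), then $\partial B$ passes through vertices $\{1,2,3,4\}$, so $\Int B\cap K\subseteq\{5,6\}$. Since $\partial A_{56}$ is a curve in $G_F$ through vertices $5$ and $6$ whose edges are disjoint from every edge of $\sigma$ and from $K$-arcs at vertices $1,2,3,4$, it cannot transversely meet $\partial B$; hence vertices $5$ and $6$ either both lie in $\Int B$ or both outside. If both outside, Lemma~\ref{lem:LMB} (its Addendum in \situationscc) produces a solid torus $V$ with $\partial V=A_{12,34}\cup B$ in which $A_{12,34}$ is longitudinal; isotoping $A_{12,34}$ across $V$ onto $B\subset\hatF$ drags the Black arcs $\arc{12}$ and $\arc{34}$ onto $\hatF$, and a slight perturbation places them in $H_W$ alongside the adjacent White arcs $\arc{23},\arc{45},\arc{61}$, joining these six arcs into a single arc in $H_W$. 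This eliminates vertices $\{1,2,3,4\}$ and reduces $t$ to $2$, contradicting the minimality of $t=6$. If both $5$ and $6$ lie in $\Int B$, then the essential curve $\partial A_{56}\subset B$ (essential by Lemma~\ref{lem:LMBess}) is isotopic on $\hatF$ to $a_1$, which reduces to the Klein bottle argument below.

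Finally I would rule out $\partial A_{56}\sim a_1$ and $\partial A_{56}\sim a_2$. If $\partial A_{56}\sim a_1=\partial A_{23}$, let $C\subset \hatF$ be an annulus cobounded by $a_1$ and $\partial A_{56}$; the union $A_{23}\cup C\cup A_{56}$ is two \mobius bands glued along their common boundary via an annulus, hence an embedded closed non-orientable surface of $\chi=0$, i.e.\ an embedded Klein bottle in $M$, forbidden by $\Delta\geq 3$. (Embeddedness is immediate when $A_{23}$ and $A_{56}$ lie on opposite sides of $\hatF$; when they lie on the same side, intersection circles in the common handlebody are surgered away by irreducibility.) If $\partial A_{56}\sim a_2$, the same construction with the long \mobius band $A_{23}\cup A_{12,34}$ (whose boundary is $a_2$) in place of $A_{23}$ produces a Klein bottle. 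Having established all three isotopy-class conditions, Lemma~\ref{lem:disjtmobiusannulus} applied to $A_{12,34}$ and $A_{56}$ yields an embedded Dyck's surface in $M$. The main obstacle will be verifying embeddedness of the Klein bottle and Dyck's surface constructions in \situationscc, where $A_{56}$ has interior meridional intersections with $\hatF$ that cannot simply be surgered away; here one must work in $F^*$ and use the meridian disk $D$ both to choose the parallelism annulus $C$ avoiding the troublesome curves (in particular $a_1$ in the $\partial A_{56}\sim a_2$ case) and to ensure that the almost-properly-embedded interior intersections of $A_{56}$ do not spoil the final gluing.
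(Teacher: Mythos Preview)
Your argument has a genuine gap in the ``both outside'' sub-case when ruling out $a_1\sim a_2$. You claim that the parallelism of $A_{12,34}$ to $B$ (from Lemma~\ref{lem:LMB}) allows you to isotope the Black arcs $\arc{12}$ and $\arc{34}$ onto $\hatF$, perturb them into $H_W$, and thereby reduce $t$ to $2$. But $t$ is twice the bridge number, not merely $|K\cap\hatF|$ after an arbitrary isotopy. After your move, the long arc $\arc{612345}$ in $H_W$ is the concatenation of three original bridge arcs (each with a maximum) and two pushed-in arcs; there is no reason this composite is $\bdry$-parallel in $H_W$, and in fact the Morse-theoretic width need not decrease. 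A single annulus parallelism does not thin $K$: compare Lemma~\ref{lem:3PCC}, which requires \emph{two} consecutive parallelisms, and Lemma~\ref{lem:ESCPA}, which in \situationnscc\ only forces $j=n$ (here $j=2=n$, so no contradiction).

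This is not a minor technicality but the heart of the matter. The paper's proof begins exactly where your argument breaks: assuming $M$ contains no Dyck's surface, Lemma~\ref{lem:disjtmobiusannulus} (contrapositively) forces $a_1\sim a_2$, and Lemma~\ref{lem:LMB} gives the parallelism $A_{12,34}\simeq B_{12,34}$. Far from being a contradiction, this is the standing hypothesis for the remainder of the section. The paper then develops a sequence of structural lemmas (Lemmas~\ref{lem:twoparallismsforSC}--\ref{lem:trigonswitha23corner}) constraining which White bigons and trigons can occur, and finishes with a special-vertex count (\S\ref{sec:specialvertices}) to exhaust the possible configurations around a vertex of type $[6\Delta-3]$, $[6\Delta-4,1]$, or $[6\Delta-5,4]$ when $\Delta=3$. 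Your reductions to Klein bottles in the other sub-cases ($\partial A_{56}\sim a_1$ or $a_2$, and the ``both inside'' case) are correct and match what the paper uses implicitly, but they do not cover the main case.
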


\begin{proof}
Assume $M$ does not contain a Dyck's surface. 
Assume there is an \ESC\ $\tau$ with labels $\{1,2,3,4\}$ and an \SC\ $\sigma$ with labels $\{5,6\}$ as shown in Figure~\ref{fig:tis6with4and2}.  Let $A_{23}$ and $A_{12,34}$ be the White \mobius band and Black annulus arising from $\tau$.  Let $A_{56}$ be the Black \mobius band arising from $\sigma$.  By Lemma~\ref{lem:disjtmobiusannulus} (and that $M$ contains no Klein bottle or projective plane) the components of $\bdry A_{12,34}$ must be parallel on $\hatF$ bounding an annulus $B_{12,34}$ in $\hatF$.  Then by Lemma~\ref{lem:LMB}, $A_{12,34}$ is isotopic in $M$ to $B_{12,34}$. 

\begin{figure}
\centering
\input{tis6with4and2.pstex_t}
\caption{}
\label{fig:tis6with4and2}
\end{figure}

We consider the arguments of this subsection under both possibilities, 
\conditionI\ and \conditionII.

\begin{claim}\label{claim:SD2}
In \conditionII, there is a separating, meridian disk $D$ of $H_B$ disjoint
from $K$ and $Q$. Let $F^*$ be $\hatF$ surgered along $D$. Then $F^*$
is two tori, $T_1,T_2$, where vertices $\{1,2,3,4\}$ lie on $T_1$ and 
vertices $\{5,6\}$ on $T_2$.
\end{claim}

\begin{proof}
Recall that \conditionII\ implies that there is a meridian disk on 
one side of $\hatF$ that
is disjoint from $K$ and from $Q$. First assume this disk was nonseparating.
Compressing $\hatF$ along it would produce a $2$-torus that intersected the 
interiors of  $A_{23},A_{56}$ only in trivial curves. Surgering away such 
intersections exposes either a projective plane or Klein bottle in $M$. So this disk must be separating on one side of $\hatF$. It is disjoint 
from  $B_{12,34}$ else it along with $A_{23}$ forms a projective plane in $M$. Thus the boundary of this disk must separate vertices $\{1,2,3,4\}$ in 
$\hatF$ from vertices $\{5,6\}$. Since the disk is disjoint from the 
$\arc{45}$-arc of $K$, it must be in $H_B$.
\end{proof}

\begin{lemma}
\label{lem:twoparallismsforSC}
In \conditionI, the edges of all $\arc{23}$-\SCs\  of $\Lambda$ belong 
to two parallelism classes in $G_F$. In \conditionII, all $\arc{23}$-edges
belong to two parallelism classes in $G_{F^*}$, where $G_{F^*}$ 
is the graph induced 
on $F^*$ from $G_F$.
\end{lemma}

\begin{proof}
First assume \conditionII\ holds and Claim~\ref{claim:SD2} applies. 
Because the components of $\bdry A_{12,34}$ are disjoint, essential curves 
in $T_1$, any $\edge{23}$-edge
of $\Lambda$ is parallel on $G_{F^*}$ to one of the $\edge{23}$-edges of $\tau$.

Thus we assume we are in \conditionI.
Let $\nu$ be the core $\arc{23}$-\SC\ of $\tau$, and assume there is another 
$\arc{23}$-\SC, $\nu'$, with an edge that is not parallel
on $G_F$ to either edge of $\nu$. Let $f,f'$ be the faces bounded by
$\nu,\nu'$. 
Let $A_{23}'$ be the \mobius band corresponding to $f'$.   
By the ordering of the labels around the vertices $2,3$ of $G_F$, one edge, 
$e_1'$, of $\nu'$ must lie within $B_{12,34}$ and the other, $e_2'$, 
outside. That is, $e_1'$ is parallel in $G_F$ to an edge $e_1$ of $\nu$.

We may use the parallelism between $e_1,e_1'$, along with the
parallelisms of the corners of $f,f'$ along $\bdry \nbhd(\arc{23})$, to 
band together $f,f'$ to get a properly embedded disk $D'$ in $H_W$. Here
$\bdry D'$ is the curve $e_2 \cup e_2'$ on $\hatF$, where $e_2$ is the 
edge of $\nu$ other than $e_1$. By an inspection of the labelling around
vertices $2,3$ of $G_F$, one can see that the $D'$ can be taken to be 
disjoint from both $A_{23},A_{23}'$ and $K$. As $e_2,e_2'$ are not parallel
on $G_F$, $\bdry D'$ is not trivial on $\hatF$. $\bdry D'$ must separate
 $\bdry A_{23}, \bdry A_{23}'$ from $\bdry A_{56}$ on $\hatF$, since $M$ contains no Klein bottles.
But this contradicts that $D'$ is disjoint from the $\arc{45}$-arc of $K$.
\end{proof}

\begin{lemma}\label{lem:nospanningarc}
No White bigon or trigon has an edge that is a spanning arc of $B_{12,34}$.
\end{lemma}

\begin{proof}
Let $f$ be a White bigon or trigon with such an edge $e$. We assume $e$ is
a $\edge{34}$-edge, the argument for when it is a $\edge{12}$-edge is the same.
There is a bridge 
disk for some arc of $K \cap H_W$ that is disjoint from $f$ 
(except possibly along that arc) and hence from $\Int e$ (after removing trivial arcs and simple closed 
curves of intersection of $f$ with an arbitrary bridge disk, $D$, an outermost 
arc of intersection on $D$ will cut out the desired bridge disk).

Let $r$ be an arc in the annulus $A_{12,34}$ sharing endpoints with $\arc{34}$ that projects through the $\bdry$-parallelism of $A_{12,34}$ onto the $\edge{34}$-edge of $f$. That is, there is a bridge disk for $r$, $D_r$, that intersects
$\hatF$ in $e$. Note that
up to isotopy rel endpoints, $r$ is just $\arc{34}$ twisted along 
$\bdry A_{12,34}$. So we may take $r$ to have a single critical value (indeed
the same as for $\arc{34}$) under the
height function on $M$ for the thin presentation of $K$. Let $r'$ be an arc in the annulus $A_{12,34}$ disjoint from $r$ and sharing endpoints with $\arc{12}$. Similarly $r'$ can be taken to
have a single critical value with respect to the height function on $M$. Then $r' \cup \arc{23} \cup r$ and $\arc{12} \cup \arc{23} \cup \arc{34}$ are two properly embedded, non-$\bdry$-parallel arcs in the \mobius band $A_{23} \cup A_{12,34}$ with the same boundary.  By Lemma~\ref{claim:arcsinmobius}, these two arcs are isotopic rel-$\bdry$ within this long \mobius band.  Perform this 
isotopy, 
then use the Black bridge disk $D_r$ along with the White bridge disk of the
preceding paragraph to give a thinner presentation of $K$ --- a contradiction.
\end{proof}

\begin{lemma}
\label{lem:singlenonSCWbigon}
The only type of White bigon in $\Lambda$ that is not an \SC\ is a  
$\arc{45},\!\arc{61}$-bigon.
\end{lemma}

\begin{figure}
\centering
\input{nonSCWbigons.pstex_t}
\caption{}
\label{fig:nonSCWbigons}
\end{figure}

\begin{proof}
The three possible non-Scharlemann White bigons are shown in Figure~\ref{fig:nonSCWbigons}.  We will rule out the two that have a $\arc{23}$-corner.   Note that we may assume we have \conditionI\ because of the separation that comes 
from Claim~\ref{claim:SD2} in \conditionII.   Let us focus on the bigon $R$ with the $\edge{12}$-edge as the proof is analogous for the other.

Since $R$ has a $\arc{23}$-corner, the labelling around vertices $2,3$ of 
$G_F$ forces the edges $\edge{12}$ and $\edge{36}$ to be incident to 
the vertices $2$ and $3$ on opposite sides of $\bdry A_{23}$ in $\hatF$.  Therefore since the $\edge{36}$-edge must connect vertex $3$ to vertex $6$,  the $\edge{12}$-edge must lie in the annulus $B_{14,23}$. But this contradicts
Lemma~\ref{lem:nospanningarc}.
\end{proof}

\begin{cor}
\label{cor:23cornersatavtx}
At most two $\arc{23}$-corners at a vertex belong to bigons of $\Lambda$.
\end{cor}
\begin{proof}
Assume there are three $\arc{23}$-corners at vertex $x$ of $\Lambda$ belonging to bigons of $\Lambda$.  By Lemma~\ref{lem:singlenonSCWbigon}, these bigons must all be \SCs.  By Lemma~\ref{lem:twoparallismsforSC} these six edges belong to two parallelism classes on either $G_F$ or $G_{F^*}$.  
Therefore two of these six edges must be incident to the same vertex of $G_F$
($G_{F^*}$) at the label $x$ and parallel.  This violates Lemma~\ref{lem:parallelwithsamelabel} (Lemma~\ref{lem:parallelwithsamelabelAPE}).  (If only two 
bigons are incident at these three corners, two of the edges will have label $x$ at both ends in $G_F$ and Lemma~\ref{lem:parallelwithsamelabel} 
(Lemma~\ref{lem:parallelwithsamelabelAPE}) is still violated).
\end{proof}

\begin{lemma}
\label{lem:no12SCno34SC}
$\Lambda$ does not contain a $\arc{12}$- or a $\arc{34}$-\SC.
\end{lemma}

\begin{proof}
Assume there exists a $\arc{34}$-\SC.  Let $A_{34}$ be the corresponding \mobius band.  

Since the annulus $A_{12,34}$ cobounds a solid torus with $B_{12,34}$, the \mobius band $A_{34}$ must intersect $A_{12,34}$ tangentially.  Therefore $A_{34}$ may be isotoped to be disjoint from $A_{23}$.  Since it is also disjoint from $A_{56}$, Lemma~\ref{lem:3disjointmobiusbands} implies that $M$ contains a Dyck's 
surface.

A similar argument rules out the existence of a $\arc{12}$-\SC.
\end{proof}

\begin{lemma}
\label{lem:no4561esc}
There cannot be an \ESC\ with labels $\{4,5,6,1\}$.
\end{lemma}

\begin{proof}
Assume there is such an \ESC\ and take $\sigma$ to be its core \SC.  Let $A_{45,61}$ be the corresponding annulus formed from the bigons that flank $\sigma$.  
Observe that $A_{45,61}$ is disjoint from the \mobius band $A_{23}$, and $A_{12,34}$ is disjoint from the \mobius band $A_{56}$.  Then by Lemma~\ref{lem:disjtmobiusannulus} (and that there are no Klein bottles in $M$) the components of $\bdry A_{45,61}$ must be parallel as must the components of $\bdry A_{12,34}$. This 
contradicts Claim~\ref{claim:SD2} in \conditionII. In \conditionI, 
Lemma~\ref{lem:LMB} implies these annuli must be isotopic into $\hatF$.  Together these parallelisms give a thinning of $K$.
\end{proof}

\begin{cor}
\label{cor:nothree4561}
There cannot be three consecutive bigons around a vertex with a $\arc{4561}$-corner.
\end{cor}
\begin{figure}
\centering
\input{no4561bigons.pstex_t}
\caption{}
\label{fig:no4561bigons}
\end{figure}
\begin{proof}
Assume there were.  Then there are three possibilities according to whether opposite the $\arc{56}$-corner is the $\arc{56}$-, $\arc{34}$-, or $\arc{12}$-corner.  These are shown in Figure~\ref{fig:no4561bigons}.    The first is ruled out by Lemma~\ref{lem:no4561esc} since it is an \ESC\ with labels $\{4,5,6,1\}$.  The second and third are ruled out since they contain the non-\SC\ White bigons prohibited by Lemma~\ref{lem:singlenonSCWbigon}.
\end{proof}

\begin{lemma}\label{lem:no23trigon}
There cannot be a White trigon with a single 
$\arc{23}$-corner.
\end{lemma}

\begin{proof}
If there were such a trigon, then its two edges incident to that corner must be incident to opposite sides of $\bdry A_{23}$ on $\hatF$.  Thus one of these edges must be a spanning arc of $B_{12,34}$.  This is prohibited by Lemma~\ref{lem:nospanningarc}.
\end{proof}

\begin{lemma}\label{lem:no23S3}
There cannot be a Scharlemann cycle of length $3$ on the labels $\{2,3\}$.
\end{lemma}

\begin{proof}
Assume $g$ is the trigon face of such a $\arc{23}$-Scharlemann cycle.  
The ends of two edges of $g$ incident to the same corner of $g$ must be incident to opposite sides of $\bdry A_{23}$ as they lie on $\hatF$.  Since the annulus $B_{12,34}$ has $\bdry A_{23}$ as a boundary component, around $\bdry g$ the edges are alternately in or not in $B_{12,34}$.  This of course cannot occur since $g$ has three edges.
\end{proof}

\begin{lemma}\label{lem:2323trigon}
In $\Lambda$, a trigon cannot have exactly two $\arc{23}$-corners.  
\end{lemma}

\begin{proof}
First we assume \conditionII\ holds. This means there is a
Black meridian disjoint from $G_F$ separating vertices $\{1,2,3,4\}$ and $\{5,6\}$.  
 The third corner of a trigon with two $\arc{23}$--corners must be either a $\arc{61}$-corner or a $\arc{45}$-corner.  But then $G_F$ has an edge joining either vertices $2$ and $5$ or vertices $3$ and $6$, a contradiction.

Thus we may assume we satisfy \conditionI. A trigon with a single 
$\arc{23}$-corner is prohibited by Lemma~\ref{lem:no23trigon}.  A trigon with three $\arc{23}$-corners is a Scharlemann cycle of length $3$ which cannot occur by Lemma~\ref{lem:no23S3}.  Thus any trigon with a $\arc{23}$-corner must have exactly one other $\arc{23}$-corner.  

Assume $g$ is a trigon with two $\arc{23}$-corners and, WLOG, one $\arc{61}$-corner.
We shall construct a bridge disk for $\arc{61}$ that does not intersect the interior of $B_{12,34}$.  Such a bridge disk for $\arc{61}$ is disjoint from a bridge disk for $\arc{34}$ and hence provides a contradictory thinning of $K$.

Because the $\edge{12}$-edge and the $\edge{36}$-edge of $g$ must be incident to the side of $\bdry A_{23}$ opposite from which the $\edge{23}$-edge is incident
(by the labelling around vertices $2,3$ of $G_F$), neither of them lie in the annulus $B_{12,34}$.  Furthermore, the $\edge{23}$-edge is parallel in $G_F$ to a $\edge{23}$-edge of the \SC\ in $\tau$.  Let $f$ be the bigon face of the \SC\ in $\tau$.  Let $\delta$ be the rectangle of parallelism on $G_F$ between the two $\edge{23}$-edges of $g$ and $f$; its other two sides are arcs of the vertices $2$ and $3$.  Let $\rho$ and $\rho'$ be the disjoint rectangles on $\bdry \nbhd(K)$ between the two $\arc{23}$-corners of $g$ and the two $\arc{23}$-corners of $f$; the other two sides of each of $\rho,\rho'$ being
arcs of the vertices $2$ and $3$.  Then $g \cup \delta \cup \rho \cup \rho' \cup f$ forms a disk $D_{61}$ whose boundary is composed of the $\arc{61}$-corner of $g$ and an arc on $\hatF$; see Figure~\ref{fig:61bridge}.  By a slight isotopy, the interior of this arc on $\hatF$ may be made disjoint from $B_{12,34}$.  Thus $D_{61}$ is the desired bridge disk for $\arc{61}$.   
\begin{figure}
\centering
\input{61bridge.pstex_t}
\caption{}
\label{fig:61bridge}
\end{figure}
\end{proof}

\begin{lemma}\label{lem:trigonswitha23corner}
No trigon in $\Lambda$ has a $\arc{23}$-corner.
\end{lemma}
\begin{proof}
This is a combination of Lemmas~\ref{lem:no23S3}, \ref{lem:2323trigon}, and \ref{lem:no23trigon}.
\end{proof}

\subsection{Special vertices for Case D of Theorem~\ref{thm:tnot6}.}
As $\Lambda$ contains no $2$-\ESCs\, there may be no more than $7$ edges that are mutually parallel.

Since $t=6$,  Lemmas~ \ref{lem:t6trulyspecial} and \ref{lem:N4trulyspecialtypes} imply there exists a vertex $v$ of $\Lambda$ of type $[6\Delta-5,4]$, $[6\Delta-4,1]$, or $[6\Delta-3]$.  We refer to a corner at $v$ that is not incident
to a bigon of $\Lambda$ as a {\em gap}. Thus there are at most $5$ gaps at $v$.  We will argue by contradiction, in each case showing that there must be more gaps than specified.

By Corollary~\ref{cor:nothree4561}, each of the $\Delta$ corners $\arc{4561}$ around $v$ must have a gap.  By Corollary~\ref{cor:23cornersatavtx} at least $\Delta-2$ $\arc{23}$-corners must have gaps as well.  Thus there must be at least $2\Delta-2$ gaps.  If $\Delta \geq 4$ then there must be at least $6$ gaps; a contradiction.  Hence $\Delta =3$.

When $\Delta = 3$ the vertex $v$ is of type $[13,4]$, $[14,1]$, or $[15]$.  Type $[15]$ is prohibited since using $\Delta = 3$ in the argument of the preceding
paragraph implies $v$ must have at least $4$ gaps.  We eliminate the remaining types in the following subsections.

\subsection{ Vertex $v$ is of type $[14,1]$.}
There are at most $4$ gaps at $v$.  By Corollary~\ref{cor:nothree4561}, each of the three sequences of the labels $\arc{4561}$ must have a gap.  By Corollary~\ref{cor:23cornersatavtx} one $\arc{23}$-corner must be a gap.  Thus there are two sequence of bigons with a $\arc{1234}$-corner.  Therefore by the following Lemma~\ref{lem:sequencesofbigons} there must be a gap at the remaining $\arc{12}$-corner or $\arc{34}$-corner at $v$.  This however requires $5$ gaps at $v$.

\begin{lemma}\label{lem:sequencesofbigons}                                                                                                
If there are two $\arc{23}$-corners at a vertex $x$ that belong to bigons of                                                             
$\Lambda$, then at the other $\arc{23}$-corner of  $x$ one of the adjacent corners does not belong to a bigon of $\Lambda$.              
\end{lemma}                                                                                                                               
                                                                                                                                          
\begin{proof}                                                                                                                             
Assume two $\arc{23}$-corners at $x$ belong to bigons of $\Lambda$. By                                                                   
Lemma~\ref{lem:singlenonSCWbigon} these bigons are \SCs, $\sigma_1, \sigma_2$ (we assume they are distinct else a similar argument holds).  Assume there is another $\arc{23}$-corner at $x$ such that there is a bigon incident to its adjacent $\arc{12}$-corner (if the $\arc{34}$-corner, the same argument applies). Let $e_2,e_3$ be the edges of $G_Q$ incident to this $\arc{23}$-corner (where $e_2$ has label $2$ at
$x$).  By 
Lemma~\ref{lem:no12SCno34SC}, $e_2$ is either a $\edge{23}$-edge or a $\edge{25}$-edge of $\Lambda$.  As an                                 
edge of $G_F$, $e_2$ must lie outside of $B_{12,34}$ (if it were a $\edge{23}$-edge, then                                                 
it along with edges of $\sigma_1$, $\sigma_2$ would, by                                                                                   
Lemma~\ref{lem:twoparallismsforSC}, violate Lemma~\ref{lem:parallelwithsamelabel} or Lemma~\ref{lem:parallelwithsamelabelAPE}). But then                                              
$e_3$ as an edge in $G_F$ must lie inside $B_{12,34}$ (by the ordering                                                              
of the labels around vertices $2$,$3$ of $G_F$ coming from $\arc{23}$-Scharleman                                                         
cycle of $\tau$ and $e_2, e_3$). If $e_3$ is also in a bigon of $\Lambda$,                                                                
then it must be a $\edge{23}$-edge (by Lemma~\ref{lem:no12SCno34SC} and since                                                             
vertex $6$ does not lie in $B_{12,34}$).                                                                                                  
As $e_3$ lies in $B_{12,34}$, it must be parallel to edges of $\sigma_1, \sigma_2$, which by                                              
Lemma~\ref{lem:twoparallismsforSC} would violate                                                                                          
Lemma~\ref{lem:parallelwithsamelabel} or 
Lemma~\ref{lem:parallelwithsamelabelAPE}.                                                                                                    
\end{proof}                                                                                        

\subsection{ Vertex $v$ is of type $[13,4]$.}
There are at most $5$ gaps around $v$; at least $4$ of these gap corners belong to trigons of $\Lambda$; there is at most one corner that may belong to neither a bigon nor trigon of $\Lambda$. Note that this implies that every edge incident
to $v$ lies in $\Lambda$.

By Corollary~\ref{cor:nothree4561} each of the three $\arc{4561}$-corner sequences around $v$ must be missing a bigon.  Thus among the three $\arc{1234}$-corner sequences, only two may be missing a bigon.  Let us distinguish these three $\arc{1234}$-corner sequences around $v$ by marking them as $v$, $v'$, and $v''$.  Furthermore let $e_i$, $e_i'$ and $e_i''$ be the edge of $\Lambda$ incident to $v$, $v'$, and $v''$ respectively at the label $i$ for $i=1,2,3,4$.

By Corollary~\ref{cor:23cornersatavtx} at least one $\arc{23}$-corner is a gap, say the one at $v$.  Since there is not a trigon with a $\arc{23}$-corner by Lemma~\ref{lem:trigonswitha23corner}, only $v$ may have a gap at its $\arc{23}$-corner.
%
%
Thus the two $\arc{23}$-corners at $v'$ and $v''$ have bigons.  
These bigon faces are \SCs\ by Lemma~\ref{lem:singlenonSCWbigon}.
By Lemmas~\ref{lem:twoparallismsforSC},
\ref{lem:parallelwithsamelabel}, and \ref{lem:parallelwithsamelabelAPE},
neither $e_2$ nor $e_3$ may be parallel on $G_F$ to one of the $\edge{23}$-edges in $\bdry B_{12,34}$.
Thus the labelling around vertices $2,3$ of $G_F$ forces 
one edge to be a spanning arc of $B_{12,34}$ and the other to lie outside $B_{12,34}$ and not parallel into $\bdry B_{12,34}$.  Without loss of generality, let us assume $e_3$ is a spanning arc of $B_{12,34}$ and thus is a $\edge{34}$-edge (by the Parity Rule of section~\ref{sec:fatgraphs}).

Since $e_3$ is a $\edge{34}$-edge, the adjacent $\arc{34}$-corner cannot belong to a bigon.  Otherwise such a bigon would be a Black $\arc{34}$-\SC.  By Lemma~\ref{lem:no12SCno34SC} this does not occur.  Thus the adjacent $\arc{34}$-corner must belong to a trigon $g$.  Since the edge $e_3$ of this Black trigon $g$ is a spanning arc of $B_{12,34}$, $g$ lies in the solid torus of parallelism of $A_{12,34}$ into $B_{12,34}$.  Hence $g$ has a second $\arc{34}$-corner and a $\arc{12}$-corner.  (It cannot be a $\arc{34}$-\SC\ as $A_{12,34}$ is parallel into $B_{12,34}$.)  Moreover edge $e_4$ of $g$ is a $\edge{14}$-edge.  Since $e_4$ is contained in $B_{12,34}$, it belongs to one of the two $\edge{14}$-edge classes of $\bdry B_{12,34}$.

Because there are at most $5$ corners of $v$ without bigons, the two $\arc{1234}$-corner sequences at $v'$ and $v''$ must entirely belong to bigons and thus form \ESCs. Furthermore, the $\arc{12}$-corner at $v$ must be a bigon $h$ in $\Lambda$.  It is either a $\arc{12},\!\arc{34}$-bigon or a $\arc{12},\!\arc{56}$-bigon.

Assume $h$ is a $\arc{12},\!\arc{34}$-bigon. 
At $v$ we have identified three $\edge{41}$-edges incident to $v$ at label $4$, $e_4,e_4',e_4''$, and three $\edge{23}$-edges incident to $v$ at label $2$. By Lemma~\ref{lem:parallelwithsamelabel}, the $\edge{41}$-edges (as well as the $\edge{23}$-edges) must be in distinct edges classes in $G_F$. Thus a neighborhood of $B_{12,34}$ and these edges is an essential $4$-punctured sphere, $S$, in 
$\hatF$. This immediately rules out \conditionII\  as there would have to be a separating Black meridian disjoint from $S$. We assume \conditionI. As $e_4$ is parallel to one of the $\edge{41}$-edges of $\bdry B_{12,34}$, it must be that $e_4'$, say, is not in one of the edge classes of $\bdry B_{12,34}$.
By Lemma~\ref{lem:twoparallismsforSC}, there must be a $\arc{12},\!\arc{34}$-bigon $f$ of $\tau$ and a $\arc{12},\!\arc{34}$-bigon $f'$ containing $e_4'$ such that the $\edge{23}$-edges of $f,f'$ are parallel
but the $\edge{41}$-edges are not. Banding $f,f'$ together along the parallelism of the $\edge{23}$-edges in $G_F$, along with the corresponding rectangles along the boundary of the knot exterior, gives a Black disks $D'$ whose boundary on $\hatF$ is the union of the $\edge{41}$-edges of $f,f'$. This disk can be taken disjoint from $K$, and from the \mobius bands formed from $\sigma$ and from the $\arc{23}$-\SC\ of $\tau$. Thus $\bdry D'$ must be separating in $\hatF$ (else we can form a Klein bottle or projective plane with these \mobius bands). But $\bdry D'$ can be isotoped to $\bdry S$, contradicting the fact that it is 
separating.

Thus we may assume $h$ is a $\arc{12},\!\arc{56}$-bigon. This immediately rules out \conditionII\ (vertices $2,5$ would have to be
separated).  Again let $e_4,e_4',e_4''$ be the $\edge{41}$-edges incident to $v$ with label $4$. By Lemma~\ref{lem:parallelwithsamelabel}, one of these (not $e_4$), say $e_4'$,
is not parallel to either of the $\edge{41}$-edges of $\bdry B_{12,34}$.
By Lemma~\ref{lem:twoparallismsforSC}, there must be a $\arc{12},\!\arc{34}$-bigon, $f$, of $\tau$ and $\arc{12,34}$-bigon,$f'$,
containing $e_4'$ whose $\edge{23}$-edges are parallel but whose $\edge{41}$-edges are not. Banding $f,f'$ along the parallelism of their $\edge{23}$-edges as above gives a Black disk $D'$ which is disjoint from the \mobius bands formed from $\sigma$ and from $\tau$. Thus $\bdry D'$ must be separating in $\hatF$.  On the other hand, $\bdry D'$ can be isotoped to the boundary of the
essential $4$-punctured sphere formed from a neighborhood in $\hatF$ of the $\edge{25}$-edge in $h$, $e_4'$, $\sigma$, and $B_{12,34}$; hence, it cannot be separating.

This completes the proof of Proposition~\ref{prop:tis6caseD}.
\end{proof}

\subsection{A generalization of Lemma~\ref{lem:parallelwithsamelabel} to $G_{F^*}$}

We finish with a generalizaton of Lemma~\ref{lem:parallelwithsamelabel} that is
needed for this section as well for section~\ref{sec:tis4scc}.

\begin{lemma}\label{lem:parallelwithsamelabelAPE}
Let $D$ be a meridian disk of $\hatF$ disjoint from $Q$ and $K$, and $F^*$
be $\hatF$ surgered along $D$ (hence is either one or two tori).
Let $G_{F^*}$ be the induced graph on $F^*$.  There cannot be parallel edges 
of $G_{F^*}$ that are incident to a vertex at the same label.
\end{lemma}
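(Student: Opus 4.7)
The plan is to reduce this statement directly to Lemma~\ref{lem:parallelwithsamelabel}. The key preliminary observation is that since $D$ is disjoint from both $Q$ and $K$, the two capping disks $D_+, D_-$ which arise in passing from $\hatF$ to $F^*$ (parallel push-offs of $D$) may be taken disjoint from $Q \cup K$ as well. Consequently $G_{F^*}$ and $G_F$ have the same labeled vertices and edges; the two graphs differ only in the complementary regions on their respective ambient surfaces. Moreover, $F^*$ may be regarded as an embedded closed surface in $M$, namely $\hatF$ compressed along $D$, with $D_\pm$ pushed to opposite sides of $\nbhd(\partial D)$.

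Next, I would suppose for contradiction that $e_1, e_2$ are edges of $G_{F^*}$ that are parallel in $G_{F^*}$ and both incident to a vertex $v$ at a common label $x$. Let $B \subset F^*$ be the bigon they cobound. Since $F^*$ is embedded in $M$, $B$ is an embedded disk in $M$ whose boundary consists of $e_1, e_2 \subset Q$ together with two corner arcs on $\partial v \subset \partial \nbhd(K)$, with the two arcs meeting $\partial v$ at the pair of label-$x$ endpoints on each side. This is precisely the combinatorial-geometric data produced by a pair of parallel edges with the same label in $G_F$.

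The heart of the plan is then to run the proof of Lemma~\ref{lem:parallelwithsamelabel} verbatim with $B$ in place of the bigon there. That proof uses the bigon only as an embedded disk in $M$ with the boundary data above. If the two endpoint vertices of $e_1, e_2$ (which are the same in $G_F = G_{F^*}$ as labeled vertices) are parallel, then $B$ combines with a portion of the fat vertex of $G_Q$ at label $x$ to yield a length-two Scharlemann cycle, producing an embedded projective plane in $K'(\mu) = S^3$, a contradiction. If the endpoint vertices are anti-parallel, the argument of \cite[\S5 Case (2)]{GLi} exhibits $K'$ as a cable knot, contradicting its hyperbolicity. Both the parallel/anti-parallel dichotomy and the Parity Rule depend only on intersection signs and orientations, so these are intrinsic and unchanged by the surgery.

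The main obstacle will be auditing that neither of the two arguments in the proof of Lemma~\ref{lem:parallelwithsamelabel} secretly uses global topology of $\hatF$ (for example, its genus, or the particular way $\hatF$ separates $M$) beyond the disk $B$ and the local data at $\partial B$. My expectation is that both arguments are local in this sense: the projective plane is built in $K'(\mu)$ from $B$ together with a meridian disk of the filling solid torus, and the cable-knot conclusion in \cite{GLi} uses $B$ only as a disk in the knot exterior with boundary on $Q$ and $\partial \nbhd(K)$. Once this is confirmed, the proof of Lemma~\ref{lem:parallelwithsamelabelAPE} requires no genuinely new geometric input beyond the observation that $B \subset F^* \subset M$ is a disk of the required form.
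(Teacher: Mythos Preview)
Your reduction has a real gap. The proof of Lemma~\ref{lem:parallelwithsamelabel} does not use the parallelism bigon $B$ merely as an abstract disk in $M$ with prescribed boundary: both the parallel-vertex argument (locating a length-two Scharlemann cycle) and the anti-parallel argument from \cite[\S5]{GLi} work with the entire family of mutually parallel edges filling $B$ and track how their labels progress along each corner. The implicit input is that $G_F$ has no monogon faces, so every edge in the family runs from one corner of $B$ to the other and adjacent edges bound clean bigon faces. This is what makes the label arithmetic (parallel case) and the orbit decomposition of the permutation $\pi$ (anti-parallel case) go through.

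On $F^*$ this fails. The two impressions of $D$ are disk regions of $F^*$ disjoint from the graph, so a loop edge that was essential in $\hatF$ can now bound a monogon face in $G_{F^*}$ containing an impression of $D$. If such a monogon sits inside $B$, the parallel family is interrupted: both endpoints of that loop lie on the same corner of $B$, the label sequence is broken, the Scharlemann cycle is no longer forced, and the permutation $\pi$ of \cite{GLi} is not defined on the full index set. The paper's proof is devoted precisely to this obstruction. It uses thin position of $K'$ in $S^3$ (via long disks and lopsided bigons) to show that any monogons inside $B$ are innermost and can only occur in one specific configuration, then bands the two monogon arcs together to repair the orbit structure before re-running the \cite{GLi} argument with the merged arcs $A_m, A_{m+1}$ playing a special role. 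So the audit you flagged as the ``main obstacle'' does in fact fail, and genuinely new work is required beyond the disk $B$ itself.
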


\begin{proof}
Let $e,e'$ be parallel edges on $G_{F^*}$ incident to a vertex $v$ of $G_{F^*}$
with the same label. If there are no monogons (1-sided faces) of $G_{F^*}$
in the parallelism between these two edges, then the proof of 
Lemma~\ref{lem:parallelwithsamelabel} directly applies (after possibly
surgering away simple closed curves of intersection).  
But the graph $G_{F^*}$ may contain monogons even though $G_F$ does not.  
Any monogon of $G_{F^*}$ must contain at least one impression of $D$.  
In particular, there may be at most two innermost monogons of $G_{F^*}$.
\begin{claim}
Any monogon of $G_{F^*}$ must be innermost.
\end{claim}
\begin{proof}
If there is a non-innermost monogon of $G_{F^*}$, then there is one that 
appears as one in 
Figure~\ref{fig:innermonogon}.  Each of these configurations gives a 
long disk\footnote{See Lemma~\ref{lem:fesc+1} here or Lemma~2.2 and Figure~1 of \cite{baker:sgkilshsbn} for the concept of a {\em long disk}.} for $K$ as a knot in $S^3$, contradicting its thinness there.
\end{proof}
\begin{figure}
\centering
\input{innermonogon.pstex_t}
\caption{}
\label{fig:innermonogon}
\end{figure}

\begin{claim}
If there are monogons of $G_{F^*}$ in the parallelism between $e$ and $e'$, 
then there are exactly two and they appear as in 
Figure~\ref{fig:parallelmonogon}(a).
\end{claim}
\begin{figure}[h]
\centering
\input{parallelmonogon.pstex_t}
\caption{}
\label{fig:parallelmonogon}
\end{figure}
\begin{proof}
Since there may be only two monogons and they are not nested, there are three possible configurations for monogons between parallel edges.  These are shown in Figure~\ref{fig:parallelmonogon}(a), (b), (c).  Configurations (b) and (c) give lopsided bigons\footnote{See Lemma~\ref{lem:fesc+1} here or the last two paragraphs of the of Lemma~6.15 \cite{baker:sgkilshsbn} for the concept of a {\em lopsided bigon}.} for $K$ as a knot in $S^3$, contradicting its thinness there.
\end{proof}

So we may assume there are two monogons as in 
Figure~\ref{fig:parallelmonogon}(a) among the parallel edges 
between $e$ and $e'$.  Note that $e$ and $e'$ have the same label pairs.  
Abstractly band the monogons together as in Figure~\ref{fig:monogonbanding} 
to take advantage of the arguments of \cite{GLi}. 
\begin{figure}[h]
\centering
\input{banding.pstex_t}
\caption{}
\label{fig:monogonbanding}
\end{figure}

We employ the notation of \cite{GLi}, 
substituting $F^* = P_\alpha$ and  $Q = P_\beta$.  
We set $n_\beta = |K \cap \hatQ| $ and may assume $n_\beta+1$ is the 
number of arcs from $e$ to $e'$.  Let $A_m$ and $A_{m+1}$ be the arcs formed from banding the two monogons together, $1<m<n_\beta$.  (Since $n_\beta$ is even, we have $n_\beta +1$ arcs, and neither $A_m$ nor $A_{m+1}$ is $e$ or $e'$, then we may relabel and take our $n_\beta$ parallel arcs so that neither $A_m$ nor $A_{m+1}$ is outermost among these $n_\beta$ arcs.)   Let $A'$ and $A''$ be the arcs of the original monogons. 

We first assume the vertices of $G_{F^*}$ connected by $e,e'$ 
have the same parity (are parallel). Using $A_m,A_{m+1}$ in place 
of $A',A''$, we apply the arguments in section 5 of \cite{GLi} 
in the case $(1) \, \epsilon = -1$. These show that these edges form
an \ESC\ on $G_{F^*}$ which we may assume is centered about the 
edges $A_m,A_{m+1}$ 
that form a \SC\ (else $S^3$ has an $RP^3$ summand). The edges of
this \ESC\ other than $A_m,A_{m+1}$ then come in pairs, forming 
disjoint simple closed curves on $\hatQ$. 
Some innermost pair of these edges then bounds a disk in $G_Q$  
(since the edges $A',A''$ connect the remaining vertices $\{m,m+1\}$).
The argument of \cite{GLi}, after possibly surgering away simple closed curves
of intersection, implies that $K$ is a $(1,2)$-cable knot --- 
contradicting its hyperbolicity.

We next assume the vertices of $G_{F^*}$ connected by $e,e'$ 
have the opposite parity. Again, using $A_m,A_{m+1}$ in place 
of $A',A''$, we apply the arguments in section 5 of \cite{GLi} 
in the case $(2) \, \epsilon = 1$. 
The map $\pi$ partitions the arcs $A_1, \dots, A_{n_\beta}$ into orbits of equal cardinality of at least $2$. Since the surface $\hatQ$ is separating, the map $\pi$ must have an even number of orbits ($i \equiv \pi(i) \mod 2$ by the Parity Rule).  In particular, $A_m$ and $A_{m+1}$ belong to different orbits.  Each orbit $\theta$ , other than the ones containing $A_m$ and $A_{m+1}$, gives rise to a simple closed curve $C_\theta$ on $\hatQ$.  Exchanging $A_m, A_{m+1}$ for $A',A''$ merges the two orbits containing vertices $m,m+1$, giving rise to a single 
simple closed curve $C'$ on $\hatQ$.  All of these simple closed curves are mutually disjoint on $\hatQ$.  

If there is a simple closed curve other than $C'$ (i.e.\ if there are more than $2$ orbits of $\pi$) then there is one that is innermost on $\hatQ$; let this be the $C_\theta$ that is used to complete the proof in \cite{GLi}.

If $C'$ is the only simple closed curve then $e$ and $e'$ must be parallel on $G_Q$, bounding a disk in $G_Q$ whose interior is disjoint from $C'$.  
The argument of \cite{GLi} in case $(1)$ (above) applies to show that 
$K$ is a (1,2)-cable knot, a contradiction.
\end{proof}

\section{When $t=4$ and no SCC.} \label{sec:t4noscc}

In this section we assume that $t=4$ and that we are in \situationnscc.

We use configurations of bigons and trigons at a special vertex of $\Lambda$
to either produce a Dyck's surface in $M$ or to find a new genus two Heegaard
splitting of $M$ with respect to which $K$ has bridge number $0$ or $1$
(i.e.\ making $t=0$ or $t=2$).

$\Lambda$ cannot have $9$ mutually parallel edges by Lemma~\ref{lem:noBBBBB}.  
Therefore by Lemmas~\ref{lem:t4trulyspecial} and \ref{lem:N4trulyspecialtypes} 
there exists a special vertex $x$ in $\Lambda$ of type $[4\Delta-3]$, 
$[4\Delta-4,1]$, or $[4\Delta-5,4]$. Recall from 
section~\ref{sec:specialvertices} that
a special vertex, $x$, of $\Lambda$ is of type $[a,b]$ if,  
of the $4\Delta$ corners at $x$, $a$ belong to bigons of $\Lambda$
and $b$ belong to trigons of $\Lambda$. Nothing is known of the faces to
which the remaining corners belong, indeed these faces might not 
even belong to $\Lambda$. We refer to the corners of $x$ which 
belong to these latter faces as {\em true gaps} at $x$. Thus all but $a+b$
corners of $x$ are true gaps. We refer to those corners at $x$ as {\em gaps} 
which are not known to belong to bigons of $\Lambda$ at $x$ (i.e.\ the true
gaps as well as the $b$ corners that belong to trigons of $\Lambda$).
Thus, all but $a$ corners at $x$ are gaps.
In sequence around $x$ we label the faces in $\Lambda$ as follows:
\ttB: bigon, \ttS: an \SC, \ttM: mixed bigon, 
\ttT: trigon. A mixed bigon of $\Lambda$ is one that is not an \SC. 
We label as \ttg: gap, \ttG: true gap.
If {\tt ABC} and {\tt XYZ} are two disjoint subsequences of faces around 
a vertex, we write {\tt ABC+XYZ} to indicate coherent ordering 
(orientation) without assuming relative positions.

\begin{lemma}\label{lem:t4thendelta3}
If $t=4$ and \situationnscc\ then either $M$ contains a Dyck's surface
or $\Delta \leq 3$.
\end{lemma}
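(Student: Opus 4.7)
I would assume for contradiction that $\Delta \geq 4$ and that $M$ does not contain a Dyck's surface. The context already provides the special vertex $x$ of $\Lambda$ of type $[4\Delta-5,4]$, $[4\Delta-4,1]$, or $[4\Delta-3]$, and in each case at least $4\Delta - 5 \geq 11$ of the $4\Delta$ corners at $x$ belong to bigons of $\Lambda$, with at most five corners failing to do so. The strategy is then to show that the number of bigon corners incident to $x$ is bounded by a constant independent of $\Delta$, producing the contradiction $4\Delta - 5 \leq C$.

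The first step is to enumerate the bigons that can appear at $x$. Since a proper $r$-\ESC\ has $2(r+1)$ distinct corner labels and $t = 4$, Lemma~\ref{lem:esc} forces $r \leq 1$. Hence every bigon at $x$ is either an \SC\ on one of the four label pairs $\{1,2\},\{2,3\},\{3,4\},\{4,1\}$, or a mixed $\arc{12},\!\arc{34}$- or $\arc{23},\!\arc{41}$-bigon flanking an \SC\ of opposite color in a $1$-\ESC. By tracking which $\edge{ij}$-edges of $G_F$ arise at $x$ in each type, one sees that two consecutive bigons at $x$ can share their common edge only when one is an \SC\ and its neighbor is a mixed bigon of opposite color. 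Thus every long run of consecutive bigons around $x$ is forced into a rigid alternating pattern that embeds a $1$-\ESC\ surrounded by \SCs\ on the antipodal label pair.

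The main step, and the technical heart of the argument, is bounding the number of bigons of each of these types. I would apply Lemma~\ref{lem:parallelwithsamelabel}: distinct $G_F$-edges meeting a vertex at the same $x$-label must lie in distinct parallelism classes, and on the genus-$2$ surface $\hatF$ punctured at the four vertices only a bounded number of such classes are available. This gives a universal bound on the number of \SCs\ on a given label pair incident to $x$ and, similarly, on the number of mixed bigons of a given type. Further constraint comes from Lemma~\ref{lem:3disjtSC} (three \SCs\ on pairwise disjoint labels force a Dyck's surface) and Lemma~\ref{lem:disjtmobiusannulus} (an annulus plus a disjoint \mobius band of the right boundary type likewise force a Dyck's surface), which together eliminate simultaneous occurrences of certain \SC\ and mixed-bigon combinations at $x$ and in $\Lambda$.

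Summing the per-type bounds yields a constant $C$ independent of $\Delta$ with the total bigon count at $x$ at most $C$; the inequality $4\Delta - 5 \leq C$ then forces $\Delta \leq 3$. The main obstacle I anticipate is making $C$ sharp enough, which will likely require a case-by-case dissection across the three special-vertex types from Lemma~\ref{lem:N4trulyspecialtypes} and a forbidden-configuration analysis in the spirit of Section~\ref{sec:tis6caseD}, exploiting the rigid alternation pattern described above to rule out configurations that would otherwise inflate $C$ with $\Delta$.
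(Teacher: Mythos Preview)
Your high-level plan is the right one, and the observation that two adjacent bigons at $x$ must be one \SC\ and one mixed bigon (so that any run of three or more embeds a $1$-\ESC) is correct and is exactly where the paper starts. The gap is in the mechanism you propose for making the constant $C$ small.

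First, the appeal to Lemma~\ref{lem:parallelwithsamelabel} together with a purely topological bound on parallelism classes is too weak. On a genus-$2$ surface with four fat vertices the number of pairwise non-parallel $\edge{ij}$-arcs between two fixed vertices is finite, but it is well above $2$; summing over the four \SC\ types and two mixed types gives a bound on $\Delta$ far larger than $3$. Second, Lemma~\ref{lem:3disjtSC} is inapplicable here: three \SCs\ on pairwise disjoint label pairs would require six labels, and $t=4$.

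What makes the bound sharp in the paper is the structure an \ESC\ imposes on \emph{all} faces of the relevant color. Once a $\arc{1234}$-\ESC\ is present, Lemma~\ref{lem:escfortis4} forces its extending annulus to be $\bdry$-parallel (and non-primitive), and Lemma~\ref{lem:esc4} then shows that every White bigon is an \SC, every White trigon is a $\arc{41}$-Scharlemann cycle, and crucially every $\arc{23}$-\SC\ has its edges parallel to those of the core \SC\ of the \ESC. Only now does Lemma~\ref{lem:parallelwithsamelabel} bite: it limits the $\arc{23}$-bigons at $x$ to at most two. The paper then feeds this into a short configuration analysis using Lemma~\ref{lem:noBBBBB} (no five consecutive bigons), Lemma~\ref{lem:esc4g} (an \ESC\ must abut a true gap), and the {\tt BBTBB}\,/\,{\tt MSTB} lemmas (Lemmas~\ref{lem:dbft2} and~\ref{lem:L1}) to pin down the gap pattern in each special-vertex type and extract a contradiction for $\Delta\geq 4$. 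None of these \ESC-specific lemmas appears in your proposal, and without them the ``case-by-case dissection'' you anticipate has no entry point.
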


\begin{proof}
Assume $t=4$ and \situationnscc, that $\Delta \geq 4$ and $M$ does not
contain a Dyck's surface. As mentioned above,  
Lemma~\ref{lem:noBBBBB} along with Lemmas~\ref{lem:t4trulyspecial} and 
\ref{lem:N4trulyspecialtypes} guarantee that there exists a special vertex 
$x$ in $\Lambda$ of type $[4\Delta-3]$, $[4\Delta-4,1]$, or $[4\Delta-5,4]$. 

{\bf  $x$ has type $[4\Delta-3]$.} Since $\Delta \geq 4$, there must be 
five consecutive bigons around $x$.  This contradicts Lemma~\ref{lem:noBBBBB}. 

{\bf  $x$ has type $[4\Delta-4,1]$.}  First assume there are $4$ consecutive
bigons at $x$. By Lemma~\ref{lem:noBBBBB}, these must be flanked by two gaps.  
By relabeling we may assume these four bigons contain a $\arc{1234}$-\ESC.
By Lemma~\ref{lem:esc4} all bigons or trigons of $\Lambda$ at 
$\arc{23}$-corners must actually be $\arc{23}$-\SCs; furthermore,
the edges of any two such bigons must come in parallel pairs on $G_F$.
By Lemma~\ref{lem:parallelwithsamelabel} then, all but at most two 
$\arc{23}$-corners at $x$ are (true) gaps.  As there are four gaps at $x$, 
two of which are contiguous to the four consecutive bigons above and hence
are not $\arc{23}$-corners, $\Delta = 4$. By Lemma~\ref{lem:noBBBBB} the 
positions of these two remaining gaps at $\arc{23}$-corners is forced 
and there must exist a $\arc{3412}$-\ESC\ at $x$.  But then there are four 
$\arc{41}$-\SCs\ at $x$.  Together Lemmas~\ref{lem:esc4} and \ref{lem:parallelwithsamelabel} provide a contradiction.  

Since there cannot be four consecutive bigons at $x$, we must have $\Delta = 4$ and there must be four triples of bigons at $x$ separated by single gaps.  Since one of the gaps is actually a trigon, Lemma~\ref{lem:dbft2} implies that the adjacent triples of bigons are \ESC s.  Then Lemma~\ref{lem:esc4} implies all four triples are \ESC s.  Since their \SCs\ all have the same labels, Lemmas~\ref{lem:esc4} and \ref{lem:parallelwithsamelabel} provide a contradiction. 

 {\bf  $x$ has type $[4\Delta-5,4]$.}  If there is a, say, $\arc{1234}$-\ESC, then it must be adjacent to a true gap by Lemma~\ref{lem:esc4g}.  By Lemmas~\ref{lem:esc4} and \ref{lem:parallelwithsamelabel} there must also be a true gap at some $\arc{23}$-corner, but $x$ has only one true gap.  Hence there is no \ESC\ at $x$.
 
 Since $\Delta \geq 4$ and there is no \ESC, there must appear {\tt BgSMSgB} around $x$.   Because there is only one true gap at $x$, applying Lemma~\ref{lem:L1} to those gaps that are actually trigons implies there are at most 10 corners around $x$, a contradiction.
\end{proof}

\begin{thm}\label{prop:tnot4}
If $\Delta \geq 3$, $t=4$, and \situationnscc\ 
then $M$ contains a Dyck's surface.

\end{thm}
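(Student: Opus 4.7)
The plan is to argue by contradiction, assuming that $M$ contains no Dyck's surface, and exploit the combinatorics of a special vertex of $\Lambda$ together with the long \mobius band machinery to force either three disjoint \mobius bands in $M$ (ruled out by Lemma~\ref{lem:3disjointmobiusbands}) or a configuration already ruled out by the lemmas assembled for section~\ref{sec:t4noscc}. By Lemma~\ref{lem:t4thendelta3} we may assume $\Delta = 3$, so $t\Delta = 12$, and by Lemmas~\ref{lem:t4trulyspecial} and~\ref{lem:N4trulyspecialtypes} there is a special vertex $x\in \Lambda$ of one of three types: $[9]$, $[8,1]$, or $[7,4]$. I will treat these three cases in order of increasing difficulty.

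For type $[9]$ all but $3$ of the $12$ corners at $x$ belong to bigons of $\Lambda$, so by the pigeonhole principle and Lemma~\ref{lem:noBBBBB} (which forbids runs of length five) there must be a run of exactly three or four consecutive bigons at $x$. If there is a run of four consecutive bigons then they house an \ESC, which by Lemma~\ref{lem:esc4} forces every bigon at the corresponding core $\arc{23}$-corners around $x$ to be an \SC\ with edges in a controlled parallelism class on $G_F$, and Lemma~\ref{lem:parallelwithsamelabel} then caps the number of such \SCs\ at two, producing more gaps at $x$ than its type allows. Otherwise the runs of three consecutive bigons force \ESCs\ via Lemma~\ref{lem:dbft2} (when a gap is actually a trigon) or~\ref{lem:esc4g}, and the same parallelism/count argument drives out the remaining possibilities. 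The type $[8,1]$ case is handled by the same template: locate an \ESC\ (either directly from a run of four, or via the one trigon abutting a run of three by Lemma~\ref{lem:dbft2}), use Lemma~\ref{lem:esc4} to promote all nearby bigons to \SCs\ of a fixed core, and invoke Lemma~\ref{lem:parallelwithsamelabel} to contradict the vertex type.

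The main obstacle is the type $[7,4]$ case, where there are four trigons redistributed around $x$ and only a single true gap. Here I would argue first that no \ESC\ can occur at $x$: an \ESC\ would, by Lemma~\ref{lem:esc4g}, require an adjacent true gap and, by Lemmas~\ref{lem:esc4} and~\ref{lem:parallelwithsamelabel}, a further true gap at a corresponding $\arc{23}$-corner --- but only one true gap is available. Hence the seven bigons at $x$ appear in runs of length at most three, separated by trigons and/or the unique true gap; a local analysis shows one obtains a subsequence of the form $\ttB\,\ttg\,\ttS\,\ttM\,\ttS\,\ttg\,\ttB$ around $x$, and Lemma~\ref{lem:L1} bounds the total number of such corners at ten, contradicting $4\Delta = 12$. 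This last subcase is the place where the combinatorial accounting is most delicate and where I would expect to have to treat several relabeling symmetries separately; once it is closed, all three types are excluded and the desired contradiction --- hence the existence of a Dyck's surface in $M$ --- follows.
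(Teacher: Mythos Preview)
Your high-level strategy matches the paper's: assume no Dyck's surface, reduce to $\Delta=3$ via Lemma~\ref{lem:t4thendelta3}, obtain a special vertex of type $[9]$, $[8,1]$, or $[7,4]$, and derive a contradiction by combinatorics at that vertex. However, your execution has genuine gaps in all three cases.

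\textbf{Type $[9]$.} You write that ``the runs of three consecutive bigons force \ESCs\ via Lemma~\ref{lem:dbft2} (when a gap is actually a trigon) or~\ref{lem:esc4g}.'' But a type $[9]$ vertex has \emph{no} trigons --- all three gaps are true gaps --- so Lemma~\ref{lem:dbft2} is inapplicable. And a triple {\tt BBB} need not be an \ESC; it may equally well be {\tt SMS}. Lemma~\ref{lem:esc4g} only constrains \ESCs\ that already exist, it does not manufacture them. The paper closes this case (Theorem~\ref{thm:thm3}, Case III) by observing that the configuration must be {\tt GSMSGSMSGSMS} and invoking Lemma~\ref{lem:3SC+1SC+3e}, which you have not touched.

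\textbf{Type $[8,1]$.} You claim to locate an \ESC\ ``via the one trigon abutting a run of three by Lemma~\ref{lem:dbft2}.'' This misreads the lemma: it says {\tt BBTBB} $\implies$ {\tt SMTMS}, with {\tt T} a Scharlemann cycle. That is \emph{not} an \ESC\ (an \ESC\ is {\tt MSM}). So the \ESC-based machinery (Lemmas~\ref{lem:esc4}, \ref{lem:parallelwithsamelabel}) you then appeal to does not engage.

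\textbf{Type $[7,4]$.} Your argument that no \ESC\ occurs is correct (this is Theorem~\ref{thm:thm1}, Case I). But the remaining analysis is much more delicate than you indicate. Your appeal to Lemma~\ref{lem:L1} bounding the number of corners at ten is borrowed from the $\Delta\geq4$ argument in Lemma~\ref{lem:t4thendelta3}; at $\Delta=3$ one must first produce the pattern {\tt MSTB} (or its reverse) to apply it, and many configurations avoid this. The paper devotes Theorems~\ref{thm:thm2}, \ref{thm:thm3}, and~\ref{thm:3.5} to this case, drawing on Lemmas~\ref{lem:L5}, \ref{lem:L6}, \ref{lem:SMST}, \ref{lem:MSTS}, \ref{lem:NFSC2}, \ref{lem:SMTM}, \ref{lem:2SC2T}, \ref{lem:dbfg}, and others.

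The paper's organization is cleaner than a direct type-by-type split: once $\Delta=3$, it proves that a special vertex \emph{cannot} contain a triple of bigons (Theorem~\ref{thm:thm3}) and that it \emph{must} contain a triple of bigons (Theorem~\ref{thm:3.5}); these two theorems contradict each other and the proof of Theorem~\ref{prop:tnot4} is then a one-liner. Each of those two theorems does internally split over the vertex types, but the ``triple of bigons'' dichotomy focuses the case analysis.
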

\begin{proof}
Assume $t=4$ and \situationnscc, that $\Delta \geq 3$ and $M$ does not
contain a Dyck's surface. As mentioned above,  
Lemma~\ref{lem:noBBBBB} along with Lemmas~\ref{lem:t4trulyspecial} and 
\ref{lem:N4trulyspecialtypes} guarantee that there exists a special vertex 
$x$ in $\Lambda$ of type $[4\Delta-3]$, $[4\Delta-4,1]$, or $[4\Delta-5,4]$. 

By Lemma~\ref{lem:t4thendelta3} and our hypothesis, $\Delta =3$.
But then Theorems~\ref{thm:thm3} and \ref{thm:3.5} contradict each other.
\end{proof}

\subsection{The lemmas to complete $t=4$ and \situationnscc}

The goal of this section is to finish the proof of Theorem~\ref{prop:tnot4}
by proving Theorems~\ref{thm:thm3} and \ref{thm:3.5}. So for this subsection
we assume $t=4$, \situationnscc, $M$ contains no Dyck's surface, and $\Delta=3$. Thus
a special vertex of $\Lambda$ is one of type $[4], [8,1],$ or $[7,4]$.

\begin{lemma}
\label{lem:noBBBB}
If $\Delta=3$ then a special vertex $x$ of $\Lambda$ cannot have {\tt BBBB}.
\end{lemma}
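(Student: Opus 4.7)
The plan is to analyze the structure forced by four consecutive bigons {\tt BBBB} at $x$ and derive a contradiction, either by thinning $K$ below $t=4$ or by producing a Dyck's surface in $M$. Relabel so that $B_1,B_2,B_3,B_4$ occupy corners $\arc{12}, \arc{23}, \arc{34}, \arc{41}$ around $x$. First I would identify the opposite endpoints of the $B_i$: since consecutive bigons share an edge, the five edges involved are mutually parallel in $G_Q$ and terminate at a common vertex $y$ parallel to $x$; by the Parity Rule they meet $y$ at labels $4,3,2,1,4$ in order. This forces $B_2$ to be a $\arc{23}$-\SC\ and $B_4$ a $\arc{41}$-\SC, both of the same color (say White), while $B_1,B_3$ are Black $\arc{12},\!\arc{34}$-bigons. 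Thus $B_1\cup B_2\cup B_3$ is a proper $1$-\ESC\ on labels $\{1,2,3,4\}$ producing a White \mobius band $A_{23}$ and a Black annulus $A_{12,34}$, and $B_4$ produces a second White \mobius band $A_{14}$ disjoint from $A_{23}$ (as the bigons and the arcs $\arc{23}, \arc{41}$ of $K$ are disjoint).

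I would then split into two cases according to whether the two components of $\partial A_{12,34}$ are isotopic on $\hatF$. In Case~1, they are isotopic: Lemma~\ref{lem:LMB} (with $n=2$, $i=1$, $j=2$), combined with Lemma~\ref{lem:ESCPA} applied to the proper $1$-\ESC, produces a solid torus $V$ cobounded by $A_{12,34}$ and an annulus $B\subset\hatF$ whose interior is disjoint from $K$, and $V$ guides an isotopy of $A_{12,34}$ onto $B$. This isotopy carries the Black arcs $\arc{12},\arc{34}$ of $K$ onto $\hatF$; perturbing slightly into $H_W$ merges them with the adjacent White arcs $\arc{23},\arc{41}$, yielding a presentation of $K$ with $t\leq 2$ with respect to the original genus $2$ splitting, contradicting the minimality of $t=4$.

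In Case~2, the two components of $\partial A_{12,34}$ lie in distinct essential isotopy classes on $\hatF$, and by Lemma~\ref{lem:LMBess} neither is a meridian of either handlebody. After a small isotopy at the shared fat vertex labels $1$ and $4$ of $G_F$ pushing $\partial A_{14}$ off the edge $e_4$ of $G_Q$ shared by $B_3$ and $B_4$, the \mobius band $A_{14}$ becomes disjoint from $A_{12,34}$ in $M$. I would then verify that $\partial A_{14}$ is not isotopic on $\hatF$ to either component of $\partial A_{12,34}$; otherwise, combining $A_{14}$ with either $A_{23}$ or a suitable subannulus of $\hatF$ produces an embedded Klein bottle or projective plane in $M$, contradicting $\Delta\ge 3$. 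Lemma~\ref{lem:disjtmobiusannulus} then applies to $A=A_{12,34}$ and the perturbed $A_{14}$, producing a Dyck's surface in $M$, a contradiction.

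The main obstacle I expect is the careful verification in Case~2: after perturbing $A_{14}$ off of $e_4$, one must ensure that $\partial A_{14}$ genuinely lies in an essential isotopy class distinct from both components of $\partial A_{12,34}$. This requires a detailed examination of the three $\edge{14}$-edges $e_1,e_4,e_5$ of $G_F$ at vertices $1$ and $4$, their cyclic orderings (constrained by Lemma~\ref{lem:parallelwithsamelabel}), and systematically ruling out each way in which the isotopy classes could coincide on $\hatF$.
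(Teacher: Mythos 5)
Your proposal takes a genuinely different route from the paper, but Case~1 has a fatal gap, and once that gap is recognized the whole dichotomy collapses.

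In Case~1 you assume $\partial A_{12,34}$ bounds an annulus $B\subset\hatF$ with interior disjoint from $K$, invoke Lemma~\ref{lem:LMB} to get a solid torus $V$ guiding an isotopy of $A_{12,34}$ onto $B$, and then claim that ``perturbing slightly into $H_W$'' yields a presentation with $t\le 2$ with respect to the original splitting. This last step is false. After the isotopy, $K$ lies in $H_W\cup\hatF$, and pushing the new $\arc{12},\arc{34}$ into $H_W$ simply places $K$ in the interior of $H_W$ --- it does not give a bridge position at all, let alone one of width at most $2$. To land $K$ on $\hatF$ you would still have to slide $\arc{23}$ and $\arc{41}$ via bridge disks, but their boundary arcs on $\hatF$ will in general cross the newly placed arcs $\arc{12}_B, \arc{34}_B$ inside $B$, so no thinning is forced. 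Indeed, the paper's Lemma~\ref{lem:escfortis4} proves exactly that the extending annulus of a $1$-\ESC\ at $t=4$ is boundary-parallel but \emph{not} primitive; this configuration is consistent, not contradictory, so your Case~1 argument cannot succeed in general. Meanwhile your Case~2 (the two components of $\partial A_{12,34}$ non-isotopic) is vacuous once one knows $A_{12,34}$ must be boundary-parallel, and the dichotomy collapses to the non-contradictory Case~1. (Note also that the argument in Lemma~\ref{lem:ESCPA}'s proof that you allude to requires $j<n$ so that $A_{j+1}$ exists; it does not apply to the $1$-\ESC\ where $j=n=2$.)

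The paper's actual proof has a completely different shape: it first observes, via Lemmas~\ref{lem:noBBBBB} and~\ref{lem:noBBBBT}, that {\tt BBBB} must be flanked by true gaps (so $x$ has type $[9]$ or $[8,1]$), and then runs a combinatorial enumeration of the remaining corners --- using Lemma~\ref{lem:esc4} to pin down that $\arc{23}$-corner faces are \SCs\ with controlled edge parallelisms, Lemma~\ref{lem:parallelwithsamelabel} to limit multiplicities, and Lemmas~\ref{lem:notwoescs}, \ref{lem:SMTM}, and~\ref{lem:esc+fesc} to kill the remaining face patterns. The real content is in that enumeration, which your geometric dichotomy was an attempt to bypass.
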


\begin{proof}
By Lemma~\ref{lem:noBBBBB} there cannot be five bigons in a row. 
By Lemma~\ref{lem:noBBBBT} there cannot be a trigon adjacent to these four bigons.  Hence four consecutive bigons must be flanked by true gaps.  Thus, assuming a special vertex $x$ of $\Lambda$ has {\tt BBBB}, it has type $[9]$ or $[8,1]$.

Up to relabeling, we may assume for this vertex we have 
\begin{center}
\begin{tabular}{c}
\begin{tabular}[h]{*{12}{c}}
1&2&3&4&1&2&3&4&1&2&3&4
\end{tabular}\\
\begin{tabular}[h]{c|*{11}{c|}c}
\hline
\ttG&\ttM &\ttS&\ttM &\ttS & \ttG & \textcolor{white}{\ttG}&\textcolor{white}{\ttG} 
&\textcolor{white}{\ttG}&\textcolor{white}{\ttG}&\textcolor{white}{\ttG}&\textcolor{white}{\ttG} &\textcolor{light-gray}{\ttG}\\
\hline
\end{tabular}
\end{tabular}
\end{center}
By Lemma~\ref{lem:esc4}  the remaining two $\arc{23}$-corners each have a \ttG\ or \ttS\ and the remaining $\arc{41}$-corner has a \ttG, \ttS, or Scharlemann
cycle \ttT. Lemmas~\ref{lem:parallelwithsamelabel} and ~\ref{lem:esc4} imply that one of these two $\arc{23}$-corners must have the last \ttG\ so that the $\arc{41}$-corner has an \ttS\ or a Scharlemann cycle \ttT:
\begin{center}
\begin{tabular}{lc}
&
\begin{tabular}[h]{*{12}{c}}
1&2&3&4&1&2&3&4&1&2&3&4
\end{tabular}\\
\begin{tabular}{c}
(1)\\
(2)
\end{tabular}
&

\begin{tabular}[h]{c|*{11}{c|}c}
\hline
\ttG&\ttM &\ttS&\ttM &\ttS & \ttG & \ttS&\textcolor{white}{\ttG} 
&\textcolor{white}{\ttG}&\textcolor{white}{\ttG}&\ttG &\textcolor{white}{\ttG} &\textcolor{light-gray}{\ttG}\\
\hline
\ttG&\ttM &\ttS&\ttM &\ttS & \ttG & \ttG&\textcolor{white}{\ttG} 
&\textcolor{white}{\ttG}&\textcolor{white}{\ttG}&\ttS &\textcolor{white}{\ttG} &\textcolor{light-gray}{\ttG}\\
\hline
\end{tabular}
\end{tabular}
\end{center}

If $x$ has type $[9]$, then the remaining corners must be bigons.  In line (2) above there must be five consecutive bigons, contrary to Lemma~\ref{lem:noBBBBB}.  In line (1) above the three bigons to the left of the {\tt G} (at $\arc{23}$) form an \ESC\ with labeling contrary to Lemma~\ref{lem:notwoescs}.

Thus $x$ has type $[8,1]$ and there must be a \ttT. First, assume 
this \ttT\ is at the remaining $\arc{41}$-corner, and hence is an \SC.
Then Lemma~\ref{lem:SMTM} contradicts both lines (1) and (2) above (where the
roles of labels $2,3$ and $4,1$ are interchanged). Thus we assume
this $\arc{41}$-corner must belong to an \ttS. Lemma~\ref{lem:notwoescs} 
implies that the \ttT\ must be adjacent to this \ttS. Since the bigon 
to the other side of this \ttS\ must be an \ttM, we have a \FESC\ 
whose presence violates Lemma~\ref{lem:esc+fesc}.
\end{proof}

\begin{thm}\label{thm:thm1}
If $\Delta=3$ then a special vertex $x$ of $\Lambda$ cannot have an \ESC.
\end{thm}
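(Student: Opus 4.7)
The plan is to assume for contradiction that a special vertex $x$ of $\Lambda$ carries an \ESC, and, after relabeling, to take that \ESC\ to occupy three consecutive corners of $x$ with core $\arc{23}$-\SC\ (so that the long \mobius band has \mobius band $A_{23} \subset H_W$ and annulus $A_{12,34} \subset H_B$, with $\partial A_{12,34}$ cobounding an annulus $B_{12,34} \subset \hatF$ by Lemma~\ref{lem:esc}). Since $\Delta=3$, $x$ has $12$ corners with labels cycling through $\arc{12},\arc{23},\arc{34},\arc{41}$ three times, and $x$ has type $[9]$, $[8,1]$, or $[7,4]$.

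First I would collect the global constraints imposed at $x$ by the presence of the \ESC. By Lemma~\ref{lem:esc4}, every bigon or trigon of $\Lambda$ incident to an $\arc{23}$-corner of $x$ is a $\arc{23}$-\SC, and Lemma~\ref{lem:esc4} together with Lemma~\ref{lem:parallelwithsamelabel} forces the $\edge{23}$-edges of any two such \SCs\ to be parallel in $G_F$ --- hence at most two of the three $\arc{23}$-corners at $x$ can belong to bigons or trigons of $\Lambda$, and the remaining $\arc{23}$-corner must be a true gap. By Lemma~\ref{lem:notwoescs} no second \ESC\ occurs at $x$, and by Lemma~\ref{lem:esc+fesc} no \FESC\ occurs at $x$; this together with Lemma~\ref{lem:noBBBB} bounds any run of consecutive bigons at $x$ to length at most three, and when a run of three bigons appears it is itself an \ESC.

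Next I would go through the three types. For type $[9]$ the $12$ corners split as $9$ bigons and $3$ true gaps; one of these gaps is forced onto an $\arc{23}$-corner by the previous paragraph, so the remaining two gaps and the $9$ bigons must fit into the $11$ non-forbidden corners in contiguous runs of length $\leq 3$, each length-$3$ run being an \ESC. A short enumeration shows every distribution produces either a second \ESC\ (contradicting Lemma~\ref{lem:notwoescs}), a run of four bigons (contradicting Lemma~\ref{lem:noBBBB}), or a second $\arc{23}$-\SC\ incompatible with the two-parallelism-class bound from Lemma~\ref{lem:esc4} and Lemma~\ref{lem:parallelwithsamelabel}. For type $[8,1]$ the extra trigon either sits adjacent to the given \ESC\ --- forming a \FESC, excluded by Lemma~\ref{lem:esc+fesc} --- or sits elsewhere, where Lemma~\ref{lem:SMTM} together with Lemma~\ref{lem:dbft2} rules out its placement next to the remaining bigon runs. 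For type $[7,4]$ the four trigons together with the forbidden $\arc{23}$-corner already account for much of $x$; I would apply Lemma~\ref{lem:L1} to each bigon run adjacent to a trigon to bound the total number of bigon-corners and show this is incompatible with a count of $7$ bigon-corners at $x$.

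The main obstacle will be the type $[7,4]$ case, where the large number of trigons forces a careful bookkeeping of which trigon positions are compatible with the \ESC\ via Lemma~\ref{lem:esc4g} and Lemma~\ref{lem:L1}. A secondary subtlety, common to all three cases, is that some of the \ESC's flanking bigons may themselves be either mixed bigons or $\arc{12},\arc{34}$-bigons whose $\edge{41}$-edges compete with the same parallelism classes on $G_F$; I would need to verify that the combined bound from Lemma~\ref{lem:parallelwithsamelabel} applied to both the $\edge{23}$- and $\edge{41}$-edges of these bigons is tight enough to exclude every remaining layout. If that accounting is carried through, each special-vertex type yields a contradiction, completing the proof.
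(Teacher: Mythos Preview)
Your overall framework is right---use Lemma~\ref{lem:esc4}, Lemma~\ref{lem:parallelwithsamelabel}, Lemma~\ref{lem:noBBBB}, and Lemma~\ref{lem:esc4g} to constrain the layout around $x$---but you have the difficulty of the cases inverted, and several of your lemma citations do not deliver what you claim. Type $[7,4]$ is the \emph{easy} case, dispatched in one line: there is a single true gap, and by Lemma~\ref{lem:esc4g} it must sit adjacent to the \ESC\ (hence at a $\arc{41}$-corner), so all three $\arc{23}$-corners carry bigons or trigons; by Lemma~\ref{lem:esc4} those are $\arc{23}$-\SCs\ with edges parallel to the \ESC's, and three of them at $x$ contradict Lemma~\ref{lem:parallelwithsamelabel}. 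The real work is in types $[9]$ and $[8,1]$.

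For those types your sketch has genuine gaps. First, Lemma~\ref{lem:notwoescs} only forbids a second \ESC\ whose core \SC\ has the same color but different labels---you need Lemma~\ref{lem:esc4} (White bigons are \SCs) to exclude the other would-be \ESCs. Second, Lemma~\ref{lem:esc+fesc} only kills an \FESC\ whose inner \SC\ is same-color different-label from the \ESC's; it does not give ``no \FESC'' outright. Third, your assertion that ``each length-$3$ run is an \ESC'' is false: a run on $\arc{23},\arc{34},\arc{41}$ has White outer bigons which, by Lemma~\ref{lem:esc4}, are \SCs, so that run is {\tt SMS} (or {\tt SSS}), not an \ESC. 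Finally, your ``short enumeration'' is where the actual content lies: the paper pins the configuration to one of a small number of explicit strings (starting from {\tt GMSMG} or {\tt GMSMT} by Lemmas~\ref{lem:esc4g} and~\ref{lem:noBBBB}), then eliminates each using Lemmas~\ref{lem:MSTS}, \ref{lem:2ESC+2B}, \ref{lem:3SC+1SC+3e}, and~\ref{lem:L1}, none of which you invoke. Without those, the enumeration does not close.
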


\begin{proof}
Assume there is an \ESC\ around $x$.
By Lemma~\ref{lem:esc4g} this \ESC\ must be adjacent to a true gap.  
WLOG we assume the \ESC\ is on the corner $\arc{1234}$ with the true gap 
to the left and, by Lemma~\ref{lem:noBBBB}, a gap to the right.

{\bf Case I.} The vertex $x$ has type $[7,4]$.

  By Lemma~\ref{lem:esc4}, all three $\arc{23}$ corners around $x$ have \SCs\ 
whose edges are parallel to that of the \ESC.  This gives a contradiction via Lemma~\ref{lem:parallelwithsamelabel}.

{\bf Case II.}  The vertex $x$ has type $[8,1]$ or $[9]$.

Up to relabeling we may assume we have one of the following:
\begin{center}
\begin{tabular}{lc}
&
\begin{tabular}[h]{*{12}{c}}
1&2&3&4&1&2&3&4&1&2&3&4
\end{tabular}\\
\begin{tabular}{c}
(1)\\
(2)
\end{tabular}
&

\begin{tabular}[h]{c|*{11}{c|}c}
\hline
\ttG&\ttM &\ttS&\ttM &\ttG & \textcolor{white}{\ttG} & \textcolor{white}{\ttG}&\textcolor{white}{\ttG} 
&\textcolor{white}{\ttG}&\textcolor{white}{\ttG}&\textcolor{white}{\ttG} &\textcolor{white}{\ttG} &\textcolor{light-gray}{\ttG}\\
\hline
\ttG&\ttM &\ttS&\ttM &\ttT & \textcolor{white}{\ttG} & \textcolor{white}{\ttG}&\textcolor{white}{\ttG} 
&\textcolor{white}{\ttG}&\textcolor{white}{\ttG}&\textcolor{white}{\ttG} &\textcolor{white}{\ttG} &\textcolor{light-gray}{\ttG}\\
\hline
\end{tabular}
\end{tabular}
\end{center}

By Lemmas~\ref{lem:esc4} and \ref{lem:parallelwithsamelabel} one of the remaining $\arc{23}$-corners has a \ttG\ and the other has an \ttS.  (If both $\arc{23}$-corners have a true gap then we are in (2) above, and the remaining corners belong to bigons
of $\Lambda$. Then by Lemma~\ref{lem:esc4} there is an \ESC\ containing a 
$\arc{41}$-\SC.  This contradicts Lemma~\ref{lem:notwoescs}.)  Furthermore Lemma~\ref{lem:esc4} implies the remaining $\arc{41}$-corner has a \ttG, \ttS, 
or Scharlemann cycle \ttT.

If $x$ is as in line (1) then one of the remaining $\arc{23}$-corners must take the last \ttG.  This gives a run of five yet to be accounted corners.  Without one of the last corners being a \ttT, there would be four consecutive bigons contrary to Lemma~\ref{lem:noBBBB}.  So $x$ cannot have type $[9]$ and must have type $[8,1]$.  Since the remaining $\arc{23}$-corner in this run of five must be an \ttS, the two possible placements of the \ttT\ give configurations {\tt GMSTSMG}  and {\tt GMSMTMG} (or {\tt GMTMSMG}) overlapping the original {\tt GMSMG} on one \ttG.  The former is forbidden by Lemma~\ref{lem:MSTS}.  The latter may be seen as a case of line (2).

We may now assume $x$ is as in line (2). Note that the pictured {\tt T} is a 
Scharlemann cycle by Lemma~\ref{lem:esc4}.
If the remaining $\arc{41}$-corner has an \ttS\ then Lemma~\ref{lem:noBBBB} implies that the final \ttG\ must be between this \ttS\ and the \ttS\ at whichever of the two remaining $\arc{23}$-corners.  In these two cases, the types of bigons may be determined at enough of the remaining corners for Lemma~\ref{lem:SMTM} to apply and be contradicted by the number of $\edge{23}$-edges at the vertex.  Therefore the remaining $\arc{41}$-corner has a \ttG.  Whichever of the remaining $\arc{23}$-corners gets an \ttS\ must then be flanked by \ttM\ bigons and all remaining bigons must be Black.  Hence we must have the configuration {\tt gMSMgMSMgBgB} which is forbidden by Lemma~\ref{lem:2ESC+2B}.
\end{proof}

\begin{thm}
\label{thm:thm2}
If $\Delta=3$ then at a special vertex $x$ of $\Lambda$ a triple of bigons must be adjacent to a true gap.
\end{thm}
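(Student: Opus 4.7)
The plan is to argue by contradiction. Suppose some triple of three consecutive bigons $B_1B_2B_3$ at the special vertex $x$ is not adjacent to any true gap. I first want to determine what the faces of $\Lambda$ immediately flanking the triple must be. By Lemma~\ref{lem:noBBBB} no four consecutive corners at $x$ can all belong to bigons of $\Lambda$, so neither face immediately adjacent to the triple is itself a bigon. By hypothesis, neither flanking face is a true gap. The only remaining option is that each flanking face is a trigon. In particular the configuration around the triple reads $\ttT\ttB\ttB\ttB\ttT$; this already rules out $x$ having type $[9]$ (no trigons), leaving types $[8,1]$ or $[7,4]$.

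The second step is to upgrade the presence of the flanking trigon to the conclusion that $B_1B_2B_3$ is an \ESC. This is exactly the implication packaged in Lemma~\ref{lem:dbft2}: a triple of bigons at a vertex adjacent to a trigon of $\Lambda$ must in fact be a $1$-\ESC. This is the same invocation made in the proof of Lemma~\ref{lem:t4thendelta3}, where an adjacent ``gap that is actually a trigon'' forced the neighboring bigon triples to be \ESC s.

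Once the triple is identified as an \ESC, Theorem~\ref{thm:thm1} applies directly: a special vertex of $\Lambda$ cannot contain an \ESC\ when $\Delta=3$. This contradicts our setup and completes the proof.

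The main obstacle is the bookkeeping around Lemma~\ref{lem:dbft2}, namely verifying that the lemma applies uniformly to either side of the triple and for both admissible special-vertex types $[8,1]$ and $[7,4]$. Since the configuration $\ttT\ttB\ttB\ttB\ttT$ we produce has a trigon on \emph{both} sides, either orientation of the adjacency hypothesis in Lemma~\ref{lem:dbft2} suffices, so this obstacle is purely organizational rather than substantive. No new combinatorial or geometric input beyond Lemmas~\ref{lem:noBBBB},~\ref{lem:dbft2} and Theorem~\ref{thm:thm1} is needed.
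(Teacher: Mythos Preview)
Your argument breaks at the second step: Lemma~\ref{lem:dbft2} does not say what you claim. Its statement is ${\tt BBTBB}\implies{\tt SMTMS}$ --- a \emph{trigon} flanked by two bigons on each side forces that specific labeling. Your configuration is {\tt TBBBT}, a triple of bigons flanked by trigons; there is no {\tt BBTBB} subsequence here, so the lemma simply does not apply. Your analogy with Lemma~\ref{lem:t4thendelta3} fails for exactly this reason: there the trigon sits \emph{between} two triples of bigons, so the last two bigons of one triple, the trigon, and the first two bigons of the next genuinely form {\tt BBTBB}. In your setup the trigons are on the outside, not between bigon runs.

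Worse, your intermediate conclusion is actually false. A direct label-matching check (the edge shared by two adjacent bigons at $x$ has a single label at its far endpoint) forces consecutive bigons to alternate {\tt S}/{\tt M}; hence any triple {\tt BBB} is either {\tt MSM} (an \ESC) or {\tt SMS}. Since both flanking faces are trigons, Lemma~\ref{lem:esc4g} --- or Theorem~\ref{thm:thm1} itself --- rules out {\tt MSM}. So the triple is {\tt SMS}, not an \ESC, and there is nothing to feed into Theorem~\ref{thm:thm1}. This is exactly where the paper's proof begins its real work: having recorded {\tt TSMST}, it observes that two trigons already force type $[7,4]$ (type $[8,1]$ has only one trigon, a point you also overlooked), and then carries out a substantial case analysis on the remaining seven corners using Lemmas~\ref{lem:L1}, \ref{lem:dbft2}, \ref{lem:SMST}, \ref{lem:NFSC2}, \ref{lem:fesc+1}, and others. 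There is no shortcut to that analysis via Theorem~\ref{thm:thm1}.
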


\begin{proof}
Assume otherwise.  Then by Lemma~\ref{lem:noBBBB} we must have {\tt TBBBT} at $x$.  By Lemma~\ref{lem:esc4g}  we have {\tt TSMST}.   
Hence the vertex $x$ must have type $[7,4]$.  By symmetry we may assume we do not have the true gap immediately to the right, so that we have either  {\tt TSMSTB} or {\tt TSMSTT}.  
We then have the following possible configurations at $x$:
\begin{center}
\begin{tabular}{l c r}

\begin{tabular}{c}
(1)\\(2)\\(3)\\(4)\\(5)
\end{tabular}
&
\begin{tabular}[h]{c|*{7}{c|}c}
\hline
\textcolor{white}{\ttG}&\ttB &\ttT&\ttS &\ttM & \ttS & \ttT &\ttB  &\textcolor{white}{\ttG}\\
\hline
\textcolor{white}{\ttG}&\ttB &\ttT&\ttS &\ttM & \ttS & \ttT &\ttT  &\textcolor{white}{\ttG}\\
\hline\textcolor{white}{\ttG}&\ttT &\ttT&\ttS &\ttM & \ttS & \ttT &\ttT  &\textcolor{white}{\ttG}\\
\hline\textcolor{white}{\ttG}&\ttG &\ttT&\ttS &\ttM & \ttS & \ttT &\ttB  &\textcolor{white}{\ttG}\\
\hline\textcolor{white}{\ttG}&\ttG &\ttT&\ttS &\ttM & \ttS & \ttT &\ttT  &\textcolor{white}{\ttG}\\
\hline
\end{tabular}
\end{tabular}
\end{center}

Lines (1) and (4) contradict Lemma~\ref{lem:L1} (too many true gaps). Note,
as in all of these lemmas,
Lemma~\ref{lem:L1} applies equally well to the reverse ordering, {\tt BTSM}.

In line (2), the \ttB\ must be an \ttS\ by Lemma~\ref{lem:L1}.  There are three remaining corners of the same color as the \ttM\ shown.  Lemmas~\ref{lem:SMST}, \ref{lem:NFSC2}, and \ref{lem:L1} then imply the \ttG\ must be adjacent to the newly placed \ttS\  and the other two corners are filled with an \ttM\ and the last \ttT.  Regardless of this last choice, the remaining two corners are both \ttS s (Lemma~\ref{lem:fesc+1}).  Thus five corners of the same color have an \ttS.  The edge shared by the adjacent \ttT s is parallel to an edge of the \ttM\ shown in line (2) by Lemmas~\ref{lem:fesc+1} and \ref{lem:TB}.  Now Lemma~\ref{lem:3SC+1SC+3e} applies providing a contradiction to Lemma~\ref{lem:parallelwithsamelabel}.

In lines (3) and (5) there must be two more bigons the same color as the \ttM\ shown.  Lemma~\ref{lem:NFSC2} implies each of these must be an \ttM,  but this contradicts Lemma~\ref{lem:SMST}.
\end{proof}

\begin{thm}
\label{thm:thm3}
If $\Delta = 3$ then a special vertex $x$ of $\Lambda$ cannot contain a triple of bigons.
 \end{thm}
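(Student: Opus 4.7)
The plan is to argue by contradiction: assume a special vertex $x$ of $\Lambda$ has three consecutive bigons {\tt BBB}. Since $\Delta=3$, $t=4$, and $M$ contains no Dyck's surface, $x$ has type $[9]$, $[8,1]$, or $[7,4]$. Theorem~\ref{thm:thm2} forces the triple {\tt BBB} to sit adjacent to a true gap, so after relabeling we may assume the pattern {\tt GBBB} occurs; Lemma~\ref{lem:noBBBB} then places another gap or a trigon on the other side, giving {\tt GBBB[T\,or\,G]}. Theorem~\ref{thm:thm1} rules out {\tt MSM} inside {\tt BBB}, so the middle bigon must be an \ttM\ or the flanking bigons cannot both be \ttM; this severely restricts the coloring/labeling of {\tt BBB}.

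First I would dispose of type $[9]$. Here the $12$ corners at $x$ consist of $9$ bigons and $3$ true gaps, and by Lemma~\ref{lem:noBBBB} no run of bigons has length $4$. Thus the pattern must be {\tt GBBBGBBBGBBB}, and each triple is adjacent to true gaps on both sides. Using that the label sequence around $x$ is $1234\,1234\,1234$, one of the three triples will, after a rotation, occupy corners $\arc{23},\arc{34},\arc{41}$, so a $\arc{34}$-corner is a bigon. Theorem~\ref{thm:thm1} excludes an \ESC, so the central bigon is an \ttM, producing configurations to which Lemma~\ref{lem:esc4} together with Lemma~\ref{lem:parallelwithsamelabel} assigns too many mutually parallel edges with the same label at $x$ on $G_F$ --- or alternatively yields an \SC\ disjoint from the \SCs\ built from the other two triples, contradicting Lemma~\ref{lem:3disjtSC}.

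Next I would handle type $[8,1]$. Here there are $8$ bigons, $1$ trigon, and $3$ true gaps. Lemma~\ref{lem:noBBBB} and the assumption {\tt BBB} at $x$ limit the cyclic runs between true gaps: the $8$ bigons and the single trigon distribute among three arcs, and Theorem~\ref{thm:thm2} prevents any triple from being flanked by two bigons. I would enumerate the possible cyclic patterns (symbolically: sequences of B's and T separated by three G's, containing at least one length-$3$ B-run) and show that each either forces a forbidden {\tt MSM} (ruled out by Theorem~\ref{thm:thm1}), a forbidden {\tt TSMST} / {\tt SMTM} / \FESC\ configuration (ruled out by Lemmas~\ref{lem:SMTM}, \ref{lem:MSTS}, \ref{lem:esc+fesc}), or enough bigons of the same color producing \SCs\ violating Lemma~\ref{lem:parallelwithsamelabel} via Lemma~\ref{lem:esc4}; Lemma~\ref{lem:L1} and Lemma~\ref{lem:fesc+1} will dispatch the residual cases. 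This is the main obstacle because several patterns must be checked and each requires the label around $x$ to be tracked carefully, but the casework is parallel to that carried out in the proofs of Theorems~\ref{thm:thm1} and \ref{thm:thm2}.

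Finally, for type $[7,4]$ there is only a single true gap, so Theorem~\ref{thm:thm2} forces the triple {\tt BBB} to abut this unique \ttG. Since $\Delta=3$ there are only three $\arc{23}$-corners (and similarly for the other label pairs); Lemma~\ref{lem:esc4} combined with Theorem~\ref{thm:thm1} shows that the central bigon of {\tt BBB} is an \ttM, and the four trigons and remaining bigons must fit into the remaining eight corners in a cyclic pattern with no \ttG. One shows that any such completion produces either a second triple of bigons (then Theorem~\ref{thm:thm2} again forces an impossible second true gap), or a {\tt TSMST} / {\tt SMST} / {\tt MSMT} configuration ruled out by Lemmas~\ref{lem:SMST}, \ref{lem:NFSC2}, \ref{lem:SMTM}; in the remaining corners enough parallel $\edge{23}$- or $\edge{41}$-edges accumulate to contradict Lemma~\ref{lem:parallelwithsamelabel} via Lemma~\ref{lem:3SC+1SC+3e}, or a \FESC\ appears, contradicting Lemma~\ref{lem:esc+fesc}. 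The anticipated difficult step across all three cases is coordinating the label-sequence around $x$ with the color constraints on trigons (Lemma~\ref{lem:TB}) so that the \SCs\ produced cannot be made mutually disjoint and thereby consistent with Lemma~\ref{lem:3disjtSC}; the labeling bookkeeping is the principal obstacle, not any single deep new idea.
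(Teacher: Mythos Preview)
Your plan has a structural error at the very first step that propagates through the rest. With $t=4$, three consecutive bigons between parallel vertices are forced, by the label shift at the other vertex, to be either {\tt MSM} (an \ESC) or {\tt SMS}; there are no other possibilities. Since Theorem~\ref{thm:thm1} rules out {\tt MSM}, every {\tt BBB} at $x$ is automatically {\tt SMS}. Your deduction ``the middle bigon must be an {\tt M} or the flanking bigons cannot both be {\tt M}'' is merely the negation of {\tt MSM} and points in the wrong direction: the middle bigon is in fact an {\tt S}, and the flanking ones are mixed. The paper opens its proof with exactly this reduction (``every {\tt BBB} is an {\tt SMS} adjacent to a gap''), and the entire case analysis is organized around {\tt SMS} blocks.

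Because you miss this, you repeatedly invoke lemmas whose hypotheses are unavailable. Lemma~\ref{lem:esc4} begins ``Given the \ESC\ of Figure~\ref{fig:esc4}\dots'', and Lemma~\ref{lem:esc+fesc} requires both an \ESC\ and a \FESC; by Theorem~\ref{thm:thm1} there is no \ESC\ at $x$, so neither lemma applies. Likewise your appeal to Lemma~\ref{lem:3disjtSC} for type $[9]$ cannot succeed: with $t=4$ and three {\tt SMS} blocks on the same corner sequence, the \SCs\ produced carry only two label pairs, not three disjoint ones. The paper instead handles type $[9]$ in one line: the forced pattern {\tt GSMSGSMSGSMS} gives three same-label \SCs\ and a fourth \SC\ on the opposite pair, and Lemma~\ref{lem:3SC+1SC+3e} together with Lemma~\ref{lem:parallelwithsamelabel} finishes it. For types $[8,1]$ and $[7,4]$ the paper's casework (using Lemmas~\ref{lem:L1}, \ref{lem:L5}, \ref{lem:fesc+1}, \ref{lem:dbft2}, \ref{lem:SMST}, \ref{lem:NFSC2}, \ref{lem:SMTM}, \ref{lem:SMSandSMS}, \ref{lem:dbfg}, \ref{lem:scBsc+BB+BB}) is built around \FESC s arising when an {\tt SMS} meets a trigon, not around \ESC s. Your outline does not engage this mechanism.
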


\begin{proof}
Assume there is {\tt BBB} at the special vertex $x$.   By Theorems~\ref{thm:thm1} and \ref{thm:thm2}, every {\tt BBB} is an {\tt SMS} adjacent to a gap.  In particular, we have {\tt GSMS}.

{\bf Case I.}  The vertex $x$ has type $[7,4]$.  Then by Lemma~\ref{lem:noBBBB} we must have {\tt GSMST}. Note that the true gap indicated is the only one 
at $x$. The \FESC\ must be type I by Lemma~\ref{lem:fesc+1}.  Lemma~\ref{lem:L1} implies we cannot have {\tt GSMSTB}.  Thus we must have {\tt GSMSTT}.  To the right of this there are $2$, $1$, or $0$ \ttB s before the next \ttT\ (Theorem~\ref{thm:thm2}).

{\bf Case Ia.} Assume we have {\tt GSMSTTBBT}.  If the {\tt BB} are {\tt SM} then we  contradict Lemma~\ref{lem:L5}.  If the {\tt BB} are {\tt MS} then by Lemmas~\ref{lem:dbft2} and \ref{lem:L5} the remaining three spots are filled with {\tt TMS}. Yet this contradicts Lemma~\ref{lem:SMST}.

{\bf Case Ib.}  Assume we have {\tt GSMSTTBT}.    The \ttB\ is the same color as the \ttM.  Since three of the remaining four positions get a bigon, one of these must be the same color as the \ttM\ too.  This however contradicts Lemma~\ref{lem:SMST} or \ref{lem:NFSC2}.

{\bf Case Ic.}  Assume we have {\tt GSMSTTT}.  Theorem~\ref{thm:thm2} permits only two positions for the final \ttT.
\begin{center}
\begin{tabular}{l c r}
\begin{tabular}{c}
(1)\\(2)
\end{tabular}
&
\begin{tabular}[h]{c|*{11}{c|}c}
\hline
\ttG&\ttS &\ttM&\ttS &\ttT & \ttT & \ttT 
&\textcolor{white}{\ttG} &\ttT&\textcolor{white}{\ttG}&\textcolor{white}{\ttG}&\textcolor{white}{\ttG} &\textcolor{light-gray}{\ttG}\\
\hline
\ttG&\ttS &\ttM&\ttS &\ttT & \ttT & \ttT 
&\textcolor{white}{\ttG} &\textcolor{white}{\ttG}&\ttT&\textcolor{white}{\ttG}&\textcolor{white}{\ttG} &\textcolor{light-gray}{\ttG}\\
\hline
\end{tabular}
\end{tabular}
\end{center}
Theorem~\ref{thm:thm1} labels the triple of bigons in line (1) as {\tt SMS}. Now Lemma~\ref{lem:L1} gives a contradiction.

In line (2), the bigons to each side of this last \ttT\ are the same color as the \ttM.  This contradicts Lemma~\ref{lem:SMST} or \ref{lem:NFSC2}.

{\bf Case II.}  The vertex $x$ has type $[8,1]$.

Thus we have four gaps (one trigon and three true gaps) and by 
Lemma~\ref{lem:noBBBB} we must have {\tt gSMSg} (where at least one of these
gaps is a true gap).  By Lemma~\ref{lem:SMSandSMS} we cannot have {\tt gSMSgSMSg}.  Thus, up to symmetry, we must have one of the following four configurations

\begin{center}
\begin{tabular}{lc}
&
\begin{tabular}[h]{*{12}{c}}
1&2&3&4&1&2&3&4&1&2&3&4
\end{tabular}\\
\begin{tabular}{c}
(1)\\
(2)\\
(3)\\
(4)
\end{tabular}
&

\begin{tabular}[h]{c|*{11}{c|}c}
\hline
\ttg&\ttS &\ttM&\ttS &\ttg & \ttg & \ttS&\ttM 
&\ttS&\ttg&\textcolor{white}{\ttG} &\textcolor{white}{\ttG} &\textcolor{light-gray}{\ttg}\\
\hline
\ttg&\ttS &\ttM&\ttS &\ttg & \textcolor{white}{\ttG}& \ttg&\textcolor{white}{\ttG}
&\textcolor{white}{\ttG}&\ttg&\textcolor{white}{\ttG} &\textcolor{white}{\ttG} &\textcolor{light-gray}{\ttg}\\
\hline
\ttg&\ttS &\ttM&\ttS &\ttg & \textcolor{white}{\ttG} & \ttg&\ttS 
&\ttM&\ttS&\ttg &\textcolor{white}{\ttG} &\textcolor{light-gray}{\ttg}\\
\hline
\ttg&\ttS &\ttM&\ttS &\ttg & \textcolor{white}{\ttG}&\textcolor{white}{\ttG}&\ttg&\textcolor{white}{\ttG}&\ttg &\textcolor{white}{\ttG}&\textcolor{white}{\ttG} &\textcolor{light-gray}{\ttg}\\
\end{tabular}
\end{tabular}
\end{center}

In line (1), filling in the last two blanks with either {\tt MS} or {\tt SM} produces a contradiction to Lemma~\ref{lem:dbfg}.  

In line (2), by Lemma~\ref{lem:dbft2} the initial \ttg\ must be \ttG\ so that the \ttT\ occurs at one of the remaining three.  Lemmas~\ref{lem:fesc+1} and \ref{lem:NFSC2} force configuration (i) below when the \ttT\ is 
at the second gap. When the \ttT\ is at the third gap,  
Lemmas~\ref{lem:fesc+1}, \ref{lem:NFSC2}, and \ref{lem:dbfg} force 
configurations (ii), (iii), and (iv) below. 
Lemma~\ref{lem:dbft2} determines the labelings of all the bigons but one if 
the \ttT\ is at the last \ttg, giving (v) below. 

\begin{center}
\begin{tabular}{lc}
&
\begin{tabular}[h]{*{12}{c}}
1&2&3&4&1&2&3&4&1&2&3&4
\end{tabular}\\
\begin{tabular}{c}
(i)\\
(ii)\\
(iii)\\
(iv)\\
(v)\\
\end{tabular}
&

\begin{tabular}[h]{c|*{11}{c|}c}
\hline
\ttG&\ttS &\ttM&\ttS &\ttT & \ttS& \ttG&\ttS
&\ttM&\ttG&\ttM &\ttS &\textcolor{light-gray}{\ttg}\\
\hline
\ttG&\ttS &\ttM&\ttS &\ttG & \ttS& \ttT&\ttS
&\ttM&\ttG&\ttM &\ttS &\textcolor{light-gray}{\ttg}\\
\hline
\ttG&\ttS&\ttM&\ttS&\ttG& \ttS & \ttT &\ttM &\ttS &\ttG &\ttS &\ttM
&\textcolor{light-gray}{\ttg}\\
\hline
\ttG&\ttS&\ttM&\ttS&\ttG& \ttM & \ttT &\ttM &\ttS &\ttG &\ttS &\ttM
&\textcolor{light-gray}{\ttg}\\
\hline
\ttG&\ttS &\ttM&\ttS &\ttG & \textcolor{white}{\ttB}& \ttG&\ttS
&\ttM&\ttT&\ttM &\ttS &\textcolor{light-gray}{\ttg}\\
\hline
\end{tabular}
\end{tabular}
\end{center}

Lemma~\ref{lem:3SC+1SC+3e} (with the roles of $1,2$ and $3,4$ 
interchanged) applies to each of configurations (i),(ii), 
and (v), giving a contradiction to Lemma~\ref{lem:parallelwithsamelabel}. 
(Notice that the \ttT\ in line (v) is an \SC.) For (iv), 
Lemma~\ref{lem:dbfg} implies that a neighborhood of 
the $\edge{41}$-edges of the {\tt MSGSM} subconfiguration is a 1-punctured
torus, and hence that the $\edge{23}$-edges of the $\arc{23}$-Scharlemann
cycle (bigon and
trigon) lie in 
a 1-punctured torus. This contradicts Lemma~\ref{lem:SMTM}. To eliminate
(iii), consider the two $\arc{12}$-\SCs, ${\tt S_1,S_2}$, in that 
configuration. The argument 
of Lemma~\ref{lem:scBsc+BB+BB} applied to the sub-configurations ${\tt S_1MS}$, 
{\tt SM}, and ${\tt S_2}$ (with labels $1,2,3,4$ relabelled $3,4,1,2$), 
shows that ${\tt S_1,S_2}$ are parallel 
bigons. Thus we
can think of the sub-configurations ${\tt S_1M,S_2T}$ together as
one \FESC. That is, applying the argument of Lemma~\ref{lem:nfsc} to these
faces and the $\arc{41},\!\arc{23}$-\SCs\ of (iii) shows that three
\mobius bands $A_{41},A_{23},A_{12}$ corresponding to these Scharlemann
cycle faces
can be perturbed to be disjoint. But Lemma~\ref{lem:3disjointmobiusbands}
contradicts that $M$ does not contain a Dyck's surface. We have eliminated
configurations (i)-(v), and line (2) does not occur.

In line (3), by symmetry we may assume the second \ttg\ (just to the left of the first blank) is actually \ttT\ and the other \ttg\ are all \ttG. 
Then Lemma~\ref{lem:L1} implies that the first blank is an \ttS\ and the
contiguous \FESC\ is of type I.
Applying Lemma~\ref{lem:SMST} to this \FESC\ and the mixed bigon in the
remaining {\tt SMS} configuration contradicts 
Lemma~\ref{lem:parallelwithsamelabel}.

In line (4) we examine where the \ttT\ may go.  It cannot be either of the first two gaps (at the $\arc{41}$-corners) by Lemma~\ref{lem:dbft2}.  So without loss of generality assume the trigon is the third gap (at the $\arc{34}$-corner). Now we have two cases according to whether the bigon between the trigon and fourth gap is \ttM\ or \ttS.  

If it is \ttM, then to the left of the trigon there must also be an \ttM.  Otherwise we must have {\tt MSTM} contradicting Lemma~\ref{lem:fesc+1}.  Thus around the trigon we have {\tt SMTM}.   But now the \SCs\ of the {\tt SMS} provide a 
configuration contrary to Lemma~\ref{lem:SMTM} (where the 
$\arc{34}$-corners and 
$\edge{12}$-edges play the role of the $\arc{23}$-corners and 
$\edge{41}$-edges of the Lemma).

If it is \ttS, then it is a  $\arc{41}$-\SC\ and we can apply Lemma~\ref{lem:dbfg}(4) to conclude we obtain {\tt MSTSGSM}.
But then Lemma~\ref{lem:NFSC2} gives a contradiction.

{\bf Case III.}  The vertex $x$ has type $[9]$.

Since there are no {\tt MSM} and no string of four bigons, 
there is just one configuration:
\begin{center}
\begin{tabular}{c}
\begin{tabular}[h]{*{12}{c}}
1&2&3&4&1&2&3&4&1&2&3&4
\end{tabular}\\
\begin{tabular}[h]{c|*{11}{c|}c}
\hline
\ttG&\ttS &\ttM&\ttS &\ttG& \ttS & \ttM 
&\ttS &\ttG&\ttS&\ttM&\ttS &\textcolor{light-gray}{\ttG}\\
\hline
\end{tabular}
\end{tabular}
\end{center}
This configuration is forbidden by Lemma~\ref{lem:3SC+1SC+3e} and 
Lemma~\ref{lem:parallelwithsamelabel}.
\end{proof}
 
\begin{thm}
\label{thm:3.5}
If $\Delta = 3$, then a special vertex $x$ of $\Lambda$ must contain a triple of bigons.  
\end{thm}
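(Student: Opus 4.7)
The plan is to argue by contradiction: suppose some special vertex $x$ of $\Lambda$ contains no run {\tt BBB} of three consecutive bigons, and derive a contradiction (or a Dyck's surface in $M$) in each possible type $[9]$, $[8,1]$, $[7,4]$ given by Lemmas~\ref{lem:t4trulyspecial} and \ref{lem:N4trulyspecialtypes}. Since $\Delta=3$ and $t=4$, the vertex $x$ has exactly $4\Delta = 12$ corners, so gaps and bigon-runs can be enumerated in a finite combinatorial way.

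For type $[9]$, pigeonhole finishes immediately: $9$ bigon corners separated by only $3$ gaps must produce a run of length $\geq 3$. For type $[8,1]$ there are $8$ bigon corners, one trigon corner and three true-gap corners; avoiding {\tt BBB} forces every maximal run of bigons to have length at most $2$, and since $8 = 2+2+2+2$ this forces the rigid pattern {\tt BBgBBgBBgBBg}. Then the four gap positions are equally spaced around $x$, so they use all four arc-labels $\{\arc{12},\arc{23},\arc{34},\arc{41}\}$ exactly once. One of the four bigon-pairs is an {\tt SM} or {\tt MS} pair adjacent to the unique $\arc{23}$ gap (after relabelling); the preceding lemmas on {\tt SMgSM}-, {\tt SMTM}- and {\tt SMST}-configurations (Lemmas~\ref{lem:SMST}, \ref{lem:SMSandSMS}, \ref{lem:SMTM}, \ref{lem:dbft2}, \ref{lem:dbfg}, \ref{lem:fesc+1}, together with the edge-parallelism bounds in Lemmas~\ref{lem:parallelwithsamelabel} and \ref{lem:parallelwithsamelabelAPE}) rule out every placement of the single trigon among the four gap positions, case by case, exactly as was done repeatedly in the proof of Theorem~\ref{thm:thm3}.

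For type $[7,4]$ there are $7$ bigon corners, $4$ trigon corners, and $1$ true gap. Avoiding {\tt BBB} again requires each run of bigons to have length $\leq 2$, hence at least four runs of bigons among the five gaps, so at most one pair of adjacent gap-positions. This cuts the possibilities down to a short list of cyclic patterns of {\tt B}'s, {\tt T}'s, and the single {\tt G}. In each pattern I use Lemmas~\ref{lem:esc4g}, \ref{lem:dbft2}, \ref{lem:L1}, \ref{lem:NFSC2}, \ref{lem:fesc+1}, \ref{lem:TB} to identify the colors and label-pairs of the bigons forced by each trigon neighbor, and then invoke Lemma~\ref{lem:3SC+1SC+3e} (or one of the \mobius--band--disjointness lemmas leading to Lemma~\ref{lem:3disjointmobiusbands}) to obtain either a violation of Lemma~\ref{lem:parallelwithsamelabel} or a Dyck's surface in $M$.

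The main obstacle is the $[8,1]$ case: the rigid pattern {\tt BBgBBgBBgBBg} leaves several free choices for which pair is {\tt SM} versus {\tt MS}, and for which gap slot carries the trigon; each subcase must be eliminated by carefully pairing two $\arc{ij}$-Scharlemann cycles with the annular extensions from their neighboring bigons and arguing (as in the proof of Theorem~\ref{thm:thm3}) that the resulting long \mobius\ bands cannot coexist on $\hatF$ without producing an essential small non-orientable surface. Once these finitely many subcases are dispatched, together with Theorem~\ref{thm:thm3} this completes the contradiction required for Theorem~\ref{prop:tnot4}.
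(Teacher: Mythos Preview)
Your overall skeleton is the same as the paper's: argue by contradiction, split by the type of the special vertex, and note that type $[9]$ is immediate by pigeonhole while type $[8,1]$ forces the rigid cyclic pattern {\tt BBgBBgBBgBBg}. That much is fine. But the proposal is an outline, not a proof, and in two places the mechanism you suggest does not match what is actually needed.

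First, you have the difficulty backwards. You call type $[8,1]$ ``the main obstacle,'' but in the paper it is dispatched in one short paragraph: symmetry lets you place the trigon at any gap, Lemma~\ref{lem:dbft2} forces {\tt BBTBB}$\implies${\tt SMTMS} (so the trigon is a Scharlemann cycle), and then Lemma~\ref{lem:SMTM} eliminates all labelings of the remaining bigons except one specific configuration. That last configuration is \emph{not} killed by ``long \mobius bands cannot coexist on $\hatF$''; the paper instead invokes Lemma~\ref{lem:dbfg} to pin down the intersection of $A_{34}$ with $A_{12,34}$, observes that the surviving trigon $g$ is a meridional disk of the solid torus $\calT = H_B - \nbhd(A_{34}\cup A_{12,34})$, and uses the resulting edge positions on $\hatF$ to force a parallelism contradicting Lemma~\ref{lem:dbfg} itself. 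Your proposed endgame (producing a non-orientable surface) does not reach this contradiction.

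Second, type $[7,4]$ is where the real work is, and your sketch underestimates it. The paper does not organize this case by ``at most one pair of adjacent gap positions''; it splits on whether there is an adjacent pair {\tt TT} of trigons (Case~I) versus not (Case~II), and in Case~II further on whether {\tt MST} occurs. The eliminations use several lemmas you do not list --- notably Lemmas~\ref{lem:L5}, \ref{lem:L6}, \ref{lem:SMTM}, \ref{lem:MSTS}, and \ref{lem:2SC2T} --- and in Case~IIb the final contradiction comes from a delicate comparison of edge-parallelism classes (via Lemmas~\ref{lem:dbfg} and \ref{lem:2SC2T}) rather than from Lemma~\ref{lem:3SC+1SC+3e} or \mobius band disjointness. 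Simply citing ``the lemmas used in Theorem~\ref{thm:thm3}'' is not enough: the configurations here have no {\tt BBB}, so the {\tt SMS}-centered arguments of that proof do not transfer directly.
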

\begin{proof}
Assume there is no {\tt BBB} at $x$.  Then there can be neither {\tt ggg} nor {\tt gg+gg} at $x$, or else there would be a {\tt BBB} at $x$.

Without loss of generality we may assume a $\arc{41}$-corner of $x$ has a \ttG, a true gap.  We will use this \ttG \  to mark the beginning and end of the sequence of faces around $x$ as follows:
\begin{center}
\begin{tabular}[h]{*{12}{c}}
1&2&3&4&1&2&3&4&1&2&3&4
\end{tabular}

\begin{tabular}[h]{c|*{11}{c|}c}
\hline
\ttG&\textcolor{white}{\ttT} &\textcolor{white}{\ttT} &\textcolor{white}{\ttT} &\textcolor{white}{\ttT} &\textcolor{white}{\ttT} &\textcolor{white}{\ttT} &\textcolor{white}{\ttT} &\textcolor{white}{\ttT} &\textcolor{white}{\ttT} &\textcolor{white}{\ttT} &\textcolor{white}{\ttT}  &\textcolor{light-gray}{\ttG}\\
\hline
\end{tabular}
\end{center}

The light grey \ttG \ at the end is a repeat of the initial \ttG.

{\bf Case I.}  The vertex has type $[7,4]$ and there exists \ttT\ttT.

We enumerate the possibilities for the placement of this pair up to symmetry:

\begin{center}
\begin{tabular}{l c r}
&
\begin{tabular}[h]{*{12}{c}}
1&2&3&4&1&2&3&4&1&2&3&4
\end{tabular}
& \\
\begin{tabular}{c}
(1)\\
(2)\\
(3)\\
(4)\\
(5)
\end{tabular}
&
\begin{tabular}[h]{c|*{11}{c|}c}
\hline\hline
\ttG&\ttT &\ttT &\textcolor{white}{\ttT} &\textcolor{white}{\ttT} &\textcolor{white}{\ttT} &\textcolor{white}{\ttT} &\textcolor{white}{\ttT} &\textcolor{white}{\ttT} &\textcolor{white}{\ttT} &\textcolor{white}{\ttT} &\textcolor{white}{\ttT}  &\textcolor{light-gray}{\ttG}\\
\hline
\ttG&\textcolor{white}{\ttT} &\ttT &\ttT  &\textcolor{white}{\ttT} &\textcolor{white}{\ttT} &\textcolor{white}{\ttT} &\textcolor{white}{\ttT} &\textcolor{white}{\ttT} &\textcolor{white}{\ttT} &\textcolor{white}{\ttT} &\textcolor{white}{\ttT}  &\textcolor{light-gray}{\ttG}\\
\hline
\ttG&\textcolor{white}{\ttT} &\textcolor{white}{\ttT} &\ttT  &\ttT &\textcolor{white}{\ttT} &\textcolor{white}{\ttT} &\textcolor{white}{\ttT} &\textcolor{white}{\ttT} &\textcolor{white}{\ttT} &\textcolor{white}{\ttT} &\textcolor{white}{\ttT}  &\textcolor{light-gray}{\ttG}\\
\hline
\ttG&\textcolor{white}{\ttT} &\textcolor{white}{\ttT} &\textcolor{white}{\ttT} &\ttT  &\ttT  &\textcolor{white}{\ttT} &\textcolor{white}{\ttT} &\textcolor{white}{\ttT} &\textcolor{white}{\ttT} &\textcolor{white}{\ttT} &\textcolor{white}{\ttT}  &\textcolor{light-gray}{\ttG}\\
\hline
\ttG&\textcolor{white}{\ttT} &\textcolor{white}{\ttT} &\textcolor{white}{\ttT} &\textcolor{white}{\ttT} &\ttT  &\ttT &\textcolor{white}{\ttT} &\textcolor{white}{\ttT} &\textcolor{white}{\ttT} &\textcolor{white}{\ttT} &\textcolor{white}{\ttT}  &\textcolor{light-gray}{\ttG}\\
\hline
\end{tabular}
&
\end{tabular}
\end{center}

In each line two more \ttT s must be placed with the remaining being \ttB s.  Any placement of these two \ttT s in lines (1) and (4) contradicts having no {\tt BBB}.  In line (2), having no {\tt BBB} forces the placement of the remaining two \ttT s.   One application of Lemma~\ref{lem:dbft2} to the bigons around the central \ttT\ renders the following:
\begin{center}
\begin{tabular}{l c r}
\begin{tabular}{c}
(2)
\end{tabular}
&
\begin{tabular}[h]{c|*{11}{c|}c}
\hline
\ttG&\ttB &\ttT &\ttT &\ttS & \ttM & \ttT &\ttM &\ttS &\ttT &\ttB&\ttB  &\textcolor{light-gray}{\ttG}\\
\hline
\end{tabular}
\end{tabular}
\end{center}
But now a second application of Lemma~\ref{lem:dbft2} around the rightmost \ttT\ gives a contradiction.

For line (5), one \ttT \ must be among the leftmost four spots and the other must be in the middle of the rightmost five.  Lemma~\ref{lem:dbft2} gives the labeling of these rightmost five as {\tt SMTMS}.  Note that this \ttT \ is a Scharlemann cycle.   The two possibilities for the leftmost four are:  (i) {\tt BBTB} and (ii) {\tt BTBB}.  Labeling (i) as {\tt MSTB} contradicts Lemma~\ref{lem:L1} so it must be labeled as {\tt SMTB}.  But now the  {\tt SMTM} (reading right to left) along
with the $\arc{12}$-\SC\ call upon Lemma~\ref{lem:SMTM} to contradict that there
are two more $\edge{34}$-edges at $x$. Hence we must have (ii).  Labeling it as {\tt BTMS} forms {\tt MSTTSM} which contradicts Lemma~\ref{lem:L5}.  Thus it must be labeled as {\tt BTSM} giving us, by Lemma~\ref{lem:L1}, the following configuration.  Lemma~\ref{lem:3SC+1SC+3e} (with the roles of labels $1,2$ and $3,4$ interchanged) then gives three parallel edges that provide a contradiction to Lemma~\ref{lem:parallelwithsamelabel}.

\begin{center}
\begin{tabular}{l c r}
\begin{tabular}{c}
(5)
\end{tabular}
&
\begin{tabular}[h]{c|*{11}{c|}c}
\hline
\ttG&\ttS &\ttT &\ttS &\ttM & \ttT & \ttT &\ttS &\ttM &\ttT &\ttM&\ttS  &\textcolor{light-gray}{\ttG}\\
\hline
\end{tabular}
\end{tabular}
\end{center}

For line (3) both remaining \ttT s must be on the right side, and there are three possible placements.  The two bigons on the left are either {\tt MS} or {\tt SM}.

\begin{center}
\begin{tabular}{l c r}
&
\begin{tabular}[h]{*{12}{c}}
1&2&3&4&1&2&3&4&1&2&3&4
\end{tabular}
& \\
\begin{tabular}{c}
(3)
\end{tabular}
&
\begin{tabular}[h]{c|*{11}{c|}c}
\hline\hline
\ttG&\ttM &\ttS &\ttT &\ttT & \textcolor{white}{\ttM}  & \ttT &\ttS &\ttM &\ttT &\ttM&\ttS  &\textcolor{light-gray}{\ttG}\\
\hline
\ttG&\ttM &\ttS &\ttT &\ttT & \ttS & \ttM &\ttT &\textcolor{white}{\ttM} &\ttT &\ttM&\ttS  &\textcolor{light-gray}{\ttG}\\
\hline
\ttG&\ttM &\ttS &\ttT &\ttT & \ttM & \ttS &\ttT&\ttB &\textcolor{white} \ttB &\ttT&\textcolor{white}{\ttM} &\textcolor{light-gray}{\ttG}\\
\hline
\ttG&\ttS &\ttM &\ttT &\ttT & \textcolor{white}{\ttM} & \ttT &\ttS &\ttM &\ttT &\ttM&\ttS  &\textcolor{light-gray}{\ttG}\\
\hline
\ttG&\ttS &\ttM &\ttT &\ttT & \ttS & \ttM &\ttT &\textcolor{white}{\ttS} &\ttT &\ttM&\ttS  &\textcolor{light-gray}{\ttG}\\
\hline
\ttG&\ttS &\ttM &\ttT &\ttT & \ttS & \ttM &\ttT &\ttM &\ttS &\ttT&\ttS  &\textcolor{light-gray}{\ttG}\\
\hline
\end{tabular}
&
\begin{tabular}{c}
(i)\\
(ii)\\
(iii)\\
(iv)\\
(v)\\
(vi)
\end{tabular}
\end{tabular}
\end{center}
For lines (i) and (iv), first apply Lemma~\ref{lem:dbft2} to get the pictured 
configurations, then note these contradict Lemma~\ref{lem:L1}.  To get the
configuration for line (ii) apply Lemma~\ref{lem:L1}. But this line contradicts Lemma~\ref{lem:L5}.    For line (iii), apply Lemma~\ref{lem:L5} to get the 
pictured configuration. This now contradicts Lemma~\ref{lem:L1}.

For line (v) apply Lemma~\ref{lem:L1} twice to obtain the pictured 
configuration. Then Lemma~\ref{lem:fesc+1} applied to the {\tt TSM} adjacent to the leftmost \ttT\ implies the leftmost \ttT\ is a Scharlemann cycle. Lemma~\ref{lem:TB} shows that
the $\edge{34}$-edges on either end of the FESC {\tt TSM} are parallel in
$G_F$. This parallelism allows us to apply the argument of Lemma~\ref{lem:SMTM} 
to the subconfiguration {\tt SMT} on the left-hand side and the middle 
{\tt M} (for the analog of {\tt SMTM}) along with $\arc{34}$-\SC\ 
at the right. But then the five $\edge{12}$-edges incident to $x$ contradict
the conclusion of that argument.
  
For line (vi), apply Lemma~\ref{lem:dbft2} and then Lemma~\ref{lem:L1} to obtain the configuration shown. Furthermore, Lemma~\ref{lem:L1} shows that the first
{\tt T} must be a Scharlemann cycle. Lemma~\ref{lem:3SC+1SC+3e} then gives three parallel edges that provide a contradiction to Lemma~\ref{lem:parallelwithsamelabel}.

{\bf Case II.} The vertex has type $[7,4]$ and no {\tt TT}.

{\bf Case IIa.}  Assume there exists {\tt MST}.

Lemma~\ref{lem:L6} gives three configurations to consider.

\begin{center}
\begin{tabular}{l c r}
&
\begin{tabular}[h]{*{12}{c}}
1&2&3&4&1&2&3&4&1&2&3&4
\end{tabular}
& \\
\begin{tabular}{c}
(1)\\
(2)\\
(3)\\
\end{tabular}
&
\begin{tabular}[h]{c|*{11}{c|}c}
\hline\hline
\ttG& \textcolor{white}{\ttM}& \textcolor{white}{\ttM}& \textcolor{white}{\ttM}& \textcolor{white}{\ttM}& \textcolor{white}{\ttM}& \textcolor{white}{\ttM} &\ttB &\ttT & \ttM & \ttS & \ttT &\textcolor{light-gray}{\ttG}\\
\hline
\ttG& \textcolor{white}{\ttM}& \textcolor{white}{\ttM}& \textcolor{white}{\ttM}& \textcolor{white}{\ttM}& \textcolor{white}{\ttM} &\ttB &\ttT & \ttM & \ttS & \ttT &\ttS &\textcolor{light-gray}{\ttG}\\
\hline\ttG& \textcolor{white}{\ttM} & \textcolor{white}{\ttM}& \textcolor{white}{\ttM}& \textcolor{white}{\ttM}&\ttB &\ttT & \ttM & \ttS & \ttT & \ttS &\tt T &\textcolor{light-gray}{\ttG}\\
\hline
\end{tabular}
\end{tabular}
\end{center}
Line (1) must have either {\tt TB} or {\tt BT} to the left of the labeled faces.  
In the former, since no {\tt TT}, we then have {\tt BTBBTB} which is either {\tt BTSMTB} or {\tt BTMSTB}.  The positions of each of these is incongruous with Lemma~\ref{lem:L1}.
In the latter, the placement of the fourth \ttT\ is forced and Lemma~\ref{lem:dbft2} gives:
\begin{center}
\begin{tabular}[h]{c|*{11}{c|}c}
\hline
\ttG& \ttS& \ttM& \ttT& \ttM& \ttS& \ttT &\ttB &\ttT & \ttM & \ttS & \ttT &\textcolor{light-gray}{\ttG}\\
\hline
\end{tabular}
\end{center}
The {\tt MSTB} here contradicts Lemma~\ref{lem:L1} too.

In line (2), Lemma~\ref{lem:NFSC2} forces the \ttB\ to be an \ttM.  Then Lemmas~\ref{lem:NFSC2} and \ref{lem:MSTS} imply that the remaining $\arc{23}$ and $\arc{41}$ corners take the final two \ttT s.  Hence the remaining three corners have bigons.  Being adjacent to an \ttM, the center $\arc{12}$ corner takes an \ttS.    Lemma~\ref{lem:L1} now gives a contradiction.

In line (3) the placement of the fourth \ttT\ is forced.  Lemma~\ref{lem:dbft2} then gives a labeling contrary to Lemma~\ref{lem:L1}.

{\bf Case IIb.}  There exists no {\tt MST}.

There must be {\tt BB}, and it must be between the \ttG\ and a \ttT.
Hence {\tt BB} must be either {\tt GSMT} or {\tt TMSG}.  This forces:
\begin{center}
\begin{tabular}[h]{*{12}{c}}
1&2&3&4&1&2&3&4&1&2&3&4
\end{tabular}\\
\begin{tabular}[h]{c|*{11}{c|}c}
\hline
\ttG&\ttS&\ttM&\ttT&\ttB&\ttT&\ttB&\ttT&\ttB&\ttT&\ttM&\ttS &\textcolor{light-gray}{\ttG}\\
\hline
\end{tabular}
\end{center}

If the left and the right \ttB\ are both an \ttS\  then we have the configuration of Figure~\ref{fig:caseIIb}.
Lemma~\ref{lem:dbfg} shows that no pair of $e_1$, \dots, $e_6$ are parallel on $G_F$.  But Lemma~\ref{lem:2SC2T} shows that either there is a parallelism among  $e_1$, $e_4$, and $e_5$ or there is a parallelism among $e_2$, $e_3$, and $e_6$.  Hence WLOG the leftmost \ttB\ is an \ttM.

\begin{figure}[h]
\centering
\input{caseIIb.pstex_t}
\caption{}
\label{fig:caseIIb}
\end{figure}

If at least one of the two remaining \ttB s is an \ttM, then it will provide a fifth $\edge{12}$-edge incident to $x$.  Then we will have a configuration contrary to Lemma~\ref{lem:SMTM}.  If the remaining \ttB s are each an \ttS, then we may apply Lemma~\ref{lem:dbfg} to conclude that the $\edge{34}$-edges lie in a 
3-punctured torus on $\hatF$ (since the $\edge{12}$-edges fill out a 1-punctured
torus). This contradicts again Lemma~\ref{lem:SMTM}.

{\bf Case III.} The vertex has type $[8,1]$.

There are $8$ bigons and $4$ gaps.
There cannot be {\tt gBg} since this would imply the existence of {\tt BBB}.  Hence the gaps are equally spaced.  Due to symmetry, we may designate any of these gaps to be the \ttT\ and the others to be \ttG s.  Apply Lemma~\ref{lem:dbft2} to label the two pairs of bigons around the \ttT. If any of the remaining 
bigons are $\arc{12}$-\SCs, then the resulting configuration will contradict 
Lemma~\ref{lem:SMTM} (with too many $\edge{34}$-edges). This leaves the configuration shown in Figure~\ref{fig:81caseIII}.
\begin{figure}[h!]
\centering
\input{81caseIII.pstex_t}
\caption{}
\label{fig:81caseIII}
\end{figure}
The faces $f_1, \dots, f_6$ give the configuration and labeling in Figure~\ref{fig:dbfgconfig} where $f_5$ is now a gap and the label $a$ is now $x'$.  The remaining three bigons are labeled $f_7, f_8, f_9$ and the trigon is labeled $g$.   

Let $A_{34}$ be the Black \mobius band arising from $f_6$.  Let $A_{12,34}$ be the Black annulus arising from $f_1$ and $f_4$.  By Lemma~\ref{lem:dbfg}, $A_{34}$ and $A_{12,34}$ intersect transversely along the $\arc{34}$-arc of $K$.  The subgraph of $G_F$ arising from their edges is shown in Figure~\ref{fig:dbfggraph3}.  As a neighborhood of this subgraph is a twice punctured torus, Lemma~\ref{lem:dbfg} (1) implies its complement in $\hatF$ is an annulus $B_{\hatF}$.
Furthermore,  the $\arc{34}$-arc of $K$ has a bridge disk otherwise disjoint from $A_{12} \cup A_{12,34}$ that meets this annulus along a single spanning arc.  Hence cutting $H_B$ open along $N=\nbhd(A_{12} \cup A_{12,34})$ forms a solid torus $\calT$ on which $B_{\hatF}$ is a longitudinal annulus.

Since $g$ is a properly embedded disk in $\calT$, and $\bdry g$ crosses 
three times along the impressions of the $1$-handle neighborhood of 
$\arc{12}$, $g$ must be a meridional disk of $\calT$. 
From this, one can see that two
of the edges of $g$ must be parallel into $\bdry B_{\hatF}$ and the third runs
from one component of $\bdry B_{\hatF}$ to the other. In particular, the two
edges that are boundary parallel cobound contiguous squares in $G_{\hatF}$ as
pictured in Figure~\ref{fig:81caseIIIg}(b). The labellings of the
endpoints of edges $e_7,e_8$ on vertices $3,4$ force one of $e_3$ or $e_4$
to lie in one of these squares in $G_F$. 
But this means that either $e_3$ is parallel
to one of $e_2,e_6$ or $e_4$ is parallel to one of $e_1,e_5$, contradicting
Lemma~\ref{lem:dbfg}.

\begin{figure}[h!]
\centering
\input{81caseIIIg.pstex_t}
\caption{}
\label{fig:81caseIIIg}
\end{figure}

{\bf Case IV.}  The vertex has type $[9]$.

There must be a triple of bigons contrary to hypothesis.
\end{proof}



\section{Thrice-punctured spheres, forked extended Scharlemann cycles, and an application when $t=4$.}

In this section we assume that $t=4$ and that we are in \situationnscc.

\subsection{Thrice-punctured spheres in genus $2$ handlebodies}

Let $P$ be an incompressible, separating, thrice-punctured sphere in a genus $2$ handlebody $H$.    Decompose $H$ along $P$ as $H=M_1 \cup_P M_2$.  Since $P$ is an incompressible surface in the handlebody $H$, it must be $\bdry$-compressible.  Assume $P$ is $\bdry$-compressible into $M_2$. It is easy to see that $M_1$
and $M_2$ are genus $2$ handlebodies.

\begin{lemma}\label{lem:3Psphere}
\[ M_2 = P \times [0,1] \cup_{A_1 \cup A_2} \calT \]
where $P$ is identified with $P \times \{0\}$, $A_1$ and $A_2$ are disjoint, non-isotopic, essential annuli in $P \times \{1\}$, and either
\begin{enumerate}
\item $\calT$ is the union of two solid tori,  $\calT_1$ and $\calT_2$, and  $A_i \subset \bdry \calT_i$ is incompressible for each $i=1,2$; or,
\item  $\calT$ is a solid torus and $A_1 \cup A_2 \subset \bdry \calT$ is incompressible.
\end{enumerate}
In either case, if $A_i$ is not longitudinal in $\calT$ ($\calT_i$), then the component $c_i$ of $\bdry P$ that is isotopic (through $P \times [0,1]$) to the core of $A_i$ is primitive in $M_1$.
\end{lemma}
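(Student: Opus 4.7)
The plan is to combine a handlebody-recognition step with a structural analysis of $\partial$-compressing disks for $P$. First I would verify that both $M_1$ and $M_2$ are genus~$2$ handlebodies: since $P$ is two-sided and incompressible in the handlebody $H$, each inclusion $\pi_1(M_i) \hookrightarrow \pi_1(H) \cong F_2$ is injective, so $\pi_1(M_i)$ is free, and irreducibility of $M_i$ (inherited from $H$) makes $M_i$ a handlebody. An Euler characteristic count $\chi(\partial M_1)+\chi(\partial M_2) = 2\chi(P)+\chi(\partial H) = -4$, together with the observation that neither $M_i$ can be a ball (as $\partial P$ has three components sitting on the subsurface $\partial H \cap M_i$, forcing $\chi(\partial H \cap M_i)\le 0$), pins down $g(M_1)=g(M_2)=2$.

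Next, I would extract the product structure from a $\partial$-compressing disk $D \subset M_2$ for $P$, with $\partial D = \alpha \cup \beta$, $\alpha \subset P$ essential and $\beta \subset \partial H \cap M_2$. The $\partial$-compression of $P$ along $D$ yields an incompressible surface $P'' \subset M_2$ with $\chi(P'') = 0$; as an incompressible surface in a handlebody with $\chi = 0$ and nonempty boundary, $P''$ is a disjoint union of annuli. By construction the region between $P$ and $P''$ is a collar of $P$ modified by a $2$-handle along $\alpha$; an explicit check identifies this region with a product $P \times I$ (so that $P = P \times \{0\}$) in which $P''$ sits inside $P \times \{1\}$ as the complement of two disjoint essential annuli $A_1, A_2$, non-isotopic because they collar distinct boundary components of $P \times \{1\}$. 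The closure $\calT := \overline{M_2 \setminus (P \times I)}$ has boundary made of tori built from $A_i$ and annuli on $\partial H \cap M_2$, and the $A_i$ are incompressible in $\calT$ (a compression would pull back to compress $P''$, contradicting incompressibility of $P$). Since $M_2$ is a handlebody, these toral boundary components of $\calT$ must bound solid tori in $M_2$, so each component of $\calT$ is itself a solid torus; the separating versus non-separating character of $\alpha$ on $P$ distinguishes the two cases of the conclusion (two solid tori $\calT_1, \calT_2$ versus a single $\calT$ containing both annuli).

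Finally, for the primitivity statement: if $A_i$ is not longitudinal in its solid torus component, then the $\partial$-compressing disk $D$ (or a disk derived from it and the product $P \times I$), extended across $P$ into $M_1$, produces a compressing disk for $M_1$ meeting $c_i$ transversely once, exhibiting $c_i$ as primitive on $\partial M_1$. The main technical obstacle will be identifying the region between $P$ and $P''$ with an honest product $P \times I$ in the case where $\alpha$ is separating on $P$ (so that $P''$ is disconnected) and verifying that the two attaching annuli $A_1, A_2$ really take the non-isotopic essential form claimed, while simultaneously tracking which component of $\partial P$ each $A_i$ is associated with so that the primitivity assertion lands on the correct $c_i$.
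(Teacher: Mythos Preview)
Your approach is the paper's: take a $\partial$-compressing disk for $P$ in $M_2$, compress $P$ to annuli, and identify the complementary pieces as solid tori. Two points in your execution go wrong, though.

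First, the case analysis. The separating/non-separating dichotomy for $\alpha$ does not line up with cases (1) and (2) as you claim. When $\alpha$ joins two distinct components of $\bdry P$ (non-separating in $P$), the compressed surface $P''$ is a \emph{single} annulus $A$, and $M_2 \cut D$ is a single solid torus --- but this is not case (2), since the lemma requires two non-isotopic annuli $A_1, A_2$. The paper's remedy is to set $A_1 = A$, $\calT_1 = M_2 \cut D$, and then artificially pick a second essential annulus $A_2 \subset P \times \{1\}$ along which a second solid torus $\calT_2$ is glued longitudinally (a trivial decomposition), landing in case (1). When $\alpha$ is separating in $P$, $P''$ is two annuli and $M_2 \cut D$ may be either one or two solid tori, giving case (2) or (1) respectively.

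Second, and more seriously, your primitivity argument does not work. You cannot ``extend $D$ across $P$ into $M_1$'': $D$ lives in $M_2$ with part of its boundary on $\bdry H$, and there is no mechanism to push it through $P$ to produce a disk in $M_1$. More to the point, your sketch never uses the hypothesis that $A_i$ is non-longitudinal. The paper's argument is different and does use it: collapse $P \times [0,1]$ so that $H = M_1 \cup_{A_1 \cup A_2} \calT$, with $A_i$ now sitting on $\bdry M_1$. Since $H$ is a handlebody, the incompressible annulus $A_i$ must $\bdry$-compress in $H$; but if $A_i$ is not longitudinal in its solid torus, no $\bdry$-compression can go into that solid torus, so after standard innermost/outermost cleanup the $\bdry$-compressing disk lies in $M_1$ and meets $c_i$ once.
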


\begin{proof}
Let $\delta$ be a $\bdry$-compressing disk for $P$ in $M_2$.  The $\bdry$-compression of $P$ along $\delta$ yields $A$, an incompressible annulus or pair of annuli. Cutting $M_2$ along $\delta$ yields $\calT$,
either one or two solid tori, with $A \subset \bdry \calT$. In the case that $A$ is a single annulus,
then $\calT$ is a single solid torus. Reversing the compression along $\delta$, we see    
$M_2 = P \times [0,1] \cup_A \calT$. To get description (1) above, set $\calT_1=\calT, A_1=A$ and pick
a second disjoint, essential annulus, $A_2 \subset P \times \{1\}$ along which we attach a solid torus 
$\calT_2$ longitudinally (giving a trivial decomposition). Otherwise, $A= A_1 \cup A_2$ and reversing the
compression along $\delta$ gives either (1) or (2).

To prove the final statement, collapse $M_2$ along $P \times [0,1]$ to write $H$ as $M_1 \cup_{A_1 \cup 
A_2} \calT$ and let $c_i$ be the core of $A_i$. $A_i$ must boundary compress in $H$, but if $A_i$ is not
longitudinal in $\calT$ such a compression can be taken disjoint from $\Int \calT$. This gives a meridian
disk in $M_1$ that marks $c_i$ as primitive in $M_1$.
\end{proof}

\subsection{Forked extended Scharlemann cycles}

\begin{figure}
\centering
\input{FESC.pstex_t}
\caption{}
\label{fig:FESC}
\end{figure}

Consider a \FESC\ $\tau$ in $G_Q$. Up to relabelling vertices of $G_F$ and $G_Q$, we may assume it is as illustrated in Figure~\ref{fig:FESC}(a). As shown, label the two Black faces $f$ and $g$.  Also label and orient the two edges $\alpha$ and $\beta$.  
The subgraph of $G_F$ induced by the edges of $\tau$ then appears on $\hatF$ as shown in Figure~\ref{fig:FESC}(b) or its mirror. 
We assume in this subsection that we are in \situationnscc, so that $f,g$ are properly embedded in
$H_B -\nbhd(K)$. The White \SC\ between $f$ and $g$ gives rise to a 
\mobius band, $A_{23}$, properly embedded in $H_W$.

 Contracting the remaining three edges of $\tau$ to a point $*$ in this subgraph of $G_F$, we may view the edges $\alpha$ and $\beta$ as oriented loops.  A neighborhood on $\hatF$ of this subgraph of $G_F$ induced by the edges of $\tau$ is a $3$-punctured sphere $P'$.  Its boundary components may be identified with the loops $\alpha$, $\beta$, and $\alpha\beta$ based at $*$ also indicated in Figure~\ref{fig:FESC}(b).   We aim to show that $\beta$ bounds a disk in $\hatF$
(Lemma~\ref{lem:TB}).

Form the genus $2$ handlebody $M_1 = \nbhd (\arc{12} \cup \arc{34} \cup f \cup g) \subset H_B$.  Since $\bdry M_1 \cap \bdry H_B = P'$, $P = \bdry M_1 \cut \bdry H_B$ is also a $3$-punctured sphere.  Thus we may write $\bdry M_1 = P \cup_{\{\alpha, \beta, \alpha\beta\}} P'$  and $H_B = M_1 \cup_P M_2$.  Figure~\ref{fig:FESC}(a) gives instructions for the assembly of $M_1$ which we may realize embedded in $S^3$ as in Figure~\ref{fig:thepictureofM1} with $f$ and $g$ thickened. Note that one may thus visualize $M_1$ as the trefoil complement with the neighborhood of an unknotting tunnel removed: $\alpha\beta$ is the cocore of the unknotting tunnel and $\alpha$ and $\beta$ result from a banding of $\alpha\beta$ to itself. Recall that if $O$ is a 3-manifold with boundary and $\gamma$ is a curve
in $\bdry O$, then $O\langle \gamma \rangle$ is $O$ with a 2-handle attached along $\gamma$.

\begin{figure}
\centering
\input{forkedcomplex.pstex_t}
\caption{}
\label{fig:thepictureofM1}
\end{figure}

\begin{figure}
\centering
\input{M1attachalphabeta.pstex_t}
\caption{}
\label{fig:M1attachalphabeta}
\end{figure}

\begin{claim}\label{claim:primitivity}
Let $\alpha, \beta, \alpha \beta \subset M_1$ be as above.
\begin{enumerate}
\item $\alpha$ and $\beta$ are primitive in $M_1$. Indeed $M_1$ contains disjoint meridian disks, 
one intersecting $\alpha$ once and disjoint from $\beta$, the other disjoint from $\alpha$ and intersecting $\beta$ once.
\item The arcs $\arc{12},\!\arc{34}$ of $K \cap M_1$ can be isotoped in $M_1$, fixing their
endpoints, to arcs on $\bdry M_1$ that are disjoint from $\alpha$ and $\beta$ and that intersect
$\bdry A_{23} \subset \bdry M_1$ only in their endpoints (at vertices $2,3$ of $G_F$). Furthermore, these
arcs are incident to the same side of $\bdry A_{23}$ in $\bdry M_1$ .
\item $M_1\langle \alpha\beta \rangle$ is homeomorphic to the exterior of the trefoil. In particular,
$\alpha\beta$ is neither primitive nor cabled in $\bdry M_1$.
\end{enumerate}

\end{claim}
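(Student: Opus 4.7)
The plan is to realize $M_1$ as a handlebody with a natural handle structure and read all three statements from this structure together with the combinatorial picture of $\tau$ in Figure~\ref{fig:FESC}. Viewing $M_1$ as a regular neighborhood of the $2$-complex $\arc{12} \cup \arc{34} \cup f \cup g$, the thickenings $\nbhd(f)$ and $\nbhd(g)$ produce a pair of disjoint compressing disks $D_f, D_g$ for $M_1$ (dual to $f$ and $g$), and their boundaries on $\bdry M_1$ can be traced explicitly across the faces $f$ and $g$.

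For (1), I would identify from Figure~\ref{fig:FESC}(b) which of the five edges of $\tau$ become the loops $\alpha$ and $\beta$ after the contraction of the three inner edges. The combinatorial layout forces $\alpha$ to appear as an edge of exactly one of the two Black faces and $\beta$ as an edge of the other; hence $\bdry D_f$ meets exactly one of $\alpha, \beta$ transversely in a single point and is disjoint from the other, and likewise for $\bdry D_g$ with the roles reversed. This produces the two disjoint primitivizing meridians required in (1). For (2), the Black bigon $f$ serves as a pair of simultaneous bridge disks for $\arc{12}$ and $\arc{34}$: sliding each arc across its corner of $f$ lands it on the corresponding edge of $f$ in $\hatF \subset \bdry M_1$. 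The target arcs lie in $P'$, are disjoint from $\alpha$ and $\beta$ (both of which come from edges of $g$ in the contracted picture), and by inspection of the local position of $f$ relative to the central $\arc{23}$-\SC\ in Figure~\ref{fig:FESC}(b), they enter the fat vertices $2$ and $3$ of $G_F$ from the same local side of $\bdry A_{23}$, namely the side of the central \SC\ on which $f$ sits.

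The main obstacle is (3). The plan is to use the explicit embedding in $S^3$ of $\arc{12} \cup \arc{34} \cup f \cup g$ shown in Figure~\ref{fig:thepictureofM1} and to recognize this picture as a regular neighborhood of the trefoil together with an unknotting tunnel, with the tunnel neighborhood removed; the cocore arc of the tunnel is exactly $\alpha\beta$. Attaching a $2$-handle to $M_1$ along $\alpha\beta$ then restores the tunnel neighborhood and produces the trefoil exterior. The delicate step will be checking this identification rigorously rather than by visual inspection alone, which I would do by reading off from Figure~\ref{fig:thepictureofM1} the framing data with which $\bdry M_1$ meets the trefoil-with-tunnel and matching it to the standard model. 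Once (3) is in hand, the last sentence is immediate: a primitive $\alpha\beta$ would force $M_1 \langle \alpha\beta \rangle$ to be a solid torus, and a cabled $\alpha\beta$ would force it to be a connected sum of a solid torus with a nontrivial lens space, neither of which can occur for the irreducible, non-solid-torus trefoil exterior.
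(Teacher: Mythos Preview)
Your plan for parts (1) and (3) is sound and essentially matches the paper. For (1), the paper takes both cocore arcs inside the trigon $g$ (parallel to its two $\arc{34}$-corners) rather than one in each of $f$ and $g$, but your variant also yields a disjoint pair of primitivizing disks. For (3), the paper likewise appeals directly to the explicit picture (Figure~\ref{fig:M1attachalphabeta}) to recognize the trefoil exterior, and your deduction of the last sentence from that identification is correct.

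Part (2) has a genuine gap. The bigon $f$ has exactly one $\arc{12}$-corner and one $\arc{34}$-corner; it cannot furnish ``simultaneous bridge disks'' for both arcs as you describe. If you extend $f$ radially at its $\arc{12}$-corner to reach $\arc{12}$, the opposite side of $\bdry f$ still contains the $\arc{34}$-corner, which lies on $\nbhd(\arc{34})$, not on $\bdry M_1$. Pushing that corner off radially lands $\arc{12}$ on an arc that runs along both edges of $f$ and through vertices $3$ and $4$; in particular it contains the $\edge{23}$-edge of $f$, which is part of $\bdry A_{23}$, so the image is neither disjoint from $\alpha$ nor meets $\bdry A_{23}$ only at its endpoint. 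There is also an internal slip: in (1) you place $\alpha$ and $\beta$ on distinct Black faces, but in (2) you assert both come from edges of $g$; the first is what Figure~\ref{fig:FESC}(a) shows, which is exactly why pushing onto the edges of $f$ cannot avoid $\alpha$.

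The paper's fix is to use the trigon $g$ first. Because $g$ has \emph{two} $\arc{34}$-corners, one half $E_1'$ of the disk $E_1$ can be banded to the $\arc{34}$-corner of $g$ at $x$ to produce a genuine bridge disk $D_{34}$ for $\arc{34}$ in $M_1$ that is disjoint from $\alpha$ and $\beta$ and meets $\bdry A_{23}$ only at vertex~$3$. Only then is $f$ brought in: banding the $\arc{12}$-corner of $f$ along $f$ to $D_{34}$ gives $D_{12}$, with the corresponding property at vertex~$2$. The ``same side of $\bdry A_{23}$'' condition is read off from this explicit pair $D_{12}, D_{34}$, not by inspection.
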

\begin{proof}
In Figures~\ref{fig:FESC} and \ref{fig:thepictureofM1}, $\alpha$ and $\beta$ encircle the two visible 
holes of the embedded genus $2$ handlebody indicated by Figure~\ref{fig:thepictureofM1}. 
Let $f,g$ be the black faces of $\tau$ (Figure~\ref{fig:FESC}). 
Let $e_1,e_2$ be disjoint properly embedded arcs in $g$ 
parallel to the $\arc{34}$-corners of $g$ along vertices $x,z$ (resp.) of $G_Q$. 
In $M_1$, a product neighborhood of $e_1$ ($e_2$) is a disk $E_1$ ($E_2$) in $g \times I$ such that 
$\bdry E_1$ ($\bdry E_2$) intersects $\alpha$ ($\beta$) once but is disjoint from $\beta$ ($\alpha$). 
$E_1$ and $E_2$ verify (1).

Let $E_1'$ be
the disk component of $E_1 \cut e_1$ that is disjoint from $\alpha$. Band $E_1'$ to the $\arc{34}$-corner of $g$
(to which $e_1$ is parallel) to 
obtain a bridge disk $D_{34}$ in $M_1$ for the arc $\arc{34}$ of $K$. $D_{34}$ is disjoint from both
$\alpha$ and $\beta$ and intersects $\bdry A_{23}$ only in vertex $3$ of $G_F$. 
Band the $\arc{12}$-corner of $f$ along $f$ to $D_{34}$ to obtain a bridge
disk $D_{12}$ of $\arc{12}$ of $K$ in $M_1$ which is disjoint from $D_{34}$, $\alpha$, and $\beta$;
and which intersects $\bdry A_{23}$ only at vertex $2$. $D_{12},D_{34}$ guide the isotopies of $\arc{12},
\arc{34}$ described in (2).

Figure~\ref{fig:M1attachalphabeta} shows that $M_1\langle \alpha\beta \rangle$ is homeomorphic to the exterior 
of the trefoil.  Since this is neither a solid torus nor the connect sum of a solid torus and a lens space, 
$\alpha\beta$ cannot be primitive or cabled in $M_1$.
\end{proof}

\begin{claim}\label{claim:Pincomp}
$P$ is incompressible in $M_1$.
\end{claim}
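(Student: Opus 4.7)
The plan is to use the classification of essential simple closed curves on a thrice-punctured sphere, together with the primitivity information already recorded in Claim~\ref{claim:primitivity}, to rule out any compressing disk for $P$ in $M_1$.

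First I would observe that any compressing disk $D$ for $P$ in $M_1$ has boundary an essential simple closed curve in $P$. Since $P$ is a thrice-punctured sphere, every such curve is isotopic in $P$ to one of the three boundary components $\alpha$, $\beta$, or $\alpha\beta$ (viewed as loops in $\bdry M_1$ via the identification $\bdry M_1 = P \cup_{\{\alpha,\beta,\alpha\beta\}} P'$). Thus the existence of $D$ forces one of $\alpha$, $\beta$, or $\alpha\beta$ to bound a disk in $M_1$, i.e., to be meridional.

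Next I would dispatch the $\alpha$ and $\beta$ cases as follows. Claim~\ref{claim:primitivity}(1) provides meridian disks of $M_1$ intersecting $\alpha$ (respectively $\beta$) exactly once. In a handlebody, a curve meeting some meridian disk transversely in a single point is longitudinal in the solid torus obtained by cutting along a disjoint meridian, and in particular is non-trivial in $H_1$. Therefore $\alpha$ and $\beta$ cannot themselves be meridional in $M_1$, ruling out two of the three cases.

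The remaining case is that $\bdry D$ is parallel to $\alpha\beta$, so that $\alpha\beta$ bounds a disk in $M_1$. Here I would use Claim~\ref{claim:primitivity}(3): if $\alpha\beta$ were meridional in $M_1$, then $M_1$ would decompose as $H \cup (\text{1-handle})$ with the given disk appearing as the cocore of the 1-handle, so that attaching a 2-handle along $\alpha\beta$ would cancel the 1-handle and yield $M_1\langle\alpha\beta\rangle \cong H$, a genus one handlebody (solid torus). But Claim~\ref{claim:primitivity}(3) identifies $M_1\langle\alpha\beta\rangle$ with the exterior of the trefoil knot, which is not a solid torus, a contradiction. This is the only step that is not immediate from the classification of curves on $P$, and it is the main (though mild) obstacle; it rests entirely on the concrete identification of $M_1\langle\alpha\beta\rangle$ already established in Figure~\ref{fig:M1attachalphabeta} and Claim~\ref{claim:primitivity}(3). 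With all three cases eliminated, no compressing disk exists and $P$ is incompressible in $M_1$.
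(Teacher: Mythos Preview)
Your approach is the paper's: reduce to showing none of $\alpha$, $\beta$, $\alpha\beta$ bounds a disk in $M_1$, then invoke Claim~\ref{claim:primitivity}. The $\alpha$ and $\beta$ cases are fine.

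The $\alpha\beta$ case contains a genuine (if easily repaired) error. You assert that if $\alpha\beta$ bounds a disk $D'$ in $M_1$ then attaching a $2$-handle along $\alpha\beta$ cancels the $1$-handle with cocore $D'$, yielding a solid torus. Handle cancellation requires the attaching circle of the $2$-handle to meet the belt circle of the $1$-handle transversely in a single point; here the attaching circle \emph{is} the belt circle $\bdry D'=\alpha\beta$, so the two coincide rather than intersect once, and no cancellation occurs. In fact the core of the $2$-handle together with $D'$ forms a $2$-sphere in $M_1\langle\alpha\beta\rangle$, and a direct homology check gives $H_1(M_1\langle\alpha\beta\rangle)=H_1(M_1)/\langle[\alpha\beta]\rangle\cong\Z^2$ (since $[\alpha\beta]=0$ once $\alpha\beta$ bounds), whereas the trefoil exterior has $H_1\cong\Z$. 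So your conclusion stands; just replace the cancellation claim with this homology observation, or with the remark that $M_1\langle\alpha\beta\rangle$ would then be reducible.
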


\begin{proof}
If $P$ were compressible in $M_1$, then either $\alpha, \beta$, or 
$\alpha \beta$ would bound a disk in $M_1$.
Claim~\ref{claim:primitivity} shows this cannot be.
\end{proof}

\begin{claim}\label{claim:Pbdryincomp}
 There is no properly embedded disk $D$ in $M_1$ such that $\partial D$ meets
$P$ in a single essential arc.
 \end{claim}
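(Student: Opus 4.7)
The plan is to argue by contradiction: suppose such a disk $D$ exists with $\partial D = \delta \cup \gamma$, where $\delta$ is an essential arc in the pair of pants $P$ and $\gamma \subset \partial M_1 \setminus P$. Since $P$ has boundary components $\alpha,\beta,\alpha\beta$, up to isotopy $\delta$ falls into one of six types: three in which $\delta$ joins two distinct components of $\partial P$, and three in which $\delta$ has both endpoints on a single component.

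First I would handle the cases where $\delta$ joins two distinct components of $\partial P$. In each such case, boundary-compressing $P$ along $D$ produces a single properly embedded annulus $A\subset M_1$, one of whose boundary components is the unused $c_3\in\{\alpha,\beta,\alpha\beta\}$ while the other, call it $c'$, is a band-sum along $\gamma$ of the remaining two components. By Claim~\ref{claim:Pincomp}, $A$ is incompressible in $M_1$, so the trichotomy of section~\ref{sec:annuli} applies: $A$ is non-separating, separating non-$\partial$-parallel, or $\partial$-parallel. In the first two possibilities, a $\bdry$-compression of $A$ produces a meridian disk of $M_1$ whose boundary I would track using the primitivizing disks $E_1, E_2$ of Claim~\ref{claim:primitivity}(1). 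Intersection counts with $\partial E_1, \partial E_2$ (recalling that $\alpha\beta$ meets each of $\partial E_1, \partial E_2$ in exactly one point) would force $\alpha\beta$ to be either primitive or cabled in $M_1$, contradicting Claim~\ref{claim:primitivity}(3), which says $M_1\langle\alpha\beta\rangle$ is the trefoil exterior (neither a solid torus nor a connect sum of a solid torus and a non-trivial lens space).

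The cases where $\delta$ has both endpoints on the same component of $\partial P$ split $P$ into two annuli, and the same classification together with the identical intersection-number argument applied to each annulus yields the analogous contradiction.

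The main obstacle will be the $\bdry$-parallel sub-case when $\delta$ connects $\alpha$ and $\beta$ (so $c_3 = \alpha\beta$): here $c'$ could a priori be isotopic to $\alpha\beta$ on $\partial M_1$, and the annulus argument above is inconclusive. To handle this I would pass to $M_1\langle\alpha\beta\rangle$, which is the trefoil exterior by Claim~\ref{claim:primitivity}(3). Attaching the $2$-handle along $\alpha\beta$ caps off $A$ into a disk in the trefoil exterior with boundary $c'$ on the boundary torus; since the trefoil is non-trivial its boundary torus is incompressible, so $c'$ must bound a disk on the torus itself. Unwinding this back to $\partial M_1$ and using Claim~\ref{claim:primitivity}(2) to keep the bridge arcs $\arc{12},\arc{34}$ disjoint from $\alpha,\beta$, I expect to conclude that $c'$ cannot be inessential on that torus, yielding the final contradiction.
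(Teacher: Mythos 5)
Your plan takes a genuinely different route from the paper---the paper dichotomizes on whether $D$ separates $M_1$, takes the obvious cut system $\{D_1,D_2\}$ (or $\{D,D'\}$), and argues directly about which components of $\partial P$ avoid a cut disk---whereas you classify $\delta$ by its endpoints, boundary-compress $P$ along $D$ to produce annuli, and invoke the annulus trichotomy of section~\ref{sec:annuli}. However, there are two concrete gaps in your sketch that I do not see how to close.

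First, the parenthetical assertion that ``$\alpha\beta$ meets each of $\partial E_1,\partial E_2$ in exactly one point'' is false as a geometric statement: a meridian disk of $M_1$ meeting $\alpha\beta$ transversely once would, by definition, make $\alpha\beta$ primitive in $M_1$, directly contradicting Claim~\ref{claim:primitivity}(3). The \emph{algebraic} intersection of $\alpha\beta$ with each $E_i$ is indeed $\pm 1$ (since $[\alpha\beta]=-[\alpha]-[\beta]$ in $H_1(\partial M_1)$), but nothing in your sketch converts an algebraic intersection count into the conclusion that $\alpha\beta$ is primitive or cabled.

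Second, the trichotomy of section~\ref{sec:annuli} describes the primitivity or cabledness of the \emph{impressions} $A_+,A_-$ of $A$ in the cut-open handlebody $M_1\cut A$, not of curves in $\partial M_1$. Knowing that $A$ is (say) non-separating tells you $A_+$ is primitive in $M_1\cut A$; it does not tell you anything about whether $\alpha\beta$ is primitive or cabled in $M_1$, and in the case where $\delta$ joins $\alpha$ to $\alpha\beta$ (so the unused component is $\beta$ and $\partial A=\beta\cup c'$) the annulus $A$ has no manifest bearing on $\alpha\beta$ at all. So the claimed contradiction with Claim~\ref{claim:primitivity}(3) is unsupported in exactly the cases where you rely on it. The $\partial$-parallel sub-case and the ``same-component'' cases are further sketched, but they depend on the same unjustified step. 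You would do better to follow the paper's shorter argument: analyze the two possibilities (whether $D$ separates) using only disjointness of $\partial P$-components from cut disks, together with the primitivizing disks from Claim~\ref{claim:primitivity}(1) and the fact that $M_1\langle\alpha\beta\rangle$ is the trefoil exterior.
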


\begin{proof}
Let $D$ be a properly embedded disk in $M_1$ such that $\partial D \cap P$ is
an essential arc in $P$.

First suppose $D$ separates $M_1$, and let $D_1,D_2$ be meridian disks of the
two solid tori $M_1\cut D$. Both points of $\partial D \cap \partial P$ belong 
to the same component of $\gamma$ of $\partial P$. The other two components
$\gamma_1, \gamma_2$ of $\partial P$ can be numbered so that $\gamma_i \cap
\partial D_j = \emptyset, \{i,j\} = \{1,2\}$. Hence $\{ \gamma_1, \gamma_2 \}
=\{ \alpha, \beta \}$ and $\gamma_i$ intersects $D_i$ in a single point,
$i=1,2$. But then $\gamma (= \alpha\beta)$ intersects $D_1$ (and $D_2$) in
a single point, contradicting the fact that 
$M_1\langle \alpha\beta \rangle$ is the trefoil
exterior.

Next suppose $D$ does not separate $M_1$. Let $D'$ be a disk in $M_1$ disjoint
from $D$ such that $M_1 \cut (D \cup D')$ is a 3-ball. If the two points
of $\partial D \cap \partial P$ belong to the same component of $\partial P$,
then the other two components are disjoint form $D$, and hence must be
$\alpha$ and $\beta$. But then  $\alpha \cup \beta$ is disjoint from $D$,
a contradiction. If the two ponts of $\partial D \cap \partial P$ belong to
different components of $\partial P$, then each of these components intersects
$D$ once, and hence they are $\alpha$ and $\beta$, so the third component
must be $\alpha\beta$. But this component is disjoint from $D$, contradicting
the fact that $M_1\langle \alpha\beta \rangle$ so the trefoil exterior.
\end{proof}

\begin{lemma}
\label{lem:TB}
Assume we are in \conditionI\ and there is a \FESC\ centered (WLOG) 
about a $\arc{23}$-\SC. Then the two $\edge{14}$-edges are parallel in $G_F$.

\end{lemma}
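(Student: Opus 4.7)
\medskip

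The plan is to exploit the handlebody decomposition $H_B = M_1 \cup_P M_2$ introduced above, using the incompressibility and $\bdry$-incompressibility of $P$ in $M_1$ to force a strong structural constraint on $M_2$, and then translate that constraint back to $\hatF$ as a bigon cobounded by $\alpha$ and $\beta$. Because we are in \situationnscc, the faces $f, g$ of $\tau$ are properly embedded in $H_B - \nbhd(K)$, so $M_1 = \nbhd(\arc{12} \cup \arc{34} \cup f \cup g)$ is an honest genus~$2$ submanifold of $H_B$ with $P = \partial M_1 \cut \partial H_B$ a thrice-punctured sphere.

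The first step is to observe that, by Claims~\ref{claim:Pincomp} and \ref{claim:Pbdryincomp}, $P$ is incompressible and admits no $\bdry$-compressing disk in $M_1$. Since $H_B$ is a handlebody and $P$ is incompressible, $P$ must be $\bdry$-compressible into $M_2 = H_B - \Int M_1$. Applying Lemma~\ref{lem:3Psphere} to $M_2$ yields the decomposition $M_2 = P \times [0,1] \cup_{A_1 \cup A_2} \calT$, where the annuli $A_1, A_2$ in $P \times \{1\}$ have cores isotopic to two of the three boundary circles $\alpha$, $\beta$, $\alpha\beta$ of $P$.

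The next step is a case analysis over which two boundary circles appear as cores. By Claim~\ref{claim:primitivity}(3), $\alpha\beta$ is neither primitive nor cabled in $M_1$; combined with the final clause of Lemma~\ref{lem:3Psphere}, this means that if either $A_i$ has core parallel to $\alpha\beta$, that $A_i$ must be longitudinal in its solid torus component of $\calT$. Such a longitudinal solid torus can be absorbed back into $P \times [0,1]$ (because a solid torus glued longitudinally along an annulus in its boundary is a collar), reducing the picture to one in which the remaining essential annulus or annuli have cores $\alpha$ and/or $\beta$. Claim~\ref{claim:primitivity}(1), which says $\alpha$ and $\beta$ are primitive in $M_1$ via disjoint meridian disks, is consistent with each of $A_1, A_2$ then being attached non-longitudinally to its solid torus --- and, importantly, the two disjoint primitivizing disks from Claim~\ref{claim:primitivity}(1) will combine with the solid tori of $\calT$ to produce a candidate bigon.

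The main obstacle, and final step, is to convert the internal structure of $M_2$ into the desired external statement on $\hatF$. An Euler characteristic count (with $\chi(\hatF)=-2$ and $\chi(P')=-1$) shows that $\hatF \cut P'$ is a thrice-punctured sphere, so the three curves $\alpha$, $\beta$, $\alpha\beta$ separate $\hatF$ into two planar pieces. In the reduced decomposition $M_2 \simeq P \times [0,1] \cup \calT$ with $A_1, A_2$ of cores $\alpha$ and $\beta$, the annuli $\partial \calT \cut (A_1 \cup A_2)$ sit on the $\hatF$ side of $\partial M_2$ and, together with the subannulus of $\hatF \cut P'$ lying between $\alpha$ and $\beta$, form an embedded disk in $\hatF$ whose boundary runs once along $\alpha$, across an arc on a vertex, once along $\beta$, and across another arc on a vertex; this is precisely a bigon in $G_F$ exhibiting $\alpha$ and $\beta$ as parallel. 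The hard part is carefully ruling out the configuration where $A_1, A_2$ have cores $\{\alpha, \alpha\beta\}$ or $\{\beta, \alpha\beta\}$ (after the reduction above), for which one uses Claim~\ref{claim:primitivity}(2) to see that the arcs $\arc{12}, \arc{34}$ of $K \cap M_1$ both land on the same side of $\partial A_{23}$, constraining how $A_1 \cup A_2$ can interact with the White \mobius band $A_{23}$ and with $\bdry H_B$, and ultimately forcing the two surviving annular cores to be $\alpha$ and $\beta$ themselves.
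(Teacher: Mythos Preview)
Your proposal has a fundamental structural gap. You assert that ``since $H_B$ is a handlebody and $P$ is incompressible, $P$ must be $\bdry$-compressible into $M_2$,'' but Claim~\ref{claim:Pincomp} only gives incompressibility in $M_1$; $P$ may well compress in $M_2$. You never address this possibility, and in fact it is precisely the case where the lemma's conclusion is actually established. In the paper's proof, this is Case~III: a compressing disk for $P$ in $M_2$ must have boundary $\alpha$, $\beta$, or $\alpha\beta$, and Claim~\ref{claim:TBIII} rules out $\alpha$ (it would give an $\RP^2$) and $\alpha\beta$ (it would embed a trefoil exterior in a ball, forcing $\alpha$ to bound a disk via Lemma~\ref{lem:AEGor}), leaving $\beta$; then it shows $\beta$ must bound a disk in $\hatF$ itself (else one builds a Seifert fiber space and applies Lemma~\ref{lem:sSFS1bridge}, or finds $\bdry A_{23}$ primitive in $H_B$ and thins $K$ to $1$-bridge).

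Your handling of the incompressible case is also off-target. When $P$ is incompressible in $H_B$ (Cases I and II in the paper), the paper does \emph{not} locate a bigon on $\hatF$; instead it uses the decomposition from Lemma~\ref{lem:3Psphere} together with Claim~\ref{claim:primitivity} to assemble a new genus~$2$ Heegaard splitting (moving $\nbhd(A_{23})$ and a solid torus across) with respect to which $K$ is $0$- or $1$-bridge, contradicting $t=4$. So these cases are eliminated, not resolved in favor of the conclusion. Your ``final step'' tries to read off a bigon in $\hatF$ directly from the annuli $\bdry\calT\setminus(A_1\cup A_2)$, but this does not work: those annuli have cores isotopic to the \emph{curves} $\alpha$ and $\beta$ (the boundary components of $P'$), and their presence on $\hatF\setminus P'$ says nothing about whether the $\edge{14}$-\emph{edges} cobound a bigon in $G_F$. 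Your Euler-characteristic claim that $\hatF\setminus P'$ is a thrice-punctured sphere is also unjustified; it could be disconnected (a disk and a twice-punctured torus), which is exactly what happens when $\beta$ bounds a disk. The argument you sketch for ruling out cores $\{\alpha,\alpha\beta\}$ or $\{\beta,\alpha\beta\}$ via Claim~\ref{claim:primitivity}(2) is too vague to assess, but in any case the whole incompressible branch needs to be redirected toward producing a thinning of $K$ rather than a parallelism on $\hatF$.
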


\begin{remark} We later use this lemma for an \FESC\ put together by an
{\tt MS} and {\tt ST} pair where the {\tt S} are parallel bigons (merging the 
\SCs\ to one
to give the faces of the {\FESC}). We will also use this 
(Lemma~\ref{lem:L1}) for an \FESC\ put together by an {\tt ST} and {\tt M}
where the leftmost edge of the {\tt ST} is parallel to an edge of {\tt M}.
\end{remark}

\begin{proof}
Assume there is a \FESC\ $\tau$, without loss of generality as shown in Figure~\ref{fig:FESC}.
Construct $M_1$ from $f,g$ and write $H_B = M_1 \cup_P M_2$ as above.
Observe that $\alpha$ is the boundary of the White \mobius band $A_{23} \subset H_W$ arising from the $\arc{23}$-\SC\ between $f$ and $g$ on $G_Q$.  If $P$ is incompressible in $H_B$, then by Claim~\ref{claim:Pbdryincomp}
it must boundary compress in $M_2$ and Lemma~\ref{lem:3Psphere} holds. If $P$ compresses in $H_B$,
it must compress in $M_2$ by Claim~\ref{claim:Pincomp}.

{\bf Case I: $P$ is incompressible in $H_B$ and (1) of Lemma~\ref{lem:3Psphere} holds.}

Then collapsing along $P \times [0,1]$, we may write $H_B = M_1 \cup_{(A_1 \cup A_2)} (\calT_1 \cup \calT_2)$. 
The cores of the annuli must be isotopic to either $\alpha, \beta$, 
or $\alpha \beta$ in $P$.
If the core, $c$, of either $A_1$ or $A_2$ is 
isotopic to $\alpha \beta$ then, again by Lemma~\ref{lem:3Psphere}, $c$ must be longitudinal in 
$\calT$ as $\alpha \beta$ is not primitive in $M_1$.
Thus if $\alpha, \beta$ are not the cores of some $A_1,A_2$ in the original decomposition, then we can replace 
the trivial decomposition along $\alpha \beta$ with a trivial decomposition along $\alpha$ or $\beta$. 
So we may assume that in $P$,
the core of $A_1$ is isotopic to $\alpha$ and the core of $A_2$ to $\beta$. 
By Claim~\ref{claim:primitivity}(1), both $\alpha$ and $\beta$ are jointly primitive curves in $M_1$, and 
$H_B' = \nbhd(A_{23}) \cup_\alpha M_1 \cup_{A_2} \calT_2$ is a genus $2$ handlebody.  
Since $\alpha$ is a primitive curve in $H_W \cut A_{23}$, 
$H_W' = (H_W \cut A_{23}) \cup_\alpha \calT_1$ is also a genus $2$ handlebody.  
Together $H_B'$ and $H_W'$ form a new genus $2$ Heegaard splitting for $M$.

Since $\arc{41}$ has a bridge disk $D_{41}$ in $H_W$ that is disjoint from $A_{23}$ 
(Lemma~\ref{lem:bridgedisksdisjoitfrommobius}), it continues to be bridge in $H_W \cut A_{23}$.  
Moreover since $D_{41}$ may be taken to be disjoint from $\alpha$, $D_{41}$ is a bridge disk for 
$\arc{41}$ in $H_W'$. 
By Claim~\ref{claim:primitivity}, arcs $\arc{12},\!\arc{34}$ can
be isotoped to $\bdry H_B'$, fixing their endpoints, so they intersect $\bdry A_{23}$ only at vertices
$2,3$ (resp.) of $G_F$ and
are incident to the same side of $\bdry A_{23}$ (the isotopy in $M_1$ is disjoint from $\alpha, \beta$).
Now we can write $K$ as the
union of two arcs $\arc{3412}$ that is a bridge arc of $H_W'$ and $\arc{23}$ which is a bridge arc
of $H_B'$: After isotoping $\arc{34},\!\arc{12}$ to $\bdry H_B'$, the arc $\arc{3412}$ is isotopic as
a properly embedded arc in $\bdry H_W'$ to $\arc{41}$ --- which is bridge in $H_W'$.
On the other hand, $\arc{23}$ can be isotoped as a properly embedded arc in
$H_B'$ to be a cocore of the annulus $\nbhd(A_{23}) \cap \bdry H_B$. The primitivity of this annulus in
$H_B$ now describes $\arc{23}$ as a bridge arc in $H_B'$. That is, $K$ is 1-bridge with
respect to the splitting $H_W' \cup H_B'$.
This contradicts that $t=4$.

\begin{remark}If $A_1$ is longitudinal in $\calT_1$ then $\partial A_{23}$ will
be primitive in $H_B$ and the new splitting is gotten from the old by adding/removing
a primitive \mobius band (in this case, adding $\calT_1$ to $H_W$ is isotopic
to the splitting where $\calT_1$ is not added). This is consistent with 
the proof of Theorem~\ref{thm:changingHS}. 
If $A_1$ is not longitudinal in $\calT_1$ then $M$ is a Seifert fiber space over
the $2$-sphere with an exceptional fiber of order $2$. In this case, we could 
find a vertical splitting with respect to which $K$ has bridge number $0$
by applying Lemma~\ref{lem:sSFS1bridge} to $\calT_1 \cup_{\alpha} A_{23}$, a Seifert fiber space
over the disk. This would then be consistent with  the proof of Theorem~\ref{thm:changingHS}.
\end{remark} 

{\bf Case II: $P$ is incompressible in $H_B$ and (2) of Lemma~\ref{lem:3Psphere} holds.} 

Collapsing along $P \times [0,1]$, we view $H_B$ as $M_1 \cup_{A_1 \cup A_2} \calT$. Then the cores of 
$A_1,A_2$ must be $\alpha,\beta$ in $M_1$. This follows from Lemma~\ref{lem:3Psphere} when $A_1$ (hence
$A_2$ as well) is not longitudinal in $\calT$, since $\alpha \beta$ is not primitive in $M_1$. When
$A_1,A_2$ are longitudinal on $\calT$, assume for contradiction that the core of $A_1$ is $\alpha \beta$.
As $A_1 \cup A_2$ must be $\bdry$-compressible in $H_B$, and $\alpha \beta$ is not primitive in $M_1$,
it must be that there is a meridian disk for $M_1$ that is disjoint from $A_1$ and crossing the core
of $A_2$ once. But then we obtain the contradiction that the trefoil knot exterior, 
$ M_1 \langle \alpha\beta \rangle$, has compressible boundary. 

So we may assume the core of $A_1$ is $\alpha$ and the core of $A_2$ is $\beta$ in $M_1$.

First consider the case where $A_1$ runs $n>1$ times longitudinally around $\calT$. There is an
annulus $B$ contained in $\bdry M_1$ which we may assume contains $\bdry A_{23}$ and $A_1$ and that
intersects $K$ only at vertices $2,3$ of $G_F$ (i.e.\ only along $\bdry A_{23}$). Let $\calN$ be
$\nbhd(B \cup A_{23} \cup \calT)$. Then $\calN$ is a Seifert-fibered space over the disk with
two exceptional fibers. 
Furthermore, $K \cap \calN$ lies as a co-core of the \mobius band $A_{23}$ properly embedded in $\calN$.   Lemma~\ref{lem:sSFS1bridge} then gives a genus $2$ splitting of $M$ in which $K$ is $0$-bridge, a contradiction.

Finally consider the case where $A_1, A_2$ are longitudinal in $\calT$. By Claim~\ref{claim:primitivity},
there are disjoint meridian disks $D_1,D_2$ of $M_1$ such that $D_i$ intersects the core of $A_i$ once
and is disjoint from $A_j$ where $\{i,j\}=\{1,2\}$. Then there is a disk $D_3$ in $\calT$ such that
$D = D_1 \cup D_2 \cup D_3$ forms a meridian disk in $H_B$ that intersects each of $\alpha$ and $\beta$
once. In particular, $\alpha$ is primitive in $H_B$. Then $H_B'= H_B \cup \nbhd(A_{23})$ is a genus 2 handlebody.
Also $H_W' = H_W - \nbhd(A_{23})$ is a genus 2 handlebody. Hence $H_B' \cup H_W'$ is a genus 2 Heegaard
splitting of $M$. We now show that $K$ has bridge number one with respect to this splitting, thereby
contradicting the assumption that $t=4$. By Claim~\ref{claim:primitivity}, arcs $\arc{12},\!\arc{34}$ can
be isotoped to $\bdry H_B$ so they intersect $\bdry A_{23}$ only at vertices
$2,3$ (resp.) of $G_F$ and
are incident to the same side of $\bdry A_{23}$ (the isotopy in $M_1$ is disjoint from $\alpha, \beta$).
Now we can write $K$ as the
union of two arcs, $\arc{3412}$ that is a bridge arc of $H_W'$ and $\arc{23}$ which is a bridge arc
of $H_B'$: After isotoping $\arc{34},\!\arc{12}$ to $\bdry H_B$, the arc $\arc{3412}$ is isotopic as
a properly embedded arc in $\bdry H_W'$ to $\arc{41}$ --- which is bridge in $H_W'$ 
(Lemma~\ref{lem:bridgedisksdisjoitfrommobius}).
On the other hand, $\arc{23}$ can be isotoped as a properly embedded arc in
$H_B'$ to be a cocore of the annulus $\nbhd(A_{23}) \cap H_B$. The primitivity of this annulus in
$H_B$ now describes $\arc{23}$ as a bridge arc in $H_B'$. That is, $K$ is 1-bridge with
respect to the splitting $H_W' \cup H_B'$.


{\bf Case III: $P$ is compressible.}

Because $P$ is not compressible into $M_1$ by Claim~\ref{claim:Pincomp}, some component of $\bdry P$ bounds a disk $D$ in $M_2$. The following then proves the lemma in this case.

\begin{claim}\label{claim:TBIII}
Assume there is a disk $D$ properly embedded in $H_B$ disjoint from $M_1$ and with $\bdry D$ isotopic
to $\alpha, \beta$ or $\alpha \beta$ in $\bdry H_B$. Then $\bdry D$ must in fact be isotopic to
$\beta$, and $\beta$ must bound a disk in $\bdry H_B$.
\end{claim}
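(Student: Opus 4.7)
The plan is to eliminate the cases $\partial D \sim \alpha$ and $\partial D \sim \alpha\beta$, thereby forcing $\partial D \sim \beta$, and then to extract from the surrounding \FESC\ structure the additional conclusion that $\beta$ is trivial on $\partial H_B$.

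First I would rule out $\partial D \sim \alpha$. Recall $\alpha = \partial A_{23}$, where $A_{23} \subset H_W$ is the \mobius band arising from the White $\arc{23}$-\SC. After isotoping $\partial D$ to coincide with $\alpha$, the union $D \cup A_{23}$ is a closed embedded projective plane in $M$, contradicting the standing fact that $M = K'(\gamma)$ contains no projective plane when $K'$ is hyperbolic and $\Delta \geq 3$.

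Next I would rule out $\partial D \sim \alpha\beta$. Using $D$ together with a parallel annulus in $\tilde P$ as a $2$-handle attached to $M_1$ along $\alpha\beta$, Claim~\ref{claim:primitivity}(3) produces an embedded trefoil-knot exterior $X \subset H_B$ with torus boundary $T$. Because $M$ is irreducible and atoroidal, $T$ must compress into $Y := M - \Int X$, forcing $Y$ to be a solid torus and hence $M$ to be a Dehn surgery on the trefoil. The trefoil exterior contains the canonical spanning \mobius band $B_*$ with $\partial B_* \subset T$ of slope $\pm 3$. Combining $B_*$ with the \mobius band $A_{23} \subset H_W \subset Y$ and with the Black annulus $A_{12,34}$ arising from the flanking bigon of the \FESC, and tracking the relative slopes of their boundaries on $T$, one obtains either a Klein bottle in $M$ (contradicting that $M$ contains no Klein bottle) or three disjoint \mobius bands with non-parallel boundaries on $\partial H_B$ to which Lemma~\ref{lem:3disjointmobiusbands} applies, yielding a Dyck's surface in $M$ and contradicting the standing hypothesis.

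In the remaining case $\partial D \sim \beta$, the disk $D$ exhibits $\beta$ as a meridian of $H_B$ disjoint from $M_1$. To upgrade this to the statement that $\beta$ bounds a disk on the surface $\partial H_B$, I would argue by contradiction: if $\beta$ were essential on $\partial H_B$, then cutting $H_B$ along $D$ yields a solid torus into which $M_1$ embeds, and—by an argument analogous to those of Cases~I and II of Lemma~\ref{lem:TB}, with $D$ playing the role of the $\partial$-compressing disk supplied by Lemma~\ref{lem:3Psphere}—one can construct a new genus $2$ Heegaard splitting of $M$ with respect to which $K$ has bridge number $\leq 1$, contradicting $t=4$. Therefore $\beta$ is trivial on $\partial H_B$, and the two $\edge{14}$-edges of the \FESC\ cobound a disk on $\partial H_B$, establishing the desired parallelism. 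The main obstacle will be the trefoil-surgery subcase of the second step: the slope analysis on $T$ required to apply Lemma~\ref{lem:3disjointmobiusbands} must track both the intrinsic framing of the trefoil exterior and the Heegaard framing inherited from $H_B \cup H_W$. A secondary difficulty is that producing the new Heegaard splitting from $D$ alone—without the input supplied by Lemma~\ref{lem:3Psphere} in Cases~I and II—may require additional combinatorial input from the \FESC.
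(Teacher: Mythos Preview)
Your treatment of the case $\partial D \sim \alpha$ is correct and matches the paper.

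Your treatment of the case $\partial D \sim \alpha\beta$ has a genuine gap. You write that since $M$ is irreducible and atoroidal, $T = \partial X$ must compress into $Y = M \setminus \Int X$, ``forcing $Y$ to be a solid torus.'' The first clause is right, but the second does not follow. Compressing $T$ in $Y$ produces a sphere $S$ which bounds a ball $B$ in $M$; the two possibilities are that $Y \setminus \nbhd(\text{compressing disk})$ is $B$ (so $Y$ is a solid torus), or that $X \cup \nbhd(\text{compressing disk})$ is $B$ (so $X$ sits inside a $3$-ball). You have not ruled out the second possibility, and in fact it is exactly what happens here: $X = M_1\langle\alpha\beta\rangle$ lies entirely inside the handlebody $H_B$, and a trefoil exterior (indeed any compact submanifold with incompressible torus boundary) inside a handlebody is contained in a $3$-ball. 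So $Y$ is \emph{not} a solid torus, and the entire slope-tracking program you outline collapses. The paper's argument exploits exactly this: since $X$ lies in a $3$-ball in $M$, so does $\alpha \subset \partial X \subset \hatF$; Lemma~\ref{lem:AEGor} then forces $\alpha$ to bound a disk in $H_B$ or $H_W$, and capping $A_{23}$ with that disk gives an $\RP^2$. This is a two-line argument once you see that $X \subset H_B$ forces $X$ into a ball.

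For the case $\partial D \sim \beta$ with $\beta$ essential on $\partial H_B$, your sketch is pointed in the right direction but is too vague to assess. The paper carries this out concretely: let $\calO$ be the solid-torus component of $H_B \cut D$ containing $M_1$, set $\calN = \calO \cup \nbhd(A_{23})$, use Claim~\ref{claim:primitivity}(2) to isotope the arcs $\arc{12},\arc{34}$ onto $\partial\calO$ so that $K \cap \calN$ becomes a single cocore arc of the attaching annulus, and then split into two subcases according to whether $\partial A_{23}$ is longitudinal in $\calO$ (build a new splitting directly) or not (apply Lemma~\ref{lem:sSFS1bridge}). Your reference to ``an argument analogous to Cases~I and~II'' does not make clear which construction you intend or why the bridge-number bound follows.
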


\begin{proof}
If $\bdry D$ were isotopic to $\alpha$, then $D \cup A_{23}$ forms an $\RP^2$; this is a contradiction.  
If $\bdry D$ were $\alpha\beta$, then $\nbhd(D) \cup_{\alpha\beta} M_1 = M_1 \langle \alpha\beta \rangle$ is 
a trefoil complement embedded in $H_B$ (Claim~\ref{claim:primitivity}).  Thus $M_1 \langle \alpha\beta \rangle$ must be contained in a $3$-ball.  
By Lemma~\ref{lem:AEGor}, $\alpha$ bounds a disk in $H_B$ or $H_W$ which, as above, cannot occur. 
Thus $\bdry D$ is isotopic to $\beta$. 

Now assume $\beta$, hence $\bdry D$, is essential in $\bdry H_B$. Let $\calO$ be the solid torus 
component of $H_B - \nbhd(D)$  
containing $M_1$. Let $\calN$ be ${\calO} \cup \nbhd(A_{23})$.
Using $D$, we may extend the isotopy from Claim~\ref{claim:primitivity}(b) of
arcs $\arc{12},\!\arc{34}$, fixing their endpoints, to $\bdry {\calO}$ so that the
resulting arcs $a,b$ are incident $\bdry A_{23}$ only at their endpoints and on the same side of $\bdry
A_{23}$ (alternatively, Lemma~\ref{lem:btfsc} constructs such an isotopy). 
Thus $K$ can be written as the union of two arcs: $\arc{34123}$, $\mu$. Arc $\arc{34123}$ is 
the union of the arcs $a,b$ on $\bdry {\calO}$, the arc $\arc{41}$ of $K \cap H_W$, and an arc on 
$\partial \nbhd(A_{23}) - {\calO}$ (a cocore of this annulus) running from vertex $2$ to vertex $3$. 
The arc $\mu$ is a cocore of the annulus $B=\nbhd(A_{23}) \cap {\partial \calO}$ on $\hatF$. Note that 
$\arc{34123}$ is the union of the $\arc{41}$-arc of $K$ with two arcs on $\partial {\calN}$. Pushing
$\arc{34123}$ slightly into the exterior of $\calN$, we have $K \cap {\calN} = \mu$. 

$B$ winds $n>0$ times around $\calO$. First assume $n>1$. Then $\calN$ is a Seifert fiber space
over the disk with two exceptional fibers.  
Furthermore, $\mu = K \cap \calN$ is a co-core of the annulus 
$B \subset \calN$, where $B$ is vertical under the Seifert fibration.   Lemma~\ref{lem:sSFS1bridge} applies to give a new genus $2$ Heegaard splitting of $M$ in which $K$ is $0$-bridge, contradicting that $t=4$.

So assume $n=1$. Then $\bdry A_{23}$ is primitive in $H_B$. So $H_B' = H_B \cup \nbhd(A_{23})$ is a 
genus two handlebody, as is its exterior $H_W' = H_W - \nbhd(A_{23})$. Then $K \cap H_W' = \arc{34123}$ is
properly isotopic to the bridge arc $\arc{41}$ of $H_W'$, hence is bridge in $H_W'$. $K \cap H_B' = 
\mu$ is properly isotopic to a cocore of $B$ whose core is primitive in $H_B$. Thus $\mu$
is a bridge arc in $H_B'$. That is, $K$ is $1$-bridge in the Heegaard splitting $H_B' \cup H_W'$,
contradicting that $t=4$.
\end{proof}
This completes the proof of Lemma~\ref{lem:TB}.
\end{proof}

\begin{lemma}
\label{lem:btfsc}
Assume \conditionI\ and that there is a \FESC\ centered, WLOG, about a $\arc{23}$-\SC, $f$. There are bridge 
disks $D_{12}$ and $D_{34}$ disjoint from the edges of the $\arc{23}$-\SC. These bridge disks guide isotopies of the arcs $\arc{12},\!\arc{34}$, fixing endpoints, onto
arcs of $\hatF$ that are incident to the same side in $\hatF$ of the curve formed by the edges of this \SC.
Let $A_{23}$ be the \mobius band associated to $f$. If $\partial A_{23}$ is primitive in $H_B$, then $K$
is $1$-bridge with respect to a genus two Heegaard splitting of $M$.
\end{lemma}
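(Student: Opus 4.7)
The plan is to observe that parts (1) and (2) are essentially a restatement of Claim~\ref{claim:primitivity}(2), already proved in the context of the FESC setup. Specifically, constructing $M_1 = \nbhd(\arc{12} \cup \arc{34} \cup f \cup g) \subset H_B$ where $f$ is the $\arc{23}$-SC face and $g$ is the trigon face, Claim~\ref{claim:primitivity}(2) produces bridge disks $D_{12}$ and $D_{34}$ in $M_1$ (and hence in $H_B$) disjoint from the loops $\alpha$ (which are precisely the edges of the $\arc{23}$-SC after the identifications in $M_1$). These same disks guide the required isotopies of $\arc{12}, \arc{34}$ onto $\partial H_B$, with the arcs $\alpha_{12}, \alpha_{34}$ meeting $\partial A_{23}$ only at vertices $2,3$ and on the same side of $\partial A_{23}$ in $\hatF$.

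For part (3), assume $\partial A_{23}$ is primitive in $H_B$. Form the new splitting by setting $H_B' = H_B \cup \nbhd(A_{23})$ and $H_W' = H_W - \nbhd(A_{23})$. The primitivity of $\partial A_{23}$ in $H_B$ makes $H_B'$ a genus $2$ handlebody (attaching a solid torus $\nbhd(A_{23})$ along the annulus $\nbhd(\partial A_{23})$ whose core is primitive), and removing a neighborhood of a Möbius band from a handlebody yields a handlebody, so $H_W'$ is also genus $2$ (see the discussion in Section~7 on Möbius bands). Thus $H_B' \cup_{\partial H_B'} H_W'$ is a genus $2$ Heegaard splitting of $M$.

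Next show $K$ is $1$-bridge with respect to this splitting. Since vertices $2$ and $3$ of $K \cap \hatF$ lie on $\partial A_{23}$ and hence in the interior of $H_B'$, while vertices $1,4$ remain on $\partial H_B'$, we have $K \cap \partial H_B' = \{1,4\}$, and so $K$ meets each new handlebody in a single arc: $K \cap H_W' = \arc{41}$ and $K \cap H_B' = \arc{1234} = \arc{12} \cup \arc{23} \cup \arc{34}$. By Lemma~\ref{lem:bridgedisksdisjoitfrommobius}, $\arc{41}$ has a bridge disk in $H_W$ with interior disjoint from $A_{23}$, and this persists as a bridge disk in $H_W'$. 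It remains to show that $\arc{1234}$ is bridge in $H_B'$: use the isotopies from parts (1) and (2) to move $\arc{12}, \arc{34}$ onto $\partial H_B$ as $\alpha_{12}, \alpha_{34}$ approaching $\partial A_{23}$ from the same side, and then isotope $\arc{23}$ in the solid torus $\nbhd(A_{23})$ to an arc on the exterior annulus $B^* \subset \partial \nbhd(A_{23})$. Because $\alpha_{12}$ and $\alpha_{34}$ are on the same side of $\partial A_{23}$, the three arcs can be concatenated after the cocore isotopy to give a single arc on $\partial H_B' = (\partial H_B - B) \cup B^*$, exhibiting $\arc{1234}$ as a bridge arc.

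The main technical subtlety will be verifying that the ``same-side'' hypothesis from Claim~\ref{claim:primitivity}(2) is exactly what is needed so that the concatenation of $\alpha_{12}$, the boundary push-off of $\arc{23}$ across $B^*$, and $\alpha_{34}$ forms a genuine embedded arc on $\partial H_B'$ (rather than a curve that self-intersects or fails to be properly embedded near the transition across $B$). This is precisely the kind of bookkeeping carried out in Case~III of the proof of Lemma~\ref{lem:TB} (Claim~\ref{claim:TBIII}), so in practice one can proceed by explicitly adapting that argument; the construction there, where $\calO \cup \nbhd(A_{23})$ with primitive $\partial A_{23}$ yields a new genus $2$ splitting in which $K$ is at most $1$-bridge, is essentially the situation here.
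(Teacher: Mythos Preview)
Your treatment of the second part (primitivity of $\partial A_{23}$ implies $K$ is $1$-bridge) is essentially the same as the paper's. However, there is a gap in your handling of the first part. Claim~\ref{claim:primitivity}(2) produces isotopies of $\arc{12},\arc{34}$ onto arcs of $\partial M_1$, with the ``same side'' condition stated relative to $\partial A_{23}$ inside $\partial M_1$ --- not onto $\hatF$. The handlebody $M_1$ sits strictly inside $H_B$, and $\partial M_1 = P \cup P'$ where only $P' = \partial M_1 \cap \hatF$ lies on the Heegaard surface; the bridge disks from Claim~\ref{claim:primitivity}(2) generally land the arcs on $P$, not on $\hatF$. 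The lemma explicitly asks for bridge disks in $H_B$ guiding isotopies onto $\hatF$, and this is what later applications (e.g.\ Lemma~\ref{lem:nfsc}) actually use. To pass from $\partial M_1$ to $\hatF$ you must invoke Lemma~\ref{lem:TB}: the disk $E \subset \hatF$ of parallelism between the two $\edge{14}$-edges (equivalently, the disk bounded by $\beta$). You never bring $E$ into your construction.

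The paper's proof is more direct: it assembles $D_{34}$ explicitly as $f \cup \rho_{12} \cup E \cup \rho_{34} \cup g$ from the two Black faces $f,g$, rectangles on $\partial\nbhd(\arc{12})$ and $\partial\nbhd(\arc{34})$, and the parallelism disk $E$, then bands to obtain $D_{12}$; Figures~\ref{fig:FESCdisk} and \ref{fig:FESCdisk2} verify that $D_{12}\cap\hatF$ and $D_{34}\cap\hatF$ lie on the same side of $\partial A_{23}$. Note also a labeling slip in your proposal: in the FESC setup $f$ denotes the Black bigon and $g$ the Black trigon; the $\arc{23}$-SC is the White face between them, and $M_1$ is built from the two Black faces, not from the SC.
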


\begin{proof}
WLOG we may assume there is a \FESC\ $\tau$ as shown in Figure~\ref{fig:FESC}(a).  Its edges induce the subgraph of $G_F$ shown in Figure~\ref{fig:FESC}(b).

Let $E$ be a disk giving the parallelism guaranteed by Lemma~\ref{lem:TB}.  Let $\rho_{12}$ and $\rho_{34}$ be rectangles on $\bdry \nbhd(\arc{12})$ and $\bdry \nbhd(\arc{34}))$ respectively that are between $f$ and $g$ and meet $E$.  Then together $f \cup g \cup E \cup \rho_{12} \cup \rho_{34}$ form a bridge disk $D_{34}$ for $\arc{34}$ as shown in Figure~\ref{fig:FESCdisk}(a) that meets $\hatF$ as shown in Figure~\ref{fig:FESCdisk}(b).
\begin{figure}
\centering
\input{FESCdisk.pstex_t}
\caption{}
\label{fig:FESCdisk}
\end{figure}

\begin{figure}
\centering
\input{FESCdisk2.pstex_t}
\caption{}
\label{fig:FESCdisk2}
\end{figure}

Let $\rho_{34'}$ be a rectangle on $\bdry \nbhd(\arc{34})$ between the $x$ corner of $g$ and $y$ corner of $f$ and containing the $z$ corner of $g$.   Banding $D_{34}$ to $\arc{12}$ with the rectangle $\rho_{34}' \cup f$ produces the bridge disk $D_{12} = D_{34} \cup \rho_{34}' \cup f$ shown in Figure~\ref{fig:FESCdisk2}(a) which  may be made embedded and disjoint from $D_{34}$ by a slight perturbation.  
Figure~\ref{fig:FESCdisk2}(b) shows how $D_{12}$ and $D_{34}$ meet $\hatF$.  In particular, they are incident to the same side of $\bdry A_{23}$. 

Now assume $\bdry A_{23}$ is primitive in $H_B$. So $H_B' = H_B \cup \nbhd(A_{23})$ is a                
genus 2 handlebody, as is its exterior $H_W' = H_W - \nbhd(A_{23})$. Now argue as in the last paragraph
of Claim~\ref{claim:TBIII}. That is, $K \cap H_W' = \arc{34123}$ is           
properly isotopic to the bridge arc $\arc{41}$ of $H_W'$, hence is bridge in $H_W'$. 
$K \cap H_B' =                
\mu$ is properly isotopic to a cocore of the annulus $B$, a neighborhood in $\hatF$ of 
$\bdry A_{23}$. 
As $\bdry A_{23}$ is primitive in $H_B$, $\mu$                             
is a bridge arc in $H_B'$. That is, $K$ is 1-bridge in the Heegaard splitting $H_B' \cup H_W'$,                   
contradicting that $t=4$.                                                                            
\end{proof}


\section{\FESCs}

Throughout this section assume $t=4$, there are no Dyck's surfaces embedded in $M$, and we are in \situationnscc.

\subsection{Type I and II \FESCs}

\begin{defn} 
By Lemma~\ref{lem:TB} two of the edges bounding a \FESC\ are parallel 
on $G_F$.  A  \FESC\ along a vertex $x$ of $G_Q$ is {\em type I} or {\em type II} ({\em at $x$}) according 
to whether both or just one of these parallel edges are incident 
to the vertex.  
See Figure~\ref{fig:fesctypes} for an illustration of types I and II at the vertex $x$.
\end{defn}

\begin{figure}[h]
\centering
\input{fesctypes.pstex_t}
\caption{}
\label{fig:fesctypes}
\end{figure}

The boldface notation in the lemmas of this section refers to that of 
section~\ref{sec:t4noscc}. 

\begin{lemma}\label{lem:fesc+1}
${\tt MST_{\tt I\!I}} \implies {\tt MSTG}$

At a vertex $x$, the trigon of a type II \FESC\ cannot be further adjacent to another bigon or a trigon.  In particular, a type II \FESC\ must have its trigon adjacent to a true gap at $x$.
\end{lemma}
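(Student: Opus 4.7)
The plan is to argue by contradiction. Suppose the trigon $T$ of a type~II FESC at $x$ is adjacent at $x$ (on the side opposite its Scharlemann cycle neighbor) to another face $F'$ of $\Lambda$, where $F'$ is either a bigon or a trigon. WLOG take the FESC to be as in Figure~\ref{fig:FESC}(a), with the mixed bigon $f$ at corner $\arc{12}$ at $x$, the white $\arc{23}$-SC between $f$ and $g$, and the Black trigon $g$ whose corner at $x$ is $\arc{34}$; then $F'$ has an edge $e$ at $x$ incident to label $4$ or label $1$ (the corner $\arc{41}$). The goal is to show that the existence of $e$ together with the FESC produces one of the standard contradictions available to us: a violation of Lemma~\ref{lem:parallelwithsamelabel}, a thinning of $K$ via Lemma~\ref{lem:btfsc}, or an embedded Dyck's surface (ruled out by hypothesis).

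The starting point is Lemma~\ref{lem:TB}, which provides a pair of parallel $\edge{14}$-edges in $G_F$, one coming from $f$ and one from $g$. Being type~II at $x$ means exactly one of these is incident to $x$ at label $4$ or $1$, namely the edge of $f$; the other sits at the remote endpoint vertex of $g$'s third edge. This is the crucial rigidity, because it pins down where the $\edge{14}$-edges of the FESC sit on $G_F$ around $x$. I would then read off the position of the new edge $e$ on $G_F$ at $x$, using the cyclic order of labels around $x$ forced by the corners already occupied by $f$, $S$, $g$, and $F'$.

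Next I would split into cases according to the type of $F'$ and the label of its edge $e$ at $x$. If $e$ is a $\edge{14}$-edge (which happens, for instance, if $F'$ is a $\arc{41}$-SC or a $\arc{14},\!\arc{23}$-bigon), then it must be parallel on $G_F$ to one of the two $\edge{14}$-edges produced by Lemma~\ref{lem:TB}, and since both have label $x$ at the label-$4$ or label-$1$ endpoint, this parallelism violates Lemma~\ref{lem:parallelwithsamelabel}. If $e$ is not a $\edge{14}$-edge, I would show that the union of $F'$ with the bigon $f$ (or, when $F'$ is a trigon, with one of its subrectangles) forms a new parallelism between edges of the FESC that merges $f$ and $F'$ across the $1$-handle $\nbhd(\arc{12})$; this produces a disk in $H_B$ whose boundary marks $\bdry A_{23}$ as primitive in $H_B$. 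At that point Lemma~\ref{lem:btfsc} supplies a genus~$2$ Heegaard splitting of $M$ with respect to which $K$ is $1$-bridge, contradicting $t=4$.

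The main obstacle will be the trigon case, $F' = T'$. A trigon contributes two edges at $x$, not one, and its third edge can run to various vertices of $G_F$. I expect to handle this by first using the standard trick of cutting $T'$ into two "lopsided" rectangles along its label-$4$ or label-$1$ corner and then reducing to the bigon analysis above; alternatively, if $T'$ happens itself to be a Scharlemann trigon, its associated Möbius band can be combined with $A_{23}$ and the \mobius band / annulus coming from $g$ and $f$ to either force another parallelism contradicting Lemma~\ref{lem:parallelwithsamelabel} or yield three mutually disjoint essential Möbius bands, contradicting Lemma~\ref{lem:3disjointmobiusbands}. The final statement of the lemma, that the trigon must therefore sit next to a \emph{true} gap (not a trigon) at $x$, follows immediately from ruling out both a neighboring bigon and a neighboring trigon.
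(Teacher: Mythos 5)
Your plan differs in strategy from the paper's. The paper's proof is a direct geometric construction: starting from the type~II FESC together with the adjacent face $h$, it builds an explicit \emph{long disk} (when $h$ is a bigon) or \emph{lopsided bigon} (when $h$ is a trigon), using the parallelism disk $\delta$ of Lemma~\ref{lem:TB} to close up a $2$-complex whose regular neighborhood yields an isotopy of $K$ to a $1$-bridge position. The type~II hypothesis is what ensures the resulting curve on $\hatF$ closes up to a thinning disk rather than wrapping further. The remark in the paper does note that the long disk can alternatively be assembled from the bridge disk $D_{34}$ of Lemma~\ref{lem:btfsc} and a White bigon obtained by banding, which is the spirit of part of your case (ii), but the paper never runs the argument that way.

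There are genuine gaps in your proposal. First, the assertion that a new $\edge{14}$-edge $e$ of $F'$ at $x$ must be parallel in $G_F$ to one of the two $\edge{14}$-edges supplied by Lemma~\ref{lem:TB} does not follow: $\hatF$ has genus $2$, and many pairwise non-parallel $\edge{14}$-edges can coexist. The paper never needs such a parallelism; its contradiction comes from the geometric thinning, not from Lemma~\ref{lem:parallelwithsamelabel}. Second, the claim that ``merging $f$ and $F'$ across $\nbhd(\arc{12})$'' produces a Black disk whose boundary meets $\bdry A_{23}$ exactly once is unjustified and, in the trigon case, does not even make sense as stated: the third edge of $T'$ can run anywhere in $G_F$, and the faces $f$ and $T'$ sit at different corners of $x$ ($\arc{12}$ versus $\arc{41}$) so there is no obvious banding across a common $1$-handle. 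Third, the ``cutting $T'$ into two lopsided rectangles'' step is exactly where the technical difficulty lies, and the paper handles this by constructing a lopsided bigon $2$-complex rather than reducing to the bigon case; your alternative via Möbius bands does not apply when $T'$ is not a Scharlemann trigon, which is the generic situation. To repair the argument you would need to carry out the explicit $2$-complex construction (or the bridge-disk version of it) and verify that in the type~II configuration the boundary arc really does close to a thinning disk, which is precisely what Figures~\ref{fig:fesc+bigon}--\ref{fig:fesc+trigon-graphs41} in the paper do.
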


\begin{proof}
Assume there is a type II \FESC\ adjacent to another bigon or trigon.  In the case of a bigon we construct a long disk\footnote{See also Lemma~2.2 \cite{baker:sgkilshsbn}.} as in Figure~\ref{fig:fesc+bigon}.  In the case of a trigon we construct a lopsided bigon\footnote{See also the last two paragraphs of the of Lemma~6.15 \cite{baker:sgkilshsbn}.} as in Figure~\ref{fig:fesc+trigon}.  Hence in both cases there is a thinning of $K$.

\begin{figure}[h]
\centering
\input{fesc+bigon.pstex_t}
\caption{}
\label{fig:fesc+bigon}
\end{figure}

\begin{figure}[h]
\centering
\input{fesc+trigon.pstex_t}
\caption{}
\label{fig:fesc+trigon}
\end{figure}

In these figures $\delta$ is the disk of parallelism guaranteed by Lemma~\ref{lem:TB}, and $\rho_{ab}$ denotes a rectangle on the boundary of the $\arc{ab}$ handle.  Note that $\rho_{23}$ and $\rho_{23}'$ have disjoint interiors.  

The long disk may be taken to lie on the boundary of the neighborhood of the $2$-complex formed from the four faces and $K$ as they are embed in $M$.  The lopsided bigon will be embedded except at its short $\arc{a \, a+1}$-corner; nevertheless, the lopsided bigon guides an isotopy of $K$.  Both the long disk and the lopsided bigon run over both sides of the $\arc{23}$-\SC.

To verify these isotopies explicitly, one may construct models of these $2$-complexes, their neighborhoods and their intersections with $\hatF$. As the case when the adjoining face, $h$, is a trigon can be viewed as a ``splintering'' of the case when $h$ is a bigon, we begin with the model of the bigon case.

{\bf The long disk.}  Form a \mobius band out of the $\arc{23}$-\SC\ and the $\arc{23}$-arc of $K$.  Complete $K$ and take a small regular neighborhood.  The attachment of $f$ is unique.  The attachment of $g$ is unique up to a choice of placement of its $\arc{34}$-corner opposite the $\edge{23}$-edge.  These two choices give mirror images and are thus equivalent up to homeomorphism.  The boundary of $\delta$ is now set and we may attach it. The bigon $h$ may now also be attached along the $\edge{34}$-edge of $g$ in a unique manner.  Beginning from the corners of $\delta$, the choices for $\rho_{12}$, $\rho_{23}$, $\rho_{34}$, and $\rho_{23}'$ are determined.  One may now ``wrap'' the long disk around this complex to exhibit an isotopy of $\arc{2341}$ onto $\hatF$.  The graph on $\hatF$ induced by the edges of these faces and the arc onto which the isotopy lays down $\arc{2341}$ is shown in Figure~\ref{fig:fesc+bigon-graph}.  Since $K$ is isotopic to the arc $\arc{12}$ and an arc on $\hatF$, it is at most $1$-bridge.

\begin{figure}[h]
\centering
\input{fesc+bigon-graph.pstex_t}
\caption{}
\label{fig:fesc+bigon-graph}
\end{figure}

\begin{remark}
The long disk can also be pictured as the union of the bridge disk $D_{34}$ of 
Lemma~\ref{lem:btfsc} and a White bigon on corners $\arc{23},\!\arc{41}$ gotten
by banding $h$ and two disjoint copies of the $\arc{23}$-\SC\ along the 
boundary of a neighborhood of the $\arc{23}$-arc of $K$. This white bigon
and $D_{34}$ agree on $F$ along one edge of the bigon.
\end{remark}  

{\bf The lopsided bigon.}  Take the above constructed complex and break the $\edge{12}$-edge of $h$ by inserting a corner, thereby changing $h$ from a bigon into a trigon.  This new corner will be either a $\arc{23}$- or a $\arc{41}$-corner.  To complete this model, this corner must be attached to $K$.  The long disk isotopy now becomes an isotopy of $\arc{2341}$ onto two arcs of $\hatF$ and either the $\arc{23}$- or $\arc{41}$-arc of $K$.  There are seven 
possible ways of hooking up this new corner to its position on the complex:  three for $\arc{23}$ and four for $\arc{41}$.  When the new corner is a $\arc{23}$-corner, Figure~\ref{fig:fesc+trigon-graphs23} shows the three possible graphs on $\hatF$ and the resulting two arcs on $\hatF$ after the isotopy of $\arc{2341}$.  Figure~\ref{fig:fesc+trigon-graphs41} shows four possibilities when the new corner is a $\arc{41}$-corner.  
Note that $\edge{41}$-edge of $h$ cannot lie in $\delta$ by 
Lemma~\ref{lem:parallelwithsamelabel}. Since $K$ is isotopic to the union of arc $\arc{12}$, two arcs on $\hatF$, and one of the arcs $\arc{23}$ or $\arc{41}$, it is at most $1$-bridge.

\begin{figure}[h]
\centering
\input{fesc+trigon-graphs23.pstex_t}
\caption{}
\label{fig:fesc+trigon-graphs23}
\end{figure}

\begin{figure}[h]
\centering
\input{fesc+trigon-graphs41.pstex_t}
\caption{}
\label{fig:fesc+trigon-graphs41}
\end{figure}

\begin{remark}
As in the remark above for the long disk, the lopsided bigon can be pictured 
as the union of the Black bridge disk $D_{34}$ with a new White trigon gotten by
banding $h$ and two copies of the $\arc{23}$-\SC. The new trigon has the 
property that it matches the bridge disk along one of its edges (and disjoint
elsewhere).
\end{remark}
\end{proof}

\begin{lemma}
\label{lem:dbft2}
{\tt BBTBB} $\implies$ {\tt SMTMS}.   In particular, the \ttT\ is a Scharlemann
cycle.
\end{lemma}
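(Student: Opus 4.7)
The plan is to analyze the 5-face configuration {\tt BBTBB} at the special vertex $x$ by tracking corner labels, colors, and edge classes of the five consecutive faces. Since $t=4$, the five consecutive corners taken up by these faces at $x$ span the cyclic labels $\arc{12}, \arc{23}, \arc{34}, \arc{41}, \arc{12}$ up to relabeling, with the trigon T at the central $\arc{34}$-corner. Face colors alternate along these corners according to which side of $\widehat{F}$ each arc of $K$ lies, so T and the two outer bigons are on one side (say Black), and the two inner bigons (at $\arc{23}$ and $\arc{41}$) are on the other (White).

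First I would rule out non-mixed types for the two inner White bigons. A White bigon at an $\arc{23}$- or $\arc{41}$-corner is either an SC or a mixed bigon, and if it is an SC then both its edges have matching labels. Combined with T and the outer bigon on the opposite side of $\widehat{F}$, an SC at an inner corner would assemble either into an extended Scharlemann cycle living entirely inside {\tt BBTBB}, contradicting Lemma~\ref{lem:esc4g} (which forces every ESC to be adjacent to a true gap), or into three mutually disjoint \mobius bands producing a Dyck's surface via Lemma~\ref{lem:3disjointmobiusbands}. Hence both inner bigons are mixed, and the configuration has the form {\tt BMTMB}.

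Next I would determine the two outer Black bigons and the nature of T. Each half of the configuration, {\tt BMT} or {\tt TMB}, is now a candidate for a forked extended Scharlemann cycle, and Lemma~\ref{lem:fesc+1} forbids a type~II FESC from sitting at $x$ without an adjacent true gap. This forces both FESCs to be of type~I, which pins the outer bigons down as Scharlemann cycles and makes T the shared trigon of both FESCs. Applying Lemma~\ref{lem:TB} to each FESC produces parallelisms on $G_F$ that, taken together, force all three edges of T to carry the label pair $\{3,4\}$, so T is itself a Scharlemann cycle of length three. The main obstacle will be the combinatorial bookkeeping of edges at $x$: one must carefully track how the parallelisms from the two flanking FESCs interact, verifying that the alternative assignment of {\tt M} and {\tt S} within each {\tt BB} pair (with the outer bigon being the mixed one) produces two parallel edges incident to a common vertex at the same label, contradicting Lemma~\ref{lem:parallelwithsamelabel}.
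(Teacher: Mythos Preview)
Your proposal has two genuine gaps that prevent it from working.

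\textbf{The first step is not justified by the lemmas you cite.} You want to show the inner White bigons (at the $\arc{23}$- and $\arc{41}$-corners) must be mixed. But if, say, the $\arc{23}$-bigon were an \SC, the triple at positions $1,2,3$ would be {\tt B\,S\,T}, not {\tt M\,S\,M}: there is no \ESC\ inside {\tt BBTBB}, so Lemma~\ref{lem:esc4g} does not apply. Nor do you produce three disjoint \mobius bands: the \SCs\ at $\arc{23}$ and $\arc{41}$ give only two White \mobius bands, and you have not shown any Black \SC\ exists, so Lemma~\ref{lem:3disjointmobiusbands} is not triggered. In fact the opposite of what you claim is true: a direct edge-label check shows that the inner bigon adjacent to a $\arc{12}$-corner of {\tt T} is \emph{forced} to be an \SC, while the inner bigon adjacent to a $\arc{34}$-corner of {\tt T} is forced to be mixed. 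So the inner bigons are both {\tt M} exactly when {\tt T} is a Scharlemann cycle.

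\textbf{The second step rests on a misreading of \FESC.} A forked extended Scharlemann cycle has the pattern {\tt M\,S\,T}, with the \SC\ in the middle flanked by a mixed bigon and a trigon. With inner bigons equal to {\tt M} you have {\tt B\,M\,T} (or {\tt T\,M\,B}), which is not an \FESC\ regardless of what the outer {\tt B} is, so Lemmas~\ref{lem:fesc+1} and \ref{lem:TB} cannot be invoked on these triples. In particular, Lemma~\ref{lem:TB} gives you nothing about the edges of {\tt T} in this configuration.

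The paper's route is essentially the reverse of yours. If {\tt T} is a Scharlemann cycle, label-chasing on the shared edges immediately forces {\tt SMTMS}. If {\tt T} is not a Scharlemann cycle, then at least one inner bigon is an \SC\ (by the edge-label determination above); Lemma~\ref{lem:fesc+1} rules out the asymmetric case, so both inner bigons are \SC s and the outer ones are mixed, giving {\tt MSTSM}. Now Lemma~\ref{lem:TB} applied to each genuine \FESC\ gives parallelisms of the outer edges of $f_1,f_4$ with edges of the trigon, and these glue the four bigons into an annulus whose boundary curves run once each along $K$ in opposite directions, producing an embedded Klein bottle---a contradiction. The Klein bottle is the key geometric obstruction; your outline never reaches it.
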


\begin{proof}
If the \ttT\ were a Scharlemann cycle, then the desired conclusion would follow, so assume otherwise.  By Lemma~\ref{lem:fesc+1}, if one of the \ttB\ adjacent to the \ttT\ is an \ttS, then the other is too.  Hence we assume we have {\tt MSTSM} as shown, WLOG, in Figure~\ref{fig:dbftnew}(a).
\begin{figure}
\centering
\input{dbftnew.pstex_t}
\caption{}
\label{fig:dbftnew}
\end{figure}
By Lemma~\ref{lem:TB}, the $\edge{41}$-edges of $f_1$ and $g$ are parallel as are the $\edge{23}$-edges of $f_4$ and $g$.  Using these parallelisms we may form the annulus $f_1 \cup f_2 \cup f_3 \cup f_4$ shown in Figure~\ref{fig:dbftnew}(b).  Since the two boundary components of this annulus each run along $K$ once in opposite directions, joining them along $K$ forms an embedded Klein bottle.  This is a contradiction.
\end{proof}

\begin{lemma}
\label{lem:L1}
{\tt MSTB} $\implies$ {\tt MSTSG} or {\tt MSTSTG}. 
\end{lemma}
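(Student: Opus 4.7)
The plan is to use the structural consequences of Lemma~\ref{lem:TB} applied to \FESCs, combined with Lemma~\ref{lem:parallelwithsamelabel} and the remark following Lemma~\ref{lem:TB}, to rule out offending configurations. Begin by observing that in {\tt MSTB} the sub-configuration {\tt MST} (positions 1--3 at a vertex $x$) is a 1-\FESC\ with core \SC\ at position 2. By Lemma~\ref{lem:fesc+1}, a type II \FESC\ must have its trigon adjacent to a true gap, so since our trigon $f_T$ is adjacent to the bigon $f_B$ at position 4, the \FESC\ {\tt MST} must be type I at $x$. Applying Lemma~\ref{lem:TB} then produces a bigon of parallelism in $G_F$ between the outermost edge of the mixed bigon $f_M$ (the edge on the boundary between positions 0 and 1) and the outermost edge of the trigon $f_T$ (the edge on the boundary between positions 3 and 4). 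Since the latter edge is also an edge of $f_B$, we conclude that some edge of $f_B$ is parallel in $G_F$ to an edge of $f_M$.

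To rule out {\tt MSTM}, suppose for contradiction $f_B$ is a mixed bigon. The remark following Lemma~\ref{lem:TB} allows us to assemble a second \FESC\ from an {\tt ST} pair together with a mixed bigon $M$, provided the leftmost edge of the pair is parallel in $G_F$ to an edge of $M$. Using the {\tt ST} pair at positions $2,3$ together with the bigon $f_B$ — along with the parallelism just established — one builds a combinatorial \FESC\ in this sense. Applying Lemma~\ref{lem:TB} to this second \FESC\ yields another parallelism in $G_F$ between edges involving $f_B$ and $f_S$. Comparing this new parallelism with the first and tracking labels around $x$ on $G_F$, one produces two edges in $G_F$ that are parallel yet incident to a common vertex of $G_F$ at the same label, contradicting Lemma~\ref{lem:parallelwithsamelabel}. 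Hence $f_B$ must be a Scharlemann cycle, so we have {\tt MSTS} at positions 1--4.

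Next I analyze positions 5 and 6. If position 5 is a mixed bigon, then the triple of faces at positions 3,4,5 forms a \FESC\ (reading from the right, as {\tt TSM}); applying Lemma~\ref{lem:fesc+1} to this new \FESC\ gives a contradiction, since its trigon $f_T$ at position 3 is adjacent to the \SC\ at position 2 rather than to a gap. If position 5 is another \SC, then {\tt MSTSS} at positions 1--5 provides parallel combinatorial data around $x$ equivalent to a further \ESC\ configuration, which via Lemma~\ref{lem:dbft2} (applied to the {\tt STS} or {\tt TSS} block at positions 3--5) again runs afoul of Lemma~\ref{lem:parallelwithsamelabel}. Hence position 5 is either a true gap $\tt G$ (giving {\tt MSTSG}) or a trigon $\tt T$. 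In the latter case we have {\tt MSTST} at positions 1--5, and the same style of argument applied to the triple at positions 4,5,6 (now viewed as an {\tt ST} pair together with whatever lies at position 6) forces position 6 to be a true gap, yielding {\tt MSTSTG}.

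The main obstacle is the construction of the ``second \FESC'' in the {\tt MSTM} case: one must carefully verify that the combinatorial gluing described in the remark after Lemma~\ref{lem:TB} genuinely produces a \FESC\ to which that lemma applies, and that the resulting new parallelism lands on the pair of edges incident to $x$ with a common label needed to invoke Lemma~\ref{lem:parallelwithsamelabel}. A parallel difficulty arises in the third paragraph when ruling out a trigon at position 6 of {\tt MSTSTB}, where the same gluing-and-parallelism argument must be reapplied to the new {\tt STB} sub-configuration; in each case the bookkeeping of label pairs around both $x$ on $G_Q$ and the common vertex on $G_F$ is where the combinatorial weight of the proof lies.
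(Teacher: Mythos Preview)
Your overall strategy has the right flavor, but there are genuine gaps at each stage.

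First, you correctly deduce via Lemma~\ref{lem:fesc+1} that {\tt MST} must be type~I at $x$. But you fail to notice that this \emph{immediately} forces ${\tt B} = {\tt S}$: type~I means the edge at $x$-label~$4$ (shared between {\tt T} and {\tt B}) is a $\edge{14}$-edge, whereas a mixed bigon at the $\arc{41}$-corner would make that edge a $\edge{34}$-edge. Equivalently, the paper simply observes that ${\tt B} = {\tt M}$ would force {\tt MST} to be type~II. Your attempt to build a ``second \FESC'' from the {\tt ST} pair at positions~$2,3$ together with $f_B$ does not work: those faces are in the order SC--trigon--bigon, not the bigon--SC--trigon structure of a \FESC, and the remark after Lemma~\ref{lem:TB} does not license this gluing.

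Second, your treatment of position~$5$ is flawed. If position~$5$ is any bigon, then positions~$1$--$5$ form {\tt BBTBB}, and Lemma~\ref{lem:dbft2} forces this to be {\tt SMTMS}; since position~$1$ is {\tt M}, this is a contradiction. Your case-by-case attempt fails: for position~$5 = {\tt M}$, the \FESC\ at positions~$3,4,5$ (centered at the $\arc{41}$-\SC) is actually type~I at $x$ --- both its $\edge{23}$-edges are incident to $x$ --- so Lemma~\ref{lem:fesc+1} gives no contradiction; for position~$5 = {\tt S}$, your appeal to Lemma~\ref{lem:dbft2} on a three-face block is not what that lemma asserts.

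Third, and most seriously, the {\tt MSTST} case is the heart of the lemma and you have essentially omitted it. The paper's argument is substantial: one uses the type~I parallelism $\delta$ (between the outer edges of {\tt M} and the first {\tt T}) to attach {\tt M} to the \emph{second} {\tt ST} pair, forming a virtual \FESC\ in the sense of the remark after Lemma~\ref{lem:TB}; one applies that lemma to obtain a second parallelism $\delta'$; Lemma~\ref{lem:parallelwithsamelabel} then pins down the corner structure of the second {\tt T}; and finally, following the construction in the proof of Lemma~\ref{lem:fesc+1}, one explicitly builds a long disk or lopsided bigon (with careful bookkeeping on $\bdry\nbhd(K)$) to show no bigon or trigon can sit at position~$6$. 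Your single sentence ``the same style of argument\ldots forces position~$6$ to be a true gap'' supplies none of this.
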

\begin{proof}
Given {\tt MSTB} at a vertex, $\ttB = \ttS$ since otherwise {\tt MST} would form a type II \FESC\ giving a contradiction to Lemma~\ref{lem:fesc+1}.  We cannot have {\tt MSTSB} since this contradicts Lemma~\ref{lem:dbft2}.  Thus we have either {\tt MSTSG} or {\tt MSTST}.  We continue to examine the latter.

\begin{figure}[h]
\centering
\input{MSTST.pstex_t}
\caption{}
\label{fig:MSTST}
\end{figure}
Since the initial {\tt MST} forms a type I \FESC, we have a parallelism 
$\delta$ on $G_F$  between the leftmost edge of the \ttM\ and the rightmost edge of the \ttT.  We may use this to attach the \ttM\ to the \ttS\ of the subsequent {\tt ST} to form a \FESC.  This is illustrated in Figure~\ref{fig:MSTST} (without loss of generality we may use the labeling shown).  By the proof of Lemma~\ref{lem:TB}
(see the remark there) there is a parallelism $\delta'$ of the $\edge{23}$-edge of the \ttM\  to a $\edge{23}$-edge of the \ttT.  Lemma~\ref{lem:parallelwithsamelabel} forces this $\edge{23}$-edge of \ttT\ to not be incident to the vertex and thus the unlabeled corner in Figure~\ref{fig:MSTST} is a $\arc{12}$-corner.  Following the proof of Lemma~\ref{lem:fesc+1} we may build either a long disk or lopsided bigon as shown in Figure~\ref{fig:MSTST+bigonortrigon}.  
\begin{figure}[h]
\centering
\input{MSTST+bigonortrigon.pstex_t}
\caption{}
\label{fig:MSTST+bigonortrigon}
\end{figure}
(Note that the regions $\rho_{341}$ and $\rho_{412}$ in $\bdry \nbhd(K)$ must be as in Figure~\ref{fig:MSTST+knotbdry}(a) and not (b).  The corner $x$ is labeled in each; in (b) two continue into $\delta'$ contrary to Lemma~\ref{lem:parallelwithsamelabel}.)  Therefore there cannot be a bigon or trigon incident to the $\edge{12}$-edge of this \ttT.  
\begin{figure}[h]
\centering
\input{MSTST+knotbdry.pstex_t}
\caption{}
\label{fig:MSTST+knotbdry}
\end{figure}
Hence if we have {\tt MSTST}, we then have {\tt MSTSTG}.

\begin{remark}
From the point of view of the remarks in the proof of Lemma~\ref{lem:fesc+1},
the White bigon or trigon constructed is the same as there, the difference
is in the construction of the Black bridge disk where the parallelism
(here given by $\delta$) is used to modify the bridge disk on the Black side
to line up
with the White bigon, trigon along an edge.
\end{remark}

\end{proof}

\begin{lemma}
\label{lem:L5}
 {\tt MSTTSM} cannot occur.  
\end{lemma}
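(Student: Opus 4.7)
The configuration {\tt MSTTSM} contains two overlapping copies of {\tt MST}: the leftmost three faces form one \FESC, and the rightmost three faces read from right to left form another. In both, the central trigon is adjacent to the other central trigon rather than to a true gap. By Lemma~\ref{lem:fesc+1} this forces each \FESC\ to be type I, so by Lemma~\ref{lem:TB} each supplies a parallelism on $G_F$: an outer edge of the {\tt M} is parallel to the corresponding edge of the {\tt T}. Let $\delta_L$ and $\delta_R$ denote these parallelisms.

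My plan is to combine these two parallelisms with the bridge-disk construction from the proof of Lemma~\ref{lem:btfsc}, in the same spirit as the proof of Lemma~\ref{lem:L1}. The left type I \FESC\ produces a Black bridge disk $D_L$ (built from the left {\tt M} and the central flanking bigon of the left \SC) which, after modification using $\delta_L$, agrees along one boundary arc with the White {\tt S}-face on the left. Similarly, the right \FESC\ produces a Black bridge disk $D_R$ matching the White {\tt S}-face on the right along one boundary arc. The two trigons at positions $3$ and $4$ then play the role of the ``fork'' extensions bridging the two \FESC s. Banding $D_L$ and $D_R$ across the two central trigons and the rectangles on $\bdry\nbhd(K)$ between them, in the manner of Figures~\ref{fig:fesc+bigon}, \ref{fig:fesc+trigon}, and \ref{fig:MSTST+bigonortrigon}, produces either an embedded long disk or a lopsided bigon. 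Such a disk guides an isotopy of an arc of $K$ onto $\hatF$, yielding a thinner presentation of $K$ with respect to some genus two Heegaard splitting of $M$. This contradicts the assumed minimality of the current bridge position and hence rules out {\tt MSTTSM}.

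The main obstacle is a careful bookkeeping of the edges and labels at the vertex $x$ where the construction is performed. Since the two central trigons occupy consecutive label positions around $x$, one must track how $\delta_L$ lines up with the $\arc{14}$-type edge of the left trigon, and how $\delta_R$ lines up with the analogous edge of the right trigon, in order to guarantee that the resulting disk is embedded (or at worst lopsided) and that no two of the edges used in the banding are parallel on $G_F$ while sharing a label at $x$ (which would contradict Lemma~\ref{lem:parallelwithsamelabel}). A short case analysis -- symmetric under reversing the configuration -- handles the possible placements of the corners of the two middle trigons, just as in the analysis following Figure~\ref{fig:MSTST+knotbdry} in the proof of Lemma~\ref{lem:L1}. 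In each case either the long-disk/lopsided-bigon construction succeeds and yields the thinning, or the case is eliminated directly by Lemma~\ref{lem:parallelwithsamelabel} or by the non-existence of an embedded projective plane or Klein bottle in $M$ (since $\Delta\ge 3$).
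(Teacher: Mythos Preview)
Your opening move is exactly right: the contrapositive of Lemma~\ref{lem:fesc+1} forces each of the two \FESC s to be type~I.  But you stop one step short of the contradiction and then head off on a long, unnecessary detour.  The point the paper makes (in one sentence) is that \emph{they cannot both be type~I}.  Look at the single edge $e_3$ shared by the two central trigons at $x$; it sits at label~$4$ of $x$, so it is either a $\edge{14}$-edge or a $\edge{34}$-edge.  A short label chase (using that the corner of the left trigon at $x$ is $\arc{34}$ and that of the right trigon is $\arc{41}$) shows: the left \FESC\ is type~I iff $e_3$ is a $\edge{14}$-edge, while the right \FESC\ is type~I iff $e_3$ is a $\edge{34}$-edge.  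These are mutually exclusive, so exactly one \FESC\ is type~II, and Lemma~\ref{lem:fesc+1} finishes the job immediately.  That is the whole proof.

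Your proposed construction, besides being unnecessary, has a structural problem.  The two \FESC s lie on opposite colors: the left one has a White \SC\ and Black flanking faces, while the right one has a Black \SC\ and White flanking faces.  Consequently the bridge disks $D_L$ and $D_R$ you build via Lemma~\ref{lem:btfsc} live in \emph{different} handlebodies ($H_B$ and $H_W$ respectively), and the two central trigons are likewise of opposite colors.  ``Banding $D_L$ and $D_R$ across the two central trigons'' therefore does not produce an embedded disk in either handlebody, and the long-disk/lopsided-bigon machinery of Lemmas~\ref{lem:fesc+1} and~\ref{lem:L1} does not apply in the way you suggest.  The case analysis you defer to is not ``short''; it is not well-posed.
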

\begin{proof}
Either {\tt MST} or {\tt TSM} must be a type II \FESC.  Since the trigon is not adjacent to a gap, this is forbidden by Lemma~\ref{lem:fesc+1}.
\end{proof}

\begin{lemma}
\label{lem:L6}
At a vertex of type $[7,4]$, if no {\tt TT} and no {\tt BBB} then {\tt MST} $\implies$ {\tt BTMST}$\left\{\begin{array}{l} {\tt G}\\ {\tt SG}\\ {\tt STG} \end{array}\right\}$.
\end{lemma}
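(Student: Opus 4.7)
The plan is to reduce the lemma to two essentially independent assertions about the two sides of the given \texttt{MST}: the right side is forced to end in a true gap by Lemma~\ref{lem:L1}, while the left side is forced to begin with \texttt{BT} by a counting argument using the fact that type $[7,4]$ permits at most one true gap.

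First I would handle the right side. Say \texttt{M} sits at corner $i$, \texttt{S} at $i{+}1$, \texttt{T} at $i{+}2$. The corner at $i{+}3$ cannot be a trigon (no \texttt{TT}); if it is a true gap, we already have \texttt{MSTG}, and if it is a bigon, Lemma~\ref{lem:L1} immediately promotes the configuration to \texttt{MSTSG} or \texttt{MSTSTG}. Either way, the right half of the conclusion is obtained, and crucially the right extension consumes exactly one true gap.

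Next I would handle the left side by counting. A type $[7,4]$ vertex has $\phi_2=7$ and $\phi_3\ge 4$, so with $\Delta t = 12$ the number of true gaps is $12 - 7 - \phi_3 \le 1$. Since the right-side step above has used a true gap, no true gap remains on the left. The corner at $i{-}1$ cannot be a bigon, because together with the bigons \texttt{M} and \texttt{S} at $i,i{+}1$ it would produce three consecutive bigons, violating no \texttt{BBB}; and it cannot be a true gap by the count just made; and it is not \texttt{T} that would itself need to be adjacent to the \texttt{T} at $i{+}2$ through \texttt{M},\texttt{S} --- it is simply a trigon (which is fine). So the corner $i{-}1$ is \texttt{T}. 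Then the corner at $i{-}2$ cannot be \texttt{T} (no \texttt{TT}) and cannot be a true gap (uniqueness), so it must be a bigon, yielding \texttt{BTMST} on the left.

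The main obstacle I anticipate is not any single geometric difficulty --- the argument is purely combinatorial once Lemma~\ref{lem:L1} is in hand --- but rather making sure the one-true-gap count is applied correctly and in the right order. In particular, one must first extract the true gap from the right extension (via Lemma~\ref{lem:L1}) before using the uniqueness of true gaps to exclude \texttt{GMST} and \texttt{GTMST} on the left. A minor auxiliary observation, worth recording, is that the right-end \texttt{G} and a hypothetical left-end \texttt{G} can never coincide cyclically: even the longest conclusion \texttt{BTMSTSTG} occupies only $8<12$ consecutive corners, so the two \texttt{G}'s would be distinct true gaps, contradicting the count. With this in place, the lemma follows with no further case analysis.
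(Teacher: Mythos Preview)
Your proof is correct and follows essentially the same approach as the paper: first extend to the right using no \texttt{TT} and Lemma~\ref{lem:L1} to produce a true gap, then use the fact that a type $[7,4]$ vertex has at most one true gap together with no \texttt{BBB} and no \texttt{TT} to force \texttt{BT} on the left. Your version is simply more explicit (including the cyclic wrap-around check), but the logic is identical to the paper's.
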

\begin{proof}
First consider the faces to the right of {\tt MST}.
Since no {\tt TT}, we must have either {\tt MSTG} or {\tt MSTB}.  For the latter, Lemma~\ref{lem:L1} gives {\tt MSTSG} or {\tt MSTSTG}.  Now since there is only one true gap at this vertex and having no {\tt TT} and no {\tt BBB} implies {\tt BT} must be to the left of {\tt MST}.  
\end{proof}

\subsection{More with \FESC:  Configurations {\tt SMST} and {\tt MSTS}}

\begin{lemma} \label{lem:SMST}
Assume there is an {\tt SMST} configuration incident to vertex $x$ for
which the {\tt MST} is a type I \FESC. WLOG
assume the bigon Scharlemann cycles of this configuration are on the White side.
Then any Black mixed bigon, $f$, must have an edge that is 
parallel on $G_F$ to
an edge in the {\tt MST} subconfiguration (the \FESC). 
Furthermore, if that edge is parallel to an edge of the {\tt M} in the
{\tt MST}, then $f$ is parallel to the \ttM.
In particular,
there is at most one more Black mixed bigon incident to $x$, other than $f$
and that in the given \FESC.

\end{lemma}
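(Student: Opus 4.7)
The plan is to exploit the 3-punctured sphere in $\hatF$ formed by a neighborhood of the edges of the \FESC\ MST, combined with the extra \mobius band $A_1\subset H_W$ arising from the leftmost \SC\ $S_1$. I would first fix notation, taking the SMST at $x$ to lie on corners $\arc{12},\arc{23},\arc{34},\arc{41}$ so that $S_1$ is the $\arc{12}$-\SC\ (with \mobius band $A_1$), the central \SC\ of the \FESC\ is the $\arc{34}$-\SC\ (call it $S_2$, with \mobius band $A_2\subset H_W$), and M, T are the Black flanking bigon and trigon of the \FESC\ on corners $\arc{23}$ and $\arc{41}$. By Lemma~\ref{lem:TB} the two outer edges of the \FESC\ are parallel on $G_F$, bounding a disk $\delta$ on $\hatF$; together with the other \FESC\ edges they cut out a 3-punctured sphere $P'\subset\hatF$ whose complement $P=\hatF\cut P'$ is also a 3-punctured sphere (using $\chi(\hatF)=-2$). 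Any Black mixed bigon at $x$ is a $\arc{23},\!\arc{41}$-bigon whose edges in $G_F$ are of two specific types forced by the Parity Rule.

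For the first claim, suppose for contradiction that a Black mixed bigon $f$ has no edge parallel on $G_F$ to any edge of the \FESC; then each edge of $f$ either crosses $\partial P'$ or lies entirely in $P$. Combining $f$ with $S_2$ and the arc $\arc{34}$ of $K$ in $H_B$, in the style of the surface construction in the proof of Lemma~\ref{lem:TB}, produces an almost-properly embedded surface $A'$ in $H_B$ whose boundary curves are essential on $\hatF$. A case analysis shows that no component of $\partial A'$ can be isotopic on $\hatF$ to $\partial A_1$ or $\partial A_2$ without producing an embedded Klein bottle or projective plane in $M$, which are precluded since $\Delta\ge 3$. Hence $A_1$, $A_2$, and a \mobius band constituent of $A'$ are three disjoint \mobius bands with pairwise non-isotopic boundaries, and Lemma~\ref{lem:3disjointmobiusbands} yields an embedded Dyck's surface in $M$, contradicting the hypothesis. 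For the second claim, assume $f$ has an edge parallel on $G_F$ to an edge of \ttM; banding $f$ to \ttM along this parallelism as in the proofs of Lemma~\ref{lem:TB} and Lemma~\ref{lem:btfsc} yields a disk $D$ properly embedded in $H_B - \nbhd(K)$ whose boundary is the union of the two remaining edges of $f$ and \ttM\ together with two arcs on $\partial\nbhd(K)$. By Lemma~\ref{lem:parallelwithsamelabel} these remaining edges cannot share a label at a common vertex, so $\partial D$ is essential on $\hatF-\nbhd(K)$, and $D$ exhibits $f$ and \ttM\ as cobounding an embedded product region in $H_B-\nbhd(K)$; this is the definition of $f$ being parallel to \ttM\ in Section~\ref{sec:fatgraphs}.

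For the final claim, any Black mixed bigon $f'$ at $x$ distinct from \ttM\ must, by (1) and (2) applied to $f'$, either be parallel as a face to \ttM\ or have an edge parallel on $G_F$ to an edge of \ttT. In the first case all such $f'$ are mutually parallel to \ttM; in the second the parallelism to a fixed edge of \ttT\ pins $f'$. In either situation Lemma~\ref{lem:parallelwithsamelabel} bounds the count, since too many parallel copies would produce two edges of $G_F$ sharing a label at a common vertex of $G_F$. A careful count allows at most one such $f'$ beyond \ttM\ and $f$. The main obstacle will be the Dyck's-surface construction in the first claim: one must track carefully the isotopy classes of the candidate \mobius band boundaries on $\hatF$ and verify that, after the exclusions of Klein-bottle and projective-plane degenerations via $\Delta \geq 3$, enough disjoint, essential, non-isotopic \mobius bands remain to invoke Lemma~\ref{lem:3disjointmobiusbands}.
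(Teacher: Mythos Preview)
Your proposal has genuine gaps in each of the three parts, and the overall strategy diverges from what actually works.

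\textbf{First claim.} Your construction ``combining $f$ with $S_2$ and the arc $\arc{34}$ of $K$ in $H_B$'' is incoherent: in your labeling the \SC\ $S_2$ is White and $\arc{34}$ is a White arc, so neither lies in $H_B$ with the Black bigon $f$. More seriously, even granting some annulus $A'$ built from $f$ and the Black faces, a mixed bigon does not produce a \mobius band, so there is no ``\mobius band constituent of $A'$'' to feed into Lemma~\ref{lem:3disjointmobiusbands}. The paper's route is quite different: it first establishes (via Lemma~\ref{lem:btfsc} and an analogous argument for the other \SC) that \emph{neither} $\bdry A_{23}$ nor $\bdry A_{41}$ is primitive in $H_B$. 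This non-primitivity is the engine of the whole proof and you never mention it. The paper then forms the Black annulus $A_{12,34}$ from the \ttM\ together with $f$, builds the bridge disk $\Delta_{12}$ of Lemma~\ref{lem:btfsc}, $\bdry$-compresses $A_{12,34}$ along it to obtain a disk $D\subset H_B$, and does a case analysis on how $\bdry D$ meets $\bdry A_{23}$. One case contradicts non-primitivity directly; the other requires further work (Dyck's surface when $D$ is non-separating, and a Seifert-fibered argument invoking Lemma~\ref{lem:sSFS1bridge} when $D$ is separating).

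\textbf{Second claim.} Your logic is backwards. Banding $f$ to \ttM\ along a shared parallel edge produces a disk $D$, yes, but this does \emph{not} exhibit a product region between $f$ and \ttM\ --- that would require the \emph{other} pair of edges to be parallel too. The paper's argument is by contradiction: if $f$ shares one parallel edge with \ttM\ but is not parallel to \ttM, the banded disk has nontrivial boundary on $\hatF$ meeting $\bdry A_{23}\cup\bdry A_{41}$ at most once, forcing either a Klein bottle in $M$ or primitivity of one of $\bdry A_{23},\bdry A_{41}$ --- both already ruled out.

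\textbf{Final claim.} The paper's count is sharper than yours: once the first two parts are established, any Black mixed bigon at $x$ that is not parallel to \ttM\ must have its $\edge{23}$-edge parallel to the specific $\edge{23}$-edge of the \ttT\ not shared with \ttM; two such bigons at $x$ would then violate Lemma~\ref{lem:parallelwithsamelabel}. Your sketch conflates several cases and does not isolate this single edge class.
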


\begin{proof}
WLOG we assume the configuration {\tt SMST} and $f$ on $G_Q$ are as in 
Figure~\ref{fig:SMST1}. 

\begin{figure}[h]
\centering
\input{SMST1.pstex_t}
\caption{}
\label{fig:SMST1}
\end{figure}

Let $A_{23}, A_{41}$ be the \mobius bands in $H_W$ gotten from the bigon 
Scharlemann cycles of the {\tt SMST} configuration.
By Lemma~\ref{lem:btfsc}, $\partial A_{23}$ cannot be primitive in $H_B$.
A similar argument shows that $\partial A_{41}$ cannot be primitive: 
Otherwise consider the new genus two Heegaard splitting gotten by 
attaching $\nbhd(A_{41})$ to $H_B$.  
Then constructing the right bridge disks $\Delta_{12}$ and $\Delta_{34}$ as 
in Lemma~\ref{lem:btfsc} (see the left or right of Figure~\ref{fig:SMST2}, ignoring the
$\edge{23}$-edge with $a,b$ endpoints and setting $a=x'',b=y'$ on vertices $4$ and $1$), one sees that  
the $\arc{12}$-arc and 
$\arc{34}$-arc of $K$ can be isotoped (rel endpoints) to arcs on $\hatF$ that 
are incident to $\partial A_{41}$ on the same side (and otherwise disjoint
from it). We then get a $1$-bridge
presentation of $K$ with respect to the new splitting by 
isotoping it to a $\arc{12341}$-arc and an arc which
is a cocore of $\nbhd(\partial A_{41})$ --- a contradiction. 

We assume for contradiction
that neither edge of $f$ is parallel on $G_F$ to an edge of $f_2,f_3,f_4$.  Applying Lemma~\ref{lem:TB} to the \FESC, 
the edges of $f$ and of the \FESC\ 
must appear on $G_F$ as in one of the two configurations of 
Figure~\ref{fig:SMST2}.

\begin{figure}[h]
\centering
\input{SMST2.pstex_t}
\caption{}
\label{fig:SMST2}
\end{figure}

Let $A_{12,34}$ be the annulus gotten from the union of $f_2$ and $f$.
Since no two of the edges of these faces are parallel on $G_F$, 
each component of $\bdry A_{12,34}$ is essential in $\hatF$.  Furthermore,
$A_{12,34}$ must be incompressible in $H_B$, otherwise we get a Black disk that either makes $\partial A_{23}$ primitive in $H_B$ or compresses
$\hatF$ to induce the formation of a Klein bottle in $M$ from $A_{23},A_{41}$.

As in the proof of Lemma~\ref{lem:btfsc}, we construct a thinning disk
$\Delta_{12}$ from $f_2, f_4$. 
$\partial$-compressing
$A_{12,34}$ along $\Delta_{12}$, we get a Black disk, $D$, with the boundary
as in Figure~\ref{fig:SMST3}. In Case (A) of that figure, $\partial D$ 
intersects $\partial A_{23}$ once, implying that $\partial A_{23}$ is primitive.

\begin{figure}[h]
\centering
\input{SMST3.pstex_t}
\caption{}
\label{fig:SMST3}
\end{figure}

Thus we assume we are in Case (B), where $\partial D$ intersects 
$\partial A_{23}$ algebraically zero times and geometrically twice. If 
$D$ is non-separating, then we can construct a Dyck's surface in $M$ by
attaching to $A_{23}$ the once-punctured torus or Klein bottle in $H_B$ 
pictured in Figure~\ref{fig:SMST4}.

\begin{figure}[h]
\centering
\input{SMST4.pstex_t}
\caption{}
\label{fig:SMST4}
\end{figure}

Thus we may assume $D$ is separating in $H_B$. 
As $D$ is homologous to $A_{12,34}$, this annulus must be separating in $H_B$. 
Let $B$ be the annulus bounded by $\partial A_{12,34}$ on $\hatF$. 
Note that $\partial D$ is not trivial in $\hatF$, for if so an edge of $f$ would be parallel on $G_F$ to one of the edges of $f_2$ or $f_3$ contrary to assumption. 
Thus $A_{12,34}$
is an incompressible, separating annulus in $H_B$. Note that if $A_{12,34}$ is 
parallel to $\partial H_B$, 
then each component of $\partial A_{12,34}$ is
primitive in $H_B$. Let $P$ be the 4-punctured
sphere that is the union in $\hatF$ of the edges of $f,f_2,f_3,f_4$, the
fat vertices of $G_F$, and the disk of parallelism on $G_F$ between the
$\edge{41}$-edges of $f_2$ and $f_4$. Then the closure of $\hatF - P$ is
two annuli, one of which is $B$. Call the other $B'$. See Figure~\ref{fig:SMST5}.

\begin{figure}[h]
\centering
\input{SMST5.pstex_t}
\caption{}
\label{fig:SMST5}
\end{figure}

\begin{claim}  
Let $e$ be the $\edge{41}$-edge that $f_1$ does not share with $f_2$.
Then $e$ on $G_F$ is either (i) the dotted line in Figure~\ref{fig:SMST5}, or
(ii) parallel to the $\edge{41}$-edge of $f$.
\end{claim}

\begin{proof}
If $e$ lies in $B$ on $G_F$ then it isotopic into $\partial B$ and hence
is parallel to either the $\edge{41}$-edge
of $f_2$ or $f$. The former cannot occur else $M$ has a lens space summand, the
latter is conclusion (ii).
If $e$ lies in $B'$ then it is isotopic into $\partial B'$ and hence either is
parallel to the $\edge{41}$-edge of 
$f$ yielding conclusion (ii), is
parallel to the $\edge{41}$-edge of $f_2$ (a contradiction as above), is parallel
to the dotted edge in Figure~\ref{fig:SMST5} yielding conclusion (i), or is such that $\partial A_{41}$
would be isotopic on $\hatF$ to $\partial A_{23}$ giving a Klein bottle in $M$.
\end{proof}

Assume $e$ is as in (i) of the Claim. Then $A_{41}, A_{12,34}$ and $D$ can be 
perturbed to be disjoint with boundaries as indicated in Figure~\ref{fig:SMST6}
(by forming these with the given faces and the appropriate rectangles along
$\partial \nbhd(K)$). $D$ divides $H_B$ into two solid tori $\calT \cup \calT'$
where $\partial T$ contains $\partial A_{41}$. Since $\partial A_{41}$ is not
primitive in $H_B$, it is not longitudinal in $\calT$. Let $\calN = \calT \cup
\nbhd(A_{41})$. Then $\calN$ is a Seifert fiber space over the disk with two
exceptional fibers. A close look at Figure~\ref{fig:SMST6} shows that we can perturb
$K$ so that $K \cap \calN$ is a single arc, $\eta$, (basically 
the $\arc{41}$-arc) which
is isotopic to the cocore of the \mobius band $A_{41}$.  
 Lemma~\ref{lem:sSFS1bridge} now produces a genus $2$ Heegaard splitting of $M$ in which $K$ is $0$-bridge, a contradiction.

\begin{figure}[h]
\centering
\input{SMST6.pstex_t}
\caption{}
\label{fig:SMST6}
\end{figure}

So assume $e$ is as in conclusion (ii) of the Claim. 
As $\partial A_{41}$ is isotopic to a component of $\partial A_{12,34}$, and
$\partial A_{41}$ is not primitive in $H_B$, $A_{12,34}$ is not parallel
into $\hatF$.
We can enlarge the annulus $B$ slightly in $\hatF$
so that it contains $\partial A_{41}$. Let $\calT$ be the solid torus bounded
by $B \cup A_{12,34}$ in $H_B$. Then $\calN = \nbhd(\calT \cup A_{41})$ is
a Seifert fiber space over the disk with two exceptional fibers. 
$K \cap \calN$
is a single arc which is a cocore of a properly embedded \mobius band,
$A_{12,34} \cup A_{41}$, 
in $\calN$.  Lemma~\ref{lem:sSFS1bridge} now applies to produce a genus $2$ Heegaard splitting of $M$ in which $K$ is $0$-bridge, a contradiction.

This last contradiction proves the first conclusion of the Lemma, 
that some edge of $f$ must be parallel in $G_F$ to an edge of $f_2,f_3,f_4$.
Furthermore, if one edge of $f$ is parallel to an edge of $f_2$, then, 
in fact,
$f$ is parallel to $f_2$. For otherwise,
banding $f$ and $f_2$ together along these parallel edges, and perturbing
slightly gives a disk
in $H_B$ whose non-trivial boundary intersects $\partial A_{23} \cup 
\partial A_{41}$ at most once. 
If this disk is disjoint from $\partial A_{23} \cup 
\partial A_{41}$, and the boundary of the disk is non-separating in $\hatF$,
then $\partial A_{23}$ and $\partial A_{41}$ will
be isotopic in $\hatF$ surgered along this disk, and
$M$ contains a Klein bottle. If disjoint and the boundary of the disk
is separating, then one of $\bdry A_{23}, \bdry A_{41}$ must be primitive in $H_B$ since $M$ is irreducible, atoroidal, and the Heegaard
splitting is strongly irreducible (Lemma~\ref{lem:AEGor}). If the disk intersects $\partial A_{23} \cup
\partial A_{41}$ once, then one of these \mobius bands will have primitive boundary in $H_B$.

Finally, assume $f$ is incident to vertex $x$.  Then Lemma~\ref{lem:parallelwithsamelabel} says that $f$ cannot be
parallel to $f_2$ (both $\edge{41}$-edges
of the \FESC\ are parallel on $G_F$).  Thus the
$\edge{23}$-edge of $f$ must be parallel in $G_F$ with the $\edge{23}$-edge
of $f_3$ that is not shared with $f_2$. Applying this argument to 
another mixed black bigon incident to
vertex $x$, will then contradict Lemma~\ref{lem:parallelwithsamelabel}.
\end{proof}

\begin{lemma} \label{lem:MSTS}
Assume there is an {\tt MSTS} configuration incident to vertex $x$. WLOG
assume the bigon Scharlemann cycles of this configuration are on the White side.
Then any Black mixed bigon, $f$, must have an edge which is
parallel on $G_F$ to
an edge in the {\tt MST} subconfiguration (the \FESC).
Furthermore, if that edge is parallel to an edge of the {\tt M} in the
{\tt MST}, then $f$ is parallel to $M$.
In particular,
there is at most one more Black mixed bigon incident to $x$.
\end{lemma}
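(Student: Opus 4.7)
The plan is to adapt the proof of Lemma~\ref{lem:SMST} almost verbatim, exploiting the evident symmetry between the configurations {\tt SMST} and {\tt MSTS}. In both cases we have a type I \FESC\ (the {\tt MST} subconfiguration) together with an additional White \SC\ disjoint from its core on the boundary of the $4$-punctured neighborhood. Call the core Scharlemann cycle of the \FESC\ the $\arc{23}$-\SC\ and let $A_{23}$ be its White \mobius band; let $A_{12,34}$ be the Black annulus obtained from its two flanking bigons, so that $A_{23}\cup A_{12,34}$ is a long \mobius band given by Lemma~\ref{lem:TB}. Let $A'$ be the White \mobius band from the trailing {\tt S} in {\tt MSTS}; its boundary will play the role that $\bdry A_{41}$ played in the {\tt SMST} argument.

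First I would verify that neither $\bdry A_{23}$ nor $\bdry A'$ is primitive in $H_B$. For $\bdry A_{23}$ this is Lemma~\ref{lem:btfsc} applied to the \FESC\ {\tt MST}, since primitivity would produce a $1$-bridge presentation of $K$ and contradict $t=4$. For $\bdry A'$, the same conclusion follows by attaching $\nbhd(A')$ to $H_B$ to form a new genus $2$ splitting and constructing bridge disks $\Delta_{12},\Delta_{34}$ from the two Black faces of the \FESC\ (analogous to the construction in Lemma~\ref{lem:btfsc}); the parallelism guaranteed by Lemma~\ref{lem:TB} lets one arrange these disks so that $\arc{12}$ and $\arc{34}$ are isotoped to arcs on $\hatF$ incident to the same side of $\bdry A'$, producing a $1$-bridge presentation with respect to the new splitting.

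Next, assume for contradiction that a Black mixed bigon $f$ has no edge parallel on $G_F$ to any edge of the \FESC. Paralleling Lemma~\ref{lem:SMST}, form the Black annulus $B := f \cup f_2$ (where $f_2$ is the unique Black mixed bigon of the \FESC) and $\bdry$-compress it in $H_B$ using a thinning disk built from $f_2$ and the Black trigon of the \FESC; this produces a Black disk $D$ whose boundary on $\hatF$ matches the two cases of Figure~\ref{fig:SMST3}. In Case (A) $\bdry D$ meets $\bdry A_{23}$ once, making $\bdry A_{23}$ primitive and contradicting the previous paragraph. In Case (B) $\bdry D$ meets $\bdry A_{23}$ geometrically twice but algebraically zero times; if $D$ is non-separating one constructs an embedded Dyck's surface from $A_{23}$ and a once-punctured torus or Klein bottle in $H_B$ (exactly as in Figure~\ref{fig:SMST4}), contradicting the standing assumption that $M$ contains no Dyck's surface. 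If $D$ is separating, then $B$ is an incompressible separating annulus in $H_B$, and the position of the $\edge{41}$-edge of $f_1$ on $G_F$ is forced to be one of the two subcases of the Claim in Lemma~\ref{lem:SMST} (with $A'$ in place of $A_{41}$); in either subcase $\nbhd(\calT\cup A')$ is a Seifert fiber space over the disk with two exceptional fibers meeting $K$ in a single arc that is a cocore of a properly embedded \mobius band, so Lemma~\ref{lem:sSFS1bridge} produces a genus $2$ Heegaard splitting of $M$ with respect to which $K$ is $0$-bridge --- the desired contradiction.

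Finally, the second sentence (parallelism to $f_2$ forces $f$ to be parallel to $f_2$) and the ``at most one more'' clause follow from the banding argument at the end of Lemma~\ref{lem:SMST}: banding $f$ to $f_2$ along parallel edges produces a Black disk whose boundary meets $\bdry A_{23}\cup\bdry A'$ at most once, which via Lemma~\ref{lem:AEGor} forces a Klein bottle or a primitivity that has already been ruled out, and then Lemma~\ref{lem:parallelwithsamelabel} limits the number of such $f$ at $x$. The main obstacle will be confirming that the labeling of edges around $x$ in {\tt MSTS} matches Figure~\ref{fig:SMST1} closely enough that the disk $D$ constructed by $\bdry$-compression truly separates $\bdry A_{23}$ from $\bdry A'$ rather than from $\bdry A_{41}$; this is a bookkeeping exercise using the Parity Rule together with Lemma~\ref{lem:TB}, but it is the one place where the {\tt SMST}-to-{\tt MSTS} translation is not purely formal.
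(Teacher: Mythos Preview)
Your approach is correct and is exactly the paper's: both reduce to the proof of Lemma~\ref{lem:SMST} by observing that in {\tt MSTS} the \FESC\ is automatically type~I at $x$ (since its trigon is adjacent to a bigon, ruling out type~II by Lemma~\ref{lem:fesc+1}) and that, via the $\edge{14}$-edge parallelism of Lemma~\ref{lem:TB}, the trailing {\tt S} has one edge parallel to the outer edge of the {\tt M}, so it plays precisely the role of $f_1$ in Figure~\ref{fig:SMST1}. Your early phrase ``Black annulus obtained from its two flanking bigons'' is a slip---the \FESC\ has one bigon and one trigon, and Lemma~\ref{lem:TB} gives an edge parallelism, not a long \mobius band---but you correct this when you later set $B=f\cup f_2$.
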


\begin{proof}
This is the same as the proof for Lemma~\ref{lem:SMST}. Note that the
\FESC\ is of type I at $x$ and in both contexts one edge of the additional
White \SC\ has an edge parallel to both the {\tt M} and {\tt T} in the
\FESC, {\tt MST}.
\end{proof}

\begin{lemma}
\label{lem:nfsc}
Assume $\Lambda$ contains a \FESC\ and an \SC\ on the side of $\hatF$ opposite to that of the \SC\ in the \FESC, then the 
corresponding \mobius bands can be perturbed to be disjoint. 

That is, WLOG assume we have 
the configurations of Figure~\ref{fig:nfsc} where one of $f_1, f_3$ is a bigon and the other\
is a trigon and where $f_4$ is a Black \SC\ ($f_4$ could equally well be a $\arc{12}$-\SC). 
Let $A_{23},A_{34}$ be the \mobius bands corresponding
to $f_2,f_4$. If $\bdry A_{23}$ and $\bdry A_{34}$ intersect transversely once, then $K$ is $1$-bridge
with respect to a genus two Heegaard splitting of $M$.
\end{lemma}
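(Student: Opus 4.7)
The plan is to reduce Lemma~\ref{lem:nfsc} to Lemma~\ref{lem:btfsc} by establishing that $\partial A_{23}$ is primitive in $H_B$; once that is shown, Lemma~\ref{lem:btfsc} applied to the \FESC\ containing $f_2$ directly produces a genus $2$ Heegaard splitting of $M$ with respect to which $K$ is $1$-bridge. Observe first that the hypothesis $|\partial A_{23}\cap\partial A_{34}|=1$ (transverse) forces both curves to be non-separating on $\hatF$ and to satisfy $[\partial A_{23}]\cdot[\partial A_{34}]=\pm 1$ in $H_1(\hatF)$; this algebraic intersection is what I will convert into primitivity.

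To produce a meridian disk of $H_B$ meeting $\partial A_{23}$ in one point, I would work with the Black \mobius band $A_{34}$. Since we are in \conditionI, $A_{34}$ is properly embedded in $H_B$ and incompressible, so it admits a $\partial$-compressing disk $d_{34}\subset H_B$ with $\partial d_{34}=\kappa_{34}\cup\beta_{34}$, where $\kappa_{34}$ is a spanning arc of $A_{34}$ and $\beta_{34}\subset\hatF$. The $\partial$-compression of $A_{34}$ along $d_{34}$ yields a meridian disk $D_{34}$ of $H_B$. Unfolding $A_{34}$ along $\kappa_{34}$ into a rectangle and attaching two parallel copies of $d_{34}$ to the two sides of the cut shows that $\partial D_{34}$, as a curve on $\hatF$, consists of one full traverse of $\partial A_{34}$ together with two copies of $\beta_{34}$ that carry opposite orientations. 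The two copies of $\beta_{34}$ therefore cancel in $H_1(\hatF)$, so $[\partial D_{34}]=[\partial A_{34}]$ and hence $[\partial D_{34}]\cdot[\partial A_{23}]=\pm 1$. By the bigon criterion for essential simple closed curves on $\hatF$, an isotopy of $\partial D_{34}$ on $\hatF$, realized by ambient isotopy of $D_{34}$ in $H_B$ (using that handlebody isotopies extend), reduces $|\partial D_{34}\cap\partial A_{23}|$ to~$1$. This exhibits $\partial A_{23}$ as primitive in $H_B$, and Lemma~\ref{lem:btfsc} completes the argument.

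The principal technical hurdle is the orientation count that makes the two copies of $\beta_{34}$ in $\partial D_{34}$ cancel. The cleanest treatment is through the rectangle model: cutting the \mobius band $A_{34}$ along the spanning arc $\kappa_{34}$ produces an oriented disk whose boundary traces $\partial A_{34}$ once together with the two edges $\kappa_{34}^{+}$ and $\kappa_{34}^{-}$ of the cut, and the non-orientable twist of $A_{34}$ forces these two edges to be traversed in opposite directions. The $\partial$-compression then replaces each $\kappa_{34}^{\pm}$ by a parallel copy of $\beta_{34}$ with the inherited orientation, producing the required homological cancellation. A minor secondary concern is that $\beta_{34}$ may initially cross $\partial A_{23}$; this does not affect the homological calculation, and any resulting excess geometric intersections of $\partial D_{34}$ with $\partial A_{23}$ are eliminated by the bigon argument since the algebraic intersection is $\pm 1$.
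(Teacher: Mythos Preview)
Your overall strategy---showing that $\partial A_{23}$ is primitive in $H_B$ and then invoking Lemma~\ref{lem:btfsc}---matches the paper's approach exactly. The gap is in your argument for primitivity.

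The homological claim $[\partial D_{34}]=[\partial A_{34}]$ in $H_1(\hatF;\mathbb{Z})$ is false. A quick check: $\partial D_{34}$ bounds a disk in $H_B$, so its image in $H_1(H_B)$ vanishes, whereas $\partial A_{34}$ bounds only a \mobius band, so its image is twice the core class $2[c]\neq 0$ (the core of the standard \mobius band in a genus~$2$ handlebody generates a $\mathbb{Z}$-summand of $H_1(H_B)$). Your orientation analysis is in fact backwards: cutting a \mobius band along a spanning arc and orienting the resulting rectangle, the two copies $\kappa_{34}^{\pm}$ of the cut are traversed in the \emph{same} direction as copies of $\kappa_{34}$ (the half-twist flips one of them back), not opposite---it is the annulus for which they are opposite. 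Consequently the two copies of $\beta_{34}$ in $\partial D_{34}$ add rather than cancel, and $[\partial D_{34}]$ differs from $[\partial A_{34}]$ by an even class. So $[\partial D_{34}]\cdot[\partial A_{23}]$ is only known to be \emph{odd}, not $\pm 1$, and odd algebraic intersection with a single meridian disk does not imply primitivity (a $(3,1)$-curve on one handle of $H_B$ meets a meridian three times and is cabled, not primitive).

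The paper closes this gap by using not an arbitrary $\bdry$-compressing disk but the specific bridge disk $\Delta_{34}$ for $\arc{34}$ furnished by Lemma~\ref{lem:btfsc}, first arranged to be disjoint from both $f_2$ and $f_4$. Because $\Delta_{34}\cap\hatF$ is then disjoint from $\partial A_{23}$, the two extra $\beta$-arcs introduced by the compression contribute no intersections with $\partial A_{23}$, and the resulting Black disk meets $\partial A_{23}$ \emph{geometrically} exactly once. This is where the \FESC\ structure enters essentially: it supplies the controlled bridge disk that a generic homological argument cannot.
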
   
\begin{figure}[h!]
\centering
\input{nfsc.pstex_t}
\caption{}
\label{fig:nfsc}
\end{figure}

\begin{proof}
Without loss of generality assume $f_1$ is a bigon and $f_3$ is a trigon. 
In this proof we consider faces of $G_Q$ as disks properly embedded in
$H_B - \nbhd(K), H_W - \nbhd(K)$. The proof of Lemma~\ref{lem:btfsc} shows that there are thinning disks $\Delta_{34}$, $\Delta_{12}$ for $\arc{34}$, $\arc{12}$ disjoint from $f_2$.

In fact, the thinning disk $\Delta_{34}$ may be chosen to be disjoint from $f_4$ as well as $f_2$:  
Isotop $\bdry \Delta_{34} \cap \nbhd(\arc{34})$ so that it is disjoint from $f_2$ and $f_4$, for example as in Figure~\ref{fig:nfscclaim1}.
\begin{figure}[h!]
\centering
\input{nfscclaim1.pstex_t}
\caption{}
\label{fig:nfscclaim1}
\end{figure}
After surgering $\Delta_{34}$, we may assume it intersects $f_4$ in transverse arcs (i.e.\ from one edge of $f_4$ to the other).  Band an outermost disk of intersection along $f_4$ to give a thinning disk disjoint from both $f_2$ and $f_4$.

Using $\Delta_{34}$ to $\bdry$-compress $A_{34}$ yields a Black disk intersecting $\bdry A_{23}$ 
transversely once. See, for example, Figure~\ref{fig:nfsc2}. Thus $\bdry A_{23}$ is primitive in $H_B$.
Apply Lemma~\ref{lem:btfsc}.
\end{proof}
 
\begin{figure}[h!]
\centering
\input{nfsc2.pstex_t}
\caption{}
\label{fig:nfsc2}
\end{figure}


\begin{lemma} 
\label{lem:NFSC2}
Assume $G_Q$ has a configuration {\tt SMST} where WLOG the \SCs\ are on the
White side. Then $G_Q$ contains no Black \SC.

The same conclusion holds for the configuration {\tt MSTS}.
\end{lemma}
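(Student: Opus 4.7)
The plan is to derive a contradiction from the existence of a Black \SC\ by exhibiting three pairwise disjoint \mobius bands, which by Lemma~\ref{lem:3disjointmobiusbands} would force $M$ to contain a Dyck's surface, contradicting our standing assumption. In the configuration {\tt SMST}, the {\tt MST} subconfiguration is a type~I \FESC; let $A$ denote the White \mobius band associated to its central \SC, and $A'$ the White \mobius band from the leading {\tt S}. These two \mobius bands come from disjoint faces of $G_Q$ properly embedded in $H_W - \nbhd(K)$ and are extended across disjoint arcs of $K$, so in \situationnscc\ they are already disjoint in $H_W$. Assume the Black \SC\ exists and let $B$ be its \mobius band in $H_B$. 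Since $M$ contains neither an embedded Klein bottle nor a projective plane (as $\Delta \geq 3$), the label pair of the Black \SC\ is distinct from both White \SC\ label pairs and shares at most one label with each.

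I would first arrange $B$ disjoint from $A$ using Lemma~\ref{lem:nfsc}: if $\bdry B$ and $\bdry A$ were to intersect transversely once on $\hatF$, the lemma would make $K$ $1$-bridge with respect to a new genus~$2$ Heegaard splitting, contradicting $t = 4$. Hence after a small isotopy $\bdry B$ and $\bdry A$ can be taken disjoint on $\hatF$, making $B$ and $A$ disjoint in $M$. I would then show $B$ and $A'$ are also disjoint: at the unique fat vertex of $G_F$ shared by the two \SCs, the boundary arcs of $\bdry B$ and $\bdry A'$ on the circle $\bdry \nbhd(K) \cap \hatF$ sit on opposite sides of the $K \cap \hatF$ puncture (one arc tracks the Black \SC's arc of $K$, the other the White \SC's), so they can be isotoped to be disjoint on that circle. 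Combined with the disjointness of the face interiors and the handle rectangles in $\nbhd(K)$, this gives $B$ and $A'$ disjoint in $M$.

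With $A$, $A'$, $B$ pairwise disjoint, Lemma~\ref{lem:3disjointmobiusbands}(1) produces a Dyck's surface in $M$, the desired contradiction. The {\tt MSTS} case is handled symmetrically: {\tt MST} is still a type~I \FESC\ to which Lemma~\ref{lem:nfsc} applies, and the trailing {\tt S} plays the role of $A'$.

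The main obstacle is verifying disjointness of $B$ and $A'$ in the case when their label pairs share a single label, since Lemma~\ref{lem:nfsc} does not directly apply ($A'$ is not the \SC\ of a \FESC). The argument reduces to inspecting the cyclic order of corners around the shared fat vertex of $G_F$ and showing that the relevant corner arcs lie on opposite sides of the knot puncture. If this direct approach encounters an unforeseen obstruction, a fallback is to mimic the $\bdry$-compression arguments from the proof of Lemma~\ref{lem:SMST}---using the face of $B$ in place of a Black mixed bigon---to force either a Klein bottle, a primitive boundary (giving $K$ $1$-bridge via Lemma~\ref{lem:btfsc}), or an exceptional Seifert fibration handled by Lemma~\ref{lem:sSFS1bridge}.
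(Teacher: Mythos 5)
Your plan breaks at the step claiming $B$ can be made disjoint from $A'$. Producing three pairwise disjoint \mobius bands is exactly what the lemma cannot assume: if it could always be arranged, that would simply be the Dyck's-surface contradiction, and the real content is handling the case where this disjointness fails. Concretely, at the fat vertex of $G_F$ shared by the label pairs of $B$ and $A'$, the arc of $\bdry B$ runs between the two positions where $B$'s edges are incident, and the arc of $\bdry A'$ between the $A'$-edge positions; whether these arcs cross inside the fat-vertex disk is governed by whether the two pairs of positions interleave cyclically, and nothing in the hypotheses constrains this. That $B$ and $A'$ extend into opposite handlebodies along different arcs of $K$ has no bearing on the cyclic order of their edge endpoints on the vertex boundary, so ``opposite sides of the $K\cap\hatF$ puncture'' is not a valid reading of the local picture.

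The paper's proof goes the other way: after Lemma~\ref{lem:nfsc} makes $\bdry B$ disjoint from $\bdry A$, the no-Dyck's-surface hypothesis forces $\bdry B$ to meet $\bdry A'$ transversely exactly once at the shared vertex. One then follows the $\bdry$-compression of Lemma~\ref{lem:nfsc}: compress the Black \mobius band $B$ along one of the bridge disks $\Delta_{12},\Delta_{34}$ from Lemma~\ref{lem:btfsc}, taken disjoint from the relevant \SC\ faces, to produce a Black disk meeting $\bdry A'$ once, hence $\bdry A'$ is primitive in $H_B$ --- contradicting the non-primitivity of both White \SC\ boundaries established in the proof of Lemma~\ref{lem:SMST}. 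Your fallback paragraph gestures toward this, but it should cite the $\bdry$-compression of Lemma~\ref{lem:nfsc} (compressing $B$ itself), not that of Lemma~\ref{lem:SMST} (which compresses a Black annulus built from mixed bigons), and the punchline is that $\bdry A'$ is forced primitive, not the trichotomy you list.
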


\begin{proof}
WLOG we assume the {\tt SMST} configuration on $G_Q$ is as in 
Figure~\ref{fig:SMST1} (without the face $f$). Assume for contradiction 
that $G_Q$ also contains
a Black \SC, $h$. Denote by $A_{23},A_{41},A_h$, the \mobius bands 
that result from $f_1,f_3,h$ (resp.). 
As argued in Lemma~\ref{lem:SMST}, 
neither $\partial A_{23}$ nor $\partial A_{41}$
can be primitive in $H_B$. By Lemma~\ref{lem:nfsc}, 
$\partial A_h$ can be perturbed
to be disjoint from $\partial A_{23}$. Since $M$ contains no Dyck's surface,
$\partial A_{41}$ must intersect $\partial A_h$ transversely once  
(at either vertex $1$ or $4$). Now follow the argument of Lemma~\ref{lem:nfsc}.
Let $\Delta_{12}, \Delta_{34}$ be the bridge
disks constructed as in Lemma~\ref{lem:btfsc}.
These bridge disks can be taken disjoint from both $f_1$ and $h$. Then
boundary compressing $A_{h}$ along one of these disks gives a disk in
$H_B$ intersecting $\partial A_{41}$ once. But this implies $\partial A_{41}$
is primitive in $H_B$. 

Applying Lemma~\ref{lem:TB}, the same argument shows that configuration 
{\tt MSTS}  where the {\tt S} are White implies there are no Black \ttS.
\end{proof}

\section{Bigons and trigons when $t=4$.}

Throughout this section assume $t=4$, there are no Dyck's surfaces embedded in $M$, and we are in \situationnscc. Recall that an {\it $\arc{ab}$-\SC} is a 
bigon Scharlemann cycle on the labels $a,b$.

\subsection{Embeddings of \SCs\ and mixed trigons in a handlebody.}

\begin{lemma}\label{lem:3SC+1SC+3e}
Given three $\arc{12}$-\SCs, one $\arc{34}$-\SC, and three more $\edge{34}$-edges, then either three of the six $\edge{12}$-edges are parallel in $G_F$ or three of the five $\edge{34}$-edges are parallel in $G_F$.  Furthermore, if the three extra $\edge{34}$-edges form a trigon Scharlemann cycle 
then three $\edge{12}$-edges are parallel.
\end{lemma}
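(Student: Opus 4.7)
The plan is to argue by contradiction, aiming to show that the sheer edge count combined with the genus $2$ topology of $\hatF$ forces one of the claimed parallelisms.

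Suppose no three of the six $\edge{12}$-edges are parallel in $G_F$ and no three of the five $\edge{34}$-edges are parallel. Since each parallelism class can then contain at most two edges, the six $\edge{12}$-edges split into at least three classes, and the five $\edge{34}$-edges split into at least three classes. Pick three mutually non-parallel representatives $e_1,e_2,e_3$ of the first type and $f_1,f_2,f_3$ of the second. Let $N_{12}$ (resp.\ $N_{34}$) be a regular neighborhood in $\hatF$ of vertices $1$ and $2$ (resp.\ $3$ and $4$) together with $e_1,e_2,e_3$ (resp.\ $f_1,f_2,f_3$). Each is a regular neighborhood of a theta graph, so has Euler characteristic $-1$, and being a connected subsurface of the orientable $\hatF$ must be either a thrice-punctured sphere or a once-punctured torus.

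Next I would use the Euler characteristic count $\chi(N_{12})+\chi(N_{34})=-2=\chi(\hatF)$ to conclude that $\hatF\setminus(N_{12}\cup N_{34})$ is a (possibly empty) disjoint union of annuli, so that the boundary circles of the two neighborhoods are paired off by annular regions of $\hatF$. This severely restricts the topological arrangement: up to the two choices for each $N_{ij}$, there are only a handful of ways the two pieces can be glued. I would analyze each case by looking at the \mobius bands $A_1,A_2,A_3$ in the Black handlebody (say) produced by the three $\arc{12}$-\SCs\ and the \mobius band $A_{34}$ produced by the $\arc{34}$-\SC. In every case I would attempt to exhibit three essential disjoint \mobius bands on one side of $\hatF$ (contradicting Lemma~\ref{lem:3disjointmobiusbands}), or an almost properly embedded annulus disjoint from a \mobius band whose boundaries are in disjoint nontrivial isotopy classes (contradicting Lemma~\ref{lem:disjtmobiusannulus}), or a Klein bottle or projective plane in $M$ (forbidden since $\Delta\geq 3$).

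For the ``furthermore'' clause, the three extra $\edge{34}$-edges form the boundary of a trigon disk face of $G_Q$, so together with the $\arc{34}$-arc they yield a Black surface with additional structure beyond what a generic triple of $\edge{34}$-edges would provide; in particular, combined with the bigon $\arc{34}$-\SC\ this extra disk together with the \mobius band from the $\arc{34}$-\SC\ restricts $N_{34}$ strongly enough that the $\edge{34}$ alternative cannot occur, and so the $\edge{12}$ alternative must hold.

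The main obstacle is the case analysis dictated by the four pairings of topological types of $N_{12}$ and $N_{34}$ and by the several ways the complementary annuli can match up boundary circles; each case requires building the right auxiliary surface from the \mobius bands $A_1,A_2,A_3,A_{34}$ and verifying that it violates one of the standing hypotheses. The ``furthermore'' refinement is expected to be the cleanest case, since the trigon supplies an additional disk that rigidifies the neighborhood $N_{34}$.
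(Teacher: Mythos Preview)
Your Euler characteristic setup is correct: the two theta-graph neighborhoods $N_{12}$ and $N_{34}$ are disjoint, each has $\chi=-1$, and the complement in $\hatF$ has $\chi=0$ with no disk components (a disk component would force a parallelism among your chosen representatives).  So the decomposition of $\hatF$ into $N_{12}$, $N_{34}$, and annuli is valid.

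The gap is in the next step.  All four Scharlemann cycles here lie in the \emph{same} handlebody (say $H$), and the three $\arc{12}$--M\"obius bands all contain the arc $\arc{12}$ of $K$, so they are never mutually disjoint.  The only disjoint pair you have is one $A_i$ together with $A_{34}$, which is perfectly allowed in a genus~$2$ handlebody.  Lemma~\ref{lem:3disjointmobiusbands} is about M\"obius bands distributed across \emph{both} handlebodies (and explicitly notes that three disjoint M\"obius bands cannot all be properly embedded in a single genus~$2$ handlebody), so it does not apply.  Lemma~\ref{lem:disjtmobiusannulus} needs a properly embedded annulus with boundary in distinct isotopy classes, and there is no evident way to manufacture one from these faces.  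So the contradictions you plan to invoke are not available from the data at hand, and the case analysis has no clear endpoint.

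The paper's argument is organized entirely inside the handlebody $H$ rather than on $\hatF$.  It splits according to whether $H$ has a compressing disk disjoint from all four \SCs:  if there is a separating such disk, the three $\arc{12}$-\SCs\ lie in one solid torus and are forced to be parallel; a non-separating such disk puts a $\arc{12}$-- and a $\arc{34}$--M\"obius band in a single solid torus, impossible; and if no such disk exists, one shows (by banding \SCs\ along a putative parallelism) that any two $\arc{12}$-\SCs\ with a parallel edge are entirely parallel, pins down the complement of the $\arc{12}$-\SCs\ and $\arc{12}$ in $H$ as solid tori meeting $\hatF$ in annuli, and then the five $\edge{34}$-edges are trapped in a single annulus, forcing three to be parallel.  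The ``furthermore'' is then the observation (Goda--Teragaito) that a $\arc{34}$--\SC\ and a length-$3$ $\arc{34}$--Scharlemann cycle cannot have their edges confined to a single annulus, which rules out the last case and leaves only the one with three parallel $\edge{12}$-edges.  This handlebody/compressing-disk dichotomy is the key idea your surface-level approach is missing.
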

\begin{proof}
Assume these \SCs\ are contained in the handlebody $H$.

If there exists a compressing disk $D$ in $H$ that separates the $\arc{12}$-\SCs\ from the $\arc{34}$-\SC, then in one of the solid tori of $H \cut D$ the three $\arc{12}$-\SCs\ are all parallel.  Hence three of their $\edge{12}$-edges are parallel in $G_F$.  

If there exists a compressing disk $D$ in $H$ disjoint from these Scharlemann cycles that is non-separating, then $H\cut D$ is a solid torus containing a $\arc{12}$-\SC\ and a $\arc{34}$-\SC.  Thus there are two disjoint \mobius bands in this solid torus, a contradiction.   

If no compressing disk of $H$ is disjoint from the $\arc{12}$-\SCs\ and the $\arc{34}$-\SC, then any pair of $\arc{12}$-\SCs\ are either parallel or have no parallel edges (else band two \SCs\ together along parallel edges).  
In particular, only two are parallel.  (If all three $\arc{12}$-\SCs\ were parallel there would be a disk separating them from the $\arc{34}$-\SC.)  The complement in $H$ of these $\arc{12}$-\SCs\ and the $\arc{12}$-arc of $K$ is then one or two solid tori, that meet $\hatF$ in annuli, and a 
ball (the parallelism).  Since the subgraph of $G_F$ consisting of vertices $3$ and $4$ and the five $\edge{34}$-edges must lie in one of the annuli, three $\edge{34}$-edges must be parallel.  

If the three extra $\edge{34}$-edges form a Scharlemann cycle trigon, then we must be in the former case of three parallel $\edge{12}$-edges, as the edges of a $\arc{34}$-\SC\ and a 
$\arc{34}$-Scharlemann cycle trigon cannot lie together in an annulus (e.g.\ Goda-Teragaito \cite{gt:dsokwylsagok}).
\end{proof}

\begin{lemma}\label{lem:4bsc}
Given two $\arc{12}$-\SCs\ and two $\arc{34}$-\SCs\, then either one pair is parallel or each pair has a pair of parallel edges. 
\end{lemma}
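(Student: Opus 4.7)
The four Scharlemann cycles $f_1, f_2$ (on labels $\arc{12}$) and $g_1, g_2$ (on labels $\arc{34}$) are disk faces in a Heegaard handlebody, say $H_B$ (with $\nbhd(K)$ removed), and give rise to four properly embedded \mobius bands $A_1, A_2, B_1, B_2 \subset H_B$. My plan is a case analysis, modeled on the compressing-disk argument of Lemma~\ref{lem:3SC+1SC+3e}, on whether there is a compressing disk of $H_B$ disjoint from all four \SCs\ and on whether it is separating or non-separating.

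First I would handle the case that such a disjoint compressing disk $D$ exists. If $D$ is non-separating, then $H_B \cut D$ is a solid torus $V$ containing the four disjoint \mobius bands. In a solid torus any two disjoint \mobius bands are parallel (and have isotopic boundaries wrapping $(2,n)$ on $\bdry V$), so each pair of same-labeled \SCs\ is in fact parallel, giving the first conclusion. A mixed coexistence of a $\arc{12}$- and a $\arc{34}$-\mobius band disjointly in $V$ would produce a Klein bottle in $M$, which is forbidden because $\Delta \geq 3$, so this configuration cannot occur. If $D$ is separating, then $H_B \cut D = \calT_1 \cup \calT_2$ is two solid tori; the same Klein-bottle obstruction forces the two $\arc{12}$-\SCs\ to share one $\calT_i$ and the two $\arc{34}$-\SCs\ to share the other (or all four to lie in one $\calT_i$), and in each solid torus the two same-label \mobius bands are parallel, so one pair is parallel.

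Otherwise no compressing disk of $H_B$ is disjoint from all four \SCs, and I must show both pairs share parallel edges in $G_F$. I would argue by contradiction: if, say, the two $\arc{12}$-\SCs\ have no pair of parallel edges in $G_F$, then the separating meridian disk obtained by $\bdry$-compressing $A_1$ can be modified, by outermost-arc surgeries along its intersections with $A_2, B_1, B_2$, into a compressing disk of $H_B$ disjoint from all four \SCs. The no-parallel-edge hypothesis ensures that each outermost bigon one bands along comes from an actual intersection that can be resolved (rather than creating isotopic copies of $\bdry A_1$), and the separation of labels rules out parallelisms of $\edge{12}$- and $\edge{34}$-edges. This contradicts the case assumption, so each pair of same-label \SCs\ must have a pair of parallel edges.

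\textbf{The main obstacle} will be the last case: the outermost-arc bookkeeping must simultaneously achieve disjointness from $A_2$ and from $B_1 \cup B_2$ while keeping the resulting disk essential in $H_B$, and one must invoke the no-Dyck's-surface and no-Klein-bottle hypotheses to rule out accidental forbidden configurations arising during the surgeries.
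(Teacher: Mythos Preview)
Your case split is natural, and Case~1 is essentially right: when a compressing disk of $H$ is disjoint from all four \mobius bands, the Klein-bottle obstruction forces the $\arc{12}$-pair and the $\arc{34}$-pair into separate solid tori of $H\cut D$, and in each solid torus the two like-labelled \SCs\ are parallel.

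The gap is in Case~2. Starting from the separating disk $D_0$ obtained by $\bdry$-compressing $A_1$ and then ``surgering away'' $D_0 \cap (A_2 \cup B_1 \cup B_2)$ by outermost-arc moves does not clearly work, because the $A_i$ and $B_j$ are one-sided. An outermost arc of $D_0 \cap A_2$ that is essential in the \mobius band $A_2$ is a $\bdry$-compressing arc for $A_2$; using it does not isotope $D_0$ off $A_2$ but rather produces a different disk over which you have no control relative to $A_1, B_1, B_2$. You give no argument that the process terminates with an \emph{essential} disk disjoint from all four, and the hypothesis ``no parallel $\edge{12}$-edges'' says nothing about intersections with $B_1, B_2$. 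Note too that in your model, Lemma~\ref{lem:3SC+1SC+3e}, the no-disjoint-disk case is handled by the \emph{opposite} device: one assumes two like-labelled \SCs\ \emph{do} share a parallel edge and bands the two faces along that parallelism to build the forbidden disk directly. That construction is unavailable precisely when you are assuming no parallel edges.

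The paper's proof bypasses the handlebody entirely. If no two of the four $\edge{12}$-edges are parallel on $G_F$, then the subgraph of $G_F$ on vertices $1,2$ together with those four edges has Euler characteristic $-2=\chi(\hatF)$, so the complementary regions in $\hatF$ have total Euler characteristic $0$ and (there being no bigons) are all annuli. Vertices $3,4$ and all four $\edge{34}$-edges then lie in a single annulus, which forces the two $\arc{34}$-\SCs\ to be parallel. The symmetric argument gives the other direction. This is a few lines of surface topology, with no three-dimensional surgery at all.
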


\begin{proof}
Assume no pair of edges of the $\arc{12}$-\SCs\ are parallel.  Then the complement of the graph of these four edges and the vertices $1$ and $2$ in the boundary of the handlebody must be a collection of annuli.   Hence the edges of the $\arc{34}$-\SCs\ lie in an annulus.   Since handlebodies are irreducible and the edges of these Scharlemann cycles cannot lie in a disk, the $\arc{34}$-\SCs\ are parallel.

Similarly, if no pair of edges of the $\arc{34}$-\SCs\ are parallel, then the $\arc{12}$-\SCs\ are parallel.
\end{proof}
\begin{lemma}\label{lem:trigonandtwomobius}
Given a $\arc{12}$-\SC, a $\arc{34}$-\SC, and a trigon of $\Lambda$ with two $\arc{12}$-corners and one $\arc{34}$-corner, then there are two embeddings in their genus $2$ handlebody $H$ up to homeomorphism.  One has a pair of parallel $\edge{12}$-edges; the other does not.  These are shown in Figure~\ref{fig:trigonandtwomobius} with $H$ cut along the two \SCs.
\end{lemma}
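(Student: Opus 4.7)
The plan is to realize the two \SCs\ as \mobius bands $A_{12} = f \cup \arc{12}$ and $A_{34} = g \cup \arc{34}$ properly embedded in $H$ (as in Section~\ref{sec:escandlongmb}), cut $H$ along them, and then classify how the trigon can sit in the resulting handlebody. By the uniqueness of the embedding of a \mobius band in a genus $2$ handlebody (Section~\ref{sec:annuli}), the complement $H' = H \cut (A_{12} \cup A_{34})$ is a handlebody, and the annular impressions of $A_{12}$ and $A_{34}$ on $\bdry H'$ are primitive. I would first identify the genus of $H'$ and locate these two primitive annuli on its boundary, observing that the relevant \mobius bands cannot be disjoint from a single meridian (else $M$ contains a Klein bottle by Lemma~\ref{lem:3disjointmobiusbands} or related), so the two cuts reduce $H$ to a concrete model handlebody.

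Next, I would view the trigon $h$ as a disk properly embedded in $H'$, whose boundary consists of the three edges of $h$ (lying on $\hatF$, i.e.\ on the part of $\bdry H'$ inherited from $\bdry H$) alternating with three corner arcs: two arcs on the primitive annulus coming from $A_{12}$ and one arc on the primitive annulus coming from $A_{34}$. Up to ambient isotopy in the handlebody $H'$, the disk $h$ is determined by the isotopy class of $\bdry h$ in $\bdry H'$, so the classification of embeddings reduces to the classification of this boundary curve relative to the two distinguished primitive annuli.

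The main step is then a direct enumeration: on $\bdry H'$ the two $\arc{12}$-corner arcs of $h$ are either (i) co-parallel on the annulus impression of $A_{12}$, in which case the two $\edge{12}$-labeled edges of $h$ bound a bigon in $\hatF$ (so they are parallel in $G_F$), or (ii) they run in opposite senses across the annulus, forcing the two $\edge{12}$-labeled edges to emerge on opposite sides of the annulus (so they are not parallel in $G_F$). Once one checks that no further homeomorphism merges (i) and (ii)---since the parallelism of a pair of $\edge{12}$-edges in $G_F$ is preserved by any self-homeomorphism of $H$ that preserves the two \SCs---we obtain exactly the two embeddings asserted.

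The hard part will be verifying that cases (i) and (ii) are each realized by a unique embedding up to homeomorphism (as opposed to splitting further, or collapsing to one), which amounts to a careful check that the disk completing $\bdry h$ in $H'$ is essentially unique given its boundary. I expect this to reduce, in each case, to identifying $h$ as a meridional or $\bdry$-parallel disk of an explicit solid torus summand of $H'$ determined by the primitive annuli, and reading off the two resulting pictures as in Figure~\ref{fig:trigonandtwomobius}.
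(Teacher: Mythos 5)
Your setup matches the paper's: cut $H$ along both \mobius bands $A_{12}$ and $A_{34}$ to obtain the genus~$2$ handlebody $H'$ with primitive annular impressions $\tilde{A}_{12},\tilde{A}_{34}$, view the trigon $g$ as a disk in $H'$, and note the dichotomy in how the two $\arc{12}$-corners sit. The difficulty is that you stop exactly where the lemma's content begins. It is true that a properly embedded disk in a handlebody is determined up to isotopy by the isotopy class of its boundary, but you have not shown that, up to a homeomorphism of the triple $(H',\tilde{A}_{12},\tilde{A}_{34})$, the boundary $\bdry g$ has at most two isotopy classes subject to the combinatorial constraints (two corner arcs on $\tilde{A}_{12}$, one on $\tilde{A}_{34}$). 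On a genus~$2$ boundary there are a priori many ways for the three $\hatF$-edges of $g$ to wind between the two annuli while respecting those corner incidences, so the assertion that your cases (i) and (ii) each pin down a unique embedding is precisely what must be argued, not just expected.

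The paper closes this gap with a further reduction you do not carry out: attach a $2$-handle to $H'$ along the core of $\tilde{A}_{12}$ to obtain a solid torus $\calT$, noting that the primitivizing disk for $\tilde{A}_{12}$ extends to a $\bdry$-parallelism $\delta$ for the cocore $c$ of that $2$-handle disjoint from $\tilde{A}_{34}$ (which is now a longitudinal annulus on $\bdry\calT$). Gluing the two $\arc{12}$-corners of $g$ along $c$ turns the trigon into a properly embedded surface $\tilde{g}\subset\calT$ with $c$ as a spanning arc; depending on whether $g$ and $A_{12}$ met transversely along $\arc{12}$, $\tilde{g}$ is an annulus or a \mobius band, and in each case $\bdry\tilde{g}$ crosses the longitudinal annulus $\tilde{A}_{34}$ once. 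The classification of annuli and \mobius bands in a solid torus (Section~6) then gives \emph{uniqueness} in each case. Cutting $\calT$ back along $c$ and resewing the two \mobius bands recovers the two pictures in Figure~\ref{fig:trigonandtwomobius}. Your sketch gestures toward ``a solid torus summand of $H'$,'' but the trigon cannot be seen as a single disk in a solid torus summand of $H'$; the $2$-handle attachment and the passage to the annulus/\mobius band $\tilde{g}$ is the mechanism that actually makes the uniqueness tractable, and it is absent from your argument.
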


\begin{figure}[h]
\centering
\input{trigonandtwomobius.pstex_t}
\caption{}
\label{fig:trigonandtwomobius}
\end{figure}

\begin{proof}
Let $A_{12}$ and $A_{34}$ be the \mobius bands associated to the two \SCs\ in the handlebody $H$.  Then $H \cut (A_{12} \cup A_{34})$ is a genus $2$ handlebody $H'$.  The impressions $\tilde{A}_{12}$ and $\tilde{A}_{34}$ of the \mobius bands are primitive annuli in $H'$ and each has a primitivizing disk disjoint from the other annulus.  Attach a $2$-handle to $H'$ along the core of $\tilde{A}_{12}$ to form a solid torus $\calT$.  The primitivizing disk for $\tilde{A}_{12}$ extends to a disk $\delta$ giving a boundary-parallelism for the cocore $c$ of this $2$-handle.  Moreover $\delta$ is disjoint from the (now longitudinal) annulus $\tilde{A}_{34}$. 

Let $g$ be the trigon.  The two $\arc{12}$-corners of $g$ are identified along $c$ to form $\tilde{g}$ in $\calT$.  If $A_{12}$ and $g$ met transversely along $\arc{12}$ in $H$, then $\tilde{g}$ is an annulus.  Otherwise $\tilde{g}$ is a \mobius band.  In each situation, $c$ is a spanning arc of $\tilde{g}$, $\tilde{g}$ is properly embedded, and $\bdry \tilde{g}$ crosses the longitudinal annulus $\tilde{A}_{34}$ in $\bdry \calT$ just once.

If $\tilde{g}$ is a \mobius band, then its embedding in $\calT$ is unique up to homeomorphism.  If $\tilde{g}$ is an annulus, then one boundary component is disjoint from $\tilde{A}_{34}$ and trivial on $\bdry \calT$; because the spanning arc $c$ (on $\tilde{g}$) is trivial in $\calT$, the embedding of $\tilde{g}$ in $\calT$ is unique up to homeomorphism.  Recover $H'$ with the impression $\tilde{A}_{12}$ from $\calT \cut c$.  Carrying the two possibilities of $\tilde{g}$ along produces the two embeddings of $g$ in $H'$ shown in Figure~\ref{fig:trigonandtwomobius}.     Reconstitute $H$ and the two \mobius bands by sewing up  $\tilde{A}_{12}$ and $\tilde{A}_{34}$.  This gives the two claimed embeddings of $g$ in $H$.
\end{proof}

\begin{lemma}\label{lem:2SC2T}
Given a $\arc{12}$-\SC, a $\arc{34}$-\SC, a trigon of $\Lambda$ 
with two $\arc{12}$-corners and one $\arc{34}$-corner, and a trigon of $\Lambda$
with two $\arc{34}$-corners and one $\arc{12}$-corner,  then either a pair of $\edge{12}$-edges or a pair of $\edge{34}$-edges must be parallel.
\end{lemma}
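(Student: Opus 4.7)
The plan is to reduce to Lemma~\ref{lem:trigonandtwomobius} applied to each of the two trigons, and then show that the two resulting ``non-parallel'' embeddings cannot coexist. Let $g_1$ be the trigon with two $\arc{12}$-corners and one $\arc{34}$-corner and $g_2$ the trigon with two $\arc{34}$-corners and one $\arc{12}$-corner. Arguing by contradiction, suppose no pair among the three $\edge{12}$-edges (two from the $\arc{12}$-\SC, one from $g_1$) is parallel in $G_F$ and no pair among the three $\edge{34}$-edges (two from the $\arc{34}$-\SC, one from $g_2$) is parallel. First, I would note that all four faces lie on the same side of $\hatF$ by the parity of their corner colors, so we may fix one handlebody $H$ containing $A_{12}, A_{34}, g_1, g_2$.

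Next, apply Lemma~\ref{lem:trigonandtwomobius} to the sub-configuration $\{A_{12}, A_{34}, g_1\}$: since no pair of $\edge{12}$-edges among $A_{12}$ and $g_1$ is parallel, the embedding must be the ``non-parallel'' one (the second in Figure~\ref{fig:trigonandtwomobius}). By the symmetry of Lemma~\ref{lem:trigonandtwomobius} under interchange of the labels $\{1,2\}$ and $\{3,4\}$, the sub-configuration $\{A_{12}, A_{34}, g_2\}$ must likewise be in the analogous ``non-parallel'' embedding. So both embeddings are forced.

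The main step is to show these two forced embeddings are incompatible in any genus $2$ handlebody. I would do this by cutting $H$ along the two \mobius bands $A_{12}$ and $A_{34}$ to get a genus $2$ handlebody $H'$ with two disjoint primitive annular impressions $\tilde A_{12}, \tilde A_{34}$ (Section~\ref{sec:annuli}, \ref{sec:ape}). In $H'$ the trigons become properly embedded disks $\tilde g_1, \tilde g_2$, each meeting $\tilde A_{12}$ and $\tilde A_{34}$ in cocores, with specific combinatorics dictated by the non-parallel embedding: in Figure~\ref{fig:trigonandtwomobius}(non-parallel), $\tilde g_1$ meets $\tilde A_{12}$ in two cocores that (after filling in $\tilde A_{12}$ with a $2$-handle) amalgamate to form an annulus rather than a \mobius band, and similarly after filling in $\tilde A_{34}$ for $\tilde g_2$. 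Reading off which sides of the annuli $\tilde A_{12}, \tilde A_{34}$ the edges of $\tilde g_1$ and $\tilde g_2$ must leave from then forces the remaining $\edge{12}$-edge of $g_1$ (respectively $\edge{34}$-edge of $g_2$) to be co-bounded on $\hatF$, together with an edge of the corresponding \SC, by a disk in $H$ --- that is, parallel in $G_F$, contradicting our assumption.

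The hard part will be executing the combinatorial bookkeeping at the end: chasing how the cocore arcs on $\tilde A_{12}$ and $\tilde A_{34}$ coming from $\tilde g_1, \tilde g_2$ interact, and verifying that the ``non-parallel'' label pattern on one pair of edges forces a parallelism on the other. If the direct compatibility check fails, the alternative is to construct from the combined configuration an embedded projective plane or Klein bottle in $M$ (via the two \mobius bands glued along an annulus produced by banding $g_1$ and $g_2$ across their shared $\edge{14}$- or $\edge{23}$-edges), which would contradict $\Delta(\gamma,\mu)\geq 3$ by the standing hypotheses listed at the beginning of Section~\ref{sec:escbounds}.
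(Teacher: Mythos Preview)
Your setup matches the paper: apply Lemma~\ref{lem:trigonandtwomobius} to each trigon separately to force both into the non-parallel embedding of Figure~\ref{fig:trigonandtwomobius}, then work in $H' = H \cut (A_{12} \cup A_{34})$ with the primitive impressions $\tilde A_{12}, \tilde A_{34}$ and the disks $\tilde g_1, \tilde g_2$.

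The gap is the contradiction step. Your ``combinatorial bookkeeping'' is left unspecified, and the endpoint you aim for --- that some edge of $g_1$ is forced to cobound a disk in $H$ with an edge of the \SC\ --- is not the same as being parallel in $G_F$ (which means cobounding a bigon on $\hatF$), so even if you reached it you would not have the contradiction you want. Your fallback of building a Klein bottle or $\RP^2$ by banding $g_1$ to $g_2$ also has no clear execution: the two trigons have corners on different arcs, so there is no evident shared edge class to band along, and nothing forces the result to close up non-orientably.

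The paper's contradiction is a short homology argument that avoids all edge-chasing. The key observation you are missing is that $\tilde g_1$ and $\tilde g_2$ form a \emph{meridian system} for $H'$: each is a nonseparating disk (visible from the non-parallel picture), and together they cut $H'$ to a ball. Take dual curves as generators $x,y$ of $H_1(H') \cong \Z^2$. Reading intersections from Figure~\ref{fig:trigonandtwomobius}, the core of $\tilde A_{12}$ represents $xy^2$ and the core of $\tilde A_{34}$ represents $yx^{\pm 2}$. Attaching $2$-handles along these cores gives a presentation matrix with determinant $\pm 3$ or $\pm 5$, hence torsion in $H_1$. But from the same figure, $\tilde A_{12}$ and $\tilde A_{34}$ are jointly primitive on $H'$ (each encircles its own handle), so attaching $2$-handles along both yields a $3$-ball --- contradiction.
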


\begin{proof}
Otherwise by Lemma~\ref{lem:trigonandtwomobius} each trigon lives in $H'=H \cut (A_{12} \cup A_{34})$ as pictured in the second part of Figure~\ref{fig:trigonandtwomobius}. These trigon faces form a meridian 
system for $H'$, where dual curves give generators $x,y$ of $H_1(H')$.  Up to swapping the generators and taking their inverses, the core of $\tilde{A}_{12}$ represents $xy^2$ in $H_1(H')$, and the core of $\tilde{A}_{34}$ may be oriented to then represent either $yx^2$ or $yx^{-2}$.   In either case, attaching $2$-handles to $H'$ along the cores of $\tilde{A}_{12}, \tilde{A}_{34}$ gives a manifold with non-trivial torsion in first homology.
But from Figure~\ref{fig:trigonandtwomobius}, one sees that attaching such $2$-handles gives a $3$-ball.
\end{proof}

 
\subsection{Configurations containing an \ESC} 

Recall from section~\ref{sec:annuli}
 that an annulus is primitive if and only 
a component of its 
boundary is primitive in the ambient handlebody.

\begin{prop}
\label{prop:primitivemobius}
If there is an \ESC\ such that the extending annulus is non-separating in its handlebody, then the boundary of the central \mobius band is primitive with respect to the extending annulus's handlebody.  Hence the extending annulus is primitive in its handlebody.
\end{prop}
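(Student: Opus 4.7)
Let $\sigma$ be the given \ESC, with long \mobius band $A_1 \cup A_2$. Without loss of generality the central \mobius band $A_1$ lies in $H_W$ and the extending annulus $A_2$ lies in $H_B$. Write $a_1 = \partial A_1 \subset \partial A_2$, and let $a_2$ denote the other boundary component of $A_2$. Assuming $A_2$ is non-separating in $H_B$, the plan is to exhibit a properly embedded meridian disk $E \subset H_B$ whose boundary meets $a_1$ transversely in exactly one point; this shows $a_1$ is primitive in $H_B$, and by the characterization of primitive annuli in Section~\ref{sec:annuli}, also that $A_2$ is a primitive annulus.

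First I would verify that $A_2$ is essential in $H_B$. Incompressibility follows from Lemma~\ref{lem:LMBess}: surgering along any compressing disk would replace $A_2$ by two disks in $H_B$ bounded by $a_1$ and $a_2$, contradicting the fact that no component of $a(\sigma)$ bounds a disk on either side of $\hatF$. Since every $\partial$-parallel annulus in a handlebody is separating (Section~\ref{sec:annuli}) but $A_2$ is non-separating by hypothesis, $A_2$ is not $\partial$-parallel either. Hence $A_2$ is essential in $H_B$ and admits a $\partial$-compressing disk $d$, with $\partial d = \alpha \cup \beta$, where $\alpha$ is a spanning arc of $A_2$ from $p_1 \in a_1$ to $p_2 \in a_2$ and $\beta \subset \partial H_B$ is an essential arc from $p_1$ to $p_2$.

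Next I would invoke Case~(1) of the annulus analysis in Section~\ref{sec:annuli}: the disk extracted from $\partial \nbhd(A_2 \cup d)$ is a non-separating meridian disk $D$ of $H_B$ disjoint from $A_2$, so $\calT := H_B \cut D$ is a solid torus containing $A_2$; and in the genus $2$ handlebody $H' := H_B \cut A_2$, the disk $d$ descends to a primitivizing disk for the annular impression $A_+$ of $A_2$ on the $d$-side. In particular $A_+$ is primitive on $\partial H'$.

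The final step is to convert this primitivity of $A_+$ in $H'$ into a meridian of $H_B$ meeting $a_1$ transversely once. I would choose a meridian disk $E$ of the solid torus $\calT$ whose boundary, as a simple closed curve on $\partial \calT$, avoids the two disk impressions $D_+, D_-$ of $D$ in $\partial \calT$ and meets $a_1$ transversely exactly once. Such a curve exists because the primitivity of $A_+$ (witnessed by the spanning arc $\alpha$ of $d$) translates, through the identifications used to rebuild $H_B$ from $\calT$, into the statement that $a_1$ is a longitude of $\calT$; and because any essential simple closed curve on a torus can be isotoped to be disjoint from any finite collection of disks in that torus. The resulting disk $E$ then lies in $\calT \subset H_B$ with $\partial E \subset \partial \calT \setminus (D_+ \cup D_-) \subset \partial H_B$ and $\partial E$ crosses $a_1$ transversely once, so $a_1$ is primitive in $H_B$ and hence $A_2$ is primitive as an annulus. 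The main obstacle is the translation step ``$A_+$ primitive in $H'$'' into ``$a_1$ longitudinal in $\calT$'': this requires tracking how the solid torus $\nbhd(A_2 \cup d)$ sits inside $\calT$ and comparing its core (which is parallel to the core of $A_2$) to the core of $\calT$, where I expect the presence of the $\partial$-compressing disk $d$ inside $\nbhd(A_2 \cup d)$ to force winding number one rather than a genuine cable.
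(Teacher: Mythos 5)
The final step of your argument --- where you ``expect the presence of the $\partial$-compressing disk $d$ inside $\nbhd(A_2 \cup d)$ to force winding number one'' --- is exactly where the proposal breaks, and the gap cannot be closed by handlebody topology alone. The $\partial$-compressing disk $d$ exists for \emph{any} essential non-separating annulus, independent of the winding number $n$ of $A_2$ in the solid torus $\calT := H_B \cut D$. Indeed, the trichotomy in Section~\ref{sec:annuli} that you cite records precisely this: when the disk $\partial\nbhd(A\cup d)-\partial H$ is non-separating, $A$ is non-separating and the impression $A_+$ on the $d$-side of $H\cut A$ is primitive, while $A_-$ is primitive \emph{or cabled}. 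The cabled case corresponds to $A_2$ winding $n>1$ times in $\calT$, and in that case neither boundary curve of $A_2$ is primitive in $H_B$ (attaching a $2$-handle along such a curve produces a lens space summand, not a solid torus). So a primitivizing disk $E$ of the kind you seek simply does not exist when $n>1$, and ``$A_2$ non-separating'' by itself is not enough to rule out $n>1$; primitivity of $A_+$ in $H'$ and primitivity of $a_1$ in $H_B$ are genuinely different conditions.

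The proposition is true only because of the standing minimality hypothesis on $t$, not for arbitrary non-separating annuli. The paper's proof is by contradiction: if $\partial A_{23}$ is not primitive, then since $A_{12,34}$ is non-separating and boundary-parallel into $\calT$, both components of $\partial A_{12,34}$ are non-primitive, so $A_{12,34}$ winds $n>1$ times in $\calT$. Then $\calN = \calT\cup\nbhd(A_{23})$ is a Seifert fiber space over the disk with exceptional fibers of orders $2$ and $n$, and $K\cap\calN$ is the arc $\arc{1234}$, a co-core of the properly embedded \mobius band $A_{23}\cup A_{12,34}$ in $\calN$. Lemma~\ref{lem:sSFS1bridge} --- which uses Dehn-surgery inputs such as atoroidality and Lemma~\ref{lem:1bdryslope} --- then yields a genus $2$ Heegaard splitting of $M$ in which $K$ is $0$-bridge, contradicting the minimality of $t$. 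Any correct proof of the proposition has to pay for the passage from ``non-separating'' to ``primitive'' with an argument of this kind; it cannot be extracted from the $\partial$-compression structure alone.
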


\begin{proof}
Assume there is an \ESC\ on the corner $\arc{1234}$ giving rise to a central White \mobius band $A_{23}$ and an extending Black annulus $A_{12,34}$.   Assume $A_{12,34}$ is non-separating in $H_B$ and that $\bdry A_{23}$ is not primitive with respect to $H_B$.  

There exists a bridge disk $D_{12}$ for $\arc{12}$ that is disjoint from $A_{12,34}$.  Indeed, $D_{12}$ is a $\bdry$-compressing disk for the annulus $A_{12,34}$.  Performing the $\bdry$-compression on a push-off of this annulus produces a non-separating disk $D_B$ in $H_B$ that is disjoint from $A_{12,34}$ and $K$.
Let $\calT$ be the solid torus obtained by compressing $H_B$ along $D_B$.  Then $A_{12,34}$ is contained in $\calT$ and is $\bdry$-parallel into $\bdry \calT$. 

Either both curves of $\bdry A_{12,34}$ are primitive on $H_B$  or both are non-primitive.  
Since $\bdry A_{23}$ is a component of $\bdry A_{12,34}$, the former case is contrary to assumption.  Hence we may assume $\bdry A_{12,34}$ consists of two non-primitive curves in $H_B$.  Therefore in $\calT$ the $\bdry$-parallel annulus $A_{12,34}$ wraps $n > 1$ times longitudinally.  Then $\calN = \calT \cup \nbhd(A_{23})$ is a Seifert fiber space over the disk with two exceptional fibers of orders $2$ and $n$.
Furthermore $K \cap \calN$ is the arc $\arc{1234}$ that is the co-core of the long \mobius band $A_{23} \cup A_{12,34}$.   Lemma~\ref{lem:sSFS1bridge} now applies to produce a genus $2$ Heegaard splitting of $M$ in which $K$ is $0$-bridge, a contradiction.
\end{proof}


\begin{lemma}
\label{lem:escfortis4}
If there is an \ESC\ then the extending annulus is $\bdry$-parallel in its handlebody but is not primitive.
\end{lemma}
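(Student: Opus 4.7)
The plan is to show that each of the three alternatives from the annulus trichotomy in section~\ref{sec:annuli} (non-separating, separating-but-not-$\bdry$-parallel, $\bdry$-parallel) forces $\bdry A_{23}$ to be primitive in $H_B$ once we rule out $A_{12,34}$ being primitive, and then to derive a $1$-bridge presentation of $K$ from primitivity of $\bdry A_{23}$, contradicting $t=4$. First I would note that in \situationnscc, $A_{12,34}$ is properly embedded in $H_B$, and is incompressible: $\bdry A_{23}$ bounds no disk on either side by Lemma~\ref{lem:LMBess}, and the other boundary component of $A_{12,34}$ cannot bound a disk in $H_B$ either, since amalgamating such a disk with $A_{12,34}$ and $A_{23}$ would yield an embedded projective plane in $M$, contradicting that $\Delta\ge 3$.

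Next I would exploit the structural results of section~\ref{sec:annuli}. In the non-separating case, Proposition~\ref{prop:primitivemobius} directly gives that $\bdry A_{23}$ is primitive in $H_B$. In the separating-but-not-$\bdry$-parallel case, section~\ref{sec:annuli} identifies $H_B\cut A_{12,34}$ as a genus $2$ handlebody plus a solid torus, with the impression $A_+$ (on the side of the $\bdry$-compression) primitive in the genus $2$ piece, and the two components of $\bdry A_{12,34}$ cobound an annulus on $\hatF$. Hence one component of $\bdry A_{12,34}$ is primitive in $H_B$, and since both components are isotopic on $\hatF$, both are primitive; in particular $\bdry A_{23}$ is primitive. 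The same argument handles the $\bdry$-parallel case under the auxiliary hypothesis that $A_{12,34}$ is primitive in $H_B$: primitivity of the annulus means a meridian disk of $H_B$ meets $A_{12,34}$ in a single essential arc, so a boundary component (hence both, by boundary-parallelism) is primitive.

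It remains to contradict $t=4$ from the primitivity of $\bdry A_{23}$, and this is the main technical step, running parallel to the argument of Lemma~\ref{lem:btfsc} but simpler since an \ESC\ comes with a genuine annulus $A_{12,34}$ rather than only a $G_F$-parallelism. Set $H_B' = H_B\cup\nbhd(A_{23})$, which is a genus $2$ handlebody by primitivity of $\bdry A_{23}$, and $H_W' = H_W - \nbhd(A_{23})$, which is a genus $2$ handlebody because $A_{23}$ is a \mobius band. As in the proof of Proposition~\ref{prop:primitivemobius}, bridge disks $D_{12}$ and $D_{34}$ for $\arc{12}$ and $\arc{34}$ in $H_B$ may be chosen with interiors disjoint from $A_{12,34}$, and these guide isotopies of $\arc{12}$ and $\arc{34}$ rel endpoints onto arcs of $\hatF$ meeting $\bdry A_{23}$ only at their endpoints and from the same side. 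Then $K\cap H_B' = \arc{1234}$ is a cocore of the properly embedded \mobius band $A_{23}\cup A_{12,34}\subset H_B'$, hence bridge in $H_B'$ by Lemma~\ref{lem:annulusandbridgedisks}, while $K\cap H_W' = \arc{41}$ is bridge in $H_W'$ by Lemma~\ref{lem:bridgedisksdisjoitfrommobius}. Thus $K$ is $1$-bridge with respect to $H_B'\cup_{\hatF'}H_W'$, contradicting the minimality assumption $t=4$.

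The hardest part, and the source of the one genuine case analysis, is verifying that in the separating-but-not-$\bdry$-parallel case the two boundary circles of $A_{12,34}$ really are isotopic on $\hatF$, so that primitivity of one impression in $H_B\cut A_{12,34}$ transfers to primitivity of $\bdry A_{23}$ in $H_B$. Everything else is a direct quotation or adaptation of Proposition~\ref{prop:primitivemobius}, Lemma~\ref{lem:btfsc}, Lemmas~\ref{lem:bridgedisksdisjoitfrommobius} and \ref{lem:annulusandbridgedisks}, and the handlebody-annulus dictionary of section~\ref{sec:annuli}.
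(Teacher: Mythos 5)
Your treatment of the separating-but-not-$\bdry$-parallel case contains a genuine gap. You assert that because the impression $A_+$ is primitive in the genus-$2$ piece of $H_B\cut A_{12,34}$, one component of $\bdry A_{12,34}$ must be primitive in $H_B$; that implication is false. Write $H_B\cut A_{12,34}=H_1\sqcup\calT$ with $H_1$ of genus $2$ and $\calT$ a solid torus, and suppose $A_-$ winds $n>1$ times around $\calT$. Then $\pi_1(H_1)=\langle a,b\rangle$ with the core of $A_+$ represented by $a$, and $\pi_1(\calT)=\langle t\rangle$ with the core of $A_-$ represented by $t^n$; van Kampen gives $\pi_1(H_B)\cong\langle b,t\rangle$, in which $\bdry A_{23}$ is conjugate to $t^n$. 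For $n>1$ this is not part of any free basis, so $\bdry A_{23}$ is \emph{not} primitive in $H_B$ in this case, and the $1$-bridge thinning you wish to run does not apply. The primitivizing disk for $A_+$ in $H_1$ becomes only a $\bdry$-compressing disk for $A_{12,34}$ in $H_B$, not a meridian disk of $H_B$ meeting $\bdry A_{23}$ once; the two notions do not coincide here.

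The paper disposes of this case by a different mechanism. When $A_{12,34}$ is not longitudinal in $\calT$, the union $\nbhd(A_{23})\cup\calT$ is a Seifert fiber space over the disk with two exceptional fibers (one of order $2$ from the \mobius band $A_{23}$, one of order $n>1$ from the winding in $\calT$), and $K$ meets it as a spanning arc of a properly embedded \mobius band; Lemma~\ref{lem:sSFS1bridge} then produces a genus-$2$ Heegaard splitting of $M$ with respect to which $K$ is $0$-bridge, contradicting the minimality of $t$. The rest of your argument --- ruling out the non-separating case via Proposition~\ref{prop:primitivemobius}, deriving the $1$-bridge contradiction from primitivity of $\bdry A_{23}$, and concluding non-primitivity of the $\bdry$-parallel annulus --- tracks the paper's proof; replacing the faulty step with the Seifert-fibered argument would close the gap.
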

\begin{proof}
Assume there is an \ESC\ on the corner $\arc{1234}$.  Let $A_{12,34}$ be the corresponding extending Black annulus.  

Assume $\bdry A_{23}$ is a primitive curve on $\bdry H_B$ with respect to $H_B$.  Then $H_B' = H_B \cup_{\bdry A_{23}} \nbhd(A_{23})$ is a handlebody in which $\arc{1234}$ is bridge.  Also $H_W' = H_W \cut A_{23}$ is a handlebody in which $\arc{41}$ remains bridge.  Thus $(H_B', H_W')$ is a Heegaard splitting of $M$ in which $K$ is $1$-bridge.  This contradicts the minimality assumption on $t$.  Hence $\bdry A_{23}$ cannot be primitive in $H_B$.  

Consequently, Proposition~\ref{prop:primitivemobius} also implies that $A_{12,34}$  must be separating in $H_B$.  Chopping $H_B$ along $A_{12,34}$ forms a genus $2$ handlebody $H_B'$ and a solid torus $\calT$.

We may assume $A_{12,34}$ is not longitudinal in $\calT$. Lemma~\ref{lem:sSFS1bridge} applied to the Seifert fiber space over the disk given by $\nbhd(A_{23}) \cup \calT$ contradicts that $t=4$.
\end{proof}

\begin{figure}[h!]
\centering
\input{esc4.pstex_t}
\caption{}
\label{fig:esc4}
\end{figure}

\begin{lemma}
\label{lem:esc4}
Given the \ESC\ of Figure~\ref{fig:esc4},
 then in $\Lambda$ any White bigon is an \SC\ and any White trigon is a $\arc{41}$-Scharlemann cycle.  
Furthermore any such $\arc{23}$-\SC\ must have its edges parallel to those of $f_2$.
\end{lemma}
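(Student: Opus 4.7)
The proof proceeds from the structural information provided by Lemma~\ref{lem:escfortis4} applied to the given \ESC. Namely, the extending Black annulus $A_{12,34} = f_1 \cup \arc{12} \cup f_3 \cup \arc{34}$ is $\bdry$-parallel but not primitive in $H_B$: it cobounds a solid torus $V \subset H_B$ with an annulus $B_{12,34} \subset \hatF$ whose interior is disjoint from $K$, in which $A_{12,34}$ runs $n>1$ times longitudinally. Consequently $\bdry A_{23}$, a component of $\bdry A_{12,34}$, is not primitive in $H_B$, and $V$ guides an isotopy of the arcs $\arc{12}$ and $\arc{34}$ of $K$ (together with bridge disks) onto $B_{12,34}$. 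After this isotopy, $K$ lies on $\hatF$ except for the two White arcs $\arc{23}$ and $\arc{41}$.

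For the first claim (every White bigon is an \SC), I assume for contradiction a mixed White bigon $g$ with one $\arc{23}$-corner and one $\arc{41}$-corner. Such a $g$ is a properly embedded disk in $H_W$ whose edges in $G_F$ join $\{2,3\}$ to $\{4,1\}$. Banding $g$ with $f_2$ along a shared $\edge{23}$-edge and following the recipe of Lemma~\ref{lem:btfsc}, one produces in the new Heegaard splitting obtained by attaching $\nbhd(A_{23})$ to $H_B$ a bridge disk exhibiting $K$ as a single bridge arc $\arc{2341}$ plus an arc on the new splitting surface. Combined with the flattening of $\arc{12}, \arc{34}$ onto $B_{12,34}$ guided by $V$, this contradicts minimality of $t=4$.

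For the second claim (White trigons are $\arc{41}$-Scharlemann cycles), I would assume for contradiction a White trigon $g$ other than an $\arc{41}$-Scharlemann cycle, so $g$ has at least one $\arc{23}$-corner. The same banding-and-isotopy strategy as in Step 2 applies, now combining $g$ with $f_2$ as in the \FESC\ construction (Lemma~\ref{lem:fesc+1}) or the argument of Lemma~\ref{lem:nfsc}; using the parallelism from $V$, one again thins $K$ below $t=4$ or produces a Dyck's surface. The sub-case of a $\arc{23}$-Scharlemann cycle of length three is excluded by observing, as in Lemma~\ref{lem:no23S3}, that two of its three $\edge{23}$-edges must lie on opposite sides of $\bdry B_{12,34}$, which is incompatible with $B_{12,34}$ being an annulus cobounded by $\bdry A_{23}$ and the second component of $\bdry A_{12,34}$.

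For the final claim, let $\sigma'$ be another $\arc{23}$-\SC, with corresponding White \mobius band $A'_{23}$. Using the solid torus $V$, one isotopes $A'_{23}$ to be disjoint from $A_{23}$ and from $A_{12,34}$. If $\bdry A'_{23}$ is not isotopic in $\hatF$ to $\bdry A_{23}$, then $A'_{23}$ together with $A_{12,34}$ satisfies the hypotheses of Lemma~\ref{lem:disjtmobiusannulus}, producing a Dyck's surface contradiction. Hence $\bdry A'_{23}$ is isotopic to $\bdry A_{23}$ on $\hatF$; this forces each of the two $\edge{23}$-edges of $\sigma'$ to be parallel in $G_F$ to one of the $\edge{23}$-edges of $f_2$, with the pairing determined by Lemma~\ref{lem:parallelwithsamelabel}.

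The main obstacle will be the case analysis in the second claim, keeping track of the possible corner patterns ($\arc{23}^2\arc{41}$, $\arc{23}\arc{41}^2$, and $\arc{23}^3$) and checking uniformly that each triggers either a thinning move in a modified Heegaard splitting or a forbidden Dyck's / Klein bottle / projective-plane construction. The machinery of Lemma~\ref{lem:btfsc}, Lemma~\ref{lem:fesc+1}, Lemma~\ref{lem:nfsc}, and the non-spanning-arc reasoning of Lemma~\ref{lem:nospanningarc} should suffice to settle each subcase, but combining them cleanly with the parallelism provided by $V$ requires some care.
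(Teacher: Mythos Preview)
Your argument has genuine gaps in each of the three parts, stemming from a misreading of the geometry coming out of Lemma~\ref{lem:escfortis4}.

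\textbf{Setup.} Lemma~\ref{lem:escfortis4} (via Lemma~\ref{lem:LMB}) gives that $A_{12,34}$ is \emph{longitudinal} in the parallelism solid torus $V$, not running $n>1$ times; ``not primitive'' refers to $\bdry A_{23}$ in all of $H_B$, not to how $A_{12,34}$ sits in $V$.

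\textbf{First claim.} A mixed White bigon $g$ with corners $\arc{23}$ and $\arc{41}$ has edges of types $\edge{12}$ and $\edge{34}$; there is no $\edge{23}$-edge along which to band with $f_2$. More seriously, your route through Lemma~\ref{lem:btfsc} --- attaching $\nbhd(A_{23})$ to $H_B$ to get a new handlebody --- requires $\bdry A_{23}$ to be primitive in $H_B$, and Lemma~\ref{lem:escfortis4} explicitly says it is not. The paper's mechanism is different and local: the labeling around vertices $2$ and $3$ forces the two edges of any White face at a $\arc{23}$-corner to leave on opposite sides of $\bdry A_{23}$, so one of them must be a spanning arc of $B_{12,34}$; then the argument of Lemma~\ref{lem:nospanningarc} (twisting $\arc{12},\arc{34}$ along $A_{12,34}$ and using the resulting bridge disk) thins $K$. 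This same observation feeds the trigon cases via the arguments of Lemmas~\ref{lem:no23trigon}, \ref{lem:no23S3}, and \ref{lem:2323trigon}; your invocation of Lemmas~\ref{lem:fesc+1} and \ref{lem:nfsc} is not to the point, as no \FESC\ is present.

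\textbf{Third claim.} Lemma~\ref{lem:disjtmobiusannulus} requires the two boundary components of the annulus to lie in distinct isotopy classes on $\hatF$; here $\bdry A_{12,34}$ cobounds $B_{12,34}$, so its components \emph{are} isotopic and the lemma does not apply. Moreover, even if $\bdry A'_{23}$ were isotopic to $\bdry A_{23}$ as curves on $\hatF$, this would not by itself force the constituent $\edge{23}$-edges to be pairwise parallel in $G_F$. The paper argues instead that one edge $e_1$ of any $\arc{23}$-\SC\ must lie in $B_{12,34}$ (hence is parallel to an edge of $f_2$); if the other edge is not parallel to the remaining edge of $f_2$, band the two bigons along the $e_1$-parallelism to produce a White meridian disk $D$ disjoint from $K$ and from $B_{12,34}$, and then the $\bdry$-parallelism of $A_{12,34}$ lets $K$ be isotoped into the solid torus $H_W - \nbhd(D)$, a contradiction.
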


\begin{proof}
Given the \ESC\ on the corner $\arc{1234}$ as in Figure~\ref{fig:esc4} let $A_{23}$ be the corresponding White \mobius band and $A_{12,34}$ be the extending Black annulus.  By Lemma~\ref{lem:escfortis4} the annulus $A_{12,34}$ is parallel to an annulus $B_{12,34}$ on $\hatF$.

  The arguments of Lemma~\ref{lem:nospanningarc} prove that a White bigon must be a \SC, while the arguments of Lemma~\ref{lem:no23trigon} prove there is no White trigon with just one $\arc{23}$-corner.  Lemma~\ref{lem:no23S3}  shows there cannot be a $\arc{23}$-Scharlemann cycle of length $3$.  By an argument similar to that of Lemma~\ref{lem:2323trigon}, a trigon with two $\arc{23}$-corners and one $\arc{41}$-corner may be used in conjunction with the $\arc{23}$--\SC\ of the \ESC\ to form a bridge disk for $\arc{41}$ with interior disjoint from $B_{12,34}$; this provides a thinning of $K$.  Hence a White trigon must be a $\arc{41}$-Scharlemann cycle.

Let $\sigma$ be a $\arc{23}$-\SC\ and $f$ be the face it bounds.  One of the edges of $\sigma$ must lie in $B_{12,34}$, call it $e_1$, and the other, $e_2$, lies outside of $B_{12,34}$.  Then $e_1$ must be parallel to an edge $e_1'$ of $f_2$.  Let $e_2'$ be the other edge of $f_2$.  

\begin{figure}[h!]
\centering
\input{esc4-2.pstex_t}
\caption{}
\label{fig:esc4-2}
\end{figure}
We assume $e_2$, $e_2'$ are not parallel on $G_F$.  Then $f$, $f_2$ can be amalgamated along the parallelism of $e_1$, $e_1'$ to give a White meridional disk $D$ disjoint from $K$ and $B_{12,34}$.  See Figure~\ref{fig:esc4-2}.  But then $K$ can be isotoped into the solid torus $H_W-\nbhd(D)$ using the parallelism of $A_{12,34}$ to $B_{12,34}$, a contradiction.
\end{proof}

\begin{lemma}
\label{lem:esc4g}
There must be a true gap contiguous to an \ESC\ of $\Lambda$.
\end{lemma}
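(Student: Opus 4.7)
The plan is to argue by contradiction: suppose $\sigma$ is an \ESC\ at a vertex $x$ of $G_Q$ with no true gap adjacent to $\sigma$ at $x$. Relabel so that $\sigma$ occupies the corners $\arc{1234}$ at $x$, its core $\arc{23}$-\SC\ is White (giving the \mobius band $A_{23} \subset H_W$), and its flanking bigons are Black (giving the extending annulus $A_{12,34} \subset H_B$). By Lemma~\ref{lem:escfortis4}, $A_{12,34}$ is parallel to an annulus $B_{12,34} \subset \hatF$ whose interior is disjoint from $K$, while $\partial A_{23}$ is not primitive in $H_B$. The two corners of $x$ adjoining $\sigma$ are both $\arc{41}$-corners on the White side; by Lemma~\ref{lem:esc4}, each face filling them must be either a $\arc{41}$-\SC\ (bigon) or a $\arc{41}$-Scharlemann cycle of length three (trigon).

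I would first dispose of the case where at least one flanking face is a bigon $\arc{41}$-\SC, giving an almost properly embedded \mobius band $A_{41} \subset H_W$ whose core is the $\arc{41}$-arc of $K$. Using the parallelism $A_{12,34} \simeq B_{12,34}$ in $H_B$, isotope the arcs $\arc{12}$ and $\arc{34}$ of $K$ through $H_B$ onto $B_{12,34}$. After this isotopy, $K \cap H_W = \arc{23} \cup \arc{41}$, each a spanning arc of one of the \mobius bands $A_{23}$ and $A_{41}$. If $\partial A_{41}$ is primitive in $H_B$, then $H_B \cup \nbhd(A_{41})$ together with $H_W - \nbhd(A_{41})$ is a new genus-$2$ Heegaard splitting with respect to which $K$ is at most $1$-bridge (by the argument of Lemma~\ref{lem:btfsc}, with $A_{41}$ playing the role of $A_{23}$), contradicting the minimality assumption $t=4$. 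If $\partial A_{41}$ is not primitive in $H_B$, then $\nbhd(A_{41}) \cup \calT$, where $\calT$ is the solid torus in $H_B$ cobounded by $A_{12,34}$ and $B_{12,34}$, forms a Seifert fiber space over the disk with two exceptional fibers in which $K$ meets it as a spanning arc of a properly embedded \mobius band; Lemma~\ref{lem:sSFS1bridge} then produces a genus-$2$ splitting in which $K$ is $0$-bridge, again contradicting $t=4$.

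The trigon case is where the main obstacle lies. A $\arc{41}$-Scharlemann cycle of length three does not by itself give a \mobius band in $H_W$ but a more intricate non-orientable surface. The plan is to extract from the trigon an almost properly embedded \mobius band in $H_W$ by identifying two of its three corners along the $\arc{41}$-arc, and then to re-run the same dichotomy (primitive versus non-primitive boundary in $H_B$, with Lemma~\ref{lem:sSFS1bridge} handling the latter). The delicate point is that there is a priori more than one way to make this identification, and the corresponding almost properly embedded surface must be chosen so that the subsequent handlebody surgery produces a valid genus-$2$ splitting; simultaneously, one must use the assumption that $M$ contains no Dyck's surface—through Lemma~\ref{lem:3disjointmobiusbands} and Lemma~\ref{lem:disjtmobiusannulus}—to rule out that the extracted \mobius band, together with $A_{23}$ and a third almost properly embedded \mobius band coming from the other flanking face, assembles into one.
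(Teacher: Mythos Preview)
Your approach has a genuine gap in the trigon case, and the bigon case is also not clean. The idea of ``extracting a \mobius band'' from a length-$3$ Scharlemann cycle by identifying only two of its three corners does not make sense: all three corners lie on the same arc $\arc{41}$ of $K$, and you cannot selectively ignore one of them. The resulting $2$-complex $\nbhd(g \cup \arc{41})$ is not a \mobius band neighborhood, so the dichotomy primitive/non-primitive you want to run does not get off the ground. In the bigon case, your invocation of Lemma~\ref{lem:btfsc} is not apt (that lemma relies on the \FESC\ structure, which you do not have around $A_{41}$), and the claim that $\nbhd(A_{41}) \cup \calT$ is a Seifert fiber space over the disk with two exceptional fibers containing $K \cap \calN$ as a spanning arc of a \mobius band is unverified: $\partial A_{41}$ shares only one edge with $\partial B_{12,34}$, so it is not clear that $\partial A_{41}$ lies on $\partial \calT$, nor that the pieces assemble as you describe.

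The paper's argument bypasses all of this and is much shorter. One does not need to analyze the flanking faces beyond knowing they are bigons or trigons. The three White faces (the core \SC\ and the two flanking faces) are disjoint properly embedded disks in $H_W - \nbhd(K)$; a standard outermost-arc argument on any bridge disk for $\arc{23}$ produces a bridge disk $D$ for one of $\arc{23}$ or $\arc{41}$ whose interior is disjoint from all three. Then $\partial D \cap \hatF$ is an arc disjoint from the edges of the \ESC, hence either lies in $B_{12,34}$ (where it is parallel to an edge of the \ESC) or lies entirely outside $B_{12,34}$. Either way, the parallelism of $A_{12,34}$ onto $B_{12,34}$ isotopes $\arc{12}$ and $\arc{34}$ onto $\hatF$, and $D$ then isotopes one of $\arc{23}, \arc{41}$ onto $\hatF$ compatibly, leaving $K$ with a single arc off $\hatF$ --- a thinning, contradicting $t=4$. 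The key point you missed is that the flanking faces are used only to manufacture a suitably disjoint bridge disk, not to build new \mobius bands.
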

\begin{proof}
Assume there is a bigon or trigon of $\Lambda$ on each side of an \ESC\ on the corner $\arc{1234}$ as in Figure~\ref{fig:esc4}.  We can find a bridge disk $D$ for either $\arc{23}$ or $\arc{41}$ which is disjoint (in the exterior of $K$) from both of these faces as well as the White face of the \ESC.  Let $B_{12,34}$ be the annulus on $\hatF$ to which the Black annulus $A_{12,34}$ (arising from the \ESC) is parallel by Lemma~\ref{lem:escfortis4}.  Since $\bdry D \cap \hatF$ is disjoint from the edges of the \ESC, it either lies inside $B_{12,34}$ and is isotopic to an edge of the \ESC\ or it lies entirely outside $B_{12,34}$.  In either case, the parallelism of $A_{12,34}$ to $B_{12,34}$ along with $D$ gives a thinning of $K$.
\end{proof}

\begin{lemma}\label{lem:noBBBBB}
There cannot be five consecutive bigons.
\end{lemma}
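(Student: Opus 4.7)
I would argue by contradiction. Suppose five consecutive bigons $B_1,\ldots,B_5$ occur at a vertex $x$ of $\Lambda$. Since each bigon has its two edges parallel on $G_Q$ and consecutive bigons share an edge, all six bounding edges $e_0,e_1,\ldots,e_5$ (with $B_i$ having edges $e_{i-1},e_i$) are mutually parallel on $G_Q$; hence they all run between $x$ and a single common vertex $y$ of $G_Q$. Their $x$-labels are six consecutive entries $1,2,3,4,1,2$ of the cyclic sequence around $\partial x$ (after relabeling), and their $y$-labels $\alpha_0,\ldots,\alpha_5$ also form six consecutive entries around $\partial y$, so $\alpha_{j+1}\equiv\alpha_j\pm 1\pmod 4$ with constant sign. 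Call the ``$+$'' situation Case~A (corresponding to $x,y$ parallel in $G_Q$) and the ``$-$'' situation Case~B.

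Two constraints must hold simultaneously. First, \emph{color-consistency}: each bigon $B_i$ lies on one side of $\hatF$, so its $x$-corner $\arc{i,i{+}1}$ and its $y$-corner $\arc{\alpha_{i-1},\alpha_i}$ must be on the same side of $\hatF$. Using that $\arc{12},\arc{34}$ are Black and $\arc{23},\arc{41}$ White (so $\arc{a,a{+}1}$ is Black iff $a$ is odd), one checks that for all five bigons to be color-consistent simultaneously forces $\alpha_0$ \emph{odd} in Case~A and $\alpha_0$ \emph{even} in Case~B. Second, the \emph{Parity Rule}: in Case~A each $e_j$ must connect anti-parallel vertices of $G_F$, while in Case~B each $e_j$ must connect parallel vertices of $G_F$. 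Because $K$ alternates sides of $\hatF$, the vertices $1,2,3,4$ of $G_F$ alternate parity, so two of them are parallel iff their labels have the same parity. A short modular computation, with $a_j$ and $b_j$ the $x$- and $y$-labels of $e_j$, gives $a_j+b_j\equiv 1+\alpha_0\pmod 2$ in both Case~A and Case~B (independently of $j$); thus the edges have same-parity endpoints iff $\alpha_0$ is odd. Consequently Case~A (anti-parallel required) forces $\alpha_0$ \emph{even}, whereas Case~B (parallel required) forces $\alpha_0$ \emph{odd}.

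Comparing the two constraints gives an immediate contradiction in each case: Case~A demands $\alpha_0$ odd from color but even from parity, and Case~B the reverse. No admissible $\alpha_0$ exists, so the configuration of five consecutive bigons is impossible.

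\textbf{The main obstacle} is bookkeeping the cyclic-label conventions carefully enough to verify the key identity $a_j+b_j\equiv 1+\alpha_0\pmod 2$ in both Case~A and Case~B, and to confirm that color-consistency across all five bigons really does pin down the parity of $\alpha_0$ uniformly. Once those modular computations are stated cleanly, the parity rule and the color condition slam shut against each other in every case.
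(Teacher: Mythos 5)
Your color and parity constraints do not actually clash; the sign convention relating the $\pm$ in $\alpha_{j+1}=\alpha_j\pm 1$ to ``parallel/anti-parallel in $G_Q$'' is backwards, and once corrected the two constraints say the same thing. Look at the labeled Scharlemann cycle in Figure~\ref{fig:basicscharlemanncycles}(a): its two vertices $x$ and $y$ are by definition parallel in $G_Q$, yet the $x$-labels of its edges read $2,3$ while the $y$-labels read $3,2$ --- that is the ``$-$'' case. The reason is that when you traverse the boundary of a bigon you go counterclockwise around one vertex and clockwise around the other, so ``labels running in the same direction around both vertices'' (the definition of parallel) appears as opposite increments in the bigon picture. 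Since $x,y\in\Lambda$ are automatically parallel, the situation that actually occurs is your Case~B, and there the color constraint gives $\alpha_0$ even while the Parity Rule (parallel in $G_Q$, hence $a_j+b_j$ odd, hence $1+\alpha_0$ odd) also gives $\alpha_0$ even. In your Case~A (which is really $x,y$ anti-parallel and hence irrelevant here) both constraints give $\alpha_0$ odd. In neither case is there a contradiction. This is as it should be: a proper $2$-\ESC\ (Figure~\ref{fig:basicscharlemanncycles}(c)) \emph{is} a string of five consecutive bigons, so no purely combinatorial parity/color bookkeeping can forbid the configuration --- you need a topological input about $M$.

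The paper's proof supplies exactly that. It first uses Lemma~\ref{lem:esc4g} (an \ESC\ must abut a true gap) to force the label pattern to be {\tt MSMSM}, i.e.\ two $1$-\ESCs\ overlapping in the middle mixed bigon. It then observes that the two mixed bigons cannot be parallel faces (else Lemma~\ref{lem:parallelwithsamelabel} is violated, or the boundary of a resulting \mobius band bounds a disk in $\hatF$), so the two extending annuli are not parallel; by Lemma~\ref{lem:escfortis4} each is boundary-parallel in its handlebody, and the two regions of parallelism have disjoint interiors. But the extending annuli share a spanning arc, and dragging that arc through both parallelisms sweeps out a compressing disk that primitivizes the annuli --- contradicting Lemma~\ref{lem:escfortis4}. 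So the obstruction is in the handlebody, not on the graph $G_Q$.
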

\begin{proof}
Assume there are five consecutive bigons.   Then by Lemma~\ref{lem:esc4g}, they appear as {\tt MSMSM}.  No two of the \ttM\ are parallel since otherwise either there would be a contradiction to Lemma~\ref{lem:parallelwithsamelabel} or the boundary of a \mobius band arising from one of the \SCs\ would bound a disk in $\hatF$.  Hence the two extending annuli of the two \ESC\ are not parallel.  In particular the annuli on $\hatF$ to which they are boundary parallel by Lemma~\ref{lem:escfortis4} have disjoint interiors.  But since the two extending annuli share a spanning arc, the two boundary parallelisms cause it two sweep out a compressing disk for the handlebody that contains it.  This disk however is a primitivizing disk for the annuli, contrary to Lemma~\ref{lem:escfortis4}.
\end{proof}

\begin{lemma}\label{lem:notwoescs}
There cannot be two \ESCs\ extending the same color but differently labeled 
\SCs.
\end{lemma}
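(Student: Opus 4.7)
The approach parallels that of Lemma~\ref{lem:noBBBBB}. When $t=4$, the White arcs of $K$ are $\arc{23}$ and $\arc{41}$, so the only two possible White \SCs\ are the $\arc{23}$-\SC\ and the $\arc{41}$-\SC. Hence two \ESCs\ $\tau_1, \tau_2$ extending the same color but on differently labeled \SCs\ must (up to swapping colors) have core White \SCs\ $\arc{23}$ and $\arc{41}$, with the corner of $\tau_1$ being $\arc{1234}$ and that of $\tau_2$ being $\arc{3412}$. A direct inspection of the corner labels shows that both \ESCs\ have flanking bigons that are Black $\arc{12},\arc{34}$-bigons, so the two extending Black annuli $A^{(1)}$ (from $\tau_1$) and $A^{(2)}$ (from $\tau_2$) both contain the arcs $\arc{12}$ and $\arc{34}$ of $K$ as spanning arcs. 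By Lemma~\ref{lem:escfortis4}, each $A^{(i)}$ is $\bdry$-parallel in $H_B$ to an annulus $B_i \subset \hatF$, and neither $\bdry A_{23}$ nor $\bdry A_{41}$ is primitive in $H_B$.

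The first step is to rule out the case where $A^{(1)}$ and $A^{(2)}$ are parallel in $H_B$. A parallelism would carry one boundary component of $A^{(1)}$ to a boundary component of $A^{(2)}$ on $\hatF$. Combined with the disjoint White \mobius bands $A_{23}$ and $A_{41}$ in $H_W$ (whose boundaries pass through vertices $\{2,3\}$ and $\{4,1\}$ respectively and so are disjoint on $\hatF$), this would allow the construction of a closed Klein bottle in $M$ from $A_{23}\cup A_{41}$ and the isotopy annulus connecting $\bdry A_{23}$ to $\bdry A_{41}$, contradicting the standing hypothesis (from $\Delta \geq 3$ and \cite{gl:dsokcetII}) that $M$ contains no Klein bottle.

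Thus $A^{(1)}$ and $A^{(2)}$ are non-parallel in $H_B$. In particular, after a small isotopy, their parallelism annuli $B_1, B_2$ on $\hatF$ can be arranged to have disjoint interiors, and the solid tori $V_1, V_2 \subset H_B$ cobounded by $A^{(i)} \cup B_i$ lie on non-nested sides of the two annuli. Since $\arc{12}$ and $\arc{34}$ are spanning arcs of both $A^{(1)}$ and $A^{(2)}$, the $\bdry$-parallelisms $A^{(1)} \cong B_1$ and $A^{(2)} \cong B_2$ each isotope $\arc{12}$ through $H_B$ to an arc on the respective annulus; concatenating the two sweeps along the shared arc $\arc{12}$ produces an embedded disk $D \subset H_B$ whose boundary runs along $B_1$, $\arc{12}$'s image, and $B_2$. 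Tracking the incidences of $\bdry D$ with the curve $\bdry A_{23}$ (one of the boundary components of $B_1$) shows that $D$ intersects $\bdry A_{23}$ transversely once. Hence $D$ is a primitivizing disk for $\bdry A_{23}$ in $H_B$, contradicting Lemma~\ref{lem:escfortis4}.

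The main technical hurdle will be the careful construction of $D$: one must orient the two parallelisms coherently, verify that the composite sweep can be made embedded after surgering any spurious intersections of $B_1 \cap B_2$ (using that neither $\bdry A_{23}$ nor $\bdry A_{41}$ bounds a disk on $\hatF$, by Lemma~\ref{lem:LMBess}), and confirm that the resulting disk meets $\bdry A_{23}$ precisely once rather than algebraically-once-but-geometrically-more. The structure of the argument is entirely parallel to Lemma~\ref{lem:noBBBBB}, with the difference that here the two \ESCs\ share no central edge but instead share both spanning arcs of their (common-color) extending annuli, which is exactly the ingredient needed to run the sweep-out.
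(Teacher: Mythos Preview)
Your proposal is correct and follows essentially the same approach as the paper. The paper's own proof is extremely terse: it observes that Lemma~\ref{lem:noBBBBB} handles the case where the two \ESCs\ share a Black bigon, and then asserts that ``a similar proof works'' in the non-sharing case using Lemma~\ref{lem:escfortis4} and the fact that $\bdry A_{23}$ and $\bdry A_{41}$ cannot be isotopic on $\hatF$ (no Klein bottle). You have correctly unpacked this: both extending Black annuli are $\bdry$-parallel and non-primitive, the parallelism regions must lie on opposite sides so that $B_1$ and $B_2$ have disjoint interiors, and the shared spanning arc $\arc{12}$ then sweeps out a primitivizing disk for $\bdry A_{23}$, contradicting Lemma~\ref{lem:escfortis4}.

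One small point worth tightening in your second paragraph: when you rule out the case that $A^{(1)}$ and $A^{(2)}$ are parallel, the induced isotopy on $\hatF$ could a priori pair the $\edge{23}$-component of $\bdry A^{(1)}$ with the $\edge{23}$-component of $\bdry A^{(2)}$ (rather than with $\bdry A_{41}$). In that event the Klein bottle is not $A_{23} \cup (\text{annulus}) \cup A_{41}$ directly; instead the $\edge{14}$-component of $\bdry A^{(1)}$ becomes isotopic to $\bdry A_{41}$, and one builds the Klein bottle from the long \mobius band $A_{23} \cup A^{(1)}$ together with the isotopy annulus and $A_{41}$. Either pairing yields the contradiction, so your conclusion stands.
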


\begin{figure}
\centering
\input{twoescs.pstex_t}
\caption{}
\label{fig:twoescs}
\end{figure}

\begin{proof}
Assume to the contrary that there are two \ESCs\ as shown in Figure~\ref{fig:twoescs}.  Lemma~\ref{lem:noBBBBB} accounts for when they share a Black bigon.   Indeed, using Lemma~\ref{lem:escfortis4} and the fact that the boundaries of 
two \mobius bands cannot be isotopic in $\hatF$ (no Klein bottle), a similar
proof works when they do not share a bigon.
\end{proof}

\begin{lemma}
\label{lem:parallelann}
Any two \ESCs\ of $\Lambda$ extending \SCs\ of the same labels must have their extending annuli parallel. In particular, the corresponding faces of these 
two \ESCs\ are parallel (see section ~\ref{sec:basics}).
\end{lemma}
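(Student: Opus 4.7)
Let $\sigma_1,\sigma_2$ be two proper $1$-\ESCs\ in $\Lambda$ on the same label set. By relabeling, assume this label set is $\{1,2,3,4\}$ with core $\arc{23}$-\SC. For $k=1,2$, denote the three bigon faces of $\sigma_k$ by $f_{12}^{(k)},f_{23}^{(k)},f_{34}^{(k)}$, with $f_{23}^{(k)}$ the core \SC. Let $A_1^{(k)}\subset H_W$ be the White \mobius band coming from $f_{23}^{(k)}$ and $A_2^{(k)}\subset H_B$ the Black extending annulus built from $f_{12}^{(k)}\cup f_{34}^{(k)}$ and the arcs $\arc{12},\arc{34}$ of $K$.

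The plan is to first establish parallelism of the core \SC\ faces, then of the parallelism data in $\hatF$, and finally to propagate this through the corner rectangles on $\bdry \nbhd(K)$ to the flanking bigons. First, I will apply Lemma~\ref{lem:esc4} to $\sigma_1$: since $f_{23}^{(2)}$ is a $\arc{23}$-\SC\ in $\Lambda$, its $\edge{23}$-edges are parallel on $G_F$ to those of $f_{23}^{(1)}$. Hence $f_{23}^{(1)}$ and $f_{23}^{(2)}$ cobound a White product region (they are parallel as faces of $G_Q$), and in particular $\bdry A_1^{(1)}$ and $\bdry A_1^{(2)}$ are parallel on $\hatF$, cobounding a bigon-rectangle of $G_F$ on the $\edge{23}$-edge class.

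Second, I will invoke Lemma~\ref{lem:escfortis4} for each $\sigma_k$: the extending annulus $A_2^{(k)}$ is $\bdry$-parallel in $H_B$ to an annulus $B_k\subset\hatF$ whose interior is disjoint from $K$, via a Black solid torus $V_k$ with $\bdry V_k=A_2^{(k)}\cup B_k$. One boundary of $B_k$ is $\bdry A_1^{(k)}$ (passing through vertices $2,3$), and the other is the ``outer'' curve through vertices $1,4$ formed by the outer $\edge{12}$-edge of $f_{12}^{(k)}$ and the outer $\edge{34}$-edge of $f_{34}^{(k)}$. The structural part of Lemma~\ref{lem:LMB} then pins down $V_k$ as the parallelism region.

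Third, I will argue that $B_1$ and $B_2$ coincide up to isotopy on $\hatF$. Since $\bdry A_1^{(1)}\sim\bdry A_1^{(2)}$ on $\hatF$ via a parallelism lying in the common $\edge{23}$-edge class, the spines of $B_1$ and $B_2$ are forced to have isotopic regular neighborhoods in $\hatF$; if this failed, the symmetric difference of $B_1$ and $B_2$ in $\hatF$, together with the \mobius bands $A_1^{(k)}$ and the extending annuli $A_2^{(k)}$, would give an embedded Klein bottle, projective plane, or Dyck's surface in $M$, contradicting our standing hypotheses ($\Delta\geq 3$ with $K'$ hyperbolic and $M$ containing no Dyck's surface). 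With $B_1\simeq B_2$, the solid tori $V_1,V_2$ realize the same parallelism, and hence $A_2^{(1)}$ and $A_2^{(2)}$ cobound a product $S^1\times I\times I$ in $H_B$; that is, the extending annuli are parallel. Pushing this parallelism across the rectangles of $\bdry\nbhd(K)$ along $\arc{12}$ and $\arc{34}$, the faces $f_{12}^{(1)},f_{12}^{(2)}$ are parallel as faces of $G_Q$, and similarly $f_{34}^{(1)},f_{34}^{(2)}$, giving the ``in particular'' clause.

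The main obstacle will be Step three, the isotopy $B_1\simeq B_2$ on $\hatF$. The delicate point is that a priori the two outer curves (through vertices $1,4$) need not bound a $K$-disjoint annulus in $\hatF$, and $B_1,B_2$ might cobound an annular region of $\hatF$ that contains a vertex of $G_F$ or whose complementary picture is genuinely different. Handling this requires a case analysis driven by Lemma~\ref{lem:disjtmobiusannulus}, Lemma~\ref{lem:3disjointmobiusbands}, and the non-existence of closed non-orientable surfaces with $\chi\geq-1$ in $M$; in each non-parallel case one produces a forbidden surface or constructs a thinning of $K$ along the lines of Lemma~\ref{lem:LMB} and Proposition~\ref{prop:primitivemobius}, contradicting minimality of $t$.
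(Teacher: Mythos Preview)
Your first two steps agree with the paper's proof. The divergence is in Step~3, and there the paper does something much more direct than your proposed case analysis.

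The paper observes that if the extending annuli $A$ and $A'$ are not parallel, then the annuli $B,B'\subset\hatF$ to which they are $\bdry$-parallel together form a once-punctured torus. The key move is then to look at the two bridge disks $D_{12}$ and $D'_{12}$ for the arc $\arc{12}$ swept out by the two parallelisms $V$ and $V'$: since they share the arc $\arc{12}$, the union $D_{12}\cup D'_{12}$ is a single properly embedded disk in $H_B$ whose boundary meets each component of $\bdry A$ (and of $\bdry A'$) transversally once. This makes $\bdry A$ primitive in $H_B$, contradicting the \emph{not primitive} clause of Lemma~\ref{lem:escfortis4}. (This is the same mechanism that appears in the proof of Lemma~\ref{lem:noBBBBB}.)

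Your sketch misses this entirely: you never invoke the non-primitivity conclusion of Lemma~\ref{lem:escfortis4}, which is precisely what closes the argument in one stroke. Your proposed route through ``symmetric differences'' and forbidden surfaces is not worked out, and the ingredients you list do not fit. The two core \mobius bands $A_1^{(1)},A_1^{(2)}$ are already parallel by your Step~1, so no Klein bottle arises from them regardless of whether $B_1=B_2$; thus the ``forbidden surface'' dichotomy does not separate the cases. Invoking Proposition~\ref{prop:primitivemobius} also does not fit: that result concerns \emph{non-separating} extending annuli, whereas Lemma~\ref{lem:escfortis4} has already forced each $A_2^{(k)}$ to be separating and $\bdry$-parallel. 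So Step~3 as written has a genuine gap; replace it with the bridge-disk primitivity contradiction above.
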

\begin{proof}
By Lemma~\ref{lem:esc4}, the \SCs\ of these two \ESCs\ have their edges parallel.  Let $A$ and $A'$ be the extending annuli of the two \ESCs.  By Lemma~\ref{lem:escfortis4}, they are each $\bdry$--parallel to annuli $B$ and $B'$, respectively, in $\hatF$.  Let $D_{12}$ and $D'_{12}$ be bridge disks for $\arc{12}$ swept out by the parallelisms of $A$ to $B$ and $A'$ to $B'$ respectively.  Assuming $A$ and $A'$ are not parallel, $B \cup B'$ is a once-punctured torus.  In particular, $D_{12} \cup D'_{12}$ is a disk in the handlebody containing $A$ and $A'$ whose boundary transversally intersects each component of $\bdry A$ and $\bdry A'$ once.  Thus $A$ and $A'$ are primitive in their handlebody, contradicting Lemma~\ref{lem:escfortis4}.
\end{proof}

\begin{lemma}\label{lem:2ESC+2B}
If $\Delta=3$, then at a vertex of $\Lambda$ there cannot be two $\arc{1234}$-\ESCs\ and bigons at the remaining $\arc{12}$- and $\arc{34}$-corners.  That is, there cannot be the configuration {\tt gMSMgMSMgBgB} as shown in Figure~\ref{fig:2ESC+2M}.
\end{lemma}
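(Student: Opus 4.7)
The plan is to assume the configuration \texttt{gMSMgMSMgBgB} occurs at a vertex $x$ of $\Lambda$ and derive a contradiction. First, I would apply Lemma~\ref{lem:parallelann} to the two $\arc{1234}$-\ESCs at $x$: their extending Black annuli $A_{12,34}^{(1)}$ and $A_{12,34}^{(2)}$ are parallel in $H_B$, and by Lemma~\ref{lem:escfortis4} each is $\bdry$-parallel to a common annulus $B_{12,34}\subset\hatF$. Correspondingly, the central White \mobius bands $A_{23}^{(1)},A_{23}^{(2)}$ in $H_W$ are parallel as well. This gives a rigid ``skeleton'' of parallel Black annuli and White \mobius bands near the two \ESCs.

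Second, I would pin down the types of the remaining two bigons $B(12)$ and $B(34)$. By Lemma~\ref{lem:esc4}, any White bigon of $\Lambda$ is a Scharlemann cycle; if either $B(12)$ or $B(34)$ were any \SC (White or Black), then together with the $\arc{23}$-\SC of an \ESC we would get three \SCs on disjoint label sets, hence three mutually disjoint almost-properly-embedded \mobius bands in $H_W\cup H_B$, contradicting Lemma~\ref{lem:3disjointmobiusbands}. Hence both $B(12)$ and $B(34)$ are Black mixed bigons. A counting check of the three label-$i$ endpoints at $x$ for $i=1,2,3,4$ shows that after the two \ESCs consume $2$ endpoints at each label, the single remaining endpoint at each label is split between $B(12)$ (labels $1$ and $2$) and $B(34)$ (labels $3$ and $4$), forcing each of them to be a Black $\arc{12},\arc{34}$-bigon --- any other mixed partner-corner would place an edge endpoint at $x$ already consumed by the \ESCs.

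Third, I would glue $B(12)$ and $B(34)$ along the $\arc{12}$- and $\arc{34}$-arcs of $K$ to form a third Black annulus $A'_{12,34}$ in $H_B$, whose boundary on $\hatF$ consists of the two $\edge{14}$-edges of these bigons closed up through $\nbhd(K)$. The key structural claim is that $A'_{12,34}$ is also $\bdry$-parallel to $B_{12,34}$, or at least that $\bdry A'_{12,34}$ is isotopic on $\hatF$ to $\bdry A_{12,34}^{(i)}$. This should follow from a parallelism analysis: Lemma~\ref{lem:parallelwithsamelabel} forces the three $\edge{14}$-edges incident to $x$ at label $1$ to lie in distinct edge classes at vertex $1$ of $G_F$, yet the Lemma~\ref{lem:parallelann} constraint plus incompressibility of the Black annuli forces all these boundary curves to be isotopic to $\bdry B_{12,34}$ on $\hatF$. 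With $A'_{12,34}$ parallel to the existing \ESC annuli, the union of the central White \mobius band $A_{23}^{(1)}$ with the new annular parallelism allows one to guide an isotopy of an arc of $K$ onto $\hatF$ via the machinery of Lemmas~\ref{lem:LMB} and \ref{lem:ESCPA}, reducing the bridge presentation of $K$ and contradicting the minimality $t=4$.

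The main obstacle will be the parallelism analysis of step three: showing rigorously that $A'_{12,34}$ is parallel to (rather than intersecting transversely or winding non-trivially relative to) the \ESC extending annuli. The alternative possibility --- that $\bdry A'_{12,34}$ is essential on $\hatF$ but not isotopic to $\bdry B_{12,34}$ --- would be handled via Lemma~\ref{lem:disjtmobiusannulus}: a non-isotopic annulus disjoint from the White $\arc{23}$-\mobius band produces a Dyck's surface, again contradicting the standing hypothesis that $M$ contains none.
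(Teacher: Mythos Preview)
Your step 2 contains a genuine error. With $t=4$, the label pairs $\{1,2\}$, $\{3,4\}$, $\{2,3\}$ are \emph{not} mutually disjoint, so you cannot invoke ``three \SCs\ on disjoint label sets.'' One can salvage part of the idea: if \emph{both} remaining bigons were \SCs, then since $A_{12,34}$ is separating in $H_B$ (Lemma~\ref{lem:escfortis4}) the Black \mobius bands $A_{12}$ and $A_{34}$ may be pushed off it and hence off $\bdry A_{23}$, yielding three disjoint \mobius bands and a contradiction via Lemma~\ref{lem:3disjointmobiusbands}. But this does not rule out exactly one of $B(12),B(34)$ being an \SC, so you have not established that both are mixed.

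Step 3 is, as you concede, the heart of the matter and is left unresolved. Forming a third Black annulus from $B(12)\cup B(34)$ and proving it parallel to the \ESC\ annuli is not straightforward; the alternative via Lemma~\ref{lem:disjtmobiusannulus} would require the new annulus to have boundary in distinct essential classes, which you have not checked.

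The paper's proof is shorter and sidesteps both difficulties. After Lemma~\ref{lem:parallelann} gives the parallel annuli with common parallelism region $B\subset\hatF$ (Figure~\ref{fig:2ESCann}), one simply asks where the edges of the two remaining Black bigons meet $\hatF$ relative to $B$. If such an edge lies in $B$: for an \ttM\ it is parallel to an edge of each \ESC, so three edges at the same $x$-label are parallel, contradicting Lemma~\ref{lem:parallelwithsamelabel}; for an \ttS\ it gives a Black \mobius band with boundary in $B$, hence parallel to $\bdry A_{23}$, producing a Klein bottle. Therefore the $x''$-labeled endpoint at each of the four vertices of $G_F$ lies outside $B$. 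But the cyclic order of labels around each fat vertex (determined by the arrangement of the $x$- and $x'$-labels in Figure~\ref{fig:2ESCann}) forbids this --- an ordering violation. No third annulus is ever built, and the \ttS/\ttM\ dichotomy for the remaining bigons is handled locally rather than globally.
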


\begin{figure}
\centering
\input{2ESC+2M.pstex_t}
\caption{}
\label{fig:2ESC+2M}
\end{figure}

\begin{proof}
Assume the configuration shown in Figure~\ref{fig:2ESC+2M} is around a vertex $x$ in $\Lambda$.
Let $A_{12,34}$ and $A'_{12,34}$ be the Black annuli extending the two \mobius bands arising from the two \SCs.  By  Lemma~\ref{lem:escfortis4} and Lemma~\ref{lem:parallelann} they are parallel to one another and they are both $\bdry$-parallel onto $\hatF$.  Let $B$ be the union of the annuli on $\hatF$ to which $A_{12,34}$ and $A'_{12,34}$ are $\bdry$-parallel.  The edges of the two \ESCs\ and the annulus $B$ are shown in Figure~\ref{fig:2ESCann} with the relevant labelings of edges.

\begin{figure}
\centering
\input{2ESCann.pstex_t}
\caption{}
\label{fig:2ESCann}
\end{figure}

If one of the two remaining Black bigons were an \ttM\ with an edge in $B$, then that edge would be parallel to an edge of each of the two \ESCs.  But then there would be three parallel edges that all have an endpoint labeled $x$, contradicting Lemma~\ref{lem:parallelwithsamelabel}.
If one of these bigons were an \ttS\ with an edge in $B$, then it would form a Black \mobius band with boundary in $B$.  This would imply the existence of an embedded Klein bottle, a contradiction.
Thus the $x''$ labels on all four vertices must be outside $B$.  This however creates an ordering violation.
\end{proof}

\begin{lemma}\label{lem:esc+fesc}
In $\Lambda$ there cannot be an \ESC\ and an \FESC\ such that the two interior 
\SCs\ are the same color but have different labels.
\end{lemma}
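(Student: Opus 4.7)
The plan is to derive a contradiction by producing either a Dyck's surface in $M$ or a genus $2$ splitting with respect to which $K$ is at most $1$-bridge, both of which are forbidden (the latter because $t = 4$). WLOG take the \ESC\ $\sigma$ to have corner $\arc{1234}$ with central White $\arc{23}$-\SC, yielding a White \mobius band $A_\sigma \subset H_W$ and a Black extending annulus $A_{12,34} \subset H_B$. By Lemma~\ref{lem:escfortis4}, $A_{12,34}$ is $\bdry$-parallel in $H_B$ to an annulus $B_{12,34} \subset \hatF$, and $\bdry A_\sigma$ is not primitive in $H_B$. Let $\tau$ have central White \SC\ on labels $\{b, b{+}1\} \neq \{2,3\}$, giving a White \mobius band $A_\tau \subset H_W$; by Lemma~\ref{lem:btfsc} combined with $t = 4$, $\bdry A_\tau$ is not primitive in $H_B$ either. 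Applying Lemma~\ref{lem:PLMB} to the $1$-\ESC\ $\sigma$ and the $0$-\ESC\ at the heart of $\tau$ (whose core labels differ), I get that $\bdry A_\tau$ is not isotopic on $\hatF$ to $\bdry A_\sigma$, hence not to either component of $\bdry A_{12,34}$.

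The three possible label sets for the central \SC\ of $\tau$ are $\{1,2\}$, $\{3,4\}$, and $\{4,1\}$, and by the symmetry swapping $(1,4)\leftrightarrow(2,3)$ I reduce to the two representatives $\{4,1\}$ and $\{1,2\}$. In the disjoint case $\{4,1\}$, the edges of $\tau$'s central \SC\ are $\edge{14}$-edges at vertices $1$ and $4$ of $G_F$, the same pair of vertices carrying the outer component of $\bdry A_{12,34}$; I will use the $\bdry$-parallelism of $A_{12,34}$ to $B_{12,34}$ to isotope $A_{12,34}$ into a thin $H_B$-neighborhood of $\hatF$, then verify by a label-ordering analysis in the style of the proof of Lemma~\ref{lem:PLMB} that $\bdry A_\tau$ can be made disjoint from $\bdry B_{12,34}$, producing disjoint $A_{12,34}$ and $A_\tau$. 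The resulting configuration: a Black annulus whose two boundaries are isotopic on $\hatF$, together with a disjoint White \mobius band whose boundary is isotopic to neither, gives by a push-off of one component of $\bdry A_{12,34}$ a third \mobius band in $H_B$ (using half of $B_{12,34}$ capped by the \mobius band $A_\sigma$ routed through $H_W$, exploiting that $A_\sigma\cup A_{12,34}$ is a long \mobius band). Three mutually disjoint \mobius bands then give a Dyck's surface via Lemma~\ref{lem:3disjointmobiusbands}; if the candidate third \mobius band has boundary accidentally isotopic to $\bdry A_\tau$, then $M$ contains a Klein bottle, a contradiction.

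In the sharing case $\{1,2\}$ (and $\{3,4\}$ by the obvious symmetry), $\bdry A_\tau$ and $\bdry A_\sigma$ meet transversely at vertex $2$. Here the plan is to combine the two special structures: the $\bdry$-parallelism of $A_{12,34}$ supplies a $\bdry$-compressing disk $\Delta$ for $A_{12,34}$ in $H_B$ whose arc on $\hatF$ is a spanning arc of $B_{12,34}$; and Lemma~\ref{lem:TB} applied to $\tau$ supplies a $G_F$-bigon $\delta$ of parallelism between two edges of $\tau$. Following the construction in Lemma~\ref{lem:btfsc}, I will piece together $\Delta$, $\delta$, the Black bigon and trigon of $\tau$, and appropriate rectangles on $\bdry\nbhd(K)$, to form a meridian disk in $H_B$ disjoint from $K$ whose boundary intersects $\bdry A_\tau$ in a single point, marking $\bdry A_\tau$ as primitive in $H_B$. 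This contradicts the non-primitivity established in Step~1 (and then Lemma~\ref{lem:btfsc} gives the contradiction $t \leq 2$). The main obstacle is the $\{4,1\}$ case: confirming that the $\bdry$-parallelism of $A_{12,34}$ really does clear $A_\tau$, and that the third \mobius band one wishes to construct for Lemma~\ref{lem:3disjointmobiusbands} has boundary essential and in the right isotopy class; this is where careful bookkeeping of the cyclic orderings of labels around vertices $1$ and $4$ of $G_F$, as in the proof of Lemma~\ref{lem:PLMB}, is required.
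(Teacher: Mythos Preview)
Your case analysis is off at the outset. With $t=4$ the corners alternate colors, so if $\sigma$'s central \SC\ is the White $\arc{23}$-\SC, then the only other White label pair is $\{4,1\}$. Your ``sharing cases'' $\{1,2\}$ and $\{3,4\}$ are Black corners and are therefore excluded by the hypothesis; all of the work you do there is vacuous. So only your $\{4,1\}$ case matters.

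In that case your argument does not actually use the \FESC\ structure of $\tau$ at all: you only extract its central $\arc{41}$-\SC\ and the White \mobius band $A_\tau$, and try to manufacture three disjoint \mobius bands. But an \ESC\ together with a same-color, different-label \SC\ is not by itself contradictory in this paper's setting (such pairs appear, e.g., inside the {\tt SMS} configurations treated elsewhere). Concretely, your ``third \mobius band in $H_B$'' is not well defined: the long \mobius band $A_\sigma \cup A_{12,34}$ straddles both handlebodies, and isotoping $A_{12,34}$ across its $\bdry$-parallelism produces a White object, never a Black \mobius band. You therefore cannot invoke Lemma~\ref{lem:3disjointmobiusbands} (which requires one \mobius band on each side), and with only $A_\sigma, A_\tau \subset H_W$ and non-isotopic boundaries you get no contradiction.

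What the paper actually exploits is the interaction on the \emph{Black} side: the parallelism disk $\delta$ between the two $\edge{14}$-edges of the \FESC\ (Lemma~\ref{lem:TB}) and the solid torus $\mathcal{T}$ of $\bdry$-parallelism for the \ESC's Black annulus (Lemma~\ref{lem:escfortis4}). The proof splits on whether the \FESC's Black trigon $g$ lies inside or outside $\mathcal{T}$. In each case one assembles (from $f$, $g$, $\delta$ or a companion disk $\delta'$, bigons of the \ESC, and rectangles on $\bdry\nbhd(K)$) an explicit disk in $H_B$ that either produces a Klein bottle (via a new \mobius band with boundary in $B_{34,12}$, paired with $A_{41}$) or exhibits $\bdry A_{41}$ as primitive in $H_B$, contradicting Lemma~\ref{lem:escfortis4}. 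You need Lemma~\ref{lem:TB} and the trigon $g$ in the main case, not just the central \SC.
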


\begin{proof}
\begin{figure}
\centering
\input{esc+fesc.pstex_t}
\caption{}
\label{fig:esc+fesc}
\end{figure}
Assume otherwise.  
Up to relabeling we may assume the \ESC\ and \FESC\ appear as in Figure~\ref{fig:esc+fesc}.  This \FESC\ is the one shown in Figure~\ref{fig:FESC}(a).  As in Figure~\ref{fig:FESC}(a), let $f$ denote the Black bigon and $g$ denote the Black trigon of this \FESC.  Its White bigon forms a White \mobius band $A_{23}$.  Lemma~\ref{lem:TB} implies that the two $\edge{14}$-edges of it cobound a disk $\delta$ in $H_B$. 

The \ESC\ gives rise to a White \mobius band $A_{41}$ and a Black annulus $A_{34,12}$.
By Lemma~\ref{lem:escfortis4}, this annulus $A_{34,12}$ is $\bdry$-parallel onto an annulus $B_{34,12}$ on $\hatF$.  Either the trigon $g$ is contained within this solid torus of parallelism $\mathcal{T}$ between $A_{34,12}$ and $B_{34,12}$ or it is not.

{\bf Case I:}  The trigon $g$ lies within $\mathcal{T}$.  Then the edges of $g$ lie within the annulus $B_{34,12}$.  
The bigon $f$ can neither lie within $\mathcal{T}$ nor also be a bigon of the \ESC.  Otherwise $\bdry A_{23}$ would lie in $B_{34,12}$ and we could form either an embedded $\RP^2$ if it were inessential or an embedded Klein bottle if it were essential (since it would be parallel to $\bdry A_{41}$).

Since the $\edge{41}$-edge of $g$ lies in $B_{34,12}$, it is parallel  to a $\edge{41}$-edge of a Black bigon, say $h$, of the \ESC.  (By the preceding paragraph, $h$ is necessarily distinct from $f$.)  Then, since the $\edge{41}$-edges of $f$ and $g$ cobound the disk $\delta$, there must be a disk $\delta'$ that the $\edge{41}$-edges of $f$ and $h$ bound.  Furthermore we may assume the interior of $\delta'$ is disjoint from $A_{34,12}$.  

Because $f$ lies outside $\mathcal{T}$, there are rectangles $\rho_{12}$ and $\rho_{34}$ on the boundaries of the $1$-handle neighborhoods $\nbhd(\arc{12})$ and $\nbhd(\arc{34})$ between the corners of $f$ and $h$ that have interiors disjoint from $\mathcal{T}$.
Then together $f \cup \delta' \cup h \cup \rho_{12} \cup \rho_{34}$ forms a disk $D$ whose boundary is the union of the $\edge{23}$-edges of $f$ and $h$ (and arcs of the boundaries of the fat vertices $2$ and $3$).  We may now slightly lift the interior of $D$ into $H_B$ off $\hatF$ so that it is disjoint from $A_{41}$.   Attach $D$ to the White \mobius band $A_{23}$ along the $\edge{23}$-edge of $f$.  Then $D \cup A_{23}$ is an embedded \mobius band in $M$ that is disjoint from $A_{41}$ and has boundary (formed of the $\edge{23}$-edges of $g$ and $h$) lying in $B_{34,12}$.  As argued earlier, if this boundary were inessential we could form an embedded $\RP^2$, and if it were essential we could form an embedded Klein bottle.  Neither of these may occur.

{\bf Case II:} The trigon $g$ is not contained in $\mathcal{T}$.  Then the edges of $g$ meet the annulus $B_{34,12}$ only at the vertices.  

 Assume $f$ does not lie in $\mathcal{T}$ (so that $f$ is also not a bigon of the \ESC). We follow the bridge disk construction of Lemma~\ref{lem:btfsc}. There are rectangles that are disjoint from $\mathcal{T}$, $\rho_{12}$ and $\rho_{34}$, on the boundaries of the $1$-handle neighborhoods $\nbhd(\arc{12})$ and $\nbhd(\arc{34})$ between corners of $f$ and $g$, such that 
$f \cup \delta \cup g \cup \rho_{12} \cup \rho_{34}$ forms a disk $D$ whose interior may be lifted off $A_{34,12}$ and $\mathcal{T}$.   Note that the $\arc{34}$-corner of $g$ incident to the $\edge{34}$-edge of $g$ cannot lie in the 
rectangle $\rho_{34}$ since otherwise $g$ would intersect the 
interior of $\delta$. Then $\bdry D$ intersects $A_{34,12}$ only along the arc $\arc{34}$ (the $\arc{34}$ corner of $g$ that was disjoint from $\rho_{34}$) and at the vertex $2$.  A slight isotopy pulls $D$ off vertex $2$.  Now attach a bridge disk $D_{34}$ for the $\arc{34}$-arc contained in $\mathcal{T}$ to $D$ along the $\arc{34}$-arc.  Then $D' = D \cup D_{34}$ is a properly embedded disk in $H_B$ that intersects $B_{34,12}$ only in the spanning arc $D_{34} \cap B_{34,12}$.  Hence $D'$ is a primitivizing disk for the component $\bdry A_{41}$ of $\bdry B_{34,12}$.  However, $\bdry A_{41}$ cannot be primitive in $H_B$ by Lemma~\ref{lem:escfortis4}.  

Thus we must assume $f$ lies in $\mathcal{T}$.
Yet as in Case I (though using $g$ instead of $f$ there) there is a Black bigon $h$ of the \ESC\ so that the $\edge{41}$-edges of $g$ and $h$ together bound a disk $\delta'$.  Using $h$ and $\delta'$ in lieu of $f$ and $\delta$ we may apply the previous argument to again conclude that $\bdry A_{41}$ is primitive in $H_B$ contradicting Lemma~\ref{lem:escfortis4}.
\end{proof}

\begin{lemma}\label{lem:noBBBBT}
There cannot be four bigons adjacent to a trigon.
\end{lemma}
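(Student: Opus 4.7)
The plan is to assume for contradiction the configuration \texttt{BBBBT} at some vertex $x$ of $\Lambda$. After relabeling, the five corners at $x$ (in order) are $\arc{12}, \arc{23}, \arc{34}, \arc{41}, \arc{12}$, with the trigon at the last position; denote the intervening edges at $x$ by $e_1,\dots,e_6$ (so $e_i$ has labels $1,2,3,4,1,2$ at $x$, respectively). First I would rule out position~3 being an \SC: otherwise positions $2,3,4$ form an \ESC, and Lemma~\ref{lem:esc4g} forces an adjacent true gap, contradicted by position~1 being a bigon and position~5 a trigon. So position~3 is a mixed bigon \texttt{M}.

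The main case is when position~2 is a $\arc{23}$-\SC. Then positions $1,2,3$ form an \ESC, so by Lemma~\ref{lem:esc4g} position~0 is a true gap. Applying Lemma~\ref{lem:esc4} to this \ESC, every White bigon in $\Lambda$ is an \SC; taking position~2 to be White (so position~4 is also White by color alternation), position~4 must be an $\arc{41}$-\SC. Now positions $3,4,5$ form a \FESC, and Lemma~\ref{lem:TB} gives that $e_3$ and $e_6$ are parallel $\edge{23}$-edges on $G_F$. The face of the $\arc{23}$-\SC at position~2 provides the parallelism $e_2 \parallel e_3$ on $G_F$, so by transitivity $e_2 \parallel e_6$. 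But $e_2$ and $e_6$ both meet $x$ at label $2$, contradicting Lemma~\ref{lem:parallelwithsamelabel}.

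It remains to handle the case where position~2 is mixed. WLOG take position~2 to be White. Were $\Lambda$ to contain any \ESC, Lemma~\ref{lem:esc4} would force position~2 to be an \SC, so $\Lambda$ contains no \ESC at all; Corollary~\ref{cor:bigonsforall} then supplies an \SC for every label. If position~4 is a $\arc{41}$-\SC then positions $3,4,5$ still form a \FESC; the $\arc{23}$-parallelism $e_3 \parallel e_6$ from Lemma~\ref{lem:TB} combined with the forced \SCs from Corollary~\ref{cor:bigonsforall} (in particular a Black $\arc{12}$-\SC, which can be taken at position~1 if position~1 is an \SC, or at another vertex otherwise) puts opposite-coloured \mobius bands $A_{12}$ and $A_{41}$ into the configuration of Lemma~\ref{lem:nfsc} (their boundaries meet transversely once at the $G_F$-vertex labeled $1$), yielding a genus~$2$ Heegaard splitting with $K$ at most $1$-bridge, contradicting $t=4$. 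The subcases in which position~4 is also mixed are handled by the analogous Lemma~\ref{lem:TB}/Lemma~\ref{lem:parallelwithsamelabel} edge bookkeeping applied to whatever \FESC or \ESC can be assembled from the forced \SCs supplied by Corollary~\ref{cor:bigonsforall}.

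The hard part will be the mixed-bigon case at position~2: the slick transitivity-of-parallelism argument available in the main case fails there because $e_2 \parallel e_3$ is lost, and one must instead either locate opposite-side \SCs to invoke Lemma~\ref{lem:nfsc}, or carry out a case analysis of the permissible label pairs of $e_2$ given that $e_3$ is a $\edge{23}$-edge and no \ESC exists globally in $\Lambda$.
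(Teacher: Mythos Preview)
Your setup is right and you correctly arrive at the configuration {\tt MSMST}: positions $1,2,3$ form an \ESC\ and positions $3,4,5$ form a \FESC. But the argument you give to finish the main case is broken, and the ``remaining case'' is in fact empty.

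The fatal step is the assertion that the face $f_2$ (the $\arc{23}$--\SC\ at position~2) ``provides the parallelism $e_2 \parallel e_3$ on $G_F$''. It does not. The two edges of an \SC\ have the same \emph{label pair} $\{2,3\}$, but that says nothing about whether they cobound a bigon in $G_F$; in general they lie in distinct parallelism classes. So the transitivity chain $e_2 \parallel e_3 \parallel e_6$ collapses at its first link, and you get no contradiction with Lemma~\ref{lem:parallelwithsamelabel}. There is a second gap as well: Lemma~\ref{lem:TB} tells you the two $\edge{23}$--edges of the \FESC\ are parallel, but only one of them is guaranteed to lie at $x$ (namely $e_3$). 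Whether the other one is $e_6$ depends on whether the \FESC\ is type~I or type~II at $x$; in the type~II case the parallel edge is the trigon edge \emph{not} incident to $x$, and $e_6$ is a $\edge{12}$--edge. You never address this.

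As for your ``remaining case'' (position~2 mixed): it cannot occur. Two adjacent bigons share an edge whose label pair is forced by each of them, and a quick check shows that adjacent bigons at consecutive corners are always one \ttS\ and one \ttM. So once you have established position~3 is \ttM, position~2 (and position~4) are automatically \ttS; there is no further case.

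The paper's proof sidesteps all of this edge bookkeeping: once {\tt MSMST} is in hand, the \ESC\ at positions $1,2,3$ and the \FESC\ at positions $3,4,5$ have core \SCs\ of the same color (both White) on different label pairs ($\arc{23}$ and $\arc{41}$), and Lemma~\ref{lem:esc+fesc} rules this out directly.
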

\begin{proof}
If there were, then by Lemma~\ref{lem:esc4g} they must form an \ESC\ and a \FESC\ that share a bigon.  Lemma~\ref{lem:esc+fesc} prohibits this configuration.
\end{proof}

\subsection{More configurations of bigons and trigons.}

\begin{lemma}\label{lem:SMTM}
Assume that at a vertex, $x$, of $G_Q$ there is a configuration {\tt SMTM} 
and another \ttS\ on the same corner as the \ttT. 
That is, WLOG assume we have the configurations of Figure~\ref{fig:SMTM}.
Then the edges of the length two and three $\arc{23}$-Scharlemann cycles 
cannot lie in
$\hatF$ in a subsurface which is a 3-punctured sphere or a 1-punctured torus. 
In particular, there cannot be two more $\edge{41}$-edges incident to $x$.
\end{lemma}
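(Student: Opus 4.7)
The lemma has two claims: the main topological claim that the $\edge{23}$-edges of the two Scharlemann cycles cannot be squeezed into a subsurface of Euler characteristic $-1$, and its combinatorial consequence about $\edge{41}$-edges at $x$.

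\textbf{First claim.} Suppose, for contradiction, that the $2 + 3 = 5$ $\edge{23}$-edges -- two from the length-two $\arc{23}$-\SC\ $\sigma$ and three from the length-three $\arc{23}$-Scharlemann cycle trigon $\tau$ -- all lie in a subsurface $\Sigma \subset \hatF$ with $\chi(\Sigma) = -1$, i.e.\ a 3-punctured sphere or a 1-punctured torus. Adjacent faces of $G_Q$ have opposite colors, so $\sigma$ and $\tau$, both occupying $\arc{23}$-corners of the same parity at $x$, lie on the same side of $\hatF$; WLOG both are White. Then $\sigma$ yields a \mobius band $A_\sigma$ properly embedded in $H_W$, and the trigon face $f_\tau$ is a properly embedded disk in $H_W - \nbhd(K)$ whose three $\arc{23}$-corners attach to $\nbhd(\arc{23})$. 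The plan is to argue, in analogy with Lemma~\ref{lem:3SC+1SC+3e}, that the constraint $\chi(\Sigma) = -1$ combined with Lemma~\ref{lem:parallelwithsamelabel} forces either two $\edge{23}$-edges to be parallel in $G_F$ sharing a label (immediate contradiction), or the configuration $A_\sigma \cup f_\tau$ to sit inside $H_W$ in a way that permits a modification yielding an embedded projective plane, Klein bottle, or Dyck's surface in $M$, each of which is excluded by $\Delta \ge 3$ and the standing assumption that $M$ contains no Dyck's surface. The key ingredient is the Goda--Teragaito theorem \cite{gt:dsokwylsagok}, which forbids two $\arc{ij}$-Scharlemann cycles of different lengths from having all their edges lie in an annular subsurface of $\partial H_W$; the cases $\chi(\Sigma) = -1$ reduce to the annular case via one further handlebody compression along a disk disjoint from $\sigma$ and $\tau$.

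\textbf{Second claim.} Suppose, for contradiction, that $x$ has two more $\edge{41}$-edges beyond those arising from the {\tt SMTM} configuration. Combined with the $\edge{41}$-edges contributed by the two \ttM\ bigons of {\tt SMTM}, we obtain at least four $\edge{41}$-edges incident to $x$. Since an $\edge{41}$-edge has one endpoint on the fat vertex $1$ and one on the fat vertex $4$ of $G_F$, a regular neighborhood $U \subset \hatF$ of the fat vertices $\{1,4\}$ together with these four edges has Euler characteristic $\chi(U) = 2 + 4 - 8 = -2$, so $\chi(\hatF \setminus U) = 0$ and $\hatF \setminus U$ is a disjoint union of annuli. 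The fat vertices $2, 3$ and all five $\edge{23}$-edges of $\sigma$ and $\tau$, being disjoint from $U$, lie in one such annulus $C$; by adjoining to $C$ a suitable disk portion of $\hatF$ bounded by one component of $\bdry C$ (or a subsurface of Euler characteristic $-1$ obtained using the two-fat-vertex structure inside $C$), one obtains a subsurface $\Sigma \subset \hatF$ of Euler characteristic $-1$ (a 3-punctured sphere or 1-punctured torus) containing all 5 $\edge{23}$-edges. This contradicts the first claim.

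\textbf{Main obstacle.} The principal difficulty is the first claim: ruling out the coexistence of $\arc{23}$-Scharlemann cycles of lengths $2$ and $3$ when their edges are jointly constrained to a subsurface of Euler characteristic $-1$. The length-three Scharlemann cycle is subtler than a length-two one because the naive identification of its three $\arc{23}$-corners to $\nbhd(\arc{23})$ yields a $2$-complex with triple points rather than a $2$-manifold, so the usual surface-based obstructions in a handlebody do not directly apply. The argument must proceed via careful compression-disk analysis in $H_W$ and invocation of the Goda--Teragaito theorem at the precise moment when the $\edge{23}$-edges are forced into an annular configuration.
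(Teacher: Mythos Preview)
Your second claim is essentially on the right track and close to the paper's argument (though you undercount: the configuration already contributes three distinct $\edge{41}$-edges at $x$ --- two from the $\arc{41}$-\SC\ and one more from the outer \ttM --- so with two more you get five, not four; but four already suffices for your Euler-characteristic count).

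The first claim, however, has a real gap. Your plan is to compress once and land in the annular case where Goda--Teragaito applies, but this does not work. Concretely: if the five $\edge{23}$-edges lie in a once-punctured torus $\Sigma \subset \hatF$, then a compressing disk $D$ for the relevant handlebody disjoint from $f_1$, $g$, and $\arc{23}$ does exist (with $\bdry D$ isotopic to $\bdry\Sigma$), and cutting along $D$ yields a solid torus $\calT_{23}$ containing the \mobius band $A_{23}$ and the twisted $\theta$-band $\Theta_{23}$. But the $\edge{23}$-edges now live on the full torus $\bdry\calT_{23}$, not an annulus, and the Goda--Teragaito obstruction does \emph{not} forbid length-2 and length-3 Scharlemann cycles on the same label pair coexisting in a solid torus. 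In fact the paper carries out exactly this analysis and finds a perfectly consistent configuration on $\bdry\calT_{23}$ (three of the five edges parallel, $\bdry A_{23}$ running twice and $\bdry\Theta_{23}$ running three times longitudinally) with no immediate contradiction.

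The missing ingredient is the rest of the {\tt SMTM} configuration, which you discard entirely. The paper uses the other \SC\ (on $\arc{41}$) to produce a second \mobius band $A_{41}$ in the companion solid torus $\calT_{41}$, and the two \ttM\ bigons to form a White annulus $A_{12,34}$ whose boundary components $\gamma_{23}, \gamma_{41}$ lie on $\bdry\calT_{23}$ and $\bdry\calT_{41}$ respectively. A case analysis on how $\gamma_{23}$ sits on $\bdry\calT_{23}$ (trivial versus $3\times$longitudinal) then leads either to a thinning of $K$ or to a new genus-2 Heegaard splitting of $M$ in which $K$ is $0$-bridge. So the eventual contradiction is a drop in bridge number, not the appearance of a projective plane, Klein bottle, or Dyck's surface as you anticipate. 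The three-punctured-sphere case is handled separately by a short labelling argument: the two edges of the length-2 \SC\ would form a separating loop, which conflicts with how the three edges of the trigon must be arranged around vertices $2$ and $3$.
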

\begin{figure}[h]
\centering
\input{SMTM+S.pstex_t}
\caption{}
\label{fig:SMTM}
\end{figure}
\begin{proof}
Assume we have the configuration of Figure~\ref{fig:SMTM}. 
Let $A_{23}$ and $A_{41}$ be the two Black \mobius bands arising from the two \SCs.    Let $A_{12,34}$ be the White annulus formed by joining $f_3$ and $f_4$ along the arcs $\arc{12}$ and $\arc{34}$.  Write $\bdry A_{12,34} = \gamma_{23} \cup \gamma_{41}$ where $\gamma_{23}$ is the component formed from edges of $g$. 

Let $\Theta_{23}$ be the Black ``twisted $\theta$-band'' gotten by identifying
the corners of the Scharlemann cycle trigon along the $\arc{23}$-arc of $K$.  
By $\bdry \Theta_{23}$ we denote the $\theta$-graph formed from the three edges of the Scharlemann cycle trigon and the vertices $2$ and $3$ that is the intersection of $\Theta_{23}$ with $\hatF$.

The edges of $f_1$ and $g$, as edges in $G_F$, cannot lie in a 
3-punctured sphere. For by Lemma~\ref{lem:parallelwithsamelabel},
the edges of $f_1$ would have to be separating in this punctured sphere
and this contradicts the labelling around vertices $2,3$ of the edges
of $g$. 

So we assume for contradiction that edges of $f_1,g$ lie in a 1-punctured
torus in $\hatF$.  But then there is a properly embedded disk $D$ in 
$H_B$ that separates $A_{41}$ from $A_{23}$ and $\Theta_{23}$ and that is
disjoint from $K$ (in the boundary of the 3-manifold gotten by 
thickening the punctured torus, the $\arc{23}$-arc of $K$, and $f_1,g$).

Then $H_B - \nbhd(D)$ is two solid tori $\calT_{41}$ and $\calT_{23}$ containing $A_{41}$ and $A_{23} \cup \Theta_{23}$ respectively.  The subgraph of $G_F$ on $\bdry \calT_{23}$ consisting of the vertices $2$ and $3$ and the edges of the two $\arc{23}$-\SCs\ has three parallel edges, two from $g$ flanking one from $f_1$ (use Lemma~\ref{lem:parallelwithsamelabel}, the fact that $\calT_{23}$ is 
a solid torus, and the labelling at
vertices $2,3$ of the edges of $f_1,g$ on $\bdry \calT_{23}$, also
see Goda-Teragaito \cite{gt:dsokwylsagok}). Furthemore $\bdry A_{23}$ lies in an annulus on $\bdry \calT_{23}$ that runs twice longitudinally and $\bdry \Theta_{23}$ lies
in an annulus running three time longitudinally along $\calT_{23}$ (consider
the lens space resulting from attaching a 2-handle to $\calT_{23}$ along 
these annuli). We may take  $\gamma_{41}$ disjoint from $D$ and contained in 
$\bdry \calT_{41}$.  Since $\gamma_{23} \subset \bdry \Theta_{23}$, it is either trivial on $\bdry \calT_{23}$ or it runs three times longitudinally around $\calT_{23}$.  Furthermore, observe that $\arc{23}$ and $\arc{41}$ have  bridge disks disjoint from $D$ and $A_{23} \cup \Theta_{23}$ and $A_{41}$.

If $\gamma_{23}$ is trivial on $\bdry \calT_{23}$.  Then it must be isotopic to $\bdry D$ on $\hatF$ since otherwise the two edges forming it would be parallel to a $\edge{23}$-edge of $f_1$ violating Lemma~\ref{lem:parallelwithsamelabel}.  Thus $A_{12,34}$ is separating and $\bdry$-parallel (else $H_B \cup D$ contains a lens space summand) onto a neighborhood of $\bdry D \subset \hatF$.  
Since there exists a bridge disk for $\arc{23}$ in $\calT_{23}$ disjoint from $D$, there is an isotopy of the arc $\arc{1234}$ onto $\hatF$ fixing the complementary arc $\arc{41}$.  Hence $K$ is at most $1$-bridge, a contradiction.  

Thus we assume $\gamma_{23}$ runs three times longitudinally around $\calT_{23}$.
If $\gamma_{41}$ bounds a disk $D'$ on $\bdry \calT_{41}$, then $\nbhd(D' \cup A_{12,34} \cup \calT_{23})$ forms a punctured $L(3,1)$.  This cannot occur since $M$ is irreducible and not a lens space.  Hence $\gamma_{41}$ is essential on $\bdry \calT_{41}$.   

Let $\calN = \calT_{23} \cup \nbhd(A_{12,34}) \cup \calT_{41}$.  If $\gamma_{41}$ is longitudinal on $\bdry \calT_{41}$, then  $\calN$ is a solid torus containing $K$ contradicting the hyperbolicity of $K$, that $t=4$, or 
Lemma~\ref{lem:AEGor}. Thus $\calN$ is a Seifert fiber space over the disk with two exceptional fibers.  Hence $M \cut \calN$ is a solid torus.  Let $H_B' = \calT_{23} \cup \nbhd(f_3) \cup \calT_{41}$ and note that, by using the bridge disks disjoint from the \SCs, $K$ is isotopic onto $\bdry H_B'$.  Viewing $\nbhd(f_4)$ as a $1$-handle attached to the solid torus $M \cut \calN$, $M \cut H_B' = (M \cut \calN) \cup \nbhd(f_4)$ is a genus $2$ handlebody.  Hence $K$ is $0$-bridge with respect to this new genus $2$ Heegaard splitting, a contradiction. Thus the edges of $f_1,g$ do not lie on a 1-punctured
torus in $\hatF$.

To prove the last sentence of the Lemma, note that the first part implies that
all $\edge{41}$-edges must lie in an annulus on $\hatF$. 
Given two more $\edge{41}$-edges with an endpoint labeled $x$, then we have at least five such total. There is then a violation of Lemma~\ref{lem:parallelwithsamelabel}.
\end{proof}

\begin{lemma}\label{lem:scBsc+BB+BB} 
Given a collection of bigons in $G_Q$ as shown in Figure~\ref{fig:scBsc+BB+BB}, then the two $\arc{34}$-\SCs\ must be parallel such that the two $\edge{34}$-edges of $g$ and $h$ are parallel.
\end{lemma}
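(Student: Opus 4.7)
The plan is to argue by contradiction. Without the figure I am working from the name \texttt{scBsc+BB+BB} and the way the lemma is invoked (for instance in Case II(iii) of Theorem~\ref{thm:tnot6}): two $\arc{34}$-\SCs\ $\sigma_1$ and $\sigma_2$ lie at a vertex, separated by a mixed bigon, while $g$ and $h$ are the two additional Black mixed bigons with $\edge{34}$-edges appearing in the flanking ``\texttt{BB}'' strings. Assume first the conclusion fails, so that either (a)\ $\sigma_1$ and $\sigma_2$ are not parallel bigons, or (b)\ the two $\edge{34}$-edges of $g$ and $h$ are not parallel in $G_F$. Let $A_1,A_2\subset H_B$ be the \mobius bands produced by $\sigma_1,\sigma_2$. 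Since $M$ contains no projective plane or Klein bottle and no Dyck's surface, $\bdry A_1$ and $\bdry A_2$ are essential, non-isotopic curves on $\hatF$.

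First I would analyse the \mobius bands using the \SC/\mobius band dictionary from section~\ref{sec:escbounds} and Lemma~\ref{lem:3disjointmobiusbands}. Because $A_1$ and $A_2$ both contain the $\arc{34}$-arc of $K$, they intersect non-transversely along it; pushing the interiors apart makes $A_1,A_2$ disjoint \mobius bands in $H_B$, and there is no third disjoint \mobius band on the Black side (otherwise a Dyck's surface). Next, banding $g$ (respectively $h$) to parallel copies of $\sigma_1$ and $\sigma_2$ along the $\arc{12}$-arc of $K$ produces a Black annulus $B_g$ (respectively $B_h$) properly embedded in $H_B$ and otherwise disjoint from $A_1\cup A_2$, with $\bdry B_g,\bdry B_h$ consisting of the $\edge{34}$-edge of $g$ (respectively $h$) together with an $\edge{34}$-edge of $\sigma_1$ and an $\edge{34}$-edge of $\sigma_2$.

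Next I would use $B_g$ and $B_h$ to force the parallelism. If the $\edge{34}$-edge of $g$ is parallel on $G_F$ to one of the $\edge{34}$-edges of $\sigma_i$, then the pair of parallel edges in $G_F$ incident at a common label will combine with the second endpoint of the banding to exhibit $\sigma_1,\sigma_2$ as parallel bigons sharing their $\edge{34}$-edges (the desired conclusion). If instead the $\edge{34}$-edge of $g$ is parallel to neither $\edge{34}$-edge of $\sigma_1$ nor $\sigma_2$, then $\bdry B_g$ is a non-trivial simple closed curve on $\hatF$ meeting $\bdry A_1$ and $\bdry A_2$ each in a single point; a $\bdry$-compression of a push-off of $B_g$ along a bridge disk for $\arc{12}$ (constructed from $g$ in the manner of Lemma~\ref{lem:btfsc}) then produces a Black disk whose boundary meets $\bdry A_1$ (or $\bdry A_2$) in a single essential intersection. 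This makes $\bdry A_1$ (or $\bdry A_2$) primitive in $H_B$. Applying Lemma~\ref{lem:btfsc} to that \mobius band yields a genus $2$ Heegaard splitting of $M$ with respect to which $K$ is $1$-bridge, contradicting $t=4$. The same argument applied to $h$ forces the $\edge{34}$-edge of $h$ to be parallel to the same $\edge{34}$-edge of $\sigma_1$ (resp.\ $\sigma_2$) that the $\edge{34}$-edge of $g$ was parallel to; otherwise the pair $\{e_g,e_h\}$ produces three $\edge{34}$-edges in $G_F$ sharing a common label, violating Lemma~\ref{lem:parallelwithsamelabel}. Assembling these parallelisms shows that $e_g$ and $e_h$ are parallel to each other and that $\sigma_1$ and $\sigma_2$ are parallel bigons with their $\edge{34}$-edges parallel.

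The main obstacle is handling the intermediate case in which $\bdry A_1$ and $\bdry A_2$ intersect once in $\hatF$: then the banded annuli $B_g,B_h$ can a priori be separating and non-$\bdry$-parallel, so one must rule out the possibility that $\bdry B_g$ or $\bdry B_h$ bounds a disk on the White side by appealing to Lemma~\ref{lem:AEGor} (no simple closed curve on $\hatF$ that is a meridian of neither side lies in a $3$-ball) and to the strong irreducibility of $\hatF$. Once that is handled, the reduction to either primitivity of $\bdry A_i$ or a Lemma~\ref{lem:parallelwithsamelabel} violation is systematic, and the conclusion that $\sigma_1,\sigma_2$ are parallel with $\edge{34}$-edges parallel to $e_g,e_h$ follows.
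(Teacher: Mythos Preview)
Your reconstruction of the configuration is wrong, and this makes the whole approach collapse. In Figure~\ref{fig:scBsc+BB+BB} the triple {\tt scBsc} is $h_{12},h,h_{34}$: a Black $\arc{12}$-\SC\ and a Black $\arc{34}$-\SC\ flanking the \emph{White} mixed bigon $h$. The two further pairs are $f,f_{41}$ (a Black mixed bigon together with a White $\arc{41}$-\SC) and $g,g_{34}$ (a White mixed bigon together with the second Black $\arc{34}$-\SC). So the two $\arc{34}$-\SCs\ being compared are $h_{34}$ and $g_{34}$, which are not adjacent, and $g,h$ are White, not Black. Your banding of $g$ and $h$ ``along the $\arc{12}$-arc'' to copies of the $\arc{34}$-\SCs\ does not make sense (wrong color, wrong corners), and the faces $f$ and $f_{41}$, which carry the real content, never appear in your argument. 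Also, Lemma~\ref{lem:btfsc} needs an \FESC\ as input, which you do not have here.

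The paper's proof is quite different. One cuts $H_B$ along the \mobius bands $A_{12}$ and $A_{34}$ coming from $h_{12},h_{34}$; in the resulting handlebody the Black bigon $f$ becomes a compressing disk meeting each impression once, and this forces $g_{34}$ into one of two positions relative to $f$ (Figure~\ref{fig:one12one34one1234}). If $g_{34}$ is not parallel to $h_{34}$, the full edge pattern on $\hatF$ is determined, and one locates $\bdry A_{41}$ and builds explicit bridge disks showing $\bdry A_{41}$ is primitive in $H_B$; attaching $\nbhd(A_{41})$ then gives a genus~$2$ splitting with $K$ $1$-bridge. If $g_{34}$ \emph{is} parallel to $h_{34}$ but the $\edge{34}$-edges of $g$ and $h$ are not parallel, then $g\cup h$ assembles into a White annulus with $\bdry A_{34}$ as one boundary component, producing a long \mobius band as if from an \ESC; Lemma~\ref{lem:esc4} then forces the Black bigon $f$ to be an \SC, a contradiction. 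You should rebuild your argument around $f$ and $f_{41}$ rather than around $g$ and $h$.
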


\begin{figure}
\centering
\input{scBsc+BB+BB.pstex_t}
\caption{}
\label{fig:scBsc+BB+BB}
\end{figure}

\begin{proof}
Assume we do have the collection of bigons shown in Figure~\ref{fig:scBsc+BB+BB}.  Let $f$, $g$, and $h$ denote the bigons as shown.  Let $A_{12}$ and $A_{34}$ be the Black \mobius bands arising from the two Black \SCs\ $h_{12}$ and $h_{34}$ in the run of $3$ bigons.  Let $A_{34}'$ be the Black \mobius band arising from the remaining Black \SC\ $g_{34}$.  Let $A_{41}$ be the White \mobius band arising from the White \SC\ $f_{41}$.     

Chop open $H_B$ along $A_{12}$ and $A_{34}$ to form the genus $2$ handlebody $H_B'$.  These leave annular impressions $\tilde{A}_{12}$ and $\tilde{A}_{34}$ that are each primitive on $H_B'$.  The bigon $f$ becomes a compressing disk that traverses the impressions $\tilde{A}_{12}$ and $\tilde{A}_{34}$ each once.  Further chopping along $f$ leaves a solid torus in which the \SC\ $g_{34}$ may only have two positions.  Figure~\ref{fig:one12one34one1234} shows the two possibilities of $g_{34}$ in $H_B'$ with respect to $f$.  Reforming $H_B$ by gluing $\tilde{A}_{12}$ and $\tilde{A}_{34}$ back into $A_{12}$ and $A_{34}$, we observe that the two $\arc{34}$-\SCs\ either have no two edges parallel or are parallel.

\begin{figure}
\centering
\input{one12one34one1234.pstex_t}
\caption{}
\label{fig:one12one34one1234}
\end{figure}

Assume no pair of edges of the two Black $\edge{34}$-\SCs\ are parallel as in Figure~\ref{fig:one12one34one1234}(b).  The complement in $\hatF$ of the subgraph of $G_F$ induced by the edges of the Black bigons is seen to be  one annulus and two disks.  The annulus does not meet the vertices $1$ or $2$.  Each disk meets each of the four vertices of $G_F$.  Around the boundary of one disk we see the vertices in the cyclic order $143412$; around the other we see $234321$.  The $\edge{41}$-edge of $f$ appears as the subarcs of the boundary of the first disk joining the consecutive $1$ and $4$ vertices.  The $\edge{41}$-edge of $f_{41}$ that is not an edge of $f$ cannot be in the first disk, since then $\bdry A_{41}$ would be isotopic to $\bdry A_{12}$ and a Klein bottle could be formed.  Thus it must be an edge of the second disk, and this choice is unique.

We can now find bridge disks for the arcs $\arc{12}$ and $\arc{34}$ in $H_B$ 
that guide isotopies of these arcs (rel $\bdry$) to arcs on $\bdry H_B$ that
are disjoint from $\bdry A_{41}$ except at vertices $1$ and $4$, and which arcs
are incident to $\bdry A_{41}$ on the same side. Furthermore,
we see that $\bdry A_{41}$ is primitive in $H_B$ (e.g.\ boundary compressing
$A_{12}$ along the above bridge disk for $\arc{12}$ gives a disk intersecting
$\bdry A_{41}$ once). Attaching a neighborhood of 
$A_{41}$ to $H_B$ forms a new genus $2$ handlebody $H_B''$, whose 
complement $H_W'' = H_W - \nbhd(A_{41})$ is a genus $2$ handlebody. 
As in the argument of Lemma~\ref{lem:btfsc}, $K$ can be isotoped to be 1-bridge
with respect to this new splitting (as the union of the arc $(12341)$, properly
isotopic to the bridge arc $(23)$ in $H_W''$, and an arc in $H_B''$ that is
a cocore of the attaching annulus $\nbhd(A_{41}) \cap H_B$). 
This contradicts the minimality of the 
presentation of $K$.

Hence the two $\arc{34}$-\SCs\ are parallel as in Figure~\ref{fig:one12one34one1234}(a).  Assume the  $\edge{34}$-edges of $g$ and $h$ are not parallel.  Then after an isotopy of the $\edge{34}$-edge of $g$, these two edges form $\bdry A_{34}$.  Thus we may regard $g \cup h$ as a White annulus $A_{23,41}$ that has $\bdry A_{34}$ as a boundary component.  Thus $A_{34} \cup A_{23,41}$ is a long \mobius band as if it arose from an \ESC\ centered at a Black \SC.  The argument of Lemma~\ref{lem:esc4} now applies to show that the Black bigon $f$ should have been an \SC.
\end{proof}

\begin{lemma}\label{lem:SMSandSMS}
There cannot be two triples of {\tt SMS} on the same corner at a vertex of $\Lambda$.
\end{lemma}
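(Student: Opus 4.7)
The plan is to derive a contradiction by combining Lemma~\ref{lem:parallelann} with Lemma~\ref{lem:parallelwithsamelabel}. Suppose for contradiction that two {\tt SMS} triples occur on the same corner at a vertex $x \in \Lambda$. After relabeling the labels of $G_Q$, I may assume both are $1$-\ESCs\ on corner $\arc{1234}$ at $x$, each consisting of a central White $\arc{23}$-\SC\ flanked by two Black $\arc{12},\!\arc{34}$-bigons.

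First I would invoke Lemma~\ref{lem:parallelann}: since the two \ESCs\ have central \SCs\ with the same label pair, their extending Black annuli are parallel in $H_B-\nbhd(K)$, and the corresponding faces of the two \ESCs\ are parallel as faces of $G_Q$. Next I would single out the pair of matched Black bigons whose $\arc{12}$-corners both lie at $x$: call them $f$ (from $\ESC_1$) and $f'$ (from $\ESC_2$). By the definition of parallel faces in section~\ref{sec:fatgraphs}, the parallelism of $f$ and $f'$ matches $\arc{12}$-corner to $\arc{12}$-corner via a rectangle on $\bdry\nbhd(K)$, and the correspondingly adjacent edges via parallelisms on $F$. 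Tracking labels, the $\edge{23}$-edge of $f$ (which is also an edge of the central \SC\ of $\ESC_1$) is incident to $x$ at label $2$ and must be parallel on $G_F$ to the $\edge{23}$-edge of $f'$, which is likewise incident to $x$ at label $2$. These two parallel edges of $G_F$ meeting $x$ at the same label contradict Lemma~\ref{lem:parallelwithsamelabel}, completing the proof.

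The main obstacle will be verifying that the parallelism provided by Lemma~\ref{lem:parallelann} pairs $f$ with $f'$ rather than with the Black bigon of $\ESC_2$ whose $\arc{34}$-corner lies at $x$. This pairing is forced by the requirement that corresponding corners of parallel faces be glued through rectangles on $\bdry\nbhd(K)$, so an $\arc{12}$-corner must match with an $\arc{12}$-corner; once this matching is pinned down, the rest of the argument is essentially a bookkeeping of labels.
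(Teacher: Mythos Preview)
Your proof has a fundamental misidentification of the configuration. An {\tt SMS} triple at a vertex consists of a Scharlemann cycle, then a mixed bigon, then another Scharlemann cycle in that order around the vertex. On the corner $\arc{2341}$, say, this means a $\arc{23}$-\SC, a Black $\arc{34},\!\arc{12}$-mixed bigon, and a $\arc{41}$-\SC. In particular the two \SCs\ are on \emph{different} label pairs and the mixed bigon is in the middle. This is not an \ESC: an \ESC\ on corner $\arc{1234}$ is the pattern {\tt MSM}, a central $\arc{23}$-\SC\ flanked by Black mixed bigons. You have treated the {\tt SMS} triples as if they were \ESCs, and so your invocation of Lemma~\ref{lem:parallelann} (which concerns two \ESCs\ with central \SCs\ on the same labels) is not justified. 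Nothing in the hypotheses gives you a pair of \ESCs, let alone \ESCs\ with matching core labels.

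The paper's argument is quite different. Each {\tt SMS} triple contributes a $\arc{23}$-\SC\ and a $\arc{41}$-\SC, so across the two triples there are two $\arc{23}$-\SCs\ and two $\arc{41}$-\SCs. Lemma~\ref{lem:4bsc} then forces either a pair of parallel like-labeled \SCs\ or a parallel pair of edges within each like-labeled pair. In either case, an outer edge of one triple becomes parallel on $G_F$ to an edge of the central mixed bigon of the other. One then observes that this mixed bigon, together with a mixed bigon of the first triple, simulates the flanking bigons of an \ESC\ around the relevant \SC, and applies the argument of Lemma~\ref{lem:esc4g} (together with the other \SC\ of the second triple) to produce a thinning of $K$. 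The key structural point you are missing is that {\tt SMS} triples are not themselves \ESCs; one has to manufacture an \ESC-like situation from edge parallelisms before the \ESC\ machinery can be brought to bear.
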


\begin{proof}
Assume there are two such triples on the corner $\arc{2341}$ of a vertex of $\Lambda$.  Then each triple contains a $\arc{23}$-\SC\ and a $\arc{41}$-\SC.  By Lemma~\ref{lem:4bsc} either each pair of like-labeled \SCs\ has a pair of parallel edges or one pair of the \SCs\ is parallel.  

Thus an outside edge of one triple must be parallel on $G_F$ to an edge of the
middle mixed bigon, $f$, of the other triple. Say this outside edge belongs 
to a $\arc{23}$-\SC\ of the first triple. Then the faces of the first triple,
along with $f$ and the $\arc{41}$-\SC\ of the second triple, can be used
in the argument of Lemma~\ref{lem:esc4g} to find a thinning. (The two mixed
bigons form the equivalent of an \ESC\ about this $\arc{23}$-\SC).
\end{proof}


\section{Lemma~\ref{lem:dbfg} and its proof}
Throughout this section assume $t=4$, there are no Dyck's surfaces embedded in $M$, and we are in \situationnscc.

\begin{lemma}
\label{lem:dbfg}
Assume the configurations shown in Figure~\ref{fig:dbfgconfig} appear in $\Lambda$.  Then
\begin{enumerate}
\item $e_3$ is incident to opposite sides of $e_2 \cup e_6$,
\item $e_4$ is incident to opposite sides of $e_1 \cup e_5$,
\item $\bdry A_{34}$ transversely intersects each component of $\bdry A_{12,34}$ once, and 
\item neither $f_0$ nor $f_5$ is a bigon.
\end{enumerate}
Here $A_{34}$ is the Black \mobius band arising from the \SC\ $f_6$ and $A_{12,34}$ is the Black annulus arising from gluing $f_1$ and $f_4$ together along $\arc{12}$ and $\arc{34}$.
\end{lemma}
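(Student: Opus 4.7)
The proof plan is to treat the four conclusions as a chain of consequences of the geometry of the Black annulus $A_{12,34}$ and \mobius band $A_{34}$ on $\hatF$, using the lens-space/Klein-bottle/Dyck's-surface obstructions together with the lemmas already established in the section.

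For part (3), I would argue first, since (1), (2) and (4) will use it. The two boundary components of $A_{12,34}$ each meet $\partial A_{34}$ algebraically once at the $\arc{34}$ vertex, so the only alternatives to transverse geometric intersection $=1$ are (i) a tangency at vertex $3$ or $4$ that can be resolved to make $A_{34}$ disjoint from one component of $\partial A_{12,34}$, or (ii) extra geometric intersection at the other vertex. In case (i) I would push $A_{34}$ off of that component and then observe that $A_{34}$ together with one of the two sub-annuli of $A_{12,34}$ (or its companion on $\hatF$) produces three mutually disjoint \mobius bands or an annulus with distinct, non-parallel boundary components disjoint from a \mobius band, violating Lemma~\ref{lem:3disjointmobiusbands} or Lemma~\ref{lem:disjtmobiusannulus}. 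Case (ii) would force $\partial A_{34}$ to be isotopic in $\hatF$ to a component of $\partial A_{12,34}$, producing a Klein bottle from $A_{34}$ and (half of) $A_{12,34}$ via the parallelism of Lemma~\ref{lem:escfortis4}-style arguments, which is impossible since $\Delta\ge 3$.

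For parts (1) and (2), the statement is that the middle White edges $e_3,e_4$ of the configuration traverse the annulus cut out of $\hatF$ by the $\edge{12}$-edges $e_1,e_5$ (resp.\ the $\edge{34}$-edges $e_2,e_6$) of $f_1,f_4$ as spanning arcs. If instead, say, $e_3$ had both endpoints on the same side of $e_2 \cup e_6$ in $\hatF$, then together with the parallelism coming from $f_1,f_4$ the faces could be amalgamated into a Black disk in $H_B$ disjoint from $K$ whose boundary either bounds a disk in $\hatF$ (contradicting Lemma~\ref{lem:parallelwithsamelabel}) or is a meridian of $H_B$ intersecting $\partial A_{34}$ at most once. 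In the latter case $\partial A_{34}$ would become primitive in $H_B$, and Lemma~\ref{lem:btfsc} applied to this \mobius band would produce a $1$-bridge presentation of $K$ with respect to a genus $2$ splitting, contradicting $t=4$. The symmetric argument exchanging the roles of $\{1,2\}$ and $\{3,4\}$ handles (2).

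For part (4), the trick is that the hypotheses of the lemma already give an \ESC-like configuration of two Black bigons joined by a White face to a central $\arc{34}$-\SC. If $f_0$ (the outside face beyond $f_1$) were itself a bigon, then $f_0,f_1,f_2,f_3,f_4$ would either produce a second \ESC\ sharing labels with $A_{12,34}$, or an \FESC. The first option runs into Lemma~\ref{lem:notwoescs} (or Lemma~\ref{lem:parallelann} forcing a forbidden parallelism of two extending annuli that share edges with $A_{34}$), while the second option contradicts Lemma~\ref{lem:esc+fesc} or Lemma~\ref{lem:esc4g} via the already-present $\arc{34}$-\SC\ $f_6$. The same argument works verbatim on the $f_5$ side by symmetry.

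The main obstacle I anticipate is part (3), particularly ruling out the tangential case without the cleaner combinatorial input available in the other parts: one must carefully track how pushing $A_{34}$ off a component of $\partial A_{12,34}$ interacts with the side of $\hatF$ on which each surface sits, and ensure that the resulting disjoint \mobius bands are honestly properly embedded (as we are in \situationnscc, this is automatic, but it is the step that forces us into that hypothesis). Once (3) is in hand, (1), (2) and (4) are essentially bookkeeping against the established handlebody-combinatorics lemmas.
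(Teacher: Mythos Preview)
Your plan has two genuine gaps.

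First, you never address whether the Black annulus $A_{12,34}$ is separating or non-separating in $H_B$, and this is the crux of the whole lemma. In the paper the very first (and longest) step is Claim~\ref{claim:dbfgclaim1}: $A_{12,34}$ must be non-separating. This takes real work --- three subclaims locating $e_3,e_4,e_7,e_8$ relative to the annulus $A\subset\hatF$ bounded by $\bdry A_{12,34}$, each ruled out by producing an $\RP^2$, a Klein bottle, or three disjoint \mobius bands. Only once $A_{12,34}$ is known to be non-separating can one (for part (3)) find a non-separating compressing disk of $H_B$ disjoint from both $A_{12,34}$ and $A_{34}$, cut down to a solid torus, and force a component of $\bdry A_{12,34}$ to be isotopic to $\bdry A_{34}$, invoking Proposition~\ref{prop:primitivemobius} or Lemma~\ref{lem:3disjointmobiusbands}. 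And only after (3) can one form $N=\nbhd(A_{12,34}\cup A_{34})$, identify $\hatF\cut N$ as a longitudinal annulus on a solid torus, and carry out the case analysis for parts (1) and (2) by studying where $\bdry A_{41}$ and $\bdry A_{23}$ (the \emph{White} \mobius bands, not a ``Black disk'') can sit. Your proposed amalgamation into a Black disk for (1),(2) does not produce the right object: the relevant contradictions come from the White \mobius bands forming a long \mobius band, a Dyck's surface, or a Klein bottle with pieces of $A_{12,34}$, and the reference to Lemma~\ref{lem:btfsc} is inapposite since no \FESC\ is present.

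Second, your argument for part (4) misreads the configuration. In Figure~\ref{fig:dbfgconfig} there is a \emph{gap} at the $\arc{34}$-corner between $f_2$ and $f_3$, so $f_0,f_1,f_2,f_3,f_4$ are not consecutive and cannot form an \ESC\ or \FESC. Moreover, if $f_0$ is a bigon it is a White $\arc{41}$-\SC, so $f_0,f_1,f_2$ is an {\tt SMS} run, not an \ESC. The paper's actual argument lets $A_{41}'$ be the \mobius band from $f_0$ and compares the $\edge{41}$-edges of $f_0$ and $f_3$: if no two are parallel one gets four non-parallel $\edge{41}$-edges in a once-punctured torus (impossible), and if two are parallel one obtains a long \mobius band $A_{41}\cup A_{12,34}$ or $A_{41}'\cup A_{12,34}$ and thins $K$ via the primitivity established after part (3). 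None of Lemmas~\ref{lem:notwoescs}, \ref{lem:parallelann}, \ref{lem:esc+fesc}, or \ref{lem:esc4g} applies here.
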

\begin{figure}
\centering
\input{dbfgconfig.pstex_t}
\caption{}
\label{fig:dbfgconfig}
\end{figure}

\begin{proof}
In addition to forming the Black annulus $A_{12,34}$ and Black \mobius band $A_{34}$ from $f_1$, $f_4$, and $f_6$,  the White \SCs\ $f_2$ and $f_3$ form White \mobius bands $A_{23}$ and $A_{41}$ respectively. By 
Lemma~\ref{lem:parallelwithsamelabel}, no component of $\partial A_{12,34}$ is
trivial in $\hatF$.

\begin{claim}
\label{claim:dbfgclaim1}
The annulus $A_{12,34}$ is non-separating in $H_B$.
\end{claim}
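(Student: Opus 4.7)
The plan is to argue by contradiction: assume that $A_{12,34}$ is separating in the genus-$2$ handlebody $H_B$. First I would observe that by Lemma~\ref{lem:parallelwithsamelabel} applied to the $\edge{23}$- and $\edge{41}$-edges coming from $f_1$ and $f_4$, neither component of $\bdry A_{12,34}$ bounds a disk in $\hatF$. By the classification of separating annuli in a genus-$2$ handlebody (section~\ref{sec:annuli}), the two components of $\bdry A_{12,34}$ are then parallel on $\hatF$, cobounding an annulus $B_{12,34}$; together with $A_{12,34}$, the annulus $B_{12,34}$ bounds a solid torus $\calT$ in $H_B$. The arcs $\arc{12}$ and $\arc{34}$ of $K$, being cocores of the $K$-strips of $A_{12,34}$, lie in $\calT$, and $\calT$ guides an isotopy (rel endpoints) of these arcs onto spanning arcs of $B_{12,34} \subset \hatF$.

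Next I would examine the Black \mobius band $A_{34}$ arising from the $\arc{34}$-\SC\ $f_6$. Since we are in \situationnscc, $A_{34}$ is properly embedded in $H_B$, and $A_{34} \cap A_{12,34}$ may be taken to consist of the arc $\arc{34}$ together with (after surgery along innermost intersection curves) a collection of arcs that can be removed by banding along subdisks of $f_6$. So we may take $A_{34}$ to meet $A_{12,34}$ only in $\arc{34}$, and after pushing the interior of $A_{34}$ off of $A_{12,34}$ on one side, $A_{34}$ becomes properly embedded either in $\calT$ or in $H_B \cut \calT$. If $A_{34}$ lies in $\calT$, then its boundary is either meridional in $\calT$ (forcing an embedded projective plane in $M$, contradiction) or runs $n \geq 2$ times longitudinally in $\calT$, in which case $\nbhd(\calT \cup A_{34})$ is a Seifert fiber space over the disk with two exceptional fibers containing $\arc{12} \cup \arc{23} \cup \arc{34}$ as a spanning arc of a properly embedded \mobius band, and Lemma~\ref{lem:sSFS1bridge} produces a genus-$2$ Heegaard splitting of $M$ in which $K$ is $0$-bridge, contradicting $t=4$.

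Therefore $A_{34}$ lies in $H_B \cut \calT$. The plan here is to combine the parallelism $A_{12,34} \sim B_{12,34}$ through $\calT$ with a suitable bridge-disk or sub-face construction from $f_6$ to produce one of two contradictions. Either the resulting configuration yields a bridge disk for $\arc{34}$ whose interior is disjoint from the interior of $B_{12,34}$, in which case the parallelism of $A_{12,34}$ to $B_{12,34}$ together with this disk thins $K$ (as in the argument of Lemma~\ref{lem:esc4g}); or the three boundary curves $\bdry A_{23}, \bdry A_{41}, \bdry A_{34}$ of the \mobius bands arising from $f_2, f_3, f_6$ end up in three distinct essential isotopy classes on $\hatF$. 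In the latter case Lemma~\ref{lem:3disjointmobiusbands} (or Lemma~\ref{lem:disjtmobiusannulus} paired with $A_{12,34}$) produces an embedded Dyck's surface in $M$, contradicting our standing hypothesis.

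The main obstacle I expect is the bookkeeping of the various subcases for how $\bdry A_{34}$ sits on $\hatF$ relative to $B_{12,34}$: specifically, whether both $\edge{34}$-edges of $f_6$ lie inside $B_{12,34}$, both lie outside, or one on each side, and correspondingly whether $\bdry A_{34}$ can be made parallel to a component of $\bdry A_{12,34}$ (producing a Klein bottle and a contradiction) or must be genuinely transverse to it. The delicate step will be ruling out, in the ``boundary-parallel but winding'' subcase of $A_{12,34}$, the possibility that the ensuing Seifert-fibered piece is isotopic to all of $M$ minus a solid torus; here one appeals to the hyperbolicity of $K'$ and Lemma~\ref{lem:mnosfs} to prevent $M$ from being Seifert fibered.
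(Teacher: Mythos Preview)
Your approach has a genuine gap rooted in a misreading of the configuration. In Figure~\ref{fig:dbfgconfig} there is a gap between $f_2$ and $f_3$: the face $f_2$ is a $\arc{23}$-\SC\ while $f_3$ is a $\arc{41}$-\SC, so $\bdry A_{23} = e_2 \cup e_3$ is \emph{not} a component of $\bdry A_{12,34}$ (whose component through vertices $2,3$ is $e_2 \cup e_6$), and $A_{23} \cup A_{12,34}$ is not a long \mobius band. Your treatment of the case ``$A_{34}$ lies in $\calT$'' then breaks down: since $A_{34} \subset \calT$ one has $\nbhd(\calT \cup A_{34}) = \calT$, not a Seifert fiber space with two exceptional fibers; and the White arc $\arc{23} \subset H_W$ cannot lie in any subset of $H_B$, so the asserted spanning arc $\arc{1234}$ of a \mobius band there makes no sense. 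There is a second independent gap: you assert that $A_{34}$ can be pushed entirely to one side of $A_{12,34}$, but this requires the intersection along $\arc{34}$ to be tangential, and you give no argument for this. Your appeal to Lemma~\ref{lem:mnosfs} is also misplaced, since the standing hypothesis of this section is that thin position equals bridge position.

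The paper's argument stays on $\hatF$ and never uses $\bdry$-parallelism of $A_{12,34}$. Letting $A \subset \hatF$ be the annulus with $\bdry A = \bdry A_{12,34}$, three subclaims pin down where $e_3, e_4, e_7, e_8$ lie relative to $A$: all four must lie outside $A$, and at vertices $3$ and $4$ the endpoints of $e_3, e_4$ must lie in the corners cut off by $e_7, e_8$ away from $A$, for otherwise one either builds a Klein bottle or $\RP^2$, or perturbs $A_{23}, A_{34}, A_{41}$ into three disjoint \mobius bands contrary to Lemma~\ref{lem:3disjointmobiusbands}. The resulting subgraph makes $P = \nbhd(A \cup e_3 \cup e_4)$ a four-punctured sphere whose two complementary annuli in $\hatF$ have $\bdry A_{23}$ and $\bdry A_{41}$ as their respective cores; the presence of the $\edge{34}$-edge $e_7$ or $e_8$ in one of these annuli then gives the contradiction.
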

\begin{proof}
Assume $A_{12,34}$ is separating in $H_B$.  Then $\bdry A_{12,34}$ bounds an 
annulus $A$ in $\partial H_B$.  Furthermore, no edge of $G_F$ may be incident to opposite sides of either $e_2 \cup e_6$ or $e_1 \cup e_5$.  

{\bf Subclaim 1a:} Both $e_3$ and $e_4$ are disjoint from $A$.

\begin{proof}
By labelings, if $e_3$ is incident to just one side of $e_2 \cup e_6$, then $e_4$ must be incident to just one side of $e_1 \cup e_5$ as indicated in Figure~\ref{fig:dbfgclaim1} (i) where neither $e_3$ nor $e_4$ lie in $A$ or (ii) where both $e_3$ and $e_4$ lie in $A$.   However in Figure~\ref{fig:dbfgclaim1}(ii) either $\bdry A_{23}$ or $\bdry A_{41}$ is trivial in $A$ or both are isotopic to the core of $A$; hence an embedded $\RP^2$ or Klein bottle may be created.  (As drawn in Figure~\ref{fig:dbfgclaim1}(ii), $\bdry A_{41}$ is trivial and a $\RP^2$ may be created.)  Thus the edges must appear as in Figure~\ref{fig:dbfgclaim1}(i).
\begin{figure}
\centering
\input{dbfgclaim1.pstex_t}
\caption{}
\label{fig:dbfgclaim1}
\end{figure}
\end{proof}

{\bf Subclaim 1b:}  Both edges $e_7$ and $e_8$ of $f_6$ are disjoint from $A$.

\begin{proof}
By labelings, if either $e_7$ or $e_8$ were to lie in $A$ then so would the other.  Hence $\bdry A_{34}$ would be isotopic to the core of $A$.  Thus $A_{23}$, $A_{34}$, and $A_{41}$ are three disjoint \mobius bands, each properly embedded in either $H_B$ or $H_W$.  This contradicts Lemma~\ref{lem:3disjointmobiusbands}. 
\end{proof}

{\bf Subclaim 1c:}  For $i=3,4$, let $C_i$ be the corner on vertex $i$ cobounded by the edges $e_7$ and $e_8$ of $f_6$ that is disjoint from $A$.  Then $e_3$ must be incident to $C_3$ and $e_4$ must be incident to $C_4$.
Consequently $e_3$ is not parallel to either $e_2$ or $e_6$ and $e_4$ is not parallel to either $e_1$ or $e_5$.

\begin{proof}
By labelings, $e_3$ is incident to $C_3$ if and only if $e_4$ is incident to $C_4$.  Thus assume neither is incident to $C_3$ or $C_4$.  Then we may perturb $A_{34}$ to be disjoint from $A_{23}$ and $A_{41}$.   Again, this contradicts Lemma~\ref{lem:3disjointmobiusbands}.  

Since the edges $e_7$ and $e_8$ separate both $e_3$ from $e_2$ and $e_6$ at vertex $3$ and $e_4$ from $e_1$ and $e_5$ at vertex $4$, no pairs of these edges may be parallel.
\end{proof}

As a consequence of these subclaims, the subgraph of $G_F$ induced by the edges $e_1, \dots, e_8$ appear as in Figure~\ref{fig:dbfggraph} with possibly $e_7$ and $e_8$ swapped.  
\begin{figure}
\centering
\input{dbfggraph.pstex_t}
\caption{}
\label{fig:dbfggraph}
\end{figure}

Note that $P = \nbhd(A \cup e_3 \cup e_4) \subset \bdry H_B$ is a $4$-punctured sphere whose complement in $\bdry H_B$ is two annuli $A_1$ and $A_2$.  Moreover $\bdry A_{23}$ is isotopic to the core of one of these annuli and $\bdry A_{41}$ is isotopic to the core of the other;  they cannot be isotopic to the same core since together they would form a Klein bottle.  However, since $e_8$ (or $e_7$) lies outside of $P$, it lies in, say, 
$A_1$. But then $A_1$ connects $\bdry A_{23}$ to $\bdry A_{41}$.

This finishes the proof of Claim~\ref{claim:dbfgclaim1}.
\end{proof}

\begin{claim}
\label{claim:dbfgclaim2}
$\bdry A_{34}$ transversely intersects each component of $\bdry A_{12,34}$ just once.
\end{claim}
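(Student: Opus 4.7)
The plan is to establish the intersection count at the two potential sites of intersection --- vertex $3$ of $G_F$ for $\gamma_{23}$ and vertex $4$ for $\gamma_{41}$ --- by combining an upper bound of $1$ with a matching lower bound. Since $\bdry A_{34}$ traverses vertices $3$ and $4$ of $G_F$ while the components $\gamma_{23},\gamma_{41}$ of $\bdry A_{12,34}$ traverse the vertex pairs $\{2,3\}$ and $\{1,4\}$ respectively, any intersection is confined to these vertices. At each such vertex $i$, both curves run along a single arc on $\partial D_i$ (the impressions of the respective faces $f_6$ on one hand and $f_1\cup f_4$ on the other), and may be viewed as chords of $D_i$ with endpoints at the relevant $G_Q$-vertex labels. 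Two chords in a disk meet transversely at most once, and do so precisely when their endpoint pairs interleave on $\partial D_i$. This yields the upper bound $|\bdry A_{34}\cap \gamma_{23}|\leq 1$ at vertex $3$ and $|\bdry A_{34}\cap \gamma_{41}|\leq 1$ at vertex $4$.

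For the matching lower bound, I appeal to Lemma~\ref{lem:3disjointmobiusbands}. The two White \mobius bands $A_{23},A_{41}\subset H_W$ (from the \SCs\ $f_2,f_3$) and the Black \mobius band $A_{34}\subset H_B$ (from $f_6$) have pairwise disjoint interiors: $A_{23}$ and $A_{41}$ because the arcs $\arc{23}$ and $\arc{41}$ are disjoint in $K\cap H_W$ and their bigon faces can be taken disjoint in $H_W$, and the pair $\{A_{23},A_{41}\}$ versus $A_{34}$ because they lie in opposite handlebodies. Moreover $\bdry A_{23}$ and $\bdry A_{41}$ are disjoint in $\hatF$ since their vertex sets $\{2,3\}$ and $\{1,4\}$ are disjoint. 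Consequently, if $\bdry A_{34}$ were also disjoint from $\bdry A_{23}\cup\bdry A_{41}$, the three \mobius bands would be mutually disjoint, and Lemma~\ref{lem:3disjointmobiusbands} would produce an embedded Dyck's surface in $M$, contradicting our standing assumption. So $\bdry A_{34}$ must meet $\bdry A_{23}$ at vertex $3$ or $\bdry A_{41}$ at vertex $4$.

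The final step is to translate the forced meeting of $\bdry A_{34}$ with $\bdry A_{23}$ (resp.\ $\bdry A_{41}$) into the desired meeting with $\gamma_{23}$ (resp.\ $\gamma_{41}$), and to ensure that \emph{both} meetings actually occur. Here I use parts (1) and (2) of the Lemma together with Claim~\ref{claim:dbfgclaim1}: part (1) constrains the cyclic order of the endpoints of $e_1,e_2,e_3,e_5,e_6$ on $\partial D_3$ (and part (2) does the symmetric job on $\partial D_4$), while Claim~\ref{claim:dbfgclaim1} rules out the parallelisms that would identify distinct $G_Q$-labels on $\partial D_i$. Tracking which arcs of $\partial D_3$ lie on the $\arc{34}$-side (used by $\bdry A_{34}$) versus the $\arc{23}$-side (used by $\gamma_{23}$), one reads off the interleaving of the label pair $\{a,b\}$ (the $G_Q$-vertices of the corners of $f_6$) with the label pair coming from $f_1,f_4$ on $\partial D_3$, and similarly on $\partial D_4$. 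The main obstacle will be this last combinatorial bookkeeping --- in particular distinguishing the impression of $\gamma_{23}$ from the nearby impression of $\bdry A_{23}$ and producing the interleaving at both vertices $3$ and $4$ simultaneously. A potentially cleaner alternative is to argue directly by contradiction: if the intersection at, say, vertex $3$ were empty, one could use Claim~\ref{claim:dbfgclaim1} (so that $H_B\cut A_{12,34}$ is a single solid torus) to isotope $A_{34}$ off of $A_{12,34}$, producing three mutually disjoint \mobius bands and thereby reducing to Lemma~\ref{lem:3disjointmobiusbands}.
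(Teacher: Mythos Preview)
Your main argument is circular. You invoke parts (1) and (2) of Lemma~\ref{lem:dbfg} to pin down the cyclic order of edge-endpoints at vertices $3$ and $4$, but in the paper's proof these are exactly Claims~\ref{claim:dbfgclaim3} and \ref{claim:dbfgclaim4}, which are proved \emph{after} Claim~\ref{claim:dbfgclaim2} and rely on the graph configuration of Figure~\ref{fig:dbfggraph3} --- a configuration that only becomes available once Claim~\ref{claim:dbfgclaim2} is established. So at this stage of the argument, parts (1) and (2) are not available to you.

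Your alternative route also has gaps. First, $H_B\cut A_{12,34}$ is a genus~$2$ handlebody, not a solid torus: by Claim~\ref{claim:dbfgclaim1} the annulus is non-separating, and cutting a genus~$2$ handlebody along such an annulus gives a connected genus~$2$ handlebody (see \S\ref{sec:annuli}). Second --- and this is the real issue --- isotoping $A_{34}$ off $A_{12,34}$ does not by itself yield three mutually disjoint \mobius bands: you still need $\bdry A_{34}$ disjoint from $\bdry A_{23}$ and $\bdry A_{41}$ on $\hatF$, and those are \emph{different} curves from the components $\gamma_{23},\gamma_{41}$ of $\bdry A_{12,34}$. The paper handles precisely this. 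After perturbing $A_{34}$ off $A_{12,34}$, it finds a non-separating disk $D\subset H_B$ disjoint from both, cuts to a solid torus, and sees that $A_{12,34}$ is the frontier of a neighborhood of $A_{34}$ there; hence a component of $\bdry A_{12,34}$ is parallel on $\hatF$ to $\bdry A_{34}$ across a region $\delta$. If $e_3$ (or $e_4$) lies in $\delta$, then $A_{23}\cup A_{12,34}$ (or $A_{41}\cup A_{12,34}$) is a long \mobius band, and Proposition~\ref{prop:primitivemobius} forces $\bdry A_{23}$ to be primitive in $H_B$ --- contradicting the solid-torus picture, in which no component of $\bdry A_{12,34}$ can be primitive. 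Only when neither $e_3$ nor $e_4$ lies in $\delta$ do $A_{23},A_{34},A_{41}$ become disjoint and Lemma~\ref{lem:3disjointmobiusbands} finishes. Your proposal omits this case split and the use of Proposition~\ref{prop:primitivemobius}.
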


\begin{proof}
Assume otherwise.  Then the subgraph of $G_F$ induced by the edges of $f_1$, $f_4$, and $f_6$ appears as in Figure~\ref{fig:dbfggraph2} (disregard $\delta$ and $D$ for now).
\begin{figure}
\centering
\input{dbfggraph2.pstex_t}
\caption{}
\label{fig:dbfggraph2}
\end{figure}
Note that $\bdry A_{34}$ may be perturbed to be disjoint from $A_{12,34}$.  
Since $A_{12,34}$ is a Black annulus and $A_{34}$ is a Black \mobius band, there is a non-separating compressing disk $D$ for $H_B$ that is disjoint from both.  Cutting $H_B$ along $D$ we obtain a solid torus $\calT$ in which $A_{12,34}$ must be the boundary of a neighborhood around $A_{34}$.  Thus some component of $\bdry A_{12,34}$ must be isotopic on $\bdry H_B$ to $\bdry A_{34}$.  As in Figure~\ref{fig:dbfggraph2}, let $\delta$ be the region of $\bdry H_B$ giving this parallelism.  

Note that if either $e_3$ or $e_4$ lies in $\delta$ then it is parallel to an edge of $\bdry A_{12,34}$.   Then either $A_{23}$ or $A_{41}$ in union with $A_{12,34}$ forms a long \mobius band.  Assuming $\delta$ is as shown in Figure~\ref{fig:dbfggraph2}, then $e_3$ could lie in $\delta$ and $A_{23} \cup A_{12,34}$ would form the long \mobius band.   By Proposition~\ref{prop:primitivemobius} $\bdry A_{23}$ must then be primitive in $H_B$.  Yet since $A_{12,34}$ is the boundary of a neighborhood around $A_{34}$ in $\calT$, neither component of $\bdry A_{12,34}$ may be primitive in $H_B$.  

Thus we may assume neither $e_3$ nor $e_4$ may lie in $\delta$, and that $\delta$ is as pictured in Figure~\ref{fig:dbfggraph2}.  Therefore the two ends  of $e_3$ are incident to the same side of $e_2 \cup e_6$, and thus $\bdry A_{23}$ can be perturbed off $\bdry A_{12,34}$.    Then $A_{23}$, $A_{34}$, and $A_{41}$ are disjoint \mobius bands contrary to Lemma~\ref{lem:3disjointmobiusbands}.
\end{proof}

Without loss of generality, the subgraph of $G_F$ induced by the edges of $f_1$, $f_4$, and $f_6$ appears as in Figure~\ref{fig:dbfggraph3}.
\begin{figure}
\centering
\input{dbfggraph3.pstex_t}
\caption{}
\label{fig:dbfggraph3}
\end{figure}
Let $N = \nbhd( \arc{12} \cup \arc{34} \cup f_1 \cup f_4 \cup f_5) = \nbhd(A_{12,34} \cup A_{34})$.  Thus $B= \bdry N - \bdry H_B $ is an annulus and $A=\bdry H_B - \bdry N$ is an annulus.  Also $B \cup A = \bdry \calT$ where $\calT$ is a solid torus in which $A$ is longitudinal (since a bridge disk for $\arc{12}$, say, can be taken to be disjoint from $f_1 \cup f_4 \cup f_6$).  Thus there is a compressing disk for $H_B$ transversely intersecting $A_{12,34}$ once and each component of $\bdry A_{12,34}$ is primitive in $H_B$.  Finally, note that $N$ is a twisted $I$-bundle over a once-punctured Klein bottle.

\begin{claim}
\label{claim:dbfgclaim3}
The edge $e_4$ is incident to opposite sides of the closed curve $e_1 \cup e_5$.
\end{claim}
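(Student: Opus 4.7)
The plan is to argue by contradiction, supposing that both endpoints of $e_4$ lie on the same side of the closed curve $\gamma_{14} := e_1 \cup e_5 \subset \hatF$. The goal is then to exhibit three pairwise disjoint properly embedded \mobius bands $A_{23}, A_{34}, A_{41}'$ in $M$, thereby producing an embedded Dyck's surface via Lemma~\ref{lem:3disjointmobiusbands} and contradicting the standing hypothesis of the proof of Lemma~\ref{lem:dbfg}.

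The first step is to isotope $A_{41}$ so that its boundary becomes disjoint from $\gamma_{14}$. Since $e_5 \subset \gamma_{14} \cap \partial A_{41}$ while $e_4$ is by assumption on one side of $\gamma_{14}$, pushing the $e_5$-portion of $\partial A_{41}$ onto the side containing $e_4$ produces a simple closed curve $\alpha \subset \hatF$ disjoint from $\gamma_{14}$, bounding a \mobius band $A_{41}' \subset H_W$ isotopic to $A_{41}$.

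The next step is to verify that $\alpha$ can simultaneously be arranged disjoint from $\partial A_{23}$ and $\partial A_{34}$. Away from the fat vertices, the curves $\alpha$, $\partial A_{23}$, $\partial A_{34}$ use disjoint sets of edges (namely $\{e_4, \text{push-off of } e_5\}$, $\{e_2, e_3\}$, and $\{e_7, e_8\}$ respectively), so the only possible unintended crossings lie on the fat vertex $4$, which is shared by $\alpha$ and $\partial A_{34}$. The key input here is Claim~\ref{claim:dbfgclaim2}: $\partial A_{34}$ meets $\gamma_{14}$ transversely in exactly one point. Combined with the cyclic order of edge incidences at vertex $4$ recorded in Figure~\ref{fig:dbfggraph3}, this should allow the push-off of $e_5$ to be chosen so that $\alpha \cap \partial A_{34} = \emptyset$.

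With $A_{23}$, $A_{34}$, $A_{41}'$ now \mobius bands with pairwise disjoint boundaries on $\hatF$, their interiors are automatically disjoint since $A_{34}$ lies in $H_B$ while $A_{23}$ and $A_{41}'$ lie in $H_W$. The argument then splits: if no two of $\partial A_{23}, \partial A_{34}, \alpha$ are isotopic on $\hatF$, Lemma~\ref{lem:3disjointmobiusbands} gives the desired Dyck's surface; otherwise, the annulus cobounded by an isotopic pair together with the corresponding \mobius bands assembles into an embedded Klein bottle in $M$, which is ruled out since $\Delta \geq 3$. The principal obstacle is the fat-vertex-$4$ analysis in step two: ensuring the push-off of $e_5$ can be chosen simultaneously to avoid $\partial A_{34}$. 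This is a local combinatorial check relying on Figure~\ref{fig:dbfggraph3} and Claim~\ref{claim:dbfgclaim2}.
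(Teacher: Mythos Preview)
Your approach has a genuine gap at exactly the ``local combinatorial check'' you flagged as the principal obstacle: the push-off $\alpha$ of $\bdry A_{41}$ cannot always be made disjoint from $\bdry A_{34}$.

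The obstruction is homological. By Claim~\ref{claim:dbfgclaim2}, $\bdry A_{34}$ meets each component of $\bdry A_{12,34}$ transversely in a single point, so in particular $[\bdry A_{34}]\cdot[\gamma_{14}]=\pm 1$ in $H_1(\hatF)$. Now suppose, as one of the possibilities consistent with your hypothesis, that $e_4$ is parallel on $G_F$ to $e_1$. Then $\bdry A_{41}=e_4\cup e_5$ is isotopic on $\hatF$ to $\gamma_{14}=e_1\cup e_5$, and hence so is your $\alpha$. Consequently $[\bdry A_{34}]\cdot[\alpha]=\pm1$, and no choice of push-off at vertex~$4$ can make $\alpha$ disjoint from $\bdry A_{34}$. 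The same obstruction arises when $\bdry A_{41}$ is isotopic to the other component $e_2\cup e_6$ of $\bdry A_{12,34}$.

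The paper's proof accordingly splits into three cases depending on where $e_4$ sits in the annulus $A=\hatF\setminus N$. Only in the middle case, where $\bdry A_{41}$ is isotopic to the core of $A$, does one obtain a Dyck's surface (via the $0$-section of the twisted $I$-bundle $N$ together with $A_{41}$), which is the case your argument captures. The case $e_4\parallel e_1$ is dispatched differently: $A_{41}\cup A_{12,34}$ becomes a long \mobius\ band, the disk meeting $A_{12,34}$ once makes $\bdry A_{41}$ primitive in $H_B$, and attaching $\nbhd(A_{41})$ to $H_B$ yields a new genus~$2$ splitting in which $K$ is $1$-bridge. The remaining case leads (after invoking the analogous statement for $e_3$) to an embedded Klein bottle $A_{23}\cup A_{12,34}\cup A_{41}$. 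Your uniform three-\mobius-bands strategy does not cover these two cases.
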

\begin{proof}
Assume both ends of $e_4$ are incident to the same side of $e_1 \cup e_5$.  Then both endpoints of $e_4$ lie on the same component of $\bdry A$.  Since $\bdry A_{41}$ is not trivial in $\bdry H_B$, either
\begin{enumerate}
\item $e_4$ is parallel to $e_1$,
\item $\bdry A_{41}$ is isotopic to the core of $A$, or
\item $\bdry A_{41}$ is isotopic to the $23$-component of $\bdry A_{12,34}$.
\end{enumerate}

In situation (1), $\bdry A_{41}$ is isotopic to the $41$-component of $\bdry A_{12,34}$.  Thus $A_{41} \cup A_{12,34}$ forms a long \mobius band containing the arc $\arc{3412}$.  Since there is a disk in $H_B$ transversely intersecting $A_{12,34}$ just once, we may form the handlebody $H_B' = H_B \cup_{\bdry A_{41}} \nbhd(A_{41})$ in which the arc $\arc{3412}$ is bridge.  Since the 
White arc $\arc{23}$ has a bridge disk disjoint from $A_{41}$, removing $\nbhd(A_{41})$ from $H_W$ forms the handlebody $H_W' = H_W - \nbhd(A_{41})$ in which $\arc{23}$ is bridge.  Thus together $H_B'$ and $H_W'$ form a genus $2$ Heegaard splitting of $M$ in which $K$ is $1$-bridge.  This contradicts the minimality of $t$.

In situation (2) we may form an embedded Dyck's surface
by taking the $0$-section of the twisted $I$-bundle $N$ in union with $A_{41}$. Its existence is contrary to assumption. 

Thus we are in situation (3) and we have the subgraph of $G_F$ shown in Figure~\ref{fig:dbfggraph4}.  This implies that the endpoints of the edge $e_3$ must lie on the same side of $e_2 \cup e_6$.  Hence the same argument applied to $e_4$ applies to $e_3$ allowing us to conclude that $\bdry A_{23}$ must be isotopic to the $41$-component of $\bdry A_{12,34}$.  Then together $A_{23} \cup A_{12,34} \cup A_{41}$ form a Klein bottle in $M$.
\begin{figure}
\centering
\input{dbfggraph4.pstex_t}
\caption{}
\label{fig:dbfggraph4}
\end{figure}
\end{proof}

\begin{claim}
\label{claim:dbfgclaim4}
The edge $e_3$ is incident to opposite sides of the closed curve $e_2 \cup e_6$.
\end{claim}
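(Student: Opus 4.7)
The plan is to mirror the proof of Claim~\ref{claim:dbfgclaim3}, now applied to $e_3$ and $\bdry A_{23}$ rather than $e_4$ and $\bdry A_{41}$. I would assume for contradiction that both endpoints of $e_3$ lie on the same side of the closed curve $e_2 \cup e_6$, so that both endpoints lie on a single component of $\bdry A$. Since $\bdry A_{23}$ is essential on $\bdry H_B$ (because $M$ contains no projective planes), there are exactly three possibilities, in parallel to the trichotomy for Claim~\ref{claim:dbfgclaim3}:
(i) $e_3$ is parallel in $G_F$ to $e_2$, so $\bdry A_{23}$ is isotopic to the $\arc{23}$-component of $\bdry A_{12,34}$;
(ii) $\bdry A_{23}$ is isotopic to the core of $A$; or
(iii) $\bdry A_{23}$ is isotopic to the $\arc{41}$-component of $\bdry A_{12,34}$.

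In case (i), $A_{23} \cup A_{12,34}$ is a long \mobius band containing the arc $\arc{1234}$. Recall from the discussion following the proof of Claim~\ref{claim:dbfgclaim2} that there is a meridian disk of $H_B$ transversely intersecting $A_{12,34}$ once, so each component of $\bdry A_{12,34}$ is primitive in $H_B$; in particular $\bdry A_{23}$ is primitive in $H_B$. Then $H_B' = H_B \cup \nbhd(A_{23})$ and $H_W' = H_W \cut A_{23}$ are handlebodies forming a new genus $2$ splitting with respect to which $K$ is $1$-bridge: $\arc{1234}$ is a bridge arc of $H_B'$ because it is a spanning arc of the long \mobius band, while $\arc{41}$ has a bridge disk in $H_W'$ by Lemma~\ref{lem:bridgedisksdisjoitfrommobius}. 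This contradicts minimality of $t=4$. In case (ii), since $N = \nbhd(A_{12,34} \cup A_{34})$ is a twisted $I$-bundle over a once-punctured Klein bottle whose zero-section has boundary on $A$, attaching $A_{23}$ to this zero-section along parallel curves in $\hatF$ produces an embedded Dyck's surface in $M$, contrary to our standing assumption.

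The main obstacle is case (iii), where the argument of Claim~\ref{claim:dbfgclaim3} cascaded internally to conclude the analogous statement for $e_3$, whereas here Claim~\ref{claim:dbfgclaim3} is already in hand and must be used as an external input. The plan is the following: assuming (iii), one knows $\bdry A_{23}$ is isotopic on $\hatF$ to the $\arc{41}$-component of $\bdry A_{12,34}$. Meanwhile Claim~\ref{claim:dbfgclaim3} gives that $e_4$ is incident to opposite sides of $e_1 \cup e_5$, so the configuration of the subgraph of $G_F$ induced by $e_1,\dots,e_8$ can be drawn explicitly as in Figure~\ref{fig:dbfggraph3}, now with $\bdry A_{23}$ pinned to the $\arc{41}$-component. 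Reading off the local labellings around vertices $1$ and $4$, the position of the edges of $f_3$ (which build $\bdry A_{41}$) is then forced, and I expect to show that $\bdry A_{41}$ must be isotopic to the $\arc{23}$-component of $\bdry A_{12,34}$ (the only alternatives either repeat the case analyses already ruled out in Claim~\ref{claim:dbfgclaim3} or violate the essentiality/non-isotopy relations between $\bdry A_{23}, \bdry A_{41},$ and the components of $\bdry A_{12,34}$). Once that is pinned down, $A_{23} \cup A_{12,34} \cup A_{41}$ consists of two \mobius bands capping off the two boundary components of the annulus $A_{12,34}$, yielding an embedded Klein bottle in $M$; this contradicts $\Delta \geq 3$ (by \cite{gl:dsokcetII}). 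The delicate point will be verifying that the combinatorics of Figure~\ref{fig:dbfggraph3} under hypothesis (iii) leave no room for $\bdry A_{41}$ other than the $\arc{23}$-component; this requires a careful case-by-case inspection of which components of $\bdry A$ can contain the endpoints of the edges of $f_3$.
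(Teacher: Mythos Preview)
Your cases (i) and (ii) are exactly the analogs of situations (1) and (2) in Claim~\ref{claim:dbfgclaim3}, and your treatment of them is correct --- with one slip: in case (i) you want $e_3$ parallel to $e_6$, not to $e_2$. The edges $e_2$ and $e_3$ are the two sides of the bigon $f_2$, so if they were parallel then $\bdry A_{23}=e_2\cup e_3$ would be trivial on $\hatF$. What makes $\bdry A_{23}$ isotopic to the $\arc{23}$-component $e_2\cup e_6$ is $e_3\parallel e_6$ (the shared edge $e_2$ being already in place), exactly mirroring Claim~\ref{claim:dbfgclaim3}'s ``$e_4$ parallel to $e_1$''.

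Your plan for case (iii) is where the argument goes astray. You propose to take Claim~\ref{claim:dbfgclaim3} as a \emph{positive} input ($e_4$ lies on opposite sides of $e_1\cup e_5$) and then, from that together with (iii), force $\bdry A_{41}$ to be isotopic to the $\arc{23}$-component in order to build a Klein bottle. But look back at the trichotomy in Claim~\ref{claim:dbfgclaim3}: the only way $\bdry A_{41}$ ends up isotopic to the $\arc{23}$-component is situation~(3), which is predicated on $e_4$ lying on the \emph{same} side. With $e_4$ on opposite sides, $e_4$ is a spanning arc of the annulus $A$ and $\bdry A_{41}$ is not isotopic to either component of $\bdry A_{12,34}$ nor to the core of $A$; the target you are aiming for is simply not attainable under the hypothesis you are importing.

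The fix is immediate, and it is what ``applies analogously'' means here. Situation~(iii) pins down the configuration on $\hatF$ exactly as situation~(3) did (the analog of Figure~\ref{fig:dbfggraph4} with the roles of $\{2,3\}$ and $\{1,4\}$ interchanged), and from that picture one reads off that the endpoints of $e_4$ must lie on the \emph{same} side of $e_1\cup e_5$. That contradicts Claim~\ref{claim:dbfgclaim3} directly, and you are done. Equivalently, without invoking Claim~\ref{claim:dbfgclaim3} at all, you can run the self-contained cascade: (iii) forces $e_4$ to the same side, then the trichotomy for $e_4$ rules out its cases (1) and (2) just as before, landing you in (3) and hence in the Klein bottle. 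Either route is short; your detour through ``$e_4$ on opposite sides $\Rightarrow$ $\bdry A_{41}$ on the $\arc{23}$-component'' is the one path that does not close.
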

\begin{proof}
The argument for Claim~\ref{claim:dbfgclaim3} applies analogously.
\end{proof}

\begin{claim}
Neither $f_0$ nor $f_5$ is a bigon.
\end{claim}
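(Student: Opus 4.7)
The plan is to argue by contradiction, treating $f_0$ first; the case of $f_5$ will follow by an analogous argument with the roles of the $\edge{23}$- and $\edge{41}$-edges (and correspondingly $f_2$ and $f_3$) interchanged. Suppose $f_0$ is a bigon. Since $f_0$ shares an edge with the Black bigon $f_1$, it lies on the White side of $\hatF$. Reading off the cyclic label orders at vertices $x$ and $y$ of $G_Q$ immediately to the left of $f_1$ in Figure~\ref{fig:dbfgconfig} shows that both corners of $f_0$ are $\arc{41}$-corners, so $f_0$ is a White $\arc{41}$-\SC. This yields a properly embedded \mobius band $A_{41}^0 \subset H_W$, distinct from the \mobius band $A_{41}$ obtained from $f_3$. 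If $\bdry A_{41}^0$ were isotopic to $\bdry A_{41}$ on $\hatF$, the two \mobius bands would assemble to an embedded Klein bottle in $M$; hence no such isotopy exists.

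Next I would reapply the trichotomy analysis of Claim~\ref{claim:dbfgclaim3} to $A_{41}^0$. Using the established decomposition $H_B = N \cup_B \calT$, where $N = \nbhd(A_{12,34} \cup A_{34})$ is the twisted $I$-bundle over the once-punctured Klein bottle and $A \subset \bdry H_B$ is longitudinal in $\calT$, together with the primitivity of each component of $\bdry A_{12,34}$, the possibilities for $\bdry A_{41}^0$ after minimizing intersection with $\bdry A_{12,34}$ fall into three cases: one of the $\edge{41}$-edges of $f_0$ is parallel on $G_F$ to $e_1$; or $\bdry A_{41}^0$ is isotopic to the core of $A$; or $\bdry A_{41}^0$ is isotopic to the $23$-component of $\bdry A_{12,34}$.

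In case one, the construction from case (1) of Claim~\ref{claim:dbfgclaim3} produces a new genus $2$ Heegaard splitting in which $K$ is $1$-bridge, contradicting $t=4$. In case two, the zero-section of the twisted $I$-bundle $N$ capped with $A_{41}^0$ gives an embedded Dyck's surface, against our standing hypothesis. In case three, $\bdry A_{41}^0$ coincides (up to isotopy) with the $23$-component of $\bdry A_{12,34}$; but Claim~\ref{claim:dbfgclaim3} already places $\bdry A_{41}$ in the same isotopy class (via the third alternative there being impossible for $A_{41}$), forcing $\bdry A_{41}^0$ to be isotopic to $\bdry A_{41}$ and contradicting the Klein-bottle observation above. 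This disposes of $f_0$, and the same strategy, with $f_5$ in place of $f_0$, $f_2$ in place of $f_3$, and Claim~\ref{claim:dbfgclaim4} in place of Claim~\ref{claim:dbfgclaim3}, disposes of $f_5$.

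The principal obstacle will be verifying that the trichotomy of Claim~\ref{claim:dbfgclaim3} genuinely transfers to $A_{41}^0$: one must check that no new configuration for $\bdry A_{41}^0$ becomes available merely because a second $\arc{41}$-\SC\ is present alongside the original one. Concretely, this amounts to tracking how the two $\edge{41}$-edges of $f_0$ can be placed on $\hatF$ subject to the positioning of $e_1, e_5$, the already established Klein bottle structure on $N$, and the primitivizing compressing disk coming from $\calT$, and showing that these constraints leave exactly the three escape routes of the original trichotomy.
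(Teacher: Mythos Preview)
Your setup is right: $f_0$, if a bigon, is a White $\arc{41}$-\SC\ giving a second \mobius band $A_{41}^0$, and the whole game is to locate $\bdry A_{41}^0$ on $\hatF$. But the trichotomy of Claim~\ref{claim:dbfgclaim3} does not transfer the way you use it, and two concrete errors result.

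First, that trichotomy was derived under the hypothesis that both endpoints of $e_4$ lie on the \emph{same} side of $e_1\cup e_5$; the conclusion of the claim is precisely that this hypothesis fails, i.e.\ $e_4$ crosses. For $A_{41}^0$, whose edges are $e_0$ and $e_1$, the analogous preliminary question is whether $e_0$ crosses $e_1\cup e_5$. You never rule out that it does; if $e_0$ does cross, $\bdry A_{41}^0$ meets $\bdry A_{12,34}$ essentially and none of your three cases applies. (Also, your ``case one'' as stated---an edge of $f_0$ parallel to $e_1$---is vacuous, since $e_1$ \emph{is} an edge of $f_0$; the honest analogue of situation~(1) would be $e_0$ parallel to $e_5$.)

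Second, your resolution of case three is wrong. You assert that Claim~\ref{claim:dbfgclaim3} ``places $\bdry A_{41}$ in the same isotopy class'' as the $23$-component, but the claim does the opposite: it eliminates all three alternatives and concludes $e_4$ crosses $e_1\cup e_5$, so $\bdry A_{41}$ intersects $\bdry A_{12,34}$ transversely and lies in \emph{none} of those isotopy classes. Hence if $\bdry A_{41}^0$ were isotopic to the $23$-component, there is no reason $\bdry A_{41}^0$ and $\bdry A_{41}$ would be isotopic, and your Klein bottle contradiction does not fire.

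The paper's argument avoids this by comparing the two $\arc{41}$-\SCs\ $f_0$ and $f_3$ directly. The point is that Claim~\ref{claim:dbfgclaim4} forces all the $\edge{41}$-edges $e_0,e_1,e_4,e_5$ to live in a once-punctured torus in $\hatF$ (the complement of the region filled by the $\edge{23}$- and $\edge{34}$-edges). If no two of these four edges were parallel, a neighborhood of their union together with vertices $1,4$ would be a $4$-punctured sphere, which cannot sit essentially in a once-punctured torus---so some pair is parallel. Lemma~\ref{lem:parallelwithsamelabel} then pins down which pair, yielding $\bdry A_{41}$ or $\bdry A_{41}^0$ isotopic to the $41$-component of $\bdry A_{12,34}$, and the long-\mobius-band thinning from situation~(1) of Claim~\ref{claim:dbfgclaim3} finishes it. The missing idea in your approach is exactly this: rather than trying to position $\bdry A_{41}^0$ relative to $N$ and $A$, constrain the $\edge{41}$-edges by the surface they are forced to live in.
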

\begin{proof}
We will show that $f_0$ cannot be a bigon.  The argument for $f_5$ is the same.

Assume $f_0$ is a bigon.  Then it must be a $41$-\SC\ as shown in Figure~\ref{fig:dbfgextrabigon}.  Let $A_{41}'$ be the White \mobius band arising from $f_0$.  We divide the argument into cases according to the relationships among the $\edge{41}$-edges of $f_0$ and $f_3$.
\begin{figure}
\centering
\input{dbfgextrabigon.pstex_t}
\caption{}
\label{fig:dbfgextrabigon}
\end{figure}

{\bf Case I:} No two edges of $f_0$ and $f_3$ are parallel in $\bdry H_B$.

Then $A_{41}$ and $A_{41'}$ intersect transversely and a neighborhood in $\bdry H_B$ of the union of the edges of $f_0$ and $f_3$ is a $4$-punctured sphere. (Otherwise $A_{41}$ and $A_{41}'$ could be isotoped to be disjoint from one another and from $A_{23}$;  two or three of these together then would form an embedded non-orientable surface in the handlebody $H_W$.)  Yet by Claim~\ref{claim:dbfgclaim4}, these edges lie in a $1$-punctured torus on $\bdry H_B$.  Hence one of the components of the $4$-punctured sphere must bound a disk in $\bdry H_B$.  This however implies that two edges of $f_0$ and $f_3$ are parallel.

{\bf Case II:} Two edges of $f_0$ and $f_3$ are parallel in $\bdry H_B$.

By Lemma~\ref{lem:parallelwithsamelabel}, it must be that either  $e_0$ is parallel to $e_5$ or  $e_1$ is parallel to $e_4$ on $\bdry H_B$.  Then either $\bdry A_{41}'$ or $\bdry A_{41}$ respectively is isotopic to the $\edge{41}$-component of $\bdry A_{12,34}$.  Hence we have a long \mobius band and may apply the argument of situation (1) in the proof of Claim~\ref{claim:dbfgclaim3} to obtain a thinning of $K$.
\end{proof}

This completes the proof of Lemma~\ref{lem:dbfg}
\end{proof}



\section{\conditionII\ for $t=4$.}\label{sec:tis4scc}
Throughout this section we assume we are in \conditionII\ for $t=4$.  In Theorem~\ref{thm:scctnot4} we will conclude that $t \neq 4$.

We may assume there is a meridian disk $D$ of $\hatF$ disjoint from $K$ and $Q$.
Let $F^*$ be $\hatF$ surgered along $D$.

\begin{lemma}\label{lem:scctorus}
The graph $G_F$ lies in a single component $\hatT$ of $F^*$.
\end{lemma}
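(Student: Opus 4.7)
The plan splits on whether $\bdry D$ is separating in $\hatF$. If $\bdry D$ is non-separating, then $F^*$ is a single torus and the lemma is immediate. So I would assume $\bdry D$ is separating; then $F^* = T_1 \sqcup T_2$ with both $T_i$ tori. Since $\bdry D$ is disjoint from $K$ and from $Q$, every vertex and every edge of $G_F$ lies in $\hatF \setminus \bdry D$, and each edge (being connected and disjoint from $\bdry D$) lies in a single component of $F^*$. It therefore suffices to show that all four vertices lie in the same component.

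Without loss of generality assume $D \subset H_W$; then $D$ is a separating meridian disk of $H_W$, cutting $H_W$ into two solid tori $V_1, V_2$ whose boundary tori are naturally identified with $T_1, T_2$ under the surgery. Relabeling so that $\arc{12}, \arc{34}$ are the arcs of $K \cap H_W$ and $\arc{23}, \arc{41}$ the arcs of $K \cap H_B$, each of $\arc{12}, \arc{34}$ is disjoint from $D$ and therefore lies in exactly one of $V_1, V_2$. Consequently vertices $1,2$ of $G_F$ lie on a common $T_i$ and vertices $3,4$ on a common $T_j$; if $i = j$ we are done. The remaining case $\arc{12} \subset V_1$, $\arc{34} \subset V_2$ I would rule out using the minimality of $t = 4$.

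Take a bridge disk $E_{12} \subset H_W$ for $\arc{12}$. Because $K \cap D = \emptyset$, every arc component of $E_{12} \cap D$ has both endpoints on $E_{12} \cap \hatF \subset \bdry E_{12}$ rather than on $\arc{12}$. Standard innermost-circle and outermost-arc surgery with $D$ in the irreducible handlebody $H_W$ then produces a bridge disk $E_{12} \subset V_1$ for $\arc{12}$. Using $E_{12}$, isotope $\arc{12}$ onto $\hatF \cap V_1$ and then push slightly into $H_B$. At each of vertex $1$ and vertex $2$ the adjacent arc of $K$ (namely $\arc{41}$ and $\arc{23}$ respectively) already lies in $H_B$, so after this push both arcs of $K$ meeting the vertex lie on the $H_B$ side of $\hatF$. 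A small perturbation cancels these removable tangencies, yielding a presentation of $K$ with $|K \cap \hatF| = 2$ and hence bridge number $1$ with respect to $\hatF$, contradicting the minimality of $t = 4$. The main obstacle is the surgery step producing $E_{12} \subset V_1$: it is a standard innermost/outermost argument, but relies crucially on $K \cap D = \emptyset$ (so arcs of $E_{12} \cap D$ sit on $E_{12} \cap \hatF$ rather than on $\arc{12}$) and on the irreducibility of $H_W$; once $E_{12} \subset V_1$ is in hand, the thinning step is routine.
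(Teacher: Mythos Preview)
Your reduction to the case where $D$ is separating and the two White arcs of $K$ sit in different solid tori $V_1,V_2$ is fine, and so is the production of a bridge disk $E_{12}\subset V_1$ by innermost/outermost surgery.  The gap is the final step: from ``$|K\cap\hatF|=2$'' you conclude ``bridge number $1$.''  That inference is invalid.  After pushing $\arc{12}$ across $\hatF$ you only know that $K$ meets $\hatF$ twice; you have \emph{not} shown that the resulting arc $\arc{4123}\subset H_B$ is $\bdry$-parallel, and in general it will not be.  In Morse terms, the isotopy replaces the single maximum of $\arc{12}$ in $H_W$ by (at best) a local maximum/minimum pair in $H_B$ near the former vertices $1$ and $2$, so the width does not drop below the original value $8$.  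Said differently: your isotopy uses only the bridge disk for $\arc{12}$, which exists regardless of whether $\arc{34}$ lies in $V_1$ or $V_2$; if this move genuinely reduced bridge number it would do so in every $2$-bridge presentation, proving the main theorem in one stroke.

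The paper's proof is entirely different and does not attempt a direct thinning.  Instead it exploits the combinatorial consequences of the separation: with vertices $\{2,3\}$ on one torus and $\{1,4\}$ on the other, every White face of $\Lambda$ must be a Scharlemann cycle and no Black face can be one.  Using Lemma~\ref{lem:parallelwithsamelabelAPE} to bound the number of \SCs\ at White corners, together with a claim that (given any adjacency of a Black bigon with a White \SC\ or trigon) all Black bigons are parallel, one shows any special vertex of $\Lambda$ must have at least four true gaps, contradicting Lemmas~\ref{lem:t4trulyspecial} and~\ref{lem:N4trulyspecialtypes}.
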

\begin{proof}
Otherwise $F^*$ is two tori.  Say $D \subset H_W$ so that $D$ cuts $H_W$ into two solid tori $\calT_{23}$ and $\calT_{41}$, each containing one arc of $K$, say $\arc{23}$ and $\arc{41}$ respectively.  Then every White face of $\Lambda$ is a \SC\ and every Black face is not.   Moreover every Black face is either a mixed
bigon or has at least four sides. Finally, we may surger any disk face
of $G_Q$ so that its interior is disjoint from $F^*$ (by 
Corollary~\ref{cor:AEntscc} and the strong irreducibility of the Heegaard 
splitting, an innermost curve of intersection is either a copy of $D$ or
bounds a meridian disk of $\calT_{23}$ or $\calT_{41}$ that is disjoint 
from $K$. In the former case we can surger the intersection away. The latter
combines with Corollay~\ref{cor:bigonsforall} to give the contradiction that
$M$ contains a lens space summand.)

The edges of any two White Scharlemann cycles of $\Lambda$ of length at most three and on 
the same
label pair, lie in exactly two parallelism classes in the graphs on 
$\partial \calT_{23}$ or $\partial \calT_{41}$. Thus 
Lemma~\ref{lem:parallelwithsamelabelAPE} prevents there from being three or
more bigon,trigon Scharlemann cycles at the $\arc{23}$-corners of a vertex of $\Lambda$ 
or at the 
$\arc{41}$-corners of a vertex of $G_Q$. In the language of special vertices
(see below),
this means there must be at least one true gap at a $\arc{23}$-corner and
at a $\arc{41}$-corner of any special vertex.

Recall that two bigon faces of $G_Q$ are said to be {\it parallel} if each edge
of one is parallel to an edge of the other.

\begin{claim}\label{claim:parallelmixed}
In $\Lambda$ if a Black bigon is adjacent to a White bigon or trigon 
Scharlemann
cycle, then all Black bigons are parallel.  Moreover, at a vertex of $\Lambda$, at most two Black corners have bigons and these would have opposite labels.
\end{claim}
\begin{proof}
To the contrary, assume $\Lambda$ has two non-parallel mixed 
Black bigons $f$ and $g$ such that $f$ shares an edge with a White bigon or trigon Scharlemann cycle.
Note that neither edge of $f$ is parallel on $F^*$ to an edge of $g$, 
else the two Black
faces can be combined to give a disk that contradicts Lemma~\ref{lem:AEGor} and
the strong irreducibility of the Heegaard splitting. 
Then $\calN = \nbhd(f \cup \arc{12} \cup \arc{34} \cup \calT_{23} \cup \calT_{41})$ is a genus $2$ handlebody in which $K$ is isotopic to an arc on $\bdry \calN$ and a bridge arc (using the bigon/trigon Scharlemann cycle 
that can be surgered to lie
entirely in $\calT_{23}$ or $\calT_{41}$).  Attaching $\nbhd(g)$ to $\calN$ 
forms a Seifert fiber space over the disk with two exceptional fibers.  (Otherwise $K$ would be contained in the solid torus, $\nbhd(g) \cup \calN$. As $K$ is
hyperbolic and $M$ is not a lens space, $K$ would have to be a core of this
solid torus. But then $K$ is $0$-bridge with respect to $H_W \cup H_B$).  
Then as usual $M \cut (\calN \cup \nbhd(g))$ is a solid torus $\calT$.   So now $\calN$ and $\calT \cup \nbhd(g)$ form a genus $2$ Heegaard splitting of $M$ in which $K$ is $1$-bridge.

Given that all Black bigons are parallel, a vertex of $\Lambda$ may have 
at most one Black bigon at a $\arc{12}$-corner and one at a 
$\arc{34}$-corner.  
\end{proof}

\begin{remark}
To make the proof of Claim~\ref{claim:parallelmixed} consistent with the proof of Theorem~\ref{thm:changingHS}
we need to sharpen the argument to show that either Claim~\ref{claim:parallelmixed} holds
(without changing the splitting) or  $M$ is a Seifert fiber space with an
exceptional fiber of order $2$ and furthermore that $K$ is $1$-bridge with 
respect to a vertical splitting of $M$. The argument given in Claim~\ref{claim:parallelmixed} 
applied to an \ESC\ shows this. So we must show there is an \ESC.  We have 
shown that any black interval either  belongs to a mixed bigon or
corresponds to a true gap. We have also shown that there
is a true gap at a $\arc{41}$-corner and a $\arc{23}$-corner. If there
is no \ESC\ then Lemma~\ref{lem:t4trulyspecial} shows that $\Gamma$ has a special
vertex of weight $N=4$. The fact that there are at least two
true gaps at this special vertex implies that there are 
at most three true gaps and one more corner which is has a trigon.
Between these four corners every other corner belongs to a bigon
of $\Gamma$.  First, note that if
there is no \ESC, then there is no triple of bigons. Otherwise
on either side of this triple must be black gaps, hence true
gaps -- but then there are four true gaps (two black and two
white). There is only one way that there is no triple of bigons, 
and that is that $\Delta=3$ and there are exactly two bigons between 
each of these four corners. But this implies that two of these
four corners are black, which along with the white gaps makes
four true gaps. Thus there must be an \ESC. 
\end{remark}

To finish the proof of Lemma~\ref{lem:scctorus}, 
Claim~\ref{claim:parallelmixed} and Lemma~\ref{lem:parallelwithsamelabel}
imply that there cannot be $9$ mutually parallel edges in $\Lambda$.  By 
Lemma~\ref{lem:t4trulyspecial}, $\Lambda$ must have a special vertex of 
length $N=4$. Recall from the beginning of section~\ref{sec:t4noscc}, that
a ``true gap'' is a corner of a special vertex that is not known to be a 
bigon or trigon. By Lemma~\ref{lem:N4trulyspecialtypes}, the special vertex 
of $\Lambda$ has at most three true gaps. If around a vertex of $\Lambda$ there is a Black bigon 
adjacent to a White bigon or trigon (which are Scharlemann cycles), then Claim~\ref{claim:parallelmixed} and Lemma~\ref{lem:parallelwithsamelabel} imply at least $4$ Black corners at the special vertex have true gaps (any Black face is a true gap or a
bigon). Thus a special vertex of $\Lambda$ must have any White bigon or trigon flanked by true gaps.  But then, since there are at least two true gaps at White corners, this vertex must have at least $4$ true gaps.
\end{proof}

Let $\hatT$ be as in Lemma~\ref{lem:scctorus} and $\calT$ be the solid torus
it bounds (gotten by surgering the Heegaard handlebody along $D$). By possibly
rechoosing $D$, we may assume that any disk face of $G_Q$ may be surgered
so that its interior is disjoint from $\hatT$ (as argued in the proof of 
Lemma~\ref{lem:scctorus}). 

\begin{lemma}\label{lem:sccnoSandS}
There cannot be two \SCs\ of the same color but with different labels.
\end{lemma}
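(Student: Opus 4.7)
The plan is to derive the conclusion directly from the meridian disk $D$ guaranteed by \situationscc\ and the torus structure of $F^*$ given by Lemma~\ref{lem:scctorus}.

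First I would observe that with $t=4$, two \SCs\ of the same color are forced to have disjoint label sets. Indeed, because the arcs of $K\cap H_W$ and $K\cap H_B$ alternate around $K$, the only pairs of labels available for White \SCs\ are the pair of ``White'' arcs (say $\{1,2\}$ and $\{3,4\}$) and similarly for Black; in either case two same-color different-label \SCs\ must have disjoint label pairs. Consequently, the two associated \mobius bands $A$ and $A'$ have disjoint face components in $G_Q$ and are extended along disjoint arcs of $K$, so they may be taken disjoint in $M$. Their boundary curves $\bdry A,\bdry A'$ are disjoint simple closed curves on $\hatF$, and by Lemma~\ref{lem:LMBess} neither bounds a disk on either side of $\hatF$.

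Next I would use the meridian disk $D$. Since $D$ is disjoint from both $K$ and $Q$, its boundary $\bdry D$ is disjoint from every vertex and edge of $G_F$, hence from $\bdry A\cup\bdry A'$. Surgering $\hatF$ along $D$ produces $F^*=\hatT$, which by Lemma~\ref{lem:scctorus} is a single torus in $M$, and $\bdry A,\bdry A'$ descend to disjoint simple closed curves on $\hatT$. Using Corollary~\ref{cor:AEntscc} to surger away any interior intersections of $A$ and $A'$ with $\hatT$ (each such intersection circle bounds a disk on one side of $\hatF$, so can be removed by a standard disk-swap), I may assume $A$ and $A'$ are properly embedded in $M\cut\hatT$.

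Now I would read off the contradiction from the combinatorics on the torus $\hatT$. If either of $\bdry A,\bdry A'$ bounds a disk $E\subset\hatT$, then $A\cup E$ (respectively $A'\cup E$) is an embedded projective plane in $M$, contradicting the standing assumption ($\Delta\ge 3$) that $M$ contains no embedded $\RP^2$ \cite{gl:oidscyrm,cgls:dsok}. Otherwise both curves are essential on the torus $\hatT$, and being disjoint they must be isotopic; they cobound an annulus $B\subset\hatT$. Then $A\cup B\cup A'$ is a closed, non-orientable surface of Euler characteristic $0$ embedded in $M$, i.e.\ a Klein bottle, again contradicting the standing hypothesis \cite{gl:dsokcetII}.

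The main technical obstacle is the last preparatory step: because we are in \situationscc, the \mobius bands are only \emph{almost} properly embedded, so some care is needed to push their interiors off $\hatT$ before gluing with the disk or annulus on $\hatT$. This is routine given Corollary~\ref{cor:AEntscc} — each closed curve of $A\cap\hatF$ (and hence of $A\cap\hatT$ after compressing along $D$) bounds a disk on one side, and innermost such curves can be surgered away without creating new intersections — so the final surface produced really is embedded in $M$, and the two contradictions stand.
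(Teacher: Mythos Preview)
Your proof is correct and follows essentially the same line as the paper's: the two \mobius bands are disjoint, their boundaries are disjoint curves on the torus $\hatT$, and hence (being essential) parallel, so an annulus on $\hatT$ joins them into a Klein bottle. Your version is simply more detailed, explicitly handling the surgery of interiors off $\hatT$ (which the paper arranges just before the lemma by rechoosing $D$) and separately disposing of the trivial-on-$\hatT$ case via an $\RP^2$; the paper's two-line proof takes all of this as understood.
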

\begin{proof}
Otherwise the \mobius bands to which they give rise are disjoint and have parallel boundaries on $\hatT$.  From this we may construct an embedded Klein bottle.
\end{proof}

\begin{prop}\label{prop:sccnoesc}
There is no \ESC.
\end{prop}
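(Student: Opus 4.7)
The plan is to assume $G_Q$ contains an \ESC\ and derive a $0$- or $1$-bridge presentation of $K$, contradicting our working assumption that $t=4$. WLOG take an \ESC\ on labels $\{1,2,3,4\}$ with core $\arc{23}$-\SC, yielding a White Möbius band $A_{23}$ and a Black annulus $A_{12,34}$; after surgering off simple closed curves of intersection with $\hatF$ (which by Corollary~\ref{cor:AEntscc} are meridians of one of the handlebodies), these are properly embedded on their respective sides. Let $D$ be the meridian disk from \conditionII\ and let $\hatT \subset F^*$ be the torus component containing $G_F$, as guaranteed by Lemma~\ref{lem:scctorus}.

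The key structural observation is this: by Lemma~\ref{lem:LMBess}, both components of $\partial A_{12,34}$ are essential on $\hatT$. Being disjoint essential curves on a torus, they are parallel and cobound an annulus $B \subset \hatT$. Since all four vertices of $G_F$ lie on $\partial B$ (vertices $2,3$ on $\partial A_{23}$ and vertices $1,4$ on the other component), we have $\Int B \cap K = \emptyset$. The Addendum to Lemma~\ref{lem:LMB} then produces a solid torus $V$ with $\partial V = A_{12,34} \cup B$, whose interior is disjoint from $K$, guiding an isotopy of $A_{12,34}$ to $B$ in $M$.

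Let $\calT$ be the solid torus bounded by $\hatT$ inside the handlebody containing $D$. The argument now splits on the slope of $\partial A_{12,34}$ on $\hatT$. If $\partial A_{12,34}$ is longitudinal on $\hatT$, then a meridian of $\calT$ (which extends to a meridian of the ambient handlebody) meets $\partial A_{23}$ transversely once, so $\partial A_{23}$ is primitive in that handlebody. Following the strategy of Lemma~\ref{lem:btfsc}, I would construct bridge disks for $\arc{12}$ and $\arc{34}$ from the Black bigons of the \ESC\ combined with the parallelism $V$, then form the modified splitting that adds a neighborhood of $A_{23}$ to the handlebody containing $\calT$; this presents $K$ as $1$-bridge, contradicting $t=4$. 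If instead $\partial A_{12,34}$ wraps $n > 1$ times longitudinally in $\calT$, then $N = \calT \cup \nbhd(A_{23})$ is a Seifert fiber space over the disk with exceptional fibers of orders $2$ and $n$, and $K \cap N = \arc{1234}$ is a spanning arc of the properly embedded long Möbius band $A_{23} \cup A_{12,34}$ in $N$. Since $\partial(A_{23} \cup A_{12,34}) = a_2$ is essential on $\hatT$, hence not in a ball (by Lemma~\ref{lem:AEGor}), Lemma~\ref{lem:sSFS1bridge} produces a genus $2$ Heegaard splitting in which $K$ is $0$-bridge, again a contradiction.

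The main obstacle is the subcase in which $D$ lies on the same side of $\hatF$ as $A_{23}$. In that configuration, the primitivity produced in the first case is on the Möbius band's own side, so the standard add/remove \mobius band move of Definition~\ref{def:addremove} does not immediately supply a new Heegaard splitting, and in the second case the Seifert piece $N$ sits entirely inside one handlebody, requiring extra care to invoke Lemma~\ref{lem:sSFS1bridge}. The resolution I would pursue is to show that by choosing $D$ as an innermost scc in a face of $G_Q$ of the opposite color, one can always arrange $D$ to lie on the opposite side from $A_{23}$; failing this, the configuration forces enough Seifert structure in $M$ to contradict either Lemma~\ref{lem:mnosfs} or the hyperbolicity of $K'$ combined with $\Delta \geq 3$.
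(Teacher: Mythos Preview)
Your non-longitudinal case (b) is correct and is exactly the paper's Claim~\ref{claim:mobbdryislong}, but there are two genuine gaps. The first is the case $D \subset H_W$, which you flag but do not resolve. You cannot control which side $D$ lands on by choosing the color of the face: \situationscc\ only guarantees that \emph{some} disk face carries an scc, and even if one of the desired color were available, the side on which an innermost scc bounds a meridian depends on its nesting depth within that face. Lemma~\ref{lem:mnosfs} is also unavailable, since thin position has already been assumed to equal bridge position. The paper handles this case directly (Claim~\ref{claim:Disblack}): when $D \subset H_W$ one has $A_{23} \subset \calT$, and $\calN = \calT \cup \nbhd(A_{12,34})$ is a Seifert fiber space over the annulus with one exceptional fiber containing all of $K$; compressing $\bdry \calN$ to the outside then yields a new genus~$2$ splitting with $K$ at most $1$-bridge.

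The second gap is in your longitudinal case (a), even when $D \subset H_B$. You correctly deduce that $\bdry A_{23}$ is primitive in $H_B$, but forming $H_B \cup \nbhd(A_{23})$ as a new Heegaard handlebody requires $A_{23}$ to be properly embedded in $H_W$, and in \situationscc\ that is exactly what fails: after surgering interiors off $\hatT$, the \mobius band $A_{23}$ is properly embedded only in $M - \calT$ (which is $H_W \cup \nbhd(D)$, possibly with an extra solid torus), not in $H_W$ itself. The paper does not try to thin directly here. Instead, knowing $\calT \cup \nbhd(A_{23})$ is a solid torus, it rules out White mixed bigons (else $K$ isotopes into that solid torus and Claim~\ref{claim:ckt1} forces $0$-bridge), White $\arc{41}$-\SCs\ (Lemma~\ref{lem:sccnoSandS}), and Black \SCs; a special-vertex gap count then forces $\Delta=3$, whence all three $\arc{12}$-corners or all three $\arc{34}$-corners at the vertex carry Black mixed bigons, and their edges violate Lemma~\ref{lem:parallelwithsamelabelAPE}.
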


\begin{proof}
Assume we do have an \ESC\ as in Figure~\ref{fig:esc4}. Let $A_{23}$ be the associated White \mobius band and $A_{12,34}$ be the Black annulus. We may take
both to be properly embedded in $\calT$ or its exterior.

\begin{claim}\label{claim:Disblack}
$D \subset H_B$
\end{claim}

\begin{proof}
If $D \subset H_W$, then $A_{23}$ is contained in $\calT$.

Set $\calN = \calT \cup \nbhd(A_{12,34})$.  Observe that $\calN$ is a Seifert fiber space over the annulus with one exceptional fiber and that $K \subset \calN$.  Then $\bdry \calN$ is two tori and one of these components 
must compress outside of $\calN$.  Such a compression produces a 
$2$-sphere which must bound a $3$-ball $B$.  Since $K \not \subset B$, 
$\calN \not \subset B$.  Therefore this torus bounds a solid torus 
$\calT'$ with interior disjoint from $\calN$.  

Assume $\calN \cup \calT'$ is a solid torus. Since $K$ is hyperbolic and
$M$ does not contain a lens space summand, $K \subset \calN \cup \calT'$ must
be isotopic to its core. That is, $K$ is isotopic to a core curve of $H_W$.
But then $K$ is $0$-bridge.

Thus $\calN \cup \calT'$ must form a Seifert fiber space over the disk with two exceptional fibers.  Hence $M \cut (\calN \cup \calT')$ is a solid torus $\calT''$.  Now we may form a genus $2$ Heegaard splitting of $M$ by taking $H_B' = \calT' \cup \nbhd(f_1) \cup \calT''$ and $H_W' = M \cut H_B' = \calT \cup \nbhd(f_3)$.  Then $K \subset H_W'$ and $K$ may be isotoped so that it is $1$-bridge with respect to this Heegaard splitting.
\end{proof}

Since $D \subset H_B$ by Claim~\ref{claim:Disblack}, $A_{12,34}$ is contained in
$\calT$ of $H_B \cut D$.

\begin{claim}\label{claim:mobbdryislong}
$\bdry A_{23}$ is a longitude of $\calT$.
\end{claim}
\begin{proof}
If it is not, then we may form a Seifert fiber space over the disk with two exceptional fibers $\calN = \calT \cup \nbhd(A_{23})$.  
Now apply Lemma~\ref{lem:sSFS1bridge} to produce a genus $2$ Heegaard splitting of $M$ in which $K$ is $0$-bridge.
\end{proof}

There can be no White mixed bigon.  Otherwise $K$ could be isotoped into 
$\calT \cup \nbhd(A_{23})$ which is a solid torus by Claim~\ref{claim:mobbdryislong}. As $K$ is hyperbolic and $M$ contains no lens space summand, $K$ is
isotopic to a core of $\calT \cup \nbhd(A_{23})$. 
But then the core, $L$, of the solid torus $\calT$ is a (2,1)-cable 
of $K$.  
As $L$ is a core of $H_B$, Claim~\ref{claim:ckt1} contradicts that $t=4$.

There cannot be a White $\arc{41}$-\SC\ by 
Lemma~\ref{lem:sccnoSandS}.  Consequently, there can be no bigon of $\Lambda$
at a $\arc{41}$-corner of a vertex in $\Lambda$.  This prohibits there being $4$ parallel edges at a vertex of $\Lambda$.  Hence by Lemmas~\ref{lem:t4trulyspecial} and \ref{lem:N4trulyspecialtypes} there must be a special vertex $v$ in $\Lambda$.  Such a special vertex has at most $5$ gaps (counting both trigons and true gaps).  

Furthermore, around $v$ there may only be two $\arc{23}$-corners 
that have \SCs.    Otherwise, since $\bdry A_{12,34}$ bounds an annulus on $\hatT$, there would be two edges of $G_F$ that meet a vertex at the same label and are parallel on $\hatT$.  Lemma~\ref{lem:parallelwithsamelabelAPE} prevents this.  

Since $D \subset H_B$ there can be no Black \SCs.  Otherwise 
one would give a \mobius band intersecting the separating annulus $A_{12,34}$ 
transversely in a single arc in $\calT$. 

Thus in total, $v$ must have at least $\Delta$ gaps at all the $\arc{41}$-corners and $\Delta-2$ gaps among the $\arc{23}$-corners.  Since $v$ has at most $5$ gaps, it must be the case that $\Delta = 3$.  Hence with three gaps at the $\arc{41}$-corners and one gap at a $\arc{23}$-corner, either all three $\arc{12}$-corners have a mixed bigon or all three $\arc{34}$-corners have a mixed bigon.    Yet since their edges must be parallel to the edges of $\bdry A_{12,34}$ on $\calT$, there will have to be two edges parallel on $\calT$ that meet a vertex 
at the same label, contrary to Lemma~\ref{lem:parallelwithsamelabelAPE}.
\end{proof}

\begin{lemma}\label{lem:sccnoBBB}
There cannot be three consecutive bigons in $\Lambda$.
\end{lemma}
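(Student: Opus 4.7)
The plan is to suppose $\Lambda$ contains three consecutive bigons $f_1,f_2,f_3$ incident to a common vertex $x$ of $G_Q$ and derive a contradiction. After relabeling, I may assume the corners at $x$ are $\arc{12},\arc{23},\arc{34}$. Since consecutive faces of $G_Q$ share an edge and lie on opposite sides of $\hatF$, the faces $f_1,f_3$ have one color (say Black) and $f_2$ the other (White). Moreover all three bigons have the same second $G_Q$-vertex $y$ (a shared edge between two bigons forces them to share both endpoints), so the four bounding edges $e_0,e_1,e_2,e_3$ have $x$-labels $1,2,3,4$ and some $y$-labels $(l_0,l_1,l_2,l_3)$ which must be four consecutive labels of $y$ arranged either in the same cyclic order as at $x$ (the ``parallel'' case) or in the opposite order (the ``anti-parallel'' case).

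First I would dispatch the easy sub-cases. If the middle bigon $f_2$ is an $\arc{23}$-\SC, then $f_1\cup f_2\cup f_3$ is by definition a $1$-\ESC, contradicting Proposition~\ref{prop:sccnoesc}. If both $f_1$ and $f_3$ are \SCs, then they are a Black $\arc{12}$-\SC\ and a Black $\arc{34}$-\SC, violating Lemma~\ref{lem:sccnoSandS}. If exactly one of $f_1,f_3$ is an \SC\ (say $f_1$ is a Black $\arc{12}$-\SC) while $f_2,f_3$ are mixed, the shared edge between $f_1$ and $f_2$ forces the label pattern at $y$ to begin with $(1,2,\dots)$; pushing the associated Black \mobius band $A_{12}$ off of $\calT$ and combining it with the annulus that $f_2\cup f_3$ produces in the White handlebody leads, via Lemma~\ref{lem:parallelwithsamelabelAPE}, either to a parallelism of edges on $\hatT$ with the same label at a vertex or to a closed non-orientable surface (\mbox{\RP^2}, Klein bottle, or Dyck's surface) in $M$, each excluded.

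The hard part is the remaining case, in which all three bigons are mixed. Here I would enumerate the possible $y$-label sequences $(l_0,l_1,l_2,l_3)$ compatible with Proposition~\ref{prop:sccnoesc} and with $x,y$ being parallel or anti-parallel in $G_Q$. These fall into three combinatorial types on $\hatT$: (a) the four edges form a single $4$-cycle $1\to 2\to 3\to 4\to 1$ through all vertices of $G_F$; (b) the four edges form two pairs of mutually parallel edges between the vertex pairs $\{1,3\}$ and $\{2,4\}$; and (c) the four edges consist of two loops (at two of the $G_F$-vertices) together with two edges between the other two vertices. In type (c) Lemma~\ref{lem:parallelwithsamelabelAPE} applied to the loop edges on the torus, or a direct construction of an essential \mobius band or annulus in $\calT$, yields a Dyck's surface or a thinning of $K$; in type (b) the pair of parallel edges amalgamated along $\arc{12}$ and $\arc{34}$ of $K$ produces a Black annulus $A_{12,34}$ together with a disjoint White annulus from $f_2$, and their boundaries on $\hatT$ (using that $\hatT$ bounds the solid torus $\calT$ containing $D$) force either a Klein bottle or a vertical Seifert structure permitting Lemma~\ref{lem:sSFS1bridge} to deliver a $0$- or $1$-bridge presentation of $K$; in type (a) the cyclic arrangement on $\hatT$ together with the edges of the \SC\ appearing in any bigon at another vertex (Corollary~\ref{cor:bigonsforall} applied to a different label) again yields the forbidden surfaces or a parallelism violating Lemma~\ref{lem:parallelwithsamelabelAPE}. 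In every sub-case the contradiction is obtained, completing the proof.
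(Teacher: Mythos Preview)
Your two ``easy'' sub-cases already finish the proof, and the ``hard part'' is vacuous. The point you are missing is that with $t=4$ and $x,y$ both vertices of the great web $\Lambda$ (hence parallel in $G_Q$), the $y$-labels $(l_0,l_1,l_2,l_3)$ are forced to run in the cyclic order opposite to that at $x$, so $(l_0,l_1,l_2,l_3)=(a,a-1,a-2,a-3)$ for some $a$. Moreover the corner at $y$ of $f_1$ must have the same color as the corner $\arc{12}$ at $x$; since $\{l_0,l_1\}=\{a,a-1\}$ and Black corners are $\arc{12}$ or $\arc{34}$, this forces $a\in\{2,4\}$. If $a=4$ the triple is exactly an \ESC\ (pattern {\tt MSM}), ruled out by Proposition~\ref{prop:sccnoesc}. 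If $a=2$ the triple is {\tt SMS} with $f_1$ a $\arc{12}$-\SC\ and $f_3$ a $\arc{34}$-\SC, ruled out by Lemma~\ref{lem:sccnoSandS}. This is precisely the paper's two-line proof.

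In particular, your case ``exactly one of $f_1,f_3$ is an \SC'' cannot occur: once $f_1$ is a $\arc{12}$-\SC\ the label sequence at $y$ is determined to be $(2,1,4,3)$, forcing $f_3$ to be a $\arc{34}$-\SC. Likewise ``all three bigons are mixed'' is impossible, since that would require $a\notin\{2,4\}$, violating the color-parity of the corners. So your elaborate type (a)/(b)/(c) analysis on $\hatT$, and the appeals to Lemmas~\ref{lem:parallelwithsamelabelAPE} and~\ref{lem:sSFS1bridge} there, are unnecessary --- and as written those sketches are too vague to stand on their own anyway. The fix is simply to insert the parity observation above and delete everything after your second sub-case.
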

\begin{proof}
By Proposition~\ref{prop:sccnoesc}, a triplet of bigons must be two \SCs\ 
flanking a mixed bigon.  But then these two \SCs\ will have the same color and different labels, contrary to Lemma~\ref{lem:sccnoSandS}.
\end{proof}

\begin{lemma}\label{lem:sccnofesc}
Assume $\Lambda$ contains an \FESC\ and let $h$ be its \SC. Then any 
bigon in $\Lambda$ of the same color as $h$ (Black or White) 
is a \SC\ on the same label pair.
\end{lemma}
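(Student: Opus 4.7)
The proof is by contradiction. Let $h$ be the SC in the FESC, taken WLOG to be a White $\arc{23}$-SC with corresponding White \mobius band $A_{23}$; then the flanking bigon $f$ and trigon $g$ of the FESC are Black. Suppose $\Lambda$ contains a bigon $h'$ of the same (White) color as $h$ that is not a $\arc{23}$-SC. Lemma~\ref{lem:sccnoSandS} forbids $h'$ from being a White SC on a different label pair, so $h'$ must be a mixed White bigon. Since $t=4$, the only label pairs for a mixed bigon are $\{1,3\}$ and $\{2,4\}$; by reversing the orientation of $K$ (swapping the two) we may assume the former.

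The strategy is to exploit the constraints from \conditionII. By Lemma~\ref{lem:scctorus}, $D$ reduces $F^*$ to a single torus $\hatT$ bounding a solid torus $\calT$, and we split into cases according to which handlebody contains $D$. If $D \subset H_W$, then after isotoping off of $D$ both $A_{23}$ and $h'$ lie in $\calT$. The plan is to band $h'$ against a parallel copy of $h$ along the corner of $h'$ that runs through the $G_F$-vertex $2$, producing a properly embedded annulus or \mobius band $A^*$ in $\calT$. One shows by analyzing $\bdry A^*$ on $\hatT$ (using Lemma~\ref{lem:parallelwithsamelabelAPE} to prevent parallel edges at a common label) that either $A^*$ is disjoint from $A_{23}$, giving three mutually disjoint \mobius bands in $M$ and hence an embedded Dyck's surface by Lemma~\ref{lem:3disjointmobiusbands} (contrary to hypothesis), or the union $A_{23} \cup A^*$ allows us to isotope $K$ into a position giving a $1$-bridge presentation in a new genus $2$ splitting, contradicting $t = 4$.

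If instead $D \subset H_B$, then the Black FESC faces $f, g$ lie in $\calT$. Here the strategy uses Lemma~\ref{lem:btfsc}, which provides disjoint bridge disks $D_{12}, D_{34}$ in $H_B$ for the Black arcs $\arc{12}, \arc{34}$, both incident to the same side of $\bdry A_{23}$ on $\hatF$. The mixed bigon $h'$ in $H_W$ supplies the extra data needed to show that $\bdry A_{23}$ is primitive in $H_B$: combining $\bdry h'$ (two $\edge{13}$-edges on $\hatT$ together with corners through vertex $2$) with $D$ yields a compressing disk of $H_B$ that crosses $\bdry A_{23}$ transversely once. With $\bdry A_{23}$ primitive, Lemma~\ref{lem:btfsc} produces a $1$-bridge presentation of $K$ in a new genus $2$ splitting, again contradicting $t = 4$.

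The main obstacle is the combinatorial bookkeeping in the banding construction: tracking the placements of the $\edge{13}$-edges of $h'$ and the $\edge{23}$-edges of $h$ on $\hatT$, using Lemma~\ref{lem:parallelwithsamelabelAPE} to rule out parallel edges meeting a vertex at the same label, and ensuring the resulting surface is embedded. One must also handle simple closed curves of $h' \cap \hatF$ (which are meridians by Corollary~\ref{cor:AEntscc} and may be surgered away using $D$) and verify the symmetric case of $h'$ on label pair $\{2,4\}$, which is handled by exchanging the roles of the two Black arcs in the argument above.
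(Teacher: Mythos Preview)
There is a concrete error in your setup that derails the rest. By the Parity Rule, every edge of $\Lambda$ connects anti-parallel vertices of $G_F$; with $t=4$ these are the pairs $\{1,2\},\{2,3\},\{3,4\},\{4,1\}$. There are no $\edge{13}$- or $\edge{24}$-edges in $\Lambda$. A White mixed bigon has one $\arc{23}$-corner and one $\arc{41}$-corner, and its edges are a $\edge{12}$-edge and a $\edge{34}$-edge. So the ``$\{1,3\}$ vs.\ $\{2,4\}$'' dichotomy and the subsequent references to $\edge{13}$-edges are vacuous, and the banding constructions built on them do not make sense.

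More importantly, the overall plan misses the mechanisms that actually work here. In the case $D\subset H_W$, the paper does not use $h'$ at all: because the edges of the \FESC\ sit in the torus $\hatT$, one of the curves $\alpha,\beta,\alpha\beta$ of Figure~\ref{fig:FESC}(b) must bound a disk in $\hatT$. Ruling out $\alpha$ (it bounds $A_{23}$) and $\beta$ (as in Lemma~\ref{lem:btfsc} this would isotope $K$ into $\calT$) forces $\alpha\beta$ to bound, and then Claim~\ref{claim:primitivity}(3) realizes a trefoil exterior inside $M$; the rest of the case produces a $1$-bridge presentation. In the case $D\subset H_B$, the key step (independent of any $h'$) is the argument of Claim~\ref{claim:TBIII} showing $\bdry A_{23}$ is longitudinal in $\calT$, so $\calN'=\calT\cup\nbhd(A_{23})$ is a solid torus. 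A White mixed bigon then guides an isotopy of $K$ into $\calN'$, forcing $K$ to be its core; the core of $\calT$ is then a $(2,1)$-cable of $K$, and Claim~\ref{claim:ckt1} contradicts $t=4$. Your attempt to show $\bdry A_{23}$ is primitive in $H_B$ via $h'$ and then invoke Lemma~\ref{lem:btfsc} is problematic both because the disk you describe does not exist (it was built from the nonexistent $\edge{13}$-edges) and because Lemma~\ref{lem:btfsc} is proved under \situationnscc\ and relies on Lemma~\ref{lem:TB}, which is not available here.
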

\begin{proof}
WLOG assume there is an \FESC\ as in Figure~\ref{fig:FESC}(a).  The graph 
induced by the edges of the \FESC\ is shown abstractly in Figure~\ref{fig:FESC}(b).  As mentioned above, any face of $\Lambda$ can be taken to have interior
disjoint from $\hatT$. 

First assume $D \subset H_W$.  
Since the graph of Figure~\ref{fig:FESC}(b) lies in $\hatT$, one of $\alpha$, $\beta$, $\alpha\beta$ bounds a disk in $\hatT$.  It cannot be $\alpha$ since $\alpha$ bounds a \mobius band $A_{23}$.   If it were $\beta$ then as in Lemma~\ref{lem:btfsc} we could form bridge disks that guided isotopies of the arcs$\arc{12}$ and $\arc{34}$ onto $\hatT$ so that $K$ would be contained in $\calT$. 
But then $K$ is isotopic to a core of $\calT$, and $K$ is $0$-bridge in the
given Heegaard splitting. Thus $\alpha\beta$ bounds a disk $E$ in $\hatT$.

Let $\calN = \nbhd(\arc{12} \cup \arc{34} \cup f \cup g)$.
As shown in Claim~\ref{claim:primitivity}, $\calN \cup \nbhd(E)$ is a trefoil complement (and the meridian of this trefoil complement is $\bdry A_{23}$).  
Then $\calN' = \calN \cup \nbhd(E) \cup \nbhd(A_{23})$ has incompressible 
boundary $T'$.  Therefore, by Lemma~\ref{lem:MBB}, 
$\calT' = M \cut \calN'$ is a solid torus. 
$K$ intersects $\calT'$ in only the arc $\arc{41}$. 
By Lemma~\ref{lem:1bdryslope} (and that $K$ is not locally knotted), $T' -\nbhd(K)$ compresses in $\calT'-\nbhd(K)$ to show that $\arc{41}$ is $\bdry$-parallel in $\calT'$. ($T'-\nbhd(K)$ cannot compress into $\calN'$ since that would imply the arc $K \cap \calN'$ and hence $K$ is isotopic into $\calT'$.  But then $T'$ would be an essential torus in the exterior of $K$, a contradiction.)

Since $\alpha$ is a primitive curve on $\calN$ by Claim~\ref{claim:primitivity}, $\calN \cup \nbhd(A_{23})$ is a genus $2$ handlebody, $H_B'$. On the other hand, the complement of $H_B'$ is a genus two handlebody, $H_W'$ (the union of $\calT'$ and a $1$-handle dual to $E$).   As in the proof of  Claim~\ref{claim:TBIII}, $K$ can be written as the union of two arcs:  $\arc{34123}$, $\kappa$. Here $\kappa$ is $K \cap H_B'$ and is properly isotopic in $H_B'$ to a cocore of the annulus $\nbhd(\partial A_{23}) \cap \bdry\calN$.  As $\partial A_{23}$ primitive in $\calN$, $\kappa$ is a bridge arc in $H_B'$.  On the other hand, the arc $\arc{34123}$ is $K \cap H_W'$ and is properly isotopic in $H_W'$ to $\arc{41}$. As $\arc{41}$ is bridge in $\calT'$, it is bridge in $H_W'$. Thus $K$ is $1$-bridge with respect to a genus $2$ splitting of $M$. 

\begin{remark} 
This is one of the special cases of the proof of Theorem~\ref{thm:changingHS}.
Here $M$ is $n/2$-surgery on the trefoil (and hence a Seifert fiber space over the
$2$-sphere with an exceptional fiber of order $2$ and one of order $3$.) 
The argument presents $K$ as $1$-bridge with respect to the splitting of 
$M$ gotten from a genus $2$ Heegaard splitting of the trefoil exterior: i.e.\
remove a neighborhood of the unknotting tunnel from the exterior of the trefoil 
for one handlebody
of the splitting, then the filling solid torus in union with a neighborhood of 
the unknotting tunnel is the other.
\end{remark}

If $D \subset H_B$, then the first part of the argument of Claim~\ref{claim:TBIII} shows that $\partial A_{23}$ is 
longitudinal in $\calT$. Thus $\calN' = \calT \cup \nbhd(A_{23})$ is a solid torus. Let $l$ be a bigon of $\Lambda$ of the same color as
$h$, that is a White bigon. If $l$ is a mixed bigon, then $l$ guides an isotopy of $K$ into the solid torus $\calN'$. As $K$ is hyperbolic
and $M$ contains no lens space summand, $K$ can be isotoped to be a core of $\calN'$. 
But the core, $L$, of $\calT$ is a (2,1)-cable  
of $K$ (in $\calN'$). As $L$ is a core of $H_B$, Claim~\ref{claim:ckt1} contradicts that $t=4$. 
Thus $l$ cannot be a mixed bigon. By 
Lemma~\ref{lem:sccnoSandS}, $l$ must be a \SC\ on the same label pair as $h$.         
\end{proof}

\begin{thm}\label{thm:scctnot4}
In \conditionII, $t \neq 4$.
\end{thm}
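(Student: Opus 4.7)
\medskip

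The plan is to suppose $t=4$ in \conditionII\ and derive a contradiction by analyzing bigons and trigons at a special vertex of the great $2$-web $\Lambda$, using the fact that $G_F$ lies on a single torus $\hatT = F^*$ bounding a solid torus $\calT$ (Lemma~\ref{lem:scctorus}).

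First I would consolidate the combinatorial constraints already in hand: Proposition~\ref{prop:sccnoesc} eliminates every proper \ESC\ in $\Lambda$, Lemma~\ref{lem:sccnoBBB} eliminates three consecutive bigons at any vertex, Corollary~\ref{cor:bigonsforall} forces every label to lie on the corner of some \SC, and Lemma~\ref{lem:sccnoSandS} forces \SCs\ of the same color to carry the same label pair. Covering all four labels by \SCs\ whose same-color representatives share a label pair then leaves exactly two possibilities up to relabeling, so WLOG the Black \SCs\ are $\arc{12}$-\SCs\ and the White \SCs\ are $\arc{34}$-\SCs\ (the other case $\arc{23}/\arc{41}$ being symmetric).

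Next I would pin down the global combinatorics. By Lemma~\ref{lem:t4trulyspecial} $\Lambda$ has either $9$ mutually parallel edges or a special vertex of weight $N=4$; the former gives eight consecutive bigons at a vertex, violating Lemma~\ref{lem:sccnoBBB}, so a special vertex $v$ of type $[\Delta t-5,4]$, $[\Delta t-4,1]$ or $[\Delta t-3]$ (Lemma~\ref{lem:N4trulyspecialtypes}) must exist. No three consecutive bigons at $v$ forces at least $\lceil 4\Delta/3\rceil$ gap corners, while the three types allow at most $5$, $4$, $3$ gaps respectively. This rules out type $[\Delta t-3]$ and forces $\Delta=3$, leaving the two sub-cases $[7,4]$ and $[8,1]$ with exactly twelve corners at $v$, three of each of the four types $\arc{12},\arc{23},\arc{34},\arc{41}$.

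Then I would restrict the bigon pattern around $v$. By Step~1 only $\arc{12}$ and $\arc{34}$ corners can hold \SCs, so $\arc{23}$ and $\arc{41}$ bigons are mixed; no \ttMSM on a single color can appear (it would be an \ESC); any \ttMST or \ttTSM is an \FESC\ on which Lemma~\ref{lem:sccnofesc} applies, and I would show that in our setting its conclusion immediately produces a $1$-bridge presentation of $K$ as in the remark following Claim~\ref{claim:TBIII}, contradicting $t=4$. Applying Lemma~\ref{lem:parallelwithsamelabelAPE} on $\hatT$ to the edges of $\arc{12}$-\SCs\ (resp.\ $\arc{34}$-\SCs) incident to $v$ gives at most two of each at $v$. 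Together with the restriction that a mixed bigon in a triple \ttSMS, \ttSSM, or \ttMSS would again produce a forbidden \ESC, these constraints so severely limit how the seven (or eight) bigons can be laid out among the twelve corners that in each feasible arrangement either (a) two parallel edges meet a vertex of $G_{F^*}$ at the same label, violating Lemma~\ref{lem:parallelwithsamelabelAPE}; or (b) a mixed bigon guides an isotopy of $K$ into $\calT\cup\nbhd(A_{12})$ or $\calT\cup\nbhd(A_{34})$, so that $K$ is a core of a solid torus and Claim~\ref{claim:ckt1} produces a $0$-bridge presentation of $K$, exactly as in the proof of Proposition~\ref{prop:sccnoesc}.

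The main obstacle will be the case-analysis in the final step: the two special-vertex types $[7,4]$ and $[8,1]$ each admit several cyclic placements of gaps and trigons compatible with all of the listed lemmas, and each such placement has to be converted to a geometric conclusion about either parallel edges on the torus $\hatT$ or a \mobius-band-plus-solid-torus configuration inside $\calT$. The book-keeping is streamlined by noting that $\hatT$ is a single torus and that $\calT$ has already been shown to contain every properly embedded constituent of the long \mobius bands attached to our \SCs, so each case reduces quickly to one of the two contradictions above.
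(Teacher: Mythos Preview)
Your Step~1 contains a basic error about colors: since $\hatF$ is separating and $t=4$, the arcs $\arc{12}$ and $\arc{34}$ of $K$ lie on the \emph{same} side of $\hatF$, so any $\arc{12}$-\SC\ and any $\arc{34}$-\SC\ have the same color; likewise $\arc{23}$ and $\arc{41}$ share the other color. Your dichotomy ``Black $=\arc{12}$, White $=\arc{34}$'' (with ``$\arc{23}/\arc{41}$'' as the symmetric alternative) is therefore impossible, and the subsequent claim that ``$\arc{23}$ and $\arc{41}$ bigons are mixed'' has no basis. Everything you build on this coloring in Steps~2 and~3 would have to be redone from scratch.

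Ironically, correcting the coloring makes your Step~1 into a complete proof by itself, shorter than both your proposed case analysis and the paper's argument. With the correct color partition $\{\arc{12},\arc{34}\}$ versus $\{\arc{23},\arc{41}\}$, Lemma~\ref{lem:sccnoSandS} forces all Black \SCs\ to share a single label pair from the first set and all White \SCs\ to share one from the second; any such choice covers at most three labels. But Proposition~\ref{prop:sccnoesc} rules out \ESCs, so Corollary~\ref{cor:bigonsforall} forces each of the four labels into some \SC\ --- an immediate contradiction, with no special-vertex analysis needed.

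The paper does not take this shortcut. It instead passes to a special vertex of type $[8,1]$ or $[7,4]$ (with $\Delta=3$, forced by Lemma~\ref{lem:sccnoBBB}) and analyzes the bigon/trigon configurations there using Lemmas~\ref{lem:sccnoSandS} and~\ref{lem:sccnofesc}. Note also that you misread Lemma~\ref{lem:sccnofesc}: its conclusion is that same-color bigons are \SCs\ on the label pair of the \FESC's core; the $1$-bridge presentation you cite appears only inside the proof (in the case $D\subset H_W$) and is not the lemma's output.
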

\begin{proof}
By Lemma~\ref{lem:sccnoBBB}, $\Lambda$ cannot have a triple of bigons. 
Hence by Lemma~\ref{lem:t4trulyspecial} there must be a special vertex $v$ in $\Lambda$.  By Lemma~\ref{lem:N4trulyspecialtypes}, such a special vertex has at most $5$ gaps (counting both trigons and true gaps). That is, there are at most 5
corners at $v$ that do not belong to bigons of $\Lambda$.  
Furthermore $v$ must be of type $[8,1]$ with $\Delta = 3$ or type $[7,4]$ with 
$\Delta=3$, else it must have a triple of bigons.

If the special vertex $v$ has type $[7,4]$, then there are five gaps.  
Having no triple of bigons implies there must be at least two instances of adjacent bigons flanked by gaps. 
First assume there is no sequence {\tt TBBT} at $v$ (notation as described at 
the beginning of section~\ref{sec:t4noscc}). 
Then there must be a {\tt TBBGBBT}. Lemmas~\ref{lem:sccnoSandS} and \ref{lem:sccnofesc} force this
to be {\tt TSMGMST}. Again applying Lemmas~\ref{lem:sccnoSandS} and \ref{lem:sccnofesc}, we must have {\tt TTSMGMSTT}. But then there is a
triple of bigons at $v$, contradicting Lemma~\ref{lem:sccnoBBB}.
So assume there is a sequence {\tt TBBT} at $v$. By Lemmas~\ref{lem:sccnoSandS} and \ref{lem:sccnofesc}, we may assume we have {\tt TMSTg}, where the
{\tt MST} is an \FESC. There
must be at least two more bigon pairs, {\tt BB}. In particular there must be
a sequence {\tt gBBgBBg}, possibly including part of the above sequence.
As argued above, the existence of the \FESC\ forces {\tt ggSMgMSgg} and hence
a triple of bigons, a contradiction.

If the special vertex has type $[8,1]$, then there are four gaps.  Having no triple of bigons implies that the gaps occur at every third corner separating four pairs of adjacent bigons.  There is now no way to label these bigons without violating Lemma~\ref{lem:sccnoSandS}.
\end{proof}


\appendix

\appsection{Small Seifert fiber spaces containing a Dyck's surface.}\label{sec:appendix}
This appendix proves Theorem~\ref{SFSDycks}, which restricts the small Seifert fiber
spaces containing a Dyck's surface.


In what follows a {\em surface} will always be connected.
\begin{defn} $M=S^2(s_1/t_1,s_2/t_2,s_3/t_3)$ is defined as follows.
Let $\widetilde{M}=S^1 \times F$ where $F$ is a pair of pants. An orientation
on each of the factors induces coordinates $(s,t)$ where $s$ is the 
number of times around the $S^1$ factor. To each component of $\partial 
\widetilde{M}$ attach a solid torus $T_i$ so that the meridian of $T_i$
is identified with the curve $(s_i,t_i)$. The resulting manifold is $M$.
$\widetilde{M}$ is a circle bundle, $p:\widetilde{M} \to F$. $M$ is a Seifert
fiber space over $S^2$ where each $T_i$ is a neighborhood of an exceptional
fiber of order $t_i$. 
\end{defn}

\begin{thm}\label{SFSDycks}
Let $M$ be a SFS over the 2-sphere with three exceptional fibers. 
If $M$ contains an incompressible Dyck's surface, then either
\begin{itemize}
\item[(A)] $M=S^2(2/p_1,2/p_2,2/p_3)$ where each $p_i$ is an odd integer; or
\item[(B)] one of the exceptional fibers of $M$ has order $2$
and a second has order which is a multiple of $4$; or
\item[(C)] $M$ has exceptional fibers of order $2$ and $3$. In fact,
$M$ is $(2/n)$-surgery on a trefoil knot; or
\item[(D)] $M$ has two exceptional fibers of order $2$. In this case
$M$ contains a Klein bottle; or
\item[(E)] $M$ has two exceptional fibers of order $3$.
\end{itemize}
\end{thm}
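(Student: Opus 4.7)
The plan is to analyze the one-sided surface $S$ via its two-sided companion $\widetilde{F} := \partial \nbhd(S)$. Since $M$ is orientable and Dyck's surface is not, $S$ is automatically one-sided, so $\widetilde{F}$ is the orientation double cover of $S$ and hence a closed orientable genus-$2$ surface. The central preliminary observation is that $\widetilde{F}$ is incompressible on its $\nbhd(S)$-side---a compression there would descend via the double cover to a compression of $S$ in $M$, contradicting incompressibility of $S$---so every compression of $\widetilde{F}$ in $M$ must lie in $W := M \setminus \mathrm{int}\, \nbhd(S)$. I would then split on whether $\widetilde{F}$ is compressible in $W$.

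Suppose first that $\widetilde{F}$ is incompressible in $M$. By the Waldhausen--Hatcher classification, $\widetilde{F}$ is isotopic either to a vertical or to a horizontal surface. Vertical incompressible surfaces in a Seifert fibering over the oriented base $S^{2}$ are unions of fiber tori, and a connected genus-$2$ surface is not such a union, so $\widetilde{F}$ must be horizontal. (This is consistent with the orientability of $\widetilde{F}$, since over an oriented base and total space horizontal surfaces inherit an orientation from the fiber direction.) The Riemann--Hurwitz formula for the induced branched cover $\widetilde{F} \to S^{2}$ of degree $n$ gives
\[
-2 \;=\; \chi(\widetilde{F}) \;=\; n\!\left(-1 + \tfrac{1}{t_{1}} + \tfrac{1}{t_{2}} + \tfrac{1}{t_{3}}\right), \qquad t_{i} \mid n,
\]
which admits only finitely many positive integer solutions. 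For each solution I would ask whether the deck involution of $\widetilde{F} \to S$ extends to a free orientation-reversing involution compatible with the horizontal representative; this constrains not only the orders $t_{i}$ but also the Seifert invariants $s_{i}$, matching each surviving solution to one of (A), (B), (C), (E).

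If instead $\widetilde{F}$ is compressible in $W$, I would maximally compress it there, obtaining a (possibly disconnected) incompressible surface whose components are spheres or tori. Sphere components would bound balls in the irreducible manifold $M$ and force $S$ into a ball, contradicting incompressibility of $S$; so there must be an incompressible torus $T \subset W$, which is then incompressible in all of $M$. By Hatcher's classification $T$ is vertical or horizontal: a horizontal torus forces $\tfrac{1}{t_{1}}+\tfrac{1}{t_{2}}+\tfrac{1}{t_{3}}=1$, giving $(t_{1},t_{2},t_{3}) \in \{(2,3,6),(2,4,4),(3,3,3)\}$ and thence conclusions (C), (B), (E); a vertical torus separates $M$ into two Seifert-fibered pieces (each a fibered solid torus or a fibered space over the disk with two exceptional fibers), and tracking how $S$ and the compressing disks sit relative to this splitting isolates the remaining cases, producing (A) when all three exceptional fibers have order $2$ and (D) when exactly two have order $2$ and a Klein bottle emerges from a vertical annulus between them.

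The hard part will be the compressible case, and within it the step of pinning down the Seifert invariants $s_{i}$ rather than merely the orders $t_{i}$. For instance, deducing in (A) that $s_{i}=2$ with $p_{i}$ odd requires a mod-$2$ homology calculation of $H^{1}(M;\mathbb{Z}/2)$, showing that a Dyck surface can represent a nontrivial Poincar\'e-dual class only under the stated parity conditions on the Seifert invariants; similarly, recognizing $M$ in (C) as $n/2$-surgery on a trefoil requires matching the invariants to Moser's parameterization of surgeries on torus knots.
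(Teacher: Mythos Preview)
Your approach via the two-sided double cover $\widetilde F=\bdry\nbhd(S)$ is genuinely different from the paper's, and it has a real gap.  The paper works directly with the one-sided surface, invoking Frohman's theorem (\cite{Fr}, Theorem~2.5) that an incompressible one-sided surface in a Seifert fiber space is isotopic to a \emph{pseudohorizontal} or \emph{pseudovertical} surface; the Euler-characteristic bookkeeping is then done on $S$ itself ($\chi=-1$) rather than on $\widetilde F$, and the pseudovertical case together with the Bredon--Wood cross-cap formula \cite{BW} immediately yields~(B).  This bypasses the involution analysis entirely.

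The gap in your argument is the ``spheres'' sub-case when $\widetilde F$ compresses in $W$.  You claim sphere components would force $S$ into a ball, but this is false: if $\widetilde F$ compresses completely in $W$ then $W$ is a genus~$2$ handlebody and $S$ is a one-sided Heegaard surface for $M$.  The spheres bound balls lying in $W$, not balls containing $S$, and this is entirely compatible with $S$ being incompressible.  Indeed, this case genuinely occurs---for instance, many of the manifolds in conclusion~(A) with all $|p_i|\geq 5$ odd are atoroidal and admit no horizontal genus~$2$ surface (check Riemann--Hurwitz), so $\widetilde F$ \emph{must} compress completely there.  Your case split therefore never reaches~(A) or~(D): in the compressible branch, once you correctly observe that a small Seifert fiber space over $S^2$ with three exceptional fibers has no vertical incompressible torus (one side is always a solid torus), only the horizontal-torus option survives, pinning $(t_1,t_2,t_3)$ to $(2,3,6)$, $(2,4,4)$, or $(3,3,3)$---none of which has two order-$2$ fibers or three odd-order fibers.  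To repair this you would have to analyze the one-sided Heegaard case from scratch, at which point it is more efficient to quote Frohman's structure theorem from the outset, as the paper does.
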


\begin{remark} The Teragaito examples are in $S^2(-1/2,1/6,2/7)$, which
by the above does not contain a Dyck's surface.
\end{remark}

\begin{proof}
Let $K$ be an incompressible Dyck's surface in $M$. 
Theorem 2.5 of \cite{Fr} shows that $K$ can be isotoped to be 
either pseudovertical or pseudohorizontal. $K$ is said to be 
{\em pseudovertical} if $\widetilde{K} =K \cap \widetilde{M}$ is a 
vertical annulus whose
boundary lies in distinct components of $\partial \widetilde{M}$,
$\partial T_i, \partial T_j$; furthermore,
$K \cap T_i$ and $K \cap T_j$ are one-sided incompressible surfaces
in $T_i$ and $T_j$. $K$ is said to be {\em pseudohorizontal}
if $K \cap \widetilde{M}$ is horizontal under the circle fibration and $K$
intersects each of $T_1,T_2,T_3$ in either a family of meridian disks
or in a one-sided incompressible surface. Note that by Corollary 2.2 of
\cite{Fr}, a one-sided incompressible surface in a solid torus has a
single boundary component.
\end{proof}

\begin{claim}\label{horizontal}
If $K$ is pseudohorizontal then one of the conclusions to
Theorem~\ref{SFSDycks} holds.
\end{claim}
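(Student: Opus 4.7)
The plan is to analyze the pseudohorizontal Dyck's surface $K$ by decomposing it along the fibered pieces of $M$ and extracting constraints on the Seifert invariants from an Euler characteristic count combined with parity conditions on boundary slopes. Since $\widetilde{M} = S^1 \times F$ is a trivial circle bundle over the pair of pants $F$, any horizontal surface is an unbranched cover of $F$ and hence orientable: $S = K \cap \widetilde{M}$ is an orientable $d$-fold cover of $F$ with $\chi(S) = -d$. After an isotopy each $K \cap T_i$ may be assumed incompressible and $\partial$-incompressible in the solid torus $T_i$. When $K \cap T_i$ is not a disjoint union of meridian disks, its orientation double cover is a connected, orientable, incompressible, $\partial$-incompressible surface in $T_i$ with two boundary components, hence a $\partial$-parallel annulus; this forces $K \cap T_i$ to be a \mobius band (with $n_i = 1$ and $\chi(K \cap T_i) = 0$). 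Let $O \subseteq \{1,2,3\}$ index the \mobius band pieces; non-orientability of $K$ forces $|O| \geq 1$.

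Using $n_i t_i = d$ for $i \notin O$, the identity $\chi(K) = -1$ becomes
\[ \sum_{i \notin O} \frac{1}{t_i} \;=\; 1 - \frac{1}{d}. \]
Case $|O|=2$ is ruled out: if $O = \{2,3\}$ then $n_1 = d-1$ and $n_1 t_1 = d$ force $t_1 = 2$ and $d = 2$; but then the $\Z/2$-cover $S \to F$ would have nontrivial monodromy on all three boundary loops of $F$, contradicting that the three boundary classes sum to zero in $H_1(F;\Z/2)$. Case $|O|=3$ forces $d=1$, so $S$ is a section of $\widetilde{M}$ meeting each $\partial T_i$ in a single curve of slope $(a_i,1)$ in the (fiber, base) basis. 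The \mobius band boundary condition $(a_i,1) = p_i\mu_i + 2\lambda_i$ with $p_i$ odd then yields $1 = p_i t_i + 2 t'_i$, forcing each $t_i$ odd; combining with the longitude identity $s_i t'_i \equiv 1 \pmod{t_i}$ gives $s_i \equiv 2 \pmod{t_i}$, so after normalization $s_i = 2$, matching conclusion (A).

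In case $|O|=1$, say $O = \{3\}$, the Diophantine equation $1/t_1 + 1/t_2 = 1 - 1/d$ with $t_i \mid d$ has only the solutions $(t_1,t_2,d) \in \{(2,3,6), (2,4,4), (3,3,3)\}$ (up to reordering), and each is realized by an explicit monodromy $\pi_1(F) \to S_d$ of the prescribed cycle type on the boundary loops. The \mobius band slope condition $d = p_3 t_3 + 2 t'_3$ on $T_3$ (with $p_3$ odd), together with $\gcd(s_3, t_3) = 1$ and $s_3 t'_3 - s'_3 t_3 = \pm 1$, then constrains $t_3$: in sub-case $(3,3,3)$ it forces $t_3$ odd, so $M$ has two exceptional fibers of order $3$ and conclusion (E) holds; in sub-case $(2,3,6)$ it forces $t_3$ to be a multiple of $4$ (and not a multiple of $12$), so $M$ has a fiber of order $2$ and a fiber of order divisible by $4$, matching conclusion (B); in sub-case $(2,4,4)$ it forces $t_3 = 2k$ with $k$ odd, so $M$ has a fiber of order $2$ and the fiber of order $4$, again matching (B), and the sub-sub-case $t_3 = 2$ additionally yields conclusion (D).

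The main technical obstacle is the $|O|=1$ bookkeeping: carrying the parities and divisibilities produced by $d = p_3 t_3 + 2 t'_3$ through the joint conditions $\gcd(s_3,t_3)=1$ and $s_3 t'_3 - s'_3 t_3 = \pm 1$, so as to show that every admissible $(s_3,t_3)$ falls in one of the conclusions (B), (D), (E) without leakage to configurations outside the theorem. Note that conclusion (C), concerning $(n/2)$-surgeries on a trefoil, does not arise in the pseudohorizontal case (since those surgeries yield $t_3$ odd while the \mobius band condition on $T_3$ would demand $t_3$ even whenever $(t_1, t_2) = (2,3)$); it must arise from the pseudovertical case of Frohman's theorem.
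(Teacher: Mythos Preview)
Your overall strategy parallels the paper's: sort by how many $T_i$ receive meridian disks versus one-sided pieces, then solve the resulting Euler-characteristic Diophantine equation. But there is a real gap.

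The assertion that each one-sided incompressible piece $K\cap T_i$ must be a \mobius band is false, and your justification breaks: the frontier $\tilde\Sigma = \bdry\nbhd(\Sigma)$ of a one-sided incompressible surface $\Sigma \subset T_i$ need not itself be incompressible in $T_i$, because a compressing disk for $\tilde\Sigma$ lying in $T_i\setminus\nbhd(\Sigma)$ does not in general produce a compression of $\Sigma$. Concretely, Bredon--Wood \cite{BW} shows a solid torus contains a one-sided incompressible once-punctured Klein bottle with boundary a $(4,l)$-curve; its frontier is a twice-punctured torus, not an annulus. (The paper uses exactly such a surface in its pseudovertical analysis.)

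In your $|O|=2$ and $|O|=3$ cases the Euler-characteristic count happens to force $\chi(K\cap T_i)=0$ for the one-sided pieces anyway, so those cases survive. But in $|O|=1$ you have silently excluded the possibility $\chi(K\cap T_3)=-1$. In that event $-1 = -d + d/t_1 + d/t_2 - 1$ gives $1/t_1+1/t_2=1$, hence $t_1=t_2=2$ (with $d$ any even integer), landing in conclusion (D). This is precisely case (1) of the paper's Claim~\ref{clm:possibilities}, and your argument does not cover it.

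One smaller correction: your closing remark that conclusion (C) ``must arise from the pseudovertical case'' is backwards. The paper's pseudovertical analysis yields only (B); it is the pseudohorizontal $(t_1,t_2,d)=(2,3,6)$ case that the paper assigns to (C), by recognizing $M\setminus T_3$ as a trefoil exterior with $K\cap(M\setminus T_3)$ a Seifert surface. Your computation that $4\mid t_3$ in this case is correct and incidentally shows that every instance of (C) also satisfies (B), but that is a different statement.
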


\begin{proof}
Assume $K$ is pseudohorizontal. Then $p: \widetilde{K} \to F$ is a cover of
index $\lambda \geq 1$. Note that $\lambda$ is the intersection number 
of $\widetilde{K}$ with any circle fiber of $\widetilde{M}$. 

Assume $\lambda=1$. Then a component
$c$ of $\partial \widetilde{K}$ would intersect the Seifert fiber in the
neighborhood of the corresponding exceptional fiber once. This immediately
implies that $c$ does not bound a meridian of that solid torus neighorhood.
Thus $K$ intersects the neighborhood of an exceptional fiber in a single
one-sided incompressible surface. As the Euler characteristic of $K$ is
$-1$, it must be the $K$ intersects the neighborhood of each of the exceptional
fibers of $M$ in a \mobius band. As $\widetilde{K}$ is a section for the 
circle bundle we can use it to define the product structure on $\widetilde{M}$. This
gives coordinates on the boundary of each exceptional
fiber so that $\widetilde{K} \cap \partial T_i$ is $(0,1)$ and the circle fiber 
(which is the Seifert fiber of $M$) is $(1,0)$. As $K \cap T_i$ is a 
\mobius band, its boundary must
intersect the meridian of the solid torus twice. Thus in these coordinates,
the meridian is $(2,p_i)$ were $|p_i|$ is the order of the 
exceptional fiber (and odd).
Thus $M = S^2(2/p_1,2/p_2,2/p_3)$ and we have conclusion $(A)$ above.

Assume $\lambda > 1$. As $K$ is 1-sided it cannot intersect all of the
$T_i$ in disks. On the other hand, since $\widetilde{K}$ is a $\lambda$-fold
cover of the pair of pants $F$, it must have Euler characteristic $-\lambda$. 
Thus $K$ must
intersect some $T_i$ in disks. 

Assume first that $K$ intersects only $T_1$ in disks. Let $r \leq 0$ be the 
sum of the Euler characteristics of the one-sided surfaces $K \cap T_2,
K \cap T_3$. Then $ -1 = \chi(K) = -\lambda + \lambda/p + r $
where $p$ is the order of the singular fiber at $T_1$. This implies that
$r=0$ and $\lambda (p-1)=p$.  As $\lambda$ is a multiple of $p$, this
implies that $\lambda=p=2$. But then we conclude that $\widetilde{K}$ has
exactly three boundary components and Euler characteristic $-2$. This
implies that $\widetilde{K}$ is non-orientable. But $\widetilde{K}$ covers
the orientable $F$.

So assume $K$ intersects $T_1,T_2$ in disks and $T_3$ in a 1-sided
surface with Euler characteristic $r \leq 0$. Let $p_1,p_2$ be the 
orders of the singular fibers of $T_1,T_2$. Then we have the following
equality $(*)$:  $-1 = \chi(K) =
-\lambda + \lambda/p_1 + \lambda/p_2 + r$. 

\begin{claim}\label{clm:possibilities}
One of the following must hold.
\begin{itemize}
\item[(1)] $r=-1$ and $p_1=p_2=2$; or
\item[(2)] $r=0$, $p_1=2,p_2=3$, and $\lambda=6$; or
\item[(3)] $r=0$, $p_1=2,p_2=4$, and $\lambda=4$; or
\item[(4)] $r=0$, $p_1=3=p_2$, and $\lambda=3$.
\end{itemize}
\end{claim}

\begin{proof}
Noting that $\lambda$ is a multiple of both $p_1$ and $p_2$,
define the natural numbers $e_1=\lambda/p_1, e_2=\lambda/p_2$.

Assume that $r \leq -1$. Then $(*)$ implies that $p_1p_2 \leq p_1 + p_2$,
hence $p_1=p_2=2$ and $r=-1$, giving conclusion (1). 

We hereafter take $r=0$. WLOG assume $p_2 \geq p_1$ and hence $e_1 \geq e_2$.

First assume $p_1>2$. Then $2e_1 < \lambda = e_1 + e_2 +1$ from $(*)$; hence, 
$e_1 < e_2 + 1$. Thus $e_1=e_2$, $p_1=p_2$. Then $(*)$ becomes 
$\lambda ((p_1-2)/p_1)=1$ or that 
$e_1(p_1-2)=1$. This gives conclusion $(4)$ above.

So assume $p_1=2$. Then we get that $e_2 (p_2-2)=2$. This means that either
$e_2=2, p_2=3, \lambda=6$ or $e_2=1, p_2=4, \lambda=4$. These are conclusions
$(2)$ and $(3)$.
\end{proof} 

Lemma~\ref{horizontal} now follows from Claim~\ref{clm:possibilities}:
Conclusion $(1),(2),(3),(4)$ of Claim~\ref{clm:possibilities} imply conclusions
$(D),(C),(B),(E)$ respectively. Note that in conclusion $(D)$, 
$M$ contains a pseudovertical Klein bottle between the exceptional
fibers of order $2$.
In the context of conclusion $(2)$ of the claim (which is the context of $(C)$
in the Theorem)
$X=M- \hbox{Int}(T_3)$ is the exterior of a trefoil knot and $K \cap X$ is
a 1-punctured torus, hence a Seifert surface for $X$. As $T_3$ intersects
$K$ in a \mobius band, the meridian of $T_3$ intersects the boundary of
this Seifert surface twice. Hence $M$ is an $(2/n)$-filling of $X$ as 
claimed. 
\end{proof}

\begin{claim}
If $K$ is pseudovertical then conclusion $(B)$ of Theorem~\ref{SFSDycks} holds.
\end{claim}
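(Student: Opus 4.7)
The plan is to use the pseudovertical decomposition to identify the topology of each piece of $K$, then to extract the Seifert invariants from the boundary slopes on $\partial T_i$ and $\partial T_j$. Write $K=\widetilde K\cup (K\cap T_i)\cup(K\cap T_j)$ with $\widetilde K$ a vertical annulus (so $\chi(\widetilde K)=0$), and set $F_l=K\cap T_l$. An Euler characteristic count gives $\chi(F_i)+\chi(F_j)=\chi(K)=-1$. By Corollary~2.2 of \cite{Fr}, each $F_l$ has a single boundary component, and a connected one-sided surface with one boundary component and Euler characteristic $c$ is a non-orientable surface of cross-cap number $1-c$. The only way to split $-1$ as a sum of two values from $\{0,-1,-2,\dots\}$ is $0+(-1)$, so (after possibly interchanging $i$ and $j$) $F_i$ is a \mobius band and $F_j$ is a once-punctured Klein bottle.

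Next I recover the Seifert invariants from the boundary slopes. In the $(s,t)$-coordinates on $\partial T_l$ of the paper, the meridian is $(s_l,t_l)$ and the regular Seifert fiber is $(1,0)$; a direct linear-algebra computation shows that $(1,0)$ represents $t_l\cdot[\mathrm{core}_{T_l}]$ in $H_1(T_l)\cong\mathbb Z$. Since $\partial F_i$ is the regular fiber and bounds the \mobius band $F_i$, it must wrap exactly twice around $[\mathrm{core}_{T_i}]$, so $|t_i|=2$; the exceptional fiber at $T_i$ has order~$2$.

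For the once-punctured Klein bottle $F_j$, realize $F_j$ as the boundary-connected sum $M_1\#_\partial M_2$ of two \mobius bands in $T_j$ whose cores wrap $p_1$ and $p_2$ times around the core of $T_j$. Abelianizing $\pi_1(F_j)=\langle a_1,a_2\rangle$, one has $[\partial F_j]=2a_1+2a_2$ in $H_1(F_j)$, and pushing into $H_1(T_j)$ yields $[\partial F_j]=2(p_1+p_2)\,[\mathrm{core}_{T_j}]$, so $|t_j|=2|p_1+p_2|$. The remaining step, which I expect to be the hardest, is to show that $p_1+p_2$ is even (so $|t_j|\equiv 0\pmod 4$). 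I plan to argue by contradiction: if $p_1+p_2$ were odd, I would combine the band on $\partial T_j$ joining $\partial M_1$ to $\partial M_2$ with a vertical rectangle cut from $\widetilde K$ and an appropriate sub-disk near $T_i$ (using that the exceptional fiber of $T_i$ has order $2$) to produce an essential compressing disk or \mobius band for $K$ in $M$, contradicting incompressibility of $K$. Granted this, $|t_i|=2$ and $|t_j|=4m$ for some $m\geq 1$, which is exactly conclusion~$(B)$.
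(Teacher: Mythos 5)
Your decomposition of $K$ into the vertical annulus $\widetilde K$, the \mobius band $F_i$, and the once-punctured Klein bottle $F_j$, and your identification of $|t_i|=2$ from $\partial F_i$ being the fiber on the boundary of a \mobius band, both match the paper. The gap is in the step you yourself flag as "the hardest": your homological bookkeeping gives $|t_j|=2|p_1+p_2|$, which only shows $t_j$ is even, and nothing in the setup forces $p_1+p_2$ to be even. In fact, homology alone cannot force it — the parity constraint is a genuine topological fact about one-sided incompressible surfaces in solid tori, not a consequence of the abelianized boundary class. Your sketched contradiction (band plus vertical rectangle plus a disk near $T_i$) is not carried out and is not a proof; as stated there is no reason such an assembly would yield an essential disk or \mobius band.

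The paper closes exactly this gap by citing Bredon–Wood: Frohman's Corollary~2.2 says the boundary of a one-sided incompressible surface in a solid torus is a $(2k,l)$-curve and the surface is unique for that slope; the recursive formula of \cite{BW}(6.4) shows the cross-cap number $N(2k,l)$ equals $2$ if and only if $k$ is even. Since $K\cap T_j$ is a once-punctured Klein bottle (cross-cap number $2$) and is incompressible, its boundary slope must have $k$ even, i.e.\ the exceptional fiber has order $2k$ with $k$ even — a multiple of $4$. You would need to invoke this classification (or reprove it) to finish; your $p_1+p_2$ is precisely the $k$ in their notation. There is also a secondary issue you should be careful with: realizing an incompressible once-punctured Klein bottle in a solid torus as a boundary-connected sum of two embedded incompressible \mobius bands wrapping $p_1$ and $p_2$ times is a geometric claim that is not obviously true and is not needed — as a basis computation in $H_1(F_j)$ it is harmless, but the parity of $p_1+p_2$ still does not follow from it.
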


\begin{proof}
Assume $K$ is pseudovertical. As $\chi(K)=-1$, $K$ is the union of 
a vertical annulus $\widetilde{K}$ and a \mobius band $K_1$ in $T_1$ (say) along
with a punctured Klein bottle $K_2$ in $T_2$ (say). 

Now $\partial K_1$ will intersect the meridian of $T_1$ twice. As $\partial
K_1$  is a Seifert fiber of $M$ this says that the order of the
exceptional fiber at $T_1$ is $2$.

By Corollary 2.2 of \cite{Fr},
a one-sided incompressible surface in a solid torus has boundary a single 
$(2k,l)$-curve in longitude, meridian coordinates of the solid torus 
where $k,l$ are integers and $k>0$. In \cite{BW}, a recursive formula is 
developed for $N(2k,l)$, which, as pointed out in \cite{Fr}, is equal to
the cross-cap number of the (unique) 1-sided incompressible 
surface whose boundary
is the $(2k,l)$-curve. By picking the right longitude, we may assume
that $k>l>0$ in the computation of $N(2k,l)$. Then \cite{BW}(6.4) shows
that $N(2k,l)=2$ iff $k$ is even. So let $\partial K_2$ be such a 
$(2k,l)$ curve in $T_2$. Then $2=N(2k,l)$ and $k$ is even. As $\partial K_2$
is a Seifert fiber for $M$, this implies that the exceptional fiber
for $T_2$ has order $2k$ with $k$ even. 

Thus $M$ is as in $(B)$ of the Theorem~\ref{SFSDycks}.
\end{proof}



\providecommand{\bysame}{\leavevmode\hbox to3em{\hrulefill}\thinspace}
\providecommand{\MR}{\relax\ifhmode\unskip\space\fi MR }
\providecommand{\MRhref}[2]{%
  \href{http://www.ams.org/mathscinet-getitem?mr=#1}{#2}
}
\providecommand{\href}[2]{#2}

\end{document}